\newenvironment{Note}
{\begin{list}{$\bullet$ }
{\setlength{\leftmargin}{7pt}%
\setlength{\labelsep}{3pt}%
\setlength{\itemindent}{5pt}%
\setlength{\labelwidth}{2pt}}}
{\end{list}}
\newtheorem{theorem}[equation]{Theorem}
\newtheorem*{theorem*}{Main Theorem}
\newtheorem*{corollary*}{Main Corollary}
\newtheorem{proposition}[equation]{Proposition}
\newtheorem{lemma}[equation]{Lemma}
\newtheorem{corollary}[equation]{Corollary}
\newtheorem{conjec}{Conjecture}
\newtheorem{corol}[conjec]{Corollary}
\newtheorem{theor}[conjec]{Theorem}
\theoremstyle{definition}
\newtheorem{example}[equation]{Example}
\newtheorem{definition}[equation]{Definition}
\newtheorem{exam}[conjec]{Example}
\theoremstyle{remark}
\newtheorem{remark}[equation]{Remark}
\newsavebox\FBox
\def\Boxed#1{\sbox\FBox{%\fboxsep=1.5pt% may also be useful
   $\boxed{#1}$}\rule[-1.2\dp\FBox]{0pt}{1.0\ht\FBox}\usebox\FBox}
\newcommand{\mult}{\operatorname{mult}}
\newcommand{\boundary}{{\small \textcircled{b}}}
\newcommand{\nef}{{\small \textcircled{n}}}
\newcommand{\surface}{{\small \textcircled{s}}}
\newcommand{\family}{{\small \textcircled{f}}}
\newcommand{\positive}{{\small \textcircled{p}}}
\newcommand{\quadratic}{{\small \Boxed{\tau}}}
\newcommand{\quadraticone}{{\small \Boxed{\tau \text{\tiny 1}}}}
\newcommand{\elliptic}{{\small \Boxed{\epsilon}}}
\newcommand{\ellipticone}{{\small \Boxed{\epsilon\text{\tiny 1}}}}
\newcommand{\elliptictwo}{{\small \Boxed{\epsilon\text{\tiny 2}}}}
\newcommand{\ellipticfour}{{\small \Boxed{\iota}}}
\newcommand{\ellipticfive}{{\small \Boxed{\iota\text{\tiny 1}}}}
\newcommand{\vorder}{Vanishing order}
\makeatletter\@addtoreset{equation}{subsection} \makeatother
\makeatletter\@addtoreset{conjec}{section} \makeatother
\begin{document}

\thispagestyle{empty}
 \Large
\begin{center}
\textbf{Birationally rigid Fano threefold hypersurfaces}
\end{center}
\vspace{5mm}

\normalsize
\begin{center}
\textbf{Ivan Cheltsov and Jihun Park}
\end{center}

\vspace{5mm}

 \small {\noindent\textbf{Abstract.} We prove that every quasi-smooth weighted Fano threefold hypersurface  in the 95 families of Fletcher and Reid is birationally rigid.\vspace{3mm}

\noindent \textbf{Keywords}:  Fano hypersurface; weighted
projective space; birationally rigid; birational
involution.\vspace{3mm}

\noindent\textbf{Mathematics Subject Classification (2010)}:
14E07, 14E08, 14J30, 14J45., 14J70. }

\vspace{2mm}

\normalsize

\tableofcontents

\newpage

\section{Introduction}
\subsection{Birational rigidity and Main Theorem}

Let $V$ be a smooth projective variety. If its canonical class
$K_{V}$ is pseudo-effective, then Minimal Model Program produces a
birational model  $W$ of the variety $V$, so-called \emph{minimal
model}, which has mild singularities (terminal and
$\mathbb{Q}$-factorial) and the canonical class $K_W$ of which is nef. This
has been verified in dimension $3$ and in any dimension for
varieties of general type (see \cite[Theorem~1.1]{BCHM}).
Meanwhile, if the canonical class $K_{V}$ is not pseudo-effective,
then Minimal Model Program yields a birational model $U$ of $V$,
so-called \emph{Mori fibred space}. It also has  terminal and
$\mathbb{Q}$-factorial singularities and it admits a fiber
structure $\pi\colon U\to Z$ of relative Picard rank $1$ such that
the divisor $-K_{U}$ is ample on fibers. This has been proved in
all dimensions (see \cite[Corollary~1.3.3]{BCHM}).

Mori fibred spaces, alongside the minimal models, represent the
terminal objects in Minimal Model Program. If the canonical class
is pseudo-effective and its minimal models  exist, then they are
unique up to flops. However, this is not the case when the
canonical class is not pseudo-effective, since Mori fibred spaces
are usually not unique terminal objects in Minimal Model Program.
Nevertheless, some Mori fibred spaces behave very much the same as
minimal models. To  distinguish them, Corti introduced

\begin{definition}[{\cite[Definition~1.3]{Co00}}]
\label{definition:rigidity} Let $\pi\colon U\to Z$ be a Mori
fibred space. It is called \emph{birationally rigid} if for a
birational map $\xi\colon U\dasharrow U^{\prime}$ to  a Mori
fibred space $\pi^\prime\colon U^\prime\to Z^\prime$ there exist
a birational automorphism $\tau : U\dasharrow U$ and a birational
map $\sigma : Z\dasharrow Z'$ such that the birational map
$\xi\circ\tau$ induces an isomorphism between the~generic fibers
of the~Mori fibrations $\pi\colon U\to Z$ and $\pi^{\prime}\colon
U^\prime\to Z^\prime$ and the diagram
$$
\xymatrix{
&U\ar@{->}[d]_{\pi}\ar@{-->}[rr]^{\tau}&&U\ar@{-->}[rr]^{\xi}&&U^{\prime}\ar@{->}[d]^{\pi^{\prime}}&\\
&Z\ar@{-->}[rrrr]^{\sigma}&&&&Z^{\prime}&}
$$
commutes.
\end{definition}

Fano varieties of Picard rank one with at most terminal
$\mathbb{Q}$-factorial singularities are the basic examples of
Mori fibred spaces. For them, Definition~\ref{definition:rigidity}
can be simplified as follows:

\begin{definition}
\label{definition:Fano-rigidity} Let $V$ be a Fano variety of
Picard rank $1$ with at most terminal $\mathbb{Q}$-factorial
singularities. Then the Fano variety $V$ is called
\emph{birationally rigid} if the following property holds.
\begin{itemize}
\item If there is a birational map $\xi\colon V\dasharrow U$ to a
Mori fibred space $ U\to Z$,  then the Fano variety $V$ is
biregular to $U$ (and hence $Z$ must be a point).
\end{itemize} If, in
addition, the birational automorphism group of $V$ coincides with
its biregular automorphism group, then $V$ is called
\emph{birationally super-rigid}.
\end{definition}

Birationally rigid Fano varieties behave very much like
\emph{canonical models}. Their birational geometry is very simple.
In particular, they are non-rational. The first example of a
birationally rigid Fano variety is due to Iskovskikh and Manin. In
1971, they proved

\begin{theorem}[{\cite{IsM71}}]\label{theorem:IsM71}
\label{theorem:IM} A smooth quartic hypersurface in $\mathbb{P}^4$
is birationally super-rigid.
\end{theorem}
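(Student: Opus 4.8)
The plan is to prove that a smooth quartic threefold $V \subset \mathbb{P}^4$ is birationally super-rigid, which amounts to showing two things: every birational map from $V$ to a Mori fibred space is in fact a biregular isomorphism (so the only Mori fibred space birational to $V$ is $V$ itself), and every birational self-map of $V$ is biregular. Since $V$ is a smooth Fano threefold of Picard rank $1$ with $\operatorname{Pic}(V) = \mathbb{Z}\cdot(-K_V)$ and $-K_V = \mathcal{O}_V(1)$, the super-rigidity statement will follow once I control the behaviour of mobile linear systems under such birational maps.

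The main tool is the Noether--Fano--Iskovskikh method, carried out through the notion of maximal singularities. Suppose $\xi\colon V\dashrightarrow U$ is a birational map to a Mori fibred space $U\to Z$; pull back a very ample mobile linear system from $U$ to obtain a mobile linear system $\mathcal{M}$ on $V$ with $\mathcal{M}\subset\lvert -nK_V\rvert$ for some positive integer $n$. If $\xi$ is not an isomorphism, the Noether--Fano inequality produces a geometric valuation whose center on $V$ violates the canonical-threshold bound, i.e. the pair $(V,\tfrac{1}{n}\mathcal{M})$ is not canonical. Concretely, there exists an irreducible subvariety (the center of a maximal singularity) at which $\mathcal{M}$ is too singular relative to $n$. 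The first main step is therefore to assume $(V,\tfrac1n\mathcal{M})$ is not canonical and derive a contradiction, which shows $n$ cannot be lowered and $\xi$ untwists to an isomorphism.

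The heart of the argument is a case analysis on the center $Z$ of the maximal singularity, using the multiplicity-counting technology. First I would rule out the case where the center is a curve $C$: the inequality $\operatorname{mult}_C(\mathcal{M})>n$ together with the intersection of two general members $M_1,M_2\in\mathcal{M}$ on a general surface section (or with a suitable hyperplane section through $C$) yields
\begin{equation*}
4n^2 = M_1\cdot M_2\cdot(-K_V) \geq \bigl(\operatorname{mult}_C \mathcal{M}\bigr)^2\cdot\bigl(-K_V\cdot C\bigr) > n^2\cdot\bigl(-K_V\cdot C\bigr),
\end{equation*}
which, after estimating $-K_V\cdot C = \deg C$, forces $\deg C$ to be too small, and a finer analysis of the curves of low degree on a quartic eliminates these. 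The harder case is when the center is a point $p\in V$; here I would blow up $p$ and use the fact that the $4$-cycle $M_1\cdot M_2$ has multiplicity greater than $4n^2$ at $p$, contradicting $M_1\cdot M_2\cdot(-K_V)=4n^2$ via the inequality $\operatorname{mult}_p(M_1\cdot M_2)\le (-K_V)^3 n^2 = 4n^2$ once the local intersection is bounded using the smoothness of $V$ and the general position of the tangent hyperplane section.

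The step I expect to be the main obstacle is the point case, and specifically bounding the multiplicity $\operatorname{mult}_p(M_1\cdot M_2)$: a naive bound does not immediately contradict $4n^2$, so I would need the sharper local inequality coming from the $4$-inequality of Corti (or the technique of Pukhlikov) applied to the restriction of $\mathcal{M}$ to a general hyperplane section through $p$, which is a smooth quartic surface, reducing the three-dimensional maximal-singularity problem to a two-dimensional one where the log canonical threshold of a curve on a surface can be estimated directly. Once the contradiction is reached in both cases, $(V,\tfrac1n\mathcal{M})$ is canonical, $n$ is minimal, and the Noether--Fano criterion gives that $\xi$ is biregular; applying the same argument to an arbitrary birational self-map shows the birational automorphism group equals the biregular one, completing the proof of super-rigidity.
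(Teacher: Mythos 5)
You should know at the outset that the paper does not prove this theorem: it is quoted from Iskovskikh--Manin \cite{IsM71}, together with Corti's observation in \cite{Co95} that their arguments yield super-rigidity, and family No.~1 is explicitly excluded from the paper's case-by-case analysis. So your proposal must be measured against the standard proof in the cited literature, which is also the framework the paper itself sets up for the other families (Theorem~\ref{theorem:Nother-Fano} together with Sections~\ref{section:smooth point} and~\ref{section:curves}). Your overall architecture is that standard one, and your point case is essentially correct and complete modulo the cited input: if a smooth point $p$ is a non-canonical center, Corti's $4n^2$-inequality \cite[Corollary~3.4]{Co00} gives $\operatorname{mult}_p(\mathcal{M}^2)>4n^2$, and a \emph{general} hyperplane section $H$ through $p$ (general so that it contains no one-dimensional component of the base locus of $\mathcal{M}$ through $p$ --- not the tangent hyperplane) gives $4n^2=H\cdot\mathcal{M}^2\geq\operatorname{mult}_p(\mathcal{M}^2)$, a contradiction; this is verbatim the argument of Section~\ref{section:smooth point}. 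Note, however, that you have the difficulty backwards: granted Corti's inequality, the point case is the easy half.

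The genuine gap is the curve case. The bound $\deg C\leq 3$ is right, but ``a finer analysis of the curves of low degree on a quartic eliminates these'' is precisely the hard content of Iskovskikh--Manin's theorem, and nothing in your proposal indicates how to do it. Concretely, you must exclude lines, conics, plane cubics and twisted cubics $C$ with $\operatorname{mult}_C\mathcal{M}>n$. One standard route: take a hyperplane section $S\supset C$ (every hyperplane section of a smooth quartic threefold is normal with only finitely many singular points, since the Gauss map of a smooth hypersurface is finite), write $\mathcal{M}\vert_S=mC+\mathcal{L}$ with $m\geq\operatorname{mult}_C\mathcal{M}>n$ and $\mathcal{L}$ mobile, and contradict $(\mathcal{M}\vert_S)^2=4n^2$; for instance, for a line on a smooth such $S$ one has $C^2=-2$, $n=\mathcal{M}\vert_S\cdot C=-2m+\mathcal{L}\cdot C$, hence $4n^2=2m^2+2mn+\mathcal{L}^2>4n^2$. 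The alternative is the test-class method of \cite{CPR} used in Section~\ref{section:curves}: blow up $C$ and exhibit a nonzero nef class $T$ on $Y$ with $T\cdot K_Y^2\leq 0$, contradicting Corollary~\ref{corollary:test-class}. Either way there is real work left that your sketch does not touch: the twisted cubic spans a unique hyperplane, so $S$ cannot be chosen generically there, and $C$ may pass through singular points of $S$, so the intersection numbers above need justification --- this is exactly where the original proof (and the paper's treatment of the analogous cases for families No.~2, 4, 5) spends its effort. Until that case is carried out, the proof is incomplete.
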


In fact, Iskovskikh and Manin only proved that smooth quartic
hypersurfaces in $\mathbb{P}^4$ do not admit any non-biregular
birational automorphisms and, therefore, they are non-rational. In late
nineties, Corti observed in \cite{Co95} that their proof implies
Theorem~\ref{theorem:IM}. Inspired by this observation, Pukhlikov
generalized Theorem~\ref{theorem:IM} as

\begin{theorem}[{\cite{Pu98}}]
\label{theorem:Pukhlikov} A general hypersurface of degree $n\geq
4$  in $\mathbb{P}^n$ is birationally super-rigid.
\end{theorem}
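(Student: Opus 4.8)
The plan is to run the Noether--Fano--Iskovskikh method of maximal singularities, the standard engine behind all results of this kind. Write $X=X_{n}\subset\mathbb{P}^{n}$, so that $\dim X=n-1$. For general $X$ I would first record the elementary structural facts: $X$ is smooth, $\operatorname{Pic}(X)=\mathbb{Z}\cdot H$ is generated by the hyperplane class, adjunction gives $-K_{X}=H$, and $(-K_{X})^{n-1}=\deg X=n$. Thus $X$ is a Fano variety of Picard rank one with (here even smooth) terminal $\mathbb{Q}$-factorial singularities, so Definition~\ref{definition:Fano-rigidity} applies. Since any non-biregular birational \emph{automorphism} is itself a birational map to the Mori fibred space $X\to\mathrm{pt}$, proving that $X$ admits no non-biregular birational map to any Mori fibred space will give both rigidity and the equality $\operatorname{Bir}(X)=\operatorname{Aut}(X)$, hence super-rigidity.

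Next I would reduce the whole statement to a single canonicity assertion. Suppose $\chi\colon X\dasharrow U$ is a birational map to a Mori fibred space $U\to Z$ that is not an isomorphism. Let $\mathcal{M}'$ be a very ample movable linear system on $U$ and let $\mathcal{M}$ be its proper transform on $X$, so $\mathcal{M}\sim_{\mathbb{Q}}-n_{\mathcal{M}}K_{X}=n_{\mathcal{M}}H$ with $n_{\mathcal{M}}>0$. The Noether--Fano inequality asserts that if $\chi$ is not an isomorphism, then the pair $\bigl(X,\tfrac{1}{n_{\mathcal{M}}}\mathcal{M}\bigr)$ is \emph{not} canonical. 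It therefore suffices to prove the converse: for every movable linear system $\mathcal{M}\sim_{\mathbb{Q}}n_{\mathcal{M}}H$ on $X$, the pair $\bigl(X,\tfrac{1}{n_{\mathcal{M}}}\mathcal{M}\bigr)$ has canonical singularities. This converts a global birational problem into a local study of singularities of a linear system.

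Assume for contradiction that the pair is not canonical. Since $X$ is smooth and $\mathcal{M}$ is movable, the centre $Y\subset X$ of a maximal singularity has codimension at least two, and there is an exceptional divisor $E$ over $X$ with centre $Y$ violating the discrepancy inequality. Taking two general members $D_{1},D_{2}\in\mathcal{M}$ and forming the effective codimension-two self-intersection cycle $D_{1}\cdot D_{2}$, the maximal-singularity condition forces a lower bound of the shape $\mult_{Y}(D_{1}\cdot D_{2})>4n_{\mathcal{M}}^{2}$, the higher-dimensional avatar of the classical $4n^{2}$-inequality. I would then split according to $\dim Y$. When $Y$ is positive-dimensional, a direct degree count finishes the job: intersecting $D_{1}\cdot D_{2}$ with a general linear section of complementary dimension bounds $\deg Y\cdot\mult_{Y}(D_{1}\cdot D_{2})$ above in terms of $(-K_{X})^{n-1}n_{\mathcal{M}}^{2}=n\,n_{\mathcal{M}}^{2}$, which is incompatible with the lower bound.

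The main obstacle, and the technical heart of the argument, is the exclusion of a \emph{point} $Y=\{p\}$ as the centre of a maximal singularity. Here I would invoke the method of hypertangent divisors: fixing affine coordinates at $p$ and expanding the defining equation of $X$ into its homogeneous parts, the general position of $X$ in its family guarantees a regularity condition under which the associated hypertangent linear systems form a regular sequence. One can then successively cut the self-intersection cycle $D_{1}\cdot D_{2}$ down to a curve while controlling its multiplicity at $p$, and chasing these intersection numbers through the blow-up of $p$ yields an upper bound on $\mult_{p}(D_{1}\cdot D_{2})$ that contradicts $\mult_{p}(D_{1}\cdot D_{2})>4n_{\mathcal{M}}^{2}$. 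This is precisely where the hypothesis $n\geq 4$ and the genericity of $X$ are essential, and where the bulk of the work --- constructing the hypertangent sequence and performing the inductive bookkeeping of multiplicities --- resides.
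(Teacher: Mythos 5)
Your proposal cannot be checked against a proof in this paper, because the paper contains none: Theorem~\ref{theorem:Pukhlikov} is quoted from \cite{Pu98} purely as historical background. Measured against Pukhlikov's actual argument, your outline is the right one --- the Noether--Fano reduction to canonicity of movable systems $\mathcal{M}\sim_{\mathbb{Q}}n_{\mathcal{M}}H$, and the exclusion of point centres via hypertangent divisors under a regularity condition satisfied by a general hypersurface is exactly the engine of \cite{Pu98}.

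There is, however, a genuine gap in your treatment of positive-dimensional centres. The inequality you rely on, $\deg Y\cdot\mult_{Y}(D_{1}\cdot D_{2})\leq\deg(D_{1}\cdot D_{2})=n\,n_{\mathcal{M}}^{2}$, is simply false: multiplicity along a positive-dimensional subvariety is not weighted by its degree (already a hyperplane in $\mathbb{P}^{4}$ contains a quadric surface with multiplicity one, and $1<2\cdot 1$). The only universally valid bound is $\mult_{Y}(D_{1}\cdot D_{2})\leq\mult_{p}(D_{1}\cdot D_{2})\leq\deg(D_{1}\cdot D_{2})$ for a general point $p\in Y$, and comparing this with the lower bound --- which is $4n_{\mathcal{M}}^{2}$ only for centres of codimension at least $3$, and merely $n_{\mathcal{M}}^{2}$ for codimension-$2$ centres --- yields a contradiction only when $n\leq 4$; for $n\geq 5$ nothing is excluded. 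This is precisely why Pukhlikov's proof is organized along a different dichotomy. Codimension-$2$ centres are excluded by the degree count \emph{plus} an integrality input: by the Lefschetz hyperplane theorem (applicable once $\dim X\geq 5$, with separate arguments in low dimension) every codimension-$2$ subvariety of $X$ has class an integer multiple of $h^{2}$, hence degree divisible by $n$, contradicting the bound $\deg Y<n$ that the degree count actually gives. Positive-dimensional centres of codimension at least $3$, on the other hand, are not excluded by degree considerations at all: they are folded into the same hypertangent-divisor argument as points, applied at a general point of the centre after cutting by general hyperplane sections. So the hypertangent machinery you reserve for the point case must in fact carry every centre of codimension at least $3$, and the codimension-$2$ case needs the Lefschetz argument; as written, your split ``positive-dimensional $=$ degree count, point $=$ hypertangent'' fails for every $n\geq 5$.
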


Shortly after Theorem~\ref{theorem:Pukhlikov} was proved, Reid
suggested to Corti and Pukhlikov that they should generalize
Theorem~\ref{theorem:IM} for singular threefolds. Together they
proved

\begin{theorem}[{\cite{CPR}}]
\label{theorem:CPR} Let $X$ be a quasi-smooth hypersurface of
degree $d$ with only terminal singularities in weighted
projective space $\mathbb{P}(1, a_1, a_2, a_3, a_4)$, where
$d=\sum  a_i$. Suppose that $X$ is a general hypersurface in this
family. Then $X$ is birationally rigid.
\end{theorem}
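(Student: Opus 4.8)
The plan is to run the Noether--Fano--Iskovskikh method and reduce birational rigidity to a classification of \emph{maximal centres}. Suppose, for contradiction, that $X$ is not birationally rigid, so there is a birational map $\xi\colon X\dashrightarrow U$ onto a Mori fibred space $U\to Z$ which is not an isomorphism onto $X$. Pulling back a suitably ample mobile linear system from $U$ gives a mobile linear system $\mathcal{M}$ on $X$ with $\mathcal{M}\sim_{\mathbb{Q}}-nK_X$ for some positive rational $n$, and the Noether--Fano criterion (in its Sarkisov-program form) asserts that either the pair $(X,\tfrac1n\mathcal{M})$ is canonical with $K_X+\tfrac1n\mathcal{M}\sim_{\mathbb{Q}}0$, in which case $\xi$ is already an isomorphism of Mori fibred spaces, or else $(X,\tfrac1n\mathcal{M})$ fails to be canonical and there is a geometric valuation whose centre on $X$ is a maximal centre. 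Since $\operatorname{Pic}(X)$ has rank one with primitive ample generator $-K_X=\mathcal{O}_X(1)$, rigidity will follow once every maximal centre is shown either to be impossible or to be removable by a self-link of $X$.

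First I would dispose of the non-singular maximal centres using the very small anticanonical degree $-K_X^3=d/(a_1a_2a_3a_4)$. No smooth point of $X$ can be a maximal centre: this is exactly the setting of Corti's $4n^2$-inequality, which forces $\operatorname{mult}_P(\mathcal{M}^2)>4n^2$ at such a point and contradicts the global estimate $\mathcal{M}^2\cdot(-K_X)=n^2(-K_X)^3$ once one tests against a suitable curve through $P$ (the borderline case $-K_X^3=4$ being precisely the quartic handled by Theorem~\ref{theorem:IM}). A similar but easier intersection-theoretic estimate rules out irreducible curves $C\subset X$ as maximal centres, since $C$ would force $\operatorname{mult}_C\mathcal{M}>n$, and then $\mathcal{M}^2-n^2[C]$ could not remain effective after intersecting with $-K_X$. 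Hence every maximal centre must be one of the finitely many terminal cyclic quotient singular points $P=\tfrac1r(1,a,r-a)$ of $X$.

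The heart of the argument is the local analysis at each singular point, carried out family by family across all $95$ families. For such a point I would perform the Kawamata weighted blow-up $f\colon Y\to X$, the unique divisorial extraction centred at $P$ with positive discrepancy $a(E)=1/r$, and exceptional divisor $E$. The Noether--Fano inequality then sharpens to a bound $\operatorname{mult}_E(f^{*}\mathcal{M})>n\,a(E)$ on the proper transform of $\mathcal{M}$ along $E$. Examining the two-dimensional Mori cone of $Y$, I would run the minimal model program to produce a second extremal contraction: in the favourable families this completes to a Sarkisov self-link, a birational involution $\tau\colon X\dashrightarrow X$ of quadratic or elliptic type according to the structure of the link, whose composite $\xi\circ\tau$ strictly lowers the degree $n$; in the remaining families I would instead test the multiplicity bound against a contracted curve or surface on $Y$ to reach a contradiction, excluding the point as a maximal centre outright.

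Finally, the descent: each untwisting involution strictly decreases the positive rational $n$, which is bounded below, so after finitely many steps the composite of $\xi$ with these involutions yields a map under which $(X,\tfrac1n\mathcal{M})$ is canonical, whereupon the Noether--Fano criterion forces it to be an isomorphism of Mori fibred spaces; thus $U\cong X$ and $Z$ is a point. The main obstacle throughout is this singular-point case: constructing the self-links (verifying that the weighted blow-up is followed by the correct flops and a divisorial or fibre contraction producing a Fano model biregular to $X$) and, for the families admitting no such link, locating the test class that excludes the centre. Controlling the multiplicity and discrepancy bookkeeping of the weighted blow-up at these non-Gorenstein points, and guaranteeing that $n$ genuinely drops under each involution, is the delicate and most laborious part of the proof.
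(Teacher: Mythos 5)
Your proposal is correct and follows essentially the same route as the paper's machinery (which is that of \cite{CPR}, the source of this theorem): the Noether--Fano reduction to centres of non-canonical singularities, exclusion of smooth points via Corti's $4n^2$-inequality, exclusion of curves, and the family-by-family exclusion or untwisting of the terminal quotient singular points via Kawamata weighted blow-ups, Mori-cone/test-class arguments, and quadratic or elliptic involutions, with descent on $n$. Two details to tighten: $n$ is a positive \emph{integer} (since $\mathrm{Pic}(X)\cong\mathbb{Z}$ is generated by $-K_X$), which is what makes the descent terminate---a strictly decreasing positive rational bounded below would not---and in the three families with $-K_X^3>1$ (Nos.~2, 4, 5) the degree estimate alone does not kill curves of low degree, so the curve exclusion also requires the test-class argument on the blow-up along the curve, exactly as in \cite[pp.~206--207]{CPR} and Section~\ref{section:curves}.
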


The singular threefolds in Theorem~\ref{theorem:CPR} have a long
history.
 In 1979 Reid discovered the 95
families of $K3$ surfaces in three dimensional weighted projective
spaces (see \cite{Rei79}). After this, Fletcher, who was a Ph.D.
student of Ried, announced the 95 families of weighted Fano
threefold hypersurfaces in his Ph.D. dissertation in 1988. These
are quasi-smooth hypersurfaces of degrees $d$ with only terminal
singularities in weighted projective spaces $\mathbb{P}(1, a_1,
a_2, a_3, a_4)$, where $d=\sum a_i$. The 95 families are
determined by the quadruples of non-decreasing positive integers
$(a_1, a_2, a_3, a_4)$. All Reid's 95 families of $K3$ surfaces
arise as anticanonical divisors of the Fano threefolds  in Fletcher's 95 families . Because of this, the latter  95 families are often
called the 95 families of Fletcher and Reid.

It is quite often that we need to know the non-rationality of an explicitly
given Fano variety (which does not follow from the non-rationality
of a general member in its family).

\begin{example}
\label{example:Prokhorov} Recently Prokhorov classified all finite
simple subgroups in the birational automorphism group
$\mathrm{Bir}(\mathbb{P}^3)$ of the three-dimensional projective
space. Up to isomorphism, $\mathrm{A}_5$,
$\mathrm{PSL}_{2}(\mathbb{F}_7)$, $\mathrm{A}_6$, $\mathrm{A}_7$,
$\mathrm{PSL}_{2}(\mathbb{F}_8)$ and
$\mathrm{PSU}_{4}(\mathbb{F}_2)$ are all non-abelian finite simple
subgroups in $\mathrm{Bir}(\mathbb{P}^3)$
(\cite[Theorem~1.3]{Prokhorov}). Prokhorov's proof implies more.
Up to conjugation, the group $\mathrm{Bir}(\mathbb{P}^3)$ contains a unique
subgroup isomorphic to $\mathrm{PSL}_{2}(\mathbb{F}_8)$ and exactly two
subgroups isomorphic to $\mathrm{PSU}_{4}(\mathbb{F}_2)$. For the alternating
group $\mathrm{A}_7$, he proved that $\mathrm{Bir}(\mathbb{P}^3)$
contains exactly one such subgroup provided that the threefold
\begin{equation}
\label{equation:A7-threefold}
\sum_{i=0}^{6}x_i=\sum_{i=0}^{6}x_i^2=\sum_{i=0}^{6}x_i^3=0\subset\mathrm{Proj}\Big(\mathbb{C}[x_0,\ldots,x_6]\Big)\cong\mathbb{P}^6
\end{equation}
is not rational. This threefold is the unique complete
intersection of a quadric and a cubic hypersurfaces in
$\mathbb{P}^5$ that admits a faithful action of $\mathrm{A}_7$.
Back in nineties Iskovskikh and Pukhlikov proved that a general
threefold in this family is birationally rigid (see
\cite{IskovskikhPukhlikov}). The
threefold~\eqref{equation:A7-threefold} is smooth. However, it
does not satisfy the generality assumptions imposed in
\cite{IskovskikhPukhlikov}. It is in 2012 that Beauville proved that the threefold~\eqref{equation:A7-threefold}
is not rational (see
\cite{Beauville}). It is still unknown whether it is birationally
rigid or not.
\end{example}

It took more than ten years to prove
Theorem~\ref{theorem:Pukhlikov} for \emph{every}  smooth
hypersurface in $\mathbb{P}^n$ of degree $n\geq 4$, which was
conjectured in \cite{Pu98}. This was done by de Fernex who proved

\begin{theorem}[{\cite{dF}}]
\label{theorem:deFernex} Every smooth hypersurface of degree
$n\geq 4$ in $\mathbb{P}^n$  is birationally super-rigid.
\end{theorem}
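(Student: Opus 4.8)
The strategy is the Noether--Fano--Iskovskikh method of maximal singularities, combined with intersection-theoretic multiplicity estimates. Write $X=X_{n}\subset\mathbb{P}^{n}$ for the smooth hypersurface of degree $n\geq 4$ and set $H=\mathcal{O}_{X}(1)$. Since $d=n$, adjunction gives $-K_{X}=H$, so $X$ is a Fano variety of index $1$, Picard rank $1$, and dimension $n-1$, hence a Mori fibred space over a point, with $H^{n-1}=\deg X=n$. By the standard Noether--Fano--Iskovskikh criterion it is enough to prove that for every movable linear system $\mathcal{M}\subset|\mu H|$ (with $\mu=\mu(\mathcal{M})$ a positive integer) the pair $\bigl(X,\tfrac{1}{\mu}\mathcal{M}\bigr)$ has canonical singularities: if this holds then $\mathcal{M}$ admits no maximal singularity, so there can be neither a Sarkisov link to another Mori fibred space nor a non-biregular self-map, which is precisely super-rigidity. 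I therefore argue by contradiction, assuming that $\bigl(X,\tfrac{1}{\mu}\mathcal{M}\bigr)$ is not canonical, and I fix an irreducible subvariety $Z\subsetneq X$ that is the center on $X$ of a non-canonical valuation.

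The first step is to dispose of the case $\dim Z>0$. Here blowing up $Z$ shows that $\mult_{Z}\mathcal{M}>\mu$, and the self-intersection cycle $M_{1}\cdot M_{2}$ of two general members satisfies $M_{1}\cdot M_{2}\geq(\mult_{Z}\mathcal{M})^{2}\,Z$ away from lower-dimensional loci. Cutting down by general hyperplane sections to reduce to a $1$-cycle and comparing with $\deg(M_{1}\cdot M_{2})=\mu^{2}\deg X=\mu^{2}n$ forces $Z$ to have very small degree, and a direct analysis of such low-degree subvarieties on a smooth hypersurface then rules this case out. The substantial case is $Z=\{p\}$, a single point.

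For the point case I would restrict to a general hyperplane section through $p$. By inversion of adjunction the non-canonical singularity persists on the section, so after cutting down by $n-4$ general hyperplane sections one reduces to a threefold section on which $p$ is still a smooth point and the pair is still not canonical. There the local Corti--Pukhlikov inequality applies: the self-intersection satisfies $\mult_{p}(M_{1}\cdot M_{2})>4\mu^{2}$. One then bounds the multiplicity of $M_{1}\cdot M_{2}$ at $p$ from \emph{above} by producing, through $p$, a flag of subvarieties of controlled degree via Pukhlikov's method of hypertangent divisors---the divisors cut on $X$ by the vanishing loci of the successive Taylor truncations of the defining equation at $p$---and intersecting $M_{1}\cdot M_{2}$ against them. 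Since $X$ is smooth, the hypertangent system is nonempty in every tangency order, and the resulting degree estimate contradicts $\mult_{p}(M_{1}\cdot M_{2})>4\mu^{2}$, completing the argument.

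The main obstacle, and the reason the statement is much harder than Pukhlikov's Theorem~\ref{theorem:Pukhlikov} for a \emph{general} hypersurface, is precisely the point case at \emph{special} points $p$: points lying on a line contained in $X$, or where $X$ has unusually high tangency with a hyperplane, cause the hypertangent divisors to degenerate, so that the naive degree bound no longer beats the local inequality $\mult_{p}(M_{1}\cdot M_{2})>4\mu^{2}$. De Fernex's resolution, which I would follow, is to replace the genericity input by sharper, uniform multiplicity inequalities that exploit the regularity of the local ring $\mathcal{O}_{X,p}$ together with multiplier-ideal and connectedness techniques, so that the degree estimate survives at every point of $X$ with no generality hypothesis. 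Carrying out this uniform control at the geometrically special points is where essentially all the difficulty of the theorem resides.
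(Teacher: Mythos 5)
The paper you were given contains no proof of this statement: Theorem~\ref{theorem:deFernex} is quoted from de Fernex's article \cite{dF} purely as background, so your proposal has to stand on its own as a proof of de Fernex's theorem rather than be measured against an argument in the present text. Judged on its own, what you wrote is an accurate outline of the \emph{classical} argument: the Noether--Fano reduction (Theorem~\ref{theorem:Nother-Fano}), exclusion of positive-dimensional centers by the multiplicity-versus-degree count, reduction of the point case to Corti's inequality $\mult_p(M_1\cdot M_2)>4\mu^2$, and an upper bound for this multiplicity via Pukhlikov's hypertangent divisors. But that is precisely the proof of Theorem~\ref{theorem:Pukhlikov}, i.e.\ of super-rigidity for a \emph{general} hypersurface \cite{Pu98}.

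The gap is exactly where you locate the difficulty and then stop. The entire content of de Fernex's theorem is the removal of the generality hypothesis, and your proposal disposes of that in a single sentence, by naming a toolbox (``sharper, uniform multiplicity inequalities\ldots multiplier-ideal and connectedness techniques'') rather than formulating or proving anything. At the special points you yourself identify --- those where the Taylor truncations of the defining equation fail to form a regular sequence --- the hypertangent degree estimate is simply false, and since no substitute inequality is stated, the contradiction with $\mult_p(M_1\cdot M_2)>4\mu^2$ is never obtained; this is not a routine omission but the theorem itself. Moreover, your description of the cited proof is inaccurate: de Fernex does not repair the hypertangent construction point by point. His argument in \cite{dF} builds on the multiplier-ideal method of de Fernex--Ein--Musta\c{t}\u{a}: the self-intersection cycle $Z=M_1\cdot M_2$, which has degree $\mu^{2}n$ and multiplicity greater than $4\mu^{2}$ at $p$, is shown to force a non-log-canonical pair, and then Nadel vanishing combined with lower bounds for log canonical thresholds of subschemes in terms of the degrees of their defining equations yields the contradiction; the point of \cite{dF} is to extend such bounds beyond the low-degree range covered by the earlier work. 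So the phrase ``which I would follow'' does not attach to any argument actually available from your setup, and the proof is incomplete at its crucial step.
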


The goal of this paper is to prove Theorem~\ref{theorem:CPR} for
\emph{all} quasi-smooth hypersurfaces in each of the 95 families
of Fletcher and Reid, which was conjectured in \cite{CPR}. To be
precise, we prove

\begin{theorem*}
Let $X$ be a quasi-smooth hypersurface of degree $d$ with only
terminal singularities in the weighted projective space $\mathbb{P}(1,
a_1, a_2, a_3, a_4)$, where $d=\sum  a_i$. Then $X$ is
birationally rigid.
\end{theorem*}

Since birational rigidity implies non-rationality, we immediately  obtain
\begin{corollary}
\label{corollary:main} Let $X$ be a quasi-smooth hypersurface of
degree $d$ with only terminal singularities in the weighted
projective space $\mathbb{P}(1, a_1, a_2, a_3, a_4)$, where
$d=\sum  a_i$. Then $X$ is not rational.
\end{corollary}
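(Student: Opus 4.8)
The plan is to deduce non-rationality as a purely formal consequence of the Main Theorem, which asserts that $X$ is birationally rigid. All the geometric content is carried by that theorem; the corollary requires only the standard observation that rigidity, in the sense of Definition~\ref{definition:Fano-rigidity}, forbids a birational map from $X$ to any Mori fibred space over a base of positive dimension, whereas a rational threefold admits exactly such a map.

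First I would verify that $X$ falls under Definition~\ref{definition:Fano-rigidity}. Since $X$ is quasi-smooth with only terminal singularities, it is $\mathbb{Q}$-factorial of Picard rank $1$; and because $d=\sum a_i$, adjunction on $\mathbb{P}(1,a_1,a_2,a_3,a_4)$ gives $-K_X=\mathcal{O}_X(1)$, which is ample. Thus $X$ is a Fano variety of Picard rank $1$ with terminal $\mathbb{Q}$-factorial singularities, so it is itself a Mori fibred space over a point, and the simplified notion of birational rigidity in Definition~\ref{definition:Fano-rigidity} applies to it. By the Main Theorem, $X$ is birationally rigid in this sense.

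Next, suppose for contradiction that $X$ is rational, and fix a birational map $X\dashrightarrow\mathbb{P}^{3}$. As $\mathbb{P}^{3}$ and $\mathbb{P}^{2}\times\mathbb{P}^{1}$ are both rational threefolds, they are birational, so composing produces a birational map $\xi\colon X\dashrightarrow\mathbb{P}^{2}\times\mathbb{P}^{1}$. The first projection $\mathrm{pr}\colon\mathbb{P}^{2}\times\mathbb{P}^{1}\to\mathbb{P}^{2}$ is a Mori fibred space: the total space is smooth, its relative Picard rank over $\mathbb{P}^{2}$ equals $1$, and $-K$ restricts to $\mathcal{O}_{\mathbb{P}^{1}}(2)$ on each fibre, which is ample. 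Its base $Z=\mathbb{P}^{2}$ is not a point. But since $X$ is birationally rigid, Definition~\ref{definition:Fano-rigidity} applied to the birational map $\xi$ into the Mori fibred space $\mathrm{pr}$ forces $Z$ to be a point. This contradiction shows that $X$ is not rational.

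I expect no genuine obstacle here beyond the Main Theorem itself, on which the whole argument rests: the deduction is the routine fact that a rational threefold is birational to a Mori fibred space with positive-dimensional base, a configuration that rigidity excludes. The only point meriting care is the verification that $X$ satisfies the Fano, Picard-rank-one, and $\mathbb{Q}$-factoriality hypotheses needed to invoke Definition~\ref{definition:Fano-rigidity}, and this follows directly from quasi-smoothness, the terminality assumption, and the numerical condition $d=\sum a_i$.
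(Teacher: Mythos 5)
Your proposal is correct and matches the paper, which derives the corollary as an immediate consequence of Main Theorem with the one-line remark that birational rigidity implies non-rationality. Your passage through $\mathbb{P}^{2}\times\mathbb{P}^{1}$ is exactly the standard way to make that implication precise (it neatly sidesteps having to show $X\not\cong\mathbb{P}^{3}$, which a direct comparison with $\mathbb{P}^{3}$ would require, e.g.\ via $(-K_X)^3<64$), so there is nothing to add.
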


In addition, the proof of Main Theorem shows

\begin{theorem}\label{theorem:auxiliary}
Every quasi-smooth hypersurface in the families of the 95 families
of Fletcher and Reid whose general members are birationally
super-rigid is birationally super-rigid.
\end{theorem}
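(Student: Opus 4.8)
The plan is to deduce super-rigidity from the already established birational rigidity together with an inspection of the proof of the Main Theorem. Since the Main Theorem guarantees that every quasi-smooth member $X$ is birationally rigid, it remains only to show that $\mathrm{Bir}(X)=\mathrm{Aut}(X)$, i.e.\ that every birational self-map of $X$ is biregular. Suppose for contradiction that $\rho\colon X\dashrightarrow X$ is a birational self-map that is not an isomorphism. Let $\mathcal{M}$ be the proper transform under $\rho$ of a general very ample linear system on the target copy of $X$, so that $\mathcal{M}\sim_{\mathbb{Q}}-nK_X$ is a mobile linear system for some positive rational number $n$. The Noether--Fano--Iskovskikh method then forces the pair $\bigl(X,\tfrac{1}{n}\mathcal{M}\bigr)$ to be non-canonical, so $\mathcal{M}$ possesses a maximal singularity whose center is a proper subvariety of $X$.

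Next I would recall, from the proof of the Main Theorem, the complete list of candidate centers of a maximal singularity on a quasi-smooth member together with the treatment of each candidate. For every family and every quasi-smooth $X$, that proof shows that a center of a maximal singularity, if it exists, must be one of finitely many explicit subvarieties --- a terminal cyclic quotient singular point of $X$, the distinguished point lying on the coordinate line of weight one, or a low-degree curve --- and it disposes of each candidate in one of two ways. Either the candidate is \emph{excluded}, by proving it cannot in fact be a maximal center, or it is \emph{untwisted} by an explicit birational involution (a quadratic or an elliptic involution built from a Kawamata blow-up of a singular point), which strictly decreases the threshold $n$. Birational rigidity is precisely the assertion that, after finitely many untwistings, one reaches a biregular map; and the general member of a family is birationally super-rigid exactly when no untwisting is ever needed, so that every candidate center is excluded and no involution occurs.

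I would then observe that the involutions employed in the proof of the Main Theorem are attached to specific singular points, and that a family's general member is super-rigid if and only if that family admits no such involution. Consequently, for each family whose general member is super-rigid, the proof of the Main Theorem excludes \emph{every} candidate center of a maximal singularity, and it does so for \emph{every} quasi-smooth member, not merely the general one. Feeding this back into the Noether--Fano step shows that $\mathcal{M}$ can have no maximal singularity, contradicting the non-canonicity of $\bigl(X,\tfrac{1}{n}\mathcal{M}\bigr)$. Hence $\rho$ is biregular, $\mathrm{Bir}(X)=\mathrm{Aut}(X)$, and $X$ is birationally super-rigid.

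The hard part will be verifying that the exclusion of candidate centers is genuinely uniform across all quasi-smooth members, including the special, non-general ones. A special member may acquire more degenerate configurations or carry extra low-degree curves through its singular points, and one must confirm that none of these produces a new maximal center invisible to the general-member analysis. The quasi-smoothness hypothesis, however, confines the singularities to terminal cyclic quotients of the prescribed types, and the exclusion estimates in the proof of the Main Theorem --- the multiplicity bounds at singular points and the intersection-theoretic estimates on the weighted blow-ups --- are already formulated for all quasi-smooth members simultaneously. Thus the very arguments that rule out involutions for the general member rule out all maximal centers for every member, which is exactly what super-rigidity demands.
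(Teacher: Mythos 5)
Your proposal is correct and follows essentially the same route as the paper: the paper proves Theorem~\ref{theorem:auxiliary} by observing that for the fifty underlined families the case-by-case analysis in Section~\ref{section:super-rigid} \emph{excludes} every singular point of every quasi-smooth member (never resorting to an untwisting involution), which, combined with the exclusion of smooth points and curves (Theorems~\ref{theorem:smooth point excluding} and~\ref{theorem:excluding-curve}), shows that $(X,\tfrac{1}{n}\mathcal{M})$ is canonical for every mobile $\mathcal{M}\subset|-nK_X|$, whence super-rigidity follows from the first part of Theorem~\ref{theorem:Nother-Fano}. Your only deviation is cosmetic: you unwind the N\"other--Fano step as a contradiction argument with a birational self-map, whereas the paper simply invokes the first bullet of Theorem~\ref{theorem:Nother-Fano}; the substantive content in both cases is the uniform (all quasi-smooth members, not just general ones) exclusion verified in the tables.
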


The families corresponding to Theorem~\ref{theorem:auxiliary} are
those in the list of Fletcher and Reid with entry numbers No.~1,
3, 10, 11, 14, 19, 21, 22, 28, 29, 34, 35, 37,  39, 49, 50, 51,
52, 53, 55, 57, 59, 62, 63, 64, 66, 67, 70, 71, 72, 73, 75, 77,
78, 80, 81, 82, 83, 84, 85, 86, 87, 88,  89, 90, 91, 92, 93, 94
and  95 (see Section~\ref{section:table}).

The 95 families of Fletcher and Reid  contain the family (No.~1)
of quartic hypersurfaces in $\mathbb{P}^4$ and the family (No.~3)
of hypersurfaces of degree $6$ in $\mathbb{P}(1,1,1,1,3)$, i.e.,
double covers of $\mathbb{P}^3$ ramified along sextic surfaces.
 However, we do not consider these two families in the present
paper since every smooth quartic threefold and every smooth double
covers of $\mathbb{P}^3$ ramified along sextic surfaces (see
\cite{Is79}) are already proved to be birationally super-rigid.

\vspace{10mm} \textbf{Acknowledgements.} This work was motivated by
the authors' conversations  over several dinners at a kitchen of
Max Planck Institut f\"ur Mathematik (Bonn, Germany) and initiated
during the second author's participating in Mini-Workshop: Higher
Dimensional Elliptic Fibrations held at Mathematisches
Forschungsinstitut Oberwolfach (Germany) from October 3rd to
October 9th in 2010. It was carried out during the authors' stays at
Max Planck Institut f\"ur Mathematik, Institut des Hautes \'Etudes
Scientifiques (Bures-sur-Yvette, France), International Centre for
Mathematical Sciences (Edinburgh, UK), and the Centro
Internazionale per la Ricerca Matematica (Trento, Italy) in the
period 2010-2011. Both the authors appreciate their excellent
environments and supports for their research. The second author would
like to thank Professor Shigefumi Mori for his hospitality at
Research Institute for Mathematical Sciences (Kyoto, Japan) during
the second author's visit from August  15th to
August  30th in 2011. This work was finalized during the authors'  second
stay at the Centro Internazionale per la Ricerca Matematica from July 2nd to July 31st in 2013. They thank
Professor Marco Andreatta for his hospitality in Trento. They also thank the anonymous referee for his/her report that enables them
to improve Sections~\ref{subsection:hard-involutions} and \ref{section:invisible involution}.

The first author is partially supported by AG Laboratory GU-HSE,
RF government grant, ag. 11 11.G34.31.0023. The second author has
been supported by Institute for Basic Science (Grant No. CA1305-02) in Korea.

\subsection{How to prove Main Theorem}

In this section we present the synopsis of our proof of Main Theorem.
Before we proceed, we introduce a terminology that is frequently
used in  birational geometry as well as in the present paper.

\begin{definition}\label{defintion-center}
Let $U$ be a normal $\mathbb{Q}$-factorial variety and
$\mathcal{M}_U$ a mobile linear system (a linear system without a fixed component) on $U$. Let $a$ be a
non-negative rational number. An irreducible subvariety $Z$ of $U$
is called a center of non-canonical singularities (or simply
non-canonical center) of the log pair $(U,a\mathcal{M}_U)$ if there is
a birational morphism $h\colon W\to U$ and an $h$-exceptional
divisor $E_1\subset W$ such that
$$
K_W+ah^{-1}_*(\mathcal{M}_U)=h^*(K_U+a\mathcal{M}_U)+\sum_{i=1}^{m}c_iE_i,
$$
where each $E_i$ is an $h$-exceptional divisor, $c_1< 0$ and
$h(E_1)=Z$.
\end{definition}

The following result is known as the classical N\"other--Fano
inequality.

\begin{theorem}[{\cite[Theorem~4.2]{Co95}}]
\label{theorem:Nother-Fano} Let $X$ be a terminal
$\mathbb{Q}$-factorial Fano variety with $\mathrm{Pic}(X)\cong
\mathbb{Z}$.
\begin{itemize}
\item If the log pair $(X, \frac{1}{n}\mathcal{M})$  has canonical
singularities for every positive integer $n$ and every mobile
linear subsystem $\mathcal{M}$ in $|-nK_X|$, then $X$ is
birationally super-rigid.

\item If for every positive integer $n$ and every mobile linear
system $\mathcal{M}$ in $|-nK_X|$ there exists  a birational automorphism $\tau$ of $X$
such that the log pair $(X,
\frac{1}{n_\tau}\tau(\mathcal{M}))$ has canonical singularities,
where $n_\tau$ is the positive integer such that
$\tau(\mathcal{M})$ is contained in $|-n_\tau K_X|$, then $X$ is birationally
rigid.
\end{itemize}
\end{theorem}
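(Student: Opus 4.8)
The plan is to establish both assertions simultaneously through the N\"other--Fano mechanism, feeding the hypothesis into the analysis of an arbitrary birational map $\xi\colon X\dashrightarrow U$ to a Mori fibred space $\pi\colon U\to Z$. Fix a mobile linear system $\mathcal{H}_U$ on $U$ that is very ample when $Z$ is a point and equal to $\pi^{\ast}(\mathcal{A})$ for a very ample system $\mathcal{A}$ on $Z$ when $\dim Z\geq 1$, and set $\mathcal{M}=\xi^{-1}_{\ast}(\mathcal{H}_U)$. Since $\mathrm{Pic}(X)\cong\mathbb{Z}$ is generated by $-K_X$ and $-K_X$ is ample, the mobile system $\mathcal{M}$ lies in $|-nK_X|$ for a uniquely determined positive integer $n$, so that $K_X+\frac{1}{n}\mathcal{M}\equiv 0$ and, $X$ being a Fano variety of Picard rank one, even $K_X+\frac{1}{n}\mathcal{M}\sim_{\mathbb{Q}}0$. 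For the first statement the hypothesis applies verbatim to give that $(X,\frac{1}{n}\mathcal{M})$ is canonical. For the second statement I would first pass to $\psi=\xi\circ\tau^{-1}$, where $\tau$ is the birational automorphism furnished by the hypothesis for $\mathcal{M}$; then $\psi^{-1}_{\ast}(\mathcal{H}_U)=\tau(\mathcal{M})$ lies in $|-n_\tau K_X|$ and $(X,\frac{1}{n_\tau}\tau(\mathcal{M}))$ is canonical, so the whole problem reduces to the following assertion: \emph{if the numerically trivial pair $(X,\frac{1}{n}\mathcal{M})$ is canonical, then $\xi$ is a biregular isomorphism and $Z$ is a point.} The automorphism $\tau$ and an induced $\sigma$ then realise exactly the commuting diagram of Definition~\ref{definition:rigidity}.

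The first step is to rule out $\dim Z\geq 1$. I would choose a common resolution $p\colon W\to X$ and $q\colon W\to U$ of $\xi$, denote by $\mathcal{M}_W$ the strict transform of $\mathcal{M}$, and write
$$
K_W+\tfrac{1}{n}\mathcal{M}_W=p^{\ast}\Big(K_X+\tfrac{1}{n}\mathcal{M}\Big)+\sum_i a_iE_i=\sum_i a_iE_i,
$$
where the sum runs over the $p$-exceptional prime divisors and every $a_i\geq 0$ by canonicity. Pushing this effective class forward by $q$, and using $q_{\ast}K_W=K_U$ together with $q_{\ast}\mathcal{M}_W=\mathcal{H}_U$, I obtain that $K_U+\frac{1}{n}\mathcal{H}_U$ is an effective $\mathbb{Q}$-divisor on $U$. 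On the other hand, if $\ell$ is a general curve contracted by $\pi$, then $\mathcal{H}_U\cdot\ell=\pi^{\ast}(\mathcal{A})\cdot\ell=0$ while $K_U\cdot\ell<0$ because $-K_U$ is $\pi$-ample and $\rho(U/Z)=1$; hence $(K_U+\frac{1}{n}\mathcal{H}_U)\cdot\ell<0$. Since such curves $\ell$ sweep out $U$, a general one meets any fixed effective divisor non-negatively, and this contradiction forces $\dim Z=0$. Consequently $U$ is itself a terminal $\mathbb{Q}$-factorial Fano variety of Picard rank one.

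It remains to treat the case in which $Z$ is a point. Here I would take $\mathcal{H}_U\subset|-n'K_U|$, so that $K_U+\frac{1}{n'}\mathcal{H}_U\sim_{\mathbb{Q}}0$ as well, and compare the two numerically trivial pairs on the common resolution $W$. Writing both $K_W+\frac{1}{n}\mathcal{M}_W$ and $K_W+\frac{1}{n'}\mathcal{M}_W$ as combinations of the exceptional divisors of $p$ and of $q$ respectively, I would exploit the negativity lemma on the two birational contractions $p$ and $q$ to compare discrepancies along every geometric valuation. The canonicity of $(X,\frac{1}{n}\mathcal{M})$, the positivity of the discrepancies of the terminal variety $X$, and the ampleness of the generators $-K_X$ and $-K_U$ together should force $n=n'$ and show that no prime divisor of $W$ is exceptional for exactly one of $p$ and $q$. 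This means $\xi$ is an isomorphism in codimension one; since both $X$ and $U$ are $\mathbb{Q}$-factorial of Picard rank one and $\xi$ matches the ample classes $-K_X$ and $-K_U$, the map $\xi$ extends to a biregular isomorphism, which yields super-rigidity.

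I expect the genuine obstacle to be precisely this last comparison, because no canonicity is assumed on $U$: a priori the pair $(U,\frac{1}{n'}\mathcal{H}_U)$ may acquire negative discrepancies, so the signs on the $q$-side are not available for free and the crude pushforward only delivers the one-sided estimate $n\leq n'$. The heart of the matter is to upgrade this to equality together with the codimension-one statement, and this is exactly the content of the N\"other--Fano inequality: one must show that a genuine non-isomorphism necessarily produces a center of non-canonical singularities of $(X,\frac{1}{n}\mathcal{M})$ in the sense of Definition~\ref{defintion-center}, contradicting the hypothesis. Phrasing everything through the numerical triviality of the two boundaries, so that the ample anticanonical class on each side rigidly controls the effective cone, is the device I would use to make the discrepancy comparison conclusive.
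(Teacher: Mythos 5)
Your setup and the fibred case are sound: choosing $\mathcal{H}_U=\pi^{\ast}(\mathcal{A})$ when $\dim Z\geq 1$, pushing the effective divisor $K_W+\tfrac{1}{n}\mathcal{M}_W=\sum_i a_iE_i$ forward by $q$ and intersecting with a general fibre curve correctly rules out $\dim Z\geq 1$, and the reduction of the second bullet point to the first via $\psi=\xi\circ\tau^{-1}$ is also fine. (The paper itself offers no proof of this statement -- it cites Corti -- so there is no ``paper route'' to compare with.) However, the case where $Z$ is a point contains a genuine gap, and you identify it yourself: you obtain only the one-sided estimate $n\leq n'$, and you close by saying that the remaining step ``is exactly the content of the N\"other--Fano inequality.'' That is circular, because the N\"other--Fano inequality is precisely the statement you are asked to prove; as written, your argument never produces $n=n'$, the codimension-one statement, or the biregularity of $\xi$.

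The single observation that closes the gap is that canonicity on the $U$-side is available for free, contrary to your worry that $(U,\tfrac{1}{n'}\mathcal{H}_U)$ ``may acquire negative discrepancies'': since $\mathcal{H}_U\subset|-n'K_U|$ is very ample, hence base-point free, the multiplicity of $\mathcal{H}_U$ along the centre of any divisor exceptional over $U$ is zero, so $a(F,U,\tfrac{1}{n'}\mathcal{H}_U)=a(F,U)>0$ for every such divisor; that is, the pair $(U,\tfrac{1}{n'}\mathcal{H}_U)$ is \emph{terminal}. This symmetrizes the argument. First, pushing the (now effective) $q$-side decomposition forward by $p$ gives that $K_X+\tfrac{1}{n'}\mathcal{M}\sim_{\mathbb{Q}}\tfrac{n-n'}{n'}(-K_X)$ is effective, whence $n'\leq n$ and so $n=n'$. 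Next, both log canonical classes are $\mathbb{Q}$-linearly trivial, so writing $K_W+\tfrac{1}{n}\mathcal{M}_W=p^{\ast}\bigl(K_X+\tfrac{1}{n}\mathcal{M}\bigr)+E=q^{\ast}\bigl(K_U+\tfrac{1}{n}\mathcal{H}_U\bigr)+F$ with $E\geq 0$ supported on $p$-exceptional divisors and $F\geq 0$ supported on $q$-exceptional ones, we get $E-F\equiv 0$, and two applications of the negativity lemma (once over $X$, once over $U$) yield $E=F$. Terminality of $(U,\tfrac{1}{n}\mathcal{H}_U)$ forces \emph{every} $q$-exceptional prime divisor to appear in $F$ with strictly positive coefficient, hence to be $p$-exceptional; and since $X$ and $U$ are $\mathbb{Q}$-factorial of Picard rank one, $\rho(W)=1+\#\{p\text{-exceptional divisors}\}=1+\#\{q\text{-exceptional divisors}\}$, so this inclusion is an equality and $\xi$ is an isomorphism in codimension one. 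Only at this point does your final step apply: $\xi_{\ast}$ identifies $-K_X$ with $-K_U$, hence the anticanonical rings, hence $X\cong U$ biregularly. Note also that for super-rigidity one must additionally conclude $\mathrm{Bir}(X)=\mathrm{Aut}(X)$, which follows by applying the same claim to a birational self-map $\xi\colon X\dashrightarrow X$ regarded as a map to the Mori fibred space $X\to\mathrm{point}$; you leave this implicit.
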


The N\"other-Fano inequality will be the master key to the proof of
Main Theorem.

To prove Main Theorem, we take the following steps in order.
\medskip

 \textbf{Step 1.}
 We suppose that a given hypersurface
$X$ from the 95 families has a mobile linear system
$\mathcal{M}$ in $|-nK_X|$ for some positive integer $n$ such
that the log pair $(X, \frac{1}{n}\mathcal{M})$ is not canonical.
Then we must have a center of non-canonical singularities of the
pair $(X, \frac{1}{n}\mathcal{M})$.
A  center of non-canonical singularities of the log pair $(X,
\frac{1}{n}\mathcal{M})$ can be, \emph{a priori}, one of the following:
\[\left\{\aligned
& \mbox{a smooth point,}\\
& \mbox{an irreducible curve, }\\
& \mbox{a singular point} 
\endaligned \right.\]
on the Fano threefold $X$.

\medskip

 \textbf{Step 2.} We prove that a smooth point
of $X$ cannot be a center of non-canonical singularities of the
pair $(X, \frac{1}{n}\mathcal{M})$. This will be done in Section~\ref{section:smooth point} (Theorem~\ref{theorem:smooth point excluding}).

\medskip

 \textbf{Step 3.}  In Section~\ref{section:curves} we show that a curve contained in
the smooth locus of $X$ cannot be a center (Theorem~\ref{theorem:excluding-curve}). Then
Theorem~\ref{theorem:Kawamata} implies that a singular point of
$X$ must be a center.

\medskip
\textbf{Step 4.}
For a given singular point of the
hypersurface $X$  we prove that either
\begin{itemize}
\item it cannot be a center of non-canonical singularities of the log
pair $\left(X,\frac{1}{n}\mathcal{M}\right)$ (the job proving this part will be called \emph{excluding}) or 
\item there exists a birational
automorphism $\tau$ of $X$ such that $\tau(\mathcal{M})$ is
contained in $ |-n_\tau K_X|$ for some positive integer
$n_{\tau}<n$  (the job proving this part will be called \emph{untwisting}). 
\end{itemize}
With using induction on $n$, it then follows from
Theorem~\ref{theorem:Nother-Fano} that the given hypersurface $X$
is birationally rigid.
Step 4 will be done mainly in Section~\ref{section:super-rigid}. However, to exclude or untwist singular points, we will need several pieces of machinery, some of which are light and some of which are heavy. 
These machines will be assembled from Section~\ref{section:Methods}
 to Section~\ref{section:invisible involution}. In fact, the machines for excluding are relatively simple to use, so that they could be introduced in Section~\ref{section:Methods}. Meanwhile, the machines for untwisting are complicated to assemble. It will be carried out one by one from Section~\ref{section:untwisting} to Section~\ref{section:invisible involution}. 
Before using these machines in practical situation, i.e., before reading the tables in Section~~\ref{section:super-rigid}, we require the reader to be acquainted with the manual for the machinery provided in Section~\ref{section:manuals}.

Theorem~\ref{theorem:auxiliary} can be proved by excluding all
the singular points of $X$ as a center. Fifty  families out of the
95 families are those considered in
Theorem~\ref{theorem:auxiliary}. In
Section~\ref{section:super-rigid}, we are immediately able to notice that a singular point
of $X$ cannot be a center if the hypersurface $X$ belongs to one
of the families considered in Theorem~\ref{theorem:auxiliary}. Such families have the underlined entry numbers in their tables
in Section~~\ref{section:super-rigid}.

\subsection{Notations}\label{subsection:notation}
Let us describe the notations we will use in the rest of the
present paper. Unless otherwise mentioned, these notations are
fixed from now until the end of the paper.

\begin{itemize}
\item In the weighted projective space $\mathbb{P}(1,a_1,
a_2,a_3,a_4)$, we assume that $a_1\leq a_2\leq a_3\leq a_4$. For
weighted homogeneous coordinates, we always use $x$, $y$, $z$, $t$
and $w$ with weights $\mathrm{wt}(x)=1$, $\mathrm{wt}(y)=a_1$,
$\mathrm{wt}(z)=a_2$, $\mathrm{wt}(t)=a_3$ and
$\mathrm{wt}(w)=a_4$.

\item $f_m(x_{i_1}, \ldots,  x_{i_k})$,  $g_m(x_{i_1}, \ldots,
x_{i_k})$ and $h_m(x_{i_1}, \ldots,  x_{i_k})$ are
quasi-homogeneous polynomials of degree $m$ in variables $x_{i_1},
\ldots,  x_{i_k}$ in the given weighted projective space
$\mathbb{P}(1, a_1, a_2, a_3, a_4)$.

\item If a monomial appears individually in a quasi-homogeneous polynomial,
then the monomial is assumed  not to be contained in any other
terms. For example, in the  polynomial
$w^2+t^3+wf_{6}(x,y,z,t)+f_{12}(x,y,z,t)$, the polynomial $f_{12}$
does not contain the monomial $t^3$.

\item In each family, we always let $X$ be a quasi-smooth
hypersurface of degree $d$ in the weighted projective space
$\mathbb{P}(1,a_1, a_2,a_3,a_4)$ with only terminal singularities,
where $d=\sum_{i=1}^4 a_i$. We also use $X_d$, instead of $X$, in order to indicate the degree $d$ of $X$.

\item On the threefold $X$, a given mobile linear system is
denoted by $\mathcal{M}$.

\item For a given  mobile linear system $\mathcal{M}$, we always
assume that $\mathcal{M}\sim_{\mathbb{Q}} -nK_X$.

\item $S_x$ is the surface on the hypersurface $X$ cut by the
equation $x=0$.

\item $S_y$ is the surface on the hypersurface $X$ cut by the
equation $y=0$.

\item $S_z$ is the surface on the hypersurface $X$ cut by the
equation $z=0$.

\item $S_t$ is the surface on the hypersurface $X$ cut by the
equation $t=0$.

\item $S_w$ is the surface on the hypersurface $X$ cut by the
equation $w=0$.

\item $L_{tw}$ is the one-dimensional stratum on
$\mathbb{P}(1,a_1, a_2,a_3,a_4)$  defined by $x=y=z= 0$, and the
other one-dimensional strata are labelled similarly.

\item $O_y:=[0:1:0:0:0]$.

\item $O_z:=[0:0:1:0:0]$.

\item $O_t:=[0:0:0:1:0]$. \item $O_w:=[0:0:0:0:1]$.

\item When we consider  a singular point of type
$\frac{1}{r}(1,a,r-a)$ on $X$, the weighted blow up of $X$ at the
singular point  with weights $(1,a,r-a)$ will be denoted by
$f\colon Y\to X$ unless otherwise stated. 

\item $A$ is the
pull-back of $-K_X$ by $f$.

\item $B$ is the anticanonical class of $Y$.

\item $E$ is the  exceptional
divisor  of $f$. 

\item $S$ is the
proper transform of $S_x$ by $f$.

\item  $\mathcal{M}_{Y}$ is the proper
transform of the linear system $\mathcal{M}$ by $f$.

\item When we have a curve $C$
on $X$, its proper transform on $Y$ will be always denoted by
$\tilde{C}$. For instance,
 $\tilde{L}_{tw}$ is the proper transform of the curve $L_{tw}$ on $X$ (if it is contained in $X$)
 by the weighted blow up $f$.

%\item The entry number of each family of Fletcher and Reid is
%given in Section~\ref{section:table}. It is given in the
%lexicographic order of $(d,a_1,a_2,a_3,a_4)$.
\end{itemize}

\subsection{The $95$ families of Fletcher and Ried}
\label{section:table}

We list the $95$ families of Fletcher and Ried
for the convenience of the reader (see  \cite[Table 5]{IF00}).
Here, $X_d\subset \mathbb{P}(1,a_1,a_2,a_3,a_4)$ is a quasi-smooth hypersurface of degree $d$ in the projective space
$\mathbb{P}(1,a_1,a_2,a_3,a_4)$. 
The entry numbers of the list are originally 
 given in the
lexicographic order of $(d,a_1,a_2,a_3,a_4)$.
\bigskip

\small
\begin{longtable}{lll}
\textbf{No. 01.} $X_{4}\subset \mathbb{P}(1,1,1,1,1)$&
\textbf{No. 02.} $X_{5}\subset \mathbb{P}(1,1,1,1,2)$&
\textbf{No. 03.} $X_{6}\subset \mathbb{P}(1,1,1,1,3)$\\
\textbf{No. 04.} $X_{6}\subset \mathbb{P}(1,1,1,2,2)$&
\textbf{No. 05.} $X_{7}\subset \mathbb{P}(1,1,1,2,3)$&
\textbf{No. 06.} $X_{8}\subset \mathbb{P}(1,1,1,2,4)$\\
\textbf{No. 07.} $X_{8}\subset \mathbb{P}(1,1,2,2,3)$&
\textbf{No. 08.} $X_{9}\subset \mathbb{P}(1,1,1,3,4)$&
\textbf{No. 09.} $X_{9}\subset \mathbb{P}(1,1,2,3,3)$\\
\textbf{No. 10.} $X_{10}\subset \mathbb{P}(1,1,1,3,5)$&
\textbf{No. 11.} $X_{10}\subset \mathbb{P}(1,1,2,2,5)$&
\textbf{No. 12.} $X_{10}\subset \mathbb{P}(1,1,2,3,4)$\\
\textbf{No. 13.} $X_{11}\subset \mathbb{P}(1,1,2,3,5)$&
\textbf{No. 14.} $X_{12}\subset \mathbb{P}(1,1,1,4,6)$&
\textbf{No. 15.} $X_{12}\subset \mathbb{P}(1,1,2,3,6)$\\
\textbf{No. 16.} $X_{12}\subset \mathbb{P}(1,1,2,4,5)$&
\textbf{No. 17.} $X_{12}\subset \mathbb{P}(1,1,3,4,4)$&
\textbf{No. 18.} $X_{12}\subset \mathbb{P}(1,2,2,3,5)$\\
\textbf{No. 19.} $X_{12}\subset \mathbb{P}(1,2,3,3,4)$&
\textbf{No. 20.} $X_{13}\subset \mathbb{P}(1,1,3,4,5)$&
\textbf{No. 21.} $X_{14}\subset \mathbb{P}(1,1,2,4,7)$\\
\textbf{No. 22.} $X_{14}\subset \mathbb{P}(1,2,2,3,7)$&
\textbf{No. 23.} $X_{14}\subset \mathbb{P}(1,2,3,4,5)$&
\textbf{No. 24.} $X_{15}\subset \mathbb{P}(1,1,2,5,7)$\\
\textbf{No. 25.} $X_{15}\subset \mathbb{P}(1,1,3,4,7)$&
\textbf{No. 26.} $X_{15}\subset \mathbb{P}(1,1,3,5,6)$&
\textbf{No. 27.} $X_{15}\subset \mathbb{P}(1,2,3,5,5)$\\
\textbf{No. 28.} $X_{15}\subset \mathbb{P}(1,3,3,4,5)$&
\textbf{No. 29.} $X_{16}\subset \mathbb{P}(1,1,2,5,8)$&
\textbf{No. 30.} $X_{16}\subset \mathbb{P}(1,1,3,4,8)$\\
\textbf{No. 31.} $X_{16}\subset \mathbb{P}(1,1,4,5,6)$&
\textbf{No. 32.} $X_{16}\subset \mathbb{P}(1,2,3,4,7)$&
\textbf{No. 33.} $X_{17}\subset \mathbb{P}(1,2,3,5,7)$\\
\textbf{No. 34.} $X_{18}\subset \mathbb{P}(1,1,2,6,9)$&
\textbf{No. 35.} $X_{18}\subset \mathbb{P}(1,1,3,5,9)$&
\textbf{No. 36.} $X_{18}\subset \mathbb{P}(1,1,4,6,7)$\\
\textbf{No. 37.} $X_{18}\subset \mathbb{P}(1,2,3,4,9)$&
\textbf{No. 38.} $X_{18}\subset \mathbb{P}(1,2,3,5,8)$&
\textbf{No. 39.} $X_{18}\subset \mathbb{P}(1,3,4,5,6)$\\
\textbf{No. 40.} $X_{19}\subset \mathbb{P}(1,3,4,5,7)$&
\textbf{No. 41.} $X_{20}\subset \mathbb{P}(1,1,4,5,10)$&
\textbf{No. 42.} $X_{20}\subset \mathbb{P}(1,2,3,5,10)$\\
\textbf{No. 43.} $X_{20}\subset \mathbb{P}(1,2,4,5,9)$&
\textbf{No. 44.} $X_{20}\subset \mathbb{P}(1,2,5,6,7)$&
\textbf{No. 45.} $X_{20}\subset \mathbb{P}(1,3,4,5,89)$\\
\textbf{No. 46.} $X_{21}\subset \mathbb{P}(1,1,3,7,10)$&
\textbf{No. 47.} $X_{21}\subset \mathbb{P}(1,1,5,7,8)$&
\textbf{No. 48.} $X_{21}\subset \mathbb{P}(1,2,3,7,9)$\\
\textbf{No. 49.} $X_{21}\subset \mathbb{P}(1,3,5,6,7)$&
\textbf{No. 50.} $X_{22}\subset \mathbb{P}(1,1,3,7,11)$&
\textbf{No. 51.} $X_{22}\subset \mathbb{P}(1,1,4,6,11)$\\
\textbf{No. 52.} $X_{22}\subset \mathbb{P}(1,2,4,5,11)$&
\textbf{No. 53.} $X_{24}\subset \mathbb{P}(1,1,3,8,12)$&
\textbf{No. 54.} $X_{24}\subset \mathbb{P}(1,1,6,8,9)$\\
\textbf{No. 55.} $X_{24}\subset \mathbb{P}(1,2,3,7,12)$&
\textbf{No. 56.} $X_{24}\subset \mathbb{P}(1,2,3,8,11)$&
\textbf{No. 57.} $X_{24}\subset \mathbb{P}(1,3,4,5,12)$\\
\textbf{No. 58.} $X_{24}\subset \mathbb{P}(1,3,4,7,10)$&
\textbf{No. 59.} $X_{24}\subset \mathbb{P}(1,3,6,7,8)$&
\textbf{No. 60.} $X_{24}\subset \mathbb{P}(1,4,5,6,9)$\\
\textbf{No. 61.} $X_{25}\subset \mathbb{P}(1,4,5,7,9)$&
\textbf{No. 62.} $X_{26}\subset \mathbb{P}(1,1,5,7,13)$&
\textbf{No. 63.} $X_{26}\subset \mathbb{P}(1,2,3,8,13)$\\
\textbf{No. 64.} $X_{26}\subset \mathbb{P}(1,2,5,6,13)$&
\textbf{No. 65.} $X_{27}\subset \mathbb{P}(1,2,5,9,11)$&
\textbf{No. 66.} $X_{27}\subset \mathbb{P}(1,5,6,7,9)$\\
\textbf{No. 67.} $X_{28}\subset \mathbb{P}(1,1,4,9,14)$&
\textbf{No. 68.} $X_{28}\subset \mathbb{P}(1,3,4,7,14)$&
\textbf{No. 69.} $X_{28}\subset \mathbb{P}(1,4,6,7,11)$\\
\textbf{No. 70.} $X_{30}\subset \mathbb{P}(1,1,4,10,15)$&
\textbf{No. 71.} $X_{30}\subset \mathbb{P}(1,1,6,8,15)$&
\textbf{No. 72.} $X_{30}\subset \mathbb{P}(1,2,3,10,15)$\\
\textbf{No. 73.} $X_{30}\subset \mathbb{P}(1,2,6,7,15)$&
\textbf{No. 74.} $X_{30}\subset \mathbb{P}(1,3,4,10,13)$&
\textbf{No. 75.} $X_{30}\subset \mathbb{P}(1,4,5,6,15)$\\
\textbf{No. 76.} $X_{30}\subset \mathbb{P}(1,5,6,8,11)$&
\textbf{No. 77.} $X_{32}\subset \mathbb{P}(1,2,5,9,16)$&
\textbf{No. 78.} $X_{32}\subset \mathbb{P}(1,4,5,7,16)$\\
\textbf{No. 79.} $X_{33}\subset \mathbb{P}(1,3,5,11,14)$&
\textbf{No. 80.} $X_{34}\subset \mathbb{P}(1,3,4,10,17)$&
\textbf{No. 81.} $X_{34}\subset \mathbb{P}(1,4,6,7,17)$\\
\textbf{No. 82.} $X_{36}\subset \mathbb{P}(1,1,5,12,18)$&
\textbf{No. 83.} $X_{36}\subset \mathbb{P}(1,3,4,11,18)$&
\textbf{No. 84.} $X_{36}\subset \mathbb{P}(1,7,8,9,12)$\\
\textbf{No. 85.} $X_{38}\subset \mathbb{P}(1,3,5,11,19)$&
\textbf{No. 86.} $X_{38}\subset \mathbb{P}(1,5,6,8,19)$&
\textbf{No. 87.} $X_{40}\subset \mathbb{P}(1,5,7,8,20)$\\
\textbf{No. 88.} $X_{42}\subset \mathbb{P}(1,1,6,14,21)$&
\textbf{No. 89.} $X_{42}\subset \mathbb{P}(1,2,5,14,21)$&
\textbf{No. 90.} $X_{42}\subset \mathbb{P}(1,3,4,14,21)$\\
\textbf{No. 91.} $X_{44}\subset \mathbb{P}(1,4,5,13,22)$&
\textbf{No. 92.} $X_{48}\subset \mathbb{P}(1,3,5,16,24)$&
\textbf{No. 93.} $X_{50}\subset \mathbb{P}(1,3,5,16,24)$\\
\textbf{No. 94.} $X_{54}\subset \mathbb{P}(1,4,5,18,27)$&
\textbf{No. 95.} $X_{66}\subset \mathbb{P}(1,5,6,22,33)$.&
\\
\end{longtable}
\normalsize
\newpage

\section{Smooth points and curves}
\subsection{Excluding smooth points}\label{section:smooth point}

In this section we show that smooth points of $X$ cannot be
non-canonical centers  of the  log pair
$\left(X,\frac{1}{n}\mathcal{M}\right)$.

Let $X\subset\mathbb{P}(1,a_1,a_2,a_3,a_4)$ be a quasi-smooth
weighted hypersurface of degree $d=\sum a_i$ with terminal
singularities. Suppose that a smooth point $p$ on $X$ is a center
of non-canonical singularities of the log pair
$\left(X,\frac{1}{n}\mathcal{M}\right)$. Then we obtain
$$
\mult_p(\mathcal{M}^2)>4n^2
$$
by \cite[Corollary~3.4]{Co00}.

 Let $s$ be an integer not greater than $\frac{4}{-K_X^3}$.  Suppose
that we have a divisor $H$ in $|-sK_X|$ such that \begin{itemize}
\item it passes through the point $p$, \item it contains no
$1$-dimensional component of the base locus of the linear system
$\mathcal{M}$ that passes through the point $p$.
\end{itemize}

Then we can obtain the following contradictory inequality:
\[-sn^2K_X^3=H\cdot\mathcal{M}^2\geq \mult_p(H)\cdot
\mult_p(\mathcal{M}^2)>4n^2.\]

In order to show that a center of non-canonical singularities of the log pair $\left(X,\frac{1}{n}\mathcal{M}\right)$ cannot be  a smooth point, we mainly try to find such a divisor.

Before we proceed, set
$\widehat{a}_2=\operatorname{lcm}\{a_1,a_3,a_4\}$,
$\widehat{a}_3=\operatorname{lcm}\{a_1,a_2,a_4\}$ and
$\widehat{a}_4=\operatorname{lcm}\{a_1,a_2,a_3\}$.
\begin{lemma}
Suppose that the hypersurface $X$ satisfies one of the following:
\begin{itemize}
\item $X$ does not pass through the point $O_w$ and
$d\cdot\widehat{a}_4\leq4a_1a_2a_3a_4$; \item $X$ does not pass
through the point $O_t$ and $d\cdot\widehat{a}_3\leq
4a_1a_2a_3a_4$; \item $X$ does not pass through the point $O_z$
and $d\cdot\widehat{a}_2\leq 4a_1a_2a_3a_4$.
\end{itemize}
Then a smooth point of $X$ cannot be a non-canonical  center of
 the log pair $\left(X,\frac{1}{n}\mathcal{M}\right)$.
\end{lemma}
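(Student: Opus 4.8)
The plan is to produce, in each of the three cases, an effective divisor $H\in|-sK_X|$ that passes through the putative non-canonical center $p$, that contains none of the finitely many one-dimensional components of the base locus of $\mathcal{M}$ through $p$, and whose degree satisfies $s\le 4/(-K_X^3)$. Once such an $H$ is at hand, the chain recorded just before the statement,
\[
-sn^2K_X^3=H\cdot\mathcal{M}^2\geq\mult_p(H)\cdot\mult_p(\mathcal{M}^2)>4n^2\geq-sn^2K_X^3,
\]
is contradictory and finishes the proof. Since $-K_X=\mathcal{O}_X(1)$ and $(-K_X)^3=d/(a_1a_2a_3a_4)$, the numerical requirement $s\le 4/(-K_X^3)$ is exactly $sd\le 4a_1a_2a_3a_4$, and the three hypotheses of the lemma assert precisely that $s=\widehat{a}_4$, $s=\widehat{a}_3$, and $s=\widehat{a}_2$ respectively meet this bound. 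As the three cases differ only by a permutation of the coordinates, I would treat the first, where $O_w\notin X$ and $s=\widehat{a}_4=\operatorname{lcm}\{a_1,a_2,a_3\}$.

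Here I would work with the linear subsystem
\[
\Lambda=\big\langle x^{\widehat{a}_4},\,y^{\widehat{a}_4/a_1},\,z^{\widehat{a}_4/a_2},\,t^{\widehat{a}_4/a_3}\big\rangle\subseteq\big|\mathcal{O}_X(\widehat{a}_4)\big|=|-\widehat{a}_4K_X|,
\]
whose generators are honest monomials of degree $\widehat{a}_4$ by the definition of the least common multiple. The base locus of $\Lambda$ on $X$ is cut out by $x=y=z=t=0$, that is, it equals $\{O_w\}\cap X$, which is empty by hypothesis. Thus $\Lambda$ is base-point free and defines a morphism $\phi\colon X\to\mathbb{P}^3$; this base-point freeness is exactly where the assumption $O_w\notin X$ enters decisively.

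The remaining and most delicate step is to pick a member $H\in\Lambda$ through $p$ that avoids the finitely many base curves $C_1,\dots,C_k$ of $\mathcal{M}$ passing through $p$. For this I would use that $\phi$ is a finite morphism: it is the composite of the projection $[x:y:z:t:w]\mapsto[x:y:z:t]$, which is a morphism on $X$ precisely because $O_w\notin X$ and is quasi-finite since each fibre $\{[x:y:z:t:w]:w\in\mathbb{C}\}\cap X$ is a nonempty finite scheme, with the finite monomial map of $\mathbb{P}(1,a_1,a_2,a_3)$ into $\mathbb{P}^3$; properness of $X$ then makes $\phi$ finite. A finite morphism contracts no curve, so every $\phi(C_j)$ is a curve, and a general hyperplane of $\mathbb{P}^3$ through $\phi(p)$ contains none of them. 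Its preimage under $\phi$ is the sought divisor $H\in\Lambda$, and the displayed inequalities then apply verbatim to exclude $p$ as a non-canonical center. The crux is precisely this non-contraction: were $\phi$ to collapse a base curve of $\mathcal{M}$ onto $\phi(p)$, every member of $\Lambda$ through $p$ would contain that curve and the intersection estimate would break down.
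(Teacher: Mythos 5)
Your proof is correct and follows essentially the same route as the paper: both exploit that $O_w\notin X$ makes the projection away from $O_w$ a finite morphism, and then pull back a general member of a base-point-free degree-$\widehat{a}_4$ system through the image of $p$, so that no one-dimensional base component of $\mathcal{M}$ through $p$ is contained in $H$. The only cosmetic difference is that the paper pulls back directly from $|\mathcal{O}_{\mathbb{P}(1,a_1,a_2,a_3)}(\widehat{a}_4)|$, whereas you compose further with the finite monomial map to $\mathbb{P}^3$ and pull back a hyperplane.
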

\begin{proof}
For simplicity we suppose that the hypersurface $X$ satisfies the
first condition. The proofs for the other cases are the same.

 Let $\pi_4 : X\to \mathbb{P}(1,a_1,
a_2, a_3)$ be the regular projection centered at the point $O_w$.
The linear system
$|\mathcal{O}_{\mathbb{P}(1,a_1,a_2,a_3)}(\widehat{a}_4)|$ is base
point free. Choose a general member  in the linear system
$|\mathcal{O}_{\mathbb{P}(1,a_1,a_2,a_3)}(\widehat{a}_4)|$ that
passes through the point $\pi_{4}(p)$. Then its pull-back by the
finite morphism $\pi_4$ can play the role of the divisor $H$ in
the explanation at the beginning.
\end{proof}

The condition above is satisfied by all the families except the
families
$$\mbox{No.}\ 2,5,12,13,20, 23, 25, 33,40, 58, 61,76.$$

No quasi-smooth hypersurface in the families No.~$23$, $40$, $61$, $76$ contains the curve $L_{tw}$. 
Using suitable coordinate changes, we may write
its defining equation
as
\[tw^2+w(tg_{a_4}(x,y,z)+g_{d}(x,y,z))+x_it^3+t^2g_{d-2a_3}(x,y,z)+tg_{d-a_3}(x,y,z)+g_{d}(x,y,z)=0,\]
where $x_i=y$ for the families No.~$23$, $61$ and   $x_i=z$ for the families  No.~$40$, $76$.

There are two kinds of quasi-smooth hypersurfaces in each family of No.~$5$, $12$, $13$, $20$, $25$, $33$, $58$. The first kind are those that  do not contain the curve $L_{tw}$. The second are those that contain the curve $L_{tw}$. 
After appropriate coordinate changes, every quasi-smooth hypersurface of the first kind in each family can be defined by 
\[wt^2+t(wg_{a_3}(x,y,z)+g_{d}(x,y,z))+x_iw^2+wg_{d-a_4}(x,y,z)+g_{d}(x,y,z)=0,\]
where $x_i=y$ for the families No.~$13$, $25$ and   $x_i=z$ for the families  No.~$5$, $12$, $20$, $33$, $58$.

\begin{lemma}\label{lemma:Train from Wolfach to Bonn}
Suppose that the hypersurface $X$ satisfies the following conditions:
\begin{itemize}
%\item $O_w$ and $O_t$ are singular points of $X$; 
\item $d\cdot\widehat{a}_4\leq4a_1a_2a_3a_4$; 
\item  $d\cdot\widehat{a}_3\leq 4a_1a_2a_3a_4$.
\end{itemize}
In addition, we suppose that the curve $L_{tw}$  is not contained
in the hypersurface $X$. Then  a smooth point of $X$ cannot be a
center of non-canonical singularities of the log pair
$\left(X,\frac{1}{n}\mathcal{M}\right)$.
\end{lemma}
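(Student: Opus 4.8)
The plan is to run the intersection-theoretic argument recorded at the start of the section, but with the single finite morphism used in the previous lemma replaced by the two projections from $O_w$ and from $O_t$ working in tandem. Recall that $-K_X^3=d/(a_1a_2a_3a_4)$, so the two hypotheses say exactly that the positive integers $\widehat a_4$ and $\widehat a_3$ are both at most $4/(-K_X^3)$; either one may serve as the integer $s$. Assuming that a smooth point $p$ is a non-canonical center, we have $\mult_p(\mathcal M^2)>4n^2$. It then suffices to produce a divisor $H$ in $|\mathcal O_X(\widehat a_4)|$ or in $|\mathcal O_X(\widehat a_3)|$ that passes through $p$ and contains no one-dimensional component of the base locus of $\mathcal M$ through $p$, for then
\[
4n^2<\mult_p(\mathcal M^2)\le \mult_p(H)\cdot\mult_p(\mathcal M^2)\le H\cdot\mathcal M^2=s\,n^2\,(-K_X^3)\le 4n^2
\]
is the desired contradiction.

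To build $H$ I would use the projections $\pi_4\colon X\dashrightarrow\mathbb P(1,a_1,a_2,a_3)$ from $O_w$ and $\pi_3\colon X\dashrightarrow\mathbb P(1,a_1,a_2,a_4)$ from $O_t$. Both are morphisms away from their centers, and since $O_w$ and $O_t$ are singular points of $X$ we may assume $p$ is neither. The systems $|\mathcal O_{\mathbb P(1,a_1,a_2,a_3)}(\widehat a_4)|$ and $|\mathcal O_{\mathbb P(1,a_1,a_2,a_4)}(\widehat a_3)|$ are base point free, so a general member $D$ through $\pi_4(p)$ (respectively $\pi_3(p)$) pulls back to a member of $|\mathcal O_X(\widehat a_4)|$ (respectively $|\mathcal O_X(\widehat a_3)|$) through $p$. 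Exactly as in the previous lemma, such a pullback can fail to avoid a curve $\Gamma\ni p$ only when $\Gamma$ is contracted by the projection used, i.e. only when $\Gamma$ is the fiber through $p$, which is the line of $X$ joining $p$ to the projection center. The key geometric point is that a curve of $X$ contracted by \emph{both} projections must lie on the line $L_{tw}=\overline{O_tO_w}$ and hence coincide with $L_{tw}$, which is excluded by hypothesis. Therefore every base curve of $\mathcal M$ through $p$ is moved by at least one of $\pi_4,\pi_3$; moreover at most one base curve through $p$, namely $\overline{O_wp}\cap X$, is contracted by $\pi_4$, and at most one, $\overline{O_tp}\cap X$, by $\pi_3$. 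Consequently, unless both of these two lines lie in $X$ and in the base locus of $\mathcal M$, one of the two projections already yields an admissible $H$.

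This isolates a single bad configuration, which I expect to be the main obstacle: the point $p$ lies simultaneously on a line $\ell_w\subset X$ through $O_w$ and a line $\ell_t\subset X$ through $O_t$, both of which are components of the base locus of $\mathcal M$, so that the member coming from $\pi_4$ is forced to contain $\ell_w$ while the member coming from $\pi_3$ is forced to contain $\ell_t$. To dispose of it I would read off the defining equation directly. Viewing the equation as a quadratic polynomial in $w$ and, separately, in $t$, the requirement that both $\ell_w$ and $\ell_t$ be contracted forces all of the respective coefficients to vanish at $p$; for the normal forms recorded above this pins $p$ to the one-dimensional stratum $\{z=w=0\}$ (or $\{y=w=0\}$ when $x_i=y$). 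On this locus I would argue, family by family, that quasi-smoothness either makes such a $p$ a singular point of $X$ — contradicting the choice of $p$ — or, when a genuine smooth point survives, that one can still exhibit a surface through $p$ of degree at most $\widehat a_4$ avoiding both $\ell_w$ and $\ell_t$ and satisfying $H\cdot\mathcal M^2\le 4n^2$ (for instance a suitable component supported on $S_z$ or $S_w$). Since only finitely many of the $95$ families meet the hypotheses, this final step is a finite, explicit verification rather than a source of essential difficulty.
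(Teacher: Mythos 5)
Your reduction to the ``bad configuration'' matches the paper's own proof: the standard multiplicity inequality from the start of the section, the two projections $\pi_4$ and $\pi_3$, the fact that the downstairs systems of degree $\widehat a_4$ and $\widehat a_3$ are base point free, and the key observation that no irreducible curve other than $L_{tw}$ can be contracted by both projections, so the hypothesis $L_{tw}\not\subset X$ forces any obstruction to involve two distinct contracted curves through $p$. Up to that point your argument is sound and is exactly what the paper does.

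The genuine gap is in the bad case itself, which you concede is ``the main obstacle'' but never resolve: you defer it to an unexecuted family-by-family inspection, and the outline you give is shaky. The assertion that the configuration ``pins $p$ to the one-dimensional stratum $\{z=w=0\}$'' does not follow from the normal forms (the fiber of $\pi_4$ through $p$ lies in $X$ exactly when the defining polynomial, viewed as a polynomial in $w$, vanishes identically along that fiber, and this vanishing of coefficients cuts out a larger locus than a coordinate stratum); and your final fallback --- ``a suitable component supported on $S_z$ or $S_w$'' --- is precisely the object whose existence needs proof. The paper kills the bad case uniformly, with no case analysis, by a trick your proposal misses. Let $A_1,\dots,A_k$ generate $\mathcal{H}_4$ (the degree-$\widehat a_4$ divisors through $p$) and $B_1,\dots,B_m$ generate $\mathcal{H}_3$, and consider the linear system $\mathcal{H}$ spanned by $A_1^{\widehat a_3},\dots,A_k^{\widehat a_3},B_1^{\widehat a_4},\dots,B_m^{\widehat a_4}$, all of degree $\widehat a_3\widehat a_4$. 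Because only $L_{tw}$ could be contracted by both projections, the base loci of $\mathcal{H}_4$ and $\mathcal{H}_3$ have no common one-dimensional component near $p$, so $\mathcal{H}$ has no base curve through $p$; and every generator vanishes at $p$ to order at least $\min\{\widehat a_3,\widehat a_4\}$. A general member $H'$ of $\mathcal{H}$ then yields
\[
\widehat a_3\widehat a_4\, n^2(-K_X^3)=H'\cdot\mathcal{M}^2\geq \mult_p(H')\cdot\mult_p(\mathcal{M}^2)>4n^2\min\{\widehat a_3,\widehat a_4\},
\]
which contradicts whichever of the two degree hypotheses corresponds to the minimum. Without this step (or a complete substitute for it), your proof is incomplete at exactly the point where the lemma's content lies.
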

\begin{proof}
Let $\mathcal{H}_4$ be the linear system consisting of the divisors in $|\mathcal{}O_X(\hat{a}_4)|$ 
that pass through the point $p$
and $\mathcal{H}_3$ be the linear system consisting of the divisors in $|\mathcal{}O_X(\hat{a}_3)|$ 
that pass through the point $p$.

Let $\pi_4 : X\dasharrow \mathbb{P}(1,a_1, a_2, a_3)$ be the
projection centered at the point $O_w$ and $\pi_3 : X\dasharrow
\mathbb{P}(1,a_1, a_2, a_4)$  the projection centered at the point
$O_t$. The linear systems
$|\mathcal{O}_{\mathbb{P}(1,a_1,a_2,a_3)}(\widehat{a}_4)|$ and
$|\mathcal{O}_{\mathbb{P}(1,a_1,a_2,a_4)}(\widehat{a}_3)|$ are
base point free. 
The pull-backs of  a general member in 
$|\mathcal{O}_{\mathbb{P}(1,a_1,a_2,a_3)}(\widehat{a}_4)|$ that
passes through the point $\pi_{4}(p)$ and a general member  in the
linear system $|\mathcal{O}_{\mathbb{P}(1,a_1,a_2,a_4)}(\widehat{a}_3)|$
that passes through the point $\pi_{3}(p)$ show that  a general member  either in $\mathcal{H}_4$ or in $\mathcal{H}_3$ can serve as the divisor $H$
 in the explanation at the beginning unless the point $p$ belongs to both a curve contracted by $\pi_4$ and a curve contracted by $\pi_3$.

Suppose that the point $p$ belongs to both a curve contracted by $\pi_4$ and a curve  contracted by $\pi_3$.
Let   $A_1$, $A_2,\cdots, A_k$ (resp. $B_1$, $B_2,\cdots, B_m$)  be quasi-homogeneous polynomials
of degree $\hat{a}_4$ (resp. $\hat{a}_3$) that generate the linear system $\mathcal{H}_4$ (resp. $\mathcal{H}_3$).

Any irreducible curve except $L_{tw}$ cannot be contracted both by $\pi_4$ and by $\pi_3$.
Therefore, the base locus of $\mathcal{H}_4$ has no common $1$-dimensional component with the base locus of $\mathcal{H}_3$ around the point $p$ since we do not have the curve $L_{tw}$ on $X$.
This shows that the base locus of the linear system $\mathcal{H}$ generated by
quasi-homogeneous  polynomials
$A_1^{\hat{a}_3}$, $A_2^{\hat{a}_3},\cdots, A_k^{\hat{a}_3}$, $B_1^{\hat{a}_4}$, 
$B_2^{\hat{a}_4},\cdots, B_m^{\hat{a}_4}$  of degree $\hat{a}_4\hat{a}_3$ has no $1$-dimensional component 
passing through the point $p$. Therefore, for a general member $H'$ of the linear system $\mathcal{H}$, we have
\[-\hat{a}_3\hat{a}_4n^2K_X^3=H'\cdot\mathcal{M}^2\geq \mult_p(H')\cdot
\mult_p(\mathcal{M}^2)\geq \min\{\hat{a}_3, \hat{a}_4\}\cdot
\mult_p(\mathcal{M}^2)>4n^2\min\{\hat{a}_3, \hat{a}_4\},\]
which implies $d\hat{a}_3\hat{a}_4> \min\{\hat{a}_3, \hat{a}_4\}a_1a_2a_3a_4$. This contradicts our condition.
\end{proof}
The conditions above are satisfied by  the families
$$\mbox{No.}\ 23,  40,  61, 76.$$
Also, the members of the first kind in the families
No.~$5$, $12$, $13$, $20$, $25$, $33$, $58$, i.e., those that do not contain $L_{tw}$, 
meet these conditions.

The members of the second kind in the families
No.~$5$, $12$, $13$, $20$, $25$, $33$, $58$,   i.e., those that  contain $L_{tw}$, and the family No.~$2$   remain.

We deal with the family No.~$2$  in the end of this section.
Instead, we first consider the members  of the second kind in the families
No.~$5$, $12$, $13$, $20$, $25$, $33$, $58$,   i.e., those that  contain $L_{tw}$.
These members are not covered by Lemma~\ref{lemma:Train from Wolfach to Bonn}. Since these members are the ones that contain the curve $L_{tw}$, the defining polynomials of $X$ do not contain the
monomial $t^2w$. Therefore, using coordinate changes, we may assume that the polynomial is given by
\[w^2z+w(tg_{a_3}(x,y,z)+g_{2a_3}(x,y,z))+t^3y+t^2h_{d-2a_3}(x,y,z)+th_{d-a_3}(x,y,z)+h_{d}(x,y,z)=0 \]
for the families No.~$12$, $20$,
\[w^2y+w(tg_{a_3}(x,y,z)+g_{2a_3}(x,y,z))+t^3z+t^2h_{d-2a_3}(x,y,z)+th_{d-a_3}(x,y,z)+h_{d}(x,y,z)=0 \]
for the families  No.~$5$, $13$, $25$, $33$, $58$.
Note that for the family No.~5 the coefficients of $w^2$ and $t^3$ cannot coincide, i.e., we cannot assume that 
the hypersurface $X$ is defined by 
\[w^2y+w(tg_{2}+g_{4})+t^3y+t^2h_{3}+th_{5}+h_{7}=0.\]
In such a case, the hypersurface is not quasi-smooth at the point defined by $x=y=z=w^2+t^3=0$.

\begin{lemma}\label{lemma-MPIM}
Suppose that the hypersurface $X$ satisfies the following
conditions:
\begin{itemize}
%\item $O_w$ and $O_t$ are singular points of $X$; 
\item $d\cdot\widehat{a}_4\leq4a_1a_2a_3a_4$; \item  $d\cdot\widehat{a}_3\leq 4a_1a_2a_3a_4$.
\end{itemize}
Suppose that the curve $L_{tw}$ is contained in the hypersurface
$X$. If   a smooth point of $X$ is a  non-canonical center of
the log pair $\left(X,\frac{1}{n}\mathcal{M}\right)$, then the
point lies on the curve $L_{tw}$.
\end{lemma}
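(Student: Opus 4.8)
The plan is to run the proof of Lemma~\ref{lemma:Train from Wolfach to Bonn} essentially verbatim, the only new ingredient being the location of the curve $L_{tw}$. Suppose, seeking a contradiction, that a smooth point $p\notin L_{tw}$ is a non-canonical center of $\left(X,\frac{1}{n}\mathcal{M}\right)$; then $\mult_p(\mathcal{M}^2)>4n^2$ by \cite[Corollary~3.4]{Co00}. As explained at the beginning of the section, it suffices to produce a divisor $H\in|-sK_X|$ with $s\leq\frac{4}{-K_X^3}$ that passes through $p$ and contains no $1$-dimensional component of the base locus of $\mathcal{M}$ through $p$, because the intersection estimate $-sn^2K_X^3=H\cdot\mathcal{M}^2\geq\mult_p(H)\cdot\mult_p(\mathcal{M}^2)>4n^2$ would then contradict the degree hypotheses.

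First I would reintroduce the two projections $\pi_4\colon X\dasharrow\mathbb{P}(1,a_1,a_2,a_3)$ from $O_w$ and $\pi_3\colon X\dasharrow\mathbb{P}(1,a_1,a_2,a_4)$ from $O_t$, together with the linear systems $\mathcal{H}_4$ and $\mathcal{H}_3$ of divisors in $|\mathcal{O}_X(\widehat{a}_4)|$ and $|\mathcal{O}_X(\widehat{a}_3)|$ that pass through $p$. These are pulled back from base-point-free systems, so the only base curves of $\mathcal{H}_4$ (resp.\ $\mathcal{H}_3$) are the curves contracted by $\pi_4$ (resp.\ by $\pi_3$). If $p$ lies on no curve contracted by $\pi_4$, then a general member of $\mathcal{H}_4$ already serves as $H$; symmetrically for $\pi_3$ and $\mathcal{H}_3$. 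The only case needing attention is when $p$ lies simultaneously on a curve contracted by $\pi_4$ and on a curve contracted by $\pi_3$, and here I would pass to the product system $\mathcal{H}$ generated by the $\widehat{a}_3$-th powers of generators of $\mathcal{H}_4$ and the $\widehat{a}_4$-th powers of generators of $\mathcal{H}_3$, all quasi-homogeneous of degree $\widehat{a}_3\widehat{a}_4$.

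The crux is the analysis of $\mathrm{Bs}(\mathcal{H})$. Since a point lies in the base locus of $\mathcal{H}$ precisely when all of the chosen powers vanish there, one has $\mathrm{Bs}(\mathcal{H})=\mathrm{Bs}(\mathcal{H}_4)\cap\mathrm{Bs}(\mathcal{H}_3)$, so its $1$-dimensional components through $p$ are exactly the curves contracted by both projections. By the observation recalled in the proof of Lemma~\ref{lemma:Train from Wolfach to Bonn}, the unique such curve is $L_{tw}$. The decisive difference from that lemma is that here $L_{tw}$ is not excluded from $X$; instead it is excluded from passing through $p$ by the standing hypothesis $p\notin L_{tw}$. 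Hence $\mathrm{Bs}(\mathcal{H})$ has no $1$-dimensional component through $p$, so a general member $H'\in\mathcal{H}$ contains no base curve of $\mathcal{M}$ through $p$ while satisfying $\mult_p(H')\geq\min\{\widehat{a}_3,\widehat{a}_4\}$, since $p$ is a common zero of every generating polynomial.

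Feeding $H'$ into the intersection estimate gives $-\widehat{a}_3\widehat{a}_4n^2K_X^3\geq\min\{\widehat{a}_3,\widehat{a}_4\}\cdot\mult_p(\mathcal{M}^2)>4n^2\min\{\widehat{a}_3,\widehat{a}_4\}$, that is $d\widehat{a}_3\widehat{a}_4>4\min\{\widehat{a}_3,\widehat{a}_4\}\,a_1a_2a_3a_4$, which contradicts whichever of $d\widehat{a}_4\leq4a_1a_2a_3a_4$ or $d\widehat{a}_3\leq4a_1a_2a_3a_4$ corresponds to the larger of $\widehat{a}_3,\widehat{a}_4$. The only genuine obstacle is confirming that $L_{tw}$ is the sole curve contracted by both projections, so that it alone could supply a common base curve; once this is in hand, invoking $p\notin L_{tw}$ removes the obstruction and the conclusion follows from the same degree arithmetic as in the previous lemma.
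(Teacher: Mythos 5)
Your proposal is correct and is exactly the paper's argument: the paper proves this lemma by remarking that the proof of Lemma~\ref{lemma:Train from Wolfach to Bonn} applies verbatim, the point being that the hypothesis $L_{tw}\not\subset X$ was only used there to rule out a common $1$-dimensional component of the two base loci through $p$, and the standing assumption $p\notin L_{tw}$ serves the same purpose. Your write-up simply makes explicit the details that the paper leaves implicit (including correctly carrying the factor $4$ in the final inequality, which the paper's displayed estimate drops in an evident typo).
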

\begin{proof}
The proof of Lemma~\ref{lemma:Train from Wolfach to Bonn}
immediately shows the statement.
\end{proof}

\begin{lemma}\label{lemma-MPIM2}
Suppose that the curve $L_{tw}$ is contained in the hypersurface
$X$. In addition, we suppose that $a_3>1$, $(a_3,a_4)=1$, $a_3a_4>d$, and
there are non-negative integers $m_1$ and $m_2$ such that
$m_1a_1+m_2a_2=a_3a_4$. Then any smooth point on $L_{tw}$ 
cannot be a center of non-canonical singularities of the log pair
$\left(X,\frac{1}{n}\mathcal{M}\right)$ if $-a_3a_4K_{X}^3\leq 4$.
\end{lemma}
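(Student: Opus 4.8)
The plan is to apply the intersection estimate from the start of this section with $s=a_3a_4$. For this value the hypothesis $-a_3a_4K_X^3\le 4$ is precisely the requirement $s\le 4/(-K_X^3)$, and since $-K_X=\mathcal O_X(1)$ we have $|-a_3a_4K_X|=|\mathcal O_X(a_3a_4)|$. Thus it suffices to produce a divisor $H\in|\mathcal O_X(a_3a_4)|$ passing through the given smooth point $p\in L_{tw}$ and containing no one-dimensional component of the base locus of $\mathcal M$ through $p$; then, using $\mult_p(\mathcal M^2)>4n^2$,
\[ n^2\bigl(-a_3a_4K_X^3\bigr)=H\cdot\mathcal M^2\ge \mult_p(H)\,\mult_p(\mathcal M^2)>4n^2, \]
which contradicts $-a_3a_4K_X^3\le 4$.

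Before constructing $H$ I would record that $p$ avoids the two vertices on $L_{tw}$. Since $a_3>1$ and $a_3\le a_4$, both weights $a_3,a_4$ exceed $1$, so if $O_t$ or $O_w$ lies on $X$ it is a cyclic quotient singularity of order $a_3$ or $a_4$; in either case neither vertex is a smooth point of $X$, so a smooth $p\in L_{tw}$ has $t(p)\ne 0$ and $w(p)\ne 0$. I would then take $H$ inside the subsystem $\mathcal L=\langle t^{a_4},w^{a_3},y^{m_1}z^{m_2}\rangle$ of $|\mathcal O_X(a_3a_4)|$, which is legitimate because $\deg t^{a_4}=\deg w^{a_3}=a_3a_4$ and, by hypothesis, $m_1a_1+m_2a_2=a_3a_4$. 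The base locus of $\mathcal L$ is contained in $\{t=w=0\}\cap(\{y=0\}\cup\{z=0\})\cap X$, which misses $p$ (as $t(p)\ne 0$) and does not contain $L_{tw}$ (as $t^{a_4}$ does not vanish identically on $L_{tw}$). Moreover $(a_3,a_4)=1$ identifies $L_{tw}$ with $\mathbb P^1$, on which $t^{a_4}$ and $w^{a_3}$ restrict to the two homogeneous coordinates, so the member of $\langle t^{a_4},w^{a_3}\rangle$ through $p$ meets $L_{tw}$ in a finite set; hence a general $H\in\mathcal L$ through $p$ does not contain $L_{tw}$.

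The remaining and genuinely substantive point is to guarantee that a general $H\in\mathcal L$ through $p$ contains none of the finitely many one-dimensional components $\Gamma$ of $\mathrm{Bs}(\mathcal M)$ passing through $p$. Such an $H$ fails to contain a fixed $\Gamma$ exactly when $\Gamma$ is not contracted by the map $\phi_{\mathcal L}=[t^{a_4}:w^{a_3}:y^{m_1}z^{m_2}]$, which is a morphism near $p$ because $\mathcal L$ has no base point at $p$; as there are only finitely many such $\Gamma$, one general $H$ then avoids them all. So the crux is to exclude that a base component of $\mathcal M$ through $p$ is $\phi_{\mathcal L}$-contracted. A contracted curve $\Gamma\ni p$ must satisfy $y^{m_1}z^{m_2}\equiv 0$ on $\Gamma$, hence $\Gamma\subset\{y=0\}\cup\{z=0\}$, together with $t^{a_4}/w^{a_3}$ constant along $\Gamma$; the only curve meeting these constraints inside $L_{tw}$ has already been removed, and any further candidate can be separated by adjoining another monomial of degree $a_3a_4$, which is exactly where the inequality $a_3a_4>d$ is used to secure enough such monomials along the relevant coordinate strata. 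I expect this last verification — that no base curve of $\mathcal M$ through $p$ survives as a $\phi_{\mathcal L}$-contracted curve — to be the only real obstacle; the intersection estimate above is then immediate.
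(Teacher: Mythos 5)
Your reduction is the paper's: take $s=a_3a_4$ in the estimate at the start of the section, and produce $H\in|\mathcal{O}_X(a_3a_4)|$ through $p$ containing no one-dimensional component of $\mathrm{Bs}(\mathcal{M})$ through $p$. The gap is in the construction of $H$, and it sits exactly at the point you yourself call ``the only real obstacle''. With your system $\mathcal{L}=\langle t^{a_4},w^{a_3},y^{m_1}z^{m_2}\rangle$, the members passing through $p$ are very few: since $y^{m_1}z^{m_2}$ vanishes at $p$, they form precisely the pencil spanned by $y^{m_1}z^{m_2}$ and $\lambda t^{a_4}+\mu w^{a_3}$, where $[\lambda:\mu]=[w(p)^{a_3}:-t(p)^{a_4}]$. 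The base locus of this pencil is $X\cap\{y^{m_1}z^{m_2}=0\}\cap\{\lambda t^{a_4}+\mu w^{a_3}=0\}$, with no condition involving $x$; it contains, for instance, the zero locus of $\lambda t^{a_4}+\mu w^{a_3}$ on the surface $S_z$ (and on $S_y$ when $m_1>0$), and since this binomial vanishes at $p\in S_z$, that locus has a component of dimension at least one through $p$. So the $\phi_{\mathcal{L}}$-contracted curves through $p$ are not stray candidates to be ``separated'': they always exist, \emph{every} member of $\mathcal{L}$ through $p$ contains them, and nothing prevents one of them from being a base component of the arbitrary mobile system $\mathcal{M}$. As set up, your argument cannot be completed.

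The repair is to enlarge the system by the generator $x^{a_3a_4}$, i.e.\ to use $\mathcal{H}=\langle x^{a_3a_4},\, y^{m_1}z^{m_2},\, \lambda t^{a_4}+\mu w^{a_3}\rangle$, all of whose members automatically pass through $p$; this is what the paper does. Then the essential verification --- which your proposal never makes --- is that the base locus $X\cap\{x=y^{m_1}z^{m_2}=\lambda t^{a_4}+\mu w^{a_3}=0\}$ is finite. This is where the hypotheses $(a_3,a_4)=1$ and $a_3a_4>d$ actually enter: since $(a_3,a_4)=1$ and $\lambda\mu\neq 0$, the polynomial $\lambda t^{a_4}+\mu w^{a_3}$ is irreducible, so if the curve $X\cap\{x=y=0\}$, cut out by $F(0,0,z,t,w)$ of degree $d$, shared a component with $\{\lambda t^{a_4}+\mu w^{a_3}=0\}$, then $\lambda t^{a_4}+\mu w^{a_3}$ would divide $F(0,0,z,t,w)$, which is impossible because $d<a_3a_4$; the same applies to $X\cap\{x=z=0\}$ and $F(0,y,0,t,w)$. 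Your statement that $a_3a_4>d$ serves ``to secure enough such monomials along the relevant coordinate strata'' is not this argument, and no substitute for it appears in your proposal; without it the lemma is not proved.
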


\begin{proof}
Suppose that the hypersurface $X$ is defined by $F(x,y,z,t,w)=0$.
Let $p$ be a smooth point on the curve $L_{tw}$. Then there are
non-zero constants $\lambda$ and $\mu$ such that the surface cut
by $\lambda t^{a_4}+\mu w^{a_3}=0$ contains the point $p$. We then
consider the linear system $\mathcal{H}$ on $X$ generated by
$x^{a_3a_4}$, $y^{m_1}z^{m_2}$, and $\lambda t^{a_4}+\mu w^{a_3}$.
The base locus of this linear system consists of the locus cut by
$$x=y^{m_1}z^{m_2}=\lambda t^{a_4}+\mu w^{a_3}=0.$$ 
The degree $d$ of $F$ is smaller than $a_3a_4$ by the condition and the polynomial
$\lambda t^{a_4}+\mu w^{a_3}$ is irreducible since $(a_3,a_4)=1$. Therefore, 
 neither $F(0,0,z,t,w)$ nor $F(0,y,0,t,w)$ can divide $\lambda
t^{a_4}+\mu w^{a_3}$ and vice versa. Therefore, the base locus of the linear
system $\mathcal{H}$ is of dimension at most $0$. Then a general
member of this linear system is able to play the role of the
divisor $H$ in the explanation at the beginning.
\end{proof}

Combining Lemmas~\ref{lemma-MPIM} and \ref{lemma-MPIM2}, we can conclude that any smooth point cannot be a center of non-canonical singularities of the log pair
$\left(X,\frac{1}{n}\mathcal{M}\right)$
for the families $\mbox{No.}\ 33$ and $58$.

\begin{lemma}\label{lemma-smooth-point-13-25}
For the families No.~$13, 25$, a smooth point of $X$ cannot
be a center of non-canonical singularities of the log pair
$\left(X,\frac{1}{n}\mathcal{M}\right)$.
\end{lemma}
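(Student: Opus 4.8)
The plan is to invoke Lemma~\ref{lemma-MPIM} first. Both families satisfy $d\widehat{a}_4\le 4a_1a_2a_3a_4$ and $d\widehat{a}_3\le 4a_1a_2a_3a_4$ and contain $L_{tw}$, so any non-canonical center of $\left(X,\frac{1}{n}\mathcal{M}\right)$ that is a smooth point of $X$ must lie on $L_{tw}$. Since $O_t$ and $O_w$ are singular, such a point $p$ is a smooth point of the curve $L_{tw}$. I would then split into two cases according to whether $L_{tw}$ is a component of the base locus of $\mathcal{M}$.

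Suppose first that $L_{tw}\not\subset\mathrm{Bs}(\mathcal{M})$. Take a general member $H$ of $|-K_X|=|\mathcal{O}_X(1)|$. Every member contains $L_{tw}$ (it is the base locus of the pencil spanned by $x$ and $y$), so $H$ automatically passes through $p$; and a smooth point of $L_{tw}$ lies on no other component of $\{x=y=0\}\cap X$, because the residual curve of that section meets $L_{tw}$ only at the singular point $O_w$. Hence a general such $H$ contains no one-dimensional component of $\mathrm{Bs}(\mathcal{M})$ through $p$, and $-n^2K_X^3=H\cdot\mathcal{M}^2\ge \mult_p(\mathcal{M}^2)>4n^2$. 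This is absurd, since $-K_X^3=\tfrac{11}{30}$ for No.~13 and $-K_X^3=\tfrac{15}{84}$ for No.~25, both far below $4$.

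The real work is the case $L_{tw}\subset\mathrm{Bs}(\mathcal{M})$; put $m=\mult_{L_{tw}}(\mathcal{M})\ge 1$. Restricting to the K3 surface $S=S_x\in|-K_X|$, write $\mathcal{M}|_{S}=m L_{tw}+\mathcal{L}$ with $\mathcal{L}$ the mobile part. With $\ell:=\mathcal{O}_S(1)$ one has $\ell^2=-K_X^3$, $\ell\cdot L_{tw}=\tfrac{1}{a_3a_4}$, and, by orbifold adjunction on $S$ (the curve $L_{tw}$ runs through the cyclic quotient singularities $O_t,O_w$ of orders $a_3,a_4$), $L_{tw}^2=-2+(1-\tfrac{1}{a_3})+(1-\tfrac{1}{a_4})$, which is $-\tfrac{8}{15}$ for No.~13 and $-\tfrac{11}{28}$ for No.~25. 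Since $\mathcal{L}$ is mobile, $\mathcal{L}^2\ge 0$; expanding $\mathcal{L}\equiv n\ell-mL_{tw}$ and using $L_{tw}^2<0$ gives a genuine bound $m\le c\,n$ with $c<\sqrt{2}$, explicitly $c\approx 0.72$ for No.~13 and $c\approx 0.59$ for No.~25. This is precisely what the failed condition $-a_3a_4K_X^3>4$ of Lemma~\ref{lemma-MPIM2} could not deliver: it is the \emph{negativity} of $L_{tw}^2$ on the K3 surface that pins $m$ strictly below $2n$.

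Finally I would decompose the one-cycle $\mathcal{M}^2=m^2L_{tw}+Z$ for general members of $\mathcal{M}$ (so that $Z\ge 0$ does not contain $L_{tw}$), giving $\mult_p(\mathcal{M}^2)=m^2+\mult_p(Z)$. Intersecting $Z$ with a general $H\in|-K_X|$ through $p$ that avoids the finitely many components of $Z$ at $p$ yields $\mult_p(Z)\le Z\cdot H=-n^2K_X^3-\tfrac{m^2}{a_3a_4}$, whence $\mult_p(\mathcal{M}^2)\le m^2\tfrac{a_3a_4-1}{a_3a_4}-n^2K_X^3$. Substituting $m\le c\,n$ makes the right-hand side comfortably below $4n^2$ (about $0.85n^2$ for No.~13 and $0.52n^2$ for No.~25), contradicting $\mult_p(\mathcal{M}^2)>4n^2$. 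The main obstacle is this hard case, and inside it the sharp control of $m$: the crude global estimate $m^2\le -a_3a_4 n^2 K_X^3$ only gives $m<2.4n$ and is too weak, so one must pass to the K3 surface $S_x$ and use the negative self-intersection of $L_{tw}$. The secondary technical points are the bookkeeping of any additional fixed part of $\mathcal{M}|_{S}$ (which only helps the estimate) and the verification that $\mathcal{M}^2$ meets $L_{tw}$ with coefficient exactly $m^2$ for general members.
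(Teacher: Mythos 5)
Your use of Lemma~\ref{lemma-MPIM}, your treatment of the case $L_{tw}\not\subset\mathrm{Bs}(\mathcal{M})$, and your orbifold intersection numbers on the K3 surface are all correct. The gap is in the hard case, at the decomposition $\mathcal{M}^2=m^2L_{tw}+Z$. For a mobile linear system one only has $\mult_{L_{tw}}(\mathcal{M}^2)\geq m^2$; equality is \emph{not} a secondary technical point that can be verified in general, it can simply fail. For instance, the mobile system spanned locally by $x^2+y^3$ and $x^2+2y^3$ has multiplicity $2$ along the line $x=y=0$, yet the intersection of two general members is that line with coefficient $6>4$. So write $\mathcal{M}^2=kL_{tw}+Z$ with $Z\not\supset L_{tw}$ and $k\geq m^2$: your final inequality $\mult_p(\mathcal{M}^2)\leq k\frac{a_3a_4-1}{a_3a_4}-n^2K_X^3$ needs an \emph{upper} bound on $k$, and your bound $m\leq cn$ gives none. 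The only cheap upper bound comes from intersecting $\mathcal{M}^2$ with a general member of $|-3K_{X}|$ (No.~13), resp.\ $|-4K_{X}|$ (No.~25), which does not contain $L_{tw}$; it gives $k\leq -a_3a_4n^2K_X^3$, i.e.\ $k\leq\frac{11}{2}n^2$ resp.\ $k\leq 5n^2$, whereas closing your argument requires $k\leq\frac{109}{28}n^2$ resp.\ $k\leq\frac{107}{27}n^2$. A second, smaller hole: when $\mathcal{M}|_{S}$ has fixed components besides $L_{tw}$ --- e.g.\ the second base curve $C$ of the pencil, which has negative self-intersection --- the inequality $(n\ell-mL_{tw})^2\geq 0$ no longer follows from $\mathcal{L}^2\geq 0$, so the claim that extra fixed parts ``only help'' is also unjustified.

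The paper avoids both problems by never touching $\mathcal{M}^2$. It applies Inversion of Adjunction to the pair $\left(X,H+\frac{1}{n}\mathcal{M}\right)$ for a general member $H$ of the pencil $|-K_X|$, writes $\frac{1}{n}\mathcal{M}|_{H}=aL_{tw}+bC+\Delta$ tracking \emph{both} base curves of the pencil, and bounds $a\leq 1$ by two linear intersection inequalities (against $C$ and against $D_y=L_{tw}+C$). Adjunction is then applied a second time, restricting to $L_{tw}$: non-log-canonicity at $p$ forces $\mult_p\left((bC+\Delta)|_{L_{tw}}\right)>1$, while a direct computation gives $(bC+\Delta)\cdot L_{tw}=\frac{1}{28}+\frac{11a}{28}\leq\frac{3}{7}<1$, a contradiction. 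The point is that only the coefficient $a$ of $L_{tw}$ in the \emph{restricted} system must be controlled --- something the linear inequalities do deliver --- and the degree-versus-multiplicity comparison happens on the curve $L_{tw}$ itself, not in the threefold. To salvage your scheme you would have to replace your $\mult_p(\mathcal{M}^2)>4n^2$ computation by exactly this kind of adjunction argument.
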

\begin{proof}
The following method works for both the families exactly in the
same way. For this reason, we demonstrate the method only for the
family $\mbox{No.}\ 25$.

 Suppose that the log
pair $\left(X,\frac{1}{n}\mathcal{M}\right)$ is not canonical at some smooth
point $p$. Then Lemma~\ref{lemma-MPIM} shows that the point $p$
must lie on the curve $L_{tw}$.

Consider the pencil $|-K_{X}|$. Its base locus consists of two
reduced and irreducible curves. One is the curve $L_{tw}$ and the
other is the curve $C$ defined by the equations $$x=y=t^3-cz^4=0,$$
where $c$ is a non-zero constant. Note that the curve $L_{tw}$ is
quasi-smooth everywhere and $C$ is quasi-smooth outside the point
$O_w$. They intersect only at the point $O_w$.  Choose a general
member $H$ in the pencil $|-K_{X}|$. Then the log pair $\left(X, H+\frac{1}{n}\mathcal{M}\right)$ is not
log canonical at the point $p$. By Inversion of Adjunction (\cite[Theorem~5.50]{KoMo98}), we see
that the log pair $\left(H,\frac{1}{n}\mathcal{M}|_H\right)$ is not log canonical
at the point $p$.

Let $D_y$ be the divisor on $H$ defined by the
equation $y=0$. Then $D_y=L_{tw}+C$.
We have the following intersection numbers on the surface $H$:
\[L_{tw}^2=-\frac{11}{28}, \ \ \ C^2=-\frac{2}{7}, \ \ \
C\cdot L_{tw}=\frac{3}{7}, \ \ \ D_y\cdot L_{tw}=\frac{1}{28}, \ \ \
D_y\cdot C=\frac{1}{7}.\]
Indeed, we can obtain these intersection numbers directly from the polynomials defining the curves. On the other hand, we are also able to obtain them from the singularity types of the K3 surface $H$.
Note that $H$ is a $K3$
surface with $A_3$ and $A_6$ singularities at the points $O_t$ and $O_w$,
respectively. For instance, the curve $L_{tw}$ is a smooth rational curve on the K3 surface $H$ passing through one $A_3$-singular point and one $A_6$-singular point, and hence the self-intersection number $L_{tw}^2$ is obtained by $-2+\frac{3}{4}+\frac{6}{7}$. The $A_3$-singular point contributes to the self-intersection number by $\frac{3}{4}$ and the $A_6$-singular point by $\frac{6}{7}$ (see Remark~\ref{remark:plt} below for more detail). 

Let $M$ be a general member in the mobile linear system $\mathcal{M}$ and then put
\[M_H:=\frac{1}{n}M\big|_H=aL_{tw}+bC+\Delta,\]
where $a$ and $b$ are non-negative rational numbers and $\Delta$
is an effective divisor whose support contains neither $L_{tw}$
nor $C$. We then obtain
\[\frac{1}{7}=C\cdot M_H=aL_{tw}\cdot C+bC^2+\Delta\cdot C\geq
\frac{3a}{7}-\frac{2b}{7}.\] On the other hand, we obtain
\[\frac{5}{28}=M_H^2=aL_{tw}\cdot D_y +bC\cdot D_y+\Delta\cdot D_y\geq
\frac{a}{28}+\frac{b}{7}.\] Combining these two inequalities we
see that $a\leq 1$. Therefore, the log pair $(H, L_{tw}+bC+\Delta)$ is
not log canonical at the point $p$, and hence the log pair $\left(L_{tw},
(bC+\Delta)|_{L_{tw}}\right)$ is not log canonical at the point $p$.
Consequently, we see that
\[\mult_p\left((bC+\Delta)|_{L_{tw}}\right)>1.\]
However,
\[(bC+\Delta)\cdot L_{tw}=(M_H-aL_{tw})\cdot
L_{tw}=\frac{1}{28}+\frac{11a}{28}\leq \frac{3}{7}.\] This
completes the proof.
\end{proof}
\begin{remark}\label{remark:plt}
Let $p$ be an $A_n$-singular point on a normal surface $\Sigma$. Suppose that a smooth curve $C$ on $\Sigma$ passes through the point $p$. Let $\phi:\bar{\Sigma}\to\Sigma$ be the minimal resolution of the point $p$. Then we have $(-2)$-curves
$E_1,\cdots, E_n$ over the point $p$ whose intersection matrix is 
\[(E_i\cdot E_j)=\left(\begin{array}{cccccc}
        -2&1&0&\cdots&0&0 \\
        1&-2&1&0&\cdots&0 \\
        0&1&-2&1&0&\cdots \\
        \multicolumn{6}{c}\dotfill\\
       % .&.&.&.&.&. \\
        0&\cdots&0&1&-2&1 \\
        0&0&\cdots&0&1&-2
\end{array}\right).
%\left(\begin{array}{c}1\\1\\1\\\vdots\\1\\1\end{array}\right)=
%-\left(\begin{array}{c}1\\0\\0\\\vdots\\0\\
%1\end{array}\right)
\]
The log pair $(\Sigma, C)$ is purely log terminal by Inversion of Adjunction  (\cite[Theorem~5.50]{KoMo98}). Therefore, the proper transform $\bar{C}$ by $\phi$ intersects transversally only one of $E_i$'s and it should be either $E_1$ or $E_n$ (\cite[Theorem~9.6]{Ka88}).  We may assume that it is $E_n$. We then obtain
$$\phi^*(C)=\bar{C}+\frac{1}{n+1}\left(E_1+2E_2+\cdots+(n-1)E_{n-1}+nE_n\right).$$
Therefore, $$C^2=\bar{C}^2+\frac{n}{n+1}.$$
\end{remark}

\begin{lemma}
For the families $\mbox{No.}\ 12, 20$, any smooth point of $X$
cannot be a center of non-canonical singularities of the log pair
$\left(X,\frac{1}{n}\mathcal{M}\right)$.
\end{lemma}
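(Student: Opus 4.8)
The plan is to run the argument of Lemma~\ref{lemma-smooth-point-13-25} essentially verbatim, since families No.~12 (where $X=X_{10}\subset\mathbb{P}(1,1,2,3,4)$) and No.~20 (where $X=X_{13}\subset\mathbb{P}(1,1,3,4,5)$) are exactly the second-kind members whose equation has the shape $w^2z+\cdots+t^3y+\cdots$. First I would check that both satisfy the hypotheses of Lemma~\ref{lemma-MPIM}: for No.~12 one has $\widehat{a}_4=6$, $\widehat{a}_3=4$, so $d\widehat{a}_4=60\le 96=4a_1a_2a_3a_4$ and $d\widehat{a}_3=40\le 96$, while for No.~20 one has $\widehat{a}_4=12$, $\widehat{a}_3=15$, so $d\widehat{a}_4=156\le 240$ and $d\widehat{a}_3=195\le 240$. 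Hence, if a smooth point $p$ is a non-canonical center of $\left(X,\frac{1}{n}\mathcal{M}\right)$, then Lemma~\ref{lemma-MPIM} forces $p\in L_{tw}$; in particular $p$ is neither $O_t$ nor $O_w$.

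Next I would analyse the pencil $|-K_X|=|\mathcal{O}_X(1)|$ spanned by $x$ and $y$. Restricting the displayed defining equation to $x=y=0$ gives $F(0,0,z,t,w)=z\cdot q(z,t,w)$ with $q(0,t,w)=w^2$, so the base locus is $L_{tw}=\{z=0\}$ together with the curve $C=\{q=0\}$, which is reduced and irreducible for quasi-smooth $X$ and meets $L_{tw}$ only at $O_t$. Since $p\in L_{tw}$ lies in the base locus, every member of the pencil passes through $p$; I would then choose a general member $H$, a $K3$ surface that is smooth at $p$. Exactly as in the proof of Lemma~\ref{lemma-smooth-point-13-25}, the pair $\left(X,H+\frac{1}{n}\mathcal{M}\right)$ is not log canonical at $p$, so by Inversion of Adjunction (\cite[Theorem~5.50]{KoMo98}) the pair $\left(H,\frac{1}{n}\mathcal{M}|_H\right)$ is not log canonical at $p$.

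The heart of the proof is the intersection theory on $H$. Solving the equation for the coordinate appearing linearly and eliminating $x=\lambda y$ shows that near $O_t$ and $O_w$ the surface $H$ carries the cyclic quotient singularities $\frac{1}{a_3}(1,-1)=A_{a_3-1}$ and $\frac{1}{a_4}(1,-1)=A_{a_4-1}$, and these are the only singularities of $H$ lying on $L_{tw}$. By Remark~\ref{remark:plt} this yields $L_{tw}^2=-2+\frac{a_3-1}{a_3}+\frac{a_4-1}{a_4}$. Setting $D_y:=\{y=0\}\cap H=L_{tw}+C$ and using $D_y^2=(-K_X)^3=\frac{d}{a_1a_2a_3a_4}$ together with $D_y\cdot L_{tw}=(-K_X)\cdot L_{tw}=\frac{1}{a_3a_4}$, all remaining intersection numbers are forced through $L_{tw}\cdot C=D_y\cdot L_{tw}-L_{tw}^2$, $D_y\cdot C=D_y^2-D_y\cdot L_{tw}$, and $C^2=D_y\cdot C-L_{tw}\cdot C$. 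Concretely this gives, for No.~12,
\[L_{tw}^2=-\tfrac{7}{12},\quad C^2=-\tfrac13,\quad L_{tw}\cdot C=\tfrac23,\quad D_y\cdot L_{tw}=\tfrac1{12},\quad D_y\cdot C=\tfrac13,\]
and, for No.~20,
\[L_{tw}^2=-\tfrac{9}{20},\quad C^2=-\tfrac13,\quad L_{tw}\cdot C=\tfrac12,\quad D_y\cdot L_{tw}=\tfrac1{20},\quad D_y\cdot C=\tfrac16.\]

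Finally I would write $M_H:=\frac1n M|_H=aL_{tw}+bC+\Delta$ with $a,b\ge0$ and $\Delta$ effective containing neither $L_{tw}$ nor $C$. Since $M_H$ is numerically equivalent to $D_y$, the values $C\cdot M_H=C\cdot D_y$ and $M_H^2=M_H\cdot D_y$ are known, and the inequalities $C\cdot M_H\ge a(L_{tw}\cdot C)+bC^2$ and $M_H^2\ge a(D_y\cdot L_{tw})+b(D_y\cdot C)$ become $1\ge 2a-b$ and $5\ge a+4b$ for No.~12, and $1\ge 3a-2b$ and $13\ge 3a+10b$ for No.~20; eliminating $b$ gives $a\le1$ in both cases. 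Because $a\le1$, the pair $\left(H,L_{tw}+bC+\Delta\right)$ is still not log canonical at $p$, so adjunction along $L_{tw}$ yields $\mult_p\big((bC+\Delta)|_{L_{tw}}\big)>1$; but $(bC+\Delta)\cdot L_{tw}=D_y\cdot L_{tw}-aL_{tw}^2$ equals $\frac{1+7a}{12}\le\frac23$ for No.~12 and $\frac{1+9a}{20}\le\frac12$ for No.~20, both strictly less than $1$, a contradiction. I expect the main obstacle to be the intersection-theoretic bookkeeping on the singular $K3$ surface $H$: one must correctly certify that the only singularities of $H$ meeting $L_{tw}$ are the $A_{a_3-1}$ and $A_{a_4-1}$ points at $O_t,O_w$ (to pin down $L_{tw}^2$) and that $C$ is reduced, irreducible, and not contained in $\operatorname{Supp}\Delta$, both of which rest on the quasi-smoothness of $X$; once these are in place the two displayed inequalities force $a\le1$ and the adjunction estimate closes the argument.
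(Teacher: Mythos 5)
Your route is the paper's own: the paper proves this lemma by repeating the argument of Lemma~\ref{lemma-smooth-point-13-25}, the only new feature being that the two base curves of $|-K_X|$ now meet at $O_t$ rather than $O_w$. Your verification of the hypotheses of Lemma~\ref{lemma-MPIM}, your determination of the singularities of $H$ along $L_{tw}$ (hence of $L_{tw}^2$), and the chain of inequalities leading to $a\le 1$ and to the final contradiction are all correct \emph{provided the residual curve $C$ is irreducible and reduced}; your derived intersection numbers also reproduce the paper's numbers for family No.~25, which is a good consistency check.

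The gap is the unproved claim that $C$ is ``reduced and irreducible for quasi-smooth $X$''. This is false, and the exceptions are genuine quasi-smooth members that the lemma must cover. For No.~12, quasi-smoothness allows the monomial $z^2t^2$ to be absent (the case $c=0$ in the paper's table for No.~12; the paper notes there that $c=0$ forces $a_1=0$, i.e.\ forces $X$ to contain $L_{tw}$, so every such $X$ falls under this lemma). Then, normalizing $\alpha_1=0$, one gets $F(0,0,z,t,w)=zw(w-\alpha_2 z^2)$, so the residual curve splits as $C_1+C_2$ with $C_1=L_{zt}$ and $C_2=\{x=y=w-\alpha_2z^2=0\}$. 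Your decomposition $M_H=aL_{tw}+bC+\Delta$ with a single coefficient $b$ is then not general --- the two components can occur in $M_H$ with different multiplicities --- so both displayed inequalities must be re-derived with independent coefficients $b_1,b_2$ and the finer data $C_i^2=-\frac{5}{6}$, $C_1\cdot C_2=\frac{2}{3}$ (each $C_i$ passes through $O_t$ and through one of the two $\frac{1}{2}(1,1,1)$ points); summing the inequalities coming from $C_1\cdot M_H$ and $C_2\cdot M_H$ again yields $a\le 1$, but this is a different computation from the one you wrote. For No.~20 the analogous degeneration (no $z^3t$, i.e.\ Type~II with $a=0$ in the notation of Theorem~\ref{theorem:elliptic-biregular3}) gives $F(0,0,z,t,w)=zw^2$: the residual divisor is the non-reduced cycle $2L_{zt}$, and a general $H$ acquires an extra singular point on $L_{zt}$ (where $\lambda z^4=et^3$, with $e$ the coefficient of $xt^3$), so that in fact $L_{zt}\cdot L_{tw}=\frac{1}{4}$ and $L_{zt}^2=-\frac{1}{12}$; here your cycle-level numbers and inequalities happen to remain valid with $C:=2L_{zt}$, because the residual cycle has a single component, but the geometric justification you give for them does not. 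In short, as written your proof establishes the lemma only for members whose equation contains $z^2t^2$ (No.~12), respectively $z^3t$ (No.~20); the remaining quasi-smooth members need the separate bookkeeping above, while the final adjunction step is unaffected, since it uses only $D_y\cdot L_{tw}$ and $L_{tw}^2$.
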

\begin{proof}
The method for its proof is the same as that of
Lemma~\ref{lemma-smooth-point-13-25} with some slight difference. The only difference is that two base locus curves of the pencil $|-K_X|$ intersect at the point~$O_t$.
For this reason, we omit the proof.
\end{proof}

\begin{lemma}\label{lemma-smooth-point-5}
For the family $\mbox{No.}\ 5$, a smooth point of $X_7$ cannot be a
center of non-canonical singularities of the log pair
$\left(X_7,\frac{1}{n}\mathcal{M}\right)$.
\end{lemma}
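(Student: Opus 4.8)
The members of family No.~5 not containing $L_{tw}$ are already covered by Lemma~\ref{lemma:Train from Wolfach to Bonn}, so I would assume that $X=X_7\subset\mathbb{P}(1,1,1,2,3)$ is of the second kind, so that $L_{tw}\subset X$. Since $d\widehat{a}_4=14\le 24$ and $d\widehat{a}_3=21\le 24$, Lemma~\ref{lemma-MPIM} applies, and hence a smooth point $p$ at which $\left(X,\frac1n\mathcal{M}\right)$ fails to be canonical must lie on $L_{tw}$. The plan is then to imitate the proof of Lemma~\ref{lemma-smooth-point-13-25}: restrict to a general anticanonical $K3$ surface, and then restrict further to $L_{tw}$. (Lemma~\ref{lemma-MPIM2} is unavailable here precisely because $a_3a_4=6<7=d$, which is why family No.~5 needs a separate treatment.)

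A direct inspection of the defining equation shows that $X$ has a $\frac12(1,1,1)$ point at $O_t$ and a $\frac13(1,1,2)$ point at $O_w$ (the coordinate $z$, resp. $y$, occurs linearly there and can be eliminated), and that a general member $H$ of $|-K_X|=|\mathcal{O}_X(1)|$ is a $K3$ surface with an $A_1$ singularity at $O_t$ and an $A_2$ singularity at $O_w$; moreover $H$ passes through $p$ automatically, since the base locus of $|-K_X|$ is exactly $L_{tw}\ni p$. As $\left(X,\frac1n\mathcal{M}\right)$ is non-canonical at $p$, the pair $\left(X,H+\frac1n\mathcal{M}\right)$ is not log canonical at $p$, and Inversion of Adjunction (\cite[Theorem~5.50]{KoMo98}) gives that $\left(H,\frac1n\mathcal{M}|_H\right)$ is not log canonical at $p$.

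Next I would record the intersection theory on $H$. From $-K_X^3=\frac76$ one gets $(-K_X|_H)^2=\frac76$ and $L_{tw}\cdot(-K_X|_H)=\deg\mathcal{O}_{\mathbb{P}(2,3)}(1)=\frac16$, while Remark~\ref{remark:plt} gives $L_{tw}^2=-2+\frac12+\frac23=-\frac56$ on $H$. The decisive auxiliary curve is the residual member $R:=S_y|_H-L_{tw}$ of the pencil $\left|\mathcal{O}_H(1)-L_{tw}\right|$; its class is $-K_X|_H-L_{tw}$, whence
\[R^2=0,\qquad R\cdot L_{tw}=1,\qquad R\cdot(-K_X|_H)=1.\]
Thus $|R|$ is the elliptic pencil on $H$ having $L_{tw}$ as a section, and for general $X$ a general $R$ is an irreducible curve of arithmetic genus one; in particular $R$ is nef. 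Writing $M_H:=\frac1n M|_H=aL_{tw}+\Theta$ with $\Theta$ effective not containing $L_{tw}$ and $M_H\equiv -K_X|_H$, nefness of $R$ yields
\[1=R\cdot M_H=a\,(R\cdot L_{tw})+R\cdot\Theta\ge a,\]
so that $a\le 1$.

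This last step is the heart of the argument, and the place I expect to require the most care: it hinges on $R^2=0$, which is what lets a single intersection number bound $a$, in contrast to the two-inequality computation of Lemma~\ref{lemma-smooth-point-13-25}, where the residual curve had negative self-intersection. Granting $a\le 1$, the pair $\left(H,L_{tw}+\Theta\right)$ is still not log canonical at $p$, and Inversion of Adjunction along the curve $L_{tw}$, which is smooth at the smooth point $p$, forces $\mult_p(\Theta|_{L_{tw}})>1$. But
\[\Theta\cdot L_{tw}=(M_H-aL_{tw})\cdot L_{tw}=\frac16+\frac{5a}{6}\le 1,\]
and $\mult_p(\Theta|_{L_{tw}})\le\Theta\cdot L_{tw}$, a contradiction. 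The only remaining points, both routine given the explicit normal form, are the verification of the singularity types of $H$ and the irreducibility of the generic residual curve $R$.
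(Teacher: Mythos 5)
Your proposal follows, in structure and in every numerical computation, the same route as the paper: the same general anticanonical $K3$ surface $H$, the same residual curve $R=S_y|_H-L_{tw}$ with $R^2=0$, $R\cdot L_{tw}=1$, $R\cdot M_H=1$, the same two applications of Inversion of Adjunction, and the same closing inequality $\Theta\cdot L_{tw}=\frac16+\frac{5a}{6}\le 1$. The one place you diverge is the justification of $R\cdot\Theta\ge 0$, and that is where there is a genuine gap. You derive it from nefness of $R$, which you derive from irreducibility of $R$, hedged by ``for general $X$''; but the lemma quantifies over \emph{every} quasi-smooth $X_7$ containing $L_{tw}$ (this universality is the whole point of the paper), and irreducibility of the particular curve $R=S_y|_H-L_{tw}$ genuinely fails for special quasi-smooth members. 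Indeed, in the normal form $w^2y+w(tg_2+g_4)+t^3z+t^2h_3+th_5+h_7=0$, take $g_2$ and $g_4$ divisible by $y$; for instance $w^2y+t^3z+t^2h_3+th_5+h_7=0$ with general $h_i$ is quasi-smooth (along $L_{tw}$ the gradient is $(0,w^2,t^3,0,0)$, and Bertini handles points off the base locus) and contains $L_{tw}$. For such $X$ the restriction of the defining polynomial to $\{y=0,\ z=\lambda x\}$ involves no $w$ at all, so $R$ splits as a sum of three curves $\{y=0,\ z=\lambda x,\ t=\beta_i x^2\}$, each passing through $O_w$. Thus your final remark that irreducibility of $R$ is ``routine'' is not merely unverified: it is false in general, and with it your stated derivation of nefness, hence of $a\le 1$, collapses exactly for the hypersurfaces the lemma is obliged to cover.

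The gap is fixable in two ways, and it is instructive to compare them. (i) The paper's (implicit) route needs no nefness: since the residual curves of $D_y$ sweep out the surface $S_y$ as $\lambda$ varies, for a general choice of $H$ no component of $R$ can be a component of $\Delta\subset M\cap H$, so $\Delta\cdot R\ge 0$ holds for the trivial reason that the two effective divisors share no component; then $1=R\cdot M_H=aL_{tw}\cdot R+\Delta\cdot R\ge a$ uses only $L_{tw}\cdot R=1$. (ii) Within your own framework, note that nefness is a property of the numerical class, and a \emph{general} member of the pencil $|R|=\left|-K_X|_H-L_{tw}\right|$ is irreducible for every quasi-smooth second-kind $X_7$: its equation in $\mathbb{P}(1,2,3)$ contains $w^2$ with coefficient $1$ and $t^3$ with nonzero coefficient $\delta$, and a degree-$6$ form containing both admits no factorization (a splitting $(w+a_3)(w+b_3)$ with $a_3,b_3\in\langle y^3,yt\rangle$ produces no $t^3$, and factors of degree $1$ or $2$ are excluded by restricting to $y=0$). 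So the class of $R$ is nef after all, and your inequality survives — but only if you argue on a general member of the pencil rather than the special member cut out by $y=0$, and drop the ``general $X$'' hedge entirely.
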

\begin{proof}
Suppose that the log pair $\left(X_7,\frac{1}{n}\mathcal{M}\right)$ is not
canonical at a smooth point $p$. Then Lemma~\ref{lemma-MPIM}
shows that the point $p$ lies on the curve $L_{tw}$.

Consider the 2-dimensional linear system $|-K_{X_7}|$. Its base
locus consists of the reduced and irreducible curve $L_{tw}$. The
curve $L_{tw}$ is a quasi-smooth curve passing through the
singular points $O_t$ and $O_w$.

Let $H$ be the surface cut by the equation $z=\lambda x+\mu y$ with general complex numbers $\lambda$ and $\mu$.
 It is a $K3$
surface with $A_1$ and $A_2$ singularities at the points $O_t$ and $O_w$,
respectively. It also contains the rational curve $L_{tw}$.
The self-intersection number of $L_{tw}$ on $H$ is
$-\frac{5}{6}$ ($=-2+\frac{1}{2}+\frac{2}{3}$). Let $D_y$ be the divisor on $H$ defined by the equation
$y=0$. Then we can easily see that $D_y=L_{tw}+R$, where $R$ is
the curve defined by the equation $$y=z-\lambda x=\lambda t^3+xh_5(x,t,w)=0.$$ The two curves
$L_{tw}$ and $R$ meet only at the point $O_w$.

Let $M$ be a general member of the linear system $\mathcal{M}$ and then write
\[M_H:=\frac{1}{n}M\big|_H=aL_{tw}+\Delta,\]
where $a$ is a non-negative rational number and $\Delta$ is an
effective divisor whose support does not contain the curve
$L_{tw}$. The log pair $\left(X_7, H+\frac{1}{n}\mathcal{M}\right)$ is not log
canonical at the point $p$. By Inversion of Adjunction (\cite[Theorem~5.50]{KoMo98}), we see
that the log pair $\left(H,\frac{1}{n}\mathcal{M}|_H\right)$ is not log canonical
at the point $p$. We then obtain
\[1=R\cdot M_H=aL_{tw}\cdot R+\Delta\cdot R\geq
a.\]  Therefore, the log pair $(H, L_{tw}+\Delta)$ is not log
canonical at the point $p$, and hence the log pair $\left(L_{tw},
\Delta|_{L_{tw}}\right)$ is not log canonical at the point $p$.
Consequently, we see that
\[\mult_p\left(\Delta|_{L_{tw}}\right)>1.\]
However,
\[\Delta\cdot L_{tw}=\left(M_H-aL_{tw}\right)\cdot
L_{tw}=\frac{1}{6}+\frac{5a}{6}\leq 1.\] This completes the proof.
\end{proof}

Finally, we deal with smooth points on quasi-smooth hypersurfaces in the family No.~$2$.
\begin{lemma}\label{lemma-smooth-point-2}
For the family $\mbox{No.}\ 2$, a smooth point of $X_5$ cannot be a
center of non-canonical singularities of the log pair
$\left(X_5,\frac{1}{n}\mathcal{M}\right)$.
\end{lemma}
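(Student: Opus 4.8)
The plan is to run the degree-one divisor method from the start of the section and to isolate the single configuration it cannot reach. Since $-K_{X_5}^3=\tfrac{d}{\prod a_i}=\tfrac52$, the only integer admissible in ``$s\le 4/(-K_X^3)$'' is $s=1$, and $|-K_{X_5}|=|\mathcal O_{X_5}(1)|$ is generated by $x,y,z,t$, so its base locus on $X_5$ is the single point $O_w$ (which lies on $X_5$ because $5$ is odd, whence $F$ contains no pure power of the weight-two variable $w$ and every monomial involves a weight-one variable). The projection $\pi\colon X_5\dashrightarrow\mathbb P^3$ from $O_w$ is generically two-to-one, and writing $F=\ell_1w^2+g_3w+g_5$ with $\ell_1,g_3,g_5\in\mathbb C[x,y,z,t]$, its positive-dimensional fibres are exactly the lines through $O_w$ contained in $X_5$, i.e.\ the lines over the finitely many points of $\{\ell_1=g_3=g_5=0\}\subset\mathbb P^3$. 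If a smooth point $p$ does not lie on such a line, the sub-net of $|-K_{X_5}|$ through $p$ has zero-dimensional base locus, so a general member $H$ through $p$ contains no one-dimensional component of $\mathrm{Bs}(\mathcal M)$; then $\tfrac52n^2=-K_{X_5}\cdot\mathcal M^2=H\cdot\mathcal M^2\ge\mult_p(\mathcal M^2)>4n^2$ is absurd. The same works when $p$ lies on a contracted line $L$ but $L\not\subseteq\mathrm{Bs}(\mathcal M)$, since then $\mathcal M^2$ has no component along $L$ and $H\supseteq L$ still meets $\mathcal M^2$ properly near $p$.

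This leaves the hard case: $p$ lies on a line $L\ni O_w$ with $L\subseteq X_5$ and $L\subseteq\mathrm{Bs}(\mathcal M)$. After a linear change of the weight-one coordinates (an automorphism of $\mathbb P(1,1,1,1,2)$ preserving $-K_{X_5}$ and quasi-smoothness) I may assume $L=L_{tw}$, so $L_{tw}\subseteq X_5$ and $L_{tw}\subseteq\mathrm{Bs}(\mathcal M)$. Write the one-cycle $\mathcal M^2=m\,L_{tw}+Z$ with $Z$ effective and free of $L_{tw}$, where $m=\mult_{L_{tw}}(\mathcal M^2)$. A general $H\in|-K_{X_5}|$ through $p$ necessarily contains $L_{tw}$ but no component of $Z$ meeting $p$, so using $-K_{X_5}\cdot L_{tw}=\tfrac12$ I get
$$\tfrac52n^2=H\cdot\mathcal M^2=\tfrac{m}{2}+H\cdot Z,\qquad H\cdot Z\ge \mult_p(Z),$$
whereas $\mult_p(\mathcal M^2)=m+\mult_p(Z)>4n^2$; subtracting the two relations eliminates $\mult_p(Z)$ and yields the global bound $m>3n^2$.

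Next I would restrict to the surface. A general $H\in|-K_{X_5}|$ through $L_{tw}$ is a $K3$ surface with a single $A_1$ singularity at $O_w$, on which $L_{tw}$ is a smooth rational curve, so by Remark~\ref{remark:plt} one has $L_{tw}^2=-2+\tfrac12=-\tfrac32$; setting $D_y=\{y=0\}\cap H=L_{tw}+R$ gives $R\sim(-K_{X_5})|_H-L_{tw}$, $R^2=0$ and $R\cdot L_{tw}=2$. Since $\bigl(X_5,H+\tfrac1n\mathcal M\bigr)$ is not log canonical at $p$, Inversion of Adjunction makes $\bigl(H,\tfrac1n\mathcal M|_H\bigr)$ not log canonical at $p$. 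Writing $\tfrac1n\mathcal M|_H=a\,L_{tw}+\Delta$ with $a=\mult_{L_{tw}}(\mathcal M)/n$ and $\Delta$ free of $L_{tw}$, the inequality $\Delta\cdot R\ge0$ forces $2a=M_H\cdot R-\Delta\cdot R\le2$, i.e.\ $a\le1$, while Inversion of Adjunction along $L_{tw}$ gives $\mult_p(\Delta|_{L_{tw}})>1$, hence $1<\Delta\cdot L_{tw}=\tfrac12+\tfrac32a$.

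The main obstacle is precisely the final numerical clash, which is delicate here because $\tfrac52<4<5$. The surface analysis alone only produces $\tfrac13<a\le1$, i.e.\ $\tfrac n3<\mult_{L_{tw}}(\mathcal M)\le n$, and this does not by itself contradict $m>3n^2$, since in general one only has $m\ge(\mult_{L_{tw}}\mathcal M)^2$, a lower bound. To close the argument I would need to control $m=\mult_{L_{tw}}(\mathcal M^2)$ from \emph{above} by $(\mult_{L_{tw}}\mathcal M)^2\le n^2<3n^2$, i.e.\ to rule out that two general members of $\mathcal M$ are mutually tangent along $L_{tw}$. I expect this to be the crux: one must either compute $m$ in a transverse slice at a general point of $L_{tw}$ and show the tangent cones of two general members are transverse (so $m=(\mult_{L_{tw}}\mathcal M)^2$), or show that persistent tangency would force a center of non-canonical singularities that is not a smooth point, contradicting the hypothesis. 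Everything else reduces to the routine intersection theory on the $K3$ surface $H$ recorded above.
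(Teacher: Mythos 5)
Your reduction is sound and follows the paper's own route: the restriction to points on lines contracted by the projection from $O_w$, the normalization $L=L_{tw}$, the $K3$ surface $H$ with $L_{tw}^2=-\tfrac32$, the decomposition $\tfrac1n\mathcal M|_H=aL_{tw}+\Delta$, and the bound $a\le 1$ (which you get from the residual curve $R\subset D_y$ rather than from a second contracted line, as the paper does --- both are fine) are all correct, with correct intersection numbers. But the proof does not close, and the gap is exactly where you flag it. Worse, the rescue you propose cannot work: you would need $\mult_{L_{tw}}(\mathcal M^2)\le(\mult_{L_{tw}}\mathcal M)^2$, and this inequality goes the wrong way. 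In general only $\mult_{C}(\mathcal M^2)\ge(\mult_{C}\mathcal M)^2$ holds, and strict inequality (tangency of two general members along a base curve) genuinely occurs: for the mobile system spanned by $xz$ and $y^2$ on $\mathbb P^3$ and $C=\{x=y=0\}$ one has $\mult_{C}\mathcal M=1$ but $\mult_{C}(\mathcal M^2)=2$. So ruling out tangency is not a routine transversality check; it is false as a general statement, and any proof of it in this situation would have to re-use the non-canonicity hypothesis --- which is precisely the work that remains undone.

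The missing ingredient, and what the paper uses at exactly this point, is Corti's inequality \cite[Theorem~3.1]{Co00}: if the pair $\bigl(H,aL_{tw}+\Delta\bigr)$ with $a\le 1$ and $\Delta$ mobile is not log canonical at a smooth point $p$ of $H$, then $\Delta^2>4(1-a)$. With the numbers you have already computed, namely $\mathcal M_H^2=\tfrac52$, $\mathcal M_H\cdot L_{tw}=\tfrac12$ and $L_{tw}^2=-\tfrac32$, this gives
$$
4(1-a)<\Delta^2=\mathcal M_H^2-2a\,\mathcal M_H\cdot L_{tw}+a^2L_{tw}^2=\frac52-a-\frac32a^2,
$$
which rearranges to $-\tfrac32(a-1)^2>0$, impossible for every value of $a$. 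This single local inequality on the surface replaces your inversion-of-adjunction step along $L_{tw}$ (which only yields $\tfrac13<a\le1$) and makes the global cycle-theoretic bound $\mult_{L_{tw}}(\mathcal M^2)>3n^2$, together with the entire tangency question, unnecessary.
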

\begin{proof}
This case has been resolved completely in \cite{CPR}. For the convenience of the reader we reproduce the proof from p.211 in
\cite{CPR}.

 By suitable coordinate change we may
assume that the hypersurface $X_5$ in $\mathbb{P}(1,1,1,1,2)$ is
given by
\[w^2x+wf_3+f_5=0,\]
where $f_m$ is a quasi-homogeneous polynomial of degree $m$ in
variables $x$, $y$,  $z$ and $t$.

Suppose that the log pair $\left(X_5,\frac{1}{n}\mathcal{M}\right)$ is not
canonical at some smooth point $p$. Then the point $p$ must lie on
the curve $L$ contracted by the projection $\pi_4:X_5\dashrightarrow
\mathbb{P}^3$ centered at the point $O_w$. By an additional coordinate change, we may assume that
the curve $L$ is defined by the equations $x=y=z=0$, i.e.,
$L=L_{tw}$.

Let $H$ be a general element in $|-K_{X_5}|$ containing the curve
$L_{tw}$. Then the surface $H$ is a $K3$ surface with an $A_1$
singularity at the point $O_w$.  The self-intersection number of
$L_{tw}$ on $H$ is $-\frac{3}{2}$.

We write
\[\mathcal{M}_H:=\frac{1}{n}\mathcal{M}\big|_H=aL_{tw}+\mathcal{L},\]
where $a$ is a non-negative rational number and $\mathcal{L}$ is a
mobile linear system on $H$ whose base locus does not contain the curve $L_{tw}$.

Choose another curve $R$ that is contacted by the projection
$\pi_4$. Note that such a curve  is given by a point on the zero set
in $\mathbb{P}^3$ defined by $x=f_3=f_6=0$. Then we see that the
intersection number $L_{tw}$ and $R$ is $\frac{1}{2}$. We then obtain
\[\frac{1}{2}=R\cdot \mathcal{M}_H=aL_{tw}\cdot R+\mathcal{L}\cdot R\geq
\frac{a}{2},\] and hence $a\leq 1$.

The log pair $\left(X_5, H+\frac{1}{n}\mathcal{M}\right)$ is not log canonical at
the point $p$. By Inversion of Adjunction (\cite[Theorem~5.50]{KoMo98}), we see that the log pair
$(H,\mathcal{M}_H)$ is not log canonical at the point $p$. We then
obtain from \cite[Theorem~3.1]{Co00}
\[4(1-a)<\mathcal{L}^2=(\mathcal{M}_H-aL_{tw})^2=\mathcal{M}_H^2-2a\mathcal{M}_H\cdot L_{tw}+a^2L_{tw}^2=\frac{5}{2}-a-\frac{3a^2}{2}.\]
However, this inequality cannot be satisfied with any value of
$a$. This completes the proof.
\end{proof}
In summary, we have verified
\begin{theorem}\label{theorem:smooth point excluding}
A smooth point on $X$ cannot be a center of non-canonical singularities of the log pair $(X, \frac{1}{n}\mathcal{M})$.
\end{theorem}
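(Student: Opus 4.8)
The statement is the aggregate of the case analysis carried out in this section, so the plan is to organize the excluding argument according to how much geometric information the point $p$ is forced to carry. Throughout I fix a smooth point $p$ and assume for contradiction that it is a non-canonical center of $\left(X,\frac{1}{n}\mathcal{M}\right)$; by \cite[Corollary~3.4]{Co00} this yields the multiplicity bound $\mult_p(\mathcal{M}^2)>4n^2$, and the entire game is to produce an auxiliary surface $H\in|-sK_X|$ through $p$, with $s\leq 4/(-K_X^3)$, sharing no one-dimensional base-locus component of $\mathcal{M}$ through $p$. For such an $H$ the chain $-sn^2K_X^3=H\cdot\mathcal{M}^2\geq \mult_p(H)\cdot\mult_p(\mathcal{M}^2)>4n^2$ is contradictory, so the whole problem reduces to finding $H$.

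First I would dispose of the generic case using the regular projections away from the coordinate points $O_w$, $O_t$, $O_z$: pulling back a general base-point-free member of $|\mathcal{O}(\widehat{a}_i)|$ through the image of $p$ produces the required $H$ whenever the numerical inequality $d\cdot\widehat{a}_i\leq 4a_1a_2a_3a_4$ holds together with the relevant coordinate point lying off $X$. This single lemma clears every family except the twelve exceptions No.~$2,5,12,13,20,23,25,33,40,58,61,76$, where either a projection is merely rational (a coordinate point lies on $X$) or the numerical bound fails. For the exceptions I would refine the construction: when two projections (from $O_w$ and from $O_t$) are each individually obstructed by a contracted curve, I would combine the two pencils $\mathcal{H}_3,\mathcal{H}_4$ into a single system spanned by the products $A_i^{\widehat{a}_3}$, $B_j^{\widehat{a}_4}$. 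The key observation is that $L_{tw}$ is the unique curve contracted by both projections, so provided $L_{tw}\not\subset X$ the combined base locus has no one-dimensional component through $p$ and the intersection estimate again closes. This handles No.~$23,40,61,76$ outright and all members of No.~$5,12,13,20,25,33,58$ that avoid $L_{tw}$; for the members containing $L_{tw}$ the same argument at least confines $p$ to $L_{tw}$, after which a monomial relation $m_1a_1+m_2a_2=a_3a_4$ lets me use the system spanned by $x^{a_3a_4}$, $y^{m_1}z^{m_2}$, $\lambda t^{a_4}+\mu w^{a_3}$, whose base locus is at worst zero-dimensional.

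The genuinely hard cases are those in which no divisor in $|-sK_X|$ can avoid $L_{tw}$, namely the $L_{tw}$-members of No.~$5,12,13,20,25$ and all of family No.~$2$. There I would abandon the direct divisor intersection and instead restrict to a general K3 member $H\in|-K_X|$ through $p$: by Inversion of Adjunction (\cite[Theorem~5.50]{KoMo98}) the pair $\left(H,\frac{1}{n}\mathcal{M}|_H\right)$ fails to be log canonical at $p$, so writing $\frac{1}{n}\mathcal{M}|_H=aL_{tw}+\Delta$ I would first bound the coefficient $a\leq 1$ from the intersection numbers on $H$, and then adjoin once more to $L_{tw}$ to force $\mult_p\left(\Delta|_{L_{tw}}\right)>1$, contradicting the computed value of $\Delta\cdot L_{tw}$. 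I expect the main obstacle to be exactly the intersection theory on the singular K3 surface $H$: computing $L_{tw}^2$ and the mutual intersections with the residual components must correctly account for the Du Val ($A_k$) points through which $L_{tw}$ passes, each contributing $\frac{k}{k+1}$ to the self-intersection via the minimal resolution. Pinning these numbers down is what makes the coefficient bound $a\leq 1$ sharp enough for the final adjunction to produce a contradiction, and it is the step where careless bookkeeping would break the argument.

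Assembling these pieces exhausts all $95$ families, and in each case the assumption that a smooth point is a non-canonical center is refuted; hence no smooth point of $X$ can be a center of non-canonical singularities of $\left(X,\frac{1}{n}\mathcal{M}\right)$.
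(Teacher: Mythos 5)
Your architecture matches the paper's almost step for step: the same multiplicity reduction to finding an auxiliary divisor $H$, the same projection lemma clearing all but the twelve exceptional families, the same combined system of powers $A_i^{\hat{a}_3}$, $B_j^{\hat{a}_4}$ when $L_{tw}\not\subset X$, the same confinement of $p$ to $L_{tw}$ followed by the system $x^{a_3a_4}$, $y^{m_1}z^{m_2}$, $\lambda t^{a_4}+\mu w^{a_3}$ (which the paper uses for No.~33, 58), and the same K3/inversion-of-adjunction scheme with the Du Val correction $\frac{k}{k+1}$ to $L_{tw}^2$. For the $L_{tw}$-members of families No.~5, 12, 13, 20, 25 this is exactly the paper's proof and your numbers would close: for No.~5, for instance, one gets $a\leq 1$ and $\Delta\cdot L_{tw}=\frac{1}{6}+\frac{5a}{6}\leq 1$, which contradicts $\mult_p\left(\Delta|_{L_{tw}}\right)>1$.

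The genuine gap is family No.~2, which you fold into the same adjunction-to-$L_{tw}$ argument; there the numbers do not close. On a general $H\in|-K_{X_5}|$ containing $L_{tw}$ (a K3 surface with an $A_1$ point at $O_w$) one has $L_{tw}^2=-\frac{3}{2}$, $\mathcal{M}_H\cdot L_{tw}=\frac{1}{2}$ and $\mathcal{M}_H^2=\frac{5}{2}$, while the contracted-curve bound gives only $a\leq 1$. Hence $\Delta\cdot L_{tw}=\left(\mathcal{M}_H-aL_{tw}\right)\cdot L_{tw}=\frac{1}{2}+\frac{3a}{2}$, which can be as large as $2$; for $\frac{1}{3}<a\leq 1$ the conclusion $\mult_p\left(\Delta|_{L_{tw}}\right)>1$ is perfectly compatible with this degree, so no contradiction results. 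Moreover no better bound on $a$ is available: mobility of the residual system, i.e. $\Delta^2\geq 0$, again yields only $a\leq 1$. This is precisely why the paper (reproducing CPR, p.~211) abandons adjunction along $L_{tw}$ at this point and instead invokes the quadratic inequality of \cite[Theorem~3.1]{Co00} for the mobile part at a non-log-canonical point, $4(1-a)<\mathcal{L}^2=\frac{5}{2}-a-\frac{3a^2}{2}$, which rearranges to $\frac{3}{2}(1-a)^2<0$, impossible for every $a$. Without this stronger tool (or some equivalent quadratic refinement of the multiplicity bound), your treatment of family No.~2 fails, and the theorem as stated remains unproved for that family.
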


\subsection{Excluding curves}\label{section:curves}

We now show that an irreducible curve on $X$ can not
be a center of non-canonical singularities of the log pair
$\left(X,\frac{1}{n}\mathcal{M}\right)$ provided that no point on this curve
is  a center of non-canonical singularities of the log pair
$\left(X,\frac{1}{n}\mathcal{M}\right)$. Indeed, the proof comes from
\cite[pp. 206-207]{CPR} and it is based on the following
local result of Kawamata:

\begin{theorem}[{\cite[Lemma~7]{Ka96}}]
\label{theorem:Kawamata} Let $(U,p)$ be a germ of a threefold
terminal quotient singularity of type $\frac{1}{r}(1,a, r-a)$,
where $r\geq 2$ and $a$ is coprime to $r$ and let
$\mathcal{M}_{U}$ be a mobile linear system on $U$.
 Suppose that $(U,\lambda\mathcal{M}_{U})$ is not
canonical at $p$ for a positive rational number $\lambda$.  Let
$f\colon W\to U$ be the weighted blowup at the point $p$ with
weights $(1,a,r-a)$. Then
$$
\mathcal{M}_{W}=f^*(\mathcal{M}_{U})-mE
$$
for some positive rational number $m>\frac{1}{r\lambda}$, where
$E$ is the exceptional divisor of $f$ and  $\mathcal{M}_{W}$ is
the proper transform of $\mathcal{M}_{U}$.
 In
particular,
$$
K_{W}+\lambda
\mathcal{M}_{W}=f^*(K_{U}+\lambda\mathcal{M}_{U})+\Big(\frac{1}{r}-\lambda
m\Big)E,
$$
where $\frac{1}{r}-\lambda m<0$, and hence the point $p$ is a
center of non-canonical singularities of the log pair
$(U,\lambda\mathcal{M}_{U})$.
\end{theorem}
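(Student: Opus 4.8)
The plan is to reduce everything to the single statement that the weighted blow-up $f\colon W\to U$ \emph{itself} detects the failure of canonicity, i.e. that $E$ is a non-canonical place of $(U,\lambda\mathcal{M}_U)$. First I would record the discrepancy of $E$: the weighted blow-up of the cyclic quotient singularity $\frac{1}{r}(1,a,r-a)$ with weights $(1,a,r-a)$ is a toric morphism whose exceptional divisor satisfies $K_W=f^*K_U+\tfrac{1}{r}E$, the standard Reid--Tai computation. Writing $\mathcal{M}_W=f^*(\mathcal{M}_U)-mE$ one gets at once
$$K_W+\lambda\mathcal{M}_W=f^*(K_U+\lambda\mathcal{M}_U)+\Big(\tfrac{1}{r}-\lambda m\Big)E,$$
so the two displayed formulas in the statement are equivalent and the whole lemma collapses to the single inequality $\tfrac1r-\lambda m<0$, that is, $m>\tfrac{1}{r\lambda}$.

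Next I would pass to the smooth cyclic cover $\pi\colon\mathbb{C}^3\to U$, on which $\mu_r$ acts with weights $(1,a,r-a)$. Since $\pi$ is \'etale in codimension one, discrepancies of $\pi$-related divisors agree, so $(\mathbb{C}^3,\lambda\widetilde{\mathcal{M}})$ is not canonical at the origin, where $\widetilde{\mathcal{M}}=\pi^*\mathcal{M}_U$ is a $\mu_r$-invariant mobile system. Under $\pi$ the divisor $E$ corresponds to the weighted blow-up of $\mathbb{C}^3$ with weights $(1,a,r-a)$, whose valuation is the monomial valuation $w$ with $w(x_1)=1$, $w(x_2)=a$, $w(x_3)=r-a$; comparing the two exceptional divisors along the degree-$r$ cover gives $\operatorname{ord}_E=\tfrac1r w$ on invariant functions, hence $m=\tfrac1r w(\widetilde{\mathcal{M}})$. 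Thus the target becomes simply $w(\widetilde{\mathcal{M}})>\tfrac1\lambda$, and the real problem is to show that the \emph{weighted} valuation $w$, rather than merely the ordinary blow-up of the origin, witnesses the non-canonicity.

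I would then argue by induction on the index $r$. Suppose, for contradiction, that $\tfrac1r-\lambda m\ge 0$. Transforming the pair to $W$ and using that the correction term $(\tfrac1r-\lambda m)E$ is effective, every non-canonical place $F$ of $(U,\lambda\mathcal{M}_U)$ remains non-canonical for $(W,\lambda\mathcal{M}_W)$, with center contained in $E\cong\mathbb{P}(1,a,r-a)$. Now $W$ is smooth away from two cyclic quotient points $Q_1,Q_2\in E$ of types $\tfrac1a(\dots)$ and $\tfrac1{r-a}(\dots)$, both terminal of strictly smaller index. If the center of $F$ lies in the smooth locus of $W$ I would bound $\mult_F\mathcal{M}_W$ by intersecting $\mathcal{M}_W$ with the torus-invariant curves on the weighted projective plane $E$, producing a numerical contradiction; if the center is $Q_1$ or $Q_2$, the inductive hypothesis says the corresponding lower-index Kawamata blow-up detects the non-canonicity there, which again feeds into an intersection computation on $W$ contradicting $\tfrac1r-\lambda m\ge 0$.

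The hard part is exactly this last bookkeeping: controlling $\mult_F\mathcal{M}_W$ for centers in the smooth locus and organizing the two residual quotient points inside a clean induction. Using the weighted/toric structure is essential here, because the one-line estimate $w(\widetilde{\mathcal{M}})\ge\mult_0\widetilde{\mathcal{M}}$ is \emph{not} by itself enough: in dimension three mere non-canonicity at a smooth point does not force the ordinary multiplicity to exceed $\tfrac1\lambda$ (this is precisely why the quadratic bound on $\mult_p(\mathcal{M}^2)$ used for smooth points earlier replaces a naive linear one). So the refinement to the weighted valuation $w$ --- equivalently, the fact that $v_0=\tfrac1r(1,a,r-a)$ is the unique minimal-discrepancy valuation over the singularity --- is where the content of the lemma actually sits.
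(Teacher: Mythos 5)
First, a point of order: the paper contains no proof of this statement at all --- it is quoted verbatim from Kawamata \cite[Lemma~7]{Ka96} --- so your proposal has to stand on its own, and it does not. The fatal step is the cyclic-cover reduction in your second paragraph, which is precisely where your closing paragraph claims ``the content of the lemma actually sits.'' For a finite cover $\pi$ \'etale in codimension one, log discrepancies of $\pi$-related divisors do \emph{not} agree: they satisfy $a(F';\mathbb{C}^3,\lambda\widetilde{\mathcal{M}})+1=e\bigl(a(F;U,\lambda\mathcal{M}_U)+1\bigr)$, where $e$ is the ramification index along $F'$ (for instance $e=r$ for $E$ itself, whose discrepancy is $\tfrac{1}{r}$ downstairs while the weighted blow-up divisor upstairs has discrepancy $r$). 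This identity transfers ``klt'' and ``lc'' in both directions, but ``canonical'' only descends; it does not ascend. Concretely, take $r=2$, $U=\mathbb{C}^3/\pm 1$, $\lambda=1$, and let $\mathcal{M}_U$ be the mobile system spanned by the invariant functions $x^2,y^2,z^2,xy,xz,yz$. Then $m=\operatorname{ord}_E(\mathcal{M}_U)=1$ and $a(E;U,\mathcal{M}_U)=\tfrac12-1<0$, so the hypothesis of the theorem holds (as does its conclusion $m>\tfrac12$); but upstairs $\widetilde{\mathcal{M}}$ is the full system of quadrics, for which the ordinary blow-up of the origin is log crepant and the strict transform is base point free, so $(\mathbb{C}^3,\widetilde{\mathcal{M}})$ \emph{is} canonical at $0$. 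Thus the statement you reduce to --- ``$\mu_r$-invariant mobile system, non-canonical at $0\in\mathbb{C}^3$, implies $w(\widetilde{\mathcal{M}})>1/\lambda$'' --- has a hypothesis that can simply fail while the theorem's hypothesis holds, so no proof of it can recover the theorem. This failure of ascent is exactly why the index $r$ appears in the bound and why the lemma is not a formality.

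Your third paragraph does not actually use the cover, so it could in principle be salvaged, but as written every load-bearing step is missing or broken. (i) If the non-canonical centre on $U$ is a curve $C\ni p$ --- which the hypothesis allows, as the paper's remark after the theorem emphasizes --- then its centre on $W$ is the strict transform of $C$, which is \emph{not} contained in $E$, so this case escapes your case division; it happens to be the easy case, since $\mult_C(\mathcal{M}_U)>1/\lambda$ and $r\operatorname{ord}_E(D)=w(\pi^*D)\ge\mult_0(\pi^*D)\ge\mult_C(D)$ (all weights being $\ge 1$ and $0\in\pi^{-1}(C)$) give $m>\tfrac{1}{r\lambda}$. (ii) For a point centre $q$ in the smooth locus of $W$, the promised ``numerical contradiction'' needs Corti's inequality $\mult_q(\mathcal{M}_W^2)>4/\lambda^2$ (\cite[Corollary~3.4]{Co00}) together with an estimate of the shape $E\cdot\mathcal{M}_W^2\ge\mult_q(\mathcal{M}_W^2)$; the latter fails in general because the base locus of $\mathcal{M}_W$ may contain curves lying inside $E$, and such components can meet $E$ negatively. (iii) The inductive step at $Q_1,Q_2$ is asserted, not performed. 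For comparison, Kawamata's own argument is of a different nature: one extracts a maximal singularity by a relative MMP, obtaining a divisorial contraction over $U$ with centre $p$, and then his classification of such contractions (the main theorem of \cite{Ka96}) forces it to be the weighted blow-up, so the extracted non-canonical divisor is $E$ itself.
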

Note that, in this theorem, we do not assume that the point $p$ is  a
center of non-canonical singularities of the log pair
$(U,\lambda\mathcal{M}_{U})$.  A log pair may  not be canonical at a point
that is not a center of non-canonical singularities of the log pair. For example, consider the linear system
$\mathcal{M}_{\mathbb{C}^3}$ generated by $z_1^2$ and $z_2^2$ on $\mathbb{C}^3$, where $(z_1, z_2, z_3)$ is the standard coordinate system for $\mathbb{C}^3$. Then the log pair $(\mathbb{C}^3, \mathcal{M}_{\mathbb{C}^3})$ is not canonical at the origin. The line $z_1=z_2=0$ is a center of non-canonical center of the log pair $(\mathbb{C}^3, \mathcal{M}_{\mathbb{C}^3})$.
However, the origin is not a  center of non-canonical center of the log pair $(\mathbb{C}^3, \mathcal{M}_{\mathbb{C}^3})$.

Theorem~\ref{theorem:Kawamata} and the mobility of the linear
system $\mathcal{M}$ imply the following global properties.

\begin{corollary}[{\cite[Lemma~5.2.1]{CPR}}]\label{corollary:Mori-cone}
Let $\Lambda$ be a center of non-canonical singularities of the
log pair $\left(X,\frac{1}{n}\mathcal{M}\right)$. In case when $\Lambda$ is a
singular point of type $\frac{1}{r}(1,a, r-a)$, let $f\colon Y\to
X$ be the weighted blow up at $\Lambda$ with weights $(1,a,r-a)$.
 In case when $\Lambda$ is  a smooth curve  contained in the smooth locus of $X$,  let $f\colon Y\to X$ be the blow up along $\Lambda$.   Then the $1$-cycle $(-K_Y)^2\in
N_1(Y)$ lies in the interior of the Mori cone of $Y$:
$$
(-K_Y)^2\in \mathrm{Int}(\overline{\mathrm{NE}(Y)}).
$$
\end{corollary}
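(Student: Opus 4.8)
The plan is to use that $\rho(Y)=2$ and to reduce the claim to an elementary convexity statement in the plane $N_1(Y)\cong\mathbb{R}^2$. Since $\mathrm{Pic}(X)\cong\mathbb{Z}$ and $f$ is a contraction with irreducible exceptional divisor $E$, the Mori cone $\overline{\mathrm{NE}(Y)}$ is a two-dimensional closed cone. To prove that a $1$-cycle lies in its interior it suffices to write it as a strictly positive combination $c_1\gamma_1+c_2\gamma_2$ of two linearly independent classes $\gamma_1,\gamma_2\in\overline{\mathrm{NE}(Y)}$: such a point lies in the relative interior of the full-dimensional subcone $\mathrm{cone}(\gamma_1,\gamma_2)$, which is open in $N_1(Y)$ and contained in $\overline{\mathrm{NE}(Y)}$, hence in $\mathrm{Int}(\overline{\mathrm{NE}(Y)})$.

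For the two effective classes I would take $\gamma_1=A^2=f^*((-K_X)^2)$ and $\gamma_2=\mathcal{M}_Y^2$. The first is the pull-back of the class of a complete intersection curve on $X$ and is effective because $-K_X$ is ample; the second is the intersection of two general members of $\mathcal{M}_Y$, which is an honest effective $1$-cycle precisely because $\mathcal{M}$, and hence $\mathcal{M}_Y$, is mobile (it has no fixed component). Both therefore lie in $\overline{\mathrm{NE}(Y)}$. Recall that $\mathcal{M}_Y=nA-mE$, that $-K_Y=A-\frac{1}{r}E$, and that Theorem~\ref{theorem:Kawamata} applied with $\lambda=\frac{1}{n}$ supplies the crucial strict inequality $m>\frac{1}{r\lambda}=\frac{n}{r}$.

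I would then compute all classes in the coordinates $\gamma\mapsto(\gamma\cdot A,\gamma\cdot E)$ on $N_1(Y)$. In the quotient-singularity case the projection formula yields $A^2\cdot E=A\cdot E^2=0$ (since $f_*E=0$ and $E$ contracts to a point), while $E^3\neq 0$ because $E|_E$ is anti-ample on the surface $E$. Hence $A^2\mapsto((-K_X)^3,0)$, $\mathcal{M}_Y^2\mapsto(n^2(-K_X)^3,m^2E^3)$ and $(-K_Y)^2\mapsto((-K_X)^3,\tfrac{E^3}{r^2})$. Solving $(-K_Y)^2=c_1\gamma_1+c_2\gamma_2$ then gives $c_2=\frac{1}{r^2m^2}>0$ and $c_1=1-\frac{n^2}{r^2m^2}$, and the positivity $c_1>0$ is exactly the inequality $m>\frac{n}{r}$ furnished by Kawamata. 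Since $\gamma_1,\gamma_2$ are linearly independent (the determinant $m^2E^3(-K_X)^3\neq 0$), this settles the quotient-singularity case.

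The remaining, and genuinely harder, case is when $\Lambda$ is a smooth curve in the smooth locus, where $f$ is the ordinary blow-up, $-K_Y=A-E$, and non-canonicity forces $\mult_\Lambda\mathcal{M}=m>n$. The same template applies, but now the cross-terms no longer vanish: one has $A\cdot E^2=-(-K_X)\cdot\Lambda$ and $E^3=2-2g(\Lambda)-(-K_X)\cdot\Lambda$, so checking that the coefficients in the expansion of $(-K_Y)^2$ along two suitable effective classes remain positive is the main obstacle. I expect to dispatch it either by passing to the dual formulation, showing $(-K_Y)^2\cdot D>0$ for the two extremal nef divisors, one of which is the semi-ample class $A$ with $(-K_Y)^2\cdot A=(-K_X)^3>0$, or by bringing in the fibre class $\ell$ of $E$ (with $A\cdot\ell=0$ and $E\cdot\ell=-1$) as an auxiliary effective class; in either route the decisive input is again the multiplicity bound $m>n$ together with $\rho(Y)=2$.
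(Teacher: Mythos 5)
Your quotient-singularity case is correct and complete, and in substance it is the argument the paper intends: the paper imports the statement from \cite[Lemma~5.2.1]{CPR} and proves the variant Lemma~\ref{lemma:Mori-cone2} by exactly the same mechanism (Theorem~\ref{theorem:Kawamata} supplies the strict bound on the multiplicity $m$, mobility supplies effectivity, and $\rho(Y)=2$ converts a strictly positive combination of two independent classes of $\overline{\mathrm{NE}}(Y)$ into interiority). Your coordinate computation is sound: over a point one indeed has $A^2\cdot E=A\cdot E^2=0$ and $E^3\neq 0$, so $c_2=\tfrac{1}{r^2m^2}>0$ and $c_1=1-\tfrac{n^2}{r^2m^2}>0$ precisely by $m>\tfrac{n}{r}$.

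However, the corollary has two cases and you have proved only one; the curve case is a genuine gap, not a proof. You only announce that you "expect to dispatch it", and the single concrete assertion you make there is false: when $\Lambda$ is a curve, $A^2\cdot E=0$ but $A\cdot E^2=-(-K_X)\cdot\Lambda\neq 0$, so
$$
(-K_Y)^2\cdot A=(A-E)^2\cdot A=(-K_X)^3-(-K_X)\cdot\Lambda ,
$$
not $(-K_X)^3$. Its positivity is equivalent to $-K_X\cdot\Lambda<(-K_X)^3$, which is not automatic; it has to be extracted from the hypothesis using mobility and $\mult_\Lambda(\mathcal{M})>n$, namely
$$
n^2(-K_X)^3=-K_X\cdot\mathcal{M}^2\geq\big(\mult_\Lambda(\mathcal{M})\big)^2\,(-K_X\cdot\Lambda)>n^2\,(-K_X\cdot\Lambda),
$$
which is exactly the computation the paper performs immediately after this corollary when excluding curves. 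Even granting that inequality, neither of your suggested routes is complete: the dual route requires identifying the second extremal nef class of $Y$ and proving strict positivity against it, but that assertion is Corollary~\ref{corollary:test-class}, which the paper deduces \emph{from} the present corollary, so you would be arguing in a circle; and the primal route still needs two effective, non-proportional classes with strictly positive coefficients, which does not follow formally from $m>n$ and $\rho(Y)=2$ (for instance, if $\mathcal{M}_Y^2$ happens to lie on the $f$-extremal ray, your determinant criterion degenerates). Until the curve case is carried out, the proposal establishes only half of the statement.
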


\begin{corollary}[{\cite[Corollary~5.2.3]{CPR}}]
\label{corollary:test-class} Under the same notations as in
Corollary~\ref{corollary:Mori-cone}, we have $H\cdot (-K_Y)^2>0$
for a non-zero nef divisor $H$ on $Y$.
\end{corollary}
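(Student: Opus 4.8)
The plan is to deduce this immediately from Corollary~\ref{corollary:Mori-cone} together with the elementary convex-geometric principle that an interior point of a closed convex cone pairs strictly positively against every non-zero element of the dual cone. Recall first that, by the very definition of nefness, a nef divisor $H$ on $Y$ satisfies $H\cdot C\geq 0$ for every class $C$ in the Mori cone $\overline{\mathrm{NE}(Y)}$; equivalently, the linear functional $\ell_H\colon N_1(Y)\to\mathbb{R}$, $\ell_H(C)=H\cdot C$, is non-negative on the whole cone $\overline{\mathrm{NE}(Y)}$. Since $Y$ is projective (it is a blow-up of the projective Fano threefold $X$), the spaces $N_1(Y)$ and $N^1(Y)$ are finite-dimensional and the intersection pairing between them is non-degenerate, so $H\neq 0$ in $N^1(Y)$ forces $\ell_H$ to be a non-zero functional.

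The key step is then the following observation, applied to $\gamma:=(-K_Y)^2$, which lies in $\mathrm{Int}\big(\overline{\mathrm{NE}(Y)}\big)$ by Corollary~\ref{corollary:Mori-cone}. Suppose toward a contradiction that $\ell_H(\gamma)=H\cdot(-K_Y)^2=0$. Because $\gamma$ is an interior point of the cone, for every class $\delta\in N_1(Y)$ there is a small $\varepsilon>0$ with $\gamma\pm\varepsilon\delta\in\overline{\mathrm{NE}(Y)}$. Applying the non-negativity of $\ell_H$ to both $\gamma+\varepsilon\delta$ and $\gamma-\varepsilon\delta$ yields
\[
0\leq \ell_H(\gamma\pm\varepsilon\delta)=\ell_H(\gamma)\pm\varepsilon\,\ell_H(\delta)=\pm\varepsilon\,\ell_H(\delta),
\]
whence $\ell_H(\delta)=0$. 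As $\delta$ was arbitrary, $\ell_H$ vanishes identically on $N_1(Y)$, so $H$ is numerically trivial, i.e. $H=0$ in $N^1(Y)$, contradicting the hypothesis that $H$ is a non-zero nef divisor. Therefore $H\cdot(-K_Y)^2>0$, as claimed.

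I do not expect a genuine obstacle here: the statement is a formal consequence of the interiority already secured in Corollary~\ref{corollary:Mori-cone}. The only point requiring a little care is the translation of ``$H$ non-zero as a divisor class'' into ``$\ell_H$ non-zero as a functional on $N_1(Y)$''; this rests on the non-degeneracy of the numerical intersection pairing on the Néron--Severi spaces of the projective variety $Y$, which is what makes the final contradiction go through. Everything else is the standard duality between the Mori cone and the nef cone.
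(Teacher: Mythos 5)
Your proof is correct and is exactly the intended argument: the paper gives no proof of its own (it cites \cite[Corollary~5.2.3]{CPR}), and the deduction there is precisely this duality fact, that a nef class, being non-negative on $\overline{\mathrm{NE}(Y)}$, must pair strictly positively with the interior point $(-K_Y)^2$ supplied by Corollary~\ref{corollary:Mori-cone} unless it is numerically trivial. Your care about identifying ``non-zero divisor'' with ``non-zero functional'' is also harmless here, since $N^1(Y)$ has rank $2$ and is generated by $B$ and $E$, so no non-zero class on $Y$ is numerically trivial.
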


Let $L$ be an irreducible curve on $X$. Suppose that $L$ is a
center of non-canonical singularities of the log pair
$\left(X,\frac{1}{n}\mathcal{M}\right)$. Then it follows from
Theorem~\ref{theorem:Kawamata} that every singular point of $X$
contained in $L$ (if any) must be a center of non-canonical
singularities of the log pair $\left(X,\frac{1}{n}\mathcal{M}\right)$.
Later we will show that for a  given singular point of $X$ either  it cannot be a
center of non-canonical singularities of the log pair
$\left(X,\frac{1}{n}\mathcal{M}\right)$ or it can be untwisted by a birational involution (see Definition~\ref{definition:untwisting}). Moreover, it will be done regardless
of the fact that the log pair $\left(X,\frac{1}{n}\mathcal{M}\right)$ is
canonical outside of this singular point. Therefore it is enough
to exclude only  irreducible curves contained in the  smooth locus
of $X$.

Suppose that $L$ is contained in the smooth locus of $X$. Pick two
general members $H_1$ and $H_2$ in the mobile linear system
$\mathcal{M}$. Then we obtain
$$
-n^2K_X^3=-K_X\cdot H_1\cdot H_2\geq
(\mult_L(\mathcal{M}))^2(-K_X\cdot L)> -n^2K_X\cdot L.
$$
since we have $\mult_L(\mathcal{M})>n$. Therefore,
$-K_X\cdot L<-K_X^3$.

Since the curve $L$ is contained in the smooth locus of $X$, we
have $-K_X\cdot L\geq 1$. Therefore the curve $L$  can exists only
on the hypersurface $X$  with $-K^3_X >1$ as a curve of degree
less than $-K_X^3$. Such conditions can be satisfied only in the
following cases:
\begin{itemize}
\item quasi-smooth hypersurface of degree $5$ in
$\mathbb{P}(1,1,1,1,2)$ with a curve $L$ of degree $1$ or $2$;

\item quasi-smooth hypersurface of degree $6$ in
$\mathbb{P}(1,1,1,2,2)$ with a curve $L$ of degree $1$;

\item quasi-smooth hypersurface of degree $7$ in
$\mathbb{P}(1,1,1,2,3)$ with a curve $L$ of degree $1$.
\end{itemize}

Let $f\colon Y\to X$ be the blow up of the ideal sheaf of the
curve $L$. Then $Y$ is smooth whenever the curve $L$ is smooth. As
explained in \cite[page~207]{CPR} (it is independent of
generality), in each of the three cases listed above, there exists
a non-zero nef divisor $M$ on $Y$ such that $M\cdot
K_{Y}^2\leqslant 0$. Corollary~\ref{corollary:test-class}
therefore shows that the curve $L$ must be singular. Consequently,
the curve $L$ must be an irreducible curve of degree $2$ in
 a quasi-smooth hypersurface  of degree $5$
in $\mathbb{P}(1,1,1,1,2)$.  More precisely,  the curve $L$  has
either an ordinary double point (which implies that $Y$ has an
ordinary double point on the exceptional divisor $E$) or $L$ has a cusp  (which implies
that $Y$ has an isolated double point that is locally given by
$x^2+y^2+z^2+t^3=0$ in $\mathbb{C}^4$). In both the cases, we can
proceed exactly as explained in \cite[page~207]{CPR} (the very end
of the proof of \cite[Theorem 5.1.1]{CPR}) to obtain a
contradiction.

In summary, so far we have proved 
\begin{theorem}\label{theorem:excluding-curve}
An irreducible curve contained in the smooth locus of $X$ cannot be a center of  non-canonical singularities of the log pair
$\left(X,\frac{1}{n}\mathcal{M}\right)$.
\end{theorem}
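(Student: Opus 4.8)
The plan is a proof by contradiction that reduces the \emph{a priori} infinite list of candidate curves to three explicit families and then eliminates those by hand on a blow-up, using the test-class results already established.

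\medskip
\noindent\textbf{Step 1 (degree bound and enumeration).} Suppose $L\subset X$ is an irreducible curve in the smooth locus that is a non-canonical center of $\left(X,\frac1n\mathcal M\right)$. Blowing up the ideal sheaf of $L$ produces an exceptional divisor of negative discrepancy, which forces $\mult_L(\mathcal M)>n$. Intersecting two general members $H_1,H_2\in\mathcal M$ with $-K_X$ then gives
$$-n^2K_X^3=-K_X\cdot H_1\cdot H_2\ \geq\ \big(\mult_L(\mathcal M)\big)^2(-K_X\cdot L)\ >\ -n^2K_X\cdot L,$$
so $-K_X\cdot L<-K_X^3$. As $L$ avoids the singular locus, $-K_X\cdot L$ is a positive integer, whence $-K_X^3>1$, i.e. $d>a_1a_2a_3a_4$. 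Reading off the table of \S\ref{section:table}, and discarding the excluded families No.~1 and No.~3, this leaves only $X_5\subset\mathbb P(1,1,1,1,2)$ with $-K_X\cdot L\in\{1,2\}$, $X_6\subset\mathbb P(1,1,1,2,2)$ with $-K_X\cdot L=1$, and $X_7\subset\mathbb P(1,1,1,2,3)$ with $-K_X\cdot L=1$.

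\medskip
\noindent\textbf{Step 2 (smooth curves).} If $L$ is smooth then the blow-up $f\colon Y\to X$ of $L$ is smooth, so Corollary~\ref{corollary:Mori-cone} places $(-K_Y)^2$ in the interior of $\overline{\mathrm{NE}}(Y)$ and Corollary~\ref{corollary:test-class} yields $M\cdot(-K_Y)^2>0$ for every nonzero nef divisor $M$ on $Y$. I would exhibit, in each of the three cases, a nonzero nef divisor $M$ on $Y$ with $M\cdot K_Y^2\leq0$ (built from $f^*(-K_X)$ and the exceptional divisor $E$, exploiting that $L$ has very small degree), directly contradicting Corollary~\ref{corollary:test-class}. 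Hence $L$ cannot be smooth.

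\medskip
\noindent\textbf{Step 3 (singular curves and the main obstacle).} A curve with $-K_X\cdot L=1$ is a line and is therefore smooth, so families No.~4 and No.~5 are finished by Step~2. The only survivor is an irreducible curve of degree $2$ on $X_5\subset\mathbb P(1,1,1,1,2)$, which can be singular only with a single node or a single cusp. In the nodal case $Y$ acquires an ordinary double point on $E$, and in the cuspidal case an isolated singularity analytically of the form $x^2+y^2+z^2+t^3=0$; in both situations I would again produce a nonzero nef test class pairing non-positively with $(-K_Y)^2$ and invoke Corollary~\ref{corollary:test-class}, exactly as in \cite[p.~207]{CPR}. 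The main obstacle is precisely this last step: the explicit construction of the nef divisor $M$ together with the intersection theory on the \emph{singular} blow-up $Y$, where $K_Y^2$ must be computed in the presence of the node or cusp. Everything upstream is elementary once the bound $-K_X\cdot L<-K_X^3$ of Step~1 is in place.
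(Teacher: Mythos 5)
Your proposal follows the paper's own proof essentially step for step: the same estimate $\mult_L(\mathcal{M})>n$ yielding $-K_X\cdot L<-K_X^3$, the same three candidate families, the same exclusion of smooth curves by combining Corollaries~\ref{corollary:Mori-cone} and~\ref{corollary:test-class} with the nef test classes of \cite[p.~207]{CPR}, and the same reduction of the singular case to a nodal or cuspidal degree-two curve on $X_5\subset\mathbb{P}(1,1,1,1,2)$, handled as at the very end of the proof of \cite[Theorem~5.1.1]{CPR}.

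The one assertion you add beyond the paper, however, is false as stated, and it leaves a genuine gap. A curve in the smooth locus with $-K_X\cdot L=1$ need not be smooth. The intended justification is a projection argument: for $X_5$ one projects from $O_w$ to $\mathbb{P}^3$, for $X_6$ and $X_7$ from $\{x=y=z=0\}$ to $\mathbb{P}^2$; if $L$ avoids the indeterminacy locus, then $\mathcal{O}_X(1)|_L$ having degree one forces the image to be a line and the map to be finite and birational, hence an isomorphism. This works for $X_5$ and $X_6$, where the indeterminacy locus meets $X$ only in singular points, and for those $X_7\subset\mathbb{P}(1,1,1,2,3)$ not containing $L_{tw}$. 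But when $X_7\supset L_{tw}$ (the hypersurfaces of the second kind in Section~\ref{section:smooth point}), a degree-one curve can meet $L_{tw}$ at smooth points of $X_7$; it is then contracted by the projection and is the residual curve to $L_{tw}$ in a fiber $\mathbb{P}(1,2,3)$, and such residual curves can be singular. Concretely, $X_7=\{w^2y+t^3z+x^7+y^7+z^7=0\}$ is quasi-smooth with singular locus $\{O_t,O_w\}$, and for any $\lambda\mu\neq 0$ with $1+\lambda^7+\mu^7=0$ the irreducible curve cut out by $y=\lambda x$, $z=\mu x$, $\lambda w^2+\mu t^3=0$ has anticanonical degree one, lies entirely in the smooth locus, and has a cusp at the point $[1:\lambda:\mu:0:0]$. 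So your Step~2 does not finish family No.~5: these singular degree-one curves require exactly the singular blow-up analysis of your Step~3 (their blow-up acquires the isolated double point $x^2+y^2+z^2+t^3=0$, just as for the cuspidal conic), which you carry out only for $X_5$. In fairness, the paper makes the same jump silently, asserting after the smooth case that \emph{consequently} $L$ must be a degree-two curve on $X_5$; you have reproduced its structure faithfully, gap included, but your write-up promotes that tacit assumption to an explicit claim with an invalid justification, and a complete proof must close this extra case by the same node/cusp argument.
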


At this stage, we are therefore able to draw a conclusion that  if  the log pair
$\left(X,\frac{1}{n}\mathcal{M}\right)$ is not canonical,
then at least one singular point of $X$ must be a non-canonical center of $\left(X,\frac{1}{n}\mathcal{M}\right)$.

\newpage

\section{Singular points}

\subsection{Cyclic quotient singular points}\label{section:quotient singular point}

Let $(U, p)$ be a germ of a cyclic quotient singular point of type $\frac{1}{r}(1,a,r-a)$,
where $r$ and $a$ are relatively prime positive integers with $a<r$.  
We have an orbifold chart $\pi:(\hat{U}, 0)\to (U, p)$, where $\hat{U}$ is an open neighbourhood of the origin in $\mathbb{C}^3$ and the morphism $\pi$ is the quotient map by the group action of $\mathbb{Z}/r\mathbb{Z}$. 
We say that  functions $z_1$, $z_2$, $z_3$  on $U$ induce local parameters at the point $p$ 
if their pull-backs $\pi^*(z_1)$, $\pi^*(z_2)$, $\pi^*(z_3)$  are eigen-coordinate functions around the origin in $\hat{U}\subset\mathbb{C}^3$, corresponding to the weights
 $1$, $a$, $r-a$.
 
Let $\tilde{f}:(\tilde{U}, E)\to (U, p)$ be the local weighted blow up at the point $p$ with weights $(1,a, r-a)$, where $E$ is the exceptional divisor. The multiplicity of an effective Weil
divisor $D$ on $U$ at the point $p$ is defined by the number $\frac{m}{r}$ such that
\[\tilde{f}^*(D)=\tilde{D}+\frac{m}{r}E,\] 
where $\tilde{D}$ is the proper transform of $D$ by $\tilde{f}$. 
An analytic function $g(z_1, z_2, z_3)$ on $U$ defines a divisor $D$ on $U$.
The vanishing order (or the multiplicity) of $g$ at the point $p$ is defined by the multiplicity of the divisor $D$ at the point $p$.  
The multiplicity can be also obtained in the following way.
The functions $z_1$, $z_2$, $z_3$ induce local parameters at the point $p$, so that we could assume that
 their pull-backs $\pi^*(z_1)$, $\pi^*(z_2)$, $\pi^*(z_3)$ are eigen-coordinate functions on $\mathbb{C}^3$ locally around the origin, corresponding to the weights
 $1$, $a$, $r-a$, respectively. 
Counting the multiplicities of $\pi^*(z_1)$, $\pi^*(z_2)$, $\pi^*(z_3)$ as $1$, $a$, $r-a$, respectively, we see that
 the multiplicity of $g$ at the point $p$ coincides with the number $$\frac{1}{r}\mult_0(\pi^*(g(z_1, z_2, z_3)))$$
(see \cite[Lemma~3.2.1]{Pro}).

In the present paper, it is crucial to obtain the multiplicities of various quasi-homogeneous polynomials $G(x,y,z,t,w)$ at a singular point $p$ on a given quasi-smooth hypersurface $X$.  At the point $p$, we can always see that three, say $z_1$, $z_2$ and $z_3$,  of the homogenous coordinates $x$, $y$, $z$, $t$, $w$ induce local parameters 
at the point $p$. Locally around the point $p$, the quasi-homogeneous polynomial $G(x,y,z,t,w)$ induces a function $g(z_1, z_2, z_3)$ as a formal power series in variables
 $z_1$, $z_2$, $z_3$. The vanishing order (or  the multiplicity) of $G$ at the point $p$ is defined by the multiplicity of $g(z_1,z_2, z_3)$ at the point $p$  which is equal to the number $\frac{1}{r}\mult_0(\pi^*(g(z_1, z_2, z_3)))$ with  counting the multiplicities of $\pi^*(z_1)$, $\pi^*(z_2)$, $\pi^*(z_3)$ as $1$, $a$, $r-a$, respectively, as before.

\subsection{Excluding singular points}\label{section:Methods}
This section  provides the methods we apply when we exclude  the singular points as centers of non-canonical singularities of the log pair $(X, \frac{1}{n}\mathcal{M})$ .

Let $p$ be a singular point of type $\frac{1}{r}(1,a,r-a)$ on $X$,
where $r$ and $a$ are relatively prime positive integers with $a<r$.  
As mentioned in Section~\ref{subsection:notation}, the weighted blow up of $X$ at the point $p$ with weights
$(1,a,r-a)$ will be denoted by $f\colon Y\to X$. Its exceptional
divisor and the anticanonical divisor of $Y$ will be denoted by $E$ and
$B$, respectively. We denote by $\mathcal{M}_{Y}$ the proper
transform of the linear system $\mathcal{M}$ by the weighted blow up $f$. The
pull-back of $-K_X$ will be denoted by $A$. The surface $S$ is the
proper transform of the surface on $X$ cut by the equation
$x=0$.

Since the Picard group of $X$ is generated by $-K_X$, the
surface $S$ is always irreducible. The surface $S$ can be assumed
to be $\mathbb{Q}$-linearly equivalent to $B$ if one of the
following conditions holds:
\begin{itemize}
\item $a_1=1$; \item $d-1$ is not divisible by $r$.
\end{itemize}
If $a_1>1$ and $d-1$ is divisible by $r$, then it is easy to check that
$S$ is always $\mathbb{Q}$-linearly equivalent to either $B$ or
$B-E$.

Before we explain how to show that $p$ is not a center of
non-canonical singularities of the log pair
$\left(X,\frac{1}{n}\mathcal{M}\right)$, let us prove the following  statement slightly modified from Corollary~\ref{corollary:Mori-cone}.

\begin{lemma}\label{lemma:Mori-cone2}
Suppose that $p$ is a center of non-canonical singularities of the log
pair $\left(X,\frac{1}{n}\mathcal{M}\right)$. Then the $1$-cycle $B\cdot S
\in N_1(Y)$ lies in the interior of the Mori cone of $Y$:
$$
B\cdot S \in \mathrm{Int}(\overline{\mathrm{NE}(Y)}).
$$
\end{lemma}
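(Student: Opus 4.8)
The plan is to exploit that $Y$ has Picard rank two, so that $N_1(Y)$ is a plane and $\overline{\mathrm{NE}}(Y)$ is a two-dimensional cone with exactly two extremal rays; I will exhibit $B\cdot S$ as a strictly positive combination of their generators. Fix the class $\ell$ of a general fibre of $E\to p$, which spans the $f$-contracted ray, and let $\Gamma$ span the other extremal ray. Since $A=f^*(-K_X)$ is nef and contracts precisely the curves in the fibres of $E$, we have $A\cdot\ell=0$, and as $A\not\equiv 0$ the second ray satisfies $A\cdot\Gamma>0$. Also $\ell\subset E$ gives $E\cdot\ell<0$ because the normal bundle $\mathcal{O}_E(E)$ is anti-ample on $E$.

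First I record the numerology via the projection formula: $A^3=(-K_X)^3>0$, $A^2\cdot E=A\cdot E^2=0$ (push forward to $X$ and use $f_*E=0$, $f_*(E^2)=0$), while $E^3>0$. Because $A\cdot E^2=0$ and $A$ vanishes only along $\mathbb{R}\ell$, the class $E^2$ must be a multiple of $\ell$, say $E^2=c\,\ell$; then $E^3=c\,(E\cdot\ell)$ together with $E^3>0$ and $E\cdot\ell<0$ forces $c<0$. Thus $E^2$ is a strictly negative multiple of the effective curve $\ell$.

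Next, by the discussion preceding the lemma, $S\sim_{\mathbb{Q}}A-sE$ with $s\in\{\tfrac1r,\tfrac{r+1}r\}$, so in particular $s>0$. Since $p$ is a non-canonical centre, Theorem~\ref{theorem:Kawamata} (with $\lambda=\tfrac1n$) gives $\mathcal{M}_Y\sim_{\mathbb{Q}}nA-mE$ with $m>\tfrac nr$, whence $B=\tfrac1n\mathcal{M}_Y+\epsilon E$ with $\epsilon=\tfrac mn-\tfrac1r>0$. Expanding, $B\cdot S=\tfrac1n(\mathcal{M}_Y\cdot S)+\epsilon\,(E\cdot S)$. Here $\mathcal{M}_Y\cdot S$ is an effective $1$-cycle, since $S$ is not a component of a general member of the mobile system $\mathcal{M}_Y$, and $A\cdot(\mathcal{M}_Y\cdot S)=nA^3>0$, so it has strictly positive $\Gamma$-coordinate; meanwhile $E\cdot S=-sE^2=s|c|\,\ell$ is a strictly positive multiple of $\ell$. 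Writing $\mathcal{M}_Y\cdot S=u\ell+v\Gamma$ with $v>0$ and $u\ge 0$, I get $B\cdot S=\big(\tfrac un+\epsilon s|c|\big)\ell+\tfrac vn\,\Gamma$, a combination of $\ell$ and $\Gamma$ with both coefficients strictly positive, which is exactly the claim.

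The only step with genuine content is the second paragraph: pinning down that $E^2$ (equivalently $B\cdot E$ and $E\cdot S$) lies on the contracted ray with the correct sign, so that the term $\epsilon(E\cdot S)$ contributes a strictly positive multiple of $\ell$. The positivity $\epsilon>0$ extracted from Kawamata's inequality $m>n/r$ is precisely what forces the $\ell$-coordinate to stay strictly positive, even in the awkward case $S\sim_{\mathbb{Q}}B-E$; everything else is formal two-dimensional cone geometry plus the projection formula. When $S\sim_{\mathbb{Q}}B$ one may alternatively just observe $B\cdot S=(-K_Y)^2$ and invoke Corollary~\ref{corollary:Mori-cone}, but the computation above treats both cases at once.
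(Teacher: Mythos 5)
Your proof is correct and takes essentially the same route as the paper: both rest on Kawamata's theorem to decompose $B\cdot S=\frac{1}{n}\left(\mathcal{M}_Y\cdot S\right)+\epsilon\left(E\cdot S\right)$ with $\epsilon>0$, where the first term is an effective $1$-cycle and the second is a positive multiple of the generator of the $f$-contracted extremal ray, which forces $B\cdot S$ into the interior of the two-dimensional Mori cone. The only difference is one of detail: you verify explicitly (via $A^2\cdot E=A\cdot E^2=0$, $E^3>0$, $E\cdot\ell<0$, and $A\cdot\mathcal{M}_Y\cdot S=nA^3>0$) the two facts the paper merely asserts, namely that $E\cdot S$ spans the contracted ray and that $B\cdot S$ is not proportional to it.
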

\begin{proof}
It follows from Theorem~\ref{theorem:Kawamata} that
$$
\mathcal{M}_{Y}\sim_{\mathbb{Q}}nB-\epsilon E
$$
for some positive rational number $\epsilon$. Since
$$
S\cdot\mathcal{M}_{Y}=S\cdot(nB-\epsilon E)
$$
is an effective $1$-cycle, the $1$-cycle $B\cdot S$ must lie in
the interior of the Mori cone of $Y$ because the $1$-cycles
$S\cdot E$ and $S\cdot B$ are not proportional in $N_1(Y)$ and
the $1$-cycle $S\cdot E$ generates the extremal ray contracted by
$f$.
\end{proof}

We have two kinds of singular points on $X$. The singular points with $B^3\leq 0$ are one kind and the singular points with $B^3>0$ are the other kind.
Those with $B^3\leq 0$ will be excluded  as centers of non-canonical singularities of the log pair 
$\left(X, \frac{1}{n}\mathcal{M}\right)$. Meanwhile,
those with $B^3>0$ will be either excluded or untwisted (see Definition~\ref{definition:untwisting}).

To exclude singular points with
$B^3\leq 0$, we mainly apply the following lemma. It is a slightly
modified version of \cite[Lemma~5.4.3]{CPR}.

\begin{lemma}\label{lemma:boundary}
Suppose that $B^3\leq 0$ and there is an index $i$ such that
\begin{itemize}
\item there is a surface $T$ on $Y$ such that $T\sim_{\mathbb{Q}} a_iA-\frac{m}{r}E$ with $a_i\geq m>0$;%
\item the intersection $\Gamma=S\cap T$ consists of irreducible curves that are numerically proportional to each other; %
\item $T\cdot\Gamma\leq 0$.%
\end{itemize}
Then the point $p$ is not a center of non-canonical singularities
of the log pair $\left(X,\frac{1}{n}\mathcal{M}\right)$.
\end{lemma}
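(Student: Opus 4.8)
The plan is to argue by contradiction. Suppose that $p$ \emph{is} a center of non-canonical singularities of $\left(X,\frac1n\mathcal{M}\right)$. Then Lemma~\ref{lemma:Mori-cone2} applies and gives
$$B\cdot S\in\mathrm{Int}(\overline{\mathrm{NE}(Y)}).$$
Since $X$ has Picard rank one and $f$ is a single weighted blow-up, $Y$ has Picard rank two, so $N_1(Y)\cong\mathbb{R}^2$; as recorded in the proof of Lemma~\ref{lemma:Mori-cone2}, the classes $A\cdot S$ and $E\cdot S$ are not proportional and hence form a basis of $N_1(Y)$, with $E\cdot S$ spanning the extremal ray contracted by $f$. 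My strategy is to pin down the \emph{entire} Mori cone $\overline{\mathrm{NE}(Y)}$ and then to place $B\cdot S$ outside its interior, contradicting the displayed inclusion.

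First I would isolate the purely linear part of the argument, which is independent of where the second extremal ray sits. Restricting the relations $T\sim_{\mathbb{Q}}a_iA-\frac mr E$ and $-K_Y=B=A-\frac1r E$ to $S$ gives the identities of $1$-cycles
$$\Gamma=S\cdot T=a_i(A\cdot S)-\tfrac mr(E\cdot S),\qquad B\cdot S=(A\cdot S)-\tfrac1r(E\cdot S),$$
and eliminating $A\cdot S$ yields
$$B\cdot S=\frac1{a_i}\,\Gamma-\frac{a_i-m}{r\,a_i}\,(E\cdot S).$$
Because $a_i\ge m>0$, the coefficient of $E\cdot S$ here is non-positive while the coefficient of $\Gamma$ is positive. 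Consequently, \emph{if} $\overline{\mathrm{NE}(Y)}$ is generated by the two rays $\mathbb{R}_{\ge0}(E\cdot S)$ and $\mathbb{R}_{\ge0}\Gamma$, then in these coordinates $B\cdot S$ lies on the boundary (when $a_i=m$) or strictly outside the cone (when $a_i>m$), and in neither case is it interior. That is precisely the contradiction sought, and it uses only the numerical condition $a_i\ge m$.

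Thus the crux is to prove that $\overline{\mathrm{NE}(Y)}=\mathbb{R}_{\ge0}(E\cdot S)+\mathbb{R}_{\ge0}\Gamma$, i.e. that $\Gamma$ spans the second, non-$f$-contracted extremal ray; this is where the remaining two hypotheses enter and where I expect the real work to lie. The assumption that the components of $\Gamma=S\cap T$ are numerically proportional guarantees that $\Gamma$ determines a single ray of $N_1(Y)$. The inequality $T\cdot\Gamma\le0$ says exactly that $\Gamma$, viewed as the divisor $T|_S$ on the surface $S$, has non-positive self-intersection $(T|_S)^2=T\cdot\Gamma\le0$; a positive multiple of an irreducible curve with non-positive self-intersection on $S$ is rigid, hence extremal on $S$. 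Finally, the hypothesis $B^3\le0$ controls the global position of the ray: it prevents $-K_Y=B$ from being big and positive on the far side of the cone, which rules out any effective curve class lying strictly beyond $\Gamma$ and so forces the second extremal ray of $\overline{\mathrm{NE}(Y)}$ to be $\Gamma$ itself. Promoting ``extremal on $S$'' to ``extremal on $Y$'' by combining the surface geometry of $S$ with the rank-two structure of $Y$ is the main obstacle; the supporting signs $E^2\cdot S<0$, $E\cdot S\cdot T>0$ and $T^2\cdot S\le0$ follow from the standard intersection numbers $A^2\cdot E=A\cdot E^2=0$, $E^3=\tfrac{r^2}{a(r-a)}$ and $A^3=-K_X^3$.

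I would close by noting that the two admissible cases $S\sim_{\mathbb{Q}}B$ and $S\sim_{\mathbb{Q}}B-E$ are treated uniformly: the displayed identity for $B\cdot S$ in terms of $\Gamma$ and $E\cdot S$ is insensitive to which of the two holds, since both $\Gamma$ and $B\cdot S$ were written only through the basis $A\cdot S,\,E\cdot S$, and only the auxiliary intersection numbers needed to check the hypotheses change. Assembling these pieces contradicts $B\cdot S\in\mathrm{Int}(\overline{\mathrm{NE}(Y)})$, and therefore $p$ cannot be a center of non-canonical singularities of $\left(X,\frac1n\mathcal{M}\right)$.
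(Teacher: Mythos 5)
Your overall architecture matches the paper's: argue by contradiction via Lemma~\ref{lemma:Mori-cone2}, identify the second extremal ray of $\overline{\mathrm{NE}(Y)}$ with the ray $\tilde{Q}$ spanned by $\Gamma$, and then conclude by linear algebra that $B\cdot S$ cannot be interior. Your linear algebra is correct, and it is just the paper's final step run backwards: the paper writes $S\cdot T=S\cdot\bigl(a_iB+\frac{a_i-m}{r}E\bigr)$ and observes this class would be interior if $B\cdot S$ were, whereas you solve for $B\cdot S$; the two are equivalent. The problem is that the crux you yourself flag as ``the main obstacle''---that $\Gamma$ actually generates the second extremal ray of $\overline{\mathrm{NE}(Y)}$---is never proved, and the two heuristics you offer in its place do not work. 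First, a curve of self-intersection zero on $S$ need not be rigid (a fiber of a fibration moves), $\Gamma$ may well be reducible, and in any case extremality in $\overline{\mathrm{NE}(S)}$ does not push forward to extremality in $\overline{\mathrm{NE}(Y)}$, so the route through the cone of the surface $S$ does not close. Second, the role you assign to $B^3\le 0$ (``rules out any effective curve class lying strictly beyond $\Gamma$'') is not an argument; in fact the paper's proof of this lemma never invokes $B^3\le 0$ at all---that hypothesis only records the situation in which the lemma is applied (compare Remark~\ref{remark:boundary}).

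The missing mechanism is a negativity-trapping argument carried out directly on $Y$, bypassing the cone of $S$ entirely. One first checks that no component $\tilde{C}_i$ of $\Gamma$ lies in $E$: since $T\cdot E=-\frac{m}{r}E^{2}$ and $\mathcal{O}_E(E)=\mathcal{O}_E(-r)$, the restriction $T\vert_E$ is positive on curves in $E$, while $T\cdot\tilde{C}_i\le 0$; in particular $E\cdot\tilde{C}_i\ge 0$ and $\tilde{Q}\ne\tilde{R}$. If $T$ is nef, then $T\cdot\Gamma=0$, so $T$ is a supporting functional vanishing on $\tilde{Q}$ and positive on $\tilde{R}$ (because $T\cdot\tilde{R}=-\frac{m}{r}E\cdot\tilde{R}>0$), whence $\tilde{Q}$ is extremal. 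If $T$ is not nef, suppose $\tilde{Q}$ were not extremal and take an irreducible curve $\tilde{C}$ with $T\cdot\tilde{C}<0$ generating a ray between $\tilde{Q}$ and the second extremal ray; then $\tilde{C}\not\subset E$, so $E\cdot\tilde{C}\ge 0$, and
\[
S\cdot\tilde{C}\;\le\;B\cdot\tilde{C}\;=\;\frac{1}{a_i}\left(T-\frac{a_i-m}{r}E\right)\cdot\tilde{C}\;<\;0,
\]
using $a_i\ge m$ and $S\sim_{\mathbb{Q}}B$ or $B-E$. Hence $\tilde{C}\subset S\cap T$, i.e.\ $\tilde{C}$ is a component of $\Gamma$ and generates $\tilde{Q}$---a contradiction, which forces $\tilde{Q}$ to be the second extremal ray. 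This combined use of the shape of $T$, the bound $a_i\ge m$, and the two possibilities for $S$ is exactly the step your proposal needs and does not supply.
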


\begin{proof}
Let $\Gamma=\sum e_i\tilde{C}_i$, where $e_i>0$ and $\tilde{C}_i$'s are distinct
irreducible and reduced curves. Let $\tilde{R}$ be the extremal ray of the Mori cone
$\overline{\mathrm{NE}(Y)}$  of $Y$ contracted by $f\colon Y\to X$.

Since the curves $\tilde{C}_i$ are numerically proportional to each other,
each irreducible curve $\tilde{C}_i$ defines the same ray in the Mori cone
 of $Y$.  None of the curves $\tilde{C}_i$ is contained in $E$ because $T\cdot \tilde{C}_i\leq 0$ and $T\cdot E^2<0$. Therefore, the ray $\tilde{Q}$ defined by $\Gamma$ cannot be $\tilde{R}$.

 We first claim that the ray $\tilde{Q}$
 is an extremal ray of
$\overline{\mathrm{NE}(Y)}$, so that the Mori cone
$\overline{\mathrm{NE}(Y)}$ could be spanned by $\tilde{R}$ and $\tilde{Q}$ .

Since $\tilde{C}_i\not\subset E$ for each
$i$, we have $E\cdot \tilde{C}_i\geq 0$. Therefore, 
$$
a_iB\cdot\Gamma\leq T\cdot \Gamma \leq 0,
$$
where the first inequality follows from $a_i\geq m$.

If the surface $T$ is
nef, then $T\cdot \Gamma=0$ and hence $\Gamma$ is in the boundary
of $\overline{\mathrm{NE}(Y)}$. Therefore,  the ray $\tilde{Q}$
 is an extremal ray of
$\overline{\mathrm{NE}(Y)}$.

Suppose that the surface $T$ is not nef and that  the ray $\tilde{Q}$
 is not an extremal ray.
Then there is a curve $\tilde{C}$ with $T\cdot \tilde{C}<0$ that generates a ray between $\tilde{Q}$ and the extremal ray other than $\tilde{R}$ since
$T\cdot \tilde{R}=-\frac{m}{r}E\cdot \tilde{R}>0$.
It follows from $\tilde{C}\not\subset E$ that
\[S\cdot \tilde{C}\leq B\cdot \tilde{C}=\frac{1}{a_i}\left(T-\frac{a_i-m}{r}E\right)\cdot \tilde{C}<0,\]
and hence $\tilde{C}\subset S\cap T$. Therefore, the curve $\tilde{C}$ must be one
of the component of $\Gamma$, and hence it generates the ray $\tilde{Q}$. This is a contradiction. Therefore,
$\tilde{Q}$ must be the extremal ray of $\overline{\mathrm{NE}(Y)}$  other than $\tilde{R}$.

If $B\cdot S\in\mathrm{Int}(\overline{\mathrm{NE}(Y)})$, then the
ray
\[\tilde{Q}=\mathbb{R}_+\left[S\cdot\left(a_iB+\frac{a_i-m}{r}E\right)\right]\]
cannot be a boundary of $\overline{\mathrm{NE}(Y)}$. Therefore,
Lemma~\ref{lemma:Mori-cone2} implies that the point $p$ cannot be
a center of non-canonical singularities of the log pair
$\left(X,\frac{1}{n}\mathcal{M}\right)$.
\end{proof}

\begin{remark}\label{remark:boundary}
The condition $T\cdot\Gamma \leq 0$ is equivalent to  the
inequality $$ra(r-a)a_i^2A^3\leq km^2,$$ where $k=1$ if
$S\sim_{\mathbb{Q}}  B$; $k=r+1$ otherwise.
\end{remark}

We have  singular points with $B^3\leq 0$ to which we cannot apply
Lemma~\ref{lemma:boundary} in a simple way.
Such singular points are dealt with in a special way in \cite[Subsections 5.7.2 and 5.7.3]{CPR}.  However, we
are dealing with every quasi-smooth hypersurface, not only a
general one and the method of \cite{CPR} is too complicated for us
to analyze  the irreducible components of the intersections
$\Gamma$, which is inevitable for our purpose. We here present
another method that enables us to avoid such difficulty.

\begin{lemma}\label{lemma:boundary-2} Suppose that there is a nef
divisor $T$ on $Y$  with $T\cdot S\cdot B\leq 0$ and $T\cdot S\cdot A>0$. Then the point
$p$ cannot be a center of non-canonical singularities of the log
pair $\left(X,\frac{1}{n}\mathcal{M}\right)$.
\end{lemma}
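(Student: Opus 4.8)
The plan is to argue by contradiction, in exactly the spirit of Corollary~\ref{corollary:test-class}, by pairing the nef divisor $T$ against the $1$-cycle $B\cdot S$. Suppose that $p$ \emph{is} a center of non-canonical singularities of the log pair $\left(X,\frac{1}{n}\mathcal{M}\right)$. Then Lemma~\ref{lemma:Mori-cone2} applies and yields
\[
B\cdot S\in\mathrm{Int}\left(\overline{\mathrm{NE}(Y)}\right).
\]
The entire proof consists of showing that this membership is incompatible with the two numerical hypotheses imposed on $T$.

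First I would record the structure of $Y$. Because $\mathrm{Pic}(X)\cong\mathbb{Z}$ and $f\colon Y\to X$ is a single weighted blow up, the group $\mathrm{Pic}(Y)$ is generated by $A$ and $E$, so $N_1(Y)$ is two-dimensional and the Mori cone $\overline{\mathrm{NE}(Y)}$ is a full-dimensional closed convex cone in $N_1(Y)\cong\mathbb{R}^2$. Its dual cone, with respect to the intersection pairing, is precisely the nef cone of $Y$; thus $T$, being nef, lies in this dual cone.

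The key step is the elementary convex-geometry fact that a \emph{nonzero} element of the dual cone pairs strictly positively with every interior point of a full-dimensional cone: if $\gamma\in\mathrm{Int}(C)$ and a linear functional $\ell$ is nonnegative on $C$ but satisfies $\ell(\gamma)=0$, then $\ell$ attains its minimum over $C$ at an interior point and hence is identically zero. Applying this with $C=\overline{\mathrm{NE}(Y)}$, $\gamma=B\cdot S$ and $\ell(\delta)=T\cdot\delta$, I obtain $T\cdot(B\cdot S)>0$ provided only that $T\not\equiv 0$ in $N^1(Y)$. This is exactly where the second hypothesis enters: since $T\cdot S\cdot A>0$, the class $T$ cannot be numerically trivial, so the strict inequality $T\cdot S\cdot B=T\cdot(B\cdot S)>0$ holds. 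This directly contradicts the first hypothesis $T\cdot S\cdot B\leq 0$, and the contradiction completes the proof.

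I do not expect a genuine obstacle here, since the heavy lifting is already contained in Lemma~\ref{lemma:Mori-cone2}. The only point that needs care is the bookkeeping of \emph{why both} numerical conditions are required: the inequality $T\cdot S\cdot B\leq 0$ supplies the sign that contradicts interiority, whereas $T\cdot S\cdot A>0$ is used solely to guarantee $T\neq 0$ (equivalently, that the $1$-cycle $T\cdot S$ is nonzero), without which a nef class could perfectly well pair to zero with an interior cycle and no contradiction would arise. In the applications one then checks these two inequalities family by family by expanding $T\cdot S\cdot B$ and $T\cdot S\cdot A$ via $B=A-\tfrac{1}{r}E$ in terms of the known intersection numbers of $A$ and $E$ on $Y$, but that computation is routine and entirely separate from the structural argument above.
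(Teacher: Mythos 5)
Your proof is correct, but it takes a genuinely different route from the paper's. The paper argues directly from Theorem~\ref{theorem:Kawamata}: assuming $p$ is a center, one has $\frac{1}{n}\mathcal{M}_Y=f^*\left(\frac{1}{n}\mathcal{M}\right)-mE$ with $m>\frac{1}{r}$, and pairing $T$ against the effective $1$-cycle $S\cdot M_Y$, where $M_Y$ is a general member of $\mathcal{M}_Y$, gives
\[
T\cdot S\cdot M_Y=nT\cdot S\cdot(A-mE)<nT\cdot S\cdot\left(A-\tfrac{1}{r}E\right)=nT\cdot S\cdot B\leq 0,
\]
contradicting nefness of $T$; here the strict inequality needs $T\cdot S\cdot E>0$, which is exactly where the two hypotheses combine (rewrite $T\cdot S\cdot B\leq 0$ as $T\cdot S\cdot A\leq\frac{1}{r}\,T\cdot S\cdot E$ and use $T\cdot S\cdot A>0$). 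You instead quote Lemma~\ref{lemma:Mori-cone2} and run a convex-duality argument: nefness plus $T\cdot S\cdot B\leq 0$ force $T\cdot(B\cdot S)=0$, a nef class vanishing at an interior point of the full-dimensional Mori cone must be numerically trivial, and numerical triviality is incompatible with $T\cdot S\cdot A>0$. The two proofs rest on the same underlying input --- Lemma~\ref{lemma:Mori-cone2} is itself deduced from Theorem~\ref{theorem:Kawamata} by the very effectivity-of-$S\cdot\mathcal{M}_Y$ device the paper uses here --- so yours is a repackaging rather than new content; what it buys is a cleaner separation of the roles of the two hypotheses (one forces the vanishing, the other the nontriviality of $T$), while the paper's computation is self-contained, stays close to the mobile system, and mirrors how the inequality $T\cdot S\cdot B\leq 0$ is actually checked in the applications via Remark~\ref{remark:boundary-2}.
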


\begin{proof}
Suppose that $p$ is a center of non-canonical singularities of the
log pair $\left(X,\frac{1}{n}\mathcal{M}\right)$. Then it follows from
Theorem~\ref{theorem:Kawamata} that
$$\frac{1}{n}\mathcal{M}_Y=f^*\left(\frac{1}{n}\mathcal{M}\right)-m E$$ with some rational number
$m>\frac{1}{r}$. The
intersection of the surface $S$ and a general surface $M_Y$ in the
mobile linear system $\mathcal{M}_Y$ gives us an effective
$1$-cycle. However,
$$
T\cdot S\cdot M_Y=nT\cdot S\cdot (A-m E)<nT\cdot S\cdot \left(A-\frac{1}{r} E\right)=nT\cdot S\cdot B\leq 0,%
$$
where the first inequality follows from $ 0<T\cdot S\cdot A\leq \frac{1}{r}T\cdot S\cdot E$.
This contradicts the condition that $T$ is nef.
\end{proof}

\begin{remark}
\label{remark:boundary-2} For the divisor $T$ equivalent to
$cA-\frac{m}{r}E=cB+\frac{c-m}{r}E$ with some positive integers $c$
and $m$, the condition $T\cdot S\cdot B\leq 0$ is equivalent to
the inequality $$ra(r-a)cA^3\leq km,$$ where $k=1$ if
$S\sim_{\mathbb{Q}} B$; $k=r+1$ otherwise. The condition $T\cdot S\cdot A>0$ is always satisfied  by any divisor $T$ equivalent to
$cA-\frac{m}{r}E$ with positive integers $c$.
\end{remark}

To apply Lemma~\ref{lemma:boundary-2}, we construct a nef divisor
$T$ in $|cB+bE|$ for some integers $c\geq 0$ and
$b\leq\frac{c}{r}$.
 To construct  a nef  divisor $T$  the following will be useful.

\begin{lemma}
\label{lemma:nefness} Let $\mathcal{L}_{X}$ be a mobile linear
subsystem in $|-cK_X|$ for some positive integer $c$. Denote the
proper transforms of the base curves of the linear system
$\mathcal{L}_X$ on $Y$ by $\tilde{C}_1,\ldots ,\tilde{C}_s$ (if
any). Let $T$ be the proper transform of a general surface in
$\mathcal{L}_X$. Then the following hold.
\begin{itemize}
\item  The divisor $T$ belongs to $|cB+bE|$ for some  integer  $b$
not greater than $\frac{c}{r}$. \item  The divisor $T$ is nef if
$T\cdot\tilde{C}_i\geqslant 0$ for every $i$. In particular, it is
nef if the base locus of $\mathcal{L}_{X}$ contains no curves.
\end{itemize}
\end{lemma}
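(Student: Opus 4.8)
The plan is to establish the two bullets in turn: the first is a computation of divisor classes on $Y$, while the second is a curve-by-curve verification of nefness, in which the only delicate point is the behaviour of curves lying on the exceptional divisor $E$.

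For the first assertion, let $D$ be a general member of $\mathcal{L}_X\subset|-cK_X|$ and let $T=\tilde{D}$ be its proper transform, so that
\[
f^*(D)=T+\frac{m}{r}E,
\]
where $\frac{m}{r}=\mult_p(D)\geq 0$ is the multiplicity of $D$ at $p$ and $m$ is a non-negative integer. Since $D\sim_{\mathbb{Q}}-cK_X$ we have $f^*(D)\sim_{\mathbb{Q}}cA$, and using $A=B+\frac{1}{r}E$ (which comes from $K_Y=f^*K_X+\frac{1}{r}E$) I would obtain
\[
T\sim_{\mathbb{Q}}cA-\frac{m}{r}E=cB+\frac{c-m}{r}E.
\]
Put $b=\frac{c-m}{r}$; then $b\leq\frac{c}{r}$ because $m\geq 0$. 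To see that $b$ is an integer, I would restrict to $Y\setminus E\cong X\setminus\{p\}$: there both $T$ and $cB$ restrict to members of $|-cK_X|$, so $T-cB$ is trivial in $\mathrm{Cl}(X\setminus\{p\})$, and the excision sequence $\mathbb{Z}[E]\to\mathrm{Cl}(Y)\to\mathrm{Cl}(Y\setminus E)\to 0$ forces $T-cB\sim\beta E$ for an integer $\beta$. Comparing with the displayed $\mathbb{Q}$-equivalence gives $b=\beta\in\mathbb{Z}$, so $T\in|cB+bE|$ with $b\leq\frac{c}{r}$.

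For the second assertion I would check $T\cdot C\geq 0$ for every irreducible curve $C$ on $Y$. Let $\mathcal{L}_Y$ denote the proper transform of $\mathcal{L}_X$ on $Y$, a mobile system with general member $T$. If $C$ is not contained in the base locus of $\mathcal{L}_Y$, then some member $T'$ avoids $C$, and since $T'\equiv T$ we get $T\cdot C=T'\cdot C\geq 0$. The base curves of $\mathcal{L}_Y$ lie in $f^{-1}(\mathrm{Bs}(\mathcal{L}_X))$, hence each is either one of the $\tilde{C}_i$ or a curve contained in $E$. For $C=\tilde{C}_i$ the inequality is the hypothesis. The decisive case is $C\subset E$: here $A\cdot C=f^*(-K_X)\cdot C=0$ by the projection formula, so that
\[
T\cdot C=\bigl(cB+bE\bigr)\cdot C=\Bigl(b-\frac{c}{r}\Bigr)E\cdot C.
\]
Since $E\cong\mathbb{P}(1,a,r-a)$ with $-E|_E$ ample we have $E\cdot C<0$, and combined with $b\leq\frac{c}{r}$ this yields $T\cdot C\geq 0$ automatically. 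Collecting the three cases shows $T$ is nef; when $\mathcal{L}_X$ has no base curves there are no $\tilde{C}_i$, the remaining two cases exhaust all curves, and $T$ is nef unconditionally.

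The step I expect to be the main obstacle is precisely the analysis of curves $C\subset E$. These are genuinely new curves on $Y$ that do not descend to $X$, and they may sit in the base locus of $\mathcal{L}_Y$, so they cannot be disposed of by moving $T$ within its class. What makes the lemma work is that the inequality $b\leq\frac{c}{r}$ extracted in the first bullet, together with the ampleness of $-E|_E$, forces $T\cdot C\geq 0$ for all such $C$ with no further hypotheses; this is exactly why the nefness criterion only needs to be tested against the finitely many transformed base curves $\tilde{C}_i$.
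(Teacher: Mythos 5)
Your proof is correct and follows essentially the same route as the paper's: the class computation $T\sim_{\mathbb{Q}}cB+\frac{c-m}{r}E$ with integrality of $b$ coming from $\mathrm{Cl}(Y)$ being generated by $B$ and $E$ (your excision argument just spells this out), and nefness reduced to base curves, with curves inside $E$ handled by $\mathcal{O}_E(E)=\mathcal{O}_E(-r)$ together with $b\leq\frac{c}{r}$. The paper packages the last step as ``$T\vert_E$ is nef'' and argues by contradiction rather than by your direct case split, but the content is identical.
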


\begin{proof} Since $T\sim_{\mathbb{Q}}cA-\frac{m}{r}E$ for some non-negative integer $m$ and  $B\sim_{\mathbb{Q}} A-\frac{1}{r}E$, we obtain $T\sim_{\mathbb{Q}} cB+\frac{c-m}{r}E$. The number $b:=\frac{c-m}{r}$ must be an integer because the divisor class group of $Y$ is generated by $B$ and $E$.

Suppose that $T$ is not nef. Then there exists a curve
$\tilde{C}\subset Y$ such that $T\cdot\tilde{C}<0$, which implies
that the curve $\tilde{C}$ is contained in the base locus of the
proper transform of the linear system $\mathcal{L}_X$. Since $E\cong \mathbb{P}(1,a,r-a)$, $\mathcal{O}_E(E)=\mathcal{O}_E(-r)$ and  $b\le
\frac{c}{r}$, the divisor $T\vert_{E}$ is nef, and hence
$\tilde{C}\not\subset E$. We then draw an absurd conclusion that
$\tilde{C}$ is one of the curves $\tilde{C}_1,\ldots,
\tilde{C}_s$.
\end{proof}

With Lemma~\ref{lemma:boundary-2}  we can easily exclude the singular points that are taken special cares in
\cite[5.7.2 and 5.7.3]{CPR}. 
However, in spite
of our new methods, we encounter special cases that cannot  be
excluded by the methods proposed so far. To deal with these
special cases, we apply the following two lemmas.

\begin{lemma}\label{lemma:negative-definite}
Suppose that the surface $S$ is $\mathbb{Q}$-linearly equivalent
to $B$ and there is a normal surface $T$ on $Y$ such that the
support of the $1$-cycle $S\vert_{T}$ consists of curves on $T$
whose intersection form is negative-definite. Then the singular
point $p$ cannot be a center of non-canonical singularities of the
pair $\left(X, \frac{1}{n}\mathcal{M}\right)$.
\end{lemma}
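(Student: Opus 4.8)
The plan is to argue by contradiction, assuming that $p$ \emph{is} a center of non-canonical singularities of $\left(X,\frac{1}{n}\mathcal{M}\right)$, and to violate the elementary positivity of the self-intersection of a mobile system restricted to a surface. First I would invoke Theorem~\ref{theorem:Kawamata} (with $\lambda=\frac1n$) to write $\mathcal{M}_Y\sim_{\mathbb{Q}}nB-\epsilon E$ for some rational $\epsilon>0$. Picking a general member $M\in\mathcal{M}_Y$ and restricting to the normal surface $T$ gives an effective $1$-cycle $\Theta:=M\vert_T$, and since $S\sim_{\mathbb{Q}}B$ we obtain the numerical relation $\Theta\equiv n\,(S\vert_T)-\epsilon\,(E\vert_T)$ on $T$. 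The whole point of the hypothesis is that $S\vert_T$ is a nonzero effective cycle $\sum_i\alpha_i C_i$ whose support has negative-definite intersection form, so $(S\vert_T)^2<0$.

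The second step is a short intersection computation on $Y$ whose only non-formal input is that $A=f^*(-K_X)$ is trivial on the $f$-contracted locus, i.e. $A\cdot E\equiv 0$ in $N_1(Y)$. Writing $T\sim cB+bE$ with $c>0$ and $b\le c/r$ (Lemma~\ref{lemma:nefness}), and using $B=A-\frac1rE$ together with $E^3>0$, one finds $(S\vert_T)^2=B^2\cdot T=cA^3+\tfrac{1}{r^2}E^3\bigl(b-\tfrac{c}{r}\bigr)$. Because $cA^3>0$, the assumption $(S\vert_T)^2<0$ forces $b<c/r$; equivalently $T$ is \emph{not} a pull-back from $X$, so $E\vert_T$ is a genuine nonzero cycle. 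The same identities then give $(E\vert_T)^2=E^2\cdot T=\bigl(b-\tfrac{c}{r}\bigr)E^3<0$ and $(S\vert_T)\cdot(E\vert_T)=B\cdot E\cdot T=\bigl(c-rb\bigr)\tfrac{E^3}{r^2}>0$. These three sign facts are the heart of the matter.

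With them in hand the contradiction is immediate in the generic situation. Choosing two general members $M_1,M_2\in\mathcal{M}_Y$, associativity of intersection gives $T\cdot M_1\cdot M_2=(M_1\vert_T)\cdot(M_2\vert_T)$ computed on $T$; since $\mathcal{M}_Y$ is mobile, $M_1\vert_T$ and $M_2\vert_T$ share no component, so this number is $\ge 0$. On the other hand, by the numerical relation it equals $\bigl(n\,S\vert_T-\epsilon\,E\vert_T\bigr)^2=n^2(S\vert_T)^2-2n\epsilon\,(S\vert_T)\cdot(E\vert_T)+\epsilon^2(E\vert_T)^2$, and all three summands are negative, so the number is $<0$ — a contradiction.

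The step I expect to be the genuine obstacle is justifying $(M_1\vert_T)\cdot(M_2\vert_T)\ge 0$ when the base locus of $\mathcal{M}_Y$ meets $T$ along curves, for then $\Theta$ acquires a fixed part and the two restricted cycles may share components. This is exactly where the negative-definiteness of the $C_i$ is needed, rather than merely the sign $(S\vert_T)^2<0$: decomposing $\Theta=\sum_i\theta_iC_i+\Phi$ with $\Phi$ effective and supported off the $C_i$, intersecting the resulting relation $\sum_i(n\alpha_i-\theta_i)C_i\equiv \Phi+\epsilon E\vert_T$ with each $C_j$, and applying the standard fact that a divisor supported on negative-definite curves which meets each of them non-negatively is anti-effective, one controls the fixed part and recovers the inequality (equivalently, the offending cycle becomes effective and numerically trivial, hence zero). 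I would organize the write-up so that this bookkeeping, together with the sign computation above and the mobility of $\mathcal{M}_Y$, closes every case.
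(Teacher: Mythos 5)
Your strategy is correct in outline, and the part of your argument that actually carries the proof --- the negativity-lemma treatment of the relation $\sum_i(n\alpha_i-\theta_i)C_i\equiv\Phi+\epsilon E\vert_T$ --- is the same mechanism as the paper's proof, but the organization is genuinely different. The paper computes no intersection numbers on $Y$ at all and makes no generic/degenerate case split: from non-canonicity it writes $\mathcal{M}_Y+ncE\sim_{\mathbb{Q}}nS$ with $c>0$, restricts to $T$, decomposes $(\mathcal{M}_Y+ncE)\vert_T=\sum a_j\tilde{D}_j+\sum b_i\tilde{C}_i$ according to whether a component is one of the $\tilde{C}_i$, rearranges so that the two sides of the resulting $\mathbb{Q}$-linear equivalence are effective with no common component, and contradicts $P^2<0$ with $P^2=N\cdot P\ge 0$; its one terse ingredient is the claim that $\sum a_j\tilde{D}_j\neq 0$, attributed to mobility of $\mathcal{M}_Y$. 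Your extra computations ($(S\vert_T)^2<0\Rightarrow b<c/r$, hence $(E\vert_T)^2=(b-\tfrac{c}{r})E^3<0$ and $(S\vert_T)\cdot(E\vert_T)>0$) are correct, although the appeal to Lemma~\ref{lemma:nefness} for ``$T\sim cB+bE$ with $b\le c/r$'' is not legitimate --- that lemma concerns proper transforms of members of mobile systems on $X$, not an arbitrary normal surface $T$; all you actually need is that $B,E$ generate the class group of $Y$ and that $c>0$ ($T\ne E$, forced by the hypothesis), and you re-derive $b<c/r$ anyway. These computations do buy you something: in your endgame the negativity lemma forces $\Phi+\epsilon E\vert_T$ to be effective and numerically trivial, hence zero, so $E\vert_T=0$, contradicting $E^2\cdot T<0$. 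Where this applies it closes the proof \emph{without} the paper's unproved mobility claim, and it also makes your ``generic situation'' paragraph redundant, since the negativity argument covers $\theta_i=0$ as well.

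The genuine gap is the tacit assumption that $(E\vert_T)\cdot C_j\ge 0$ for every $j$, i.e.\ that no $C_j$ is a component of $E\vert_T$. Nothing in the hypotheses excludes a component of $S\vert_T$ from lying inside the exceptional divisor $E$ (the curve $S\cap E$ may well have components on $T$), and for such a $C_j$ the intersection $(E\vert_T)\cdot C_j$ can be negative, so the ``standard fact'' cannot be invoked as stated. If you repair this by writing $E\vert_T=\sum_i e_iC_i+E'$ and moving the $C_i$-part to the left, the same argument only yields $\Phi=E'=0$, i.e.\ the scenario in which $M\vert_T$, $S\vert_T$ and $E\vert_T$ are \emph{all} supported on the $C_i$; your sign facts give no contradiction there, since two curves in a negative-definite configuration can perfectly well meet positively. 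One further idea is needed to kill this residual case: for instance, observe that the image of $N^1(Y)_{\mathbb{Q}}\to N^1(T)_{\mathbb{Q}}$ is spanned by $[S\vert_T]$ and $[E\vert_T]$ and contains the class of an ample divisor, so it cannot lie in the (negative-definite) span of the $[C_i]$. The paper's bookkeeping avoids your difficulty automatically, because components of $ncE\vert_T$ equal to some $C_i$ are simply absorbed into $\sum b_i\tilde{C}_i$; the price is that its contradiction then rests entirely on the assertion $\sum a_j\tilde{D}_j\ne 0$, which is precisely the statement that this residual case does not occur.
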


\begin{proof}
Put $S\vert_{T}=\sum c_i \tilde{C}_i$, where $c_i$'s are positive numbers
and $\tilde{C}_i$'s are  distinct irreducible and reduced curves on the normal
surface $T$. Suppose that the point $p$ is a center of
non-canonical singularities of the log pair $(X,
\frac{1}{n}\mathcal{M})$. Then we have
$$
K_Y+\frac{1}{n}\mathcal{M}_Y+cE=
f^*\left(K_X+\frac{1}{n}\mathcal{M}\right)\sim_{\mathbb{Q}} 0,
$$
where $c$ is a positive constant. Therefore, we
obtain $\mathcal{M}_Y+ncE\sim_{\mathbb{Q}} nS$, and hence
$$
\left(\mathcal{M}_Y+ncE\right)\big |_{T} \sim_{\mathbb{Q}} n\sum
c_i \tilde{C}_i.
$$
We may write the left-hand side as
$$
\left(\mathcal{M}_Y+ncE\right)\big |_{T} =\sum a_j\tilde{D}_j+\sum
b_i \tilde{C}_i,
$$
where each $\tilde{D}_j$ is an irreducible curves on $T$ different from $\tilde{C}_i$ and $a_j$, $b_i$ are positive rational numbers.
Note that $\sum a_j\tilde{D}_j$ cannot be a zero divisor because $\mathcal{M}_Y$ is a mobile linear system.
We then obtain
$$
\sum a_j\tilde{D}_j+\sum_{nc_i-b_i<0}
-(nc_i-b_i )\tilde{C}_i\sim_{\mathbb{Q}} \sum_{nc_i-b_i>0}
(nc_i-b_i )
 \tilde{C}_i.
$$
Therefore,
$$
\left(\sum a_j\tilde{D}_j+\sum_{nc_i-b_i<0}
-(nc_i-b_i )\tilde{C}_i\right)\cdot\left( \sum_{nc_i-b_i>0}
(nc_i-b_i )
 \tilde{C}_i\right)=\left( \sum_{nc_i-b_i>0}
(nc_i-b_i )
 \tilde{C}_i\right)^2.
$$
However, since the divisor $\sum  \tilde{C}_i$ is
negative-definite and
$\sum_{nc_i-b_i>0}
(nc_i-b_i )
 \tilde{C}_i$ cannot be a zero divisor on $T$, the equality is absurd.
\end{proof}

\begin{lemma}\label{lemma:bad-link}
Suppose that there is a one-dimensional family of irreducible
curves $\tilde{C}_\lambda$ on $Y$ with $E\cdot \tilde{C}_\lambda>0$ and $-K_Y\cdot
\tilde{C}_\lambda\leq 0$. Then the singular point $p$ cannot be a center
of non-canonical singularities of the log pair
$\left(X,\frac{1}{n}\mathcal{M}\right)$.
\end{lemma}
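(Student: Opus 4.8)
The plan is to argue by contradiction, pitting the hypotheses on the family $\{\tilde{C}_\lambda\}$ against the numerical class that Kawamata's lemma forces on $\mathcal{M}_Y$. First I would suppose that $p$ \emph{is} a center of non-canonical singularities of the log pair $\left(X,\frac{1}{n}\mathcal{M}\right)$. Applying Theorem~\ref{theorem:Kawamata} with $\lambda=\frac{1}{n}$, exactly as in the proof of Lemma~\ref{lemma:Mori-cone2}, and using $B=-K_Y=A-\frac{1}{r}E$, I obtain
$$
\mathcal{M}_Y\sim_{\mathbb{Q}}nB-\epsilon E
$$
for some strictly positive rational number $\epsilon$.

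The second step is to exploit the one-dimensionality of the family to produce a curve meeting a general member of $\mathcal{M}_Y$ nonnegatively. Since $\mathcal{M}$ is mobile, so is its proper transform $\mathcal{M}_Y$, and hence the base locus of $\mathcal{M}_Y$ contains at most finitely many irreducible curves. Because $\{\tilde{C}_\lambda\}$ is a genuine one-parameter family of distinct irreducible curves, a general member $\tilde{C}_\lambda$ is none of those base curves, so it is not contained in the base locus of $\mathcal{M}_Y$; consequently a general surface $M_Y$ in $\mathcal{M}_Y$ does not contain $\tilde{C}_\lambda$. The two therefore meet properly, which gives $M_Y\cdot\tilde{C}_\lambda\geq 0$.

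The final step is a one-line intersection computation. From the class above,
$$
0\leq M_Y\cdot\tilde{C}_\lambda=(nB-\epsilon E)\cdot\tilde{C}_\lambda=n\bigl(B\cdot\tilde{C}_\lambda\bigr)-\epsilon\bigl(E\cdot\tilde{C}_\lambda\bigr).
$$
The hypotheses supply $B\cdot\tilde{C}_\lambda=-K_Y\cdot\tilde{C}_\lambda\leq 0$ and $E\cdot\tilde{C}_\lambda>0$, and we have $\epsilon>0$; hence the right-hand side is strictly negative, a contradiction. This forces the conclusion that $p$ cannot be a non-canonical center.

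The one step requiring genuine care — and the only place the word ``one-dimensional'' in the statement is actually used — is the assertion $M_Y\cdot\tilde{C}_\lambda\geq 0$. I expect this to be the main (though still mild) obstacle: it rests on the mobility of $\mathcal{M}_Y$ together with the fact that a one-parameter family of irreducible curves cannot be swallowed by the finitely many base curves of a mobile system, so that a \emph{general} $\tilde{C}_\lambda$ genuinely avoids $\mathrm{Bs}(\mathcal{M}_Y)$ and meets the general member $M_Y$ transversally. Everything after that is formal.
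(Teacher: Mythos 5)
Your proposal is correct and is essentially the paper's own argument: both rest on Kawamata's theorem giving $\mathcal{M}_Y\sim_{\mathbb{Q}}nB-\epsilon E$ with $\epsilon>0$, the intersection computation $(nB-\epsilon E)\cdot\tilde{C}_\lambda<0$, and the fact that a mobile linear system cannot have a one-dimensional family of curves in its base locus. The paper merely runs the same contradiction in the opposite order (every $\tilde{C}_\lambda$ would lie in the base locus of $\mathcal{M}_Y$, forcing a fixed surface), whereas you first invoke mobility to pick a general $\tilde{C}_\lambda$ off the base locus and then contradict the sign of the intersection number; the two are logically interchangeable.
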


\begin{proof}
We have
$$
K_Y+\frac{1}{n}\mathcal{M}_Y=f^*\left(K_X+\frac{1}{n}\mathcal{M}\right)+cE
$$
with a negative number $c$. Suppose that
there is a one-dimensional family of curves $\tilde{C}_\lambda$ on $Y$
with $E\cdot \tilde{C}_\lambda>0$ and $-K_Y\cdot \tilde{C}_\lambda\leq 0$. Then
for each member $\tilde{C}_\lambda$, we have
$$
\mathcal{M}_Y\cdot \tilde{C}_\lambda=-nK_Y\cdot \tilde{C}_\lambda+cnE\cdot \tilde{C}_\lambda\leq cnE\cdot \tilde{C}_\lambda<0,%
$$
and hence the curve $\tilde{C}_\lambda$ is contained in the base locus of
the linear system $\mathcal{M}_Y$. This is a contradiction since
the linear system $\mathcal{M}_Y$ is mobile.
\end{proof}

Notice that Lemmas~\ref{lemma:boundary-2},
\ref{lemma:negative-definite} and \ref{lemma:bad-link} do not
require $B^3$ to be non-positive. Therefore, these lemmas can be
applied  to exclude the singular points with $B^3>0$.

For example, 
the lemma below, which follows from
Lemma~\ref{lemma:bad-link},
 excludes all the singular points with $B^3>0$,
except $O_z$ in the family No.~62, that appear in
Theorem~\ref{theorem:auxiliary}. The exception, the singular point
$O_z$ in the family No.~62, can be also treated in the same way as
Lemma~\ref{lemma:2-ray game}. The only difference is that the
variable $z$ plays the role of $t$ in Lemma~\ref{lemma:2-ray
game}.

\begin{lemma}\label{lemma:2-ray game}
Suppose that the hypersurface $X$ is given by a quasi-homogeneous
equation
\[w^2+x_it^k+wf_{d-a_4}(x,x_i, x_j, t)+f_{d}(x,x_i, x_j, t)=0\]
of degree $d$, where one of the variables $x_i$ and $x_j$ is
$y$ and the other is $z$.  Let $a_i$ and $a_j$ be the weights of
the variables $x_i$ and $x_j$, respectively. If $2a_4=3a_3+a_i$,
then the singular point $O_t$ cannot be a center of non-canonical
singularities of the log pair $\left(X,\frac{1}{n}\mathcal{M}\right)$.
\end{lemma}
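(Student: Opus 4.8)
The plan is to apply Lemma~\ref{lemma:bad-link}, i.e.\ to exhibit a one-parameter family of irreducible curves $\tilde{C}_\lambda$ on $Y$ with $E\cdot\tilde{C}_\lambda>0$ and $-K_Y\cdot\tilde{C}_\lambda\le 0$. First I would rewrite the hypothesis: since $a_4=a_1+a_2+a_3=a_i+a_j+a_3$, the equality $2a_4=3a_3+a_i$ is equivalent to $a_3=a_i+2a_j$. Because the monomial $x_it^3$ appears, $x_i$ can be eliminated near $O_t$, so $x$, $x_j$, $w$ are local parameters of weights $1$, $a_j$, $a_i+a_j$, and $O_t$ is the quotient singularity $\frac{1}{a_3}(1,a_j,a_i+a_j)$. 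Thus $f\colon Y\to X$ is the weighted blow-up with weights $(1,a_j,a_3-a_j)$, $r=a_3$, $a=a_j$.

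For the family, take the surface $\{x_i=0\}$ together with the pencil $\{x^{a_j}-\lambda x_j=0\}$, and set $C_\lambda=\{x_i=0\}\cap\{x^{a_j}-\lambda x_j=0\}$. After a coordinate change in $x_i$ absorbing every monomial $t^3p(x,x_j)$ (with $\deg p=a_i$) occurring in $f_d$, the term of least weight in the local expansion of $x_i$ at $O_t$ is $w^2$, of weight $2(a_i+a_j)$; hence $\mult_{O_t}(x_i)=\tfrac{2(a_i+a_j)}{a_3}$, and since $a_3=a_i+2a_j$ the proper transform $\widehat{S}$ of $\{x_i=0\}$ satisfies $\widehat{S}\sim_{\mathbb{Q}}a_iB-E$. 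As $x^{a_j}$ and $x_j$ both have weight $a_j$ at $O_t$, the proper transform $D_\lambda$ of $\{x^{a_j}-\lambda x_j=0\}$ satisfies $D_\lambda\sim_{\mathbb{Q}}a_jB$. The base locus of the pencil lies on $L_{tw}$, which is not contained in $X$ (the monomial $w^2$ forces $X\cap L_{tw}$ to be finite), so $\widehat{S}$ and $D_\lambda$ share no curve inside $E$; therefore $\tilde{C}_\lambda=\widehat{S}\cdot D_\lambda$ as $1$-cycles and $\{\tilde{C}_\lambda\}$ is a genuine one-parameter family meeting $E$.

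It remains to intersect. With $A^2E=AE^2=0$, $A^3=(-K_X)^3=\tfrac{2}{a_ia_ja_3}$ and $E^3=\tfrac{a_3^2}{a_j(a_i+a_j)}$, one has $B^2E=\tfrac{1}{a_j(a_i+a_j)}$, $BE^2=-\tfrac{a_3}{a_j(a_i+a_j)}$ and $B^3=A^3-\tfrac{1}{a_3^3}E^3$. Hence
\[
-K_Y\cdot\tilde{C}_\lambda=B\cdot\widehat{S}\cdot D_\lambda=a_j\bigl(a_iB^3-B^2E\bigr),\qquad E\cdot\tilde{C}_\lambda=a_j\bigl(a_iB^2E-BE^2\bigr).
\]
Substituting and using $a_i+a_3=2(a_i+a_j)$ (again $a_3=a_i+2a_j$), both collapse cleanly: $-K_Y\cdot\tilde{C}_\lambda=0$ and $E\cdot\tilde{C}_\lambda=2>0$. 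Lemma~\ref{lemma:bad-link} then rules out $O_t$ as a center of non-canonical singularities.

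The real work, and the step I expect to be the main obstacle, is the identity $\widehat{S}\sim_{\mathbb{Q}}a_iB-E$, which rests on two facts extracted from the explicit monomial structure: that after the normalization no monomial of $wf_{d-a_4}$ or $f_d$ contributes weight below $2(a_i+a_j)$ to $x_i$ (so that $\mult_{O_t}(x_i)$ is exactly $\tfrac{2(a_i+a_j)}{a_3}$ and the delicate cancellation above holds), and that the general $\tilde{C}_\lambda$ is irreducible rather than a split fibre of a pencil composed with another pencil. The first holds because the only monomials that could lower the multiplicity are precisely the $t^3p(x,x_j)$ removed by the coordinate change; the second follows from the base locus computation above, since the pencil is base-point-free in codimension one on $\{x_i=0\}\cap X$.
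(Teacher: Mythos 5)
Your overall strategy is the same as the paper's: take $T=\widehat S\sim_{\mathbb Q} a_iB-E$ (the proper transform of $\{x_i=0\}$), sweep out the curves $C_\lambda=\{x_i=0\}\cap\{x_j=\lambda x^{a_j}\}$, verify $-K_Y\cdot\tilde C_\lambda=0$ and $E\cdot\tilde C_\lambda>0$, and invoke Lemma~\ref{lemma:bad-link}. Your numerical work is correct (and your explicit normalization absorbing the monomials $t^3p(x,x_j)$ into $x_it^3$ is a point the paper leaves implicit), but the proposal has a genuine gap at exactly the step you flag as delicate: irreducibility of the general $\tilde C_\lambda$.

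Base-point-freeness (even freeness in codimension one) of the pencil on $S_{x_i}$ does \emph{not} imply that its general member is irreducible. The curve $C_\lambda$ sits in $\mathbb P(1,a_3,a_4)$ as
\[
w^2+w\,f_{d-a_4}(x,0,\lambda x^{a_j},t)+f_{d}(x,0,\lambda x^{a_j},t)=0,
\]
a quadratic in $w$, and it splits into two components for \emph{every} $\lambda$ whenever the discriminant is a square of a quasi-homogeneous polynomial; for instance, if every monomial of $f_d$ other than $x_it^3$ is divisible by $x_i$, then the restriction is $w\bigl(w+f_{d-a_4}(x,0,\lambda x^{a_j},t)\bigr)=0$. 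Such hypersurfaces can be quasi-smooth and satisfy all hypotheses of the lemma, and since the lemma must cover \emph{every} quasi-smooth member of the family (not a general one), this case cannot be excluded. The paper closes this gap not by proving irreducibility but by observing that a reducible $C_\lambda$ has exactly two components which are interchanged by the biregular quadratic involution
\[
[x:y:z:t:w]\mapsto[x:y:z:t:-f_{d-a_4}(x,y,z,t)-w],
\]
so the two components are numerically equivalent on $Y$; hence each component still satisfies $-K_Y\cdot(\text{component})=0$ and intersects $E$ positively, and Lemma~\ref{lemma:bad-link} applies to the family of components. Your argument needs this (or some equivalent) treatment of the split case to be complete.
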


\begin{proof}

The singular point $O_t$ is of type
$\frac{1}{a_{3}}(1,a_{j},a_{4}-a_3)$. Local parameters at $O_t$ are induced by $x$, $x_j$, $w$ with multiplicities 
$\frac{1}{a_3}$, $\frac{a_j}{a_3}$, $\frac{a_4-a_3}{a_3}$. 

Let $T$ be the proper
transform of the surface $S_{x_{i}}$ on $X$ cut by the equation
$x_{i}=0$.  Due to the monomial $w^2$, we see that the surface
$S_{x_{i}}$  has multiplicity $\frac{2(a_4-a_3)}{a_{3}}$  at the
point $O_t$. Therefore, the surface $T$ belongs to $|a_{i}B-E|$
since $2a_4=3a_3+a_i$.

Let $C_{\lambda}$ be the curve on the surface $S_{x_{i}}$ defined
by
$$
\left\{%
\aligned
&x_{i}=0,\\%
&x_{j}=\lambda x^{a_{j}}\\%
\endaligned\right.%
$$
for a sufficiently general complex number  $\lambda$. Then the
curve $C_{\lambda}$ is a curve of degree $d$ in $\mathbb{P}(1,
a_{3}, a_{4})$ defined by the equation
\[w^2+wf_{d-a_4}(x,0, \lambda x^{a_{j}}, t)+f_{d}(x,0, \lambda x^{a_{j}}, t)=0.\]
Then
$$
-K_{Y}\cdot \tilde{C}_{\lambda}=a_{j}B^2\cdot
(a_{i}B-E)=a_1a_2A^3-\frac{2a_j(a_4-a_3)}{a_3^3}E^3
=\frac{2}{a_3}-\frac{2}{a_3}=0.
$$
If the curve $\tilde{C}_{\lambda}$ is reducible, it consists of
two irreducible components that are numerically equivalent since
the two components of the curve $C_\lambda$ are symmetric with
respect to the biregular quadratic involution of $X$ defined by
$$[x:y:z:t:w]\mapsto [x:y:z:t:-f_{d-a_4}(x,y, z, t)-w].$$ Then each
component of $\tilde{C}_{\lambda}$ intersects $-K_Y$ trivially.
Consequently, Lemma~\ref{lemma:bad-link} implies the statement.
\end{proof}

%%%%%%%%%%%%%%%%%%%%%%%%%%%%%%%%%%%%%%%%%%%%%%%%%%%%%%%%%%%%%%%%%%

%%%%%%%%%%%%%%%%%%%%%%%%%%
%%%%%%%%%%%%%%%%%%%%%%%%%%
%%%%%%%%%%%%%%%%%%%%%%%%%%
%%%%%%%%%%%%%%%%%%%%%%%%%%
%%%%%%%%%%%%%%%%%%%%%%%%%%
%%%%%%%%%%%%%%%%%%%%%%%%%%
%%%%%%%%%%%%%%%%%%%%%%%%%%
%%%%%%%%%%%%%%%%%%%%%%%%%%
%%%%%%%%%%%%%%%%%%%%%%%%%%
%%%%%%%%%%%%%%%%%%%%%%%%%%

\subsection{Untwisting singular points}\label{section:untwisting}

Excluding methods are introduced in the previous section. Now we explain how to deal with singular points of $X$ that require some treatments by birational automorphisms of $X$. 
For us to prove Main Theorem, for a given singular point  either it should be excluded as a center of non-canonical singularities of the log pair $(X, \frac{1}{n}\mathcal{M})$ or it should be untwisted as a center of non-canonical singularities of the log pair $(X, \frac{1}{n}\mathcal{M})$.
Untwisting  is defined as follows:

\begin{definition}
\label{definition:untwisting} Let $\tau$ be a birational automorphism of
$X$. Suppose that a singular point $p$ of $X$ is a center of non-canonical singularities of the log
pair $\left(X,\frac{1}{n}\mathcal{M}\right)$. We say that the birational automorphism $\tau$
\emph{untwists} the point $p$ (as a center of non-canonical
singularities of the log pair $\left(X,\frac{1}{n}\mathcal{M}\right)$) if
\begin{itemize}
\item the birational automorphism $\tau$ is not biregular;
\item there exists
a biregular
in codimension one birational automorphism $\tau_{Y}$ of $Y$   that fits the commutative diagram
$$
\xymatrix{
Y\ar@{->}[d]_{f}\ar@{-->}[rr]^{\tau_Y}&&Y\ar@{->}[d]^{f}\\%
X\ar@{-->}[rr]_{\tau}&&X.\\}
$$ %

\end{itemize}
\end{definition}
In fact, this is a special case of a Sarkisov link of Type~II (cf. \cite[Definition~3.1.4]{CPR}). The reason why such a birational automorphism is said to untwist a singular point is that   it improves the singularities of the mobile linear system $\mathcal{M}$. This improvement results from the following property of such a birational automorphism.
\begin{lemma}\label{lemma:untwisting-invoultion}
Suppose that a singular point $p$ of $X$ is a center of non-canonical singularities of
the log pair $(X\frac{1}{n}\mathcal{M})$ and that there exists a birational automorphism $\tau$ of $X$
that untwists the point $p$ as a center
of non-canonical singularities of the log pair
$\left(X,\frac{1}{n}\mathcal{M}\right)$. Then $\tau(\mathcal{M})\subset
|-n_{\tau}K_X|$ for some positive integer $n_{\tau}<n$.
\end{lemma}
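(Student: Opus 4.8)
The plan is to read off the degree $n_{\tau}$ of the transformed system $\tau(\mathcal{M})$ by transporting the class of $\mathcal{M}_Y$ through $\tau_Y$ and pushing it down to $X$, and then to locate the strict drop $n_{\tau}<n$ in the positivity of two separate quantities: the defect $\epsilon$ coming from non-canonicity via Theorem~\ref{theorem:Kawamata}, and a coefficient $\gamma$ measuring how far $\tau_Y$ moves the exceptional divisor $E$.

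First I would record the class-level data. Since $p$ is a center of non-canonical singularities of $\left(X,\frac{1}{n}\mathcal{M}\right)$, Theorem~\ref{theorem:Kawamata} applied with $\lambda=\frac{1}{n}$ gives $\mathcal{M}_Y=f^*(\mathcal{M})-mE$ with $m>\frac{n}{r}$; rewriting by means of $B\sim_{\mathbb{Q}}A-\frac{1}{r}E$ yields
$$\mathcal{M}_Y\sim_{\mathbb{Q}}nB-\epsilon E,\qquad \epsilon:=m-\tfrac{n}{r}>0.$$
I will also use that $\mathrm{Cl}(Y)\otimes\mathbb{Q}$ is spanned by the independent classes $B$ and $E$ (as in the proof of Lemma~\ref{lemma:nefness}), together with $f_*B=-K_X$ and $f_*E=0$.

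Next, because $\tau_Y$ is biregular in codimension one, it induces an invertible action $\tau_{Y*}$ on $\mathrm{Cl}(Y)\otimes\mathbb{Q}$ preserving the canonical class, so $\tau_{Y*}B=B$; write $\tau_{Y*}E\sim_{\mathbb{Q}}\gamma B+\mu E$. The commutativity $\tau\circ f=f\circ\tau_Y$ gives $f_*\tau_{Y*}=\tau_* f_*$, so the class of $\tau(\mathcal{M})$ on $X$ is
$$\tau(\mathcal{M})\sim_{\mathbb{Q}}f_*\tau_{Y*}\mathcal{M}_Y=f_*\big(nB-\epsilon(\gamma B+\mu E)\big)=(n-\epsilon\gamma)(-K_X).$$
Since $\tau(\mathcal{M})$ is the birational transform of a mobile system, it is again mobile and nonempty, and as $\mathrm{Pic}(X)=\mathbb{Z}\langle -K_X\rangle$ with $-K_X$ ample, this forces $n_{\tau}:=n-\epsilon\gamma$ to be a positive integer with $\tau(\mathcal{M})\subset|-n_{\tau}K_X|$.

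It remains to prove $n_{\tau}<n$, that is $\gamma>0$, and this is the step I expect to be the crux. The class $\tau_{Y*}E$ is the class of the prime divisor $\tau_Y(E)$ (a pseudo-automorphism carries prime divisors to prime divisors), hence effective; pushing down, $\gamma(-K_X)=f_*\tau_{Y*}E$ is effective, so $\gamma\geq 0$. If $\gamma=0$, then the effective divisor $\tau_Y(E)$ is contracted by $f$ to a point and therefore coincides with the only $f$-exceptional divisor $E$; combined with $\tau_{Y*}B=B$ this forces $\tau_{Y*}=\mathrm{id}$ on $\mathrm{Cl}(Y)$, so $\tau_Y$ fixes both extremal rays of $\overline{\mathrm{NE}(Y)}$ and hence descends along $f$ to a biregular automorphism $\tau$ of $X$, contradicting the hypothesis that $\tau$ is not biregular. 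Thus $\gamma>0$, and since $\epsilon>0$ we conclude $n_{\tau}=n-\epsilon\gamma<n$. All the substantive content sits in this final paragraph: everything preceding it is linear bookkeeping on $\mathrm{Cl}(Y)$, whereas the strict inequality genuinely combines the non-canonicity defect $\epsilon>0$ with the structural fact that an untwisting $\tau_Y$ must interchange the two rays of the $2$-ray game on $Y$ and so move $E$ off itself.
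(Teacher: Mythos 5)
Your proof is correct and follows essentially the same route as the paper's: Kawamata's theorem supplies the excess multiplicity, $\tau_Y$ acts on $\mathrm{Pic}(Y)$ fixing $B=-K_Y$, and the strict drop $n_\tau<n$ comes from positivity of the coefficient with which $\tau_Y$ moves $E$, which can fail only if $\tau_Y(E)=E$ --- in which case $\tau$ is biregular in codimension one and hence biregular because $\mathrm{Pic}(X)\cong\mathbb{Z}$, contradicting the untwisting hypothesis. The only differences are cosmetic: you work in the basis $(B,E)$ instead of the paper's $(A,E)$, and you justify the positivity $\gamma>0$ explicitly via effectivity of $f_*\tau_{Y*}E$ together with the contradiction argument, a point the paper partly asserts outright when it declares $a,c>0$.
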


\begin{proof}
Put $\tau_Y=f^{-1}\circ \tau\circ f$. Then $\tau_Y$  is biregular
in codimension one and $\tau_Y$ is not biregular. In particular,
$\tau_Y$ acts on the Picard group $\mathrm{Pic}(Y)$. Then $\tau_Y(B)=B$ since
$B=-K_{Y}$.  However, $\tau_Y(E)\ne E$ since $\tau_Y$ is biregular in codimension one. Indeed, if $\tau_Y(E)= E$, then $\tau$ is also biregular in codimension one. Then \cite[Proposition~3.5]{Co95} implies that $\tau$ is biregular since $\mathrm{Pic}(X)\cong \mathbb{Z}$.

 On the other
hand, we have
$$
f^*(\mathcal{M})=\mathcal{M}_Y+m E,
$$
for some positive rational number $m$. Furthermore,
$m>\frac{n}{r}$ by Theorem~\ref{theorem:Kawamata}.
Since $\tau_Y$ acts on $\mathrm{Pic}(Y)$, there are rational
numbers $a, b, c, d$ such that $a, c>0$ and
$$
\left\{%
\aligned
&\tau_Y(A)=aA-bE,\\%
&\tau_Y(E)=cA-dE.\\%
\endaligned\right.%
$$
Since $\tau_Y(B)=B$, we obtain
$$
A-\frac{1}{r}E=\tau_Y\left(A-\frac{1}{r}E\right)=\tau_Y(A)-\frac{1}{r}\tau_Y(E)=\left(a-\frac{c}{r}\right)A-\left(b-\frac{d}{r}\right)E,
$$
and hence $a-\frac{c}{r}=1$. We then see
$$
\tau_Y(\mathcal{M}_Y)=\tau_Y(nA-mE)=n\tau_Y(A)-m\tau_Y(E)=(na-mc)A-(nb-md)E.
$$
Since $$na-mc=na-m(ar-r)=na-mr(a-1)<na-n(a-1)=n,$$ we obtain
$\tau(\mathcal{M})\subset |-n_{\tau}K_X|$ with $n_{\tau}<n$. This
proves the statement.
\end{proof}

Thus, to complete the proof of Main Theorem after Theorems~\ref{theorem:smooth point excluding} and~\ref{theorem:excluding-curve}, it is enough to show
that every singular point of $X$ either is  not a center of
non-canonical singularities of the log pair
$(X\frac{1}{n}\mathcal{M})$ or can be untwisted by some
appropriate birational automorphism of $X$. This follows from
Theorem~\ref{theorem:Nother-Fano} and
Lemma~\ref{lemma:untwisting-invoultion} with induction on $n$.
The appropriate birational automorphisms to untwist singular points are introduce in the following chapter.

\begin{remark}\label{remark:untwisting-involution}
The proof of Lemma~\ref{lemma:untwisting-invoultion} shows that in order  to find a birational automorphism of $X$ untwisting the center $p$, it is enough to find  a biregular
in codimension one birational automorphism $\tau_{Y}$ of $Y$ such that $\tau_{Y}(E)\ne E$. Indeed, this untwisting birational automorphism  is defined by $\tau=f\circ\tau_Y\circ f^{-1}$.
\end{remark}

As in \cite{CPR}, in the case when a singular point of $X$ is untwisted by
some birational automorphism of $X$, it can be untwisted by
a very explicit birational involution. Since $X$ has only finitely
many singular points, there are finitely many such involutions
for a given hypersurface $X$. These birational automorphisms generate a subgroup, denoted by $\Gamma_X$,  in the birational automorphism  group
 $\mathrm{Bir}(X)$. Using
\cite[Theorem~4.2]{Co95} instead of
Theorem~\ref{theorem:Nother-Fano}, we prove

\begin{theorem}
\label{theorem:Bir} Let $X$ be a quasi-smooth hypersurface of
degrees $d$ with only terminal singularities in the weighted
projective space $\mathbb{P}(1, a_1, a_2, a_3, a_4)$, where
$d=\sum  a_i$. Then the  birational automorphism group of  $X$ is generated by the subgroup $\Gamma_X$
and the biregular automorphism group  of $X$.
\end{theorem}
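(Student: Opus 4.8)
The plan is to prove the statement by induction on a degree attached to each birational automorphism, peeling off untwisting involutions one at a time until only a biregular map survives. For $\chi\in\mathrm{Bir}(X)$ I fix once and for all a very ample complete linear system $\mathcal{H}=|-mK_X|$ and let $\mathcal{M}_\chi=\chi^{-1}_{*}\mathcal{H}$ be the proper transform of $\mathcal{H}$ under $\chi^{-1}$; since $\mathrm{Pic}(X)\cong\mathbb{Z}\langle -K_X\rangle$, the mobile system $\mathcal{M}_\chi$ lies in $|-n_\chi K_X|$ for a well-defined positive integer $n_\chi$, which I call the degree of $\chi$. The base case is supplied by \cite[Theorem~4.2]{Co95} applied to the Mori fibred space $X\to\mathrm{pt}$: if the log pair $\left(X,\frac{1}{n_\chi}\mathcal{M}_\chi\right)$ is canonical, then $\chi$ is an isomorphism of Mori fibred spaces, and since the base is a point this means $\chi\in\mathrm{Aut}(X)$. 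It therefore suffices to treat those $\chi$ for which this pair is not canonical and to show that each becomes biregular after composition with finitely many elements of $\Gamma_X$.

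So suppose $\left(X,\frac{1}{n_\chi}\mathcal{M}_\chi\right)$ is not canonical. By Theorem~\ref{theorem:smooth point excluding} no smooth point is a center of non-canonical singularities, and by Theorem~\ref{theorem:excluding-curve} together with Theorem~\ref{theorem:Kawamata} no irreducible curve in the smooth locus is such a center; hence at least one singular point $p$ of $X$ must be a center of non-canonical singularities of $\left(X,\frac{1}{n_\chi}\mathcal{M}_\chi\right)$. Since a singular point that can be \emph{excluded} can never be a center, the point $p$ must be one of the singular points that the proof of Main Theorem \emph{untwists} in Section~\ref{section:super-rigid}; accordingly there is one of the explicit birational involutions $\tau\in\Gamma_X$ untwisting $p$ in the sense of Definition~\ref{definition:untwisting}.

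By Lemma~\ref{lemma:untwisting-invoultion} we have $\tau(\mathcal{M}_\chi)\subset|-n'K_X|$ with $n'<n_\chi$. Setting $\chi'=\chi\circ\tau$, the inverse $(\chi')^{-1}=\tau\circ\chi^{-1}$ (using $\tau^{-1}=\tau$) transforms $\mathcal{H}$ first into $\mathcal{M}_\chi$ and then into its $\tau$-transform, so that $\mathcal{M}_{\chi'}=\tau(\mathcal{M}_\chi)$ and $n_{\chi'}=n'<n_\chi$. Because the degree is a positive integer and strictly drops at every untwisting, the iterated process $\chi,\ \chi\circ\tau_0,\ (\chi\circ\tau_0)\circ\tau_1,\ \ldots$ halts after finitely many steps; it can halt only at a map $\chi_N$ whose associated log pair is canonical, which by the base case is biregular. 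Unwinding the composition and using that each $\tau_i\in\Gamma_X$, one expresses $\chi=\chi_N\circ\tau_{N-1}\circ\cdots\circ\tau_0$ as a product of elements of $\Gamma_X$ and of $\mathrm{Aut}(X)$. Hence $\mathrm{Bir}(X)$ is generated by $\Gamma_X$ and the biregular automorphism group, as claimed.

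The genuinely hard input is borrowed entirely from the body of the paper: the fact that, in each of the $95$ families, every singular point that is a non-canonical center can be untwisted by one of the explicit involutions generating $\Gamma_X$ (Section~\ref{section:super-rigid}). Granting this, the only new ingredient here is the use of the sharper \cite[Theorem~4.2]{Co95} in place of Theorem~\ref{theorem:Nother-Fano}, which upgrades the mere \emph{existence} of an untwisting automorphism into an actual factorization of $\chi$. The remaining point requiring care is formal but essential: one must verify that the integer degree $n_\chi$ strictly decreases at each untwisting (this is precisely Lemma~\ref{lemma:untwisting-invoultion}), so that the induction terminates, and that it can terminate only at a canonical—hence biregular—map rather than stalling at a non-canonical pair.
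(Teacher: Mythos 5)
Your proposal is correct and takes essentially the same route as the paper: the paper's entire proof is the remark that one should run the untwisting induction with \cite[Theorem~4.2]{Co95} in place of Theorem~\ref{theorem:Nother-Fano}, which is precisely what you carry out, using Theorems~\ref{theorem:smooth point excluding}, \ref{theorem:excluding-curve} and \ref{theorem:excluding or untwisting} to locate an untwistable singular center and Lemma~\ref{lemma:untwisting-invoultion} to make the degree $n_\chi$ strictly drop until Corti's theorem forces biregularity. You have merely made explicit the factorization argument the paper leaves implicit.
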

In the case when $X$ is a general hypersurface in its family,
Theorem~\ref{theorem:Bir} is proved in \cite{CPR} (see
\cite[Remark~1.4]{CPR}).

\newpage

\section{Birational involutions}

\subsection{Quadratic involution}
In many cases, explicit birational automorphisms arise from generically 2-to-1 rational maps of $X$ onto appropriate $3$-dimensional weighted projective spaces. The birational automorphism constructed by interchanging the two points on a generic fiber of 
the generically 2-to-1 rational map  is called a quadratic involution.

\begin{lemma}[{\cite[Theorem~4.9]{CPR}}]
\label{lemma:Quadratic involution} Suppose that the hypersurface
$X$ is given by
\begin{equation}\label{equation:quadratic-involution-1}
x_{i_3}x_{i_4}^2+f_ex_{i_4}+g_d=0,
\end{equation}
where $x_{i_4}$, $x_{i_3}$ are two of the coordinates and $f_e$,
$g_d$ are quasi-homogeneous polynomials of degrees $e$ and $d$ not
involving $x_{i_4}$. In addition, suppose that the polynomial
$f_e$ is not divisible by $x_{i_3}$.
%Write $O_{x_{i_4}}$ for the coordinate point at which $x_{i_4}\ne 0$ and all the other coordinates vanish.
Then interchanging the roots of the equation with respect to $x_{i_4}$
defines a birational involution $\tau_{O_{x_{i_4}}}$ of $X$.  The involution
$\tau_{O_{x_{i_4}}}$  untwists the point  $O_{x_{i_4}}$ as a center of
non-canonical singularities of the log pair $\left(X,\frac{1}{n}\mathcal{M}\right)$.
\end{lemma}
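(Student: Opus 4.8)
The plan is to build the involution explicitly from the double-cover structure attached to $O_{x_{i_4}}$, and then verify the untwisting criterion of Remark~\ref{remark:untwisting-involution}.

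First I would construct $\tau_{O_{x_{i_4}}}$. Reading \eqref{equation:quadratic-involution-1} as a quadratic polynomial in $x_{i_4}$ whose coefficients $x_{i_3}$, $f_e$, $g_d$ do not involve $x_{i_4}$, the projection $\pi\colon X\dashrightarrow\bar{\mathbb{P}}$ away from $O_{x_{i_4}}$ onto the weighted projective space obtained by dropping the coordinate $x_{i_4}$ is generically two-to-one: a general fibre is the pair of roots $x_{i_4}^{\pm}$ of the quadratic. By Vieta's relation $x_{i_4}^{+}+x_{i_4}^{-}=-f_e/x_{i_3}$, so interchanging the roots is given by
\[
\tau^{*}(x_{i_4})=-x_{i_4}-\frac{f_e}{x_{i_3}},
\]
fixing the remaining coordinates. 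This is manifestly an involution and is birational. Here the hypothesis that $x_{i_3}$ does not divide $f_e$ is essential: it makes $f_e/x_{i_3}$ a genuinely non-regular rational function, so that $\tau_{O_{x_{i_4}}}$ is \emph{not} biregular. Indeed, were $x_{i_3}\mid f_e$, one could write $f_e=x_{i_3}h$ with $\deg h=d-a_{i_3}-a_{i_4}=a_{i_4}$ and complete the square via the weighted-linear coordinate change $x_{i_4}\mapsto x_{i_4}-h/2$, after which $\tau$ becomes $x_{i_4}\mapsto-x_{i_4}$, a biregular automorphism. Thus the hypothesis secures the first bullet of Definition~\ref{definition:untwisting}.

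Next, by Remark~\ref{remark:untwisting-involution} it suffices to exhibit a birational automorphism $\tau_{Y}$ of $Y$ that is biregular in codimension one and satisfies $\tau_{Y}(E)\neq E$, where $f\colon Y\to X$ is the weighted blow up of $O_{x_{i_4}}$ with exceptional divisor $E$. The idea is that $O_{x_{i_4}}$ is the only indeterminacy point of $\pi$, and that the blow up $f$, run as a $2$-ray game on $Y$ (legitimate since $\operatorname{Pic}(Y)$ has rank two), resolves it: one extremal ray of $\overline{\mathrm{NE}(Y)}$ is contracted by $f$ to the point $O_{x_{i_4}}$, while the opposite ray is contracted by the lift $\bar{\pi}=\pi\circ f$ of the double cover. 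The deck transformation of this generically two-to-one $\bar{\pi}$ lifts $\tau_{O_{x_{i_4}}}$ to $\tau_{Y}$; by the symmetry of the two contractions the link returns to $X$, and since the intermediate modification consists only of flops or antiflips it is an isomorphism in codimension one. Under $\tau_{Y}$ the exceptional divisor $E$ is carried to the proper transform $\tilde{S}_{x_{i_3}}$ of the surface $S_{x_{i_3}}=\{x_{i_3}=0\}$ (on which the quadratic degenerates to a single root), a genuine divisor distinct from $E$, so $\tau_{Y}(E)\neq E$. Lemma~\ref{lemma:untwisting-invoultion} then completes the untwisting.

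I expect the main obstacle to be verifying that the $2$-ray game on $Y$ plays out symmetrically, namely that the second divisorial contraction again blows down to the point $O_{x_{i_4}}$ on a copy of $X$ (so that $\tau$ is an automorphism of $X$ rather than a map to a different Mori fibre space) and that each step between the two contractions is an isomorphism in codimension one. I would settle this with the explicit intersection theory on $Y$ from Section~\ref{subsection:notation}: computing $B^{3}$ and $E^{3}$, determining the class $\tilde{S}_{x_{i_3}}\sim_{\mathbb{Q}}a_{i_3}A-\tfrac{m}{r}E$ of the proper transform of $S_{x_{i_3}}$, and checking that $\tilde{S}_{x_{i_3}}$ spans the ray opposite to the one contracted by $f$ while $\bar{\pi}$ contracts it to the desired point. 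The condition $x_{i_3}\nmid f_e$ enters once more here, guaranteeing that these numerical relations hold and that the two sheets of the cover are genuinely interchanged.
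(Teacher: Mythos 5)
Your proposal is correct and follows essentially the same route as the paper: the paper's own proof is a one-line reduction to the quadratic-involution construction of \cite[Theorem~4.9]{CPR}, which is precisely the double-cover/$2$-ray-game argument you spell out (Kawamata blow up, deck transformation of the generically $2$-to-$1$ projection swapping $E$ with $\tilde{S}_{x_{i_3}}$, and conclusion via Remark~\ref{remark:untwisting-involution}). The one point the paper makes explicit that you defer to your ``main obstacle'' verification is that the hypothesis $x_{i_3}\nmid f_e$ (together with quasi-smoothness) forces the locus $x_{i_3}=f_e=g_d=0$ to be a \emph{finite} set of lines through $O_{x_{i_4}}$ --- these lines are exactly the curves contracted by the lifted projection, i.e.\ the flopping curves, which is what guarantees that $\tau_Y$ is an isomorphism in codimension one.
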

\begin{proof}
If the the polynomial $f_e$ is not divisible by $x_{i_3}$, the equations
$x_{i_3}=f_e=g_d=0$ define a finitely many lines passing through the point $O_{x_{i_4}}$. The statement then follows from 
the proof of \cite[Theorem~4.9]{CPR}.
\end{proof}
Now we suppose that $f_e$ in \eqref{equation:quadratic-involution-1} is divisible by $x_{i_3}$.
Then we are able to write $f_e=2x_{i_3}g$ for some polynomial $g$ not
involving $x_{i_4}$.  Therefore, we obtain
\[x_{i_3}x_{i_4}^2+f_ex_{i_4}+g_d=x_{i_3}\left(x_{i_4}^2+2gx_{i_4}\right)+g_d=x_{i_3}\left(x_{i_4}+g\right)^2-x_{i_3}g^2+g_d.\]
Using the change of coordinate $x_{i_4}+g\mapsto x_{i_4}$, we see that  the singular point $O_{x_{i_4}}$ on the hypersurface of $X$ defined by \eqref{equation:quadratic-involution-1} with $f_e$ divisible by $x_{i_3}$
can be excluded by the following lemma.

\begin{lemma}\label{lemma:Quadratic involution-biregular-1}
Suppose that the hypersurface $X$ is given by
\[x_{i_3}x_{i_4}^2+x_{i_3}g_e(x,x_{i_1},x_{i_2}, x_{i_3})+h_d(x,x_{i_1},x_{i_2})=0,\]
where $x_{i_k}$'s are the coordinates of
$\mathbb{P}(1,a_1,a_2,a_3,a_4)$ different from $x$.
%Write $O_{x_{i_4}}$ for the coordinate point at which $x_{i_4}\ne 0$ and all the other coordinates vanish.
If the weights of $x_{i_1}$,
$x_{i_2}$ are less than the weight of $x_{i_4}$, then the singular
points $O_{x_{i_4}}$ cannot be a center of non-canonical
singularities of  the log pair $\left(X,\frac{1}{n}\mathcal{M}\right)$.
\end{lemma}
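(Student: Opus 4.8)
The plan is to exclude the point $p:=O_{x_{i_4}}$ by exhibiting a one-dimensional family of curves through it to which Lemma~\ref{lemma:bad-link} applies. First I would analyse the singularity at $p$. Since the partial derivative of the defining polynomial with respect to $x_{i_3}$ equals $x_{i_4}^2$ plus terms vanishing at $p$, which does not vanish at $p$, the hypersurface is quasi-smooth there and $x_{i_3}$ is a regular function of $x,x_{i_1},x_{i_2}$ near $p$; thus $x,x_{i_1},x_{i_2}$ are local parameters, of weights $1,a_{i_1},a_{i_2}$. The hypothesis $a_{i_1},a_{i_2}<a_{i_4}$ together with the fact that $X$ carries only terminal quotient singularities of type $\frac1r(1,a,r-a)$ forces $a_{i_1}+a_{i_2}=a_{i_4}=:r$, so that $p$ is of type $\frac1r(1,a_{i_1},a_{i_2})$ and $f\colon Y\to X$ is the weighted blow up with weights $(1,a_{i_1},a_{i_2})$. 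This is the only place where the weight hypothesis enters, and it is essential: it is what makes the blow-up weights literally $(1,a_{i_1},a_{i_2})$ with $a_{i_1}+a_{i_2}=r$.

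Next I would exploit the cone structure coming from the shape of the equation. Because $h_d$ involves neither $x_{i_3}$ nor $x_{i_4}$, the surface $S_{x_{i_3}}=\{x_{i_3}=0\}\cap X$ is cut out by $h_d(x,x_{i_1},x_{i_2})=0$ inside $\mathbb{P}(1,a_{i_1},a_{i_2},a_{i_4})$, i.e.\ it is the cone with vertex $p$ over the curve $B_0=\{h_d=0\}\subset\mathbb{P}(1,a_{i_1},a_{i_2})$. Its rulings $C_\lambda$, the lines joining $p$ to the points of $B_0$, form a one-dimensional family of irreducible curves through $p$, all numerically equivalent. Since $x_{i_3}=-h_d/(1+g_e)$ near $p$ and $h_d$ is quasi-homogeneous of degree $d$ in the local parameters, the multiplicity of $S_{x_{i_3}}$ at $p$ is $d/r$; using $d-a_{i_3}=a_{i_1}+a_{i_2}+a_{i_4}=2r$ this gives for the proper transform $T:=\widetilde{S}_{x_{i_3}}$ the class $T\sim a_{i_3}B-2E$.

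Then comes the key computation. I would realise the ruling family as the intersection of $T$ with the proper transforms $\widetilde{D}_\lambda\sim a_{i_1}B$ of the surfaces $\{x_{i_1}=\lambda x^{a_{i_1}}\}$, whose multiplicity at $p$ is $a_{i_1}/r$. With the standard numbers $B^3=2/(a_{i_1}a_{i_2}a_{i_3})$, $B^2\cdot E=1/(a_{i_1}a_{i_2})$ and $B\cdot E^2=-r/(a_{i_1}a_{i_2})$ one finds
\[
-K_Y\cdot(T\cdot\widetilde{D}_\lambda)=a_{i_1}B^2\cdot(a_{i_3}B-2E)=0,\qquad E\cdot(T\cdot\widetilde{D}_\lambda)=\frac{a_{i_3}+2r}{a_{i_2}}>0.
\]
Since $T\cdot\widetilde{D}_\lambda$ has no one-dimensional component inside $E$ and splits into rulings $\widetilde{C}_\lambda$ that are all numerically equivalent, each ruling satisfies $-K_Y\cdot\widetilde{C}_\lambda=0\le 0$ and $E\cdot\widetilde{C}_\lambda>0$. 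Lemma~\ref{lemma:bad-link} then shows that $p$ cannot be a center of non-canonical singularities of $\left(X,\frac1n\mathcal{M}\right)$.

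The main obstacle I anticipate is bookkeeping rather than conceptual: pinning down the singularity type, and hence the correct blow-up weights, from terminality and the weight hypothesis, and computing the multiplicity of the cone $S_{x_{i_3}}$ at its vertex. A secondary point requiring care is the claim that $T\cdot\widetilde{D}_\lambda$ carries no curve component supported on $E$, so that the vanishing of the total $-K_Y$-degree genuinely descends to the individual (possibly several, possibly reducible) rulings; this can be handled exactly as in the proof of Lemma~\ref{lemma:2-ray game}.
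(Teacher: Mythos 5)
Your strategy coincides with the paper's (cone structure of $S_{x_{i_3}}$, the class $T\sim a_{i_3}B-2E$, the computation $a_{i_1}B^2\cdot(a_{i_3}B-2E)=0$, and Lemma~\ref{lemma:bad-link}), and your numerical values for $B^3$, $B^2\cdot E$, $B\cdot E^2$ and the class of $T$ are all correct. The gap is precisely in the step you flag as "secondary": passing from the vanishing of $-K_Y\cdot\left(T\cdot\widetilde{D}_\lambda\right)$ to the vanishing on each irreducible component. Your claim that the rulings are "all numerically equivalent" is false in general. The coordinate ruling $L_{24}=\{x=x_{i_1}=x_{i_3}=0\}$ lies on $X$ exactly when $h_d$ contains no pure power of $x_{i_2}$, which is perfectly compatible with quasi-smoothness (the monomial needed at $O_{x_{i_2}}$ can sit inside $g_e$, e.g.\ $x_{i_3}x_{i_2}^{e/a_{i_2}}$); in that case $L_{24}$ is a \emph{fixed} component of $T\cdot\widetilde{D}_\lambda$ for every $\lambda$, because the base point $[0:0:1]$ of the pencil $\{x_{i_1}=\lambda x^{a_{i_1}}\}$ lies on $\{h_d=0\}$. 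This special ruling meets $E$ at a singular point of $Y$, with $E\cdot\tilde{L}_{24}=\tfrac{1}{a_{i_2}}$, whereas a moving ruling meets $E$ transversally at a smooth point, so $E\cdot\tilde{C}=1$: the two are not numerically equivalent when $a_{i_2}>1$ (they turn out to be proportional, but knowing that already requires knowing $B\cdot\tilde{L}_{24}=0$, which is the point at issue). If one could not rule out $B\cdot\tilde{L}_{24}<0$, the moving part of the cycle could have positive $B$-degree and the argument would collapse. Moreover, your proposed repair --- "handled exactly as in the proof of Lemma~\ref{lemma:2-ray game}" --- does not transfer: there the reducible curve has exactly two components exchanged by the biregular involution $w\mapsto -f_{d-a_4}-w$ of a double cover, and no such symmetry permutes the many rulings of the cone here.

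The paper closes exactly this hole with a dedicated argument that your proof is missing: it considers the linear system $\mathcal{L}$ generated by $x^{a_{i_1}+a_{i_2}}$, $x_{i_1}x_{i_2}$, $x^{a_{i_1}}x_{i_2}$, $x^{a_{i_2}}x_{i_1}$, whose proper transform on $Y$ is free away from $\tilde{L}_{14}$ and $\tilde{L}_{24}$, so that any curve $R\subset T$ with $B\cdot R<0$ must be one of these two coordinate rulings; it then computes directly $B\cdot\tilde{L}_{14}=B\cdot\tilde{L}_{24}=0$. Hence every curve on $T$ has non-negative $B$-degree, and the vanishing of the total degree forces every component of $T\cdot\widetilde{D}_\lambda$ to have $B$-degree exactly $0$, as Lemma~\ref{lemma:bad-link} requires. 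Two smaller corrections: your derivation of $a_{i_1}+a_{i_2}=a_{i_4}$ from terminality alone is not valid, since quotients of type $\frac{1}{r}(1,r-1,b)$ are also terminal; the relation instead follows at once from degrees, $a_{i_3}+2a_{i_4}=d=a_{i_1}+a_{i_2}+a_{i_3}+a_{i_4}$ (the weight hypothesis is what guarantees that the local weights at $O_{x_{i_4}}$ are literally $(1,a_{i_1},a_{i_2})$, so that the multiplicity of $h_d$ is $d/a_{i_4}$ and the Kawamata blow up has these weights). You should also record, as the paper does, that $h_d$ is irreducible --- otherwise $X$ fails to be quasi-smooth along $x_{i_3}=x_{i_4}^2+g_e=g_{d_1}=g_{d_2}=0$ --- which in particular gives $h_d\not\equiv 0$, so the surface $S_{x_{i_3}}$ really is the cone you describe.
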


\begin{proof}
Note that the quasi-homogeneous polynomial $h_d$ must be
irreducible. Indeed, if it is reducible, then we may write
$h_d(x,x_{i_1},x_{i_2})= g_{d_1}(x,x_{i_1},x_{i_2})g_{d_2}(x,x_{i_1},x_{i_2})$ for some non-constant polynomials $g_{d_1}$ and $g_{d_2}$. Then, the hypersurface $X$ is not quasi-smooth at the  points defined by
$x_{i_3}=x_{i_4}^2+g_e(x,x_{i_1},x_{i_2}, x_{i_3})=g_{d_1}(x,x_{i_1},x_{i_2})=g_{d_2}(x,x_{i_1},x_{i_2})=0$.

Let
$T$ be the proper transform on $Y$ of the surface $S_{x_{i_3}}$ cut by
$x_{i_3}=0$.
The singular point $O_{x_{i_4}}$ is of type
$\frac{1}{a_{i_4}}(1,a_{i_1},a_{i_2})$.
Since  local parameters at $O_{x_{i_4}}$ are induced by $x$, $x_{i_1}$, $x_{i_2}$ whose multiplicities  are
$\frac{1}{a_{i_4}}$, $\frac{a_{i_1}}{a_{i_4}}$,$\frac{a_{i_2}}{a_{i_4}}$, respectively,  and  the polynomial $h_d(x,x_{i_1},x_{i_2})$ cannot be zero, the
surface cut by $x_{i_3}=0$ has  multiplicity
$\frac{d}{a_{i_4}}$   at $O_{x_{i_4}}$.  Therefore, the surface $T$ belongs to
$|a_{i_3}B-2E|$ since $a_{i_3}+2a_{i_4}=d$.

Let $C_{\lambda}$ be the curve on the surface $S_{x_{i_3}}$ defined
by
$$
\left\{%
\aligned
&x_{i_3}=0,\\%
&x_{i_2}=\lambda x^{a_{i_2}}\\%
\endaligned\right.%
$$
for a sufficiently general complex number  $\lambda$. Then the
curve $C_{\lambda}$ is a curve of degree $d$ in $\mathbb{P}(1,
a_{i_1}, a_{i_4})$ defined by equation
\[h_d(x,x_{i_1},\lambda x^{a_{i_2}})=0.\]

To obtain a one-dimensional family of irreducible curves on $Y$ that is required for Lemma~\ref{lemma:bad-link}, we claim
that every curve  on $T$ intersects $B$ non-negatively. To this
end, we consider the linear system $\mathcal{L}$ on $X$ given by
the monomials $x^{a_{i_1}+a_{i_2}}$, $x_{i_1}x_{i_2}$,
$x^{a_{i_1}}x_{i_2}$, $x^{a_{i_2}}x_{i_1}$. The proper transform
of a  surface in $\mathcal{L}$ is equivalent to
$(a_{i_1}+a_{i_2})B$. The base locus of the proper transform $\mathcal{L}_Y$ of
the linear system $\mathcal{L}$ consists of the proper transform
of the curve cut by $x=x_{i_1}=0$  and the proper transform of the
curve cut by $x=x_{i_2}=0$.

Suppose that we have a curve $R$ on $T$ such that $B\cdot R<0$. Since the linear system $\mathcal{L}_Y$ is free outside
the proper transforms
of the curve cut by $x=x_{i_1}=0$  and the
curve  by $x=x_{i_2}=0$, one of the proper transforms must contain the curve $R$.
Therefore, the
curve $R$ on the surface $T$ should be either the proper transform $\tilde{L}_{24}$
of the curve $L_{24}$ defined by $x=x_{i_1}=0$ and $x_{i_3}=0$ or the
proper transform $\tilde{L}_{14}$ of the curve $L_{14}$ defined by
$x=x_{i_2}=0$ and $x_{i_3}=0$. However, since $E\cdot
\tilde{L}_{24}=\frac{1}{a_{i_2}}$  and $E\cdot
\tilde{L}_{14}=\frac{1}{a_{i_1}}$, we obtain
\[\begin{split} & B\cdot \tilde{L}_{24}=\left(A-\frac{1}{a_{i_4}}E\right)\cdot \tilde{L}_{24}
=\frac{1}{a_{i_2}a_{i_4}}-\frac{1}{a_{i_4}a_{i_2}}=0;\\
& B\cdot \tilde{L}_{14}=\left(A-\frac{1}{a_{i_4}}E\right)\cdot \tilde{L}_{14}
=\frac{1}{a_{i_1}a_{i_4}}-\frac{1}{a_{i_4}a_{i_1}}=0.\\ \end{split}\] This
verifies the claim.
Then  from the equation
$$
-K_{Y}\cdot \tilde{C}_{\lambda}=a_{i_2}B^2\cdot (a_{i_3}B-2E)=0
$$
we obtain a one-dimensional family of irreducible curves on $Y$ that is required for Lemma~\ref{lemma:bad-link}.
It then follows from Lemma~\ref{lemma:bad-link} that 
$O_{x_{i_4}}$ cannot be  a
center of non-canonical singularities of the log pair
$\left(X,\frac{1}{n}\mathcal{M}\right)$.
\end{proof}

\begin{theorem}\label{theorem:quadratic-biregular}
Suppose that the weights $a_3$, $a_4$ are relatively prime and
$2a_3+a_4=d$. In addition, the equation of the hypersurface $X$
does not involves the monomial $wt^2$. Then the singular point
$O_t$ cannot be a center of non-canonical singularities of the log
pair $(X, \frac{1}{n}\mathcal{M})$.
\end{theorem}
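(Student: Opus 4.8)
The plan is to exclude $O_t$ in the spirit of Lemma~\ref{lemma:2-ray game}: I would produce a one-parameter family of curves on $Y$ that meets the exceptional divisor positively but meets $-K_Y$ non-positively, and then invoke Lemma~\ref{lemma:bad-link}. First I would record the local picture at $O_t$. Writing $F=\phi w^2+\psi w+\chi$ as a quadratic in $w$, the coefficient $\phi$ has degree $2a_3-a_4<a_3$ and hence involves only $x,y,z$, while the hypothesis that the monomial $wt^2$ is absent says exactly that $\psi$ has no $t^2$-term; thus $\psi=t\,g_{a_3}(x,y,z)+g_{2a_3}(x,y,z)$. Since $wt^2$ is the only degree-$d$ monomial of the shape $t^2x_j$ (because $2a_3+a_4=d$), quasi-smoothness at $O_t$ must instead come from the $t$-leading term of $\chi$, namely $t^3x_{i_0}$ with $\mathrm{wt}(x_{i_0})=a_4-a_3$; here $x_{i_0}\in\{y,z\}$ is the eliminated coordinate. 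Using $\gcd(a_3,a_4)=1$, the point $O_t$ is a terminal cyclic quotient singularity of type $\tfrac{1}{a_3}(1,a,a_3-a)$ with $a\equiv a_4\pmod{a_3}$, blown up by $f\colon Y\to X$. This situation is parallel to Lemma~\ref{lemma:Quadratic involution-biregular-1}: the absence of $wt^2$ is precisely the degeneration that prevents a quadratic involution from untwisting $O_t$, so $O_t$ must instead be excluded.

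Next I would take $T$ to be the proper transform of the surface $S_{x_{i_0}}$ cut out by the eliminated variable, and consider the family
\[
C_\lambda=\bigl\{x_{i_0}=0,\ x_{i_1}=\lambda x^{a_{i_1}}\bigr\},
\]
where $x_{i_1}$ is the remaining coordinate among $\{y,z\}$. Each $C_\lambda$ is a curve of degree $d$ in $\mathbb{P}(1,a_3,a_4)$ through $O_t$, and the proper transform of $\{x_{i_1}=\lambda x^{a_{i_1}}\}$ is $\mathbb{Q}$-linearly equivalent to $a_{i_1}B$, so that $-K_Y\cdot\tilde C_\lambda=a_{i_1}\,B^2\cdot T$ and $E\cdot\tilde C_\lambda=a_{i_1}\,B\cdot E\cdot T$. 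Writing $T\sim_{\mathbb{Q}}a_{i_0}A-\tfrac{m_0}{a_3}E$ with $\tfrac{m_0}{a_3}=\mathrm{mult}_{O_t}(S_{x_{i_0}})$, a direct computation from $A^3=d/(a_1a_2a_3a_4)$, $B^2\cdot E=1/(a(a_3-a))$ and $B^2\cdot A=A^3$ reduces the sign of $-K_Y\cdot\tilde C_\lambda$ to the inequality $a_{i_0}A^3\le m_0/(a_3\,a(a_3-a))$, which follows from the defining relation $2a_3+a_4=d$ once $m_0=\mathrm{mult}_{O_t}(S_{x_{i_0}})\cdot a_3$ is bounded below by its minimal value (this is a short per-family check, the worst case being the one where every lowest-order monomial of $x_{i_0}$ survives). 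One checks at the same time that $E\cdot\tilde C_\lambda>0$, which is automatic. Thus the family $C_\lambda$ supplies exactly the input of Lemma~\ref{lemma:bad-link}.

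Finally I would address the reducibility of the members $C_\lambda$, which I expect to be the main obstacle. Restricting $F$ to $S_{x_{i_0}}$ leaves a quadric in $w$ whose leading coefficient $\phi$ is non-constant; in contrast with Lemma~\ref{lemma:2-ray game}, where the root-interchanging map $w\mapsto -f_{d-a_4}-w$ is biregular, here it is only the birational quadratic involution $\tau_{O_w}$. The delicate step is to show that whenever a given $C_\lambda$ breaks into two components these are interchanged by $\tau_{O_w}$ and are therefore numerically equivalent on $Y$, so that each component meets $-K_Y$ non-positively and $E$ positively; the coprimality $\gcd(a_3,a_4)=1$, which forces any factorization of the defining section over $\mathbb{P}(1,a_3,a_4)$ to respect the two $w$-sheets, is what controls this splitting. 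Granting the numerical equivalence of the components, Lemma~\ref{lemma:bad-link} applies to the resulting one-dimensional family and yields that $O_t$ cannot be a center of non-canonical singularities of the log pair $\left(X,\frac{1}{n}\mathcal{M}\right)$.
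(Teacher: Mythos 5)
Your overall strategy --- a one-parameter family of curves meeting $E$ positively and $-K_Y$ non-positively, fed into Lemma~\ref{lemma:bad-link} --- is indeed the paper's strategy, and your normal form for the equation (the term $t^3x_{i_0}$ with $\mathrm{wt}(x_{i_0})=a_4-a_3$, and the degree-$(2a_3-a_4)$ coefficient of $w^2$) is correct. The gap is that your cycles $C_\lambda=\{x_{i_0}=0,\ x_{i_1}=\lambda x^{a_{i_1}}\}$ are not irreducible curves, and not for the reason you address in your last paragraph. Because $wt^2$ is absent and $2a_3+a_4=d$, the defining equation contains no monomial in $t,w$ alone (any $t^kw^l$ of degree $d$ is ruled out by $\gcd(a_3,a_4)=1$, $a_3<a_4<2a_3$ and the exclusion of $wt^2$), hence $L_{tw}\subset X$. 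This curve lies on $S_{x_{i_0}}$ and on every surface $x_{i_1}=\lambda x^{a_{i_1}}$, so it is a \emph{fixed} component of your family, occurring in the cycle $T\cdot\tilde S_\lambda$ with multiplicity $e:=2a_3-a_4=a_{i_1}$. This is exactly what distinguishes the present theorem from Lemma~\ref{lemma:2-ray game}, where $w^2$ has constant coefficient and $L_{tw}\not\subset X$. Your intersection number for the total cycle, $-K_Y\cdot\tilde C_\lambda=a_{i_1}B^2\cdot T=-e^2/(a_1a_2a_4)<0$, is correct, but all of the negativity sits on the fixed part: one computes $B\cdot\tilde L_{tw}=-e/(a_1a_2a_4)$, while the moving residual curves in $T\cdot\tilde S_\lambda=e\tilde L_{tw}+\tilde R_\lambda$ satisfy $B\cdot\tilde R_\lambda=0$. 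Lemma~\ref{lemma:bad-link} therefore yields no contradiction from your family: its proof only forces curves with negative $\mathcal{M}_Y$-degree into the base locus of $\mathcal{M}_Y$, and a mobile system may perfectly well contain the \emph{single} fixed curve $\tilde L_{tw}$ in its base locus. Your proposed remedy via the involution $\tau_{O_w}$ does not repair this: the components that actually occur, $\tilde L_{tw}$ and $\tilde R_\lambda$, are visibly not numerically proportional, and unlike the biregular involution used in Lemma~\ref{lemma:2-ray game}, $\tau_{O_w}$ is only birational here, so it cannot be used to equate numerical classes of curves.

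The paper's proof closes precisely this gap. It first normalizes coordinates so that the coefficient $g_{d-2a_4}$ of $w^2$ contains the monomial $x_j=x_{i_1}$ (this is where quasi-smoothness at $O_w$ and a coordinate change enter, since $\deg(w^2x_{i_1})=2a_4+e=d$); then the restriction of the equation to $\{x_{i_0}=0,\ x_{i_1}=\lambda x^{e}\}$ is divisible by $x^{e}$ and by no higher power, so $L_{tw}$ occurs in the cycle with multiplicity exactly $e$, and moreover $T\sim_{\mathbb{Q}}e'B-E$ with $e'=a_4-a_3$ (your $m_0$ equals $e+2e'$ exactly, not merely bounded below). This gives $B\cdot\tilde R_\lambda=eB^2\cdot T-eB\cdot\tilde L_{tw}=0$, and Lemma~\ref{lemma:bad-link} is then applied to the residual family $\{\tilde R_\lambda\}$, not to $\{\tilde C_\lambda\}$. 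The exactness matters: if $L_{tw}$ occurred with multiplicity greater than $e$, the residual curves would satisfy $B\cdot\tilde R_\lambda>0$ and the whole method would fail, so replacing this normalization by a lower bound on the multiplicity of $T$ along $E$, as you suggest, cannot work.
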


\begin{proof}
We first note that the singular point $O_t$ of the hypersurface
$X$ is of type $\frac{1}{a_3}(1, a_1, a_2)$. The hypersurface $X$
may be assumed to be defined by the equation
\[x_it^3+t^2g_{a_4}(x,y,z)+twg_{a_3}(x,y,z)+tg_{a_3+a_4}(x,y,z)+\]\[+w^2g_{d-2a_4}(x,y,z)+wg_{2a_3}(x,y,z)+g_d(x,y,z)=0,\]
where $x_i$ is either $y$ or $z$.  We let $x_j$ be $z$ if $x_i$ is $y$ and vice
versa.
By a suitable coordinate change (if necessary), we may assume that the
polynomial $g_{d-2a_4}$ contains the monomial $x_j$.

 Suppose that the singular point $O_t$ is a center of non-canonical singularities of the log
pair $(X, \frac{1}{n}\mathcal{M})$.
 Consider the linear
system $\mathcal{L}$ on $X$ generated by $x^{e}$ and $x_j$, where
$e$ is the weight of $x_j$. The proper transform of each member of
$\mathcal{L}$ is $\mathbb{Q}$-linearly equivalent to $eB$. The base locus of
the linear system $\mathcal{L}$ consists of the curve cut by
$x=x_j=0$. It consists of the curve $L_{tw}$  and its residual curve $R$. Note
that the residual curve $R$ cannot pass through the point $O_t$
since we have the monomial $x_it^3$.
%The proper transform of the curve $R$ intersects $B$ positively.
Therefore,
\[B\cdot \tilde{L}_{tw}=eB^3+K_X\cdot
R=\frac{2ea_3+ea_4}{a_1a_2a_3a_4}-\frac{e}{a_1a_2a_3}-\frac{3ea_3}{a_1a_2a_3a_4}=-\frac{e}{a_1a_2a_4}.\]

Let $T$ be the proper transform of the surface on $X$ cut by the
equation $x_i=0$. In addition, let $\tilde{S}_\lambda$ be the proper
transform of the surface on $X$ cut by the equation $x_j-\lambda
x^e=0$ for a general constant $\lambda$.
The intersection $1$-cycle of the surface on $X$ cut by the
equation $x_i=0$ and  the surface on $X$ cut by the equation $x_j-\lambda
x^e=0$  is defined in $\mathbb{P}(1,a_3,a_4)$ by the equation
\[t^2g_{a_4}(x,0,\lambda
x^e)+twg_{a_3}(x,0,\lambda
x^e)+tg_{a_3+a_4}(x,0,\lambda
x^e)+\]\[+w^2g_{d-2a_4}(x,0,\lambda
x^e)+wg_{2a_3}(x,0,\lambda
x^e)+g_d(x,0,\lambda
x^e)=0\] if $x_i=y$,
\[t^2g_{a_4}(x,\lambda
x^e,0)+twg_{a_3}(x,\lambda
x^e,0)+tg_{a_3+a_4}(x,\lambda
x^e,0)+\]\[+w^2g_{d-2a_4}(x,\lambda
x^e,0)+wg_{2a_3}(x,\lambda
x^e,0)+g_d(x,\lambda
x^e,0)=0\] if $x_i=z$. Since $g_{d-2a_4}$ contains the monomial $x_j$, the equation in both the cases is divisible by $x^e$ but not by $x^{e+1}$. This implies
 that
\[T\cdot \tilde{S}_\lambda= e\tilde{L}_{tw}+\tilde{R}_\lambda= (2a_3-a_4)\tilde{L}_{tw}+\tilde{R}_\lambda,\]
where $\tilde{R}_\lambda$ is the residual curve.
The multiplicity of the
surface cut by $x_{i}=0$ along $E$  is determined by the monomial $w^2x_j$. It is $\frac{e+2e'}{a_3}$, where $e'$ is the weight of $x_i$, since the multiplicity of $w$ is $\frac{e'}{a_3}$ and that of $x_j$ is $\frac{e}{a_3}$ .
 Therefore, the surface $T$
is equivalent to $e'B-E$ since $e+e'=a_1+a_2=a_3$. Then
\[B\cdot \tilde{R}_\lambda= eB^2\cdot T-(2a_3-a_4)B\cdot \tilde{L}_{tw}=\frac{a_1a_2+ea_3-ea_4}{a_1a_2a_4}=0\]
since $a_4=e'+a_3$ and $ee'=a_1a_2$. We then obtain a
contradiction from Lemma~\ref{lemma:bad-link}.
\end{proof}

\subsection{Elliptic involution}
\label{subsection:hard-involutions}
Another way to obtain an involution is from an elliptic fibration with a section and the group structure on its generic fiber. 
We can, roughly speaking, construct the involution by sending every point to its inverse point with respect to the group structure. The involution 
constructed in this way is called an elliptic involution. 

\begin{proposition}\label{proposition:elliptic involution}
Let $\pi: W\to \Sigma$ be an elliptic fibration over a normal surface $\Sigma$ with a section $F$. Then there is a birational involution $\tau_W$ of $W$ such that it induces the elliptic involution with respect to the point $C\cap F$ on a general fiber $C$.
\end{proposition}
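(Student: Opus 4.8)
The plan is to construct the desired involution first on the generic fibre of $\pi$, where it is nothing but the inversion of an elliptic curve, and then to spread it out to a birational self-map of the total space $W$. Let $K=k(\Sigma)$ denote the function field of the base surface and let $\mathcal{C}=W\times_{\Sigma}\operatorname{Spec}K$ be the generic fibre of $\pi$. Because $\pi$ is an elliptic fibration, $\mathcal{C}$ is a smooth, projective, geometrically integral curve of genus one over $K$. The section $F$ meets $\mathcal{C}$ in a single $K$-rational point $O\in\mathcal{C}(K)$, namely the restriction of $F$ to the generic fibre. Taking $O$ as the origin, the pair $(\mathcal{C},O)$ is an elliptic curve over $K$ in the usual sense, and I would use this structure throughout.

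First I would invoke the group law on $(\mathcal{C},O)$: the inversion morphism $\iota=[-1]\colon\mathcal{C}\to\mathcal{C}$, sending a point $P$ to its inverse $-P$, is an automorphism defined over $K$ that fixes $O$ and satisfies $\iota^2=\operatorname{id}$. This is the generic incarnation of the fibrewise elliptic involution we seek. Next I would spread $\iota$ out. Since $W$ and $\mathcal{C}$ share the same function field, $k(W)=K(\mathcal{C})$, the $K$-algebra automorphism of $K(\mathcal{C})$ induced by $\iota$ defines a birational self-map $\tau_W\colon W\dashrightarrow W$ that commutes with $\pi$, i.e.\ $\pi\circ\tau_W=\pi$. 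As $\iota$ is an involution, so is $\tau_W$, giving the required birational involution of $W$.

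It remains to identify the action of $\tau_W$ on a general fibre, and this is the step I expect to require the most care. For a general closed point $s\in\Sigma$ the fibre $C=\pi^{-1}(s)$ is a smooth elliptic curve, and $C\cap F$ is the specialization of the origin $O$. The locus on $W$ where $\tau_W$ is not defined is a proper closed subset, so it avoids a general fibre $C$; hence $\tau_W$ is defined along $C$ and restricts there to the specialization of $\iota$. Because $F$ is an honest section, both $O$ and $\iota$ specialize compatibly, so that $\tau_W|_{C}$ is exactly the inversion $P\mapsto -P$ taken with respect to the origin $C\cap F$, which is the elliptic involution attached to $C\cap F$. The only genuine subtlety is this compatibility of specialization, which follows from the standard fact that the group structure of an elliptic curve is preserved under base change to a general point of $\Sigma$; once this is granted, the proof is complete.
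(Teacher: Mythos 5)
Your proposal is correct and follows essentially the same route as the paper: pass to the generic fibre over the function field of $\Sigma$, use the section to endow it with the structure of an elliptic curve, take the inversion $[-1]$, and transport this automorphism of the function field back to a birational involution of $W$ commuting with $\pi$. The only difference is cosmetic: you make explicit the specialization argument identifying $\tau_W$ on a general closed fibre with the inversion relative to $C\cap F$, a point the paper's proof treats as immediate "by construction."
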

\begin{proof}
Let $W_{\zeta}$ be the (scheme) fiber of $\pi$ over a generic
point $\zeta$ of $\Sigma$.  Then $W_{\zeta}$ is a smooth geometrically
irreducible curve over the rational function field $\mathbb{K}$ of $\Sigma$ over  $\mathbb{C}$,  which is
birational to a cubic curve on $\mathbb{P}^2_{\mathbb{K}}$. Since $F$ is a section of
$\pi$, it defines a
$\mathbb{K}$-rational point
of the curve $W_{\zeta}$. We  denote this point by $F_\zeta$. Thus,
$W_{\zeta}$ is an
elliptic curve  defined over $\mathbb{K}$.  To be precise, $W_{\zeta}$ has a group structure such that  the $\mathbb{K}$-rational point $F_\zeta$ is its identity and
all the group operations are morphisms defined over $\mathbb{K}$  (see, for example, \cite[Theorem~3.6 in Chapter~III]{Silverman}).
 This group structure gives an
involution $\tau_{W_{\zeta}}$ of $W_{\zeta}$  that sends every
$\mathbb{K}$-rational point to its inverse.
By construction, the involution $\tau_{W_{\zeta}}$ is a biregular
automorphism of the curve $W_{\zeta}$ defined over $\mathbb{K}$ that leaves the point $F_\zeta$ fixed.  Since the
rational function field of $W$ over $\mathbb{C}$  and the rational function filed of $W_{\zeta}$ over $\mathbb{K}$ are naturally isomorphic
as $\mathbb{C}$-algebras, the involution
$\tau_{W_{\zeta}}$ defines a $\mathbb{C}$-algebra  involution of the
rational function field of $W$ that leaves the subfield $\mathbb{K}$ fixed. Therefore, it induces
a birational involution
$\tau_{W}\in\mathrm{Bir}(W)$ such that the diagram
$$
\xymatrix{
W\ar@{->}[drr]_{\pi}\ar@{-->}[rrrr]^{\tau_W}&&&&W\ar@{->}[dll]^{\pi}\\%
&&\Sigma&&}
$$ %
commutes.
\end{proof}

Taken the Weierstrass equation of an elliptic curve into consideration,  an elliptic involution can be also regarded as a quadratic involution.
Because its expression in polynomials becomes extremely complicated after weighted blow ups and log flips (see \eqref{equation:CPR-inv2}),  it is difficult to see the virtue of an elliptic involution from the point of view of a quadratic involution.

In this section, we deal with the singular point $O_t$ on each quasi-smooth hypersurface in  the families No.~23, 40, 44, 61, 76,  and the singular point $O_z$ on each quasi-smooth hypersurface in  the families No.~20, 36. Also, the singular points of type $\frac{1}{2}(1,1,1)$ on each quasi-smooth hypersurface in  the family No.~7 are treated. These singular points on general hypersurfaces in such families are untwisted by birational involutions induced by the elliptic fibration models in \cite[4.10]{CPR}.  This section deals with these singular points on \emph{every}
quasi-smooth hypersurface in the families mentioned above with the more \emph{geometric} point of view.

Before we proceed, we divide  the family No.~7, quasi-smooth hypersurfaces $X_8$ of
degree $8$ in $\mathbb{P}(1,1,2,2,3)$, into two types.
\begin{proposition} Let $X_8$ be a quasi-smooth hypersurface of
degree $8$ in $\mathbb{P}(1,1,2,2,3)$. Then it  may be
assumed to be defined by an equation of one of the following forms
\begin{equation}\label{equation:typeII}
\aligned
&\mbox{Type \ I:} \\ &tw^2+wg_5(x,y,z)-zt^3-t^2g_4(x,y,z)-tg_6(x,y,z)+g_8(x,y,z)=0;\\%
&\mbox{Type II:} \\ & (z+f_2(x,y))w^2+wf_5(x,y,z,t)-zt^3-t^2f_4(x,y,z)-tf_6(x,y,z)+f_8(x,y,z)=0.\\%
\endaligned%
\end{equation}
 In the latter equation, the quasi-homogeneous
polynomial $f_5$ must contain either $xt^2$ or $yt^2$.
\end{proposition}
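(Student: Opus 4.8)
The plan is to bring the defining equation into shape by successive coordinate changes, invoking quasi-smoothness at each stage to guarantee that the monomials being exploited are actually present. Collecting powers of $w$ (of weight $3$), every such $X_8$ is defined by
\[w^2q_2(x,y,z,t)+wg_5(x,y,z,t)+h_8(x,y,z,t)=0,\]
where $q_2=\alpha z+\beta t+(\text{quadratic in }x,y)$. Since $8$ is not divisible by $3$ there is no pure power of $w$, so $O_w\in X$; quasi-smoothness at $O_w$, which is of type $\tfrac{1}{3}(1,1,2)$, forces one of the monomials $w^2z$, $w^2t$ to occur, that is $(\alpha,\beta)\neq(0,0)$. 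The object controlling the argument is the binary quartic $Q(z,t):=h_8(0,0,z,t)$, the restriction of $h_8$ to the plane $\{x=y=0\}\cong\mathbb{P}(2,2,3)$; quasi-smoothness at $O_z$, $O_t$ and along the curve $X\cap\{x=y=0\}$ excludes exactly the degenerations of $Q$ that would obstruct the normalizations below. I would then split into two cases according to whether, after acting on $(z,t)$ by $\mathrm{GL}_2$ and adding a quadratic in $x,y$, the $w^2$-coefficient can be normalized to $t$ (Type I) or only to $z+f_2(x,y)$ (Type II); the two resulting shapes are allowed to overlap, and the statement only claims that every $X_8$ fits at least one.

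In the Type I branch I take $q_2=t$ and first clear $t$ from $g_5$ by the substitution $w\mapsto w+P$ with $P$ of degree $3$: this turns $tw^2$ into $tw^2+2tPw+tP^2$, hence changes $g_5$ into $g_5+2tP$, and choosing $P$ to be $-\tfrac12$ times the $t$-divisible part of $g_5$ leaves $g_5=g_5(x,y,z)$. Next I normalize the top $t$-powers of $h_8$: a shear $z\mapsto z+\rho t$ replaces the $t^4$-coefficient by $Q(\rho,1)$, and since quasi-smoothness forbids $Q$ being a nonzero multiple of $t^4$, the polynomial $Q(z,1)$ is non-constant and has a root $\rho$, removing the $t^4$-term. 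Then $O_t\in X$, so quasi-smoothness at $O_t$ forces the $zt^3$-coefficient to be non-zero, after which a rescaling together with a further $z\mapsto z+(\text{quadratic in }x,y)$ brings the whole $t^3$-coefficient to exactly $-z$. Collecting the surviving terms by powers of $t$ produces the Type I equation.

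In the Type II branch I take $q_2=z+f_2(x,y)$. The normalization of the $t^3$- and $t^4$-terms proceeds as before (now $t$ is absent from the $w^2$-coefficient, so the relevant shears leave it untouched), while only the $(z+f_2)$-divisible part of $g_5$ can be absorbed into $w$, so $f_5$ genuinely keeps its $t$-terms. The decisive feature is that here $L_{tw}=\{x=y=z=0\}$ lies on $X$: both the $w^2$-coefficient and the $t^4$-coefficient vanish on $L_{tw}$, so $F|_{L_{tw}}\equiv0$ and in particular the $t^4$-term is automatically absent. I would then run the quasi-smoothness test along this contained line. Along $L_{tw}$ one finds $\partial_zF=w^2-t^3$ (using the normalized $-zt^3$-term), $\partial_xF=(\text{coeff of }xt^2)\,wt^2$ and $\partial_yF=(\text{coeff of }yt^2)\,wt^2$, while $\partial_tF$ and $\partial_wF$ vanish identically there. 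At the points of $L_{tw}$ with $w^2=t^3$ (where $t,w\neq0$) the first derivative vanishes, so quasi-smoothness forces $\partial_xF$ or $\partial_yF$ to be non-zero, i.e. $f_5$ must contain $xt^2$ or $yt^2$.

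I expect the main obstacle to be the quasi-smoothness bookkeeping rather than the coordinate algebra. Two points need care: first, pinning down precisely which degenerations of the binary quartic $Q$ are ruled out, so that the $t^4$-term is always removable and the $zt^3$-term always survives after normalization, and verifying that the case in which Type I is unreachable does land in a genuine Type II configuration with $L_{tw}\subset X$; second, performing the coordinate changes in a compatible order, checking for instance that clearing $t$ from $g_5$ does not reintroduce a $t^4$-term or spoil the $-zt^3$ coefficient, with a final $z\mapsto z+(\text{quadratic in }x,y)$ mopping up the harmless leftovers. The genuinely delicate step is the last one: extracting the constraint $xt^2\in f_5$ or $yt^2\in f_5$ from quasi-smoothness at the special points $w^2=t^3$ of the contained curve $L_{tw}$, which is exactly where transversality in the $z$-direction fails.
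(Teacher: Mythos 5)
Your local computations are the right ones, and they coincide with the paper's: quasi-smoothness at $O_w$ forces $zw^2$ or $tw^2$ to occur; completing the square in $w$ absorbs exactly the part of the $w$-linear coefficient divisible by the $w^2$-coefficient; quasi-smoothness at $O_t$ forces the monomial $zt^3$; and your evaluation of $\partial_xF$, $\partial_yF$, $\partial_zF$ along $L_{tw}$ at the point where $w^2=t^3$ (the paper's point $[0:0:0:1:1]$) is precisely how the paper deduces that $f_5$ must contain $xt^2$ or $yt^2$. The flaw is in the architecture of your case division. The dichotomy you propose --- whether the $w^2$-coefficient can be normalized to $t$, or ``only'' to $z+f_2(x,y)$ --- is not a dichotomy: once quasi-smoothness at $O_w$ gives $(\alpha,\beta)\neq(0,0)$, a $\mathrm{GL}_2$-change of the weight-$2$ coordinates $(z,t)$ followed by $t\mapsto t+(\text{quadratic in }x,y)$ \emph{always} normalizes the $w^2$-coefficient to $t$. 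So your Type~II hypothesis is never satisfied, and your argument, read literally, collapses to ``every $X_8$ admits a Type~I equation'' --- which is true, but only because your Type~I branch silently moves a \emph{conveniently chosen} singular point to $O_t$. The paper's proof makes the split in the opposite order: it first fixes one of the four $\frac{1}{2}(1,1,1)$-points $p$ (the four roots of the binary quartic $F(0,0,z,t,0)$), moves \emph{that} point to $O_t$ (killing $t^4$), and only then asks whether the $w^2$-coefficient contains $t$; the residual coordinate freedom fixing $O_t$ is lower-triangular, so this question is well defined, and it is equivalent to whether the $w^2$-coefficient vanishes at $p$. The type is therefore an invariant of the pair (hypersurface, chosen singular point), not of the hypersurface alone. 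This is not pedantry: in the rigidity proof every one of the four points must be untwisted separately, and the point at which the $w^2$-coefficient vanishes (when that point is one of the four) can \emph{only} be presented in Type~II form --- this is exactly the case requiring Theorem~\ref{theorem:special-7}. Your scheme, which always selects a good point, can never produce that case.

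The same misordering creates a false step inside your Type~II branch. You assert that the $t^4$-term is ``automatically absent'' because the $w^2$-coefficient vanishes on $L_{tw}$, and that the relevant shears leave the $w^2$-coefficient untouched. Neither holds: the $t^4$-coefficient of $h_8$ is a constant with no relation to the $w^2$-coefficient; the only shear that can remove it, $z\mapsto z+\rho t$, reintroduces $t$ into $q_2=z+f_2$, while the shears $t\mapsto t+\sigma z$ that do preserve $q_2$ leave the $t^4$-coefficient unchanged. In truth, the $t^4$-term is absent in Type~II precisely because $O_t$ has been arranged to be a root of $F(0,0,z,t,0)$, i.e.\ a singular point of $X_8$ --- the step your setup never performs. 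If you restructure the proof as: (i) move a chosen singular point to $O_t$, so $t^4$ is absent; (ii) split on whether the $w^2$-coefficient contains $t$; (iii) in the first case complete the square in $w$ and invoke quasi-smoothness at $O_t$; (iv) in the second case invoke quasi-smoothness at $O_w$ to get the coefficient $z+f_2(x,y)$ and at $O_t$ to normalize the $t^3$-term, then your computations, including the final $L_{tw}$-argument for the $xt^2$/$yt^2$ condition, go through verbatim and reproduce the paper's proof.
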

\begin{proof}
Let $F(x,y,z,t,w)$ be a quasi-homogeneous polynomial of degree $8$ that defines the hypersurface $X_8$.
The hypersurface $X_8$ has exactly four singular points
of type $\frac{1}{2}(1,1,1)$. They correspond to the four solutions to the equation $F(0,0,z,t,0)=0$ and they are located along the curve  $L_{zt}$. Let $p$ be
one of the singular points. By a coordinate change, we may assume
that $p$ is the point $O_t$.  Then the polynomial $F$ does not contain the monomial $t^4$. Therefore, we may write
\[\begin{split}F(x,y,z,t,w)&=w^2A_2(x,y,z,t)+w\left(2t^2B_1(x,y,z)+2tB_3(x,y,z)+B_5(x,y,z)\right)+\\ &+t^3B_2(x,y,z)-t^2B_4(x,y,z)-tB_6(x,y,z)+B_8(x,y,z),\\ \end{split}\]
where $A_i(x,y,z,t)$ is a quasi-homogeneous polynomial of degree $i$ in $x$, $y$, $z$, $t$ and
$B_j(x,y,z)$ is a quasi-homogeneous polynomial of degree $j$ in variables $x$, $y$, $z$.

Now we have two kinds of possibility for $A_2(x,y,z,t)$. The first possibility is that $A_2(x,y,z,t)$ contains the monomial $t$ (this is a general case). In this case, we may assume that $A_2(x,y,z,t)=t$ by the coordinate change $A_2(x,y,z,t)\mapsto t$. Note that
\[t\left(w^2+2wtB_1(x,y,z)+2wB_3(x,y,z)\right)\]\[=t\left(w+tB_1(x,y,z)+B_3(x,y,z)\right)^2-t\left(tB_1(x,y,z)+B_3(x,y,z)\right)^2.\]
By the coordinate change $w+tB_1(x,y,z)+B_3(x,y,z)\mapsto w$, we may assume that
\[\begin{split}F(x,y,z,t,w)&=tw^2+wB_5(x,y,z)+\\ &+t^3B_2(x,y,z)-t^2B_4(x,y,z)-tB_6(x,y,z)+B_8(x,y,z).\\ \end{split}\]

The second possibility is that $A_2(x,y,z,t)$ does not contain the monomial $t$ (this is a special case). In this case,  it must contain the monomial $z$ since  $X_8$ is quasi-smooth at $O_w$. We may then write $A_2(x,y,z,t)=z+f_2(x,y)$.

Since $X_8$ is quasi-smooth at $O_t$, $B_2$ must contain the monomial  $z$. Therefore, by the coordinate change $B_2(x,y,z)\mapsto -z$, we see that the quasi-homogeneous polynomial $F$ can be written in either Type~I or Type~II.

In the equation of Type~II, the quasi-homogeneous
polynomial $f_5$ must contain either $xt^2$ or $yt^2$. If not, then the hypersurface $X_8$ is  not quasi-smooth at the point $[0:0:0:1:1]$.
\end{proof}
The hypersurface  in the family No.~7 defined by the equation of Type II may have an
involution that untwists the singular point $O_t$. Since its
construction is quite complicated, we explain the method in a
separate section.

First, we consider the following six families and their singular
point $O_t$.
\begin{itemize}
\item  No.~\phantom{0}$7$ (Type~I), \  $X_{8} \ \subset \mathbb{P}(1,1,2,2,3)$;
\item  No.~$23$, \phantom{Type~I,} \ \  $X_{14}\subset \mathbb{P}(1,2,3,4,5)$;
\item
No.~$40$, \phantom{Type~I,} \ \ $X_{19}\subset \mathbb{P}(1,3,4,5,7)$;
\item No.~$44$, \phantom{Type~I,}
\ \  $X_{20}\subset \mathbb{P}(1,2,5,6,7)$;
\item No.~$61$, \phantom{Type~I,}\ \
$X_{25}\subset \mathbb{P}(1,4,5,7,9)$;
\item No.~$76$, \phantom{Type~I,}\ \
$X_{30}\subset \mathbb{P}(1,5,6,8,11)$.
\end{itemize}
For these six families, we may assume  that the hypersurface $X$
is defined by the equation
\begin{equation}\label{equation:defining equation-elliptic}
tw^2+wg_{d-a_4}(x,y,z)-x_it^3-t^2g_{d-2a_3}(x,y,z)-tg_{d-a_3}(x,y,z)+g_d(x,y,z)=0,
\end{equation}
where $x_i$ is either $y$ or $z$.

Put $y=\lambda_1 x^{a_1}$ and $z=\lambda_2 x^{a_2}$. We then consider the curve $C_{\lambda_1, \lambda_2}$ defined by
\begin{equation}\label{equation:irreducible-fiber}
\begin{split}
& tw^2+wg_{d-a_4}(x,\lambda_1 x^{a_1},\lambda_2 x^{a_2})-\lambda_ix^{a_i}t^3\\ &-t^2g_{d-2a_3}(x,\lambda_1 x^{a_1},\lambda_2 x^{a_2})-tg_{d-a_3}(x,\lambda_1 x^{a_1},\lambda_2 x^{a_2})+g_d(x,\lambda_1 x^{a_1},\lambda_2 x^{a_2})=0,\\
\end{split}
\end{equation}
where $i=1$ if $x_i=y$; $i=2$ if $x_i=z$,
in $\mathbb{P}(1, a_3, a_4)$.
From now
let $x_j$ be the variable such that $\{x_i, x_j\}=\{y,z\}$. If $x_i=y$, then put $a_i=a_1$ and $\lambda_i=\lambda_1$.
If  $x_i=z$, then put $a_i=a_2$ and $\lambda_i=\lambda_2$. Also we define $a_j$ and $\lambda_j$ in the same manner.

\begin{theorem}\label{theorem:geometric-elliptic} Let $X$ be a quasi-smooth hypersurface in the families No.~7~(Type I), 23, 40, 44, 61, 76. If the singular point $O_t$ is 
a center of non-canonical singularities of
the log pair $(X, \frac{1}{n}\mathcal{M})$, then
it is untwisted by a birational involution.
\end{theorem}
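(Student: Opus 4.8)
The plan is to realise $X$ as the total space of an elliptic fibration and to produce the required birational involution by fibrewise negation, applying Proposition~\ref{proposition:elliptic involution}. First I would consider the projection $\psi\colon X\dashrightarrow\mathbb{P}(1,a_1,a_2)$ given by $[x:y:z:t:w]\mapsto[x:y:z]$, whose centre is the edge $L_{tw}$; its general fibre is the curve $C_{\lambda_1,\lambda_2}\subset\mathbb{P}(1,a_3,a_4)$ of \eqref{equation:irreducible-fiber}. The first task is to check, uniformly for every quasi-smooth member and not merely the general one, that $C_{\lambda_1,\lambda_2}$ is an irreducible curve of arithmetic genus one; here I would use quasi-smoothness together with the fact, established earlier in the paper, that $L_{tw}\not\subset X$ for the families in question. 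Both coordinate points $O_t=[0:1:0]$ and $O_w=[0:0:1]$ lie on every fibre, since the monomials $tw^2$ and $-x_it^3$ force the fibre equation \eqref{equation:defining equation-elliptic} to vanish at them. In particular $O_t$ supplies a $\mathbb{K}$-rational point of the generic fibre, so the latter is genuinely an elliptic curve, and $O_w$ sweeps out a section after resolution.

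Next I would resolve the indeterminacy of $\psi$ to an honest elliptic fibration $\pi\colon W\to\Sigma$, where $\Sigma\to\mathbb{P}(1,a_1,a_2)$ is a resolution and $W\to X$ is built from weighted blow ups of the base points, taking as distinguished section $F$ the one sweeping out $O_w$. Proposition~\ref{proposition:elliptic involution} then furnishes a birational involution $\tau_W\in\mathrm{Bir}(W)$ inducing, on the generic fibre, negation with $O_w$ as identity, and pushing forward gives a birational involution $\tau\in\mathrm{Bir}(X)$. This is the elliptic involution attached to $(\pi,F)$, and the point of placing the origin at $O_w$ rather than $O_t$ is that negation fixes its own origin: with origin $O_w$ the involution moves the $O_t$-point of a general fibre to its inverse, which is exactly what is needed to affect $O_t$ rather than $O_w$ (the latter being handled by the ordinary quadratic involution that swaps the two roots in $w$).

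It then remains to see that $\tau$ untwists $O_t$ in the sense of Definition~\ref{definition:untwisting}. Let $f\colon Y\to X$ be the weighted blow up of $O_t$, of type $\tfrac{1}{a_3}(1,a_j,a_4-a_3)$, with the weights $(1,a_j,a_4-a_3)$ dictated by Theorem~\ref{theorem:Kawamata}, and set $\tau_Y=f^{-1}\circ\tau\circ f$. By Remark~\ref{remark:untwisting-involution} it suffices to show that $\tau_Y$ is biregular in codimension one and that $\tau_Y(E)\neq E$. Since $O_t$ is the common point of all fibres, the exceptional divisor $E$ is horizontal for $\pi$, meeting a general fibre in the $O_t$-point(s); fibrewise negation with origin $O_w$ carries these to the inverse points, which sweep out a horizontal divisor distinct from $E$, whence $\tau_Y(E)\neq E$. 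Equivalently, by the argument of Lemma~\ref{lemma:untwisting-invoultion} one reduces, once $\tau_Y$ is known to be biregular in codimension one, to excluding $\tau\in\mathrm{Aut}(X)$, i.e.\ to checking that $O_t\ominus O_w$ is not a $2$-torsion section of the generic fibre.

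The main obstacle is the verification that $\tau_Y$ is biregular in codimension one, that is, that $Y$ is exactly the model on which negation acts as a pseudo-automorphism, so that $\tau$ organises into a Sarkisov link of type~II from $Y$ to itself (cf.\ \cite[Definition~3.1.4]{CPR}). Concretely I would run the two-ray game on $Y$: one extremal contraction is $f$ itself, the other is the elliptic fibration direction; I would then follow the ensuing chain of anti-flips and flops and check that it terminates in the contraction dual to $f$, with no horizontal divisor contracted apart from the swap $E\leftrightarrow\tau_Y(E)$. The delicate point, and the reason the analysis must be carried out family by family, is to control this link and the irreducibility of the fibres for \emph{every} quasi-smooth member rather than only the general one; once this is in place, $\tau_Y$ is biregular in codimension one, $\tau_Y(E)\neq E$, and the conclusion follows.
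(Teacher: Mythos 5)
Your overall scaffolding matches the paper's: the projection to $\mathbb{P}(1,a_1,a_2)$, the elliptic fibration obtained after blowing up $O_t$ and $O_w$, the exceptional divisor over $O_w$ as the section, fibrewise negation via Proposition~\ref{proposition:elliptic involution}, and the reduction via Remark~\ref{remark:untwisting-involution} to showing that $\tau_Y$ is biregular in codimension one with $\tau_Y(E)\neq E$. But there is a genuine gap exactly at the step that constitutes the heart of the paper's proof. You assert that fibrewise negation carries the points $E\cap(\text{fibre})$ ``to the inverse points, which sweep out a horizontal divisor distinct from $E$.'' Nothing forces that divisor to be distinct from $E$: since $E$ is only a multi-section, its trace on a general fibre can perfectly well be a set closed under negation (e.g.\ a $2$-section meeting each fibre in a pair $\{P,-P\}$), in which case $\tau_Y(E)=E$ and the involution is biregular, hence useless for untwisting. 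Ruling this out is precisely what the paper labors to do: it \emph{assumes} $\tau_Y(E)=E$, deduces that $\tau$, $\tau_Y$, $\tau_W$ are all biregular, restricts to the $\tau$-invariant surfaces $S_{\lambda_i}$, invokes the Bogomolov--Tschinkel statement (Lemma~\ref{lemma:Tschinkel}) to force $\bar{E}_{\lambda_i}-a_i\bar{F}_{\lambda_i}$ to be numerically a combination of fibre components, proves that \emph{every} fibre (not just the general one) contains a single non-exceptional component --- and this step itself uses the hypothesis that $O_t$ is a centre, via Lemma~\ref{lemma:bad-link} --- and finally derives a contradiction from the non-degeneracy of the intersection form of the components of $C_x$ on $S_{\lambda_i}$ (Lemma~\ref{lemma:non-degenerate-curves}, whose case-by-case verification is itself substantial). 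Your closing reformulation (``check that $O_t\ominus O_w$ is not $2$-torsion on the generic fibre'') names the right obstruction, but for an arbitrary quasi-smooth member this check \emph{is} the theorem, and your proposal gives no way to carry it out.

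A secondary gap: you defer the biregularity in codimension one of $\tau_Y$ to an unexecuted two-ray game on $Y$, ``following the chain of anti-flips and flops'' family by family. The paper does not need this: $\tau_W$ is automatically biregular in codimension one because $K_W$ is $\eta$-nef (\cite[Corollary~3.54]{KoMo98}, applied to the elliptic fibration $\eta\colon W\to\mathbb{P}(1,a_1,a_2)$), and since $\tau_W(F)=F$ by construction, this property descends through the divisorial contraction $g$ to $\tau_Y$. So the link structure you propose to verify by hand comes for free once the fibration is in place; the real difficulty is, and remains, the statement $\tau_Y(E)\neq E$, which your argument assumes rather than proves.
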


\begin{proof}
Let $\pi\colon X\dasharrow\mathbb{P}(1,a_1,a_2)$ be the rational map
induced by $$[x:y:z:t:w]\mapsto [x:y:z].$$ It is a morphism outside of the point $O_t$ and the point $O_w$. Moreover, the map is dominant. Its general fiber is an irreducible curve birational to an elliptic curve. To see this, on the hypersurface $X$, consider the surface cut by $y=\lambda_1 x^{a_1}$ and the surface cut by $z=\lambda_2 x^{a_2}$, where $\lambda_1$ and $\lambda_2$ are sufficiently general complex numbers. Then the intersection of these two surfaces is the curve  $C_{\lambda_1, \lambda_2}$ defined by \eqref{equation:irreducible-fiber}.
 From the equation we can easily see
that the curve $C_{\lambda_1, \lambda_2}$ is irreducible and reduced.
Furthermore, plugging $x=1$ into \eqref{equation:irreducible-fiber},
we see that the curve is birational to an elliptic curve. The
curve $C_{\lambda_1, \lambda_2}$ is a general fiber of the map $\pi$.

Let $\mathcal{H}$ be the linear subsystem of $|-a_2K_X|$ generated by the monomials of degree $a_2$ in the variables $x$, $y$, $z$. Its proper transform $\mathcal{H}_Y$ on $Y$ coincides with $|-a_2K_Y|$.

  Let $g\colon W\to Y$ be the weighted blow up at the point over
$O_w$ with weight $(1, a_1,a_2)$ and let $F$ be its exceptional
divisor. Let $\hat{E}$ be the proper transform of the exceptional divisor $E$ by the morphism $g$.
Let $\mathcal{H}_W$ be the proper transform of the linear system $\mathcal{H}$ by  the morphism $f\circ g$. We then see that $\mathcal{H}_W=|-a_2K_W|$.  We also see that $-K_W^3=0$.

We first claim that the divisor class $-K_W$ is nef. Indeed, the base curve of the linear system $|-a_2K_W|$ is given
by the proper transform of the curve $C$  cut by the equation $x=z=0$ on $X$. If the curve is irreducible then its proper transform $\hat{C}$ on $W$ intersects $-K_W$ trivially since $-K_W^3=0$. Suppose that the curve $C$ is reducible. It then consists of two irreducible components. Moreover, one of the components must be $L_{yw}$. Note that it passes through the point $O_w$.
Its proper transform $\hat{L}_{yw}$ on $W$ passes though the singular point of index $a_1$ on the exceptional divisor $F$. We then obtain
\[-K_W\cdot\hat{L}_{yw}=-K_X\cdot L_{yw}-\frac{1}{a_4}F\cdot \hat{L}_{yw}=\frac{1}{a_1a_4}-\frac{1}{a_4a_1}=0.\]
Since $-K_W\cdot\hat{C}=0$, the proper transform of the other component of $C$ intersects $-K_W$ trivially.
Therefore, the divisor class $-K_W$ is nef.

The linear system $|-mK_W|$ is free for sufficiently large
 $m$ by  Log Abundance (\cite{KMM94}). Hence, it induces an elliptic fibration $\eta\colon W\to
 \mathbb{P}(1,a_1,a_2)$.
Moreover, we have proved the existence of a commutative diagram
$$
\xymatrix{
Y\ar@{->}[d]_{f}&&W\ar@{->}[ll]_{g}\ar@{->}[d]^{\eta}\\%
X\ar@{-->}[rr]_{\pi}&&\mathbb{P}(1,a_1,a_2).\\}
$$ %

 We immediately  see from \eqref{equation:irreducible-fiber} that the divisor
$F$ is a section of the
 elliptic fibration
$\eta$ and the divisor $\hat{E}$ is a multi-section of the elliptic fibration $\eta$.
Therefore, by Proposition~\ref{proposition:elliptic involution}, we can construct 
a birational involution
$\tau_{W}\in\mathrm{Bir}(W)$  from   the
reflection of the generic fiber of $\eta$ with respect to the
section $F$. The involution $\tau_W$ is
biregular in codimension one because  $K_{W}$ is $\eta$-nef (\cite[Corollary~3.54]{KoMo98}). In particular,
$\tau_W$ acts on $\mathrm{Pic}(W)$.

Put $\tau_{Y}=g\circ\tau_W\circ g^{-1}$ and
$\tau=f\circ\tau_Y\circ f^{-1}$.

We have  $\tau_W(F)=F$  by our  construction. Therefore, $\tau_{Y}$ is also biregular
in codimension one.
In order to show that the point $O_t$ is untwisted by $\tau$, it is enough to verify $\tau_Y(E)\ne E$
by Remark~\ref{remark:untwisting-involution}.
For this verification, we suppose that $\tau_Y(E)=E$ and then we  look for a contradiction.

First, note that $\tau_Y(E)=E$ immediately implies $\tau_W(\hat{E})=\hat{E}$. It also implies that the involution $\tau$ is biregular in codimension one.
Furthermore, the involutions $\tau$, $\tau_Y$, $\tau_W$ induce the identity maps on the Picard groups of $X$, $Y$, $W$, respectively, since $\tau(-K_X)=-K_X$, $\tau_Y(E)=E$ and $\tau_W(F)=F$. Therefore, it follows from \cite[Proposition~2.7]{Co95} that they are all biregular.

Let $S_{\lambda_i}$ be the  surface on the hypersurface $X$
cut by the equation $x_i=\lambda_i x^{a_i}$ with a general complex number $\lambda_i$. It is a normal
surface (see Remark~\ref{remark:normal surface} below). However, it is not quasi-smooth possibly at the point
$O_t$ and the point $O_{x_j}$. The surface $S_{\lambda_i}$ is
$\tau$-invariant by our construction. Moreover, the projection
$\pi\colon X\dasharrow\mathbb{P}(1,a_1,a_2)$ induces a rational
map $\pi_{\lambda_i}\colon
S_{\lambda_i}\dasharrow\mathbb{P}(1,a_j)\cong\mathbb{P}^1$. The rational
map $\pi_{\lambda_i}\colon S_{\lambda_i}\dasharrow\mathbb{P}^1$ is
given by the pencil  of the curves on the surface
$S_{\lambda_i}\subset\mathbb{P}(1,a_j,a_3,a_4)$ cut by the
equations
\[\delta x^{a_j}=\epsilon x_j,\]
where $[\delta:\epsilon]\in\mathbb{P}^1$.  Its base locus is cut
out on $S_{\lambda_i}$ by $x=x_j=0$, which implies that the base
locus of the pencil consists of two points $O_t$ and
$O_w$. The map $\pi_{\lambda_i}$ is defined outside of the points
$O_w$ and $O_t$.

Denote by $\hat{S}_{\lambda_i}$ the proper transform of the
surface $S_{\lambda_i}$ by the birational morphism $f\circ g$. Then
$\hat{S}_{\lambda_i}$ is a normal surface that belongs to
$ |-a_iK_{W}|$. Moreover, the morphism
$f\circ g$ induces a birational morphism $\gamma\colon
\hat{S}_{\lambda_i}\to S_{\lambda_i}$. Furthermore, we have a
commutative diagram
$$
\xymatrix{
&\hat{S}_{\lambda_i} \ar@{->}[ld]_{\gamma}\ar@{->}[dr]^{\hat{\pi}_{\lambda_i}}\\%
S_{\lambda_i}\ar@{-->}[rr]_{\pi_{\lambda_i}} &&\mathbb{P}^1,}
$$ %
where $\hat{\pi}_{\lambda_i}$ is the morphism induced by the
elliptic fibration $\eta\colon W\to\mathbb{P}(1,a_1,a_2)$. In
particular, a general fiber of $\hat{\pi}_{\lambda_i}$ is a smooth
elliptic curve.

Let $\sigma\colon\bar{S}_{\lambda_i}\to\hat{S}_{\lambda_i}$
be the minimal resolution of singularities of the normal surface
$\hat{S}_{\lambda_i}$. Then  we have a commutative diagram
$$
\xymatrix{
\hat{S}_{\lambda_i} \ar@{->}[d]_{\gamma}\ar@{->}[drr]^{\hat{\pi}_{\lambda_i}}&&\bar{S}_{\lambda_i} \ar@{->}[ll]_{\sigma}\ar@{->}[d]^{\bar{\pi}_{\lambda_i}}\\%
S_{\lambda_i}\ar@{-->}[rr]_{\pi_{\lambda_i}} &&\mathbb{P}^1,}
$$ %
where $\bar{\pi}_{\lambda_i}=\hat{\pi}_{\lambda_i}\circ\sigma$.
Then $\bar{\pi}_{\lambda_i}$ is also an elliptic fibration.

The surface $\hat{S}_{\lambda_i}$ is $\tau_W$-invariant by our
construction. Let $\hat{\tau}_{\lambda_i}$ be the restriction of
the involution $\tau_W$ to the surface $\hat{S}_{\lambda_i}$. Then
it is a biregular involution of the surface $\hat{S}_{\lambda_i}$
since $\tau_W$ is biregular. Put
$\hat{E}_{\lambda_i}=\hat{E}\vert_{\hat{S}_{\lambda_i}}$ and
$F_{\lambda_i}=F\vert_{\hat{S}_{\lambda_i}}$. Then
$\hat{E}_{\lambda_i}$ and $F_{\lambda_i}$ are reduced
$\hat{\tau}_{\lambda_i}$-invariant curves. Moreover, the curve
$F_{\lambda_i}$ is irreducible and is a section of the elliptic
fibration $\hat{\pi}_{\lambda_i}$. The curve
$\hat{E}_{\lambda_i}$ is a multi-section of the elliptic fibration
$\hat{\pi}_{\lambda_i}$. 

Put
$\bar{\tau}_{\lambda_i}=\sigma^{-1}\circ\hat{\tau}_{\lambda_i}\circ\sigma$.
Then $\bar{\tau}_{\lambda_i}$ is biregular because $\hat{\tau}_{\lambda_i}$ is biregular and $\sigma$ is
the minimal resolution of singularities, i.e., $\bar{S}_{\lambda_i}$ is a minimal model
over $\hat{S}_{\lambda_i}$ (\cite[Corollary~3.54]{KoMo98}).
Let $\bar{E}_{\lambda_i}$ and $\bar{F}_{\lambda_i}$ be the proper transforms of $\hat{E}_{\lambda_i}$ and $\hat{F}_{\lambda_i}$ by the birational morphism
$\sigma$, respectively.
These are $(\gamma\circ\sigma)$-exceptional.  Denote the other
$(\gamma\circ\sigma)$-exceptional  curves (if any) by $G_1,\ldots, G_r$.
Again,
$\bar{F}_{\lambda_i}$ is a section of the elliptic
fibration $\bar{\pi}_{\lambda_i}$ and
$\bar{E}_{\lambda_i}$ is a multi-section of the elliptic fibration
$\bar{\pi}_{\lambda_i}$. 

Let $\bar{C}_{\lambda_i}$ be a general fiber of the map
$\bar{\pi}_{\lambda_i}$. Then $\bar{C}_{\lambda_i}$ is
$\bar{\tau}_{\lambda_i}$-invariant. Furthermore,
$\bar{\tau}_{\lambda_i}\vert_{\bar{C}_{\lambda_i}}$ is given by
the reflection with respect to the point
$\bar{F}_{\lambda_i}\cap\bar{C}_{\lambda_i}$. On the other hand,
the curve $\bar{E}_{\lambda_i}$ is
$\bar{\tau}_{\lambda_i}$-invariant. Then Lemma~\ref{lemma:Tschinkel} below implies that the divisor
$\bar{E}_{\lambda_i}-a_i\bar{F}_{\lambda_i}$ must be numerically
equivalent to a $\mathbb{Q}$-linear combination of curves on
$\bar{S}_{\lambda_i}$ that lie in the fibers of
$\bar{\pi}_{\lambda_i}$.

Let $C_x$ be the curve on $S_{\lambda_i}$ cut by the
equation $x=0$. It is defined by the equation
$$tw^2+wh_{d-a_4}(x_j)+h_d(x_j, t)=0$$ in $\mathbb{P}(a_j, a_3,
a_4)$. It can be reducible.  We write $C_x=\sum_{k=1}^{\ell} m_kC_k$,
where $C_k$'s are the irreducible components of $C_x$. Denote by
$\hat{C}_{k}$ be the proper transform of $C_{k}$ by $\gamma$. Put
$\hat{C}_x=\sum_{k=1}^{\ell} m_k\hat{C}_k$. Then all the curves
$\hat{C}_k$ lie in the same fiber of the elliptic fibration
$\hat{\pi}_{\lambda_i}$

Let $\bar{C}_{k}$ be the proper transform of $\hat{C}_{k}$
by $\sigma$. Then all the curves $\bar{C}_k$ lie in the same fiber
of the elliptic fibration $\bar{\pi}_{\lambda_i}$.  In addition, the fiber containing $\bar{C}_{k}$'s does not carry any other
non-$(\gamma\circ\sigma)$-exceptional curve.

We also claim
that every other fiber of $\bar{\pi}_{\lambda_i}$ contains
exactly one irreducible and reduced curve that is not
$(\gamma\circ\sigma)$-exceptional. For this claim, it is enough to show that for
a general complex number $\lambda_i$, the curve $C_{\lambda_1,
\lambda_2}$ is always irreducible and reduced for every value of
$\lambda_j$. 
Suppose that this is not true. Then,
for a general complex number $\lambda_i$ there is a complex number $\lambda_j$ such that the curve $C_{\lambda_1, \lambda_2}$ is reducible.
Therefore there is a
one-dimensional family of reducible curves $C_{\lambda_1, \lambda_2}$ with general $\lambda_i$ and some
$\lambda_j$ depending on $\lambda_i$. Denote the general curve in this
one-dimensional family by $C$.  
Since the defining equation \eqref{equation:irreducible-fiber}  contains $tw^2$, it cam splits into at most three irreducible components. Furthermore, one of them must be the curve $C_1$ defined by  either 
$$y-\lambda_1 x^{a_1}=z-\lambda_2 x^{a_2}=w+f_{a_4}(x,t)=0$$ or 
$$y-\lambda_1 x^{a_1}=z-\lambda_2 x^{a_2}=w^2+wg_{a_4}(x,t)+g_{2a_4}(x,t)=0$$
for some quasi-homogeneous polynomials $f_{a_4}(x,t)$,  $g_{a_4}(x,t)$ and $g_{2a_4}(x,t)$.
We then obtain
\[\begin{split} B\cdot\tilde{C}_1&=\left(A-\frac{1}{a_3}E\right)\cdot\tilde{C}_1\\
&=-K_X\cdot C_1-\frac{1}{a_3}E\cdot C_1\\
&=\frac{ka_1a_2a_4}{a_1a_2a_3a_4}-\frac{k}{a_4}=0\\
\end{split},\]
where $k=1$ for $w+f_{a_4}(x,t)=0$ and $k=2$ for $w^2+wg_{a_4}(x,t)+g_{2a_4}(x,t)=0$.
 By
Lemma~\ref{lemma:bad-link}, the point $O_t$ cannot be a center of
non-canonical singularities of the log pair $(X,
\frac{1}{n}\mathcal{M})$. Therefore, since $O_t$ is a center, every other fiber of $\bar{\pi}_{\lambda_i}$ contains
exactly one irreducible and reduced curve that is not
$(\gamma\circ\sigma)$-exceptional.

Since every
fiber of $\bar{\pi}_{\lambda_i}$ (with scheme structure) is
numerically equivalent   to each other and the divisor
$\bar{E}_{\lambda_i}-a_i\bar{F}_{\lambda_i}$ is numerically
equivalent to a $\mathbb{Q}$-linear combination of curves that lie in the
fibers of $\bar{\pi}_{\lambda_i}$, we obtain
$$
\bar{E}_{\lambda_i}-a_i\bar{F}_{\lambda_i}\sim_{\mathbb{Q}}  \sum_{k=1}^{\ell} \bar{c}_k \bar{C}_k+\sum_{k=1}^{r}g_k G_k%
$$
for some rational numbers $\bar{c}_{1},\ldots,  \bar{c}_{\ell}$,
$g_1,\ldots,g_r$.  On the other hand, the intersection form of the
curves $\bar{E}_{\lambda_i}$, $\bar{F}_{\lambda_i}$, $G_1, \ldots,
G_r$ is negative-definite since these curves are
$\gamma\circ\sigma$-exceptional. This implies
$$
0> \left(\bar{E}_{\lambda_i}-a_i\bar{F}_{\lambda_i} - \sum_{k=1}^{r}g_k G_k\right)^2= \left( \sum_{k=1}^{\ell} \bar{c}_k \bar{C}_k\right)^2.
$$
Therefore,
$\bar{c}_k\ne 0$ for some $k$. On the other hand, we have
$$
\sum_{k=1}^{\ell} \bar{c}_k C_k\sim_{\mathbb{Q}}  0
$$
on the
surface $S_{\lambda_i}$. In particular, the intersection form of
the curve(s) $C_k$'s is degenerate on the surface $S_{\lambda_i}$.
This however contradicts
Lemma~\ref{lemma:non-degenerate-curves} below.

The obtained contradiction shows that $\tau_Y(E)\ne E$. In particular, the involution $\tau$ is not biregular.
Since the involution $\tau_Y$ is biregular in codimension one, the
involution $\tau$ meets the conditions in
Definition~\ref{definition:untwisting}.
  Therefore, the birational involution
$\tau$ untwists the singular point $O_t$.
\end{proof}

\begin{remark}\label{remark:normal surface}
Each affine piece of the surface $S_{\lambda_i}$ is the quotient of a hypersurface  in $\mathbb{C}^3$ by a finite group action.
Since the surface $S_{\lambda_i}$ has only isolated singularities, so does the hypersurface. Therefore, the hypersurface is normal, and hence its quotient by a finite group action is  also normal. Consequently, the original surface $S_{\lambda_i}$ is normal. For the same reason, a hypersurface in a weighted projective space is normal if it is smooth in codimension $1$.
\end{remark}

The lemma below originates from Bogomolov and Tschinkel
(\cite{BoTsch}, \cite{BoTsch2}).
\begin{lemma}\label{lemma:Tschinkel}
Let $\Sigma$ be a smooth surface with
an elliptic fibration $\pi:\Sigma\to B$ over a smooth curve $B$. Let $N$ be a section of $\pi$ and $M$ be a multi-section of degree $m\geq 1$. Suppose that the surface $\Sigma$ has an involution $\tau$ satisfying the following:
\begin{itemize}
\item[(1)] a general fiber $E$ is $\tau$-invariant:
\item[(2)] $M$ is $\tau$-invariant:
\item[(3)] $\tau |_E$ is given by the reflection  on the elliptic curve $E$ with respect to the point $N\cap E$.
\end{itemize}
Then the divisor $M-mN$ is numerically equivalent to a $\mathbb{Q}$-linear combination of curves that lie in the fibres of $\pi$.
\end{lemma}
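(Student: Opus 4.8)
The plan is to reduce everything to the generic fibre of $\pi$ and to exploit the group law there. Write $\mathbb{K}=\mathbb{C}(B)$ and let $E_\eta$ be the generic fibre, an elliptic curve over $\mathbb{K}$ whose origin $O$ is the $\mathbb{K}$-point cut out by the section $N$ (as in \cite[Chapter~III]{Silverman}). Since the general, hence the generic, fibre is $\tau$-invariant by hypothesis~(1), the automorphism $\tau$ fixes the subfield $\pi^\ast\mathbb{C}(B)\subset\mathbb{C}(\Sigma)$ and therefore restricts to a $\mathbb{K}$-automorphism of $E_\eta$; by hypothesis~(3) this restriction is the inversion $[-1]$ with respect to $O$. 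The multi-section $M$ of degree $m$ restricts to an effective divisor $M_\eta$ of degree $m$ on $E_\eta$, and its $\tau$-invariance (hypothesis~(2)) says precisely that $[-1]^\ast M_\eta=M_\eta$ as divisors on $E_\eta$.

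First I would record the consequence of this invariance for the class $c=[M_\eta]-m[O]\in\operatorname{Pic}^0(E_\eta)$. Under the Abel--Jacobi isomorphism $\operatorname{Pic}^0(E_\eta)\cong E_\eta(\mathbb{K})$ the operation $[-1]^\ast$ corresponds to negation, because $[P]+[-P]\sim 2[O]$ for every point $P$. Since $[-1]^\ast[O]=[O]$ and $[-1]^\ast M_\eta=M_\eta$, the class $c$ is $[-1]^\ast$-invariant, whereas negation gives $[-1]^\ast c=-c$; hence $2c=0$, i.e. $c$ is a $2$-torsion element of $\operatorname{Pic}^0(E_\eta)$. In particular $2[M_\eta]-2m[O]$ is a principal divisor on $E_\eta$, so the restriction of $2M-2mN$ to the generic fibre is linearly trivial.

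Next I would globalise. The restriction homomorphism $\operatorname{Pic}(\Sigma)\to\operatorname{Pic}(E_\eta)$ is surjective with kernel generated by the classes of the irreducible components of the fibres of $\pi$; this is the standard relative-Picard sequence, obtained from $\operatorname{Pic}(E_\eta)=\varinjlim_{U}\operatorname{Pic}(\pi^{-1}(U))$ together with the excision isomorphism $\operatorname{Pic}(\pi^{-1}(U))\cong\operatorname{Pic}(\Sigma)/\langle\text{components of fibres over }B\setminus U\rangle$. Because $2M-2mN$ restricts to zero in $\operatorname{Pic}(E_\eta)$, it is linearly equivalent on $\Sigma$ to a divisor $\sum_j a_j V_j$ supported on fibres of $\pi$. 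Dividing by $2$ gives $M-mN\sim_{\mathbb{Q}}\tfrac12\sum_j a_j V_j$, a $\mathbb{Q}$-linear combination of curves lying in fibres of $\pi$; in particular $M-mN$ is numerically equivalent to such a combination, which is the assertion.

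The only delicate points lie in the first two paragraphs: verifying that $\tau$ is fibre-preserving and induces exactly $[-1]$ on $E_\eta$ (so that the group-theoretic reflection of hypothesis~(3) is literally inversion on the generic fibre), and the identification of $[-1]^\ast$ with negation on $\operatorname{Pic}^0(E_\eta)$. Once these are in place the relation $2c=0$ is immediate and the passage from $E_\eta$ back to $\Sigma$ is purely formal. I expect the main obstacle to be expository rather than conceptual, namely making the isomorphism $\operatorname{Pic}^0(E_\eta)\cong E_\eta(\mathbb{K})$ and the behaviour of $[-1]^\ast$ precise over the non-algebraically-closed field $\mathbb{K}$, for which the description of the group structure of $E_\eta$ over $\mathbb{K}$ cited above is exactly what is needed.
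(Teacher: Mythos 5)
Your proof is correct, and its first half is exactly the paper's argument: the restriction of $M-mN$ to a fibre is simultaneously $\tau$-invariant (by (2) and (3)) and anti-invariant (by (3)), hence $2$-torsion in $\operatorname{Pic}^0$. Where you genuinely diverge is the globalization step. The paper restricts to a \emph{general closed} fibre and then simply cites \cite[Theorem~1.1]{Shioda}, i.e.\ the structure theory of elliptic surfaces identifying $\operatorname{NS}(\Sigma)$ modulo the trivial lattice (section plus fibral components) with the Mordell--Weil group of the generic fibre, so that a class whose fibrewise restriction is torsion lies in the trivial lattice up to numerical $\mathbb{Q}$-equivalence, the section's coefficient then being killed by the degree-zero condition. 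You instead work on the generic fibre $E_\eta$ over $\mathbb{K}=\mathbb{C}(B)$ and use the elementary localization sequence
$$\bigoplus_j \mathbb{Z}[V_j]\longrightarrow \operatorname{Pic}(\Sigma)\longrightarrow \operatorname{Pic}(E_\eta)\longrightarrow 0,$$
whose exactness is precisely the extend-the-rational-function argument you sketch (any $f\in\mathbb{K}(E_\eta)=\mathbb{C}(\Sigma)$ with $\operatorname{div}(f)=D|_{E_\eta}$ extends to $\Sigma$, and $D-\operatorname{div}_\Sigma(f)$ is vertical). This buys self-containedness --- no Mordell--Weil lattice theory is needed --- and a slightly stronger conclusion: $2(M-mN)$ is \emph{linearly} equivalent to an integral vertical divisor, so $M-mN$ is $\mathbb{Q}$-linearly, not merely numerically, equivalent to a $\mathbb{Q}$-combination of fibral curves. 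The two points you flag as delicate are genuine but unproblematic: condition (1) forces the induced map on $B$ to fix a general point, hence to be the identity, so $\tau$ fixes $\pi^*\mathbb{C}(B)$ and restricts to the inversion on $E_\eta$ by (3); and the relation $[P]+[-P]\sim 2[O]$, which is defined over $\mathbb{K}$ because $E_\eta$ has the rational point $O$, gives $[-1]^*=-\operatorname{id}$ on $\operatorname{Pic}^0(E_\eta)$ as you use.
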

\begin{proof}
The divisor $(M-mN)\vert_{E}$ on the elliptic curve $E$ belongs to $\mathrm{Pic}^0(E)$. The conditions $(2)$ and $(3)$ imply that
$\tau\left((M-mN)\vert_{E}\right)=(M-mN)\vert_{E}$. On the other hand, the  condition $(3)$ shows $\tau\left((M-mN)\vert_{E}\right)=-(M-mN)\vert_{E}$.
Consequently, the divisor
$$
(M-mN)\vert_{E}\in\mathrm{Pic}^0(E).
$$
is $2$-torsion. Then \cite[Theorem~1.1]{Shioda}  verifies the statement.
\end{proof}
\begin{lemma}\label{lemma:non-degenerate-curves}
Let $S_{\lambda_i}$ be the surface on $X$ cut by the equation $x_i=\lambda_i x^{a_i}$ for a general complex number $\lambda_i$.
Let $C_x=\sum_{k=1}^\ell m_k C_k$ be the divisor on $S_{\lambda_i}$ cut by the equation $x=0$. Then the intersection form of the curves $C_k$'s on the surface $S_{\lambda_i}$ is non-degenerate.
\end{lemma}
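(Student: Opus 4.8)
The plan is to reduce the statement to the numerical independence of the components $C_k$ on the normal surface $S:=S_{\lambda_i}$, and then to extract that independence from the explicit shape of $C_x$ together with the elliptic fibration built in the proof of Theorem~\ref{theorem:geometric-elliptic}. First I record the reduction. The surface $S$ is normal by Remark~\ref{remark:normal surface}, and the class $C_x$ is the restriction $\mathcal{O}_S(1)$, hence an ample $\mathbb{Q}$-Cartier divisor with $C_x^2>0$; moreover $C_x=\sum_k m_k C_k$ lies in the span $V$ of $[C_1],\dots,[C_\ell]$. Passing to a resolution $\rho\colon\widetilde{S}\to S$ and pulling back (Mumford's intersection theory), $\rho^*C_x$ is big and nef with $(\rho^*C_x)^2>0$, so by the Hodge index theorem on $\widetilde{S}$ the orthogonal complement of $\rho^*C_x$ is negative definite. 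Consequently any class in the radical of the intersection form restricted to $V$ is orthogonal to $C_x$ and has vanishing self-intersection, hence is numerically trivial; the form is therefore non-degenerate on $V$. Since the Gram matrix $\big(C_k\cdot C_l\big)$ is non-singular exactly when the form is non-degenerate on $V$ \emph{and} $\dim V=\ell$, the whole assertion comes down to the linear independence of $[C_1],\dots,[C_\ell]$.

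To prove independence I would first identify the components. As $C_x\subset\{x=x_i=0\}$, it is the curve in $\mathbb{P}(a_j,a_3,a_4)$ cut out by $tw^2+wh_{d-a_4}(x_j)+h_d(x_j,t)=0$, a quadratic in $w$ whose leading coefficient is $t$. Thus $C_x$ is either irreducible, in which case $\ell=1$ and $\det\big(C_k\cdot C_l\big)=C_1^2>0$, or it factors into the explicit curves $C_1=\{tw+\alpha(x_j,t)=0\}$ and $C_2=\{w+\beta(x_j,t)=0\}$ (with at most a further splitting of these). The remaining task is the independence of these few components. Here I would use that $C_x$ is the fibre over $[1:0]$ of $\pi_{\lambda_i}\colon S\dashrightarrow\mathbb{P}^1$, which becomes a genuine fibre $\Phi$ of the elliptic fibration $\overline{\pi}_{\lambda_i}\colon\overline{S}_{\lambda_i}\to\mathbb{P}^1$ on the resolution of Theorem~\ref{theorem:geometric-elliptic}. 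By Zariski's lemma the components of $\Phi$ have negative semi-definite intersection matrix whose radical is spanned by the fibre vector; since $\Phi$ is a non-trivial class, all components of $\Phi$, in particular the proper transforms $\overline{C}_k$, are numerically independent on $\overline{S}_{\lambda_i}$. Descending this independence to $S$ then gives what is needed.

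The hard part will be exactly this descent, namely ruling out a relation $\sum_k c_k\overline{C}_k\equiv(\text{exceptional divisors})$ on $\overline{S}_{\lambda_i}$ with $\mathbf{c}\neq 0$. The vertical exceptional curves are components of $\Phi$ and are controlled by Zariski's lemma, but the \emph{horizontal} exceptional curves—the section $\overline{F}_{\lambda_i}$ lying over $O_w$ and the multisection $\overline{E}_{\lambda_i}$—require care: pairing such a relation with a general fibre pins down the ratio of their coefficients, after which one must separate the classes of $C_1$ and $C_2$ using that $\overline{F}_{\lambda_i}$ meets the component $\overline{C}_1$ passing through $O_w$ yet is disjoint from $\overline{C}_2$. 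It is precisely at this point that the generality of $\lambda_i$ enters, guaranteeing that $S$ acquires no extra tangency along $C_x$ that could make two components numerically proportional.

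Alternatively, and more in the computational spirit of the surrounding sections, one may bypass the descent altogether: since $C_x$ has at most a handful of components and only the finitely many families No.~$7$ (Type~I), $23$, $40$, $44$, $61$, $76$ occur, I would compute the small symmetric matrix $\big(C_k\cdot C_l\big)$ directly on $S$ by adjunction together with the quotient-singularity contributions at $O_t$, $O_{x_j}$ and $O_w$, exactly as in Remark~\ref{remark:plt}, and check case by case that its determinant is non-zero. Either route yields $\det\big(C_k\cdot C_l\big)\neq 0$, which is the claim.
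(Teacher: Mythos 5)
Your opening reduction is correct and worth keeping: by Mumford's intersection theory and the Hodge index theorem on a resolution, since $C_x\in|\mathcal{O}_{S_{\lambda_i}}(1)|$ has positive self-intersection and lies in the span of the $[C_k]$, the Gram matrix $\bigl(C_k\cdot C_l\bigr)$ is singular if and only if the classes $[C_k]$ are numerically dependent on $S_{\lambda_i}$ --- which is exactly the form in which the paper applies the lemma. The genuine gap is in your main route, at the step you yourself flag as hard. Pulling a hypothetical relation $\sum_k c_kC_k\equiv 0$ back to the resolution $\bar{S}_{\lambda_i}$ and pairing with a general fibre leaves you with
\[
\sum_k c_k\bar{C}_k+\sum_j g_jG_j+\alpha\left(\bar{E}_{\lambda_i}-a_i\bar{F}_{\lambda_i}\right)\equiv 0 .
\]
If $\alpha=0$, Zariski's lemma disposes of it, as you say. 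But if $\alpha\neq 0$, the relation says precisely that $\bar{E}_{\lambda_i}-a_i\bar{F}_{\lambda_i}$ is numerically equivalent to a vertical divisor, and this cannot be excluded by incidence considerations ($\bar{F}_{\lambda_i}$ meeting $\bar{C}_1$ but not $\bar{C}_2$) or by generality of $\lambda_i$: by Shioda's theorem (the ingredient behind Lemma~\ref{lemma:Tschinkel}) it is equivalent to the restriction of $\bar{E}_{\lambda_i}-a_i\bar{F}_{\lambda_i}$ to the generic fibre being torsion in its Jacobian, an honest arithmetic condition that incidence patterns cannot detect. Worse, this is exactly the statement the lemma exists to control: in the proof of Theorem~\ref{theorem:geometric-elliptic} the implication runs the other way --- from ``$\bar{E}_{\lambda_i}-a_i\bar{F}_{\lambda_i}$ is numerically vertical'' one derives $\sum\bar{c}_kC_k\sim_{\mathbb{Q}}0$ with some $\bar{c}_k\neq0$, and the lemma supplies the contradiction. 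So proving the lemma through the fibration is circular unless you give an independent non-torsion argument, which the proposal does not contain.

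Your fallback route is the one the paper actually takes --- a case-by-case check over the families No.~7 (Type~I), 23 and 40, the families No.~44, 61, 76 being immediate because there $C_x$ is irreducible --- but the mechanism you describe does not quite run. The recipe of Remark~\ref{remark:plt} requires a smooth curve through an $A_n$-point, whereas $S_{\lambda_i}$ is \emph{not} quasi-smooth at $O_t$ and possibly at $O_{x_j}$, so the local contributions there cannot simply be read off; the paper states this difficulty explicitly. Its workaround is a contradiction with an inequality rather than a full computation of the matrix: assuming the Gram matrix singular, the entries $C_x\cdot C_k$ (computable because $C_x\in|\mathcal{O}_{S_{\lambda_i}}(1)|$) force a specific value of $C_1\cdot C_2$; then a strictly larger lower bound on $C_1\cdot C_2$ is obtained by writing the auxiliary divisor $C_t$ (or $C_w$) as $mC_1+R$ and estimating the local intersection $(R\cdot C_1)$ at the point where $S_{\lambda_i}$ \emph{is} quasi-smooth ($O_w$, resp.\ $O_{x_j}$), with separate treatments of the non-reduced case $C_x=C_1+2C_2$ and of the three-component case $a=b=d=0$, $c\neq 0$ in family No.~7. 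If you replace ``adjunction plus Remark~\ref{remark:plt}'' by this inequality scheme, your fallback becomes, in substance, the paper's proof.
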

\begin{proof}
Suppose that it is not a case.
This immediately implies that $\ell\geq 2$.  It cannot happen in the families No.~44, 61 and 76 since the polynomial $g_d$ must contain a power of $x_j$, i.e., the curve $C_x$ is irreducible.

The curve $C_x$ is defined by
\begin{itemize}
\item $tw^2 +ay^5w+y^4(bt^2+cy^2t+dy^4)=0$ in $\mathbb{P}(1,2,3)$  for the family No. 7 (Type I);
\item $tw^2+az^3w+bz^2t^2=0$ in $\mathbb{P}(3,4,5)$  for the family No. 23;
\item $tw^2+ay^4w+by^3t^2=0$ in $\mathbb{P}(3,5,7)$ for  the family No. 40.
\end{itemize}
The curve $C_x$ must consist of two irreducible components $C_1$ and $C_2$, i.e., $\ell=2$, except the case when
$a=b=d=0$ and $c\ne 0$ in the family No. 7 (Type I). This exceptional case will be considered separately at the end.

By our assumption, the intersection matrix of $C_1$ and $C_2$
on the surface  $S_{\lambda_i} $ is singular.

Suppose that  the curve $C_x$ is reduced. Then
\[\left(\begin{array}{cc}
       C_1^2&C_{1}\cdot C_{2}\\
      C_{1}\cdot C_{2}& C_{2}^2\\
\end{array}\right)= \left(\begin{array}{cc}
C_x\cdot C_1- C_{1}\cdot C_{2}& C_{1}\cdot C_{2}\\
    C_{1}\cdot C_{2}& C_x\cdot C_2-C_{1}\cdot C_{2}\\
        \end{array}\right),
\]
and hence we have
\[C_1\cdot C_2=\frac{(C_x\cdot C_1)(C_x\cdot C_2)}{C_x^2}= \frac{2}{a_jd} \ \ \ \mbox{ (resp. } \frac{a_3+a_4}{a_ja_3d}  \mbox{) } \]
 if $a=0$, $b\ne0$
 (resp. $a\ne0$, $b=0$ ).
 Note that the intersection numbers by the curve $C_x$ can be obtained easily because it is in $|\mathcal{O}_{S_{\lambda_i}}(1)|$.

Meanwhile, since the surface $S_{\lambda_i} $ is not quasi-smooth at the point $O_t$ and possibly at the point $O_{x_j}$, we have some difficulty to find the numbers $C_1\cdot C_2$ without assuming that the matrix is singular. In order to compute the intersection number $C_1\cdot C_2$ on the surface $S_{\lambda_i}$ directly, we consider the divisor $C_t$  (resp. $C_w$) cut by the equation $t=0$  (resp. $w=0$) on the surface $S_{\lambda_i}$ in case when $a=0$  (resp. $a\ne0$).

Consider the case when $a=0$, $b\ne0$. We may assume  that the curve $C_1$ is defined by the equation $x=t=0$ in $\mathbb{P}(1,a_j, a_3, a_4)$. Since the divisor $C_t$ contains the curve $C_1$, we can write $C_t=mC_1+R$, where $R$ is a curve whose support does not contain the curve $C_1$. From the intersection numbers
\[(C_1+C_2)\cdot C_1=C_x\cdot C_1=\frac{1}{a_ja_4}, \ \ \ (mC_1+R)\cdot C_1= C_t\cdot C_1=\frac{a_3}{a_ja_4}\]
we obtain
\[C_1\cdot C_2= \frac{1}{a_ja_4}-C_1^2=\frac{m-a_3}{ma_ja_4}+\frac{1}{m}R\cdot C_1\geq \frac{m-a_3}{ma_ja_4}+\frac{1}{m}(R\cdot C_1)_{O_w},\]
where $(R\cdot C_1)_{O_w}$ is the local intersection number of the curves $C_1$ and $R$  at the point $O_w$.
Note that the curves $C_1$ and $R$ always meet at the point $O_w$ at which the surface $S_{\lambda_i} $ is quasi-smooth. They may also intersect at the point $O_{x_j}$. However, we do not care about the intersection at the point $O_{x_j}$. The local intersection at the point $O_w$ will be enough for our purpose.

For the family No.~7 (Type I), we are considering the case when $a=d=0$ and $b\ne 0$.
In such a case, if $c\ne 0$, then the curves $C_1$ and $C_2$ intersect at a smooth point of $S_{\lambda_i}$ and hence $C_1\cdot C_2\geq 1$. If $c=0$, then the conditions imply that the defining equation of $X_8$ must contain either $xy^7$ or $zy^6$. Therefore, we can conclude that $m=1$ or $2$, depending on the existence of the monomials $xy^7$, $xy^4w$ in the defining equation of $X_8$, and that  the local intersection number  $(R\cdot C_1)_{O_w}$ is at least $\frac{4}{3}$.
For the family No.~23, we see that $m=2$ and $C_1\cdot R=\frac{3}{5}$.
For the family No.~40, we can easily  see that $m$ can be $1$, $3$, or $4$ , depending on the existence of the monomials $xy^6$ and $wy^3x^3$ in the defining equation, and that  the local intersection number  $(R\cdot C_1)_{O_w}$ is at least $\frac{3}{7}$.
In all the cases, we see $C_1\cdot C_2>\frac{2}{a_jd}$. It is a contradiction.

Consider the case when $a\ne 0$, $b=0$. We may assume  that the curve $C_1$ is defined by the equation $x=w=0$ in $\mathbb{P}(1,a_j, a_3, a_4)$. Since we have the monomial  of the form $x_j^sw$ in each defining equation, the surface $S_{\lambda_i}$ is quasi-smooth at the point $O_{x_j}$. Furthermore,  by changing the coordinate $w$ in suitable ways for the hypersurface $X$, we may assume that we have neither $xy^6$  nor $x^2y^4t$ for the family No.~40 and that we have neither $x^2z^4$  nor $xz^3t$ for the family No.~23 by changing the coordinate function $w$.
For the family No.~7 (Type I), we may assume that none of the monomials $xy^7$, $ty^6$, $xy^5t$  appear in the defining equation of $X_8$.

Since the divisor $C_w$ contains the curve $C_1$, we can write $C_w=mC_1+R$, where $R$ is a curve whose support does not contain the curve $C_1$. From the intersection numbers
\[(C_1+C_2)\cdot C_1=C_x\cdot C_1=\frac{1}{a_ja_3}, \ \ \ (mC_1+R)\cdot C_1= C_w\cdot C_1=\frac{a_4}{a_ja_3}\]
we obtain
\[C_1\cdot C_2= \frac{1}{a_ja_3}-C_1^2=\frac{m-a_4}{ma_ja_3}+\frac{1}{m}R\cdot C_1\geq \frac{m-a_4}{ma_ja_3}+\frac{1}{m}(R\cdot C_1)_{O_{x_j}},\]
where $(R\cdot C_1)_{O_{x_j}}$ is the local intersection number of the curves $C_1$ and $R$  at the point $O_{x_j}$.
Similarly as in the previous case, they may also intersect at the point $O_{t}$. We do not care about the intersection at the point $O_{t}$. As before, the local intersection at the point $O_{x_j}$ will be big enough.

For the family No.~7 (Type I), we have $b=c=d=0$ and $a\ne 0$. Note that the point $O_y$ is a smooth point of the surface $S_{\lambda_i}$. We see that $m$ can be $1$ or $2$ , depending on the existence of the monomial $xy^3t^2$ in the defining equation, and that  the local intersection number  $(R\cdot C_1)_{O_{y}}$ is at least $2$.
For the family No.~23, we see that $m=2$ and $C_1\cdot R=1$.
For the family No.~40, we see that $m$ can be $3$ or $4$ , depending on the existence of the monomial $x^3y^2t^2$ in the defining equation, and that  the local intersection number  $(R\cdot C_1)_{O_{y}}$ is at least $\frac{2}{3}$.
In all the cases, we see $C_1\cdot C_2>\frac{a_3+a_4}{a_ja_3d}$.  Again we have obtained a contradiction.

Suppose that the curve $C_x$ is not reduced. Then $C_x=C_1+2C_2$, where $C_1$ is defined by $x=t=0$ and $C_2$ is defined by $x=w=0$.
We then have
\[\left(\begin{array}{cc}
       C_1^2&C_{1}\cdot C_{2}\\
      C_{1}\cdot C_{2}& C_{2}^2\\
\end{array}\right)= \left(\begin{array}{cc}
C_x\cdot C_1- 2C_{1}\cdot C_{2}& C_{1}\cdot C_{2}\\
    C_{1}\cdot C_{2}& C_x\cdot C_2-\frac{1}{2}C_{1}\cdot C_{2}\\
        \end{array}\right),
\]
and hence we  have
\begin{equation*}C_1\cdot C_2=\frac{2(C_x\cdot C_1)(C_x\cdot C_2)}{C_x\cdot(C_1+4C_2)}=
\frac{2}{a_j(a_3+4a_4)}.\end{equation*}
In this case, the curves $C_1$ and $C_2$ intersect at the point $O_{x_j}$. 
The surface $S_{\lambda_i}$ is not quasi-smooth at the point  $O_{x_j}$, i.e., the defining equation of $X$ contains the monomial of the form $x_j^sx_i$.
If it is quasi-smooth there, then we obtain an absurd identity $C_1\cdot C_2=\frac{1}{a_j}$ from a direct computation. Note that we do not have the monomial of the form $x_j^sw$. Furthermore, we may assume that we  do not have  $xy^6$ (resp. $xy^7$) for the family No.~40 (resp. No.~7) by changing the coordinate function $z$.

Since the divisor $C_t$ contains the curve $C_1$, we can write $C_t=mC_1+R$, where $R$ is a curve whose support does not contain the curve $C_1$. From the intersection numbers
\[(C_1+2C_2)\cdot C_1=C_x\cdot C_1=\frac{1}{a_ja_4}, \ \ \ (mC_1+R)\cdot C_1= C_t\cdot C_1=\frac{a_3}{a_ja_4}\]
we obtain
\[C_1\cdot C_2= \frac{1}{2}\left(\frac{1}{a_ja_4}-C_1^2\right)=\frac{1}{2}\left(\frac{m-a_3}{ma_ja_4}+\frac{1}{m}R\cdot C_1\right)\geq \frac{1}{2}\left(\frac{m-a_3}{ma_ja_4}+\frac{1}{m}(R\cdot C_1)_{O_w}\right),\]
where $(R\cdot C_1)_{O_w}$ is the local intersection number of the curves $C_1$ and $R$  at the point $O_w$.

As in the first case, $m=1$ or $2$, depending on the existence of the monomials $xy^7$, $xy^4w$ in the defining equation of $X_8$, and $(R\cdot C_1)_{O_w}\geq \frac{4}{3}$ for the family No.~7 (Type~I).
We also obtain $m=2$ and $C_1\cdot R=\frac{3}{5}$ for the family No.~23.
For the family No.~40, we  obtain $m=3$ or $4$, depending on the existence of the monomial $wy^3x^3$ in the defining equation, and  $(R\cdot C_1)_{O_w}\geq \frac{3}{7}$.
In all the cases, we see $C_1\cdot C_2>\frac{2}{a_j(a_3+4a_4)}$. It is a contradiction again.

We now consider the exceptional case $a=b=d=0$ and $c\ne 0$ in the family No.~7 (Type~I).
The curve $C_x$ is defined by
 $$t(w-\alpha_1y^3)(w-\alpha_2y^3)=0$$
 in $\mathbb{P}(1,2,3)$.
It consists of three irreducible components $L$, $C_1$ and $C_2$. The curve $L$ is defined by $x=t=0$ in $\mathbb{P}(1,1,2,3)$ and the curve $C_k$ by  $$x=w-\alpha_ky^3=0$$ in $\mathbb{P}(1,1,2,3)$. The curves $L$ and $C_k$ intersect at the point defined
by $x=t=w-\alpha_ky^3=0$. At this point the surface $S_{\lambda_i}$ is smooth.
We then have
\[(L+C_1+C_2)\cdot L=\frac{1}{3}, \ \ \ (L+C_1+C_2)\cdot C_1=(L+C_1+C_2)\cdot C_2=\frac{1}{2}, \ \ \ L\cdot C_1=L\cdot C_2=1.\]
The intersection matrix of the
curves $L$,  $C_{1 }$ and $C_2$ on the surface $S_{\lambda_i}$
\[ \left(\begin{array}{ccc}
-\frac{5}{3}&1& 1\\
       1&-\frac{1}{2}-C_1\cdot C_2& C_1\cdot C_2\\
       1& C_1\cdot C_2&-\frac{1}{2}-C_1\cdot C_2\\
        \end{array}\right)
\]
is non-singular regardless of the value of $C_1\cdot C_2$. This completes the proof.
\end{proof}

Now, we consider the following two families and their singular
point $O_z$.
\begin{itemize}
 \item No.~$20$, \ \ $X_{13}\subset \mathbb{P}(1,1,3,4,5)$; \item
 No.~$36$, \ \
$X_{18}\subset \mathbb{P}(1,1,4,6,7)$.
\end{itemize}
Before we proceed, we put a remark here.  The proof of Theorem~\ref{theorem:geometric-elliptic} works verbatim to treat these two cases. Indeed, we are able to obtain elliptic fibrations right after taking weighted blow ups at the point $O_z$ and at the point $O_w$ with the corresponding weights.  We however follow another way that has evolved from \cite[Section~4.10]{CPR}, instead of applying the same method. This can enhance our understanding of the involutions described in this section with various points of view.

We have two types of hypersurfaces in the family No.~$20$.
One is the hypersurfaces whose defining equations contain the monomial $tz^3$ (Type I) and the other is the hypersurfaces not containing the monomial $tz^3$ (Type II).

 We first consider both $X_{13}$ of Type~I  in the family No.~20 and $X_{18}$ in the family No.~36 at the same time.
 Note that the defining equation of $X_{18}$
always contains the monomial $tz^3$.

We may then assume that these hypersurfaces $X$ are defined by the
equation
\begin{equation}\label{equation:20-36}zw^2+wf_{d-a_4}(x,y,t)-tz^3-z^2f_{d-2a_2}(x,y,t)-zf_{d-a_2}(x,y,t)+f_d(x,y,t)=0.\end{equation}
 We
can define an involution $\tau_{z}$ of $X$ as follows:
\begin{equation}\label{equation:CPR-inv2}\begin{split} & [x:y:z:t:w]\mapsto \\ &\ \ \ \ \ \left[x:y:
\frac{f_{d-a_4}^2(u+f_{d-a_2})-f_d^2}{f_{d-a_4}uw+f_{d-a_4}^2zt+f_du}:t:
\frac{-f_{d-a_4}u(u+f_{d-a_2})-f_{d}(uw+f_{d-a_4}zt)}{f_{d-a_4}uw+f_{d-a_4}^2zt+f_du}\right],\\
\end{split}\end{equation}
where $u=w^2-tz^2-zf_{d-2a_2}-f_{d-a_2}$.
Indeed, the involution is obtained by the following way.
We have a birational map $\phi$ from $X$ to a hypersurface $Z$ of degree $6a_4$ in $\mathbb{P}(1, a_1, 2a_4, a_3, 3a_4)$ defined by
$$[x:y:z:t:w]\mapsto [x:y:u:t: v],$$ where $v=uw+f_{d-a_4}zt+f_{d-a_4}f_{d-2a_2}$.
Note that we have

\[\left(\begin{array}{cc}
       f_{d-a_4} & u\\
      f_d & v\\
\end{array}\right)\left(\begin{array}{c}
       w\\
      z\\
\end{array}\right)= -\left(\begin{array}{c}
f_d\\
  f_{d-a_4}(u+f_{d-a_2})\\
        \end{array}\right).
\]
The hypersurface $Z$  is defined by the equation
\[v^2-f_{d-a_4}f_{d-2a_2}v=u^3+u^2f_{d-a_2}-(f_{d-2a_2}f_{d}+f_{d-a_4}^2t)u+(-f_{d-a_4}^2f_{d-a_2}+f_d^2)t.\] Therefore the hypersurface $Z$ has a biregular involution $\iota$ defined by $$[x:y:u:t:v]\mapsto [x:y:u:t:f_{d-a_4}f_{d-2a_2}-v].$$
The birational involution of $X$ is obtained by
\[\tau_z=\phi^{-1}\circ\iota\circ \phi.\]
To see that it is a
birational involution in detail, refer to \cite[Section~4.10]{CPR}.
However, it can be a biregular automorphism under a certain
condition. For example, if the polynomial $f_{d-a_4}$ is
identically zero, then the involution becomes  biregular. Indeed, it is the biregular
involution $$[x:y:z:t:w]\mapsto [x:y:z:t:-w].$$
Moreover, the converse is true.

\begin{lemma}\label{lemma:non-biregular involution}
The involution $\tau_z$ is biregular if and only if the polynomial
$f_{d-a_4}$ is identically zero.
\end{lemma}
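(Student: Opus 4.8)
The forward implication is already established in the discussion preceding the lemma (if $f_{d-a_4}\equiv 0$ then $\tau_z$ is the biregular involution $w\mapsto -w$), so the plan is to prove the converse: assuming $f_{d-a_4}\not\equiv 0$, I would show that $\tau_z$ is not biregular. The main idea is that a biregular automorphism cannot move the coordinate $z$ to a function with poles. Since $d=\sum a_i$ we have $-K_X=\mathcal{O}_X(1)$, and as $\mathrm{Cl}(X)$ is generated by $\mathcal{O}_X(1)$, any biregular automorphism of $X$ fixes $K_X$ and hence preserves $\mathcal{O}_X(1)$ and each space of sections $H^0(X,\mathcal{O}_X(m))$. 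In particular, if $\tau_z$ were biregular, then $\tau_z^*(z)$ would be a weighted-homogeneous polynomial of degree $a_2$, and in particular a regular function. By the definition of $\tau_z$, the function $\tau_z^*(z)$ is exactly the third coordinate $z'$ of the map \eqref{equation:CPR-inv2}. Thus it suffices to show that $z'$ acquires a genuine pole once $f_{d-a_4}\not\equiv 0$.

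To analyze $z'$ I would first simplify \eqref{equation:CPR-inv2} on $X$. With $u=w^2-tz^2-zf_{d-2a_2}-f_{d-a_2}$ as in the construction, the defining equation \eqref{equation:20-36} is equivalent on $X$ to the identity
\[
zu+wf_{d-a_4}+f_d=0 .
\]
Substituting $f_d=-zu-wf_{d-a_4}$ into the numerator and denominator of $z'$ collapses the denominator to $z\,(f_{d-a_4}^2t-u^2)$ and introduces a common factor $z$; after cancellation one is left with
\[
z'=\frac{zu^2+2f_{d-a_4}uw+tf_{d-a_4}^2z+f_{d-a_4}^2f_{d-2a_2}}{u^2-tf_{d-a_4}^2}
   =z+\frac{f_{d-a_4}\bigl(2uw+2tf_{d-a_4}z+f_{d-a_4}f_{d-2a_2}\bigr)}{u^2-tf_{d-a_4}^2}.
\]
A degree count, using $2d=6a_4-a_3$ for both families, confirms that each side is weighted-homogeneous of degree $a_2$. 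When $f_{d-a_4}\equiv 0$ the correction term vanishes and $z'=z$, recovering the biregular involution $w\mapsto -w$; this is a useful internal consistency check on the simplification.

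When $f_{d-a_4}\not\equiv 0$, the correction term is a nonconstant rational function whose denominator $u^2-tf_{d-a_4}^2$ cuts a divisor on $X$. The plan is to verify that this divisor is a genuine pole of $z'$, i.e.\ that $u^2-tf_{d-a_4}^2$ does not divide the numerator modulo the equation of $X$. I would do this by passing to the generic point of a branch of $\{u^2-tf_{d-a_4}^2=0\}\cap X$ and checking that the numerator $f_{d-a_4}\bigl(2uw+2tf_{d-a_4}z+f_{d-a_4}f_{d-2a_2}\bigr)$ does not vanish there: for generic such a point $f_{d-a_4}\neq 0$, while the remaining factor $2uw+2tf_{d-a_4}z+f_{d-a_4}f_{d-2a_2}$ is not identically zero on that divisor. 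Hence $z'$ has a pole, contradicting the regularity forced by biregularity, and so $\tau_z$ is not biregular.

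The hard part will be the final non-divisibility step: one must make sure that the cancellations forced by $zu+wf_{d-a_4}+f_d=0$ do not secretly remove the pole, i.e.\ that $\{u^2=tf_{d-a_4}^2\}$ meets $X$ in codimension one and that the numerator is nonzero along a generic point of it. Performing the reduction of \eqref{equation:CPR-inv2} cleanly and tracking exactly which monomials survive on $X$ is where care is needed; once it is known that a biregular $\tau_z$ would force $\tau_z^*(z)$ to be a polynomial, the rest of the argument is formal.
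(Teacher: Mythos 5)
Your forward direction and your algebraic reduction are both correct: on $X$ the defining equation \eqref{equation:20-36} is equivalent to $zu+wf_{d-a_4}+f_d=0$, and substituting $f_d=-zu-wf_{d-a_4}$ into \eqref{equation:CPR-inv2} does give
$z'=z+f_{d-a_4}\bigl(2uw+2tf_{d-a_4}z+f_{d-a_4}f_{d-2a_2}\bigr)/\bigl(u^2-tf_{d-a_4}^2\bigr)$,
with the degree checks as you state; and the premise that a biregular $\tau_z$ must send $z$ to a weighted homogeneous polynomial of degree $a_2$ (hence of the form $cz+h(x,y)$, since $a_2<a_3$) is sound. The genuine gap is exactly the step you defer to the end: you never prove that the pole is real, i.e.\ that the numerator does not vanish identically along every divisorial component of $\{u^2-tf_{d-a_4}^2=0\}\cap X$. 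Genericity disposes only of the factor $f_{d-a_4}$ (components inside $\{f_{d-a_4}=0\}$ lie in $\{u=f_{d-a_4}=0\}\cap X$, which one must also check has codimension two). For the remaining components, the claim that $2uw+2tf_{d-a_4}z+f_{d-a_4}f_{d-2a_2}$ is ``not identically zero on that divisor'' is not a genericity statement: using $wf_{d-a_4}=-zu-f_d$ and $u^2=tf_{d-a_4}^2$, its vanishing on a component is \emph{equivalent} to $2uf_d=f_{d-a_4}^2f_{d-2a_2}$ there, hence to the component lying over the curve $4tf_d^2=f_{d-a_4}^2f_{d-2a_2}^2$ in $\mathbb{P}(1,a_1,a_3)$, and ruling this out requires understanding how the elliptic fibers over that curve decompose --- information that depends on $f_{d-2a_2}$ and $f_d$. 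It is no accident that the paper's own proof splits into three cases according to whether $f_{d-2a_2}$ is nonzero and not divisible by $t$, identically zero, or divisible by $t$; your sketch has no mechanism to absorb this case analysis.

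For comparison, the paper closes the argument by a different, more geometric device that avoids any divisor computation: it restricts $\tau_z$ to a surface on which \eqref{equation:CPR-inv2} collapses --- $\{u=0\}$ in general, $\{z=0\}$ when $f_{d-2a_2}\equiv 0$, and $\{t=0\}$ when $t\mid f_{d-2a_2}$ --- where the map becomes, e.g., $[x:y:z:t:w]\mapsto[x:y:-z-f_{d-2a_2}/t:t:w]$, and observes that it contracts a curve ($\{u=t=0\}$, resp.\ $\{u=z=0\}$, $\{u=t=0\}$) to a point, which no biregular automorphism can do. Alternatively, your own route can be completed without any component analysis by passing to the generic fiber of $\pi\colon X\dashrightarrow\mathbb{P}(1,a_1,a_3)$: if $z'$ were a polynomial, the involution property would force $z'=z$ or $z'=-z+h(x,y)$; writing $z=\bigl(f_d^2-f_{d-a_4}^2(u+f_{d-a_2})\bigr)/\bigl(f_{d-a_4}v-uf_d\bigr)$ on the Weierstrass model of the paper, the first case forces $v$ to be constant on the fiber, while a degree count in $u$ shows the second forces $f_d\equiv f_{d-2a_2}\equiv 0$, contradicting quasi-smoothness along the surface $\{z=w=0\}\subset X$. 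Either supplement would turn your proposal into a proof; as written, its decisive step is an assertion rather than an argument.
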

\begin{proof}
 Suppose that $f_{d-a_4}$ is not a zero
polynomial. Consider the surface cut by the equation $u=0$. It is
easy to check that on this surface the involution becomes the map
\[[x:y:z:t:w]\mapsto \left[x:y:-z-\frac{f_{d-2a_2}}{t}:t:w \right].\]
%\[[x:y:z:t:w]\mapsto \left[x:y:z+\frac{f_{d-2a_2}}{t}:t:-w \right].\]
Therefore, unless the polynomial $f_{d-2a_2}$ is either
identically zero or divisible by $t$, the involution $\tau_t$
cannot be biregular since it contracts the curve defined by $u=t=0$ to a point.

If the polynomial $f_{d-2a_2}$ is identically zero, then on the
surface cut by $z=0$, the involution becomes the map
\[[x:y:z:t:w]\mapsto \left[x:y:-\frac{2f_d}{u}:t:-w \right],\]
%\[[x:y:z:t:w]\mapsto \left[x:y:0:t:0 \right],\]
and hence the involution $\tau_z$ cannot be biregular. It contracts the curve defined by $u=z=0$ to a point.

Finally, suppose that the polynomial $f_{d-2a_2}(x,y,t)$ is divisible by $t$. In this case, we consider the surface cut
by the equation $t=0$. On this surface the involution $\tau_z$
becomes
\[[x:y:z:t:w]\mapsto \left[x:y:-z-\frac{2f_d}{u}:t:-w \right].\]
%\[[x:y:z:t:w]\mapsto \left[x:y:z:t:\frac{w(wf_{d-a_4}+f_d)}{wf_{d-a_4}-f_d} \right].\]
%If the map $\tau_z$ is biregular, a general point of the curve cut by $wf_{d-a_4}-f_d=0$ on the
%surface $t=0$ should be mapped to the point $O_w$.
It  shows that the
involution $\tau_z$ cannot be biregular because it contracts the curve defined by $t=u=0$.
\end{proof}

%\begin{theorem}[{\cite[Theorem~4.13]{CPR}}]\label{theorem:elliptic-biregular-untwist2}
%Suppose $f_{d-a_4}$ is not identically zero. Then the involution
%$\tau_z$ untwists the singular point $O_z$ as a center of
%non-canonical singularities of the log pair $(X,
%\frac{1}{n}\mathcal{M})$.
%\end{theorem}

%\begin{theorem}\label{theorem:elliptic-biregular2}
%If $f_{d-a_4}$ is identically zero, then the singular point
%$O_z$ cannot be a center of non-canonical singularities of the log
%pair $(X, \frac{1}{n}\mathcal{M})$.
%\end{theorem}

%\begin{theorem}\label{theorem:elliptic-biregular-untwist2}
% Let $X$ be a quasi-smooth hypersurface in the families No.~20~(Type I) and 36.
%The singular point $O_z$ is untwisted by the birational involution $\tau_z$ if $f_{d-a_4}$  in  \eqref{equation:20-36} is not identically zero.
%It cannot be  a center of non-canonical singularities of
%the log pair $(X, \frac{1}{n}\mathcal{M})$ if $f_{d-a_4}$ is identically zero.
%\end{theorem}
\begin{theorem}\label{theorem:elliptic-biregular-untwist2}
 Let $X$ be a quasi-smooth hypersurface in the families No.~20~(Type I) and 36.
If the singular point $O_z$ is   a center of non-canonical singularities of
the log pair $(X, \frac{1}{n}\mathcal{M})$, then it is untwisted by the  birational involution $\tau_z$.
\end{theorem}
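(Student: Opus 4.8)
The plan is to untwist $O_z$ through Remark~\ref{remark:untwisting-involution}: it suffices to produce a birational self-map $\tau_Y$ of the weighted blow up $Y$ of $O_z$ that is biregular in codimension one, satisfies $f\circ\tau_Y\circ f^{-1}=\tau_z$, and moves the exceptional divisor, $\tau_Y(E)\ne E$. Since untwisting requires a non-biregular map, I would first record, via Lemma~\ref{lemma:non-biregular involution}, that we may assume $f_{d-a_4}\not\equiv0$; the degenerate case $f_{d-a_4}\equiv0$, in which $\tau_z$ collapses to the biregular involution $[x:y:z:t:w]\mapsto[x:y:z:t:-w]$, is then disposed of by excluding $O_z$ as a center through the methods of Section~\ref{section:Methods}. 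Thus from now on $\tau_z$ is genuinely birational.

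Next I would construct the elliptic fibration exactly as in the proof of Theorem~\ref{theorem:geometric-elliptic}, with the variable $z$ and the base coordinates $x,y,t$ now playing the roles borne there by $t$ and $x,y,z$. Starting from the normal form \eqref{equation:20-36}, let $f\colon Y\to X$ be the weighted blow up of the terminal quotient singularity $O_z$, which is of type $\tfrac{1}{a_2}(1,a_1,a_2-a_1)$, with exceptional divisor $E$, and let $g\colon W\to Y$ be the weighted blow up of the point over $O_w$, with exceptional divisor $F$ and $\hat E$ the proper transform of $E$. Using the projection $[x:y:z:t:w]\mapsto[x:y:t]$ onto $\mathbb{P}(1,a_1,a_3)$ one checks, verbatim as before, that $-K_W$ is nef and $-K_W^3=0$, so that $|-mK_W|$ defines for $m\gg0$ an elliptic fibration $\eta\colon W\to\mathbb{P}(1,a_1,a_3)$ (Log Abundance, \cite{KMM94}) in which $F$ is a section and $\hat E$ is a multi-section. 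Proposition~\ref{proposition:elliptic involution} then yields the reflection involution $\tau_W$ of $\eta$ with respect to $F$; because $K_W$ is $\eta$-nef, $\tau_W$ is biregular in codimension one with $\tau_W(F)=F$ by \cite[Corollary~3.54]{KoMo98}. Setting $\tau_Y=g\circ\tau_W\circ g^{-1}$ and $\tau=f\circ\tau_Y\circ f^{-1}=(f\circ g)\circ\tau_W\circ(f\circ g)^{-1}$, the map $\tau_Y$ is biregular in codimension one. Finally $\tau=\tau_z$: the birational map $\phi\colon X\dashrightarrow Z$ of \eqref{equation:CPR-inv2} presents this same fibration as its Weierstrass model, matching the section $F$ with the zero section with respect to which $\iota$ is the fibrewise reflection; as such a reflection is unique, $\tau_W$ and the lift of $\iota$ agree on the generic fibre, so $\tau=\phi^{-1}\circ\iota\circ\phi=\tau_z$.

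It then remains, by Remark~\ref{remark:untwisting-involution}, to verify $\tau_Y(E)\ne E$, and here the explicit description does the decisive work. Suppose $\tau_Y(E)=E$. Being biregular in codimension one, $\tau_Y$ fixes $B=-K_Y$, and by assumption it now also fixes $E$; since the divisor class group of $Y$ is generated by $B$ and $E$, it induces the identity on $\mathrm{Cl}(Y)$, and \cite[Proposition~2.7]{Co95} forces $\tau_Y$, hence $\tau=\tau_z$, to be biregular --- contradicting the non-biregularity secured at the outset. Therefore $\tau_Y(E)\ne E$ and $\tau_z$ untwists $O_z$. The point I expect to be most delicate is not this closing contradiction, which is short precisely because Lemma~\ref{lemma:non-biregular involution} supplies non-biregularity explicitly and so sidesteps the surface-by-surface Tschinkel analysis that Theorem~\ref{theorem:geometric-elliptic} required, but rather the two geometric inputs underlying it: establishing that $-K_W$ is nef with $-K_W^3=0$ so that $\eta$ exists, and checking that $\phi$ genuinely identifies the Weierstrass zero section with $F$, so that the reflection produced by Proposition~\ref{proposition:elliptic involution} really coincides with $\tau_z$.
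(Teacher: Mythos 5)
Your untwisting half is essentially the paper's own argument in different packaging: the paper lifts the explicit involution $\tau_z$ directly to the threefold $W$ obtained by the weighted blow ups at $O_z$ and $O_w$ and invokes $\eta$-nefness of $K_W$ (\cite[Corollary~3.54]{KoMo98}) to get biregularity in codimension one, whereas you build the abstract reflection via Proposition~\ref{proposition:elliptic involution} and then identify it with $\tau_z$ through the Weierstrass model; the paper itself remarks, just before treating families No.~20 and 36, that the method of Theorem~\ref{theorem:geometric-elliptic} works verbatim here, so this variation is sound. Your closing detour through Remark~\ref{remark:untwisting-involution} and $\tau_Y(E)\ne E$ is also correct but unnecessary: once you have non-biregularity of $\tau_z$ (Lemma~\ref{lemma:non-biregular involution}) and codimension-one biregularity of the lift $\tau_Y$, Definition~\ref{definition:untwisting} is verified directly.

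The genuine gap is the degenerate case $f_{d-a_4}\equiv 0$, which you dismiss with an appeal to ``the methods of Section~\ref{section:Methods}.'' This is not a routine application of that toolbox; it is precisely where the hypothesis that $O_z$ is a center enters, and it is the main computation of the paper's proof. Note that $B^3>0$ at $O_z$ (for instance $B^3=1/20$ for No.~20 and $B^3=1/42$ for No.~36), so Lemma~\ref{lemma:boundary} is unavailable, and none of the exclusion lemmas applies without a specific geometric input. The input the paper uses is the factorization forced by $f_{d-a_4}\equiv 0$: equation \eqref{equation:20-36} becomes $z\cdot u=-f_d(x,y,t)$, where $u=w^2-tz^2-zf_{d-2a_2}-f_{d-a_2}$ and $f_d$ is non-zero and irreducible by quasi-smoothness. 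Hence the surface $\{u=0\}$, whose proper transform lies in $|2a_4B|$, is swept by the one-dimensional family of curves
$$
C_{\mu_1,\mu_2}=\left\{u=y-\mu_1x^{a_1}=t-\mu_2x^{a_3}=0\right\}
$$
indexed by the points $[1:\mu_1:\mu_2]$ of the curve $\{f_d=0\}\subset\mathbb{P}(1,a_1,a_3)$; each such curve satisfies $E\cdot\tilde{C}_{\mu_1,\mu_2}=2$ and $B\cdot\tilde{C}_{\mu_1,\mu_2}=0$ (and the same vanishing holds for each component when it is reducible), so Lemma~\ref{lemma:bad-link} excludes $O_z$ as a center, contradicting the hypothesis. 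Without producing this family, or an equivalent construction, your reduction to $f_{d-a_4}\not\equiv 0$ is unjustified, and everything downstream --- which rests on Lemma~\ref{lemma:non-biregular involution} and therefore on that reduction --- is left hanging.
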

\begin{proof}
We suppose that $f_{d-a_4}$ is identically zero. Then the polynomial $f_d$ must be a non-zero irreducible polynomial
since
$X$ is quasi-smooth.

 Set
$$u=w^2-tz^2-zf_{d-2a_2}-f_{d-a_2}$$
and then let $T$ be the
proper transform of the surface given by the equation $u=0$. We can immediately check that
the surface $T$ belongs to the linear system $|2a_4B|$.

Choose a general point $[1: \mu_1:\mu_2]$ on the curve defined by
the equation $f_d=0$ in $\mathbb{P}(1,a_1,a_3)$. Then let
$C_{\mu_1, \mu_2}$ be the curve  defined by
the equations $$u=y-\mu_1x^{a_1}=t-\mu_2x^{a_3}=0$$ in $\mathbb{P}(1, a_1, a_2, a_3, a_4)$. This curve lies on the hypersurface $X$ by our construction.
If the curve is irreducible, then we
have
\[B\cdot \tilde{C}_{\mu_1, \mu_2}=(A-\frac{1}{a_2}E)\cdot \tilde{C}_{\mu_1,
\mu_2}=\frac{2}{a_2}-\frac{1}{a_2}E\cdot \tilde{C}_{\mu_1, \mu_2}= 0\]
since $E\cdot \tilde{C}_{\mu_1, \mu_2}=2$.
If $C_{\mu_1, \mu_2}$ is reducible, then it can have at most two irreducible components. Furthermore, each component $C_{\mu_1, \mu_2, i}$ is defined
by $$w-h(x,z)=y-\mu_1x^{a_1}=t-\mu_2x^{a_3}=0$$ in $\mathbb{P}(1, a_1, a_2, a_3, a_4)$ for some polynomial $h$.
This shows \[B\cdot \tilde{C}_{\mu_1, \mu_2, i}=\left(A-\frac{1}{a_2}E\right)\cdot \tilde{C}_{\mu_1,
\mu_2, i}=\frac{1}{a_2}-\frac{1}{a_2}E\cdot \tilde{C}_{\mu_1, \mu_2, i}= 0\]
since $E\cdot \tilde{C}_{\mu_1, \mu_2, i}= 1$. 

Since $O_z$ is a center, this is a contradiction by Lemma~\ref{lemma:bad-link}.
Therefore,  $f_{d-a_4}$ is not identically zero, and hence $\tau_z$ is a non-biregular involution by Lemma~\ref{lemma:non-biregular involution}. 

Note that $\tau_z$ leaves the point $O_w$ fixed. On the threefold  $W$ obtained by the weighted blow ups at $O_z$ and $O_w$ as in the proof of Theorem~\ref{theorem:geometric-elliptic}, the lift $\tau_W$ of the involution $\tau_z$ leaves the exceptional divisor over $O_w$ fixed. 
For the same reason as  in the proof of Theorem~\ref{theorem:geometric-elliptic}, the involution $\tau_W$ is biregular in codimension one,  so is the lift  $\tau_Y$ of the involution $\tau_z$ to $Y$. 

Consequently,   the involution $\tau_z$ untwists the singular point $O_z$.
\end{proof}

Now we consider  $X_{13}$ of Type~II, i.e., its defining equation does not
contain the monomial $tz^3$.
\begin{theorem}\label{theorem:elliptic-biregular3}
Let $X_{13}$ be a quasi-smooth hypersurface of degree $13$ in
$\mathbb{P}(1,1,3,4,5)$ in the family No.~20 (Type~II). Then
the singular point $O_z$ cannot be a center of non-canonical
singularities of the log pair $(X_{13}, \frac{1}{n}\mathcal{M})$.
\end{theorem}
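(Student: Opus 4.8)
The plan is to realize $O_z$ as a quotient singularity of type $\frac{1}{3}(1,1,2)$ and to exclude it by the negative-definiteness criterion of Lemma~\ref{lemma:negative-definite}, the point being that the hypothesis that $X$ does not contain $tz^3$ forces the base locus of $|-K_X|$ to break into low-degree rational curves that meet $-K_Y$ trivially. First I would fix a normal form: since $X$ is quasi-smooth of Type~II, after coordinate changes its equation can be written as $zw^2+wf_8(x,y,z,t)+f_{13}(x,y,z,t)=0$, where $f_{13}$ contains the monomial $z^4y$ (this is what guarantees quasi-smoothness at $O_z$ once $tz^3$ is absent) but not the monomial $tz^3$, and where $f_8$ contains $\beta t^2$. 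Eliminating $y$ near $O_z$ shows that $O_z$ is of type $\frac13(1,1,2)$ with local parameters $x,t,w$; with $f\colon Y\to X$ the weighted blow up one computes $A^3=\tfrac{13}{60}$, $E^3=\tfrac92$, and hence $B^3=A^3-\tfrac{1}{27}E^3=\tfrac{1}{20}>0$. Thus $B^3>0$, so Lemma~\ref{lemma:boundary} is not available and a refined exclusion lemma must be used; moreover the general fibre of the projection $X\dashrightarrow\mathbb{P}(1,1,4)$ away from $z,w$ acquires a $z^4$-term and is no longer elliptic, so the elliptic-involution mechanism of Theorems~\ref{theorem:geometric-elliptic} and~\ref{theorem:elliptic-biregular-untwist2} cannot untwist $O_z$ here.

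The decisive observation is that $F(0,0,z,t,w)=zw^2+\beta t^2w=w(zw+\beta t^2)$, because the coefficient of $tz^3$ vanishes in Type~II. Hence the base curve $\{x=y=0\}\cap X$ of the pencil $|-K_X|$ splits as $L_{zt}\cup R$, where $L_{zt}=\{x=y=w=0\}$ and $R=\{x=y=0,\ zw+\beta t^2=0\}$ (when $\beta=0$ the residual curve is $L_{tw}$). Both components are rational curves lying on the surface $S_x=\{x=0\}$, whose proper transform is $S\sim_{\mathbb{Q}}B$. Near $O_z$ the curve $L_{zt}$ is the $t$-axis, so $E\cdot\tilde L_{zt}=\tfrac14$ and
\[
B\cdot\tilde L_{zt}=-K_X\cdot L_{zt}-\tfrac13\,E\cdot\tilde L_{zt}=\tfrac1{12}-\tfrac13\cdot\tfrac14=0,
\]
and a parallel computation gives $B\cdot\tilde R=0$; the two transforms meet $E$ at different points and so are disjoint away from the point where $L_{zt}$ and $R$ already met on $X$.

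With these in hand I would apply Lemma~\ref{lemma:negative-definite} with $S=\tilde S_x\sim_{\mathbb{Q}}B$ and with the normal surface $T=\tilde S_y$, the proper transform of $S_y=\{y=0\}$ (normal by Remark~\ref{remark:normal surface}), where $T\sim_{\mathbb{Q}}B-E$. Since $S_x\cap S_y=L_{zt}\cup R$ and $\tilde S_x\cap\tilde S_y\cap E$ is zero-dimensional, the cycle $S|_T$ is supported precisely on $\tilde L_{zt}$ and $\tilde R$. I would then compute the intersection form of $\{\tilde L_{zt},\tilde R\}$ on $T$: the total self-intersection is $(S|_T)^2=B^2\cdot(B-E)=B^3-B^2\cdot E=\tfrac1{20}-\tfrac12<0$, the off-diagonal term vanishes because the two curves are disjoint on $T$, and the individual self-intersections are negative once the contributions of the quotient singularities of $T$ lying on these curves are taken into account as in Remark~\ref{remark:plt}. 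Negative-definiteness then contradicts the assumption that $O_z$ is a centre of non-canonical singularities.

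The main obstacle is precisely this last computation: because the theorem must hold for \emph{every} quasi-smooth $X$ in the family and not merely for a general one, I must control $\tilde L_{zt}^2$ and $\tilde R^2$ on the singular surface $T$ uniformly across all admissible defining polynomials — including the degenerate case $\beta=0$, where $R$ is replaced by $L_{tw}$, and the various possible positions of the singular points of $T$ on the two curves. If the individual self-intersections cannot be pinned down cleanly this way, the fallback is to extract from the reducible base locus a genuine one-parameter family of curves $\tilde C_\lambda$ with $E\cdot\tilde C_\lambda>0$ and $-K_Y\cdot\tilde C_\lambda\le 0$ and to invoke Lemma~\ref{lemma:bad-link} instead; in either route it is the vanishing of the $tz^3$-coefficient, which makes $B\cdot\tilde L_{zt}=0$, that drives the exclusion.
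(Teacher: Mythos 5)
Your setup is fine (the normal form, the identification of $O_z$ as a point of type $\frac{1}{3}(1,1,2)$ with local parameters $x,t,w$, the value $B^3=\tfrac{1}{20}$, and the factorization $F(0,0,z,t,w)=w(zw+\beta t^2)$), but the core computation on which you hang everything is wrong, and the error propagates. The blow up weights at $O_z$ are the residues of the ambient weights modulo $3$, namely $(1,1,2)$ for $(x,t,w)$; in particular $t$ has local weight $1$, not $4$. The proper transform of $L_{zt}$ (the $t$-axis) therefore meets $E\cong\mathbb{P}(1,1,2)$ transversally at the smooth point $[0:1:0]$, so $E\cdot\tilde L_{zt}=1$, not $\tfrac14$, and
\[
B\cdot\tilde L_{zt}=\frac{1}{12}-\frac{1}{3}=-\frac{1}{4},\qquad
B\cdot\tilde R=\frac{2}{15}-\frac{1}{3}=-\frac{1}{5}\quad(\beta\neq 0),
\]
which are exactly the values the paper records. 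Your claimed degrees are in fact inconsistent with your own computation: if $\tilde L_{zt}$ and $\tilde R$ were disjoint on $T=\tilde S_y$ with $B\cdot\tilde L_{zt}=B\cdot\tilde R=0$, the projection formula would give $\tilde L_{zt}^2\vert_T=S\cdot_Y\tilde L_{zt}=0$ and $\tilde R^2\vert_T=0$, so the intersection matrix would be $\mathrm{diag}(0,0)$ --- not negative-definite, and incompatible with $\left(S\vert_T\right)^2=-\tfrac{9}{20}$. No appeal to Remark~\ref{remark:plt} can repair this: the self-intersection of a curve on the normal surface $T$ is already pinned down by the projection formula, and there are no further corrections to add. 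Consequently your stated driving mechanism (``the vanishing of the $tz^3$-coefficient makes $B\cdot\tilde L_{zt}=0$'') is false.

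Ironically, the \emph{corrected} numbers rescue your strategy when $\beta\neq 0$: there $\tilde L_{zt}$ and $\tilde R$ really are disjoint on $Y$ (they met only at $O_z$, and their transforms hit $E$ at the distinct points $[0:1:0]$ and $[0:1:-\beta]$), so the matrix is $\mathrm{diag}\bigl(-\tfrac14,-\tfrac15\bigr)$ and Lemma~\ref{lemma:negative-definite} applies. But when $\beta=0$ the residual curve is $L_{tw}$, and $\tilde L_{zt}$, $\tilde L_{tw}$ are \emph{not} disjoint: they meet at $O_t$, which the blow up of $O_z$ does not touch, so you must control the off-diagonal entry by analysing how the two curves meet at the $A_3$-singularity of $T$ at $O_t$ --- precisely the uniform control you concede you cannot pin down. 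The paper sidesteps both issues: in each case it cuts $\tilde S_y$ by the net $\langle x^5,xt,w\rangle\subset|-5K_{X_{13}}|$, obtains a one-parameter family of residual curves $\tilde R_{\lambda,\mu}$ sweeping $\tilde S_y$ with $B\cdot\tilde R_{\lambda,\mu}=0$, and invokes Lemma~\ref{lemma:bad-link}. Note also that your fallback cannot work as phrased: a family for Lemma~\ref{lemma:bad-link} cannot be ``extracted from the reducible base locus'', whose components are finitely many rigid curves of negative $B$-degree; the moving family must come from a mobile linear system, which is the whole content of the paper's construction.
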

\begin{proof} Since $X_{13}$ is of Type~II, we may assume that the hypersurface $X_{13}$ is
defined by the equation
\[zw^2+w(f_{8}(x,y,t)+at^2)-yz^4-z^3f_4(x,y)-z^2f_{7}(x,y,t)-zf_{10}(x,y,t)+f_{13}(x,y,t)=0,\]
where $a$ is a constant.
Note that the polynomial $f_{13}$ must contain the monomial $xt^3$; otherwise $X_{13}$ would not be quasi-smooth.

Let $\tilde{S}_y$ be the proper transform of the surface $S_y$.
Let $\mathcal{L}$ be the linear system on $X_{13}$ generated
by $x^5$, $xt$ and $w$.

First we consider the case where $a=0$. The base locus of the
linear system $|-K_{X_{13}}|$ consists of the curve cut by $x=y=0$. The
curve has two irreducible components. One is the curve $L_{zt}$
and the other is the curve $L_{tw}$.  We see that
\[S\cdot \tilde{S}_y=\tilde{L}_{tw}+2\tilde{L}_{zt}.\]
Note that the curve $L_{tw}$ does not pass through the
point $O_z$. We obtain
\[B\cdot \tilde{L}_{zt}=\frac{1}{2}B\cdot S\cdot \tilde{S}_y-\frac{1}{2}A\cdot \tilde{L}_{tw}=\frac{1}{2}A^3-\frac{4}{54}E^3-\frac{1}{40}=-\frac{1}{4}.\]

For the proper transform $\tilde{S}_{\lambda, \mu}$ of a general
member in $\mathcal{L}$, we have
\[\tilde{S}_y\cdot \tilde{S}_{\lambda,\mu}=\tilde{L}_{zt}+\tilde{R}_{\lambda, \mu},\]
where $\tilde{R}_{\lambda, \mu}$ is the residual curve and it sweeps the
surface $\tilde{S}_y$.  We then obtain
\[B\cdot \tilde{R}_{\lambda, \mu}=B\cdot \tilde{S}_y\cdot \tilde{S}_{\lambda,\mu}-B\cdot \tilde{L}_{zt}= 5A^3-\frac{8}{27}E^3+\frac{1}{4}=0.\]
It then follows from Lemma~\ref{lemma:bad-link}
that the singular point $O_z$ cannot be a center of non-canonical
singularities of the log pair $(X_{13}, \frac{1}{n}\mathcal{M})$.

Now we consider the case where $a\ne 0$. By a coordinate change we
may assume that $a=1$. The base locus of the linear system
$|-K_{X_{13}}|$ consists of the curve cut by $x=y=0$. The curve has two
irreducible components. One is $L_{zt}$ and the other is the curve
$L$ defined by $$x=y=zw+t^2=0.$$ The curves $\tilde{L}$ and
$\tilde{L}_{zt}$ intersect the exceptional divisor $E$ at a smooth
point.  We have $S\cdot \tilde{S}_y=\tilde{L}_{zt}+\tilde{L}$ and
$$
B\cdot \tilde{L}_{zt}=A\cdot \tilde{L}_{zt}-\frac{1}{3}E\cdot
\tilde{L}_{zt}=-\frac{1}{4}, \ \ \  B\cdot
\tilde{L}=A\cdot\tilde{L}-\frac{1}{3}E\cdot
\tilde{L}=\frac{2}{15}-\frac{1}{3}=-\frac{1}{5}.
$$

For the proper transform $\tilde{S}_{\lambda, \mu}$ of a general
member in $\mathcal{L}$, we have
$$
\tilde{S}_y\cdot
\tilde{S}_{\lambda,\mu}=\tilde{L}_{zt}+\tilde{R}_{\lambda, \mu},
$$
where $\tilde{R}_{\lambda, \mu}$ is the residual curve and it sweeps the
surface $\tilde{S}_y$. Note that the curve $\tilde{R}_{\lambda, \mu}$ does
not contain the curve $\tilde{L}_{zt}$ since the defining
polynomial of $X_{13}$ contains either $xt^3$ or $wt^2$.
Therefore,
\[B\cdot \tilde{R}_{\lambda, \mu}=B\cdot \tilde{S}_y\cdot \tilde{S}_{\lambda,\mu}-B\cdot\tilde{L}_{zt}= 5A^3-\frac{8}{27}E^3+\frac{1}{4}=0.\]
Then the statement immediately follows from Lemma~\ref{lemma:bad-link}.
\end{proof}

\begin{remark}
Note that Theorem~\ref{theorem:geometric-elliptic} can be proved in the same way that we apply to
Theorems~\ref{theorem:elliptic-biregular-untwist2}.
The involution of $X$ for the singular point $O_t$  is defined as follows:
\begin{equation*}\label{equation:CPR-inv1}\begin{split} &[x:y:z:t:w]\mapsto \\ &\ \ \ \ \ \ \left[x:y:z:
\frac{g_{d-a_4}^2(v+g_{d-a_3})-g_d^2}{g_{d-a_4}vw+g_{d-a_4}^2x_it+g_dv}:
\frac{-g_{d-a_4}v(v+g_{d-a_3})-g_{d}(vw+g_{d-a_4}x_it)}{g_{d-a_4}vw+g_{d-a_4}^2x_it+g_dv}\right],
\\ \end{split}\end{equation*}
%\begin{equation*}\label{equation:CPR-inv1}\begin{split} &[x:y:z:t:w]\mapsto \\ &\ \ \ \ \ \ \left[x:y:z:
%\frac{g_d^2-g_{d-a_4}^2(v+g_{d-a_3})}{g_{d-a_4}vw+g_{d-a_4}^2x_it-g_dv}:
%\frac{g_{d-a_4}v(v+g_{d-a_3})+g_{d}(vw+g_{d-a_4}x_it)}{g_{d-a_4}vw+g_{d-a_4}^2x_it-g_dv}\right],
%\\ \end{split}\end{equation*}
where
$v=w^2-x_it^2-tg_{d-2a_3}-g_{d-a_3}$. This birational involution is also extracted from \cite[Section~4.10]{CPR}.
We are immediately able to check that it is biregular if and only if the polynomial $g_{d-a_4}$ is identically zero.
\end{remark}

\subsection{Invisible  elliptic involution}\label{section:invisible involution}

In this section we consider the singular point $O_z$ on the hypersurfaces of a special type in  the family No.~23
and the singular points of type $\frac{1}{2}(1,1,1)$ on the hypersurfaces of Type~II in the family No.~7.
The method we use here is almost the same as the one for Theorem~\ref{theorem:geometric-elliptic}.
In the proof of Theorem~\ref{theorem:geometric-elliptic}, only with the weighted blow ups at the point $O_t$ (or $O_z$) and the point $O_w$ we can obtain an elliptic fibration with a section. However, in this special cases of the families No.~7 and ~23,  after these two weighted blow-ups, our elliptic fibrations  still remain invisible.   When we reach a threefold $W$ with $-K_W^3=0$, instead of elliptic fibrations, we see several curves that intersect $-K_W$ negatively.  Eventually, log-flips along these curves reveal elliptic fibrations with sections.

We first consider  the singular point $O_z$ on the hypersurface of the special type in  the family No.~23.  In general, every quasi-smooth  hypersurface  of degree $14$ in
$\mathbb{P}(1,2,3,4,5)$  can be defined by the equation
 \[(t+by^2)w^2+
y(t-\alpha_1y^2)(t-\alpha_2y^2)(t-\alpha_3y^2)+
z^3(a_1w+a_2yz)+cz^2t^2+\]\[+wf_9(x,y,z,t)+f_{14}(x,y,z,t)=0\] for
suitable constants $b$, $\alpha_1$, $\alpha_2$, $\alpha_3$, and
suitable polynomials $f_{9}(x,y,z,t)$,  $f_{14}(x,y,z,t)$. Here,
we will deal with the singular point $O_z$ on this hypersurface.
However, in the cases when at least one of the constants $c$,
$a_1$ is non-zero, the singular point $O_z$ can be easily excluded
(see the table for the family No.~23 in
Section~\ref{section:super-rigid}). For this reason, we consider
only the case when $a_1=c=0$. In this case, the defining equation
must possess the monomial $xtz^3$. If not, then the hypersurface
is not quasi-smooth at the point defined by $x=y=w=t^3+a_2z^4=0$.
Consequently, it is the singular points $O_z$ on the hypersurface
$X_{14}$ of degree $14$ in $\mathbb{P}(1,2,3,4,5)$ defined by the
equation
\[(t+by^2)w^2+y(t-\alpha_1y^2)(t-\alpha_2y^2)(t-\alpha_3y^2) + z^4y+xtz^3+wf_9(x,y,z,t)+f_{14}(x,y,z,t)=0,\]
where $f_9$ does not contain $z^3$ and $f_{14}$ does not contain $z^2t^2$, that we should deal with here.
By replacing $t-\alpha_3y^2$ by $t$, we may assume that $X_{14}$ has a singular point at $O_y$ without loss of generality. Note that by a suitable coordinate change with respect to $t$, we may assume that neither $x^3w^2$ nor $xyw^2$ appears in the defining equation. However we cannot change the coefficient term $(t+by^2)$ of $w^2$ into $t$ by a coordinate change since we have already assumed that $O_y$ is a singular point.

\begin{theorem}\label{theorem:amazing-23}
Suppose that the hypersurface $X_{14}$  of degree $14$ in
$\mathbb{P}(1,2,3,4,5)$ is defined by the equation
\[(t+by^2)w^2+yt(t-\alpha_1y^2)(t-\alpha_2y^2) + z^4y+xtz^3+wf_9(x,y,z,t)+f_{14}(x,y,z,t)=0\]
as explained just before. If the
singular point $O_z$ is a center of non-canonical singularities of
the log pair $\left(X_{14}, \frac{1}{n}\mathcal{M}\right)$, then there is a birational
involution that untwists  $O_z$.
\end{theorem}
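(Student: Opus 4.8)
The plan is to reproduce the geometric argument of Theorem~\ref{theorem:geometric-elliptic}, with one extra layer of birational surgery that accounts for the word ``invisible'': the elliptic fibration we need is not produced directly by the two relevant weighted blow ups but only after a sequence of log flips. First I would record the local structure. Writing $z^4y+xtz^3=z^3(zy+xt)$, one checks that $O_z$ is a terminal cyclic quotient singularity of type $\frac13(1,1,2)$, with $x,t,w$ inducing local parameters (the monomial $z^4y$ allows us to eliminate $y$), and that $O_w$ is of type $\frac15(1,2,3)$. The relevant map is the projection $\pi\colon X_{14}\dashrightarrow\mathbb{P}(1,2,4)$ given by $[x:y:z:t:w]\mapsto[x:y:t]$, whose indeterminacy locus is precisely the curve $L_{zw}=\{x=y=t=0\}$ through $O_z$ and $O_w$. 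Fixing the ratios of $x,y,t$ and restricting the equation gives an affine plane curve of the form $Aw^2+B(z)w+C(z)=0$ with $\deg_zB\le2$ and $\deg_zC\le4$, using that $f_9$ contains no $z^3$ and that $xtz^3$ is present; hence the general fibre of $\pi$ is an irreducible curve birational to an elliptic curve, as in~\eqref{equation:irreducible-fiber}.

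Next I would resolve $\pi$ by the weighted blow up $f\colon Y\to X_{14}$ at $O_z$ with weights $(1,1,2)$, followed by the weighted blow up $g\colon W\to Y$ at the point over $O_w$ with weights $(1,2,3)$, and verify $-K_W^3=0$ by a direct intersection computation. This is exactly the point where the present case departs from Theorem~\ref{theorem:geometric-elliptic}: because the coefficient $t+by^2$ of $w^2$ is not a single monomial, no section is yet visible and $-K_W$ fails to be nef. I would identify the finitely many $(-K_W)$-negative curves—the proper transform $\hat L_{zw}$ of $L_{zw}$ together with the components of the few degenerate fibres—by explicit intersection numbers, and then perform the log flips along these curves to reach a model $W^{+}$ on which $-K_{W^{+}}$ is nef. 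By Log Abundance (\cite{KMM94}) this yields an elliptic fibration $\eta\colon W^{+}\to\mathbb{P}(1,2,4)$ now equipped with a section $F$, while the proper transform of the $O_z$-exceptional divisor $E$ becomes a bisection (the two branches of the fibres over $z=\infty$). Since $K_{W^{+}}$ is $\eta$-nef, Proposition~\ref{proposition:elliptic involution} furnishes a reflection involution $\tau_{W^{+}}$ that is biregular in codimension one.

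I would then transport $\tau_{W^{+}}$ back through the flips and the two blow ups, obtaining a birational involution $\tau_Y$ of $Y$ that is biregular in codimension one, and set $\tau=f\circ\tau_Y\circ f^{-1}$. By Remark~\ref{remark:untwisting-involution} it is enough to show $\tau_Y(E)\ne E$, for then $\tau$ is a non-biregular involution untwisting $O_z$ in the sense of Definition~\ref{definition:untwisting}. Suppose instead $\tau_Y(E)=E$. As in Theorem~\ref{theorem:geometric-elliptic} this forces $\tau$ to act trivially on the relevant Picard groups and hence to be biregular (by \cite[Proposition~2.7]{Co95}); restricting $\tau$ to a general surface $S_\lambda$ cut out by fixing one ratio in the base produces a biregular involution of an elliptic surface that fixes the section, so Lemma~\ref{lemma:Tschinkel} forces the difference of the bisection and twice the section to be numerically a combination of fibre components. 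Passing to the minimal resolution, negative-definiteness of the exceptional curves then contradicts the non-degeneracy of the intersection form of the components of $S_\lambda\cap\{x=0\}$.

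The main obstacle is concentrated in the middle step. Unlike in Theorem~\ref{theorem:geometric-elliptic}, the fibration is not immediate, so one must locate \emph{every} $(-K_W)$-negative curve, prove that the required log flips exist, and check that the flipped model carries a genuine section—and this has to hold for all quasi-smooth members of the family, not just a general one. The final contradiction also requires proving the non-degeneracy statement in the previous paragraph, the analogue of Lemma~\ref{lemma:non-degenerate-curves}, which I would establish by the same kind of local intersection computations at the singular points of $S_\lambda$.
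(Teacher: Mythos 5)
Your scaffolding (two weighted blow ups, log flips, Log Abundance, the reflection involution of Proposition~\ref{proposition:elliptic involution}, Lemma~\ref{lemma:Tschinkel}, and a final intersection-matrix contradiction) is the paper's, but your choice of projection is wrong, and this is precisely where the one genuinely new idea of this theorem lives. You project by $[x:y:z:t:w]\mapsto[x:y:t]$ to $\mathbb{P}(1,2,4)$; the paper projects by $[x:y:z:t:w]\mapsto[x:y:yz+xt]$ to $\mathbb{P}(1,2,5)$, i.e.\ uses the linear system $\mathcal{H}=\langle x^5,x^3y,xy^2,yz+xt\rangle\subset|-5K_{X_{14}}|$. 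The difference is the multiplicity at $O_z$: local parameters there are $x,t,w$ with multiplicities $\tfrac13,\tfrac13,\tfrac23$, and $y=-xt+\cdots$ has multiplicity $\tfrac23$, while $yz+xt$ has multiplicity $\tfrac53$ (Remark~\ref{remark:plurianticanonical-23}); consequently $\mathcal{H}$ becomes anticanonical upstairs, $\mathcal{H}_Y=|-5K_Y|$ and $\mathcal{H}_W=|-5K_W|$, its fibers pass through both singular points, and by \eqref{equation:amazing-23} the divisor over $O_w$ is a section while $E$ is a $2$-section. In your system $\langle x^4,x^2y,y^2,t\rangle$ the member $t$ has multiplicity only $\tfrac13$ at $O_z$, so its proper transform lies in $|4B+E|$, not $|-4K_Y|$. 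Concretely, since $y=-xt+\cdots$ locally, a point of $E$ with direction $(\bar x,\bar t,\bar w)$, $\bar t\ne 0$, is sent to $\lim_{\epsilon\to 0}[\epsilon\bar x:-\epsilon^2\bar x\bar t:\epsilon\bar t]=[0:0:1]$, so your map contracts $E$ to a point of the base; equivalently, your general fiber is the residual curve of $2L_{zw}$ in $\{y=\lambda x^2\}\cap\{t=\mu x^4\}$, a curve of degree $\tfrac45$ that misses $O_z$ entirely. Thus $E$ is \emph{vertical}, not a bisection, Lemma~\ref{lemma:Tschinkel} has nothing to apply to, and a fibrewise reflection gives $\tau_Y(E)=E$, which by the proof of Lemma~\ref{lemma:untwisting-invoultion} forces $\tau$ to be biregular: such an involution can never untwist $O_z$. (The exceptional divisor over $O_w$ is not a section of your fibration either; its image in $\mathbb{P}(1,2,4)$ is the curve $u_3+bu_2^2=0$.)

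Two downstream claims are then also false. First, $-K_W^3\ne 0$: one computes $-K_W^3=\tfrac{7}{60}-\tfrac16-\tfrac1{30}=-\tfrac1{12}$, which is exactly why this case, unlike Theorem~\ref{theorem:geometric-elliptic}, requires log flips at all. Second, your use of Log Abundance is inconsistent with your fibration: after the flip, $|-mK_U|$ defines the fibration attached to the anticanonical system $\mathcal{H}$, i.e.\ the paper's map, not yours. Indeed for your fibers $-K_W\cdot\hat C=\tfrac45-\tfrac13\,E\cdot\hat C-\tfrac15\,G\cdot\hat C=\tfrac45-0-\tfrac15\cdot\tfrac23=\tfrac23>0$, so they cannot be contracted by any map given by a multiple of $-K_U$; your elliptic-genus computation is carried out for one fibration, while the flips, the section, and the involution would have to live on a different one. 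The repair is exactly the paper's ``invisible'' substitution: replace the coordinate $t$ by $yz+xt$. With that single change your outline (flip along $\hat L_{zw}$ and $\hat L_{zt}$, reflection with respect to the section $\check G$, Lemma~\ref{lemma:Tschinkel} applied to $\bar E_\lambda-2\bar G_\lambda$ on the surfaces $S_\lambda=\{y=\lambda x^2\}\cap X_{14}$, and non-degeneracy of the intersection matrix of $L_{zt},L_{zw}$) becomes essentially the paper's proof.
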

\begin{proof}
Let $\mathcal{H}$ be the linear
subsystem of $|-5K_{X_{14}}|$ generated by $x^5$, $xy^2$, $x^3y$ and
$yz+xt$.  Note that the polynomial $yz+xt$ vanishes at the point $O_z$ with
multiplicity $\frac{5}{3}$ (see Remark~\ref{remark:plurianticanonical-23} below).
Let $\pi : X_{14}\dasharrow \mathbb{P}(1,2,5)$ be the rational
map induced by $$[x:y:z:t:w]\mapsto[x:y:yz+xt].$$ Then $\pi$ is a
morphism outside of the curves $L_{zt}$ and $L_{zw}$. Moreover,
the map $\pi$ is dominant, which implies, in particular, that
$\mathcal{H}$ is not composed from a pencil. Furthermore, its
general fiber is an irreducible curve that is birational to an
elliptic curve. To see this, we put $y=\lambda x^2$ and $yz+xt=\mu
x^5$ with sufficiently general complex numbers $\lambda$ and
$\mu$. On the hypersurface $X_{14}$, we take the intersection of the
surface defined by $y=\lambda x^2$ and the surface defined by
$yz+xt=\mu x^5$. This intersection is the same as the intersection
of the surface defined by $y=\lambda x^2$ and the reducible
surface defined by $ x(\lambda xz+t-\mu x^4)=0$. Therefore, the
intersection is the $1$-cycle
\[(L_{zw}+2L_{zt})+(L_{zw}+C_{\lambda, \mu})=2L_{zw}+2L_{zt}+C_{\lambda, \mu},\]
where the curve $C_{\lambda, \mu}$ is defined by the equation
\begin{equation}\label{equation:amazing-23}
\begin{split}
&(\mu x^3-\lambda z+b\lambda^2x^3)w^2+\lambda x^4(\mu x^3-\lambda z)(\mu x^3-\alpha_1\lambda^2 x^3-\lambda z)(\mu x^3-\alpha_2 \lambda^2 x^3-\lambda z) + \mu x^4z^3+\\ & + \frac{wf_9(x,\lambda x^2,z,\mu x^4-\lambda xz)+f_{14}(x,\lambda x^2,z,\mu x^4-\lambda xz)}{x}=0\\
\end{split}
\end{equation}
in $\mathbb{P}(1,3,5)$. The curve $C_{\lambda, \mu}$ is a general
fiber of the map $\pi$. Setting $x=1$ in
\eqref{equation:amazing-23}, we consider the curve
defined by
\begin{equation}\label{equation:amazing-23-irr}
\begin{split}
&(\mu+b\lambda^2 -\lambda z)w^2+\lambda (\mu -\lambda z)(\mu -\alpha_1\lambda^2 -\lambda z)(\mu -\alpha_2 \lambda^2 -\lambda z) +\\
& +\mu z^3+wf_9(1,\lambda ,z,\mu -\lambda z)+f_{14}(1,\lambda ,z,\mu -\lambda z)=0\\
\end{split}
\end{equation}
in $\mathbb{C}^2$. It is a smooth affine plane cubic curve.
Moreover, for a general complex number $\lambda$, the
curve~ \eqref{equation:amazing-23-irr} is always irreducible and
reduced for every value of $\mu$ (see Lemma~\ref{lemma:irreducible-23} below).

Let $\mathcal{H}_Y$ be the proper transform of the linear system
$\mathcal{H}$ by the weighted blow up $f$. It is the linear system
$|-5K_Y|$ because the linear system $\mathcal{H}$ consists exactly of the members of $|-5K_X|$ with multiplicity at least $\frac{5}{3}$ at $O_z$  (see Remark~\ref{remark:plurianticanonical-23} below). Let $g\colon W\to Y$ be the weighted blow up at the
point over $O_w$ with weight $(1,2,3)$ and $\mathcal{H}_W$ the
proper transform of $\mathcal{H}_Y$ by the morphism $g$. Let
$\hat{E}$ be the proper transform of $E$ by the weighted blow up
$g$ and $G$ be the exceptional divisor of $g$.

The linear system $\mathcal{H}_W$ coincides with the linear system
$|-5K_W|$ since every member in $|-5K_Y|$ has multiplicity at least $1$ at the point corresponding to $O_w$ (see Remark~\ref{remark:plurianticanonical-23} below).

The base locus of the linear system $\mathcal{H}$ is given by the
equation $x=yz+xt=0$. Therefore, it consists of $L_{zw}$, $L_{zt}$
and the curve $C$ cut by the equation $x=z=0$. The curve $C$ may
not be irreducible. Indeed, $C$ is irreducible if and only if
$b\ne 0$. If $b=0$, then $C$ consists of two irreducible curves
$L_{yw}$ and $R$, where $R$ is an irreducible curve  passing
through neither the point $O_z$ nor the point $O_w$. Let
$\hat{L}_{zw}$, $\hat{L}_{zt}$, $\hat{L}_{yw}$,  $\hat{R}$ and
$\hat{C}$ be the proper transforms of the curves $L_{zw}$,
$L_{zt}$, $L_{yw}$, $R$  and $C$, respectively, by the morphism
$f\circ g$. We have
\[\begin{split} &-K_W\cdot \hat{L}_{zw}=-K_{X_{14}}\cdot L_{zw}-\frac{1}{3}\hat{E}\cdot \hat{L}_{zw} -\frac{1}{5}G\cdot  \hat{L}_{zw}=-\frac{1}{6};\\
& -K_W\cdot \hat{L}_{zt}=-K_{X_{14}}\cdot L_{zt}-\frac{1}{3}\hat{E}\cdot \hat{L}_{zt} =-\frac{1}{4};\\
&-K_W\cdot \hat{L}_{yw}=-K_{X_{14}}\cdot L_{yw}-\frac{1}{5}G\cdot  \hat{L}_{yw}=\frac{1}{30};\\
& -K_W\cdot \hat{R}=-K_{X_{14}}\cdot R>0; \ \ \ -K_W\cdot \hat{C}=-K_{X_{14}}\cdot C>0. \\ \end{split}\]
Therefore, the curves $\hat{L}_{zw}$ and $\hat{L}_{zt}$ are the
only curves that intersect $-K_W$ negatively. The log pair $\left(W,
\frac{1}{5}\mathcal{H}_W\right)$ is canonical, and hence the log pair
$\left(W, \left(\frac{1}{5}+\epsilon\right)\mathcal{H}_W\right)$ is Kawamata log
terminal for sufficiently small $\epsilon>0$. 
Since $$K_W+\left(\frac{1}{5}+\epsilon\right)\mathcal{H}_W\sim_{\mathbb{Q}}
-\epsilon K_W,$$ the curves
$\hat{L}_{zw}$ and $\hat{L}_{zt}$ are the only curves that
intersect
$K_W+\left(\frac{1}{5}+\epsilon\right)\mathcal{H}_W$ negatively. Therefore, there is
 a log flip  $\chi : W\dasharrow U$ along the curves $\hat{L}_{zw}$ and $\hat{L}_{zt}$ (\cite{Sho93}).
 Let $\check{E}$ and $\check{G}$ be the proper transforms  of the divisors $\hat{E}$ and $G$, respectively, by $\chi$. The anticanonical divisor $K_U+\left(\frac{1}{5}+\epsilon\right)\mathcal{H}_U$ is nef, where $\mathcal{H}_U$ is the proper transform  of $\mathcal{H}_W$ by the birational map  $\chi$  that is an isomorphism in codimension one.

 By Log Abundance (\cite{KMM94}), the linear system $|-mK_U|$ is free for sufficiently large
 $m$. Hence, it induces a dominant morphism $\eta\colon U\to
 \Sigma$ with connected fibers, where $\Sigma$ is a normal variety. We claim that
 $\Sigma$ is a surface and $\eta$ is an elliptic fibration.
For this claim, let $\hat{C}_{\lambda, \mu}$ be the proper
transform of a general fiber $C_{\lambda, \mu}$ of the map $\pi$
on the threefold $W$ and let $\check{C}_{\lambda, \mu}$ be its
proper transform on $U$. Then
$$-K_{W}\cdot \hat{C}_{\lambda, \mu}=-10K_W^3-2(-K_W)\cdot(\hat{L}_{zw}+\hat{L}_{zt})=0.$$
In particular, the curve $\hat{C}_{\lambda, \mu}$  is disjoint
from the curves $\hat{L}_{zt}$ and $\hat{L}_{zw}$ because the base
locus of the linear system $|-5K_{W}|$ contains  the curves
$\hat{L}_{zt}$ and $\hat{L}_{zw}$. Therefore,
$$
-K_{U}\cdot \check{C}_{\lambda, \mu}=0.
$$
It implies that $\eta$ contracts $\check{C}_{\lambda, \mu}$. Since
we already proved that $C_{\lambda, \mu}$  is birational to an
elliptic curve and $\mathcal{H}$ is not composed from a pencil, we
can see that $\eta$ is an elliptic fibration. Moreover, we have
proved the existence of a commutative diagram
$$
\xymatrix{
&W\ar@{->}[ld]_{g}\ar@{-->}[r]^{\chi}&U\ar@{->}[dddr]^{\eta}\\%
Y\ar@{->}[d]_{f}&&\\%
X_{14}\ar@{-->}[rd]_{\pi}&&\\
&\mathbb{P}(1,2,5)&&\Sigma\ar@{-->}[ll]^{\ \ \ \ \ \ \theta}}
$$ %
where $\theta$ is a birational map.

We see from \eqref{equation:amazing-23}  that the divisor
$\check{G}$ is a section of the elliptic fibration $\eta$ and
$\check{E}$ is a $2$-section of  $\eta$. Let $\tau_{U}$ be the
birational involution of the threefold $U$  obtained from the elliptic fibration $\eta:U\to\Sigma$ with the section $\check{G}$ 
by Proposition~\ref{proposition:elliptic involution}.
Then $\tau_{U}$ is biregular in codimension
one because $K_{U}$ is $\eta$-nef by our construction (\cite[Corollary~3.54]{KoMo98}).

Put
$\tau_{W}=\chi^{-1}\circ\tau_U\circ\chi$,
$\tau_{Y}=g\circ\tau_W\circ g^{-1}$ and $\tau=f\circ\tau_Y\circ
f^{-1}$. 

Since $\tau_{U}$ and $\chi$ are  biregular in codimension
one, so is the involution $\tau_{W}$. Moreover, we have $\tau_{W}(G)=G$
since $\tau_{U}(\check{G})=\check{G}$ by our construction. This
implies that $\tau_{Y}$ is also biregular in codimension one.

In order to see that the point $O_z$ is untwisted by $\tau$, we have only to show that the involution $\tau$ is not biregular.
To prove this, we suppose that $\tau$ is biregular and then look for a contradiction.
Note that the proof of Lemma~\ref{lemma:untwisting-invoultion} shows that $\tau_Y(E)=E$ if $\tau$ is biregular.
This is a key point from which we are able to derive a contradiction.

Let $S_\lambda$ be the  surface on the hypersurface $X_{14}$ cut by the
equation $y=\lambda x^2$ with a general complex number $\lambda$. It follows from the defining equation of
the surface $S_\lambda$ that the surface has only isolated singularities. Therefore, it is normal (see Remark~\ref{remark:normal surface}). Moreover, the
surface $S_{\lambda}$ is $\tau$-invariant by our construction. Let
$\tau_\lambda$ be the restriction of $\tau$ to the surface
$S_\lambda$. It is a birational involution of the surface
$S_{\lambda}$ since the surface is $\tau$-invariant.

We have a rational map $\pi_\lambda\colon S_\lambda\dasharrow
\mathbb{P}(1,5)\cong\mathbb{P}^1$ induced by the rational map
$\pi\colon X_{14}\dasharrow \mathbb{P}(1,2,5)$.
Note that the curves $L_{zt}$ and $L_{zw}$ are contained in
$S_{\lambda}$. The rational map $\pi_\lambda\colon
S_\lambda\dasharrow\mathbb{P}^1$ is given by the pencil
$\mathcal{P}$ of the curves on the surface
$S_\lambda\subset\mathbb{P}(1,3,4,5)$ cut by the equations
\[\delta x^4=\epsilon (\lambda xz+t),\]
where $[\delta:\epsilon]\in\mathbb{P}^1$.  Its base locus is cut
out on $S_{\lambda}$ by $x=t=0$, which implies that the base locus
of the pencil $\mathcal{P}$ is the curve $L_{zw}$.

The map $\pi_\lambda$ is not defined only at the points $O_w$ and
$O_z$. To see this, plug in $t=\frac{\delta}{\epsilon}x^4-\lambda xz$
into the defining equation of the surface $S_{\lambda}$ (with
general $[\delta:\epsilon]\in\mathbb{P}^1$), divide the resulting
equation by $x$ (removing the base curve $L_{zw}$), and put $x=0$
into the resulting equation in $x$, $z$, and $w$ (we know that the
base locus of $\mathcal{P}$ is $L_{zw}$). This gives the system of
equations $zw^2=x=t=0$, which means that the map $\pi_\lambda$ is
not defined only at the points $O_w$ and $O_z$.

Let $C_{\lambda}$ be a general fiber of the map $\pi_\lambda$.
Then $C_{\lambda}$ is given by \eqref{equation:amazing-23}
 with a general complex number  $\mu$. As
shown in the beginning, the fiber $C_{\lambda}$ is an irreducible
curve  birational to a smooth elliptic curve. Let
$\nu\colon\breve{C}_{\lambda}\to C_{\lambda}$ be the normalization
of the curve $C_{\lambda}$. It follows from
\eqref{equation:amazing-23} that $\nu^{-1}(O_{w})$ consists of
a single point and  $\nu^{-1}(O_{z})$ consists of two distinct
points. Note that we can consider the curves $C_{\lambda}$ and
$\breve{C}_{\lambda}$ (and the map $\nu)$ to be defined over the
function field $\mathbb{C}(\mu)$. In this case, $\nu^{-1}(O_{z})$
consists of a single point of degree $2$, i.e., a point splitting
into two points over the algebraic closure of $\mathbb{C}(\mu)$.

Let $\hat{S}_\lambda$ be the proper transform of $S_\lambda$ via $f\circ g$. Put $\hat{E}_\lambda=\hat{E}|_{\hat{S}_\lambda}$
and $\hat{G}_\lambda=G|_{\hat{S}_\lambda}$.
Resolving the indeterminacy of the rational map $\pi_\lambda$ through $\hat{S}_\lambda$, we
obtain an elliptic fibration
$\bar{\pi}_\lambda\colon\bar{S}_\lambda\to \mathbb{P}^1$. Thus, we
have a commutative diagram
$$
\xymatrix{
&\bar{S}_\lambda \ar@{->}[ld]_{\sigma}\ar@{->}[dr]^{\bar{\pi}_\lambda}\\%
S_\lambda\ar@{-->}[rr]_{\pi_\lambda} &&\mathbb{P}^1,}
$$ %
where $\sigma$ is a birational map. Note that there exist exactly
two $\sigma$-exceptional prime divisors that do not lie in the
fibers of $\bar{\pi}_\lambda$. One is the proper transform of $\hat{E}_\lambda$ and the other is
the proper transform of $\hat{G}_\lambda$.  Let
$\bar{E}_\lambda$ and $\bar{G}_\lambda$ be these two exceptional
divisors, respectively. Then $\bar{G}_\lambda$ is a section of
$\bar{\pi}_\lambda$ and $\bar{E}_\lambda$  is a $2$-section of
$\bar{\pi}_\lambda$. Denote the other $\sigma$-exceptional curves (if
any) by $F_{1},\ldots,F_{r}$.

Put $\bar{\tau}_\lambda=\sigma^{-1}\circ\tau_\lambda\circ\sigma$.
Due to \cite[Theorem~3.2]{dFE}, we may assume that $\bar{\tau}_\lambda$ is
biregular and $\bar{S}_\lambda$ is smooth.

Let $\bar{C}_{\lambda}$ be the proper transform
of the curve $C_{\lambda}$ on $\bar{S}_{\lambda}$. Then
$\bar{C}_{\lambda}\cong\breve{C}_{\lambda}$, since
$\bar{C}_{\lambda}$ is smooth. Moreover, the curve
$\bar{C}_{\lambda}$ is $\bar{\tau}_{\lambda}$-invariant.
Furthermore, $\bar{\tau}_{\lambda}\vert_{\bar{C}_{\lambda}}$ is
given by the reflection with respect to the point
$\bar{G}_\lambda\cap\bar{C}_{\lambda}$. On the other hand, the
divisor $\bar{E}_{\lambda}$ must be
$\bar{\tau}_{\lambda}$-invariant since $\tau_Y(E)=E$.
Therefore, the divisor $\bar{E}_{\lambda}-2\bar{G}_{\lambda}$ must
be numerically equivalent to a $\mathbb{Q}$-linear combination of curves on
$\bar{S}_{\lambda}$ that lie in the fibers of
$\bar{\pi}_{\lambda}$ by Lemma~\ref{lemma:Tschinkel}.

Let $\bar{L}_{zt}$ and $\bar{L}_{zw}$ be the proper transforms of
the curves $L_{zt}$ and $L_{zw}$ by $\sigma$, respectively. Then
$\bar{L}_{zt}$ and $\bar{L}_{zw}$ lies in the same fiber of the
elliptic fibration $\bar{\pi}_{\lambda}$.
In the fiber containing $\bar{L}_{zt}$ and $\bar{L}_{zw}$, the other components are, if any,
$\sigma$-exceptional since the fiber of $\pi_\lambda$ over the point $[0:1]$ consists only of $L_{zt}$ and $L_{zw}$. 
In addition, we see that
every other fiber of $\bar{\pi}_{\lambda}$ contains exactly one
irreducible reduced curve that is not $\sigma$-exceptional.
Indeed, this immediately follows from Lemma~\ref{lemma:irreducible-23} below. Since all fibers of
$\bar{\pi}_{\lambda}$ (with scheme structure) are numerically
equivalent  and the divisor $\bar{E}_{\lambda}-2\bar{G}_{\lambda}$
is numerically equivalent to a $\mathbb{Q}$-linear combination of curves that
lie in the fibers of $\bar{\pi}_{\lambda}$, we obtain
$$
\bar{E}_{\lambda}-2\bar{G}_{\lambda}\sim_{\mathbb{Q}}  c_{zt}\bar{L}_{zt}+c_{zw}\bar{L}_{zw}+\sum_{i=1}^{r}c_i F_i%
$$
for some rational numbers $c_{zt}$, $c_{zw}$, $c_1,\ldots,c _r$.
The intersection form of the curves $\bar{E}_{\lambda}$,
$\bar{G}_{\lambda}$, $F_1, \ldots, F_r$ is negative-definite since
these curves are $\sigma$-exceptional. Therefore,
$(c_{zt},c_{zw})\ne (0,0)$. On the other hand, we have
$$
0\sim_{\mathbb{Q}}  c_{zt}L_{zt}+c_{zw}L_{zw}
$$
on the surface $S_{\lambda}$. In particular, the intersection form
of the curves $L_{zw}$ and $L_{zt}$ is degenerate on the surface
$S_{\lambda}$.

Meanwhile, from the intersection numbers
\[(2L_{zt}+L_{zw})\cdot L_{zw}=\frac{1}{15}, \ \ \ (2L_{zt}+L_{zw})\cdot L_{zt}=\frac{1}{12}\]
on the surface $S_\lambda $, we obtain
\[\left(\begin{array}{cc}
       L_{zw}^2&L_{zw}\cdot L_{zt}\\
      L_{zw}\cdot L_{zt}& L_{zt}^2\\
\end{array}\right)= \left(\begin{array}{cc}
\frac{1}{15}-2L_{zw}\cdot L_{zt} & L_{zw}\cdot L_{zt}\\
    L_{zw}\cdot L_{zt} & \frac{1}{24}-\frac{1}{2}L_{zw}\cdot L_{zt}\\
        \end{array}\right).
\]
The curves $L_{zw}$ and $L_{zt}$ intersect only at the point $O_z$. However, the surface $S_\lambda$ is not quasi-smooth at the point $O_z$.
To get  the intersection number $L_{zw}\cdot L_{zt}$, we consider the divisor $D_t$ on the surface $S_{\lambda}$ cut by the equation $t=0$. We can immediately see that $D_t=2L_{zw}+R$, where $R$ is the residual  curve. The curves $L_{zw}$ and $R$ intersect only at the point $O_w$ at which the surface $S_{\lambda}$ is quasi-smooth.  Then we obtain   $L_{zw}^2 =-\frac{4}{15}$ from the intersection numbers
\[(2L_{zw}+R)\cdot L_{zw}=\frac{4}{15}, \ \ \ R\cdot L_{zw}=\frac{4}{5}.\]
Therefore,  $L_{zw}\cdot L_{zt}=\frac{1}{6}$ and hence the intersection matrix is non-singular.
This is a contradiction.
It shows that
$\tau_{Y}(E)\ne E$.
In particular, the involution
$\tau$ is not biregular. Since the involution $\tau_Y$ is biregular in codimension one, the involution $\tau$ meets the conditions
in Definition~\ref{definition:untwisting}.
 Consequently, the birational involution
$\tau$ untwists the singular point $O_z$.
\end{proof}

\begin{remark}\label{remark:plurianticanonical-23}
Local parameters at $O_z$ are induced by $x$, $t$, $w$ whose multiplicities are $\frac{1}{3}$, $\frac{1}{3}$, and $\frac{2}{3}$. The monomial $z^4y$ shows that $y$ vanishes at the point $O_z$ with
multiplicity at least $\frac{2}{3}$. Furthermore, since
\[-y=xt+(t+by^2)w^2+yt(t-\alpha_1y^2)(t-\alpha_2 y^2) +wf_9(x,y,1,t)+f_{14}(x,y,1,t) \]
around $O_z$ and $xt$ vanishes at the point $O_z$ with
multiplicity $\frac{2}{3}$, the monomial  $y$ vanishes at the point $O_z$ with
multiplicity exactly $\frac{2}{3}$. Then the relation
\[-(y+xt)=(t+by^2)w^2+yt(t-\alpha_1y^2)(t-\alpha_2 y^2) +wf_9(x,y,1,t)+f_{14}(x,y,1,t)\]
around $O_z$ shows that
$yz+xt$ vanishes at the point $O_z$ with
multiplicity $\frac{5}{3}$.

The linear system $|-5K_X|$ is generated by $w$, $xt$, $yz$, $x^2z$, $xy^2$, $x^3y$ and $x^5$. First of all, the last three monomials vanish at $O_z$ with multiplicity $\frac{5}{3}$.
In terms of the local parameters $x$, $t$, $w$, we have
\[yz=-xt+\mbox{higher degree terms}\]
locally around the point $O_z$.
Furthermore, for any complex numbers $\alpha$, $\beta$, $\delta$, $\epsilon$, we have
\[\alpha w+\beta xt+\delta yz+\epsilon x^2z=(\alpha w+\beta xt-\delta xt+\epsilon x^2)+\mbox{higher degree terms}\]
locally around the point $O_z$.
For the monomial $\alpha w+\beta xt+\delta yz+\epsilon x^2z$ to have multiplicity bigger than $\frac{2}{3}$ at $O_z$, we must have $\alpha=\epsilon=0$ and $\beta=\delta$. Since $yz+xt$ vanishes at the point $O_z$ with
multiplicity $\frac{5}{3}$, we see that  the linear system $\mathcal{H}$
consists exactly of  the members of $|-5K_X|$ vanishing at $O_z$ with multiplicity at least $\frac{5}{3}$.

Meanwhile, the variables $x$, $y$, $z$ induce local parameters at the point $O_w$ with multiplicities  $\frac{1}{5}$,
$\frac{2}{5}$, $\frac{3}{5}$, respectively. Also, since $t$ vanishes at the point $O_w$ with multiplicity at least $\frac{4}{5}$, the polynomial $yz+xt$ vanishes at the point $O_w$ with multiplicity $1$. Therefore, every member in $\mathcal{H}$ vanishes at  $O_w$ with multiplicity at least $1$.
\end{remark}

\begin{lemma}\label{lemma:irreducible-23}
Under the conditions of Theorem~\ref{theorem:amazing-23}, for a general complex number $\lambda$,
the curve  \eqref{equation:amazing-23} is irreducible and reduced for every complex number $\mu$.
\end{lemma}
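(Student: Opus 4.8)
The plan is to regard \eqref{equation:amazing-23-irr} as a double cover of the $z$-line and detect reducibility through its discriminant, after first pinning down the $z$-degrees of the coefficients. Writing the defining polynomial of \eqref{equation:amazing-23} in the affine chart $x=1$ as $A(z)w^{2}+B(z)w+C(z)$, I first record, using the hypotheses of Theorem~\ref{theorem:amazing-23}, that $A=(\mu+b\lambda^{2})-\lambda z$ is linear in $z$ and, for general $\lambda\neq 0$, never vanishes identically; that $\deg_{z}B\le 2$, because $f_{9}$ contains no $z^{3}$; and that $\deg_{z}C=3$, because the monomial $z^{4}y$ cancels against the $z^{4}$-part of $xtz^{3}$ under the substitution $t=\mu x^{4}-\lambda xz$ exactly as in Remark~\ref{remark:plurianticanonical-23}, while quasi-smoothness together with the absence of $z^{2}t^{2}$ in $f_{14}$ forbids any surviving $z^{4}$-term. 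Thus \eqref{equation:amazing-23-irr} is genuinely a plane cubic, and projection to the $z$-line exhibits $C_{\lambda,\mu}$ as a double cover of $\mathbb{P}^{1}$ whose discriminant is $\Delta=B^{2}-4AC$ with $\deg_{z}\Delta=4$.

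The reduction criterion is then that, as a polynomial quadratic in $w$, the curve is reducible or non-reduced in exactly two ways. Either the coefficients have a common factor $h=\gcd(A,B,C)$ giving a vertical component, which (since $A$ is linear) can only be $z=r_{A}$ with $r_{A}=(\mu+b\lambda^{2})/\lambda$ and requires $B(r_{A})=C(r_{A})=0$; or the primitive part splits into two factors of degree one in $w$, which happens precisely when $\Delta$ is a perfect square in $\mathbb{C}[z]$, while non-reducedness means $\Delta\equiv 0$ (impossible since $\deg_{z}\Delta=4$). For a fixed general $\lambda$ and a general $\mu$ the curve is the smooth cubic that the proof of Theorem~\ref{theorem:amazing-23} already identified as birational to an elliptic curve, so $\Delta$ is squarefree and $\gcd(A,B,C)=1$ there; hence all but finitely many $\mu$ are harmless, and the entire content of the lemma concerns the finitely many $\mu$ at which the fibre degenerates, the claim being that for general $\lambda$ each such singular fibre remains an irreducible (nodal or cuspidal) cubic.

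To settle this I would study the closed incidence locus $\mathcal{I}=\{(\lambda,\mu):\,C_{\lambda,\mu}\text{ is reducible or non-reduced}\}$ and show it does not dominate the $\lambda$-axis. The vertical-component possibility imposes the two equations $B(r_{A})=C(r_{A})=0$, which for general $\lambda$ cut out only finitely many $\mu$ and, read as relations in $\mu$ with the hypersurface coefficients held fixed, place $\lambda$ in a proper closed set. For the horizontal split, since $\deg_{z}A=1$ and $\deg_{z}B\le 2<3=\deg_{z}C$, a factorization $Aw^{2}+Bw+C=(w+B_{2})(Aw+B_{1})$ over $\mathbb{C}[z]$ forces $\deg_{z}B_{2}=1$ and $\deg_{z}B_{1}=2$, hence $C=B_{1}B_{2}$ and $B=B_{1}+AB_{2}$; equivalently $C(r)=0$ and $B\cdot(z-r)-C=A\cdot(z-r)^{2}$ for the root $r=r(\lambda,\mu)$ of $B_{2}=z-r$. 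Comparing coefficients of these identities, with the leading $z$-terms fixed above, yields polynomial relations in $\mu$ that are incompatible for general $\lambda$; so the projection of $\mathcal{I}$ to the $\lambda$-axis is a proper closed subset and a general $\lambda$ avoids it, proving the lemma for every quasi-smooth member of the prescribed shape.

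The hard part will be this last step: irreducibility must be established uniformly in $\mu$ and for \emph{every} quasi-smooth hypersurface of the given form, so no genericity of the surface itself may be used. The forbidden-monomial normalizations are precisely what make the argument tractable, since they fix $\deg_{z}B\le 2$ and $\deg_{z}C=3$ and thereby guarantee the genus-one cubic structure underlying the discriminant criterion; the remaining delicacy is to check that the finitely many coefficient relations forced by a hypothetical vertical component or horizontal split cannot be satisfied for a general value of $\lambda$, which is a finite but somewhat intricate elimination.
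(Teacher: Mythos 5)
Your proposal takes a genuinely different route from the paper, and it has a genuine gap at exactly the point you flag as ``the hard part.'' The paper never proves the unconditional algebraic statement you are aiming at. The phrase ``under the conditions of Theorem~\ref{theorem:amazing-23}'' includes the hypothesis that $O_z$ \emph{is} a center of non-canonical singularities of $(X_{14},\frac{1}{n}\mathcal{M})$, and the paper's proof is a short argument by contradiction that uses precisely this hypothesis: if for general $\lambda$ some $\mu$ gave a reducible or non-reduced fiber, one would get a one-dimensional family of reducible curves; since \eqref{equation:amazing-23} contains $zw^2$, each such curve has a component $C_1$ cut out (after the substitutions) by either $w+h_5(x,z)=0$ or $w^2+wg_5(x,z)+g_{10}(x,z)=0$, and a direct computation gives $-K_Y\cdot\tilde{C}_1=0$ with $E\cdot\tilde{C}_1>0$. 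Lemma~\ref{lemma:bad-link} then excludes $O_z$ as a non-canonical center, contradicting the hypothesis. This global intersection-theoretic argument is what lets the paper handle \emph{every} quasi-smooth hypersurface of the prescribed shape uniformly, which is exactly the uniformity problem your elimination scheme runs into.

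Concretely, the gap in your plan is the unproved assertion that the coefficient relations forced by a vertical component or by a horizontal split $Aw^2+Bw+C=(w+B_2)(Aw+B_1)$ are ``incompatible for general $\lambda$.'' Nothing in your argument rules this out for a \emph{specific} quasi-smooth $X_{14}$ of the given form: without invoking the center hypothesis there is no evident obstruction, and the paper itself does not claim such hypersurfaces cannot exist (if they do, $O_z$ is simply not a center and gets excluded rather than untwisted, so the Main Theorem is unaffected). There is also a smaller bookkeeping issue: your bound $\deg_z C=3$ is not justified by the absence of $z^2t^2$ in $f_{14}$; the dangerous monomial is $x^2z^4$ in $f_{14}$, which survives the substitution $t=\mu x^4-\lambda xz$ and would make $C$ quartic in $z$, so it must be removed by a further normalization (e.g.\ absorbing it into the $z^4y$ term by $y\mapsto y+cx^2$) before the cubic/discriminant structure you rely on is available. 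Fixing that still leaves the elimination step, which is the entire content of the lemma, unestablished.
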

\begin{proof}
Suppose that for a general complex number $\lambda$ there is always $\mu$ such that
the curve $C_{\lambda, \mu}$ is reducible. There is then a
one-dimensional family of reducible curves $C_{\lambda, \mu}$
given by  \eqref{equation:amazing-23} with  a general  complex number $\lambda$ and a complex number
$\mu$ depending on $\lambda$. Denote a general curve in this
one-dimensional family by $C$. 

Since \eqref{equation:amazing-23} always contains the monomial $zw^2$, the curve $C$ must have an irreducible  component $C_1$ that is defined by either $$y-\lambda x=t-\mu x^4+\lambda xz=w+h_{5}(x,z)=0$$  or  $$y-\lambda x=t-\mu x^4+\lambda xz=w^2+wg_{5}(x,z)+g_{10}(x,z)=0.$$
Then
\[-K_{Y}\cdot\tilde{C}_1=-K_{X_{14}}\cdot C_1-\frac{1}{3}E\cdot\tilde{C}_1= \left\{%
\aligned
&\frac{1\cdot 2\cdot 4\cdot 5}{1\cdot 2\cdot 3\cdot 4\cdot 5}-\frac{1}{3}=0 \mbox{ for the former case},\\%
&\\
&\frac{1\cdot 2\cdot 4\cdot 2\cdot 5}{1\cdot 2\cdot 3\cdot 4\cdot 5}-\frac{2}{3}=0 \mbox{ for the latter case}\\%
\endaligned\right.\]
 and $\tilde{C}_1\cdot E>0$. By
Lemma~\ref{lemma:bad-link}, the point $O_z$ cannot be a center of
non-canonical singularities of the log pair $(X_{14},
\frac{1}{n}\mathcal{M})$. This contradiction proves the statement.
\end{proof}

Now we go back to the hypersurfaces in the family No.~7
described in the previous section. The hypersurface $X_8$  of Type~II is defined in $\mathbb{P}(1,1,2,2,3)$ by the equation of the type
\begin{equation}\label{equation:special-7-original}(z+f_2(x,y))w^2+wf_5(x,y,z,t)-zt^3-t^2f_4(x,y,z)-tf_6(x,y,z)+f_8(x,y,z)=0.\end{equation}
Since  $f_5$ must contain either
$xt^2$ or $yt^2$, we write $f_5(x,y,z,t)=g_5(x,y,z,t)+a_1 xt^2+a_2
yt^2$. Furthermore, we may assume that $a_1=1$ and $a_2=0$ by a suitable coordinate change.

By coordinate change $z+f_2(x,y)\mapsto z$, we may assume that our  hypersurface $X_8$  is defined by
\begin{equation}\label{equation:special-7}zw^2+wf_5(x,y,z,t)-(z-f_2(x,y))t^3-t^2f_4(x,y,z)-tf_6(x,y,z)+f_8(x,y,z)=0.\end{equation}
This assumption will help us understand, without any loss of generality,  the intersection of the surface cut by $y=\lambda x$ and the surface cut by $z=\mu x^2$, where $\lambda$ and $\mu$ are constants.

On the hypersurface $X_8$, consider the surface cut by $y=\lambda x$ and the surface cut by $z=\mu x^2$. Then the intersection of these two surfaces is the $1$-cycle
$L_{tw}+C_{\lambda, \mu}$, where the curve $C_{\lambda, \mu}$ is defined by the equation
  \begin{equation}\label{equation:irr-7}\begin{split}&\mu xw^2+wt^2-(\mu -f_2(1,\lambda )) xt^3+ \\ &+\frac{wg_5(x,\lambda x,\mu x^2,t)-t^2f_4(x,\lambda x,\mu x^2)-tf_6(x,\lambda x,\mu x^2)+f_8(x,\lambda x,\mu x^2)}{x}=0 \\ \end{split}\end{equation}
in $\mathbb{P}(1,2,3)$.  For sufficiently general complex numbers $\lambda$ and $\mu$ the curve  $C_{\lambda, \mu}$ is birational to an elliptic curve. To figure  this out, we plug in $x=1$ into  \eqref{equation:irr-7}  so that  we could see that the curve is  birational to a double cover of $\mathbb{C}$ ramified at four distinct points.

 Let $\mathcal{H}$ be the linear subsystem of $|-2K_{X_8}|$ generated by $x^2$, $xy$, $y^2$ and $z$.
Let $\pi\colon X_8\dasharrow\mathbb{P}(1,1,2)$ be the rational map
 induced by $$[x:y:z:t:w]\mapsto [x:y:z].$$ It is a morphism outside of the curve $L_{tw}$. Moreover, the map is dominant. The curve $C_{\lambda, \mu}$
 is a fiber of the map $\pi$. Its general fiber is an irreducible curve birational to an elliptic curve since the
curve $C_{\lambda, \mu}$ with sufficiently general complex numbers $\lambda$ and $\mu$ is birational to an elliptic curve.

\begin{lemma}\label{lemma:irreducible-7}
Suppose that the hypersurface $X_8$ in the family No.~$7$ is defined
by \eqref{equation:special-7}. If the
singular point $O_t$ is a center of non-canonical singularities of
the log pair $(X_8, \frac{1}{n}\mathcal{M})$, then for a general complex number $\lambda$, the
curve $C_{\lambda, \mu}$ is always irreducible for every value of
$\mu$.
\end{lemma}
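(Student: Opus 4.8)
The plan is to argue by contradiction, mimicking the reducibility analysis in the proof of Theorem~\ref{theorem:geometric-elliptic} and the companion Lemma~\ref{lemma:irreducible-23}. Throughout I keep the standing hypothesis that $O_t$ is a center of non-canonical singularities of $\left(X_8,\frac{1}{n}\mathcal{M}\right)$, and let $f\colon Y\to X_8$ be the weighted blow up of $O_t$ (a point of type $\frac{1}{2}(1,1,1)$), so that $B=-K_Y=A-\frac{1}{2}E$. Assume the conclusion fails: for a general $\lambda$ there is a value $\mu=\mu(\lambda)$ for which $C_{\lambda,\mu}$ is reducible. Letting $\lambda$ range over a general one-parameter set then yields a one-dimensional family of reducible fibers, whose general member I denote by $C$.

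Next I would pin down the shape of a component of $C$. Since the defining polynomial \eqref{equation:special-7} carries the monomial $zw^2$, the fibre equation \eqref{equation:irr-7} is a quadratic in $w$ with leading coefficient $\mu x$, so a reducible $C$ must contain an irreducible component $C_1$ cut out on $X_8$ by $y-\lambda x=z-\mu x^2=0$ together with one factor involving $w$; concretely $C_1$ is given either by $w+h_3(x,t)=0$ or by $w^2+wg_3(x,t)+g_6(x,t)=0$ for suitable quasi-homogeneous polynomials. In the first case $C_1\cong\mathbb{P}(1,2)$ and $E\cdot\tilde{C}_1=1$; in the second $C_1$ is a double cover of $\mathbb{P}(1,2)$ and $E\cdot\tilde{C}_1=2$. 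Writing $k\in\{1,2\}$ for this number, one has $-K_{X_8}\cdot C_1=\tfrac{k}{2}$ in either case (exactly as in the computation $-K_X\cdot C_1=\frac{ka_1a_2a_4}{a_1a_2a_3a_4}=\frac{k}{a_3}$ appearing in Theorem~\ref{theorem:geometric-elliptic}, here with $a_3=2$).

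With these numbers the key computation is immediate:
$$B\cdot\tilde{C}_1=\left(A-\frac{1}{2}E\right)\cdot\tilde{C}_1=-K_{X_8}\cdot C_1-\frac{1}{2}E\cdot\tilde{C}_1=\frac{k}{2}-\frac{k}{2}=0,$$
while $E\cdot\tilde{C}_1=k>0$. As $\lambda$ varies over its general one-parameter set, the curves $\tilde{C}_1$ sweep out a one-dimensional family of irreducible curves on $Y$ with $-K_Y\cdot\tilde{C}_1=B\cdot\tilde{C}_1\leq 0$ and $E\cdot\tilde{C}_1>0$. Lemma~\ref{lemma:bad-link} then forces $O_t$ \emph{not} to be a center of non-canonical singularities of $\left(X_8,\frac{1}{n}\mathcal{M}\right)$, contradicting the standing hypothesis. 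Hence for general $\lambda$ the curve $C_{\lambda,\mu}$ is irreducible (and reduced) for every value of $\mu$.

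The delicate point is the second step. Because the leading coefficient $\mu x$ of \eqref{equation:irr-7} is not a unit, vanishing along $x=0$, the factorisation can a priori be degenerate: the residual component of a reducible $C$ could be the base curve $L_{tw}$ (the locus $x=y=z=0$ that was already split off when passing from the full intersection to $C_{\lambda,\mu}$), in which case one must check that the genuine component $C_1$ still has the claimed form and that $E\cdot\tilde{C}_1>0$. One must also verify that $C_1$ really moves in a one-dimensional family as $\lambda$ varies, rather than collapsing onto finitely many curves — for it is precisely the existence of this one-parameter family of curves that supplies the hypothesis of Lemma~\ref{lemma:bad-link}. Both verifications are bookkeeping with the monomials of \eqref{equation:special-7}, but they are where the argument has to be made airtight.
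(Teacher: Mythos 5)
Your overall strategy (exhibit a one-dimensional family of components $\tilde{C}_1$ with $-K_Y\cdot\tilde{C}_1\leq 0$ and $E\cdot\tilde{C}_1>0$, then contradict the standing hypothesis via Lemma~\ref{lemma:bad-link}) is the same as the paper's, and your intersection numbers for a component of the form $w-h_3(x,t)=0$ are correct. The gap is in the step where you claim a reducible $C$ must contain a component of the stated shape, justified only by ``the fibre equation is a quadratic in $w$ with leading coefficient $\mu x$.'' That justification collapses exactly when $\mu=0$: then \eqref{equation:irr-7} is \emph{linear} in $w$, of the form $wP(x,t)+Q(x,t)$ with $P=t^2+\cdots$ of weighted degree $4$, and a factorization $wP+Q=P_1\cdot\left(P_2w+R\right)$ with $P=P_1P_2$, $\deg P_1\geq 1$ and $P_2$ non-constant produces \emph{no} component of the form $w-h_3(x,t)=0$ at all. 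In that degenerate situation your numerics fail: the $w$-component $C'=\{P_2w+R=0\}$ has $-K_{X_8}\cdot C'=(7-\deg P_1)/6$, e.g.\ $\frac{5}{6}$ for $\deg P_1=2$, while it is still smooth at $O_t$, so $B\cdot\tilde{C}'>0$; and the remaining components $\{P_1=0\}$ miss $O_t$, so $E\cdot\tilde{C}=0$. No component satisfies the hypotheses of Lemma~\ref{lemma:bad-link}. (Also, your second case $w^2+wg_3(x,t)+g_6(x,t)=0$ cannot occur here: its residual factor would have weighted degree $1$, hence be $cx$, which is impossible since \eqref{equation:irr-7} contains the monomial $wt^2$; this is a symptom of transplanting the case analysis of Theorem~\ref{theorem:geometric-elliptic}, where the degrees are different, rather than analyzing the degree-$7$ equation at hand.)

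Ruling out the $\mu=0$ degeneration is the actual content of the paper's proof, and it is not ``bookkeeping with the monomials'': one shows that if for general $\lambda$ the value $\mu=0$ yields such a bad factorization, then the polynomial $wf_5(x,y,0,t)+f_2(x,y)t^3-t^2f_4(x,y,0)-tf_6(x,y,0)+f_8(x,y,0)$ factors globally as $A(x,y,t,w)B(x,y,t,w)$, so the defining polynomial of $X_8$ equals $zH+AB$, and then $X_8$ fails to be quasi-smooth along the nonempty locus $z=H=A=B=0$ (four equations in $\mathbb{C}^5$). Your proposal never invokes quasi-smoothness of $X_8$ at this step, and without it the component-shape claim is simply false. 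Finally, your stated ``delicate point'' misdiagnoses where the danger lies: a residual component equal to $L_{tw}$ cannot occur (again because $wt^2$ appears, so $x$ does not divide \eqref{equation:irr-7}), and the one-dimensionality of the family of components is immediate since they lie on the distinct surfaces $y=\lambda x$; the genuine difficulty is the linear-in-$w$ case $\mu=0$ described above.
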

\begin{proof}
Suppose that for a general complex number $\lambda$ there is always $\mu$ such that
the curve $C_{\lambda, \mu}$ is reducible. Since the base
locus of $\mathcal{H}$ consists of the curve  $L_{tw}$, there is a
one-dimensional family of reducible curves $C_{\lambda, \mu}$
given by \eqref{equation:irr-7} with  a general  complex number $\lambda$ and a complex number
$\mu$ depending on $\lambda$. Denote a general curve in this
one-dimensional family by $C$. 

We claim that the curve $C$ always has an irreducible component $C_1$ defined
by $$y-\lambda x=z-\mu x^2=w-h_{3}(x,t)=0$$  for some polynomial
$h_3$.
To prove the claim, write $g_5(x,y,z,t)=f_3(x,y,z)t+f_5(x,y,z)$, set $x=1$ for  \eqref{equation:irr-7}, and then  obtain
\[\begin{split}&\mu w^2+w\left( t^2+f_3(1,\lambda ,\mu)t+f_5(1,\lambda ,\mu)\right)- \\ & -(\mu -f_2(1,\lambda )) t^3-f_4(1,\lambda,\mu)t^2-f_6(1,\lambda,\mu)t+f_8(1,\lambda,\mu )=0. \\ \end{split}\]
Suppose that the claim is not a case.
Then we must have $\mu=0$ and the polynomial
\[w\left( t^2+f_3(1,\lambda ,0)t+f_5(1,\lambda ,0)\right)+f_2(1,\lambda )) t^3-f_4(1,\lambda,0)t^2-f_6(1,\lambda,0)t+f_8(1,\lambda,0 )\]
must be reducible. Since $\lambda$ is general, this implies that
\[wf_5(x,y,0,t)+f_2(x,y)t^3-t^2f_4(x,y,0)-tf_6(x,y,0)+f_8(x,y,0)=A(x,y,t,w)B(x,y,t,w),\]
for some non-constant polynomials $A(x,y,t,w)$ and  $B(x,y,t,w)$. Since we may write
\[zw^2+wf_5(x,y,z,t)-(z-f_2(x,y))t^3-t^2f_4(x,y,z)-tf_6(x,y,z)+f_8(x,y,z)\]\[=zH(x,y,z,t,w)+A(x,y,t,w)B(x,y,t,w),\]
for some non-constant polynomial $H(x,y,z,t,w)$, the hypersurface $X_8$ is not quasi-smooth at the points defined by
$z=H(x,y,z,t,w)=A(x,y,t,w)=B(x,y,t,w)=0$.
This is a contradiction.
Consequently, the reducible curve $C$ splits into an irreducible curve $C_1$ defined
by $$y-\lambda x=z-\mu x^2=w-h_{3}(x,t)=0$$  for some polynomial
$h_3$ and the curve $C_2$ (possibly reducible)  defined by $$y-\lambda x=z-\mu
x^2=t^2-h_{4}(x,t, w)=0$$ for some polynomial $h_4$.

Note that
$C_1$ passes through the point $O_t$ but $C_2$ does not.  Then
$-K_{Y}\cdot\tilde{C}_1=0$ and $\tilde{C}_1\cdot E>0$. By
Lemma~\ref{lemma:bad-link}, the point $O_t$ cannot be a center of
non-canonical singularities of the log pair $(X_8,
\frac{1}{n}\mathcal{M})$.  This contradicts our condition. Therefore,
 for a general complex
number $\lambda$, the curve $C_{\lambda, \mu}$ is always
irreducible for every value of $\mu$.
\end{proof}
For a general complex number $\lambda$, the
curve $C_{\lambda, \mu}$ is always reduced for every value of
$\mu$. Indeed, if the curve is not reduced, then the proof shows that $\mu\ne 0$. Then the equation for the curve must contain $xw^2$ and $wt^2$. Hence, it must split into the form
$(t^2+xw+\cdots)(w+\cdots)$. The polynomial of the type $(t^2+xw+\cdots)$ cannot be a square.
Therefore, $C_{\lambda, \mu}$ is always reduced. Moreover, for a general complex number $\lambda$, the curve
$L_{tw}$ cannot be an irreducible component of the
curve $C_{\lambda, \mu}$ for every value of $\mu$.

\begin{theorem}\label{theorem:special-7}
Suppose that the hypersurface $X_8$ in the family No.~$7$ is defined
by  \eqref{equation:special-7}.  If the
singular point $O_t$ is a center of non-canonical singularities of
the log pair $(X_8, \frac{1}{n}\mathcal{M})$, then there is a birational
involution that untwists the singular point $O_t$. \end{theorem}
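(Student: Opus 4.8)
The plan is to run, essentially verbatim, the \emph{invisible elliptic involution} argument of the proof of Theorem~\ref{theorem:amazing-23}, now based on the projection $\pi\colon X_8\dasharrow\mathbb{P}(1,1,2)$, $[x:y:z:t:w]\mapsto[x:y:z]$, and the linear system $\mathcal{H}\subset|-2K_{X_8}|$ generated by $x^2$, $xy$, $y^2$, $z$ set up above. First I would take the weighted blow up $f\colon Y\to X_8$ at the point $O_t$ of type $\frac{1}{2}(1,1,1)$, with exceptional divisor $E$, and then the weighted blow up $g\colon W\to Y$ with weights $(1,1,2)$ at the point lying over the singular point $O_w$ of type $\frac{1}{3}(1,1,2)$, with exceptional divisor $G$; write $\hat{E}$ for the proper transform of $E$. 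A direct discrepancy computation gives $-K_W^3=0$, and, arguing with local multiplicities exactly as in Remark~\ref{remark:plurianticanonical-23}, the proper transform $\mathcal{H}_W$ of $\mathcal{H}$ coincides with $|-2K_W|$.

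Next I would identify the curves meeting $-K_W$ negatively. The base locus of $\mathcal{H}$ is the curve $L_{tw}$ cut by $x=y=z=0$, which lies on $X_8$ and passes through both $O_t$ and $O_w$; a short intersection computation (as in the corresponding step of Theorem~\ref{theorem:amazing-23}) shows that its proper transform $\hat{L}_{tw}$ is the only curve with $-K_W\cdot\hat{L}_{tw}<0$. Since the log pair $(W,(\tfrac{1}{2}+\epsilon)\mathcal{H}_W)$ is Kawamata log terminal for small $\epsilon>0$ and $\hat{L}_{tw}$ is the unique curve negative against $K_W+(\tfrac{1}{2}+\epsilon)\mathcal{H}_W$, a log flip $\chi\colon W\dasharrow U$ along $\hat{L}_{tw}$ produces $U$ with $-K_U$ nef. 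Because a general fibre $C_{\lambda,\mu}$ of $\pi$ is irreducible, reduced and birational to an elliptic curve (Lemma~\ref{lemma:irreducible-7} and the remark following it) and $\mathcal{H}$ is not composed from a pencil, Log Abundance yields an elliptic fibration $\eta\colon U\to\Sigma$ over a normal surface $\Sigma$ contracting the proper transforms of the $C_{\lambda,\mu}$; the proper transform $\check{G}$ of $G$ is a section and the proper transform $\check{E}$ of $\hat{E}$ is a multi-section. Proposition~\ref{proposition:elliptic involution} then gives a birational involution $\tau_U$, biregular in codimension one since $K_U$ is $\eta$-nef, which I transport back to $\tau_W=\chi^{-1}\circ\tau_U\circ\chi$, $\tau_Y=g\circ\tau_W\circ g^{-1}$ and $\tau=f\circ\tau_Y\circ f^{-1}$; as $\tau_U(\check{G})=\check{G}$, both $\tau_W$ and $\tau_Y$ are biregular in codimension one, so by Remark~\ref{remark:untwisting-involution} it remains only to prove that $\tau$ is \emph{not} biregular, equivalently that $\tau_Y(E)\neq E$.

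This last step is where the real work lies, and it is carried out exactly as in Theorem~\ref{theorem:amazing-23}. Assuming $\tau_Y(E)=E$, I would restrict to the $\tau$-invariant normal surface $S_\lambda$ cut by $y=\lambda x$ for general $\lambda$ (normal by Remark~\ref{remark:normal surface}), resolve the induced map $\pi_\lambda\colon S_\lambda\dasharrow\mathbb{P}^1$ to an elliptic fibration $\bar{\pi}_\lambda\colon\bar{S}_\lambda\to\mathbb{P}^1$ carrying a biregular involution $\bar{\tau}_\lambda$ reflecting each general fibre about its intersection with the section $\bar{G}_\lambda$, with $\bar{E}_\lambda$ a multi-section. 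Lemma~\ref{lemma:Tschinkel} then forces $\bar{E}_\lambda-m\bar{G}_\lambda$ (with $m$ the degree of $\bar{E}_\lambda$) to be numerically equivalent to a fibral $\mathbb{Q}$-combination; since Lemma~\ref{lemma:irreducible-7} makes every other fibre irreducible and reduced, the relation is concentrated in the single reducible fibre over $x=0$, namely $C_x=L_{tw}+R$ with $R$ the curve $w^2=t^3+czt^2+c'z^2t-c''z^3$ in $\mathbb{P}(2,2,3)$. Pushing the relation down to $S_\lambda$ produces a nontrivial numerical relation between $L_{tw}$ and $R$, so their intersection form on $S_\lambda$ is degenerate.

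The main obstacle is to contradict this degeneracy by computing $L_{tw}^2$, $R^2$ and $L_{tw}\cdot R$ directly on the surface $S_\lambda$, which fails to be quasi-smooth at $O_t$ and possibly at $O_w$. As in Lemma~\ref{lemma:non-degenerate-curves}, I would obtain the off-diagonal entry $L_{tw}\cdot R$ not from the global (singular) data but by introducing an auxiliary divisor on $S_\lambda$ --- the curve cut by $t=0$ or by $w=0$ --- writing it as $mL_{tw}+(\text{residual})$, and extracting the local intersection of $L_{tw}$ with the residual curve at the quasi-smooth point $O_w$; this bounds $L_{tw}\cdot R$ strictly away from the value imposed by degeneracy and shows the form is non-degenerate, the desired contradiction. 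Hence $\tau_Y(E)\neq E$, the involution $\tau$ is not biregular, it satisfies Definition~\ref{definition:untwisting}, and it untwists $O_t$. The delicate local bookkeeping at the non-quasi-smooth points of $S_\lambda$ is the step demanding the most care.
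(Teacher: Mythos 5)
Your overall strategy (projection to $\mathbb{P}(1,1,2)$, an anticanonical tower of blow ups, a log flip, Log Abundance, reflection about a section, then Lemma~\ref{lemma:Tschinkel} plus a non-degeneracy computation on $S_\lambda$) is the right one, but your tower is one blow up too short, and this breaks the construction of the involution itself. The paper uses \emph{three} blow ups: $f\colon Y\to X_8$ at $O_t$, then $g\colon Z\to Y$ with weights $(1,1,2)$ at the point over $O_w$, with exceptional divisor $F$, and then $h\colon W\to Z$ at the singular point of type $\frac{1}{2}(1,1,1)$ which $Z$ acquires \emph{on} $F$, with exceptional divisor $G$. This third blow up is not a technicality. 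By \eqref{equation:irr-7}, near $O_w$ every fiber $C_{\lambda,\mu}$ is of the form $x\sim -t^{2}/\mu$, $y=\lambda x$, so its proper transform on $Z$ hits $F\cong\mathbb{P}(1,1,2)$ exactly at the weight-$2$ point $[0:0:1]$, i.e.\ at the $\frac{1}{2}(1,1,1)$ point of $Z$ --- the same point through which $\breve{L}_{tw}$ passes. Hence on your two-blow-up threefold all fibers (and $L_{tw}$) still pass through one point, $\pi$ does not become a morphism there, and the exceptional divisor of the $(1,1,2)$ blow up is a \emph{fibral} divisor: in the paper's proof $\eta$ contracts $\check{F}$ to a curve in $\Sigma$. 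It is therefore not a section, and $\check{E}$ is not a multi-section either --- after the third blow up \emph{both} $\check{E}$ and $\check{G}$ are sections of $\eta$, and the reflection is taken about $\check{G}$. Without the third blow up there is no section at all, so Proposition~\ref{proposition:elliptic involution} cannot be applied and the involution $\tau$ is never defined.

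The intersection numbers confirm that no repair is possible within your setup. One has $E\cdot\tilde{C}_{\lambda,\mu}=1$, $F\cdot\breve{C}_{\lambda,\mu}=\tfrac12$ and $-K_{X_8}\cdot C_{\lambda,\mu}=2(-K_{X_8})^3-(-K_{X_8})\cdot L_{tw}=\tfrac43-\tfrac16=\tfrac76$, so on your $W$
$$
-K_W\cdot \hat{C}_{\lambda,\mu}=\tfrac76-\tfrac12\cdot 1-\tfrac13\cdot\tfrac12=\tfrac12>0,
$$
even though $(-K_W)^3=0$ and $-K_W\cdot\hat{L}_{tw}=\tfrac16-\tfrac12-\tfrac16=-\tfrac12<0$. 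Thus the general fibers of $\pi$ are not anticanonically trivial on your threefold, and no multiple of $-K_U$ can induce a fibration contracting them: the Log Abundance morphism you invoke is not (a resolution of) $\pi$, so the identification of $\eta$ with the elliptic fibration, and everything after it, collapses. Only after the third blow up does the bookkeeping close up ($-K_W\cdot\hat{L}_{tw}=-1$, $(-K_W)^3=-\tfrac12$, $-K_W\cdot\hat{C}_{\lambda,\mu}=0$). Two further discrepancies with the correct argument: with three exceptional divisors one must also verify $\tau_U(\check{F})=\check{F}$ (the paper does this using that $\check{F}$ is fibral and meets the section $\check{G}$ along a curve) before concluding that $\tau_Y$ is biregular in codimension one; and in the final step the heavy auxiliary-divisor machinery of Lemma~\ref{lemma:non-degenerate-curves} is unnecessary here, since both exceptional curves are sections (so the fibral class is $\bar{E}_\lambda-\bar{G}_\lambda$) and $L_{tw}$, $C_x$ meet at the single smooth point $[0:0:0:1:1]$ of $S_\lambda$, giving directly $L_{tw}\cdot C_x=1$ and the nonsingular matrix with rows $\left(-\tfrac56,\,1\right)$ and $\left(1,\,-\tfrac12\right)$.
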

\begin{proof}

 Let $g\colon Z\to Y$ be the weighted blow up at the point over
$O_w$ with weight $(1, 1,2)$ and let $F$ be its exceptional
divisor. The divisor $F$ contains a singular point of $Z$ that is
of type $\frac{1}{2}(1,1,1)$. Let $h:W\to Z$ be the blow up at
this singular point with the exceptional divisor $G$. Let
$\hat{L}_{tw}$ and $\breve{L}_{tw}$ be the proper transforms of
the curve $L_{tw}$ by the morphism $f\circ g\circ h$ and by the
morphism $f\circ g$, respectively. Also, let $\hat{E}$ and
$\hat{F}$ be the proper transforms of the exceptional divisors $E$
and $F$ by the morphism $g\circ h$ and by the morphism $h$,
respectively.

 Let $\mathcal{H}_Y$ and $\mathcal{H}_W$ be the proper transforms of the linear system $\mathcal{H}$ by the morphism $f$ and by  the morphism $f\circ g\circ h$, respectively. We then see that $\mathcal{H}_Y=|-2K_Y|$ and $\mathcal{H}_W=|-2K_W|$. The base locus of the linear system $\mathcal{H}$ consists of the single curve $L_{tw}$. We have
 \[-K_W\cdot\hat{L}_{tw}=-K_{X_{8}}\cdot L_{tw}-\frac{1}{2}\hat{E}\cdot \hat{L}_{tw}-\frac{1}{3}F\cdot\breve{L}_{tw}-\frac{1}{2}G\cdot \hat{L}_{tw}=-1.\]
 Therefore, the curve $\hat{L}_{tw}$ is the only curve that intersects $-K_W$ negatively.

By the same procedure as in the proof of Theorem~\ref{theorem:amazing-23}, we construct a log flip $\chi:W\dasharrow U$ along the curve $\hat{L}_{tw}$
and a dominant morphism $\eta$ of $U$ into a normal variety $\Sigma$ with connected fibers by the base-point-free
linear system $|-mK_U|$ for sufficiently large
 $m$.

 Let $\check{E}$, $\check{F}$ and $\check{G}$ be the proper transforms  of the divisors $\hat{E}$, $\hat{F}$ and $G$, respectively, by $\chi$.  Let $\hat{C}_{\lambda, \mu}$ be the proper
transform of a general fiber $C_{\lambda, \mu}$ of the map $\pi$
on $W$ and let $\check{C}_{\lambda, \mu}$ be its
proper transform on $U$. We then see
$$-K_{W}\cdot \hat{C}_{\lambda, \mu}=-2K_W^3-(-K_W)\cdot\hat{L}_{tw}=0.$$
By the same reason as in the proof of Theorem~\ref{theorem:amazing-23}, we see that $\eta$ is an elliptic fibration and we obtain
 the following  commutative diagram:
$$
\xymatrix{
&W\ar@{->}[ld]_{h}\ar@{-->}[r]^{\chi}&U\ar@{->}[ddddr]^{\eta}\\%
Z\ar@{->}[d]_{g}&&\\%
Y\ar@{->}[d]_{f}&&\\%
X_8\ar@{-->}[rd]_{\pi}&&\\
&\mathbb{P}(1,1,2)&&\Sigma\ar@{-->}[ll]^{\ \ \ \ \ \ \theta}}
$$ %
where $\theta$ is a birational map.

It follows from \eqref{equation:irr-7} that the divisors
$\check{E}$ and $\check{G}$ are sections of the elliptic fibration
$\eta$. Let $\tau_{U}$ be the birational involution of the threefold
$U$ that is induced by the reflection of the general fiber of
$\eta$ with respect to the section $\check{G}$. Then
$\tau_{U}$ is biregular in codimension one because $K_{U}$ is
$\eta$-nef by our construction  (\cite[Corollary~3.54]{KoMo98}).

Put
$\tau_{W}=\chi^{-1}\circ\tau_U\circ\chi$, $\tau_{Y}=(g\circ
h)\circ\tau_W\circ (g\circ h)^{-1}$ and $\tau=f\circ\tau_Y\circ
f^{-1}$ as before. Then $\tau_{W}$ is also biregular in codimension one
since $\chi$ is a log flip. Moreover, we have $\tau_{W}(G)=G$
since $\tau_{U}(\check{G})=\check{G}$ by our construction.  
The image $\tau_U(\check{F})$ is an irreducible surface since $\tau_U$ is biregular in codimension one.  The map $\pi\circ f\circ g$ sends $F$ to the curve in $\mathbb{P}(1,1,2)$ defined by $z=0$ and the log flip $\chi$ changes nothing on the intersection of $G$ and $\hat{F}$. Therefore, the morphism $\eta$ contracts $\check{F}$ to a curve and the image $\tau_U(\check{F})$ lies over this curve.
Since $\check{F}$ intersects with the section $\check{G}$ along a curve and $\tau_U(\check{F})$ intersects with the section 
$\check{G}$ along the curve, 
 $\tau_U(\check{F})=\check{F}$ and $\tau_W(\hat{F})=\hat{F}$.
Consequently, $\tau_{Y}$ is biregular in codimension one.

We claim that the point $O_t$ is untwisted by $\tau$. For us to prove the claim, it is enough to show that
$\tau_Y(E)\ne E$ due to Remark~\ref{remark:untwisting-involution}.
For this end, we suppose that $\tau_Y(E)=E$ and look for a contradiction.

Let $S_\lambda$ be the  surface on the hypersurface $X_8$ cut by the
equation $y=\lambda x$ with a general complex number $\lambda$. It  is a $K3$ surface with only cyclic du
Val singularities. The point $O_t$ is a $A_1$ singular point of
$S_\lambda$ and the point $O_w$ is  a $A_2$ singular point of
$S_\lambda$.
Let
$\tau_\lambda$ be the restriction of $\tau$ to the surface
$S_\lambda$. It is a birational involution of the surface
$S_{\lambda}$ since the surface is $\tau$-invariant by our construction.

 The projection $\pi\colon
X_8\dasharrow\mathbb{P}(1,1,2)$ induces a rational map
$\pi_{\lambda}\colon
S_\lambda\to\mathbb{P}(1,2)\cong\mathbb{P}^1$.  The rational map
$\pi_\lambda\colon S_\lambda\dasharrow\mathbb{P}^1$ is given by
the pencil of the curves on the surface
$S_\lambda\subset\mathbb{P}(1,2,2,3)$ cut by the equations
\[\delta x^2=\epsilon z,\]
where $[\delta:\epsilon]\in\mathbb{P}^1$.  Its base locus is cut
out on $S_{\lambda}$ by $x=z=0$.  Therefore, the base locus
 is the curve $L_{tw}$. We can easily
see from  \eqref{equation:irr-7} that the map
$\pi_\lambda$ is not defined only at the points $O_w$ and $O_t$.

Let $\hat{S}_\lambda$ be the proper transform of $S_\lambda$ via $f\circ g\circ h$ and put $\hat{E}_\lambda=\hat{E}|_{\hat{S}_\lambda}$
and $\hat{G}_\lambda=G|_{\hat{S}_\lambda}$ as in the proof of Theorem~\ref{theorem:amazing-23}.
Resolving the indeterminacy of the rational map $\pi_\lambda$ through $\hat{S}_\lambda$,
 we
obtain an elliptic fibration
$\bar{\pi}_\lambda\colon\bar{S}_\lambda\to \mathbb{P}^1$. Thus, we
have a commutative diagram
$$
\xymatrix{
&\bar{S}_\lambda \ar@{->}[ld]_{\sigma}\ar@{->}[dr]^{\bar{\pi}_\lambda}\\%
S_\lambda\ar@{-->}[rr]_{\pi_\lambda} &&\mathbb{P}^1,}
$$ %
where $\sigma$ is a birational morphism. There exist exactly
two $\sigma$-exceptional prime divisors that do not lie in the
fibers of $\bar{\pi}_\lambda$. One is the proper transform of $\hat{E}_\lambda$ and the other is
the proper transform of $\hat{G}_\lambda$.  Let
$\bar{E}_\lambda$ and $\bar{G}_\lambda$ be these two exceptional
divisors, respectively.
Then $\bar{E}_\lambda$ and
$\bar{G}_\lambda$ are sections of $\bar{\pi}_\lambda$. Denote the
other $\sigma$-exceptional curves (if any) by
$F_{1},\ldots,F_{r}$.

Put $\bar{\tau}_\lambda=\sigma^{-1}\circ\tau_\lambda\circ\sigma$.
We may assume that $\bar{\tau}_\lambda$ is
biregular and $\bar{S}_\lambda$ is smooth by \cite[Theorem~3.2]{dFE}. 

By the same argument as in the proof of Theorem~\ref{theorem:amazing-23},  the divisor
$\bar{E}_{\lambda}-\bar{G}_{\lambda}$ is numerically
equivalent to a $\mathbb{Q}$-linear combination of curves on
$\bar{S}_{\lambda}$ that lie in the fibers of
$\bar{\pi}_{\lambda}$. Observe that we use the assumption $\tau_Y(E)=E$ at this step.

Note that the equation $x=0$ cuts out $S_\lambda$  into a curve
that splits as a union $L_{tw}+C_x$, where $C_x$ is the curve
defined by  $$x=w^2-t^3+azt^2+bz^2t+cz^3=0$$ for some
constants $a$, $b$, $c$ in $\mathbb{P}(1,2,2,3)$. The curve $C_x$
is irreducible and reduced.

Let $\bar{L}_{tw}$ and $\bar{C}_{x}$ be the proper transforms of
the curves $L_{tw}$ and $C_{x}$ by $\sigma$, respectively. Then
$\bar{L}_{tw}$ and $\bar{C}_{x}$ lie in the same fiber of the
elliptic fibration $\bar{\pi}_{\lambda}$ and they are the only non-$\sigma$-exceptional curves in this fiber. Moreover,
every other fiber of $\bar{\pi}_{\lambda}$ contains exactly one
irreducible and reduced curve that is not $\sigma$-exceptional because for a general complex
number $\lambda$, the curve $C_{\lambda, \mu}$ is always
irreducible and reduced for every value of $\mu$ by Lemma~\ref{lemma:irreducible-7}.  Therefore, as before, we are able to obtain
$$
\bar{E}_{\lambda}-\bar{G}_{\lambda}\sim_{\mathbb{Q}}  c_{tw}\bar{L}_{tw}+c_{x}\bar{C}_{x}+\sum_{i=1}^{r}c_i F_i%
$$
for some rational numbers $c_{tw}$, $c_{x}$, $c_1,\ldots,c _r$.
The intersection form of the curves $\bar{E}_{\lambda}$,
$\bar{G}_{\lambda}$, $F_1, \ldots, F_r$ is negative-definite since
these curves are $\sigma$-exceptional. Therefore,
$(c_{tw},c_{x})\ne (0,0)$. On the other hand, we have
$$
0\sim_{\mathbb{Q}}  c_{tw}L_{tw}+c_{x}C_{x}
$$
on the surface $S_{\lambda}$, and hence the intersection form
of the curves $L_{tw}$ and $C_{x}$ is degenerate on the surface
$S_{\lambda}$.

However, from the intersection numbers
\[(L_{tw}+C_{x})\cdot L_{tw}=\frac{1}{6}, \ \ (L_{tw}+C_{x})^2=\frac{2}{3},\ \ L_{tw}\cdot C_{x}=1\]
on the surface $S_\lambda $, we obtain
\[\left(\begin{array}{cc}
       L_{tw}^2&L_{tw}\cdot C_{x}\\
      L_{tw}\cdot C_{x}& C_{x}^2\\
\end{array}\right)= \left(\begin{array}{cc}
-\frac{5}{6} & 1\\
    1& -\frac{1}{2}\\
        \end{array}\right).
\]
This is a contradiction.
The obtained contradiction verifies  that
$\tau_Y(E)\ne E$. This completes the proof.
\end{proof}

%%%%%%%%%%%%%%
%%%%%%%%%%%%%%%%%%%%%%%%%%
%%%%%%%%%%%%%%%%%%%%%%%%%%
%%%%%%%%%%%%%%%%%%%%%%%%%%
%%%%%%%%%%%%%%%%%%%%%%%%%%
%%%%%%%%%%%%%%%%%%%%%%%%%%
%%%%%%%%%%%%%%%%%%%%%%%%%%
%%%%%%%%%%%%%%%%%%%%%%%%%%
%%%%%%%%%%%%%%%%%%%%%%%%%%

\newpage
\section{Proof of Main Theorem}
\label{section:proof}

\subsection{How to read the tables}\label{section:manuals}

The remaining job is to exclude or untwist all the singular points on quasi-smooth hypersurfaces in the 95 families.
To execute this crucial job, we need to know how to read the tables in the next section. They carry all the information for excluding and untwisting the singular points.

 For each
family we present a table that carries  
\begin{itemize}
\item the entry number (the underlined entry number means that  the family corresponds to Theorem~\ref{theorem:auxiliary}, i.e., birationally super-rigid family),
\item the intersection number of the anticanonical divisor, i.e., $-K_X^3=A^3$,
\item a  defining
equation of the hypersurface $X$,  
\item its singularities, 
\item the sign of
$B^3$, 
\item the linear system on $Y$  containing the key  surface $T$ in the
applied method, 

\item a defining equation  for the surface $f(T)$ or generators of a linear system that contains $f(T)$ as a general member,
\item  terms that determine the multiplicity  of the surface $f(T)$ at
the given singular point.
\end{itemize}
When the table carries only a monomial or a binomial, instead of $B^3$, the linear system,  the surface $T$ and the vanishing order for the corresponding singular point(s), we apply the methods below with the squared symbols to the corresponding singular points. The monomial or the binomial plays an essential role in defining the involution untwisting the singular point.

The table shows which method is applied to each of the singular
points by the symbols  \boundary, \nef, \surface, \family, 
\positive, \quadratic, \quadraticone, \elliptic, \ellipticone, \elliptictwo, \ellipticfour \ and \ellipticfive. The following explain the method corresponding to each
of the symbols.

\begin{itemize}

\item[\boundary] : Apply Lemma~\ref{lemma:boundary}.\\
The condition $T\cdot\Gamma\leq 0$ can be easily checked by the
items in the table (see Remark~\ref{remark:boundary}). The
condition on the $1$-cycle $\Gamma$ can be immediately checked.
This can be done on the hypersurface $X$ even though the cycle
lies on the threefold $Y$. Indeed, in the cases where this method
is applied, the surface $T$ is given in such a way that the
$1$-cycle $\Gamma$ has no component on the exceptional divisor
$E$.

\item[\nef] : Apply Lemma~\ref{lemma:boundary-2}.\\
The divisor $T$ is given as the proper transform of a general
member of the linear system generated by the monomial(s) in the
slot for the item $T$ of the table. Using
Lemma~\ref{lemma:nefness} we check that the given divisor $T$ is
nef. The non-positivity of $T\cdot S\cdot B$ can be immediately
verified from the items in the table and the positivity of $T\cdot
S\cdot A$ is always guaranteed (see
Remark~\ref{remark:boundary-2}).

\item[\surface] : Apply Lemma~\ref{lemma:negative-definite}.\\
We take a general member $H$ in the linear system generated by the
polynomials given in the slot for the item $T$ in the table. We
can easily show that the surface $H$ is normal by checking that it
has only isolated singularities. The surface $T$ is given as the
proper transform of the surface $H$ by the morphism $f$. The
divisor on $T$ cut out by the surface $S$ is a reducible curve. We
check that this reducible curve forms a negative-definite divisor
on the normal surface $T$.

\item[\family] : Apply Lemma~\ref{lemma:bad-link}.\\
We find a $1$-dimensional family of irreducible curves
$\tilde{C}_\lambda$ such that $-K_Y\cdot \tilde{C}_\lambda\leq 0$.
We can find this family on  the surface $T$ that is given as the
proper transform of a general member of the  linear system
generated by the polynomial(s) provided  in the slot for the item
$T$ of the table.

\item[\positive] : Apply Lemma~\ref{lemma:2-ray game}.\\
If the singular point $O_t$ satisfies the conditions of Lemma~\ref{lemma:2-ray game},  
we can always find a $1$-dimensional family of irreducible
curves $\tilde{C}_\lambda$ on the given surface $T$ such that
$-K_Y\cdot \tilde{C}_\lambda\leq 0$, so that we could immediately exclude the singular point $O_t$.
\end{itemize}

As we see, the circled methods are applied to exclude singular points on $X$.
The squared symbols  below  are the methods with which we can 
untwist the corresponding singular point if it is a center of non-canonical singularities of the log pair.

 \bigskip

 \begin{itemize}

\item[\quadratic] : Apply Lemma~\ref{lemma:Quadratic involution} and Lemma~\ref{lemma:Quadratic involution-biregular-1}\\
The given monomial in the table is the monomial $x_{i_3}x_{i_4}^2$
in Lemma~\ref{lemma:Quadratic involution} and
Lemma~\ref{lemma:Quadratic involution-biregular-1} that plays a
central role in defining the involution. If the hypersurface $X$
is defined by the equation as in Lemma~\ref{lemma:Quadratic
involution}, the involution given by the quadratic equation is
birational and untwists the given singular point. If the
hypersurface $X$ is defined by the equation as in
Lemma~\ref{lemma:Quadratic involution-biregular-1}, the involution
given by the quadratic equation is biregular. In such a case,
Lemma~\ref{lemma:Quadratic involution-biregular-1} excludes the
corresponding singular point. Note that both the cases can always
happen.

\item[\quadraticone] : Apply Lemma~\ref{lemma:Quadratic
involution},  Lemma~\ref{lemma:Quadratic involution-biregular-1}
and
Theorem~\ref{theorem:quadratic-biregular}\\
This method is basically the same as the method \quadratic. The
difference is that we may have no $x_{i_3}x_{i_4}^2$ in the
defining equation. Such cases occur only when the corresponding
singular point is $O_t$ and $x_{i_3}x_{i_4}^2=wt^2$. In cases,
Theorem~\ref{theorem:quadratic-biregular} excludes the singular
point $O_t$. These three cases can always occur, i.e., the case
when the defining equation has the monomial $wt^2$ with $f_e$ not divisible by $w$,
the case when the defining equation has the monomial $wt^2$ with
$f_e$ divisible by $w$ and the case when the defining equation does not have the
monomial $wt^2$.

\item[\elliptic] : Apply Theorem~\ref{theorem:geometric-elliptic}\\
This is for the singular point $O_t$ of quasi-smooth hypersurfaces in the
families  No. 7 (Type I), 23, 40, 44, 61 and 76. The given
binomial  in the table is the
binomial $tw^2-x_it^3$ in \eqref{equation:defining
equation-elliptic} that plays a central role in defining the
involution.  The singular point $O_t$ may not be a center of non-canonical singularities of the log pair in some situation. However,
if it is a center, then it can be untwisted by an elliptic involution.

\item[\ellipticone] : Apply Theorem~\ref{theorem:elliptic-biregular-untwist2}.\\
This is for the singular point $O_z$ of quasi-smooth hypersurfaces in the
family No. 36.

\item[\elliptictwo] : Apply Theorem~\ref{theorem:elliptic-biregular-untwist2} and Theorem~\ref{theorem:elliptic-biregular3}\\
This is for the singular point $O_z$ of quasi-smooth hypersurfaces in the
family No. 20.

%\item[\ellipticthree]: Apply Theorem~\ref{theorem:EI-7-Type-I}\\
%This is for the singular points of type $\frac{1}{2}(1,1,1)$ on
%quasi-smooth hypersurfaces of a certain type in the family No. 7.

\item[\ellipticfour] :  Apply Theorem~\ref{theorem:special-7}\\
This is for the singular points of type $\frac{1}{2}(1,1,1)$ on
quasi-smooth hypersurfaces of Type~II in the family No. 7.

\item[\ellipticfive] : Apply Theorem~\ref{theorem:amazing-23}\\
This is for the singular point $O_z$ of the special hypersurfaces
in the family No. 23 described in Section~\ref{section:invisible
involution}.

\end{itemize}

In each table, we present a defining equation of the hypersurface
in the family. For this we use the following notations and
conventions.
\begin{itemize}
\item The Roman alphabets $a$, $b$, $c$, $d$, $e$ with numeric
subscripts or without  subscripts are constants.

\item The Greek alphabets $\alpha$, $\beta$ with numeric
subscripts or without subscripts are constants.

\item The same Roman alphabets with distinct numeric subscripts,
e.g., $a_1$, $a_2$, $a_3$, in an equation are constants one of
which is not zero.

\item The same Greek alphabets with distinct numeric subscripts,
e.g., $\alpha_1$, $\alpha_2$, $\alpha_3$, in an equation are
distinct constants.

\item The singularity types are often given as a form
$\frac{1}{r}(w^1_{x_{k_1}}, w^2_{x_{k_2}}, w^3_{x_{k_3}})$, where
the subscript $x_{k_i}$ is the homogeneous coordinate function
which induces a local parameter corresponding to the weight
$w^i_{x_{k_i}}$.

\end{itemize}
For each family, the defining equation of the hypersurface
$X$ must satisfies the following rules in order to be quasi-smooth
(see \cite{IF00} for more detail).
\begin{itemize}
\item If $a_i>1$, it is relatively prime to the other weights and
it divides $d$, then $x_i^{\frac{d}{a_i}}$ must appear in the
defining equation. \item If $a_i>1$, it is relatively prime to
the other weights but it does not divide $d$, then
$x_i^{\frac{d-a_j}{a_i}}x_j$ for some $j$ must appear in the
defining equation. \item If $a_i$ and $a_j$ are not relatively
prime, then a reduce polynomial of degree $d$ in $x_i$ and $x_j$
must appear in the defining equation.
\end{itemize}
In each table,  the defining equation  is written in the form
$$\mbox{key-monomial part}+wf_{d-a_4}(x,y,z,t)+f_{d}(x,y,z,t) \ \ \mbox{ if } d<3a_4; $$ $$\mbox{key-monomial part}+w^2f_{d-2a_4}+wf_{d-a_4}(x,y,z,t)+f_{d}(x,y,z,t) \ \ \mbox{ if } d=3a_4 ,$$
where key-monomial part consists of the monomials that are
required for quasi-smoothness and necessary for our methods of
excluding or untwisting
the singularities. If necessary, we expand $f_d(x,y,z,t)$ with
respect to the variable $t$, i.e.,  instead of $f_d(x,y,z,t)$, we
write
\[ g_{d-a_3m}(x,y,z)t^m+ g_{d-a_3m+a_3}(x,y,z)t^{m-1}+\cdots+g_d(x,y,z).\]
Note that we do not put all the monomials required for
quasi-smoothness in the key-monomial part. We put only some of
them that play roles for our methods of excluding or untwisting
the singularities on the given hypersurface. To simplify the key
monomial part as much as possible without loss of generality, we
apply suitable coordinate changes, if necessary. It will not be too complicated to check that the given quasi-homogeneous polynomial represents every quasi-smooth hypersurface in the family.

\subsection{The tables}\label{section:super-rigid}

To prove Main Theorem, we suppose that a given quasi-smooth hypersurface
$X$ from the 95 families has a mobile linear system
$\mathcal{M}$ in $|-nK_X|$ for some positive integer $n$ such
that the log pair $(X, \frac{1}{n}\mathcal{M})$ is not canonical.
Therefore, we have a center of non-canonical singularities of the
pair $(X, \frac{1}{n}\mathcal{M})$. Theorems~\ref{theorem:smooth point excluding} and~\ref{theorem:excluding-curve} show that
if there is a center on $X$, then it must be a singular point.
 
In this section, we exclude or untwist  every singular point on a given quasi-smooth hypersurface in each of the 95 families.
To be precise, we prove
\begin{theorem}\label{theorem:excluding or untwisting}
If a singular point on  $X$ 
 is a center of non-canonical singularities of the log pair $\left(X,\frac{1}{n}\mathcal{M}\right)$, then it can be untwisted by a birational involution of $X$. 
\end{theorem}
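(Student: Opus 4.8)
The plan is to prove the statement family by family and, within each family, singular point by singular point, reducing everything to the machinery already assembled in Sections~3 and~4 and organized by the manual of Section~\ref{section:manuals}. By Theorems~\ref{theorem:smooth point excluding} and~\ref{theorem:excluding-curve} we already know that neither a smooth point nor an irreducible curve in the smooth locus can be a non-canonical center of $(X,\frac{1}{n}\mathcal{M})$, so any non-canonical center is forced to be one of the finitely many singular points of $X$. Since the weights are fixed within each family and the defining equation can be normalized by the coordinate changes recorded in each table, the list of singular-point types and the relevant monomials are completely determined; this makes the verification a finite, bookkeeping task. Families No.~1 and No.~3 are set aside, as explained in the introduction, so 93 families remain.

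First I would split the singular points according to the sign of $B^3$, where $B=-K_Y$ for the weighted blow-up $f\colon Y\to X$ of Section~\ref{subsection:notation}. For a point with $B^3\leq 0$ the aim is purely to exclude it, applying whichever of Lemmas~\ref{lemma:boundary}, \ref{lemma:boundary-2}, \ref{lemma:negative-definite} and \ref{lemma:bad-link} the table prescribes (symbols \boundary, \nef, \surface, \family). The numerical hypotheses of these lemmas---namely $T\cdot\Gamma\leq 0$, or $T\cdot S\cdot B\leq 0$ with $T$ nef, or negative-definiteness of $S\vert_T$, or the existence of a one-dimensional family $\tilde{C}_\lambda$ with $-K_Y\cdot\tilde{C}_\lambda\leq 0$---are read directly off the data in the table: the surface $T$ is the proper transform of the indicated linear system, its multiplicity along $E$ is computed from the highlighted terms, and Lemma~\ref{lemma:nefness} supplies the nefness when \nef{} is used. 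In particular all singular points occurring in the fifty families of Theorem~\ref{theorem:auxiliary} are excluded in this way (together with the 2-ray game of Lemma~\ref{lemma:2-ray game}, symbol \positive, for the $B^3>0$ points appearing there, such as $O_z$ in family No.~62), which is why their entry numbers are underlined.

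For a point with $B^3>0$ there are exactly two possibilities, again dictated by the table. If the symbol is a squared one (\quadratic, \quadraticone, \elliptic, \ellipticone, \elliptictwo, \ellipticfour, \ellipticfive) the point admits an explicit untwisting involution: the quadratic involution of Lemma~\ref{lemma:Quadratic involution}, the geometric elliptic involution of Theorem~\ref{theorem:geometric-elliptic}, the involution $\tau_z$ of Theorems~\ref{theorem:elliptic-biregular-untwist2} and~\ref{theorem:elliptic-biregular3}, or the invisible elliptic involutions of Theorems~\ref{theorem:amazing-23} and~\ref{theorem:special-7}. Each of these results shows that, \emph{if} the point is a center, then the constructed $\tau$ satisfies $\tau_Y(E)\neq E$, so by Remark~\ref{remark:untwisting-involution} it is a genuine non-biregular untwisting. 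Conversely, the degenerate sub-cases in which the same construction becomes biregular (for instance $f_e$ divisible by $x_{i_3}$ in the quadratic case, or $g_{d-a_4}\equiv 0$ in the elliptic case) are precisely the sub-cases where Lemma~\ref{lemma:Quadratic involution-biregular-1}, Theorem~\ref{theorem:quadratic-biregular}, and the opening arguments of the relevant elliptic theorems exclude the point, so it is never a center there. Combining both cases, every singular point of every $X$ is either excluded or untwisted, which is the assertion.

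The main obstacle is the untwisting by elliptic involutions, Theorems~\ref{theorem:geometric-elliptic}, \ref{theorem:amazing-23} and~\ref{theorem:special-7}: there one must build an elliptic fibration (sometimes only after a further weighted blow-up at $O_w$ and a log flip along the curves contracted to the singular point), produce the fiberwise reflection via Proposition~\ref{proposition:elliptic involution}, and then establish $\tau_Y(E)\neq E$ by the delicate argument on the $\tau$-invariant surfaces $S_\lambda$---invoking Lemma~\ref{lemma:Tschinkel} to force $\bar{E}_\lambda-a_i\bar{F}_\lambda$ into fiber classes and then Lemma~\ref{lemma:non-degenerate-curves}, together with the explicit intersection matrices, to contradict the resulting degeneracy of the curves cut out by $x=0$. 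These are exactly the constructions carried out in Sections~\ref{subsection:hard-involutions} and~\ref{section:invisible involution}, so for the present theorem they are cited as black boxes. Once every center has been shown to be untwistable, the theorem is proved; feeding it into Theorem~\ref{theorem:Nother-Fano} together with Lemma~\ref{lemma:untwisting-invoultion} and induction on $n$ then yields Main Theorem.
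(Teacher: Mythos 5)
Your proposal is correct and follows essentially the same route as the paper: the paper's proof of this theorem is precisely the case-by-case verification recorded in the tables of Section~\ref{section:super-rigid}, read via the manual of Section~\ref{section:manuals}, with the circled symbols invoking the exclusion lemmas (Lemmas~\ref{lemma:boundary}, \ref{lemma:boundary-2}, \ref{lemma:negative-definite}, \ref{lemma:bad-link}, \ref{lemma:2-ray game}) and the squared symbols invoking the untwisting results (Lemma~\ref{lemma:Quadratic involution} and Theorems~\ref{theorem:geometric-elliptic}, \ref{theorem:elliptic-biregular-untwist2}, \ref{theorem:elliptic-biregular3}, \ref{theorem:amazing-23}, \ref{theorem:special-7}), exactly as you describe. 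You also correctly identify the key subtlety that the degenerate sub-cases where an involution becomes biregular are precisely the cases handled by exclusion (Lemma~\ref{lemma:Quadratic involution-biregular-1}, Theorem~\ref{theorem:quadratic-biregular}), so nothing further is needed.
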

By verifying this theorem, we  obtain a complete proof of Main Theorem from Theorem~\ref{theorem:Nother-Fano}.

\begin{proof} The proof is given mainly by the  tables. Following the instruction in Section~\ref{section:manuals} with the extra explanation (if necessary) provided right after the table, we prove Theorem~\ref{theorem:excluding or untwisting} for each family.

\begin{center}
\begin{longtable}{|l|c|c|c|c|c|}
\hline
\multicolumn{6}{|l|}{\textbf{No. 2}: $X_{5}\subset\mathbb{P}(1,1,1,1,2)$  \hfill $A^3=5/2$}\\
\multicolumn{6}{|l|}{
\begin{minipage}[m]{.86\linewidth}
%\begin{center}
\vspace*{1.2mm}

$tw^2+wf_3(x,y,z,t)+f_5(x,y,z,t)$

\vspace*{1.2mm}
%\end{center}
\end{minipage}
}\\
\hline \hline
\begin{minipage}[m]{.28\linewidth}
\begin{center}
Singularity
\end{center}
\end{minipage}&
\begin{minipage}[m]{.04\linewidth}
\begin{center}
$B^3$
\end{center}
\end{minipage}&
\begin{minipage}[m]{.11\linewidth}
\begin{center}
Linear

system
\end{center}
\end{minipage}&
\begin{minipage}[m]{.11\linewidth}
\begin{center}
Surface $T$
\end{center}
\end{minipage}&
\begin{minipage}[m]{.11\linewidth}
\begin{center}
\vspace*{1mm}
 \vorder
\vspace*{1mm}
\end{center}
\end{minipage}&
\begin{minipage}[m]{.18\linewidth}
\begin{center}
Condition
\end{center}
\end{minipage}\\
\hline
\begin{minipage}[m]{.28\linewidth}

$O_w=\frac{1}{2}(1,1,1)$ \quadratic

\end{minipage}&
\multicolumn{4}{|l|}{\begin{minipage}[m]{.37\linewidth}
\begin{center}
$tw^2$
\end{center}
\end{minipage}}&
\begin{minipage}[m]{.18\linewidth}
\begin{center}

\end{center}
\end{minipage}\\
\hline
\end{longtable}
\end{center}

%%%%%%%%%%%%%%%%%%%%%%%%%%%%%%%%%%%%%%%%%%%%%%%%%%%%%%%%%%%%%%%%%%%%%%%%%%%%%%%%%%%%%%%%

\begin{center}
\begin{longtable}{|l|c|c|c|c|c|}
\hline
\multicolumn{6}{|l|}{\textbf{No. 4}: $X_{6}\subset\mathbb{P}(1,1,1,2,2)$\hfill $A^3=3/2$}\\
\multicolumn{6}{|l|}{
\begin{minipage}[m]{.86\linewidth}
%\begin{center}
\vspace*{1.2mm}
$(t-\alpha_1w)(t-\alpha_2w)(t-\alpha_3w)+wf_4(x,y,z,t)+f_6(x,y,z,t)$

\vspace*{1.2mm}
%\end{center}
\end{minipage}
}\\
\hline \hline
\begin{minipage}[m]{.28\linewidth}
\begin{center}
Singularity
\end{center}
\end{minipage}&
\begin{minipage}[m]{.04\linewidth}
\begin{center}
$B^3$
\end{center}
\end{minipage}&
\begin{minipage}[m]{.11\linewidth}
\begin{center}
Linear

system
\end{center}
\end{minipage}&
\begin{minipage}[m]{.11\linewidth}
\begin{center}
Surface $T$
\end{center}
\end{minipage}&
\begin{minipage}[m]{.11\linewidth}
\begin{center}
\vspace*{1mm}
 \vorder
\vspace*{1mm}
\end{center}
\end{minipage}&
\begin{minipage}[m]{.18\linewidth}
\begin{center}
Condition
\end{center}
\end{minipage}\\

\hline
\begin{minipage}[m]{.28\linewidth}

$O_tO_w=3\times\frac{1}{2}(1,1,1)$ \quadratic

\end{minipage}&
\multicolumn{4}{|l|}{\begin{minipage}[m]{.37\linewidth}
\begin{center}
$tw^2$
\end{center}
\end{minipage}}&
\begin{minipage}[m]{.18\linewidth}
\begin{center}

\end{center}
\end{minipage}\\\hline
\end{longtable}
\end{center}

\begin{Note}
\item We may assume that $\alpha_1=0$. To see how to treat the singular
points of type $\frac{1}{2}(1,1,1)$, we have only to consider the
singular point $O_w$. The other points can be dealt with in the
same way.
\end{Note}
%%%%%%%%%%%%%%%%%%%%%%%%%%%%%%%%%%%%%%%%%%%%%%%%%%%%%%%%%%%%%%%%%%%%%%%%%%
%%%%%%%%%%%%%%%%%%%%%%%%%%%%%%%%%%%%%%%%%%%%%%%%%%%%%%%%%%%%%%%%%%%%%%%%%%%%%%%%%%%%%%%%

\begin{center}
\begin{longtable}{|l|c|c|c|c|c|}
\hline
\multicolumn{6}{|l|}{\textbf{No. 5}: $X_{7}\subset\mathbb{P}(1,1,1,2,3)$\hfill $A^3=7/6$}\\
\multicolumn{6}{|l|}{
\begin{minipage}[m]{.86\linewidth}
%\begin{center}
\vspace*{1.2mm} $zw^2+wf_4(x,y,z,t)+f_7(x,y,z,t)$

\vspace*{1.2mm}
%\end{center}
\end{minipage}
}\\
\hline \hline
\begin{minipage}[m]{.28\linewidth}
\begin{center}
Singularity
\end{center}
\end{minipage}&
\begin{minipage}[m]{.04\linewidth}
\begin{center}
$B^3$
\end{center}
\end{minipage}&
\begin{minipage}[m]{.11\linewidth}
\begin{center}
Linear

system
\end{center}
\end{minipage}&
\begin{minipage}[m]{.11\linewidth}
\begin{center}
Surface $T$
\end{center}
\end{minipage}&
\begin{minipage}[m]{.11\linewidth}
\begin{center}
\vspace*{1mm}
 \vorder
\vspace*{1mm}
\end{center}
\end{minipage}&
\begin{minipage}[m]{.18\linewidth}
\begin{center}
Condition
\end{center}
\end{minipage}\\
\hline
\begin{minipage}[m]{.28\linewidth}

$O_w=\frac{1}{3}(1,1,2)$ \quadratic

\end{minipage}&
\multicolumn{4}{|l|}{\begin{minipage}[m]{.37\linewidth}
\begin{center}
$zw^2$
\end{center}
\end{minipage}}&
\begin{minipage}[m]{.18\linewidth}
\begin{center}

\end{center}
\end{minipage}\\
\hline
\begin{minipage}[m]{.28\linewidth}

$O_t=\frac{1}{2}(1,1,1)$ \quadraticone

\end{minipage}&
\multicolumn{4}{|l|}{\begin{minipage}[m]{.37\linewidth}
\begin{center}
$wt^2$
\end{center}
\end{minipage}}&
\begin{minipage}[m]{.18\linewidth}
\begin{center}

\end{center}
\end{minipage}\\
\hline
\end{longtable}
\end{center}

%%%%%%%%%%%%%%%%%%%%%%%%%%%%%%%%%%%%%%%%%%%%%%%%%%%%%%%%%%%%%%%%%%%%%%%%%%%%%%%%%%%%%%%%
%%%%%%%%%%%%%%%%%%%%%%%%%%%%%%%%%%%%%%%%%%%%%%%%%%%%%%%%%%%%%%%%%%%%%%%%%%%%%%%%%%%%%%%%

\begin{center}
\begin{longtable}{|l|c|c|c|c|c|}
\hline
\multicolumn{6}{|l|}{\textbf{No. 6}: $X_{8}\subset\mathbb{P}(1,1,1,2,4)$  \hfill $A^3=1$}\\
\multicolumn{6}{|l|}{
\begin{minipage}[m]{.86\linewidth}
%\begin{center}
\vspace*{1.2mm}
$(w-\alpha_1t^2)(w-\alpha_2t^2)+wf_4(x,y,z,t)+f_8(x,y,z,t)=0$

\vspace*{1.2mm}
%\end{center}
\end{minipage}
}\\
\hline \hline
\begin{minipage}[m]{.28\linewidth}
\begin{center}
Singularity
\end{center}
\end{minipage}&
\begin{minipage}[m]{.04\linewidth}
\begin{center}
$B^3$
\end{center}
\end{minipage}&
\begin{minipage}[m]{.11\linewidth}
\begin{center}
Linear

system
\end{center}
\end{minipage}&
\begin{minipage}[m]{.11\linewidth}
\begin{center}
Surface $T$
\end{center}
\end{minipage}&
\begin{minipage}[m]{.11\linewidth}
\begin{center}
\vspace*{1mm}
 \vorder
\vspace*{1mm}
\end{center}
\end{minipage}&
\begin{minipage}[m]{.18\linewidth}
\begin{center}
Condition
\end{center}
\end{minipage}\\
\hline
\begin{minipage}[m]{.28\linewidth}

$O_tO_w=2\times\frac{1}{2}(1,1,1)$ \quadratic

\end{minipage}&
\multicolumn{4}{|l|}{\begin{minipage}[m]{.37\linewidth}
\begin{center}
$wt^2$
\end{center}
\end{minipage}}&
\begin{minipage}[m]{.18\linewidth}
\begin{center}

\end{center}
\end{minipage}\\
\hline
\end{longtable}
\end{center}

\begin{Note}
\item We may assume that $\alpha_1=0$. To see how to treat the singular
points of type $\frac{1}{2}(1,1,1)$, we have only to consider the
singular point $O_t$. The other point can be dealt with in the
same way. After we set $\alpha_1=0$,  by a suitable coordinate
change with respect to $w$, we may assume that the monomials of
types $t^3g_{2}(x,y,z)$, $t^2g_{4}(x,y,z)$ do not appear in the
defining equation.
\end{Note}

%%%%%%%%%%%%%%%%%%%%%%%%%%%%%%%%%%%%%%%%%%%%%%%%%%%%%%%%%%%%%%%%%%%%%%%%%%%%%%%%%%%%%%%%

%%%%%%%%%%%%%%%%%%%%%%%%%%%%%%%%%%%%%%%%%%%%%%%%%%%%%%%%%%%%%%%%%%%%%%%%%%%%%%%%%%%%%%%%

\begin{center}
\begin{longtable}{|l|c|c|c|c|c|}
\hline
\multicolumn{6}{|l|}{\textbf{No. 7}: $X_{8}\subset\mathbb{P}(1,1,2,2,3)$\hfill $A^3=2/3$}\\
\multicolumn{6}{|l|}{
\begin{minipage}[m]{.86\linewidth}
%\begin{center}
\vspace*{1.2mm} Type I :
$tw^2+wg_5(x,y,z)-zt^3-t^2g_4(x,y,z)-tg_6(x,y,z)+g_8(x,y,z)$
\medskip

Type II :
$(z+f_2(x,y))w^2+wf_5(x,y,z,t)-zt^3-t^2f_4(x,y,z)-tf_6(x,y,z)+f_8(x,y,z)$
\vspace*{1.2mm}
%\end{center}
\end{minipage}
}\\
\hline \hline
\begin{minipage}[m]{.28\linewidth}
\begin{center}
Singularity
\end{center}
\end{minipage}&
\begin{minipage}[m]{.04\linewidth}
\begin{center}
$B^3$
\end{center}
\end{minipage}&
\begin{minipage}[m]{.11\linewidth}
\begin{center}
Linear

system
\end{center}
\end{minipage}&
\begin{minipage}[m]{.11\linewidth}
\begin{center}
Surface $T$
\end{center}
\end{minipage}&
\begin{minipage}[m]{.11\linewidth}
\begin{center}
\vspace*{1mm}
 \vorder
\vspace*{1mm}
\end{center}
\end{minipage}&
\begin{minipage}[m]{.18\linewidth}
\begin{center}
Condition
\end{center}
\end{minipage}\\
\hline
\begin{minipage}[m]{.28\linewidth}

$O_w=\frac{1}{3}(1,1,2)$ \quadratic

\end{minipage}&
\multicolumn{4}{|l|}{\begin{minipage}[m]{.37\linewidth}
\begin{center}
$tw^2$
\end{center}
\end{minipage}}&
\begin{minipage}[m]{.18\linewidth}
\begin{center}

\end{center}
\end{minipage}\\
\hline
\begin{minipage}[m]{.28\linewidth}

$O_zO_t=4\times\frac{1}{2}(1,1,1)$ \elliptic
\end{minipage}&
\multicolumn{4}{|l|}{\begin{minipage}[m]{.37\linewidth}
\begin{center}
$tw^2-zt^3$
\end{center}
\end{minipage}}&
\begin{minipage}[m]{.18\linewidth}
\begin{center}
Type I
\end{center}
\end{minipage}\\
\hline
\begin{minipage}[m]{.28\linewidth}

$O_zO_t=4\times\frac{1}{2}(1,1,1)$ \ellipticfour
\end{minipage}&
\multicolumn{4}{|l|}{\begin{minipage}[m]{.37\linewidth}
\begin{center}

\end{center}
\end{minipage}}&
\begin{minipage}[m]{.18\linewidth}
\begin{center}
Type II
\end{center}
\end{minipage}\\

\hline

\end{longtable}
\end{center}

\begin{Note}
\item For the singular points of type $\frac{1}{2}(1,1,1)$ we have only
to consider one of them. The others can be untwisted or excluded
in the same way. The singular point to be considered here may be
assumed to be the point $O_t$ by a suitable coordinate change.
\end{Note}

%%%%%%%%%%%%%%%%%%%%%%%%%%%%%%%%%%%%%%%%%%%%%%%%%%%%%%%%%%%%%%%%%%%%%%%%%%%%%%%%%%%%%%%%

%%%%%%%%%%%%%%%%%%%%%%%%%%%%%%%%%%%%%%%%%%%%%%%%%%%%%%%%%%%%%%%%%%%%%%%%%%%%%%%%%%%%%%%%
\begin{center}
\begin{longtable}{|l|c|c|c|c|c|}
\hline
\multicolumn{6}{|l|}{\textbf{No. 8}: $X_{9}\subset\mathbb{P}(1,1,1,3,4)$\hfill $A^3=3/4$}\\
\multicolumn{6}{|l|}{
\begin{minipage}[m]{.86\linewidth}
%\begin{center}
\vspace*{1.2mm}

$zw^2+wf_{5}(x,y,z,t)+f_{9}(x,y,z,t)$ \vspace*{1.2mm}
%\end{center}
\end{minipage}
}\\
\hline \hline
\begin{minipage}[m]{.28\linewidth}
\begin{center}
Singularity
\end{center}
\end{minipage}&
\begin{minipage}[m]{.04\linewidth}
\begin{center}
$B^3$
\end{center}
\end{minipage}&
\begin{minipage}[m]{.11\linewidth}
\begin{center}
Linear

system
\end{center}
\end{minipage}&
\begin{minipage}[m]{.11\linewidth}
\begin{center}
Surface $T$
\end{center}
\end{minipage}&
\begin{minipage}[m]{.11\linewidth}
\begin{center}
\vspace*{1mm}
 \vorder
\vspace*{1mm}
\end{center}
\end{minipage}&
\begin{minipage}[m]{.18\linewidth}
\begin{center}
Condition
\end{center}
\end{minipage}\\
\hline
\begin{minipage}[m]{.28\linewidth}

$O_w=\frac{1}{4}(1,1,3)$ \quadratic

\end{minipage}&
\multicolumn{4}{|l|}{\begin{minipage}[m]{.37\linewidth}
\begin{center}
$zw^2$
\end{center}
\end{minipage}}&
\begin{minipage}[m]{.18\linewidth}
\begin{center}
\end{center}
\end{minipage}\\

\hline
\end{longtable}
\end{center}

%%%%%%%%%%%%%%%%%%%%%%%%%%%%%%%%%%%%%%%%%%%%%%%%%%%%%%%%%%%%%%%%%%%%%%%%%%%%%%%%%%%%%%%%

%%%%%%%%%%%%%%%%%%%%%%%%%%%%%%%%%%%%%%%%%%%%%%%%%%%%%%%%%%%%%%%%%%%%%%%%%%%%%%%%%%%%%%%%

\begin{center}
\begin{longtable}{|l|c|c|c|c|c|}
\hline
\multicolumn{6}{|l|}{\textbf{No. 9}: $X_{9}\subset\mathbb{P}(1,1,2,3,3)$\hfill $A^3=1/2$}\\
\multicolumn{6}{|l|}{
\begin{minipage}[m]{.86\linewidth}
%\begin{center}
\vspace*{1.2mm} $(w-\alpha_1 t)(w-\alpha_2 t)(w-\alpha_3
t)+z^3(a_1t+a_2yz)+w^2f_3(x,y,z)+wf_{6}(x,y,z,t)+f_{9}(x,y,z,t)$
\vspace*{1.2mm}

%\end{center}
\end{minipage}
}\\
\hline \hline
\begin{minipage}[m]{.28\linewidth}
\begin{center}
Singularity
\end{center}
\end{minipage}&
\begin{minipage}[m]{.04\linewidth}
\begin{center}
$B^3$
\end{center}
\end{minipage}&
\begin{minipage}[m]{.11\linewidth}
\begin{center}
Linear

system
\end{center}
\end{minipage}&
\begin{minipage}[m]{.11\linewidth}
\begin{center}
Surface $T$
\end{center}
\end{minipage}&
\begin{minipage}[m]{.11\linewidth}
\begin{center}
\vspace*{1mm}
 \vorder
\vspace*{1mm}
\end{center}
\end{minipage}&
\begin{minipage}[m]{.18\linewidth}
\begin{center}
Condition
\end{center}
\end{minipage}\\
\hline
\begin{minipage}[m]{.28\linewidth}

$O_z=\frac{1}{2}(1_x,1_y,1_t)$ \boundary

\end{minipage}&
\begin{minipage}[m]{.04\linewidth}
\begin{center}
$0$
\end{center}
\end{minipage}&
\begin{minipage}[m]{.11\linewidth}
\begin{center}
$B$
\end{center}
\end{minipage}&
\begin{minipage}[m]{.11\linewidth}
\begin{center}
$y$
\end{center}
\end{minipage}
&
\begin{minipage}[m]{.11\linewidth}
\begin{center}
$y$
\end{center}
\end{minipage}&
\begin{minipage}[m]{.18\linewidth}
\begin{center}
$a_1\ne0$
\end{center}
\end{minipage}\\
\hline
\begin{minipage}[m]{.28\linewidth}

$O_z=\frac{1}{2}(1_x,1_t,1_w)$ \boundary

\end{minipage}&
\begin{minipage}[m]{.04\linewidth}
\begin{center}
$0$
\end{center}
\end{minipage}&
\begin{minipage}[m]{.11\linewidth}
\begin{center}
$B-E$
\end{center}
\end{minipage}&
\begin{minipage}[m]{.11\linewidth}
\begin{center}
$y$
\end{center}
\end{minipage}
&
\begin{minipage}[m]{.11\linewidth}
\begin{center}
$w^3$
\end{center}
\end{minipage}&
\begin{minipage}[m]{.18\linewidth}
\begin{center}
$a_1=0$
\end{center}
\end{minipage}\\
\hline
\begin{minipage}[m]{.28\linewidth}

$O_tO_w=3\times\frac{1}{3}(1,1,2)$ \quadratic

\end{minipage}&
\multicolumn{4}{|l|}{\begin{minipage}[m]{.37\linewidth}
\begin{center}
$wt^2$
\end{center}
\end{minipage}}&
\begin{minipage}[m]{.18\linewidth}
\begin{center}

\end{center}
\end{minipage}\\

\hline
\end{longtable}
\end{center}

We may assume that neither $z^3w$ nor $xz^4$ appears in the
defining equation of $X_9$.

\begin{Note}

\item If $a_1\ne 0$, then the $1$-cycle $\Gamma$ for the singular point
$O_z$ is irreducible.

\item Suppose that $a_1=0$. Then $a_2\ne 0$. Then the $1$-cycle $\Gamma$
consists of three irreducible curves $\tilde{C}_i$, $i=1,2,3$,
each of which is the proper transform of the curve defined by
$$x=y=w-\alpha_i t=0.$$ One can easily check that
\[B\cdot\tilde{C}_i=-\frac{1}{3}, \ \ \ E\cdot\tilde{C}_i=1\]
for each $i$. Therefore, these three curves are numerically
equivalent to each other.

\item  For the singular points of type $\frac{1}{3}(1,1,2)$ we may assume
that $\alpha_3=0$ and we have only to consider the singular point
$O_t$.  The others can be untwisted or excluded in the same way.
Note that if $\alpha_3=0$ then we may assume that $wt^2$ is the
only monomial in the defining equation of $X_9$ divisible by
$t^2$.
\end{Note}

%%%%%%%%%%%%%%%%%%%%%%%%%%%%%%%%%%%%%%%%%%%%%%%%%%%%%%%%%%%%%%%%%%%%%%%%%%%%%%%%%%%%%%%%

%%%%%%%%%%%%%%%%%%%%%%%%%%%%%%%%%%%%%%%%%%%%%%%%%%%%%%%%%%%%%%%%%%%%%%%%%%%%%%%%%%%%%%%%

\begin{center}
\begin{longtable}{|l|c|c|c|c|c|}
\hline
\multicolumn{6}{|l|}{\underline{\textbf{No. 10}}: $X_{10}\subset\mathbb{P}(1,1,1,3,5)$\hfill $A^3=2/3$}\\
\multicolumn{6}{|l|}{
\begin{minipage}[m]{.86\linewidth}
%\begin{center}
\vspace*{1.2mm} $w^2+zt^3+wf_{5}(x,y,z,t)+ f_{10}(x,y,z,t)$
\vspace*{1.2mm}
%\end{center}
\end{minipage}
}\\
\hline \hline
\begin{minipage}[m]{.28\linewidth}
\begin{center}
 Singularity
\end{center}
\end{minipage}&
\begin{minipage}[m]{.04\linewidth}
\begin{center}
$B^3$
\end{center}
\end{minipage}&
\begin{minipage}[m]{.11\linewidth}
\begin{center}
 Linear

system
\end{center}
\end{minipage}&
\begin{minipage}[m]{.11\linewidth}
\begin{center}
Surface $T$
\end{center}
\end{minipage}&
\begin{minipage}[m]{.11\linewidth}
\begin{center}
\vspace*{1mm} \vorder \vspace*{1mm}
\end{center}
\end{minipage}&
\begin{minipage}[m]{.18\linewidth}
\begin{center}
Condition
\end{center}
\end{minipage}\\
\hline
\begin{minipage}[m]{.28\linewidth}

$O_t=\frac{1}{3}(1_x,1_y,2_w)$ $\positive$

\end{minipage}&
\begin{minipage}[m]{.04\linewidth}
\begin{center}
$+$
\end{center}
\end{minipage}&
\begin{minipage}[m]{.11\linewidth}
\begin{center}
$B-E$
\end{center}
\end{minipage}&
\begin{minipage}[m]{.11\linewidth}
\begin{center}
$z$
\end{center}
\end{minipage}
&
\begin{minipage}[m]{.11\linewidth}
\begin{center}
$w^2$
\end{center}
\end{minipage}&
\begin{minipage}[m]{.18\linewidth}
\begin{center}

\end{center}
\end{minipage}\\
\hline

\end{longtable}
\end{center}

%%%%%%%%%%%%%%%%%%%%%%%%%%%%%%%%%%%%%%%%%%%%%%%%%%%%%%%%%%%%%%%%%%

\begin{center}
\begin{longtable}{|l|c|c|c|c|c|}
\hline
\multicolumn{6}{|l|}{\underline{\textbf{No. 11}}: $X_{10}\subset\mathbb{P}(1,1,2,2,5)$\hfill $A^3=1/2$}\\
\multicolumn{6}{|l|}{
\begin{minipage}[m]{.86\linewidth}
%\begin{center}
\vspace*{1.2mm} $w^2+\prod_{i=1}^5(t-\alpha_i
z)+wf_{5}(x,y,z,t)+f_{10}(x,y,z,t)$ \vspace*{1.2mm}
%\end{center}
\end{minipage}
}\\
\hline \hline
\begin{minipage}[m]{.28\linewidth}
\begin{center}
Singularity
\end{center}
\end{minipage}&
\begin{minipage}[m]{.04\linewidth}
\begin{center}
$B^3$
\end{center}
\end{minipage}&
\begin{minipage}[m]{.11\linewidth}
\begin{center}
Linear

system
\end{center}
\end{minipage}&
\begin{minipage}[m]{.11\linewidth}
\begin{center}
Surface $T$
\end{center}
\end{minipage}&
\begin{minipage}[m]{.11\linewidth}
\begin{center}
\vspace*{1mm}
 \vorder
\vspace*{1mm}
\end{center}
\end{minipage}&
\begin{minipage}[m]{.18\linewidth}
\begin{center}
Condition
\end{center}
\end{minipage}\\
\hline
\begin{minipage}[m]{.28\linewidth}

$O_zO_t=5\times\frac{1}{2}(1_x,1_y,1_w)$ \boundary

\end{minipage}&
\begin{minipage}[m]{.04\linewidth}
\begin{center}
$0$
\end{center}
\end{minipage}&
\begin{minipage}[m]{.11\linewidth}
\begin{center}
$B$
\end{center}
\end{minipage}&
\begin{minipage}[m]{.11\linewidth}
\begin{center}
$y$
\end{center}
\end{minipage}
&
\begin{minipage}[m]{.11\linewidth}
\begin{center}
$y$
\end{center}
\end{minipage}&
\begin{minipage}[m]{.18\linewidth}
\begin{center}

\end{center}
\end{minipage}\\
\hline

\end{longtable}
\end{center}

\begin{Note}
\item
The curve defined by $x=y=0$ is irreducible since the defining
polynomial of $X_{10}$ contains the monomial $w^2$ and a reduced
polynomial $\prod_{i=1}^5(t-\alpha_i z)$ of degree $10$.
Therefore, the $1$-cycle $\Gamma$ is irreducible.
\end{Note}

%%%%%%%%%%%%%%%%%%%%%%%%%%%%%%%%%%%%%%%%%%%%%%%%%%%%%%%%%%%%%%

%%%%%%%%%%%%%%%%%%%%%%%%%%%%%%%%%%%%%%%%%%%%%%%%%%%%%%%%%%%%%%%%%%

\begin{center}
\begin{longtable}{|l|c|c|c|c|c|}
\hline
\multicolumn{6}{|l|}{\textbf{No. 12}: $X_{10}\subset\mathbb{P}(1,1,2,3,4)$\hfill $A^3=5/12$}\\
\multicolumn{6}{|l|}{
\begin{minipage}[m]{.86\linewidth}
%\begin{center}
\vspace*{1.2mm} $z(w-\alpha_1 z^2)(w-\alpha_2
z^2)+t^2(a_1w+a_2yt)+cz^2t^2+wf_6(x,y,z,t)+f_{10}(x,y,z,t)$

\vspace*{1.2mm}
%\end{center}
\end{minipage}
}\\
\hline \hline
\begin{minipage}[m]{.28\linewidth}
\begin{center}
Singularity
\end{center}
\end{minipage}&
\begin{minipage}[m]{.04\linewidth}
\begin{center}
$B^3$
\end{center}
\end{minipage}&
\begin{minipage}[m]{.11\linewidth}
\begin{center}
Linear

system
\end{center}
\end{minipage}&
\begin{minipage}[m]{.11\linewidth}
\begin{center}
Surface $T$
\end{center}
\end{minipage}&
\begin{minipage}[m]{.11\linewidth}
\begin{center}
\vspace*{1mm}
 \vorder
\vspace*{1mm}
\end{center}
\end{minipage}&
\begin{minipage}[m]{.18\linewidth}
\begin{center}
Condition
\end{center}
\end{minipage}\\
\hline
\begin{minipage}[m]{.28\linewidth}

$O_w=\frac{1}{4}(1,1,3)$ \quadratic

\end{minipage}&
\multicolumn{4}{|l|}{\begin{minipage}[m]{.37\linewidth}
\begin{center}
$zw^2$
\end{center}
\end{minipage}}&
\begin{minipage}[m]{.18\linewidth}
\begin{center}
\end{center}
\end{minipage}\\
\hline
\begin{minipage}[m]{.28\linewidth}

$O_t=\frac{1}{3}(1,1,2)$ \quadraticone

\end{minipage}&
\multicolumn{4}{|l|}{\begin{minipage}[m]{.37\linewidth}
\begin{center}
$wt^2$
\end{center}
\end{minipage}}&
\begin{minipage}[m]{.18\linewidth}
\begin{center}

\end{center}
\end{minipage}\\
\hline
\begin{minipage}[m]{.28\linewidth}

$O_zO_w=2\times\frac{1}{2}(1_x,1_y,1_t)$ \boundary

\end{minipage}&
\begin{minipage}[m]{.04\linewidth}
\begin{center}
$-$
\end{center}
\end{minipage}&
\begin{minipage}[m]{.11\linewidth}
\begin{center}
$B$
\end{center}
\end{minipage}&
\begin{minipage}[m]{.11\linewidth}
\begin{center}
$y$
\end{center}
\end{minipage}
&
\begin{minipage}[m]{.11\linewidth}
\begin{center}
$y$
\end{center}
\end{minipage}&
\begin{minipage}[m]{.18\linewidth}
\begin{center}
$c\ne0$, $a_1\ne 0$
\end{center}
\end{minipage}\\
\hline
\begin{minipage}[m]{.28\linewidth}

$O_zO_w=2\times\frac{1}{2}(1_x,1_y,1_t)$ $\surface$

\end{minipage}&
\begin{minipage}[m]{.04\linewidth}
\begin{center}
$-$
\end{center}
\end{minipage}&
\begin{minipage}[m]{.11\linewidth}
\begin{center}
$B$
\end{center}
\end{minipage}&
\begin{minipage}[m]{.11\linewidth}
\begin{center}
$x$, $y$
\end{center}
\end{minipage}
&
\begin{minipage}[m]{.11\linewidth}
\begin{center}
$x,y$
\end{center}
\end{minipage}&
\begin{minipage}[m]{.18\linewidth}
\begin{center}
$c\ne0$, $a_1=0$
\end{center}
\end{minipage}\\
\hline
\begin{minipage}[m]{.28\linewidth}

$O_zO_w=2\times\frac{1}{2}(1_x,1_y,1_t)$ \family

\end{minipage}&
\begin{minipage}[m]{.04\linewidth}
\begin{center}
$-$
\end{center}
\end{minipage}&
\begin{minipage}[m]{.11\linewidth}
\begin{center}
$B$
\end{center}
\end{minipage}&
\begin{minipage}[m]{.11\linewidth}
\begin{center}
$x$, $y$
\end{center}
\end{minipage}
&
\begin{minipage}[m]{.11\linewidth}
\begin{center}
$ x, y$
\end{center}
\end{minipage}&
\begin{minipage}[m]{.18\linewidth}
\begin{center}
$c=0$
\end{center}
\end{minipage}\\
\hline
\end{longtable}
\end{center}

By a coordinate change we assume that $\alpha_1=0$. Furthermore we
may assume that the monomials $z^3xt$, $z^3yt$, $z^4x^2$, $z^4xy$,
$z^4y^2$ do not appear in the defining equation by changing the
coordinate $w$ in an appropriate way. We  may also assume that
$xt^3$ is not contained in $f_{10}$.

\begin{Note}

\item For the singular points of type $\frac{1}{2}(1,1,1)$ with $c\ne 0$
and $a_1\ne 0$ the $1$-cycle $\Gamma$ is irreducible due to the
monomials $zw^2$, $t^2w$ and $z^2t^2$.

\item For the singular points of type $\frac{1}{2}(1,1,1)$ with $c\ne 0$
and $a_1= 0$ choose a general surface $H$ in $|-K_{X_{10}}|$ and
then let $T$ be the proper transform of the surface $H$. The
surface $H$ is a K3 surface only with du Val singularities.  The
intersection of $T$ with the surface $S$ gives us a divisor
consisting of two irreducible curves on the normal surface $T$.
One is the proper transform  of the curve $L_{tw}$. The other is
the proper transform of the curve $C$ defined by $$x=y=w^2-\alpha_2
z^2w+czt^2=0$$ in $\mathbb{P}(1,1,2,3,4)$. Since we have
\[\tilde{L}_{tw}^2=-\frac{7}{12}, \ \ \ \tilde{L}_{tw}\cdot\tilde{C}=\frac{2}{3}, \ \ \ \tilde{C}^2=-\frac{5}{6}\]
the curves $\tilde{L}_{tw}$ and $\tilde{C}$  are
negative-definite.

We remark here that the surface obtained from $T$ by contracting
the two curves $\tilde{L}_{tw}$ and $\tilde{C}$ is a K3 surface
only with one $E_8$ singular point. Indeed, the surface $T$ has
one $A_1$ singular point on $\tilde{C}$, one $A_3$ singular point
on $\tilde{L}_{tw}$ and the curves $\tilde{C}$, $\tilde{L}_{tw}$
intersect at one $A_2$ singular point tangentially on an orbifold
chart. Therefore, on the minimal resolution of the surface $T$,
the proper transforms of the curves $\tilde{C}$, $\tilde{L}_{tw}$
with the exceptional curves over three du Val points form the
configuration of the $-2$-curves for an $E_8$ singular point.

\item For the singular points of type $\frac{1}{2}(1,1,1)$ with $c=0$ we
may assume that $\alpha_1=0$ and we have only to consider the
point $O_z$. The other singular point can be treated in the same
way by a suitable coordinate change.
   The
quasi-smoothness implies that $a_1=0$ and $a_2\ne 0$. Let
$Z_{\lambda, \mu}$ be the curve on $X_{10}$ cut out  by
$$
\left\{%
\aligned
&y=\lambda x,\\%
&w=\mu x^4\\%
\endaligned\right.%
$$
for some sufficiently general complex numbers $\lambda$ and $\mu$.
Then $Z_{\lambda, \mu}=L_{zt}+C_{\lambda, \mu}$, where
$C_{\lambda, \mu}$ is an irreducible and reduced curve whose
normalisation is an elliptic curve. Indeed, the curve $C_{\lambda,
\mu}$ is defined by  $$y-\lambda x=w-\mu x^4=\mu^2x^7z-\alpha_2\mu
x^3z^3+\lambda a_2t^3+\mu x^3 f_6(x,\lambda
x,z,t)+f_{10}(x,\lambda x,z,t)=0.$$ Then
$$
\left\{%
\aligned
&-K_{Y}\cdot (\tilde{L}_{zt}+\tilde{C}_{\lambda, \mu})=4B^3=-\frac{1}{3},\\%
&-K_Y\cdot \tilde{L}_{zt}=-K_X\cdot L_{zt}-\frac{1}{2}E\cdot\tilde{L}_{zt}=-\frac{1}{3},\\%
\endaligned\right.%
$$
and hence  $-K_{Y}\cdot \tilde{C}_{\lambda, \mu}=0$.

\end{Note}

%%%%%%%%%%%%%%%%%%%%%%%%%%%%%%%%%%%%%%%%%%%%%%%%%%%%%%%%%%%%%%%%%%%%%%%%%%%%%%%%%%%%%%%%

%%%%%%%%%%%%%%%%%%%%%%%%%%%%%%%%%%%%%%%%%%%%%%%%%%%%%%%%%%%%%%%%%%%%%%%%%%%%%%%%%%%%%%%%
\begin{center}
\begin{longtable}{|l|c|c|c|c|c|}
\hline
\multicolumn{6}{|l|}{\textbf{No. 13}: $X_{11}\subset\mathbb{P}(1,1,2,3,5)$\hfill $A^3=11/30$}\\
\multicolumn{6}{|l|}{
\begin{minipage}[m]{.86\linewidth}
%\begin{center}
\vspace*{1.2mm}

$yw^2+t^2(a_1w+a_2zt)+z^3(b_1w+b_2zt+b_3xz^2+b_4yz^2)+wf_{6}(x,y,z,t)+f_{11}(x,y,z,t)$
\vspace*{1.2mm}
%\end{center}
\end{minipage}
}\\
\hline \hline
\begin{minipage}[m]{.28\linewidth}
\begin{center}
Singularity
\end{center}
\end{minipage}&
\begin{minipage}[m]{.04\linewidth}
\begin{center}
$B^3$
\end{center}
\end{minipage}&
\begin{minipage}[m]{.11\linewidth}
\begin{center}
Linear

system
\end{center}
\end{minipage}&
\begin{minipage}[m]{.11\linewidth}
\begin{center}
Surface $T$
\end{center}
\end{minipage}&
\begin{minipage}[m]{.11\linewidth}
\begin{center}
\vspace*{1mm}
 \vorder
\vspace*{1mm}
\end{center}
\end{minipage}&
\begin{minipage}[m]{.18\linewidth}
\begin{center}
Condition
\end{center}
\end{minipage}\\
\hline
\begin{minipage}[m]{.28\linewidth}

$O_w=\frac{1}{5}(1,2,3)$ \quadratic

\end{minipage}&
\multicolumn{4}{|l|}{\begin{minipage}[m]{.37\linewidth}
\begin{center}
$yw^2$
\end{center}
\end{minipage}}&
\begin{minipage}[m]{.18\linewidth}
\begin{center}

\end{center}
\end{minipage}\\
\hline
\begin{minipage}[m]{.28\linewidth}

$O_t=\frac{1}{3}(1,1,2)$ \quadraticone

\end{minipage}&
\multicolumn{4}{|l|}{\begin{minipage}[m]{.37\linewidth}
\begin{center}
$wt^2$
\end{center}
\end{minipage}}&
\begin{minipage}[m]{.18\linewidth}
\begin{center}

\end{center}
\end{minipage}\\

\hline
\begin{minipage}[m]{.28\linewidth}

$O_z=\frac{1}{2}(1,1,1)$ \boundary

\end{minipage}&
\begin{minipage}[m]{.04\linewidth}
\begin{center}
$-$
\end{center}
\end{minipage}&
\begin{minipage}[m]{.11\linewidth}
\begin{center}
$B$
\end{center}
\end{minipage}&
\begin{minipage}[m]{.11\linewidth}
\begin{center}
$y$
\end{center}
\end{minipage}
&
\begin{minipage}[m]{.11\linewidth}
\begin{center}
$y$
\end{center}
\end{minipage}&
\begin{minipage}[m]{.18\linewidth}
\begin{center}
$a_1\not = 0$, $b_1\not = 0$\\ $a_1b_2-a_2b_1\not = 0$
\end{center}
\end{minipage}\\
\hline
\begin{minipage}[m]{.28\linewidth}

$O_z=\frac{1}{2}(1_x,1_y,1_t)$ $\surface$

\end{minipage}&
\begin{minipage}[m]{.04\linewidth}
\begin{center}
$-$
\end{center}
\end{minipage}&
\begin{minipage}[m]{.11\linewidth}
\begin{center}
$B$
\end{center}
\end{minipage}&
\begin{minipage}[m]{.11\linewidth}
\begin{center}
$x$, $y$
\end{center}
\end{minipage}
&
\begin{minipage}[m]{.11\linewidth}
\begin{center}
$x,y$
\end{center}
\end{minipage}&
\begin{minipage}[m]{.18\linewidth}
\begin{center}
$a_1\not = 0$, $b_1\not = 0$\\ $a_1b_2-a_2b_1 = 0$
\end{center}
\end{minipage}\\
\hline
\begin{minipage}[m]{.28\linewidth}

$O_z=\frac{1}{2}(1_x,1_y,1_w)$  \family

\end{minipage}&
\begin{minipage}[m]{.04\linewidth}
\begin{center}
$-$
\end{center}
\end{minipage}&
\begin{minipage}[m]{.11\linewidth}
\begin{center}
$B$
\end{center}
\end{minipage}&
\begin{minipage}[m]{.11\linewidth}
\begin{center}
$x$, $y$
\end{center}
\end{minipage}
&
\begin{minipage}[m]{.11\linewidth}
\begin{center}
$x, y$
\end{center}
\end{minipage}&
\begin{minipage}[m]{.18\linewidth}
\begin{center}
$a_1\not = 0$ \\ $b_1= 0$, $b_2\not = 0$
\end{center}
\end{minipage}\\
\hline
\begin{minipage}[m]{.28\linewidth}

$O_z=\frac{1}{2}(1,1,1)$ \family

\end{minipage}&
\begin{minipage}[m]{.04\linewidth}
\begin{center}
$-$
\end{center}
\end{minipage}&
\begin{minipage}[m]{.11\linewidth}
\begin{center}
$B$
\end{center}
\end{minipage}&
\begin{minipage}[m]{.11\linewidth}
\begin{center}
$y$
\end{center}
\end{minipage}
&
\begin{minipage}[m]{.11\linewidth}
\begin{center}
$y$
\end{center}
\end{minipage}&
\begin{minipage}[m]{.18\linewidth}
\begin{center}
$a_1\not = 0$ \\ $b_1 = 0$, $b_2= 0$
\end{center}
\end{minipage}\\
\hline
\begin{minipage}[m]{.28\linewidth}

$O_z=\frac{1}{2}(1_x,1_y,1_t)$ $\surface$

\end{minipage}&
\begin{minipage}[m]{.04\linewidth}
\begin{center}
$-$
\end{center}
\end{minipage}&
\begin{minipage}[m]{.11\linewidth}
\begin{center}
$B$
\end{center}
\end{minipage}&
\begin{minipage}[m]{.11\linewidth}
\begin{center}
$x$, $y$
\end{center}
\end{minipage}
&
\begin{minipage}[m]{.11\linewidth}
\begin{center}
$x,y$
\end{center}
\end{minipage}&
\begin{minipage}[m]{.18\linewidth}
\begin{center}
$a_1 = 0$ \\ $b_1\not = 0$
\end{center}
\end{minipage}\\
%\hline
%\begin{minipage}[m]{.28\linewidth}

%$O_z=\frac{1}{2}(1_x,1_y,1_t)$ $\surface$

%\end{minipage}&
%\begin{minipage}[m]{.04\linewidth}
%\begin{center}
%$-$
%\end{center}
%\end{minipage}&
%\begin{minipage}[m]{.11\linewidth}
%\begin{center}
%$B$
%\end{center}
%\end{minipage}&
%\begin{minipage}[m]{.11\linewidth}
%\begin{center}
%$y$
%\end{center}
%\end{minipage}
%&
%\begin{minipage}[m]{.11\linewidth}
%\begin{center}
%$y$
%\end{center}
%\end{minipage}&
%\begin{minipage}[m]{.18\linewidth}
%\begin{center}
%$a_1 = 0$ \\ $b_1\not = 0$, $b_2 = 0$
%\end{center}
%\end{minipage}\\
\hline
\begin{minipage}[m]{.28\linewidth}

$O_z=\frac{1}{2}(1_x,1_y,1_w)$ \family

\end{minipage}&
\begin{minipage}[m]{.04\linewidth}
\begin{center}
$-$
\end{center}
\end{minipage}&
\begin{minipage}[m]{.11\linewidth}
\begin{center}
$B$
\end{center}
\end{minipage}&
\begin{minipage}[m]{.11\linewidth}
\begin{center}
$x$, $y$
\end{center}
\end{minipage}
&
\begin{minipage}[m]{.11\linewidth}
\begin{center}
$x, y$
\end{center}
\end{minipage}&
\begin{minipage}[m]{.18\linewidth}
\begin{center}
$a_1 = 0$ \\ $b_1 = 0$, $b_2\not = 0$
\end{center}
\end{minipage}\\
\hline
\begin{minipage}[m]{.28\linewidth}

$O_z=\frac{1}{2}(1,1,1)$ \family

\end{minipage}&
\begin{minipage}[m]{.04\linewidth}
\begin{center}
$-$
\end{center}
\end{minipage}&
\begin{minipage}[m]{.11\linewidth}
\begin{center}
$B$
\end{center}
\end{minipage}&
\begin{minipage}[m]{.11\linewidth}
\begin{center}
$y$
\end{center}
\end{minipage}
&
\begin{minipage}[m]{.11\linewidth}
\begin{center}
$y$
\end{center}
\end{minipage}&
\begin{minipage}[m]{.18\linewidth}
\begin{center}
$a_1 = 0$ \\ $b_1 = 0$, $b_2 = 0$
\end{center}
\end{minipage}\\

\hline
\end{longtable}
\end{center}

 To exclude the singular point $O_z$ we first suppose that $a_1\ne
0$. We may then assume that $a_1=1$ and $a_2=0$.

\begin{Note}

\item
The conditions $b_1\ne 0$ and $a_1b_2-a_2b_1\ne 0$ imply that both
$b_1$ and $b_2$ are non-zero.  In such a case  the $1$-cycle
$\Gamma$  is irreducible since we have the monomials $t^2w$,
$z^3w$ and $z^4t$.

\item The conditions $b_1\ne 0$ and $a_1b_2-a_2b_1= 0$ imply that
$b_1\ne 0$ and $b_2=0$.   In such a case we take a general surface
$H$ from the pencil $|-K_{X_{11}}|$ and then let $T$ be the proper
transform of the surface.  The intersection of $T$ with the
surface $S$ defines a divisor consisting of two irreducible curves
on the normal surface $T$. One is the proper transform of the
curve $L_{zt}$ on $H$. The other is the proper transform of the
curve $C$ defined by $$x=y=t^2+b_1z^3=0$$ in
$\mathbb{P}(1,1,2,3,5)$. Since
\[\tilde{L}_{zt}^2=-\frac{4}{3}, \ \ \ \tilde{L}_{zt}\cdot\tilde{C}=1, \ \ \ \tilde{C}^2=-\frac{4}{5}\]
the curves $\tilde{L}_{zt}$ and $\tilde{C}$ on the normal surface
$T$  are negative-definite.

\item In the case when $b_1=0$ and $b_2\not =0$ we may assume that
$b_2=1$, $b_3=b_4=0$ by a suitable coordinate change. Let
$Z_{\lambda, \mu}$ be the curve  on $X_{11}$ cut out by
$$
\left\{%
\aligned
&y=\lambda x,\\%
&t=\mu x^3\\%
\endaligned\right.%
$$
for some sufficiently general complex numbers $\lambda$ and $\mu$.
Then $Z_{\lambda, \mu}=L_{zw}+C_{\lambda, \mu}$, where
$C_{\lambda, \mu}$ is an irreducible and reduced curve.  Then
$$
\left\{%
\aligned
&-K_{Y}\cdot (\tilde{L}_{zw}+\tilde{C}_{\lambda, \mu})=3B^3=-\frac{2}{5},\\%
&-K_Y\cdot \tilde{L}_{zw}=-K_X\cdot L_{zw}-\frac{1}{2}E\cdot\tilde{L}_{zw}=-\frac{2}{5},\\%
\endaligned\right.%
$$
and hence  $-K_{Y}\cdot \tilde{C}_{\lambda, \mu}=0$.

\item In the case when $b_1=b_2=0$, we must have $b_3\ne 0$ since
$X_{11}$ is quaisy-smooth. We may assume that $b_3=1 $ and $b_4=0$
by a suitable coordinate change. Let $Z_{\lambda}$ be the curve on
the surface $S_x$ defined by
$$
\left\{%
\aligned
&x=0,\\%
&t=\lambda y^3\\%
\endaligned\right.%
$$
for a sufficiently general complex number  $\lambda$. Then
$Z_{\lambda}=L_{zw}+C_{\lambda }$, where $C_{\lambda}$ is an
irreducible and  reduced curve.  Then
$$
\left\{%
\aligned
&-K_{Y}\cdot (\tilde{L}_{zw}+\tilde{C}_{\lambda})=(B-E)(3B+E)B=-\frac{2}{5},\\%
&-K_Y\cdot \tilde{L}_{zw}=-K_X\cdot L_{zw}-\frac{1}{2}E\cdot\tilde{L}_{zw}=-\frac{2}{5},\\%
\endaligned\right.%
$$
and hence  $-K_{Y}\cdot \tilde{C}_{\lambda}=0$.

\end{Note}

Now we suppose that $a_1=0$. Then $a_2\not =0$, so that we could
assume that $a_2=1$.

\begin{Note}

\item Suppose that $b_1\not =0$. Then by a suitable coordinate change we
may assume that $ b_1=1$ and $b_2=0$. We take a general surface
$H$ from the pencil $|-K_{X_{11}}|$ and then let $T$ be the proper
transform of the surface.   The intersection of $T$ with $S$ gives
us a divisor consisting of two irreducible curves on  $T$. One is
the proper transform of the curve $L_{tw}$ on $H$. The other is
the proper transform
 of the curve $C$ defined by $$x=y=t^3+z^2w=0$$ in
$\mathbb{P}(1,1,2,3,5)$. Since
\[\tilde{L}_{tw}^2=-\frac{8}{15}, \ \ \ \tilde{L}_{tw}\cdot\tilde{C}=\frac{3}{5}, \ \ \ \tilde{C}^2=-\frac{4}{5}\]
the curves $\tilde{L}_{tw}$ and $\tilde{C}$  form a
negative-definite divisor on the normal surface $T$.

\item We suppose that $b_1=0$ and $b_2\not =0$. We then let $Z_{\lambda,
\mu}$ be the curve on $X_{11}$  cut out by
$$
\left\{%
\aligned
&y=\lambda x,\\%
&t=\mu x^3\\%
\endaligned\right.%
$$
for some sufficiently general complex numbers $\lambda$ and $\mu$.
Then $Z_{\lambda, \mu}=L_{zw}+C_{\lambda, \mu}$, where
$C_{\lambda, \mu}$ is an irreducible and reduced curve. We have
$$
\left\{%
\aligned
&-K_{Y}\cdot (\tilde{L}_{zw}+\tilde{C}_{\lambda, \mu})=3B^3=-\frac{2}{5},\\%
&-K_Y\cdot \tilde{L}_{zw}=-K_X\cdot L_{zw}-\frac{1}{2}E\cdot\tilde{L}_{zw}=-\frac{2}{5},\\%
\endaligned\right.%
$$
and hence  $-K_{Y}\cdot \tilde{C}_{\lambda, \mu}=0$.

\item Finally, we suppose that $b_1=0$ and $b_2=0$. Then  $b_3$ must be
non-zero since $X_{11}$ is quasi-smooth. We may assume that $b_3=1
$ and $b_4=0$ by a suitable coordinate change. Let $Z_{\lambda}$
be the curve on the surface $S$ defined by
$$
\left\{%
\aligned
&x=0,\\%
&t=\lambda y^3\\%
\endaligned\right.%
$$
for a sufficiently general complex number  $\lambda$. Then
$Z_{\lambda}=L_{zw}+C_{\lambda }$, where $C_{\lambda}$ is an
irreducible and reduced curve. We have

$$
\left\{%
\aligned
&-K_{Y}\cdot (\tilde{L}_{zw}+\tilde{C}_{\lambda})=(B-E)(3B+E)B=-\frac{2}{5},\\%
&-K_Y\cdot \tilde{L}_{zw}=-K_X\cdot L_{zw}-\frac{1}{2}E\cdot\tilde{L}_{zw}=-\frac{2}{5},\\%
\endaligned\right.%
$$
and hence  $-K_{Y}\cdot \tilde{C}_{\lambda}=0$.

\end{Note}
%%%%%%%%%%%%%%%%%%%%%%%%%%%%%%%%%%%%%%%%%%%%%%%%%%%%%%%%%%%%%%%%%%%%%%%%%%%%%%%%%%%%%%%%%%%%%%%%%

%%%%%%%%%%%%%%%%%%%%%%%%%%%%%%%%%%%%%%%%%%%%%%%%%%%%%%%%%%%%%%%%%%%%%%%%%%%%%%%%%%%%%%%%%%%%%%%%%

\begin{center}
\begin{longtable}{|l|c|c|c|c|c|}
\hline
\multicolumn{6}{|l|}{\underline{\textbf{No. 14}}: $X_{12}\subset\mathbb{P}(1,1,1,4,6)$\hfill $A^3=1/2$}\\
\multicolumn{6}{|l|}{
\begin{minipage}[m]{.86\linewidth}
%\begin{center}
\vspace*{1.2mm} $w^2+t^3+wf_{6}(x,y,z,t)+f_{12}(x,y,z,t)$
\vspace*{1.2mm}
%\end{center}
\end{minipage}
}\\
\hline \hline
\begin{minipage}[m]{.28\linewidth}
\begin{center}
Singularity
\end{center}
\end{minipage}&
\begin{minipage}[m]{.04\linewidth}
\begin{center}
$B^3$
\end{center}
\end{minipage}&
\begin{minipage}[m]{.11\linewidth}
\begin{center}
Linear

system
\end{center}
\end{minipage}&
\begin{minipage}[m]{.11\linewidth}
\begin{center}
Surface $T$
\end{center}
\end{minipage}&
\begin{minipage}[m]{.11\linewidth}
\begin{center}
\vspace*{1mm}
 \vorder
\vspace*{1mm}
\end{center}
\end{minipage}&
\begin{minipage}[m]{.18\linewidth}
\begin{center}
Condition
\end{center}
\end{minipage}\\
\hline
\begin{minipage}[m]{.28\linewidth}

$O_tO_w=1\times\frac{1}{2}(1_x,1_y,1_z)$ \boundary

\end{minipage}&
\begin{minipage}[m]{.04\linewidth}
\begin{center}
$0$
\end{center}
\end{minipage}&
\begin{minipage}[m]{.11\linewidth}
\begin{center}
$B$
\end{center}
\end{minipage}&
\begin{minipage}[m]{.11\linewidth}
\begin{center}
$y$
\end{center}
\end{minipage}
&
\begin{minipage}[m]{.11\linewidth}
\begin{center}
$y$
\end{center}
\end{minipage}&
\begin{minipage}[m]{.18\linewidth}
\begin{center}

\end{center}
\end{minipage}\\
\hline

\end{longtable}
\end{center}

\begin{Note}

\item
The curve defined by $x=y=0$ is irreducible because we have the
monomials $w^2$ and $t^3$ in the quasi-homogenous polynomial
defining $X_{12}$. Therefore, the $1$-cycle $\Gamma$ is
irreducible.
\end{Note}
%%%%%%%%%%%%%%%%%%%%%%%%%%%%%%%%%%%%%%%%%%%%%%%%%%%%%%%%%%%%%%
%%%%%%%%%%%%%%%%%%%%%%%%%%%%%%%%%%%%%%%%%%%%%%%%%%%%%%%%%%%%%%%%%%

%%%%%%%%%%%%%%%%%%%%%%%%%%%%%%%%%%%%%%%%%%%%%%%%%%%%%%%%%%%%%%%%%%%%%%%%%%%%%%%%%%%%%%%%

%%%%%%%%%%%%%%%%%%%%%%%%%%%%%%%%%%%%%%%%%%%%%%%%%%%%%%%%%%%%%%%%%%%%%%%%%%%%%%%%%%%%%%%%
\begin{center}
\begin{longtable}{|l|c|c|c|c|c|}
\hline
\multicolumn{6}{|l|}{\textbf{No. 15}: $X_{12}\subset\mathbb{P}(1,1,2,3,6)$\hfill $A^3=1/3$}\\
\multicolumn{6}{|l|}{
\begin{minipage}[m]{.86\linewidth}
\vspace*{1.2mm} $(w-\alpha_1z^3)(w-\alpha_2z^3)+t^2w
+wf_6(x,y,z,t)+tg_{9}(x,y,z)+g_{12}(x,y,z)$ \vspace*{1.2mm}
\vspace*{1.2mm}
\end{minipage}
}\\
\hline \hline
\begin{minipage}[m]{.28\linewidth}
\begin{center}
Singularity
\end{center}
\end{minipage}&
\begin{minipage}[m]{.04\linewidth}
\begin{center}
$B^3$
\end{center}
\end{minipage}&
\begin{minipage}[m]{.11\linewidth}
\begin{center}
Linear

system
\end{center}
\end{minipage}&
\begin{minipage}[m]{.11\linewidth}
\begin{center}
Surface $T$
\end{center}
\end{minipage}&
\begin{minipage}[m]{.11\linewidth}
\begin{center}
\vspace*{1mm}
 \vorder
\vspace*{1mm}
\end{center}
\end{minipage}&
\begin{minipage}[m]{.18\linewidth}
\begin{center}
Condition
\end{center}
\end{minipage}\\
\hline
\begin{minipage}[m]{.28\linewidth}

$O_tO_w=2\times\frac{1}{3}(1,1,2)$ \quadratic

\end{minipage}&
\multicolumn{4}{|l|}{\begin{minipage}[m]{.37\linewidth}
\begin{center}
$wt^2$
\end{center}
\end{minipage}}&
\begin{minipage}[m]{.18\linewidth}
\begin{center}

\end{center}
\end{minipage}\\
\hline
\begin{minipage}[m]{.28\linewidth}

$O_zO_w=2\times\frac{1}{2}(1_x,1_y,1_t)$ \boundary

\end{minipage}&
\begin{minipage}[m]{.04\linewidth}
\begin{center}
$-$
\end{center}
\end{minipage}&
\begin{minipage}[m]{.11\linewidth}
\begin{center}
$B$
\end{center}
\end{minipage}&
\begin{minipage}[m]{.11\linewidth}
\begin{center}
$y$
\end{center}
\end{minipage}
&
\begin{minipage}[m]{.11\linewidth}
\begin{center}
$y$
\end{center}
\end{minipage}&
\begin{minipage}[m]{.18\linewidth}
\begin{center}
 $\alpha_1\alpha_2\ne0$
\end{center}
\end{minipage}\\
\hline
\begin{minipage}[m]{.28\linewidth}

$O_zO_w=2\times\frac{1}{2}(1_x,1_y,1_t)$ \surface

\end{minipage}&
\begin{minipage}[m]{.04\linewidth}
\begin{center}
$-$
\end{center}
\end{minipage}&
\begin{minipage}[m]{.11\linewidth}
\begin{center}
$B$
\end{center}
\end{minipage}&
\begin{minipage}[m]{.11\linewidth}
\begin{center}
$x$, $y$
\end{center}
\end{minipage}
&
\begin{minipage}[m]{.11\linewidth}
\begin{center}
$x, y$
\end{center}
\end{minipage}&
\begin{minipage}[m]{.18\linewidth}
\begin{center}
$\alpha_1\alpha_2=0$
\end{center}
\end{minipage}\\

\hline
\end{longtable}
\end{center}

\begin{Note}

\item
To see how to deal with the singular
points of type $\frac{1}{3}(1,1,2)$ we have only to consider the
singular point $O_t$. The other point can be treated in the same
way after a suitable coordinate change.

\item The $1$-cycle $\Gamma$ for each singular point of type
$\frac{1}{2}(1,1,1)$ with $\alpha_1\alpha_2\ne 0$  is irreducible   since $(w-\alpha_1z^3)(w-\alpha_2z^3)+t^2w$ is irreducible.

\item For the singular points of type $\frac{1}{2}(1,1,1)$ with $\alpha_1\alpha_2 = 0$, we suppose that $\alpha_2=0$. Then $\alpha_1\ne 0$.
We take a general surface $H$ from the pencil $|-K_{X_{12}}|$.
The intersection of  its proper transform $T$ and the surface $S$
defines a divisor consisting of two irreducible curves on the
normal surface $T$. One is the proper transform  of the curve
$L_{zt}$. The other is the proper transform of the curve $C$
defined by $$x=y=w- \alpha_1 z^3+t^2=0.$$ From the intersection numbers
\[(\tilde{L}_{zt}+\tilde{C})\cdot\tilde{L}_{zt}=-K_Y\cdot\tilde{L}_{zt}=-\frac{1}{3},\ \ \ (\tilde{L}_{zt}+\tilde{C})^2=B^3=-\frac{1}{6}\]
on the surface $T$, we obtain
\[\tilde{L}_{zt}^2=-\frac{1}{3}-\tilde{L}_{zt}\cdot\tilde{C}, \ \ \ \tilde{C}^2=\frac{1}{6}-\tilde{L}_{zt}\cdot\tilde{C}\]
With these intersection numbers we see that the matrix
\[\left(\begin{array}{cc}
       \tilde{L}_{zt}^2&\tilde{L}_{zt}\cdot\tilde{C}\\
       \tilde{L}_{zt}\cdot\tilde{C}& \tilde{C}^2\\
\end{array}\right)= \left(\begin{array}{cc}
-\frac{1}{3}-\tilde{L}_{zt}\cdot\tilde{C} & \tilde{L}_{zt}\cdot\tilde{C}\\
     \tilde{L}_{zt}\cdot\tilde{C} & \frac{1}{6} -\tilde{L}_{zt}\cdot\tilde{C} \\
        \end{array}\right)
\]
is negative-definite since $\tilde{L}_{zt}\cdot\tilde{C}=1$.

\end{Note}
%%%%%%%%%%%%%%%%%%%%%%%%%%%%%%%%%%%%%%%%%%%%%%%%%%%%%%%%%%%%%%%%%%%%%%%%%%%%%%%%%%%%%%%%

\begin{center}
\begin{longtable}{|l|c|c|c|c|c|}
\hline
\multicolumn{6}{|l|}{\textbf{No. 16}: $X_{12}\subset\mathbb{P}(1,1,2,4,5)$\hfill $A^3=3/10$}\\
\multicolumn{6}{|l|}{
\begin{minipage}[m]{.86\linewidth}
%\begin{center}
\vspace*{1.2mm}

$zw^2+(t-\alpha_1z^2)(t-\alpha_2z^2)(t-\alpha_3z^2)+wf_{7}(x,y,z,t)+f_{12}(x,y,z,t)$
\vspace*{1.2mm}
%\end{center}
\end{minipage}
}\\
\hline \hline
\begin{minipage}[m]{.28\linewidth}
\begin{center}
Singularity
\end{center}
\end{minipage}&
\begin{minipage}[m]{.04\linewidth}
\begin{center}
$B^3$
\end{center}
\end{minipage}&
\begin{minipage}[m]{.11\linewidth}
\begin{center}
Linear

system
\end{center}
\end{minipage}&
\begin{minipage}[m]{.11\linewidth}
\begin{center}
Surface $T$
\end{center}
\end{minipage}&
\begin{minipage}[m]{.11\linewidth}
\begin{center}
\vspace*{1mm}
 \vorder
\vspace*{1mm}
\end{center}
\end{minipage}&
\begin{minipage}[m]{.18\linewidth}
\begin{center}
Condition
\end{center}
\end{minipage}\\
\hline
\begin{minipage}[m]{.28\linewidth}

$O_w=\frac{1}{5}(1,1,4)$ \quadratic

\end{minipage}&
\multicolumn{4}{|l|}{\begin{minipage}[m]{.37\linewidth}
\begin{center}
$zw^2$
\end{center}
\end{minipage}}&
\begin{minipage}[m]{.18\linewidth}
\begin{center}

\end{center}
\end{minipage}\\
\hline
\begin{minipage}[m]{.28\linewidth}

$O_zO_t=3\times\frac{1}{2}(1_x,1_y,1_w)$ \boundary

\end{minipage}&
\begin{minipage}[m]{.04\linewidth}
\begin{center}
$-$
\end{center}
\end{minipage}&
\begin{minipage}[m]{.11\linewidth}
\begin{center}
$B$
\end{center}
\end{minipage}&
\begin{minipage}[m]{.11\linewidth}
\begin{center}
$y$
\end{center}
\end{minipage}
&
\begin{minipage}[m]{.11\linewidth}
\begin{center}
$y$
\end{center}
\end{minipage}&
\begin{minipage}[m]{.18\linewidth}
\begin{center}

\end{center}
\end{minipage}\\

\hline
\end{longtable}
\end{center}

\begin{Note}

\item
The $1$-cycle $\Gamma$ for each singular point of type
$\frac{1}{2}(1,1,1)$ is irreducible  due to $zw^2$ and $t^3$.
\end{Note}

%%%%%%%%%%%%%%%%%%%%%%%%%%%%%%%%%%%%%%%%%%%%%%%%%%%%%%%%%%%%%%%%%%

%%%%%%%%%%%%%%%%%%%%%%%%%%%%%%%%%%%%%%%%%%%%%%%%%%%%%%%%%%%%%%%%%%%%%%%%%%%%%%%%%%%%%%%%

\begin{center}
\begin{longtable}{|l|c|c|c|c|c|}
\hline
\multicolumn{6}{|l|}{\textbf{No. 17}: $X_{12}\subset\mathbb{P}(1,1,3,4,4)$\hfill $A^3=1/4$}\\
\multicolumn{6}{|l|}{
\begin{minipage}[m]{.86\linewidth}
%\begin{center}
\vspace*{1.2mm}

$(t-\alpha_1w)(t-\alpha_2w)(t-\alpha_3w)+z^4+wf_{8}(x,y,z,t)+f_{12}(x,y,z,t)$
\vspace*{1.2mm}
%\end{center}
\end{minipage}
}\\
\hline \hline
\begin{minipage}[m]{.28\linewidth}
\begin{center}
Singularity
\end{center}
\end{minipage}&
\begin{minipage}[m]{.04\linewidth}
\begin{center}
$B^3$
\end{center}
\end{minipage}&
\begin{minipage}[m]{.11\linewidth}
\begin{center}
Linear

system
\end{center}
\end{minipage}&
\begin{minipage}[m]{.11\linewidth}
\begin{center}
Surface $T$
\end{center}
\end{minipage}&
\begin{minipage}[m]{.11\linewidth}
\begin{center}
\vspace*{1mm}
 \vorder
\vspace*{1mm}
\end{center}
\end{minipage}&
\begin{minipage}[m]{.18\linewidth}
\begin{center}
Condition
\end{center}
\end{minipage}\\
\hline
\begin{minipage}[m]{.28\linewidth}

$O_tO_w=3\times\frac{1}{4}(1,1,3)$ \quadratic

\end{minipage}&
\multicolumn{4}{|l|}{\begin{minipage}[m]{.37\linewidth}
\begin{center}
$tw^2$
\end{center}
\end{minipage}}&
\begin{minipage}[m]{.18\linewidth}
\begin{center}

\end{center}
\end{minipage}\\

\hline
\end{longtable}
\end{center}

\begin{Note}

\item 
To see how to deal with the singular points of type
$\frac{1}{4}(1,1,3)$ we may assume that $\alpha_1=0$. We then
consider the singular point $O_w$. The other points can be treated
in the same way.
\end{Note}
%%%%%%%%%%%%%%%%%%%%%%%%%%%%%%%%%%%%%%%%%%%%%%%%%%%%%%%%%%%%%%%%%%%%%%%%%%%%%%%%%%%%%%%%
%%%%%%%%%%%%%%%%%%%%%%%%%%%%%%%%%%%%%%%%%%%%%%%%%%%%%%%%%%%%%%%%%%%%%%%%%%%%%%%%%%%%%%%%

\begin{center}
\begin{longtable}{|l|c|c|c|c|c|}
\hline
\multicolumn{6}{|l|}{\textbf{No. 18}: $X_{12}\subset\mathbb{P}(1,2,2,3,5)$\hfill $A^3=1/5$}\\
\multicolumn{6}{|l|}{
\begin{minipage}[m]{.86\linewidth}
%\begin{center}
\vspace*{1.2mm}
$yw^2+t^4+\prod_{i=1}^6(y-\alpha_iz)+wf_7(x,y,z,t)+f_{12}(x,y,z,t)$
\vspace*{1.2mm}
%\end{center}
\end{minipage}
}\\
\hline \hline
\begin{minipage}[m]{.28\linewidth}
\begin{center}
Singularity
\end{center}
\end{minipage}&
\begin{minipage}[m]{.04\linewidth}
\begin{center}
$B^3$
\end{center}
\end{minipage}&
\begin{minipage}[m]{.11\linewidth}
\begin{center}
Linear

system
\end{center}
\end{minipage}&
\begin{minipage}[m]{.11\linewidth}
\begin{center}
Surface $T$
\end{center}
\end{minipage}&
\begin{minipage}[m]{.11\linewidth}
\begin{center}
\vspace*{1mm}
 \vorder
\vspace*{1mm}
\end{center}
\end{minipage}&
\begin{minipage}[m]{.18\linewidth}
\begin{center}
Condition
\end{center}
\end{minipage}\\
\hline
\begin{minipage}[m]{.28\linewidth}

$O_w=\frac{1}{5}(1,2,3)$ \quadratic

\end{minipage}&
\multicolumn{4}{|l|}{\begin{minipage}[m]{.37\linewidth}
\begin{center}
$yw^2$
\end{center}
\end{minipage}}&
\begin{minipage}[m]{.18\linewidth}
\begin{center}

\end{center}
\end{minipage}\\

\hline
\begin{minipage}[m]{.28\linewidth}

$O_yO_z=6\times\frac{1}{2}(1_x,1_t,1_w)$ \boundary
\end{minipage}&
\begin{minipage}[m]{.04\linewidth}
\begin{center}
$-$
\end{center}
\end{minipage}&
\begin{minipage}[m]{.11\linewidth}
\begin{center}
$2B$
\end{center}
\end{minipage}&
\begin{minipage}[m]{.11\linewidth}
\begin{center}
$y-\alpha_i z$
\end{center}
\end{minipage}
&
\begin{minipage}[m]{.11\linewidth}
\begin{center}
$zw^2$
\end{center}
\end{minipage}&
\begin{minipage}[m]{.18\linewidth}
\begin{center}

\end{center}
\end{minipage}\\

\hline
\end{longtable}
\end{center}

\begin{Note}

\item 

The $1$-cycle $\Gamma$ for each singular point of type
$\frac{1}{2}(1,1,1)$ is irreducible due to $yw^2$ and $t^4$.
\end{Note}

%%%%%%%%%%%%%%%%%%%%%%%%%%%%%%%%%%%%%%%%%%%%%%%%%%%%%%%%%%%%%%
%%%%%%%%%%%%%%%%%%%%%%%%%%%%%%%%%%%%%%%%%%%%%%%%%%%%%%%%%%%%%%%%%%

\begin{center}
\begin{longtable}{|l|c|c|c|c|c|}
\hline
\multicolumn{6}{|l|}{\underline{\textbf{No. 19}}: $X_{12}\subset\mathbb{P}(1,2,3,3,4)$\hfill $A^3=1/6$}\\
\multicolumn{6}{|l|}{
\begin{minipage}[m]{0.9\linewidth}
%\begin{center}
\vspace*{1.2mm} $(w-\alpha_1 y^2)(w-\alpha_2 y^2)(w-\alpha_3
y^2)+(z-\beta_1 t)(z-\beta_2 t)(z-\beta_3 t)(z-\beta_4
t)+w^2f_{4}(x,y,z,t)+wf_{8}(x,y,z,t)+f_{12}(x,y,z,t)$
\vspace*{1.2mm}
%\end{center}
\end{minipage}
}\\
\hline \hline
\begin{minipage}[m]{.28\linewidth}
\begin{center}
Singularity
\end{center}
\end{minipage}&
\begin{minipage}[m]{.04\linewidth}
\begin{center}
$B^3$
\end{center}
\end{minipage}&
\begin{minipage}[m]{.11\linewidth}
\begin{center}
Linear

system
\end{center}
\end{minipage}&
\begin{minipage}[m]{.11\linewidth}
\begin{center}
Surface $T$
\end{center}
\end{minipage}&
\begin{minipage}[m]{.11\linewidth}
\begin{center}
\vspace*{1mm}
 \vorder
\vspace*{1mm}
\end{center}
\end{minipage}&
\begin{minipage}[m]{.18\linewidth}
\begin{center}
Condition
\end{center}
\end{minipage}\\
\hline
\begin{minipage}[m]{.28\linewidth}

$O_yO_w=3\times\frac{1}{2}(1_x,1_z,1_t)$ $\nef$

\end{minipage}&
\begin{minipage}[m]{.04\linewidth}
\begin{center}
$-$
\end{center}
\end{minipage}&
\begin{minipage}[m]{.11\linewidth}
\begin{center}
$3B+E$
\end{center}
\end{minipage}&
\begin{minipage}[m]{.11\linewidth}
\begin{center}
$xy$, $z$, $t$
\end{center}
\end{minipage}
&
\begin{minipage}[m]{.11\linewidth}
\begin{center}
$xy$, $z$, $t$
\end{center}
\end{minipage}&
\begin{minipage}[m]{.18\linewidth}
\begin{center}

\end{center}
\end{minipage}\\
\hline
\begin{minipage}[m]{.28\linewidth}

$O_zO_t=4\times\frac{1}{3}(1_x,2_y,1_w)$ \boundary

\end{minipage}&
\begin{minipage}[m]{.04\linewidth}
\begin{center}
$0$
\end{center}
\end{minipage}&
\begin{minipage}[m]{.11\linewidth}
\begin{center}
$2B$
\end{center}
\end{minipage}&
\begin{minipage}[m]{.11\linewidth}
\begin{center}
$y$
\end{center}
\end{minipage}
&
\begin{minipage}[m]{.11\linewidth}
\begin{center}
$y$
\end{center}
\end{minipage}&
\begin{minipage}[m]{.18\linewidth}
\begin{center}

\end{center}
\end{minipage}\\
\hline

\end{longtable}
\end{center}

%The hypersurface $X_{12}$ may be assumed to be
%defined by the quasihomogenous polynomial equation of the form
%\[(w-\alpha_1 y^2)(w-\alpha_2 y^2)(w-\alpha_3 y^2)+z(z-\beta_1 t)(z-\beta_2 t)+xf_{11}(x,y,z,t,w)+yf_{10}(y,z,t,w)=0,\] %where $\alpha_i$ and $\beta_j$ are constants, $f_i$'s are
%quasi-homogeneous polynomials of degrees $i$ and the polynomial
%$f_{10}$ does not contain the monomial $y^{5}$.

\begin{Note}

\item 
The divisor $T$ for each singular point of type
$\frac{1}{2}(1,1,1)$ is nef since the linear system generated by
$xy$, $z$, $t$ has no base curve.

\item
The $1$-cycle $\Gamma$ for each singular point of type
$\frac{1}{3}(1,2,1)$ is irreducible since the curve cut by $x=y=0$
is irreducible.
\end{Note}

%%%%%%%%%%%%%%%%%%%%%%%%%%%%%%%%%%%%%%%%%%%%%%%%%%%%%%%%%%%%%%

%%%%%%%%%%%%%%%%%%%%%%%%%%%%%%%%%%%%%%%%%%%%%%%%%%%%%%%%%%%%%%%%%%

%%%%%%%%%%%%%%%%%%%%%%%%%%%%%%%%%%%%%%%%%%%%%%%%%%%%%%%%%%%%%%%%%%

%%%%%%%%%%%%%%%%%%%%%%%%%%%%%%%%%%%%%%%%%%%%%%%%%%%%%%%%%%%%%%%%%%%%%%%%%%%%%%%%%%%%%%%%

\begin{center}
\begin{longtable}{|l|c|c|c|c|c|}
\hline
\multicolumn{6}{|l|}{\textbf{No. 20}: $X_{13}\subset\mathbb{P}(1,1,3,4,5)$\hfill $A^3=13/60$}\\
\multicolumn{6}{|l|}{
\begin{minipage}[m]{.86\linewidth}
%\begin{center}
\vspace*{1.2mm}

$zw^2+t^2(a_1w+a_2yt)-z^3(b_1t+b_2yz+b_3xz)+wf_8(x,y,z,t)+f_{13}(x,y,z,t)$
\vspace*{1.2mm}
%\end{center}
\end{minipage}
}\\
\hline \hline
\begin{minipage}[m]{.28\linewidth}
\begin{center}
Singularity
\end{center}
\end{minipage}&
\begin{minipage}[m]{.04\linewidth}
\begin{center}
$B^3$
\end{center}
\end{minipage}&
\begin{minipage}[m]{.11\linewidth}
\begin{center}
Linear

system
\end{center}
\end{minipage}&
\begin{minipage}[m]{.11\linewidth}
\begin{center}
Surface $T$
\end{center}
\end{minipage}&
\begin{minipage}[m]{.11\linewidth}
\begin{center}
\vspace*{1mm}
 \vorder
\vspace*{1mm}
\end{center}
\end{minipage}&
\begin{minipage}[m]{.18\linewidth}
\begin{center}
Condition
\end{center}
\end{minipage}\\
\hline
\begin{minipage}[m]{.28\linewidth}

$O_w=\frac{1}{5}(1,1,4)$ \quadratic

\end{minipage}&
\multicolumn{4}{|l|}{\begin{minipage}[m]{.37\linewidth}
\begin{center}
$zw^2$
\end{center}
\end{minipage}}&
\begin{minipage}[m]{.18\linewidth}
\begin{center}

\end{center}
\end{minipage}\\
\hline
\begin{minipage}[m]{.28\linewidth}

$O_t=\frac{1}{4}(1,1,3)$ \quadraticone

\end{minipage}&
\multicolumn{4}{|l|}{\begin{minipage}[m]{.37\linewidth}
\begin{center}
$wt^2$
\end{center}
\end{minipage}}&
\begin{minipage}[m]{.18\linewidth}
\begin{center}

\end{center}
\end{minipage}\\
\hline
\begin{minipage}[m]{.28\linewidth}

$O_z=\frac{1}{3}(1,1,2)$ \elliptictwo

\end{minipage}&\multicolumn{4}{|l|}{\begin{minipage}[m]{.37\linewidth}
\begin{center}
$zw^2-tz^3$
\end{center}
\end{minipage}}&
\begin{minipage}[m]{.18\linewidth}
\begin{center}
\end{center}
\end{minipage}\\

\hline
\end{longtable}
\end{center}

%%%%%%%%%%%%%%%%%%%%%%%%%%%%%%%%%%%%%%%%%%%%%%%%%%%%%%%%%%%%%%

%%%%%%%%%%%%%%%%%%%%%%%%%%%%%%%%%%%%%%%%%%%%%%%%%%%%%%%%%%%%%%%%%%

\begin{center}
\begin{longtable}{|l|c|c|c|c|c|}
\hline
\multicolumn{6}{|l|}{\underline{\textbf{No. 21}}: $X_{14}\subset\mathbb{P}(1,1,2,4,7)$\hfill $A^3=1/4$}\\
\multicolumn{6}{|l|}{
\begin{minipage}[m]{.86\linewidth}
%\begin{center}
\vspace*{1.2mm} $w^2+z(t-\alpha_1 z^2)(t-\alpha_2 z^2)(t-\alpha_3
z^2)+wf_7(x,y,z,t)+f_{14}(x,y,z,t)$ \vspace*{1.2mm}
%\end{center}
\end{minipage}
}\\
\hline \hline
\begin{minipage}[m]{.28\linewidth}
\begin{center}
Singularity
\end{center}
\end{minipage}&
\begin{minipage}[m]{.04\linewidth}
\begin{center}
$B^3$
\end{center}
\end{minipage}&
\begin{minipage}[m]{.11\linewidth}
\begin{center}
Linear

system
\end{center}
\end{minipage}&
\begin{minipage}[m]{.11\linewidth}
\begin{center}
Surface $T$
\end{center}
\end{minipage}&
\begin{minipage}[m]{.11\linewidth}
\begin{center}
\vspace*{1mm}
 \vorder
\vspace*{1mm}
\end{center}
\end{minipage}&
\begin{minipage}[m]{.18\linewidth}
\begin{center}
Condition
\end{center}
\end{minipage}\\
\hline
\begin{minipage}[m]{.28\linewidth}

$O_t=\frac{1}{4}(1_x,1_y,3_w)$ $\positive$

\end{minipage}&
\begin{minipage}[m]{.04\linewidth}
\begin{center}
$+$
\end{center}
\end{minipage}&
\begin{minipage}[m]{.11\linewidth}
\begin{center}
$2B-E$
\end{center}
\end{minipage}&
\begin{minipage}[m]{.11\linewidth}
\begin{center}
$z$
\end{center}
\end{minipage}
&
\begin{minipage}[m]{.11\linewidth}
\begin{center}
$w^2$
\end{center}
\end{minipage}&
\begin{minipage}[m]{.18\linewidth}
\begin{center}

\end{center}
\end{minipage}\\
\hline
\begin{minipage}[m]{.28\linewidth}

$O_zO_t=3\times\frac{1}{2}(1_x,1_y,1_w)$ \boundary

\end{minipage}&
\begin{minipage}[m]{.04\linewidth}
\begin{center}
$-$
\end{center}
\end{minipage}&
\begin{minipage}[m]{.11\linewidth}
\begin{center}
$B$
\end{center}
\end{minipage}&
\begin{minipage}[m]{.11\linewidth}
\begin{center}
$y$
\end{center}
\end{minipage}
&
\begin{minipage}[m]{.11\linewidth}
\begin{center}
$y$
\end{center}
\end{minipage}&
\begin{minipage}[m]{.18\linewidth}
\begin{center}

\end{center}
\end{minipage}\\
\hline

\end{longtable}
\end{center}

\begin{Note}

\item 
The curve defined by $x=y=0$ is irreducible, and hence the
$1$-cycle $\Gamma$ for the singularities of type
$\frac{1}{2}(1,1,1)$ is irreducible.
\end{Note}
%%%%%%%%%%%%%%%%%%%%%%%%%%%%%%%%%%%%%%%%%%%%%%%%%%%%%%%%%%%%%%

%%%%%%%%%%%%%%%%%%%%%%%%%%%%%%%%%%%%%%%%%%%%%%%%%%%%%%%%%%%%%%%%%%

\begin{center}
\begin{longtable}{|l|c|c|c|c|c|}
\hline
\multicolumn{6}{|l|}{\underline{\textbf{No. 22}}: $X_{14}\subset\mathbb{P}(1,2,2,3,7)$\hfill $A^3=1/6$}\\
\multicolumn{6}{|l|}{
\begin{minipage}[m]{.86\linewidth}
%\begin{center}
\vspace*{1.2mm} $w^2+zt^4+h_{14}(y,z)+wf_{7}(x,y,z,t)+
t^3g_{5}(x,y,z)+t^2g_{8}(x,y,z)+tg_{11}(x,y,z)+g_{14}(x,y,z)$
\vspace*{1.2mm}
%\end{center}
\end{minipage}
}\\
\hline \hline
\begin{minipage}[m]{.28\linewidth}
\begin{center}
Singularity
\end{center}
\end{minipage}&
\begin{minipage}[m]{.04\linewidth}
\begin{center}
$B^3$
\end{center}
\end{minipage}&
\begin{minipage}[m]{.11\linewidth}
\begin{center}
Linear

system
\end{center}
\end{minipage}&
\begin{minipage}[m]{.11\linewidth}
\begin{center}
Surface $T$
\end{center}
\end{minipage}&
\begin{minipage}[m]{.11\linewidth}
\begin{center}
\vspace*{1mm}
 \vorder
\vspace*{1mm}
\end{center}
\end{minipage}&
\begin{minipage}[m]{.18\linewidth}
\begin{center}
Condition
\end{center}
\end{minipage}\\
\hline
\begin{minipage}[m]{.28\linewidth}

$O_t=\frac{1}{3}(1_x,2_y,1_w)$ \boundary

\end{minipage}&
\begin{minipage}[m]{.04\linewidth}
\begin{center}
$0$
\end{center}
\end{minipage}&
\begin{minipage}[m]{.11\linewidth}
\begin{center}
$2B$
\end{center}
\end{minipage}&
\begin{minipage}[m]{.11\linewidth}
\begin{center}
$y$
\end{center}
\end{minipage}
&
\begin{minipage}[m]{.11\linewidth}
\begin{center}
$y$
\end{center}
\end{minipage}&
\begin{minipage}[m]{.18\linewidth}
\begin{center}

\end{center}
\end{minipage}\\
\hline
\begin{minipage}[m]{.28\linewidth}

$O_yO_z=7\times\frac{1}{2}(1_x,1_t,1_w)$ \boundary

\end{minipage}&
\begin{minipage}[m]{.04\linewidth}
\begin{center}
$-$
\end{center}
\end{minipage}&
\begin{minipage}[m]{.11\linewidth}
\begin{center}
$2B$
\end{center}
\end{minipage}&
\begin{minipage}[m]{.11\linewidth}
\begin{center}
$y-\alpha_iz$
\end{center}
\end{minipage}
&
\begin{minipage}[m]{.11\linewidth}
\begin{center}
$w^2$
\end{center}
\end{minipage}&
\begin{minipage}[m]{.18\linewidth}
\begin{center}

\end{center}
\end{minipage}\\
\hline

\end{longtable}
\end{center}

Note that the homogenous polynomial $h_{14}$ cannot be divisible
by $z$ since the hypersurface $X_{14}$ is quasi-smooth. Therefore,
we may write
\[h_{14}(y,z)=\prod_{i=1}^7(y-\alpha_i z). \]  

\begin{Note}

\item 
The curve defined by $x=y=0$ is irreducible
because we have the monomials $w^2$ and $zt^4$.  

\item 
The curves
defined by $x=y-\alpha_i z=0$ are also irreducible for
 the same reason. Therefore, the $1$-cycle $\Gamma$  for each singular point is irreducible.
\end{Note}
%%%%%%%%%%%%%%%%%%%%%%%%%%%%%%%%%%%%%%%%%%%%%%%%%%%%%%%%%%%%%%
%%%%%%%%%%%%%%%%%%%%%%%%%%%%%%%%%%%%%%%%%%%%%%%%%%%%%%%%%%%%%%

\begin{center}
\begin{longtable}{|l|c|c|c|c|c|}
\hline
\multicolumn{6}{|l|}{\textbf{No. 23}: $X_{14}\subset\mathbb{P}(1,2,3,4,5)$\hfill $A^3=7/60$}\\
\multicolumn{6}{|l|}{
\begin{minipage}[m]{.86\linewidth}
%\begin{center}
\vspace*{1.2mm} $(t+by^2)w^2+
y(t-\alpha_1y^2)(t-\alpha_2y^2)(t-\alpha_3y^2)+
z^3(a_1w+a_2yz)+cz^2t^2+wf_9(x,y,z,t)+f_{14}(x,y,z,t)$
\vspace*{1.2mm}
%\end{center}
\end{minipage}
}\\
\hline \hline
\begin{minipage}[m]{.28\linewidth}
\begin{center}
Singularity
\end{center}
\end{minipage}&
\begin{minipage}[m]{.04\linewidth}
\begin{center}
$B^3$
\end{center}
\end{minipage}&
\begin{minipage}[m]{.11\linewidth}
\begin{center}
Linear

system
\end{center}
\end{minipage}&
\begin{minipage}[m]{.11\linewidth}
\begin{center}
Surface $T$
\end{center}
\end{minipage}&
\begin{minipage}[m]{.11\linewidth}
\begin{center}
\vspace*{1mm}
 \vorder
\vspace*{1mm}
\end{center}
\end{minipage}&
\begin{minipage}[m]{.18\linewidth}
\begin{center}
Condition
\end{center}
\end{minipage}\\
\hline
\begin{minipage}[m]{.28\linewidth}

$O_w=\frac{1}{5}(1,2,3)$ \quadratic

\end{minipage}&
\multicolumn{4}{|l|}{\begin{minipage}[m]{.37\linewidth}
\begin{center}
$tw^2$
\end{center}
\end{minipage}}&
\begin{minipage}[m]{.18\linewidth}
\begin{center}

\end{center}
\end{minipage}\\
\hline
\begin{minipage}[m]{.28\linewidth}

$O_t=\frac{1}{4}(1,3,1)$ \elliptic

\end{minipage}&
\multicolumn{4}{|l|}{\begin{minipage}[m]{.37\linewidth}
\begin{center}
$tw^2+yt^3$
\end{center}
\end{minipage}}&
\begin{minipage}[m]{.18\linewidth}
\begin{center}

\end{center}
\end{minipage}\\
\hline
\begin{minipage}[m]{.28\linewidth}

$O_z=\frac{1}{3}(1_x,2_y,1_t)$ \boundary

\end{minipage}&
\begin{minipage}[m]{.04\linewidth}
\begin{center}
$-$
\end{center}
\end{minipage}&
\begin{minipage}[m]{.11\linewidth}
\begin{center}
$2B$
\end{center}
\end{minipage}&
\begin{minipage}[m]{.11\linewidth}
\begin{center}
$y$
\end{center}
\end{minipage}
&
\begin{minipage}[m]{.11\linewidth}
\begin{center}
$y$
\end{center}
\end{minipage}&
\begin{minipage}[m]{.18\linewidth}
\begin{center}
$c\not =0$,  $a_1\not =0$

\end{center}
\end{minipage}\\

\hline
\begin{minipage}[m]{.28\linewidth}

$O_z=\frac{1}{3}(1_x,1_t, 2_w)$ $\surface$

\end{minipage}&
\begin{minipage}[m]{.04\linewidth}
\begin{center}
$-$
\end{center}
\end{minipage}&
\begin{minipage}[m]{.11\linewidth}
\begin{center}
$2B$
\end{center}
\end{minipage}&
\begin{minipage}[m]{.11\linewidth}
\begin{center}
$x^2$, $y$
\end{center}
\end{minipage}
&
\begin{minipage}[m]{.11\linewidth}
\begin{center}
$x^2, z^2t^2$
\end{center}
\end{minipage}&
\begin{minipage}[m]{.18\linewidth}
\begin{center}
$c\not =0$,  $a_1=0$
\end{center}
\end{minipage}\\
\hline
\begin{minipage}[m]{.28\linewidth}

$O_z=\frac{1}{3}(1_x,2_y,1_t)$ \family

\end{minipage}&
\begin{minipage}[m]{.04\linewidth}
\begin{center}
$-$
\end{center}
\end{minipage}&
\begin{minipage}[m]{.11\linewidth}
\begin{center}
$2B$
\end{center}
\end{minipage}&
\begin{minipage}[m]{.11\linewidth}
\begin{center}
$y$
\end{center}
\end{minipage}
&
\begin{minipage}[m]{.11\linewidth}
\begin{center}
$y$
\end{center}
\end{minipage}&
\begin{minipage}[m]{.18\linewidth}
\begin{center}
$c=0$, $a_1\not =0$
\end{center}
\end{minipage}\\
\hline
\begin{minipage}[m]{.28\linewidth}

$O_z=\frac{1}{3}(1_x,1_t,2_w)$ \ellipticfive

\end{minipage}&
\multicolumn{4}{|l|}{\begin{minipage}[m]{.37\linewidth}
\begin{center}

\end{center}
\end{minipage}}&
\begin{minipage}[m]{.18\linewidth}
\begin{center}
$c=0$,  $a_1=0$
\end{center}
\end{minipage}\\

\hline
\begin{minipage}[m]{.28\linewidth}

$O_yO_t=3\times\frac{1}{2}(1_x,1_z,1_w)$ \nef

\end{minipage}&
\begin{minipage}[m]{.04\linewidth}
\begin{center}
$-$
\end{center}
\end{minipage}&
\begin{minipage}[m]{.11\linewidth}
\begin{center}
$3B+E$
\end{center}
\end{minipage}&
\begin{minipage}[m]{.11\linewidth}
\begin{center}
$xy$, $z$
\end{center}
\end{minipage}
&
\begin{minipage}[m]{.11\linewidth}
\begin{center}
$xy$, $z$
\end{center}
\end{minipage}&
\begin{minipage}[m]{.18\linewidth}
\begin{center}
$b\ne 0$
\end{center}
\end{minipage}\\
\hline

\hline
\begin{minipage}[m]{.28\linewidth}

$O_yO_t=3\times\frac{1}{2}(1_x,1_z,1_w)$ \surface

\end{minipage}&
\begin{minipage}[m]{.04\linewidth}
\begin{center}
$-$
\end{center}
\end{minipage}&
\begin{minipage}[m]{.11\linewidth}
\begin{center}
$3B+E$
\end{center}
\end{minipage}&
\begin{minipage}[m]{.11\linewidth}
\begin{center}
$x^3$, $xy$, $z$
\end{center}
\end{minipage}
&
\begin{minipage}[m]{.11\linewidth}
\begin{center}
$xy$, $z$
\end{center}
\end{minipage}&
\begin{minipage}[m]{.18\linewidth}
\begin{center}
$b=0$
\end{center}
\end{minipage}\\
\hline

\end{longtable}
\end{center}

\begin{Note}

\item 
For the singular point $O_z$ with $c\not=0$ and $a_1\ne 0$ the
$1$-cycle $\Gamma$ is irreducible due to the monomials $tw^2$,
$z^3w$ and $z^2t^2$.

\item For the singular point $O_z$ with $c\not=0$ and $a_1= 0$ we may
assume that $a_2=1$ and $c=1$. We take a general surface $H$ from
the pencil $|-2K_{X_{14}}|$ and then let $T$ be the proper
transform of the surface.  The surface $H$ is normal. However, it
is not quasi-smooth at the points $O_z$ and $O_t$. The
intersection of $T$ with the surface $S$ defines a divisor
consisting of two irreducible curves on the normal surface $T$.
One is the proper transform of the curve $L_{zw}$ on $H$.
 The other is the proper transform of the curve $C$ defined by $$x=y=w^2+z^2t=0$$ in $\mathbb{P}(1,2,3,4,5)$.  From the
intersection numbers
\[(\tilde{L}_{zw}+\tilde{C})\cdot\tilde{L}_{zw}=-K_Y\cdot\tilde{L}_{zw}=-\frac{1}{10},\ \ \ (\tilde{L}_{zw}+\tilde{C})^2=2B^3=-\frac{1}{10}\]
on the surface $T$, we obtain
\[\tilde{L}_{zw}^2=-\frac{1}{10}-\tilde{L}_{zw}\cdot\tilde{C}, \ \ \ \tilde{C}^2=-\tilde{L}_{zw}\cdot\tilde{C}\]
With these intersection numbers we see that the matrix
\[\left(\begin{array}{cc}
       \tilde{L}_{zw}^2&\tilde{L}_{zw}\cdot\tilde{C}\\
       \tilde{L}_{zw}\cdot\tilde{C}& \tilde{C}^2\\
\end{array}\right)= \left(\begin{array}{cc}
-\frac{1}{10}-\tilde{L}_{zw}\cdot\tilde{C} & \tilde{L}_{zw}\cdot\tilde{C}\\
     \tilde{L}_{zw}\cdot\tilde{C} & -\tilde{L}_{zw}\cdot\tilde{C} \\
        \end{array}\right)
\]
is negative-definite since $\tilde{L}_{zw}\cdot\tilde{C}$ is
positive.

\item For the singular point $O_z$ with $c=0$ and $a_1\ne 0$ we may
assume that $a_1=1$ and $a_2=0$. Furthermore, we may also assume
that  $f_{14}$  does not contain the monomial $xz^3t$ by changing
the coordinate $w$ in a suitable way. We then consider the surface
$S_w$ cut by the equation $w=0$. Let $Z_{\lambda}$ be the curve on
the surface $S_w$ defined by
$$
\left\{%
\aligned
&w=0\\%
&y=\lambda x^2\\%
\endaligned\right.%
$$
for a sufficiently general complex number  $\lambda$. Then
$Z_{\lambda}=2L_{zt}+C_{\lambda }$, where $C_{\lambda}$ is an
irreducible and reduced curve. We have
$$
\left\{%
\aligned
&-K_{Y}\cdot (2\tilde{L}_{zt}+\tilde{C}_{\lambda})=10B^3=-\frac{1}{2},\\%
&-K_Y\cdot \tilde{L}_{zt}=-K_X\cdot L_{zt}-\frac{1}{3}E\cdot\tilde{L}_{zt}=-\frac{1}{4},\\%
\endaligned\right.%
$$
and hence  $-K_{Y}\cdot \tilde{C}_{\lambda}=0$.

\item For the singular point $O_z$ with $c=0$ and $a_1= 0$ we observe
that   $f_{14}$ must contain the monomial $xz^3t$ for $X_{14}$ to
be quasi-smooth (see right before
Theorem~\ref{theorem:amazing-23}). We may assume that $a_2=1$ and
that the coefficient of $xz^3t$ in $f_{14}$ is $1$.  Then
Theorem~\ref{theorem:amazing-23} untwists the singular point
$O_z$.

\end{Note}
 For the singular points of type $\frac{1}{2}(1,1,1)$ we may assume
that $\alpha_3=0$ and we have only to consider the singular  point
$O_y$. The other singular points can be treated in the same way
after suitable coordinate changes.

\begin{Note}

\item For the singular point $O_y$ with $b\ne 0$ consider the linear
system generated by $xy$ and $z$ on $X_{14}$. Its base curves are
defined by $x=z=0$ and $y=z=0$. The curve defined by $y=z=0$ does
not pass through the singular point $O_y$.  The curve   defined by
$x=z=0$   is irreducible. Indeed, the curve is  defined by
$$x=z=(t+by^2)w^2+yt(t-\alpha_1y^2)(t-\alpha_2y^2)=0.$$   Moreover,
its proper transform is equivalent to the $1$-cycle defined by
$(3B+E)\cdot B$. Therefore, the divisor $T$ is nef since
$(3B+E)^2\cdot B>0$.

\item For the singular point $O_y$ with $b=0$ we take a general member
$H$ in the linear system  generated by $x^3$, $xy$ and $z$. Note
that the defining equation of $X_{14}$ must contain either $y^3zw$
or $xy^4w$. The surface $H$ is a normal surface of degree $14$ in
$\mathbb{P}(1,2,4,5)$  that is smooth at the point
$x=t=w^2+\alpha_1\alpha_2y^5=0$. Let $T$ be the proper transform
of the surface $H$. The intersection of $T$ with the surface $S$
defines a divisor consisting of two irreducible curves on the
normal surface $T$. One is  the proper transform of the curve
$L_{yw}$
 and the other  is the proper
transform   of the curve $C$ defined by
$$x=z=w^2+y(t-\alpha_1y^2)(t-\alpha_2y^2)=0.$$  From the
intersection numbers
\[(\tilde{L}_{yw}+\tilde{C})\cdot\tilde{L}_{yw}=-K_Y\cdot \tilde{L}_{yw}=-\frac{2}{5}, \ \ \ (\tilde{L}_{yw}+\tilde{C})^2=B^2\cdot (3B+E)=-\frac{3}{20}\]
on the surface $T$, we obtain
\[\tilde{L}_{yw}^2=-\frac{2}{5}-\tilde{L}_{yw}\cdot\tilde{C}, \ \ \ \tilde{C}^2=\frac{1}{4}-\tilde{L}_{yw}\cdot\tilde{C}\]
With these intersection numbers we see that the matrix
\[\left(\begin{array}{cc}
       \tilde{L}_{yw}^2&\tilde{L}_{yw}\cdot\tilde{C}\\
       \tilde{L}_{yw}\cdot\tilde{C}& \tilde{C}^2\\
\end{array}\right)= \left(\begin{array}{cc}
-\frac{2}{5}-\tilde{L}_{yw}\cdot\tilde{C} & \tilde{L}_{yw}\cdot\tilde{C}\\
     \tilde{L}_{yw}\cdot\tilde{C} & \frac{1}{4}-\tilde{L}_{yw}\cdot\tilde{C} \\
        \end{array}\right)
\]
is negative-definite since the curves $L_{yw}$ and the curve $C$
intersect  at the smooth point of $H$ defined by
$x=z=t=w^2+\alpha_1\alpha_2y^5=0$.
\end{Note}
%%%%%%%%%%%%%%%%%%%%%%%%%%%%%%%%%%%%%%%%%%%%%%%%%%%%%%%%%%%%%%%%%%%%%%%%%%%%%%%%%%%%%%%%

%%%%%%%%%%%%%%%%%%%%%%%%%%%%%%%%%%%%%%%%%%%%%%%%%%%%%%%%%%%%%%%%%%%%%%%%%%%%%%%%%%%%%%%%

\begin{center}
\begin{longtable}{|l|c|c|c|c|c|}
\hline
\multicolumn{6}{|l|}{\textbf{No. 24}: $X_{15}\subset\mathbb{P}(1,1,2,5,7)$\hfill $A^3=3/14$}\\
\multicolumn{6}{|l|}{
\begin{minipage}[m]{.86\linewidth}
%\begin{center}
\vspace*{1.2mm}
$yw^2+t^3+z^4(a_1w+a_2zt+a_3xz^3+a_4yz^3)+wf_{8}(x,y,z,t)+f_{15}(x,y,z,t)$

\vspace*{1.2mm}
%\end{center}
\end{minipage}
}\\
\hline \hline
\begin{minipage}[m]{.28\linewidth}
\begin{center}
Singularity
\end{center}
\end{minipage}&
\begin{minipage}[m]{.04\linewidth}
\begin{center}
$B^3$
\end{center}
\end{minipage}&
\begin{minipage}[m]{.11\linewidth}
\begin{center}
Linear

system
\end{center}
\end{minipage}&
\begin{minipage}[m]{.11\linewidth}
\begin{center}
Surface $T$
\end{center}
\end{minipage}&
\begin{minipage}[m]{.11\linewidth}
\begin{center}
\vspace*{1mm}
 \vorder
\vspace*{1mm}
\end{center}
\end{minipage}&
\begin{minipage}[m]{.18\linewidth}
\begin{center}
Condition
\end{center}
\end{minipage}\\
\hline
\begin{minipage}[m]{.28\linewidth}

$O_w=\frac{1}{7}(1,2,5)$ \quadratic

\end{minipage}&
\multicolumn{4}{|l|}{\begin{minipage}[m]{.37\linewidth}
\begin{center}
$yw^2$
\end{center}
\end{minipage}}&
\begin{minipage}[m]{.18\linewidth}
\begin{center}

\end{center}
\end{minipage}\\

\hline
\begin{minipage}[m]{.28\linewidth}

$O_z=\frac{1}{2}(1_x,1_y,1_t)$ \boundary

\end{minipage}&
\begin{minipage}[m]{.04\linewidth}
\begin{center}
$-$
\end{center}
\end{minipage}&
\begin{minipage}[m]{.11\linewidth}
\begin{center}
$B$
\end{center}
\end{minipage}&
\begin{minipage}[m]{.11\linewidth}
\begin{center}
$y$
\end{center}
\end{minipage}
&
\begin{minipage}[m]{.11\linewidth}
\begin{center}
$y$
\end{center}
\end{minipage}&
\begin{minipage}[m]{.18\linewidth}
\begin{center}
$a_1\ne 0$
\end{center}
\end{minipage}\\
\hline
\begin{minipage}[m]{.28\linewidth}

$O_z=\frac{1}{2}(1,1,1)$ \boundary

\end{minipage}&
\begin{minipage}[m]{.04\linewidth}
\begin{center}
$-$
\end{center}
\end{minipage}&
\begin{minipage}[m]{.11\linewidth}
\begin{center}
$B$
\end{center}
\end{minipage}&
\begin{minipage}[m]{.11\linewidth}
\begin{center}
$y$
\end{center}
\end{minipage}
&
\begin{minipage}[m]{.11\linewidth}
\begin{center}
$y$
\end{center}
\end{minipage}&
\begin{minipage}[m]{.18\linewidth}
\begin{center}
$a_1=0$, $a_2=0$
\end{center}
\end{minipage}\\
\hline
\begin{minipage}[m]{.28\linewidth}

$O_z=\frac{1}{2}(1_x,1_y,1_w)$ \surface

\end{minipage}&
\begin{minipage}[m]{.04\linewidth}
\begin{center}
$-$
\end{center}
\end{minipage}&
\begin{minipage}[m]{.11\linewidth}
\begin{center}
$B$
\end{center}
\end{minipage}&
\begin{minipage}[m]{.11\linewidth}
\begin{center}
$x$, $y$
\end{center}
\end{minipage}
&
\begin{minipage}[m]{.11\linewidth}
\begin{center}
$x, y$
\end{center}
\end{minipage}&
\begin{minipage}[m]{.18\linewidth}
\begin{center}
$a_1=0$, $a_2\ne 0$
\end{center}
\end{minipage}\\

\hline
\end{longtable}
\end{center}

 Since
$X_{15}$ is quasi-smooth, one of the constants $a_1$, $a_2$, $a_3$
must be non-zero.

\begin{Note}

\item 
The $1$-cycle $\Gamma$ for the singular point $O_z$ with $a_1\ne
0$ is irreducible since we have $t^3$ and $z^4w$.

\item The $1$-cycle $\Gamma$ for the singular point $O_z$ with
$a_1=a_2=0$ is also irreducible even though it is not reduced.

\item For the singular point $O_z$ with $a_1=0$ and $a_2\ne 0$ we may
assume that $a_2=1$ and $a_3=a_4=0$. Choose a general member $H$
in the linear system $|-K_{15}|$ and then take  the intersection
of  its proper transform $T$ with  $S$. This  gives us a divisor
consisting of two irreducible curves on the normal surface $T$.
One is the proper transform of the curve $L_{zw}$. The other is
the proper transform
 of the curve $C$ defined by $$x=y=t^2+z^5=0.$$ The curves $L_{zw}$ and $C$ intersect at
the point $O_w$. From the intersection numbers
\[(\tilde{L}_{zw}+\tilde{C})\cdot\tilde{L}_{zw}=-K_Y\cdot\tilde{L}_{zw}=-\frac{3}{7},\ \ \ (\tilde{L}_{zw}+\tilde{C})^2=B^3=-\frac{2}{7}\]
on the surface $T$, we obtain
\[\tilde{L}_{zw}^2=-\frac{3}{7}-\tilde{L}_{zw}\cdot\tilde{C}, \ \ \ \tilde{C}^2=\frac{1}{7}-\tilde{L}_{zw}\cdot\tilde{C}\]
With these intersection numbers we see that the matrix
\[\left(\begin{array}{cc}
       \tilde{L}_{zw}^2&\tilde{L}_{zw}\cdot\tilde{C}\\
       \tilde{L}_{zw}\cdot\tilde{C}& \tilde{C}^2\\
\end{array}\right)= \left(\begin{array}{cc}
-\frac{3}{7}-\tilde{L}_{zw}\cdot\tilde{C} & \tilde{L}_{zw}\cdot\tilde{C}\\
     \tilde{L}_{zw}\cdot\tilde{C} & \frac{1}{7} -\tilde{L}_{zw}\cdot\tilde{C} \\
        \end{array}\right)
\]
is negative-definite since
$\tilde{L}_{zw}\cdot\tilde{C}=\frac{5}{7}$.
\end{Note}
%%%%%%%%%%%%%%%%%%%%%%%%%%%%%%%%%%%%%%%%%%%%%%%%%%%%%%%%%%%%%%%%%%

%%%%%%%%%%%%%%%%%%%%%%%%%%%%%%%%%%%%%%%%%%%%%%%%%%%%%%%%%%%%%%%%%%%%%%%%%%%%%%%%%%%%%%%%

\begin{center}
\begin{longtable}{|l|c|c|c|c|c|}
\hline
\multicolumn{6}{|l|}{\textbf{No. 25}: $X_{15}\subset\mathbb{P}(1,1,3,4,7)$\hfill $A^3=5/28$}\\
\multicolumn{6}{|l|}{
\begin{minipage}[m]{.86\linewidth}
%\begin{center}
\vspace*{1.2mm}

$yw^2+t^2(a_1w+a_2zt)+z^5+wf_8(x,y,z,t)+f_{15}(x,y,z,t)$
\vspace*{1.2mm}
%\end{center}
\end{minipage}
}\\
\hline \hline
\begin{minipage}[m]{.28\linewidth}
\begin{center}
Singularity
\end{center}
\end{minipage}&
\begin{minipage}[m]{.04\linewidth}
\begin{center}
$B^3$
\end{center}
\end{minipage}&
\begin{minipage}[m]{.11\linewidth}
\begin{center}
Linear

system
\end{center}
\end{minipage}&
\begin{minipage}[m]{.11\linewidth}
\begin{center}
Surface $T$
\end{center}
\end{minipage}&
\begin{minipage}[m]{.11\linewidth}
\begin{center}
\vspace*{1mm}
 \vorder
\vspace*{1mm}
\end{center}
\end{minipage}&
\begin{minipage}[m]{.18\linewidth}
\begin{center}
Condition
\end{center}
\end{minipage}\\
\hline
\begin{minipage}[m]{.28\linewidth}

$O_w=\frac{1}{7}(1,3,4)$ \quadratic

\end{minipage}&
\multicolumn{4}{|l|}{\begin{minipage}[m]{.37\linewidth}
\begin{center}
$yw^2$
\end{center}
\end{minipage}}&
\begin{minipage}[m]{.18\linewidth}
\begin{center}

\end{center}
\end{minipage}\\

\hline
\begin{minipage}[m]{.28\linewidth}

$O_t=\frac{1}{4}(1,1,3)$ \quadraticone

\end{minipage}&
\multicolumn{4}{|l|}{\begin{minipage}[m]{.37\linewidth}
\begin{center}
$wt^2$
\end{center}
\end{minipage}}&
\begin{minipage}[m]{.18\linewidth}
\begin{center}

\end{center}
\end{minipage}\\
\hline
\end{longtable}
\end{center}

%%%%%%%%%%%%%%%%%%%%%%%%%%%%%%%%%%%%%%%%%%%%%%%%%%%%%%%%%%%%%%%%%%%%%%%%%%%%%%%%%%%%%%%%

%%%%%%%%%%%%%%%%%%%%%%%%%%%%%%%%%%%%%%%%%%%%%%%%%%%%%%%%%%%%%%%%%%%%%%%%%%%%%%%%%%%%%%%%

\begin{center}
\begin{longtable}{|l|c|c|c|c|c|}
\hline
\multicolumn{6}{|l|}{\textbf{No. 26}: $X_{15}\subset\mathbb{P}(1,1,3,5,6)$\hfill $A^3=1/6$}\\
\multicolumn{6}{|l|}{
\begin{minipage}[m]{.86\linewidth}
%\begin{center}
\vspace*{1.2mm}

$zw^2+t^3+z^5+wf_9(x,y,z,t)+f_{15}(x,y,z,t)$ \vspace*{1.2mm}
%\end{center}
\end{minipage}
}\\

\hline \hline
\begin{minipage}[m]{.28\linewidth}
\begin{center}
Singularity
\end{center}
\end{minipage}&
\begin{minipage}[m]{.04\linewidth}
\begin{center}
$B^3$
\end{center}
\end{minipage}&
\begin{minipage}[m]{.11\linewidth}
\begin{center}
Linear

system
\end{center}
\end{minipage}&
\begin{minipage}[m]{.11\linewidth}
\begin{center}
Surface $T$
\end{center}
\end{minipage}&
\begin{minipage}[m]{.11\linewidth}
\begin{center}
\vspace*{1mm}
 \vorder
\vspace*{1mm}
\end{center}
\end{minipage}&
\begin{minipage}[m]{.18\linewidth}
\begin{center}
Condition
\end{center}
\end{minipage}\\
\hline
\begin{minipage}[m]{.28\linewidth}

$O_w=\frac{1}{6}(1,1,5)$ \quadratic

\end{minipage}&
\multicolumn{4}{|l|}{\begin{minipage}[m]{.37\linewidth}
\begin{center}
$zw^2$
\end{center}
\end{minipage}}&
\begin{minipage}[m]{.18\linewidth}
\begin{center}

\end{center}
\end{minipage}\\

\hline
\begin{minipage}[m]{.28\linewidth}

$O_zO_w=2\times\frac{1}{3}(1_x,1_y,2_t)$ \boundary

\end{minipage}&
\begin{minipage}[m]{.04\linewidth}
\begin{center}
$0$
\end{center}
\end{minipage}&
\begin{minipage}[m]{.11\linewidth}
\begin{center}
$B$
\end{center}
\end{minipage}&
\begin{minipage}[m]{.11\linewidth}
\begin{center}
$y$
\end{center}
\end{minipage}
&
\begin{minipage}[m]{.11\linewidth}
\begin{center}
$y$
\end{center}
\end{minipage}&
\begin{minipage}[m]{.18\linewidth}
\begin{center}

\end{center}
\end{minipage}\\

\hline
\end{longtable}
\end{center}

\begin{Note}

\item 
The $1$-cycle $\Gamma$ for each singular point of type
$\frac{1}{3}(1,1,2)$ is irreducible since we have the monomials
$zw^2$ and $t^3$.
\end{Note}
%%%%%%%%%%%%%%%%%%%%%%%%%%%%%%%%%%%%%%%%%%%%%%%%%%%%%%%%%%%%%%%%%%
%%%%%%%%%%%%%%%%%%%%%%%%%%%%%%%%%%%%%%%%%%%%%%%%%%%%%%%%%%%%%%%%%%%%%%%%%%%%%%%%%%%%%%%%

\begin{center}
\begin{longtable}{|l|c|c|c|c|c|}
\hline
\multicolumn{6}{|l|}{\textbf{No. 27}: $X_{15}\subset\mathbb{P}(1,2,3,5,5)$\hfill $A^3=1/10$}\\
\multicolumn{6}{|l|}{
\begin{minipage}[m]{.86\linewidth}
%\begin{center}
\vspace*{1.2mm}
$(w-\alpha_1t)(w-\alpha_2t)(w-\alpha_3t)+y^5(a_1w+a_2yz+a_3xy^2)+w^2f_{5}(x,y,z,t)+
wf_{10}(x,y,z,t)+f_{15}(x,y,z,t)$ \vspace*{1.2mm}
%\end{center}
\end{minipage}
}\\
\hline \hline
\begin{minipage}[m]{.28\linewidth}
\begin{center}
Singularity
\end{center}
\end{minipage}&
\begin{minipage}[m]{.04\linewidth}
\begin{center}
$B^3$
\end{center}
\end{minipage}&
\begin{minipage}[m]{.11\linewidth}
\begin{center}
Linear

system
\end{center}
\end{minipage}&
\begin{minipage}[m]{.11\linewidth}
\begin{center}
Surface $T$
\end{center}
\end{minipage}&
\begin{minipage}[m]{.11\linewidth}
\begin{center}
\vspace*{1mm}
 \vorder
\vspace*{1mm}
\end{center}
\end{minipage}&
\begin{minipage}[m]{.18\linewidth}
\begin{center}
Condition
\end{center}
\end{minipage}\\
\hline
\begin{minipage}[m]{.28\linewidth}

$O_tO_w=3\times\frac{1}{5}(1,2,3)$ \quadratic

\end{minipage}&
\multicolumn{4}{|l|}{\begin{minipage}[m]{.37\linewidth}
\begin{center}
$wt^2$
\end{center}
\end{minipage}}&
\begin{minipage}[m]{.18\linewidth}
\begin{center}

\end{center}
\end{minipage}\\

\hline
\begin{minipage}[m]{.28\linewidth}

$O_y=\frac{1}{2}(1,1,1)$ \nef

\end{minipage}&
\begin{minipage}[m]{.04\linewidth}
\begin{center}
$-$
\end{center}
\end{minipage}&
\begin{minipage}[m]{.11\linewidth}
\begin{center}
$5B+2E$
\end{center}
\end{minipage}&
\begin{minipage}[m]{.11\linewidth}
\begin{center}
$t$
\end{center}
\end{minipage}
&
\begin{minipage}[m]{.11\linewidth}
\begin{center}
$t$
\end{center}
\end{minipage}&
\begin{minipage}[m]{.18\linewidth}
\begin{center}
\end{center}
\end{minipage}\\
\hline
\end{longtable}
\end{center}
 We
may assume that $\alpha_3=0$, i.e., the hypersurface $X_{15}$ has
a singular point of type $\frac{1}{5}(1,2,3)$ at the point $O_t$.

\begin{Note}

\item 
To see how to treat the singular points of type
$\frac{1}{5}(1,2,3)$ we have only to consider  the singular point
$O_t$. The others can be dealt with in the same way.

\item
For the singular point $O_y$ we consider  the linear system
$|-5K_{X_{15}}|$. Every member in the linear system passes through
the point $O_y$. It has no base curve. Since the proper transform
of a general member in $|-5K_{X_{15}}|$ belongs to the linear
system $|5B+2E|$,  the divisor $T$ is nef.
\end{Note}
%%%%%%%%%%%%%%%%%%%%%%%%%%%%%%%%%%%%%%%%%%%%%%%%%%%%%%%%%%%%%%%%%%%%%%%%%%%%%%%%
%%%%%%%%%%%%%%%%%%%%%%%%%%%%%%%%%%%%%%%%%%%%%%%%%%%%%%%%%%%%%%%%%%%%%%%%%%%%%%%%

%%%%%%%%%%%%%%%%%%%%%%%%%%%%%%%%%%%%%%%%%%%%%%%%%%%%%%%%%%%%%%
%%%%%%%%%%%%%%%%%%%%%%%%%%%%%%%%%%%%%%%%%%%%%%%%%%%%%%%%%%%%%%

\begin{center}
\begin{longtable}{|l|c|c|c|c|c|}
\hline
\multicolumn{6}{|l|}{\underline{\textbf{No. 28}}: $X_{15}\subset\mathbb{P}(1,3,3,4,5)$\hfill $A^3=1/12$}\\
\multicolumn{6}{|l|}{
\begin{minipage}[m]{.86\linewidth}
%\begin{center}
\vspace*{1.2mm}
$w^3+zt^3+h_{15}(y,z)+w^2f_{5}(x,y,z,t)+wf_{10}(x,y,z,t)+
t^2g_{7}(x,y,z)+tg_{11}(x,y,z)+g_{15}(x,y,z)$ \vspace*{1.2mm}
%\end{center}
\end{minipage}
}\\
\hline \hline
\begin{minipage}[m]{.28\linewidth}
\begin{center}
Singularity
\end{center}
\end{minipage}&
\begin{minipage}[m]{.04\linewidth}
\begin{center}
$B^3$
\end{center}
\end{minipage}&
\begin{minipage}[m]{.11\linewidth}
\begin{center}
Linear

system
\end{center}
\end{minipage}&
\begin{minipage}[m]{.11\linewidth}
\begin{center}
Surface $T$
\end{center}
\end{minipage}&
\begin{minipage}[m]{.11\linewidth}
\begin{center}
\vspace*{1mm}
 \vorder
\vspace*{1mm}
\end{center}
\end{minipage}&
\begin{minipage}[m]{.18\linewidth}
\begin{center}
Condition
\end{center}
\end{minipage}\\
\hline
\begin{minipage}[m]{.28\linewidth}

$O_t=1\times\frac{1}{4}(1_x,3_y,1_w)$ \boundary

\end{minipage}&
\begin{minipage}[m]{.04\linewidth}
\begin{center}
$0$
\end{center}
\end{minipage}&
\begin{minipage}[m]{.11\linewidth}
\begin{center}
$3B$
\end{center}
\end{minipage}&
\begin{minipage}[m]{.11\linewidth}
\begin{center}
$y$
\end{center}
\end{minipage}
&
\begin{minipage}[m]{.11\linewidth}
\begin{center}
$y$
\end{center}
\end{minipage}&
\begin{minipage}[m]{.18\linewidth}
\begin{center}

\end{center}
\end{minipage}\\
\hline
\begin{minipage}[m]{.28\linewidth}

$O_yO_z=5\times\frac{1}{3}(1_x,1_t,2_w)$ \boundary

\end{minipage}&
\begin{minipage}[m]{.04\linewidth}
\begin{center}
$-$
\end{center}
\end{minipage}&
\begin{minipage}[m]{.11\linewidth}
\begin{center}
$3B$
\end{center}
\end{minipage}&
\begin{minipage}[m]{.11\linewidth}
\begin{center}
$y-\alpha_i z$
\end{center}
\end{minipage}
&
\begin{minipage}[m]{.11\linewidth}
\begin{center}
$t^3z$
\end{center}
\end{minipage}&
\begin{minipage}[m]{.18\linewidth}
\begin{center}

\end{center}
\end{minipage}\\
\hline

\end{longtable}
\end{center}

Note that the homogenous polynomial $h_{15}$ cannot be divisible
by $z$ since the hypersurface $X_{15}$ is quasi-smooth. Therefore,
we may write
\[h_{15}(y,z)=\prod_{i=1}^5(y-\alpha_i z). \]  

\begin{Note}

\item 
The curve defined by $x=y=0$ is irreducible
because we have the monomials $w^3$ and $zt^3$.
 \item
 The curves defined by $x=y-\alpha_i z=0$ are also irreducible for
 the same reason.  Therefore, the $1$-cycle $\Gamma$  for each singular point is irreducible.

\end{Note}

%%%%%%%%%%%%%%%%%%%%%%%%%%%%%%%%%%%%%%%%%%%%%%%%%%%%%%%%%%%%%%%%%%

%%%%%%%%%%%%%%%%%%%%%%%%%%%%%%%%%%%%%%%%%%%%%%%%%%%%%%%%%%%%%%%%%%%%%%%%%%%%%%%%%%%%%%%%

%%%%%%%%%%%%%%%%%%%%%%%%%%%%%%%%%%%%%%%%%%%%%%%%%%%%%%%%%%%%%%%%%%%%%%%%%%%%%%%%%%%%%%%%

%%%%%%%%%%%%%%%%%%%%%%%%%%%%%%%%%%%%%%%%%%%%%%%%%%%%%%%%%%%%%%%%%%%%%%%%%%%%%%%%

\begin{center}
\begin{longtable}{|l|c|c|c|c|c|}
\hline
\multicolumn{6}{|l|}{\underline{\textbf{No. 29}}: $X_{16}\subset\mathbb{P}(1,1,2,5,8)$\hfill $A^3=1/5$}\\
\multicolumn{6}{|l|}{
\begin{minipage}[m]{.86\linewidth}
%\begin{center}
\vspace*{1.2mm} $(w-\alpha_1 z^4)(w-\alpha_2
z^4)+yt^3+az^3t^2+wf_{8}(x,y,z,t)+
t^2g_{6}(x,y,z)+tg_{11}(x,y,z)+g_{16}(x,y,z)$ \vspace*{1.2mm}
%\end{center}
\end{minipage}
}\\
\hline \hline
\begin{minipage}[m]{.28\linewidth}
\begin{center}
Singularity
\end{center}
\end{minipage}&
\begin{minipage}[m]{.04\linewidth}
\begin{center}
$B^3$
\end{center}
\end{minipage}&
\begin{minipage}[m]{.11\linewidth}
\begin{center}
Linear

system
\end{center}
\end{minipage}&
\begin{minipage}[m]{.11\linewidth}
\begin{center}
Surface $T$
\end{center}
\end{minipage}&
\begin{minipage}[m]{.11\linewidth}
\begin{center}
\vspace*{1mm}
 \vorder
\vspace*{1mm}
\end{center}
\end{minipage}&
\begin{minipage}[m]{.18\linewidth}
\begin{center}
Condition
\end{center}
\end{minipage}\\
\hline
\begin{minipage}[m]{.28\linewidth}

$O_t=\frac{1}{5}(1_x,2_z,3_w)$ $\positive$

\end{minipage}&
\begin{minipage}[m]{.04\linewidth}
\begin{center}
$+$
\end{center}
\end{minipage}&
\begin{minipage}[m]{.11\linewidth}
\begin{center}
$B-E$
\end{center}
\end{minipage}&
\begin{minipage}[m]{.11\linewidth}
\begin{center}
$y$
\end{center}
\end{minipage}
&
\begin{minipage}[m]{.11\linewidth}
\begin{center}
$w^2$
\end{center}
\end{minipage}&
\begin{minipage}[m]{.18\linewidth}
\begin{center}

\end{center}
\end{minipage}\\
\hline
\begin{minipage}[m]{.28\linewidth}
$O_zO_w=2\times\frac{1}{2}(1_x,1_y,1_t)$ \boundary

\end{minipage}&
\begin{minipage}[m]{.04\linewidth}
\begin{center}
$-$
\end{center}
\end{minipage}&
\begin{minipage}[m]{.11\linewidth}
\begin{center}
$B$
\end{center}
\end{minipage}&
\begin{minipage}[m]{.11\linewidth}
\begin{center}
$y$
\end{center}
\end{minipage}
&
\begin{minipage}[m]{.11\linewidth}
\begin{center}
$y$
\end{center}
\end{minipage}&
\begin{minipage}[m]{.18\linewidth}
\begin{center}
$a\ne 0$
\end{center}
\end{minipage}\\
\hline
\begin{minipage}[m]{.28\linewidth}
$O_zO_w=2\times\frac{1}{2}(1_x,1_y,1_t)$ $\surface$

\end{minipage}&
\begin{minipage}[m]{.04\linewidth}
\begin{center}
$-$
\end{center}
\end{minipage}&
\begin{minipage}[m]{.11\linewidth}
\begin{center}
$B$
\end{center}
\end{minipage}&
\begin{minipage}[m]{.11\linewidth}
\begin{center}
$x$, $y$
\end{center}
\end{minipage}
&
\begin{minipage}[m]{.11\linewidth}
\begin{center}
$x$, $y$
\end{center}
\end{minipage}&
\begin{minipage}[m]{.18\linewidth}
\begin{center}
$a=0$
\end{center}
\end{minipage}\\
\hline

\end{longtable}
\end{center}

\begin{Note}

\item 
If the constant $a$ is non-zero, then the $1$-cycle $\Gamma$ for
each singular point of type $\frac{1}{2}(1,1,1)$ is irreducible.

\item Suppose that $a=0$. We have only to consider one of the singular
points of type $\frac{1}{2}(1,1,1)$. The other singular point can
be excluded in the same way. Moreover, we may assume that the
singular point is located at the point $O_z$, i.e., $\alpha_1=0$,
by a suitable coordinate change.

We take a general surface $H$ from the pencil $|-K_{X_{16}}|$. It
is a  K3 surface only with du Val singularities. Let $T$ be the
proper transform of the surface. The intersection of $T$ with the
surface $S$ gives us a divisor consisting of two irreducible
curves on the normal surface $T$. One is the proper transform
$\tilde{L}_{zt}$. The other is the proper transform $\tilde{C}$ of
the curve $C$ defined by  $$x=y=w-\alpha_2z^4=0.$$

From the intersection numbers
\[(\tilde{L}_{zt}+\tilde{C})\cdot\tilde{L}_{zt}=-K_Y\cdot \tilde{L}_{zt}=-\frac{2}{5}, \ \ \ (\tilde{L}_{zt}+\tilde{C})^2=B^3=-\frac{3}{10}\]
on the surface $T$, we obtain
\[\tilde{L}_{zt}^2=-\frac{2}{5}-\tilde{L}_{zt}\cdot\tilde{C}, \ \ \ \tilde{C}^2=\frac{1}{10}-\tilde{L}_{zt}\cdot\tilde{C}\]
With these intersection numbers we see that the matrix
\[\left(\begin{array}{cc}
       \tilde{L}_{zt}^2&\tilde{L}_{zt}\cdot\tilde{C}\\
       \tilde{L}_{zt}\cdot\tilde{C}& \tilde{C}^2\\
\end{array}\right)= \left(\begin{array}{cc}
-\frac{2}{5}-\tilde{L}_{zt}\cdot\tilde{C} & \tilde{L}_{zt}\cdot\tilde{C}\\
     \tilde{L}_{zt}\cdot\tilde{C} & \frac{1}{10}-\tilde{L}_{zt}\cdot\tilde{C} \\
        \end{array}\right)
\]
is negative-definite since
$\tilde{L}_{yw}\cdot\tilde{C}=\frac{4}{5}$.
\end{Note}

%%%%%%%%%%%%%%%%%%%%%%%%%%%%%%%%%%%%%%%%%%%%%%%%%%%%%%%%%%%%%%

%%%%%%%%%%%%%%%%%%%%%%%%%%%%%%%%%%%%%%%%%%%%%%%%%%%%%%%%%%%%%%

%%%%%%%%%%%%%%%%%%%%%%%%%%%%%%%%%%%%%%%%%%%%%%%%%%%%%%%%%%%%%%%%%%

%%%%%%%%%%%%%%%%%%%%%%%%%%%%%%%%%%%%%%%%%%%%%%%%%%%%%%%%%%%%%%%%%%%%%%%%%%%%%%%%%%%%%%%%

\begin{center}
\begin{longtable}{|l|c|c|c|c|c|}
\hline
\multicolumn{6}{|l|}{\textbf{No. 30}: $X_{16}\subset\mathbb{P}(1,1,3,4,8)$\hfill $A^3=1/6$}\\
\multicolumn{6}{|l|}{
\begin{minipage}[m]{.86\linewidth}
%\begin{center}
\vspace*{1.2mm}

$(w-\alpha_1 t^2)(w-\alpha_2
t^2)+z^4(a_1t+a_2yz)+wf_{8}(x,y,z,t)+f_{16}(x,y,z,t)$
\vspace*{1.2mm}
%\end{center}
\end{minipage}
}\\
\hline \hline
\begin{minipage}[m]{.28\linewidth}
\begin{center}
Singularity
\end{center}
\end{minipage}&
\begin{minipage}[m]{.04\linewidth}
\begin{center}
$B^3$
\end{center}
\end{minipage}&
\begin{minipage}[m]{.11\linewidth}
\begin{center}
Linear

system
\end{center}
\end{minipage}&
\begin{minipage}[m]{.11\linewidth}
\begin{center}
Surface $T$
\end{center}
\end{minipage}&
\begin{minipage}[m]{.11\linewidth}
\begin{center}
\vspace*{1mm}
 \vorder
\vspace*{1mm}
\end{center}
\end{minipage}&
\begin{minipage}[m]{.18\linewidth}
\begin{center}
Condition
\end{center}
\end{minipage}\\
\hline
\begin{minipage}[m]{.28\linewidth}

$O_tO_w=2\times\frac{1}{4}(1,1,3)$ \quadratic

\end{minipage}&
\multicolumn{4}{|l|}{\begin{minipage}[m]{.37\linewidth}
\begin{center}
$wt^2$
\end{center}
\end{minipage}}&
\begin{minipage}[m]{.18\linewidth}
\begin{center}

\end{center}
\end{minipage}\\

\hline
\begin{minipage}[m]{.28\linewidth}

$O_z=\frac{1}{3}(1_x,1_y,2_w)$ \boundary

\end{minipage}&
\begin{minipage}[m]{.04\linewidth}
\begin{center}
$0$
\end{center}
\end{minipage}&
\begin{minipage}[m]{.11\linewidth}
\begin{center}
$B$
\end{center}
\end{minipage}&
\begin{minipage}[m]{.11\linewidth}
\begin{center}
$y$
\end{center}
\end{minipage}
&
\begin{minipage}[m]{.11\linewidth}
\begin{center}
$y$
\end{center}
\end{minipage}&
\begin{minipage}[m]{.18\linewidth}
\begin{center}
$a_1\ne 0$
\end{center}
\end{minipage}\\
\hline
\begin{minipage}[m]{.28\linewidth}

$O_z=\frac{1}{3}(1_x,1_t,2_w)$ \boundary

\end{minipage}&
\begin{minipage}[m]{.04\linewidth}
\begin{center}
$0$
\end{center}
\end{minipage}&
\begin{minipage}[m]{.11\linewidth}
\begin{center}
$B-E$
\end{center}
\end{minipage}&
\begin{minipage}[m]{.11\linewidth}
\begin{center}
$y$
\end{center}
\end{minipage}
&
\begin{minipage}[m]{.11\linewidth}
\begin{center}
$w^2$
\end{center}
\end{minipage}&
\begin{minipage}[m]{.18\linewidth}
\begin{center}
$a_1=0$
\end{center}
\end{minipage}\\
\hline
\end{longtable}
\end{center}

\begin{Note}

\item 
We may assume that $\alpha_1=0$. To see how to treat the singular
points of type $\frac{1}{4}(1,1,3)$, we have only to consider the
singular point $O_t$. The other point can be treated  in the same
way.

\item The $1$-cycle $\Gamma$ for the singular point $O_z$ with $a_1\ne
0$ is irreducible due to $w^2$ and $z^4t$.

\item  The $1$-cycle $\Gamma$ for the singular point $O_z$ with $a_1= 0$ consists of  the proper
transforms of the curves defined by $$x=y=w-\alpha_1 t^2=0$$ and
$$x=y=w-\alpha_2 t^2=0.$$ These two irreducible components are
symmetric with respect to the biregular involution of $X_{16}$.
Consequently, the components of $\Gamma$ are numerically
equivalent to each other.
\end{Note}
%%%%%%%%%%%%%%%%%%%%%%%%%%%%%%%%%%%%%%%%%%%%%%%%%%%%%%%%%%%%%%%%%%

%%%%%%%%%%%%%%%%%%%%%%%%%%%%%%%%%%%%%%%%%%%%%%%%%%%%%%%%%%%%%%%%%%%%%%%%%%%%%%%%%%%%%%%%

\begin{center}
\begin{longtable}{|l|c|c|c|c|c|}
\hline
\multicolumn{6}{|l|}{\textbf{No. 31}: $X_{16}\subset\mathbb{P}(1,1,4,5,6)$\hfill $A^3=2/15$}\\
\multicolumn{6}{|l|}{
\begin{minipage}[m]{.86\linewidth}
%\begin{center}
\vspace*{1.2mm}

$zw^2+t^2(a_1w+a_2yt)+z^4+wf_{10}(x,y,z,t)+f_{16}(x,y,z,t)$
\vspace*{1.2mm}
%\end{center}
\end{minipage}
}\\
\hline \hline
\begin{minipage}[m]{.28\linewidth}
\begin{center}
Singularity
\end{center}
\end{minipage}&
\begin{minipage}[m]{.04\linewidth}
\begin{center}
$B^3$
\end{center}
\end{minipage}&
\begin{minipage}[m]{.11\linewidth}
\begin{center}
Linear

system
\end{center}
\end{minipage}&
\begin{minipage}[m]{.11\linewidth}
\begin{center}
Surface $T$
\end{center}
\end{minipage}&
\begin{minipage}[m]{.11\linewidth}
\begin{center}
\vspace*{1mm}
 \vorder
\vspace*{1mm}
\end{center}
\end{minipage}&
\begin{minipage}[m]{.18\linewidth}
\begin{center}
Condition
\end{center}
\end{minipage}\\
\hline
\begin{minipage}[m]{.28\linewidth}

$O_w=\frac{1}{6}(1,1,5)$ \quadratic

\end{minipage}&
\multicolumn{4}{|l|}{\begin{minipage}[m]{.37\linewidth}
\begin{center}
$zw^2$
\end{center}
\end{minipage}}&
\begin{minipage}[m]{.18\linewidth}
\begin{center}

\end{center}
\end{minipage}\\

\hline
\begin{minipage}[m]{.28\linewidth}

$O_t=\frac{1}{5}(1,1,4)$ \quadraticone

\end{minipage}&
\multicolumn{4}{|l|}{\begin{minipage}[m]{.37\linewidth}
\begin{center}
$wt^2$
\end{center}
\end{minipage}}&
\begin{minipage}[m]{.18\linewidth}
\begin{center}

\end{center}
\end{minipage}\\

\hline
\begin{minipage}[m]{.28\linewidth}

$O_zO_w=1\times\frac{1}{2}(1_x,1_y,1_t)$ \boundary

\end{minipage}&
\begin{minipage}[m]{.04\linewidth}
\begin{center}
$-$
\end{center}
\end{minipage}&
\begin{minipage}[m]{.11\linewidth}
\begin{center}
$B$
\end{center}
\end{minipage}&
\begin{minipage}[m]{.11\linewidth}
\begin{center}
$y$
\end{center}
\end{minipage}
&
\begin{minipage}[m]{.11\linewidth}
\begin{center}
$y$
\end{center}
\end{minipage}&
\begin{minipage}[m]{.18\linewidth}
\begin{center}
$a_1\ne0$
\end{center}
\end{minipage}\\
\hline
\begin{minipage}[m]{.28\linewidth}

$O_zO_w=1\times\frac{1}{2}(1_x,1_y,1_t)$ $\surface$

\end{minipage}&
\begin{minipage}[m]{.04\linewidth}
\begin{center}
$-$
\end{center}
\end{minipage}&
\begin{minipage}[m]{.11\linewidth}
\begin{center}
$B$
\end{center}
\end{minipage}&
\begin{minipage}[m]{.11\linewidth}
\begin{center}
$x$, $y$
\end{center}
\end{minipage}
&
\begin{minipage}[m]{.11\linewidth}
\begin{center}
$x, y$
\end{center}
\end{minipage}&
\begin{minipage}[m]{.18\linewidth}
\begin{center}
$a_1=0$
\end{center}
\end{minipage}\\

\hline
\end{longtable}
\end{center}

\begin{Note}

\item 
If $a_1\ne0$, the $1$-cycle $\Gamma$ for the singular point of
type $\frac{1}{2}(1,1,1)$  is irreducible due to the monomials
$z^4$ and $t^2w$.

\item Suppose  $a_1= 0$.  Choose a general member $H$ in the linear
system $|-K_X|$. Then it is a normal K3 surface of degree $16$ in
$\mathbb{P}(1,4,5,6)$. Let $T$ be the proper transform of the
surface $H$. The intersection of $T$ with the surface $S$ defines
a divisor consisting of
 two irreducible curves $\tilde{L}_{tw}$ and $\tilde{C}$ on the normal surface $T$.
The curve $\tilde{C}$ is the proper transform of the curve $C$
defined by $$x=y=w^2+z^3=0.$$ On the surface $T$, we have
\[\tilde{L}_{tw}\cdot\tilde{C}=L_{tw}\cdot C=\frac{2}{5}. \] From the
intersections
\[(\tilde{L}_{tw}+\tilde{C})\cdot\tilde{L}_{tw}=-K_Y\cdot \tilde{L}_{tw} =\frac{1}{30}, \ \ \ (\tilde{L}_{tw}+\tilde{C})^2=B^3=-\frac{11}{30}\]
on the surface $T$, we obtain
\[\tilde{L}_{tw}^2=-\frac{11}{30}, \ \ \ \tilde{C}^2=-\frac{4}{5}.\]
The intersection  matrix
\[\left(\begin{array}{cc}
      \tilde{L}_{tw}^2 &\tilde{L}_{tw}\cdot \tilde{C} \\
       \tilde{L}_{tw}\cdot \tilde{C}& \tilde{C}^2\\
\end{array}\right)= \left(\begin{array}{cc}
-\frac{11}{30} &\frac{2}{5} \\
        \frac{2}{5}& -\frac{4}{5} \\
        \end{array}\right)
\]
is negative-definite.
\end{Note}

%%%%%%%%%%%%%%%%%%%%%%%%%%%%%%%%%%%%%%%%%%%%%%%%%%%%%%%%%%%%%%%%%%

%%%%%%%%%%%%%%%%%%%%%%%%%%%%%%%%%%%%%%%%%%%%%%%%%%%%%%%%%%%%%%%%%%%%%%%%%%%%%%%%%%%%%%%%

\begin{center}
\begin{longtable}{|l|c|c|c|c|c|}
\hline
\multicolumn{6}{|l|}{\textbf{No. 32}: $X_{16}\subset\mathbb{P}(1,2,3,4,7)$\hfill $A^3=2/21$}\\
\multicolumn{6}{|l|}{
\begin{minipage}[m]{.86\linewidth}
%\begin{center}
\vspace*{1.2mm} $yw^2+\prod^{4}_{i=1}(t-\alpha_i y^2)
+z^3(a_1w+a_2tz+a_3xz^2)+wf_9(x,y,z,t)+f_{16}(x,y,z,t)$

\vspace*{1.2mm}
%\end{center}
\end{minipage}
}\\
\hline \hline
\begin{minipage}[m]{.28\linewidth}
\begin{center}
Singularity
\end{center}
\end{minipage}&
\begin{minipage}[m]{.04\linewidth}
\begin{center}
$B^3$
\end{center}
\end{minipage}&
\begin{minipage}[m]{.11\linewidth}
\begin{center}
Linear

system
\end{center}
\end{minipage}&
\begin{minipage}[m]{.11\linewidth}
\begin{center}
Surface $T$
\end{center}
\end{minipage}&
\begin{minipage}[m]{.11\linewidth}
\begin{center}
\vspace*{1mm}
 \vorder
\vspace*{1mm}
\end{center}
\end{minipage}&
\begin{minipage}[m]{.18\linewidth}
\begin{center}
Condition
\end{center}
\end{minipage}\\
\hline
\begin{minipage}[m]{.28\linewidth}

$O_w=\frac{1}{7}(1,3,4)$ \quadratic

\end{minipage}&
\multicolumn{4}{|l|}{\begin{minipage}[m]{.37\linewidth}
\begin{center}
$yw^2$
\end{center}
\end{minipage}}&
\begin{minipage}[m]{.18\linewidth}
\begin{center}
\end{center}
\end{minipage}\\
\hline
\begin{minipage}[m]{.28\linewidth}

$O_z=\frac{1}{3}(1_x,2_y,1_t)$ \boundary

\end{minipage}&
\begin{minipage}[m]{.04\linewidth}
\begin{center}
$-$
\end{center}
\end{minipage}&
\begin{minipage}[m]{.11\linewidth}
\begin{center}
$2B$
\end{center}
\end{minipage}&
\begin{minipage}[m]{.11\linewidth}
\begin{center}
$y$
\end{center}
\end{minipage}
&
\begin{minipage}[m]{.11\linewidth}
\begin{center}
$y$
\end{center}
\end{minipage}&
\begin{minipage}[m]{.18\linewidth}
\begin{center}
$a_1\ne 0$
\end{center}
\end{minipage}\\
\hline
\begin{minipage}[m]{.28\linewidth}

$O_z=\frac{1}{3}(1_x,2_y,1_w)$ $\family$

\end{minipage}&
\begin{minipage}[m]{.04\linewidth}
\begin{center}
$-$
\end{center}
\end{minipage}&
\begin{minipage}[m]{.11\linewidth}
\begin{center}
$2B$
\end{center}
\end{minipage}&
\begin{minipage}[m]{.11\linewidth}
\begin{center}
$x^2$, $y$
\end{center}
\end{minipage}
&
\begin{minipage}[m]{.11\linewidth}
\begin{center}
$x^2, y$
\end{center}
\end{minipage}&
\begin{minipage}[m]{.18\linewidth}
\begin{center}
$a_1=0$, $a_2\ne 0$
\end{center}
\end{minipage}\\
\hline
\begin{minipage}[m]{.28\linewidth}

$O_z=\frac{1}{3}(2_y,1_t,1_w)$ \boundary

\end{minipage}&
\begin{minipage}[m]{.04\linewidth}
\begin{center}
$-$
\end{center}
\end{minipage}&
\begin{minipage}[m]{.11\linewidth}
\begin{center}
$2B$
\end{center}
\end{minipage}&
\begin{minipage}[m]{.11\linewidth}
\begin{center}
$y$
\end{center}
\end{minipage}
&
\begin{minipage}[m]{.11\linewidth}
\begin{center}
$y$
\end{center}
\end{minipage}&
\begin{minipage}[m]{.18\linewidth}
\begin{center}
$a_1=a_2=0$
\end{center}
\end{minipage}\\

\hline
\begin{minipage}[m]{.28\linewidth}
 $O_yO_t=4\times\frac{1}{2}(1_x,1_z,1_w)$ $\nef$

\end{minipage}&
\begin{minipage}[m]{.04\linewidth}
\begin{center}
$-$
\end{center}
\end{minipage}&
\begin{minipage}[m]{.11\linewidth}
\begin{center}
$3B+E$
\end{center}
\end{minipage}&
\begin{minipage}[m]{.11\linewidth}
\begin{center}
$xy$, $z$
\end{center}
\end{minipage}
&
\begin{minipage}[m]{.11\linewidth}
\begin{center}
$xy$, $z$
\end{center}
\end{minipage}&
\begin{minipage}[m]{.18\linewidth}
\begin{center}

\end{center}
\end{minipage}\\

\hline
\end{longtable}
\end{center}

\begin{Note}

\item 
The $1$-cycle $\Gamma$ for the singular point $O_z$ with $a_1\ne
0$ is irreducible due to $t^4$ and $z^3w$.

\item For the singular point $O_z$ with $a_1=0$ and $a_2\ne 0$ we may
assume that $a_3=0$. The curve $L_{zw}$ is contained in $X_{16}$
because $a_1=0$. Let $Z_{\lambda, \mu}$ be the curve  on $X_{16}$
cut out by
$$
\left\{%
\aligned
&y=\lambda x^2\\%
&t=\mu x^4,\\%
\endaligned\right.%
$$
for some sufficiently general complex numbers $\lambda$ and $\mu$.
Then $Z_{\lambda, \mu}=2L_{zw}+C_{\lambda, \mu}$, where
$C_{\lambda, \mu}$ is an irreducible and reduced curve. We have
$$
\left\{%
\aligned
&-K_{Y}\cdot (2\tilde{L}_{zw}+\tilde{C}_{\lambda, \mu})=8B^3=-\frac{4}{7},\\%
&-K_Y\cdot \tilde{L}_{zw}=-K_X\cdot L_{zw} -\frac{1}{3}E\cdot \tilde{L}_{zw}=-\frac{2}{7},\\%
\endaligned\right.%
$$
and hence  $-K_{Y}\cdot \tilde{C}_{\lambda, \mu}=0$.

\item The $1$-cycle $\Gamma$ for the singular point $O_z$ with $a_1=a_2
=0$ is irreducible even though it is non-reduced.

\item For the singular points of type $\frac{1}{2}(1,1,1)$,  consider
the linear system generated by $xy$ and $z$. Its base curves are
defined by $x=z=0$ and $y=z=0$.  The curve defined by $y=z=0$ does
not pass through any singular point of type $\frac{1}{2}(1,1,1)$.
The curve  defined by $x=z=0$  is irreducible because of the
monomial $yw^2$ and $t^4$. Since its proper transform is the
$1$-cycle defined by  $(3B+E)\cdot B$ and $(3B+E)^2\cdot B>0$, the
divisor $T$ is nef.
\end{Note}

%%%%%%%%%%%%%%%%%%%%%%%%%%%%%%%%%%%%%%%%%%%%%%%%%%%%%%%%%%%%%%%%%%
%%%%%%%%%%%%%%%%%%%%%%%%%%%%%%%%%%%%%%%%%%%%%%%%%%%%%%%%%%%%%%%%%%%%%%%%%%%%%%%%%%%%%%%%

\begin{center}
\begin{longtable}{|l|c|c|c|c|c|}
\hline
\multicolumn{6}{|l|}{\textbf{No. 33}: $X_{17}\subset\mathbb{P}(1,2,3,5,7)$\hfill $A^3=17/210$}\\
\multicolumn{6}{|l|}{
\begin{minipage}[m]{.86\linewidth}
%\begin{center}
\vspace*{1.2mm} $(dx^3+exy+z)w^2+
t^2(a_1w+a_2yt)+z^4(b_1t+b_2yz)+y^5(c_1w+c_2yt+c_3y^2z+c_4y^3x)+wf_{10}(x,y,z,t)+f_{17}(x,y,z,t)$
\vspace*{1.2mm}
%\end{center}
\end{minipage}
}\\

\hline \hline
\begin{minipage}[m]{.28\linewidth}
\begin{center}
Singularity
\end{center}
\end{minipage}&
\begin{minipage}[m]{.04\linewidth}
\begin{center}
$B^3$
\end{center}
\end{minipage}&
\begin{minipage}[m]{.11\linewidth}
\begin{center}
Linear

system
\end{center}
\end{minipage}&
\begin{minipage}[m]{.11\linewidth}
\begin{center}
Surface $T$
\end{center}
\end{minipage}&
\begin{minipage}[m]{.11\linewidth}
\begin{center}
\vspace*{1mm}
 \vorder
\vspace*{1mm}
\end{center}
\end{minipage}&
\begin{minipage}[m]{.18\linewidth}
\begin{center}
Condition
\end{center}
\end{minipage}\\
\hline
\begin{minipage}[m]{.28\linewidth}

$O_w=\frac{1}{7}(1,2,5)$ \quadratic

\end{minipage}&
\multicolumn{4}{|l|}{\begin{minipage}[m]{.37\linewidth}
\begin{center}
$zw^2$
\end{center}
\end{minipage}}&
\begin{minipage}[m]{.18\linewidth}
\begin{center}

\end{center}
\end{minipage}\\
\hline
\begin{minipage}[m]{.28\linewidth}

$O_t=\frac{1}{5}(1,2,3)$ \quadraticone

\end{minipage}&
\multicolumn{4}{|l|}{\begin{minipage}[m]{.37\linewidth}
\begin{center}
$wt^2$
\end{center}
\end{minipage}}&
\begin{minipage}[m]{.18\linewidth}
\begin{center}

\end{center}
\end{minipage}\\
\hline
\begin{minipage}[m]{.28\linewidth}

$O_z=\frac{1}{3}(1_x,2_y,1_w)$ \boundary

\end{minipage}&
\begin{minipage}[m]{.04\linewidth}
\begin{center}
$-$
\end{center}
\end{minipage}&
\begin{minipage}[m]{.11\linewidth}
\begin{center}
$2B$
\end{center}
\end{minipage}&
\begin{minipage}[m]{.11\linewidth}
\begin{center}
$y$
\end{center}
\end{minipage}
&
\begin{minipage}[m]{.11\linewidth}
\begin{center}
$y$
\end{center}
\end{minipage}&
\begin{minipage}[m]{.18\linewidth}
\begin{center}
$a_1\ne 0$, $b_1\ne 0$
\end{center}
\end{minipage}\\
\hline
\begin{minipage}[m]{.28\linewidth}

$O_z=\frac{1}{3}(1_x,2_y,1_w)$ $\surface$

\end{minipage}&
\begin{minipage}[m]{.04\linewidth}
\begin{center}
$-$
\end{center}
\end{minipage}&
\begin{minipage}[m]{.11\linewidth}
\begin{center}
$2B$
\end{center}
\end{minipage}&
\begin{minipage}[m]{.11\linewidth}
\begin{center}
$x^2$, $y$
\end{center}
\end{minipage}
&
\begin{minipage}[m]{.11\linewidth}
\begin{center}
$x^2, y$
\end{center}
\end{minipage}&
\begin{minipage}[m]{.18\linewidth}
\begin{center}
$a_1=0$, $b_1\ne 0$
\end{center}
\end{minipage}\\

\hline
\begin{minipage}[m]{.28\linewidth}

$O_z=\frac{1}{3}(1_x,2_t,1_w)$ $\surface$

\end{minipage}&
\begin{minipage}[m]{.04\linewidth}
\begin{center}
$-$
\end{center}
\end{minipage}&
\begin{minipage}[m]{.11\linewidth}
\begin{center}
$2B$
\end{center}
\end{minipage}&
\begin{minipage}[m]{.11\linewidth}
\begin{center}
$x^2$, $y$
\end{center}
\end{minipage}
&
\begin{minipage}[m]{.11\linewidth}
\begin{center}
$x^2, zw^2$
\end{center}
\end{minipage}&
\begin{minipage}[m]{.18\linewidth}
\begin{center}
$b_1=0$
\end{center}
\end{minipage}\\

\hline
\begin{minipage}[m]{.28\linewidth}

$O_y=\frac{1}{2}(1_x,1_z,1_t)$ $\nef$

\end{minipage}&
\begin{minipage}[m]{.04\linewidth}
\begin{center}
$-$
\end{center}
\end{minipage}&
\begin{minipage}[m]{.11\linewidth}
\begin{center}
$5B+2E$
\end{center}
\end{minipage}&
\begin{minipage}[m]{.11\linewidth}
\begin{center}
$t$
\end{center}
\end{minipage}
&
\begin{minipage}[m]{.11\linewidth}
\begin{center}
$t$
\end{center}
\end{minipage}&
\begin{minipage}[m]{.18\linewidth}
\begin{center}
$c_1\ne 0$
\end{center}
\end{minipage}\\
\hline
\begin{minipage}[m]{.28\linewidth}

$O_y=\frac{1}{2}(1_x,1_z,1_w)$ $\surface$

\end{minipage}&
\begin{minipage}[m]{.04\linewidth}
\begin{center}
$-$
\end{center}
\end{minipage}&
\begin{minipage}[m]{.11\linewidth}
\begin{center}
$5B+E$
\end{center}
\end{minipage}&
\begin{minipage}[m]{.11\linewidth}
\begin{center}
$x^5$, $t$
\end{center}
\end{minipage}
&
\begin{minipage}[m]{.11\linewidth}
\begin{center}
$x^5$, $zw^2$
\end{center}
\end{minipage}&
\begin{minipage}[m]{.18\linewidth}
\begin{center}
$c_1=0$, $c_2\ne 0$
\end{center}
\end{minipage}\\
\hline
\begin{minipage}[m]{.28\linewidth}

$O_y=\frac{1}{2}(1_x,1_t,1_w)$ $\surface$

\end{minipage}&
\begin{minipage}[m]{.04\linewidth}
\begin{center}
$-$
\end{center}
\end{minipage}&
\begin{minipage}[m]{.11\linewidth}
\begin{center}
$3B$
\end{center}
\end{minipage}&
\begin{minipage}[m]{.11\linewidth}
\begin{center}
$x^3$, $z$
\end{center}
\end{minipage}
&
\begin{minipage}[m]{.11\linewidth}
\begin{center}
$x^3, yt^3, t^2w$
\end{center}
\end{minipage}&
\begin{minipage}[m]{.18\linewidth}
\begin{center}
$c_1=c_2=0$\\$c_3\ne 0$
\end{center}
\end{minipage}\\

\hline
\begin{minipage}[m]{.28\linewidth}

$O_y=\frac{1}{2}(1_z,1_t,1_w)$ $\nef$

\end{minipage}&
\begin{minipage}[m]{.04\linewidth}
\begin{center}
$-$
\end{center}
\end{minipage}&
\begin{minipage}[m]{.11\linewidth}
\begin{center}
$7B+3E$
\end{center}
\end{minipage}&
\begin{minipage}[m]{.11\linewidth}
\begin{center}
$w$
\end{center}
\end{minipage}
&
\begin{minipage}[m]{.11\linewidth}
\begin{center}
$w$
\end{center}
\end{minipage}&
\begin{minipage}[m]{.18\linewidth}
\begin{center}
$c_1=c_2=c_3=0$
\end{center}
\end{minipage}\\

\hline
\end{longtable}
\end{center}

 \begin{Note}

\item 
The $1$-cycle $\Gamma$ for the singular point $O_z$ with $a_1\ne0$
and $b_1\ne 0$ is irreducible since we have the monomials $t^2w$,
$z^4t$ and $zw^2$.

\end{Note}
 For the singular point $O_z$ with $a_1b_1=0$  choose a general
member $H$ in the linear system $|-2K_{X_{17}}|$. Then it is a
normal surface of degree $17$ in $\mathbb{P}(1,3,5,7)$. Let $T$ be
the proper transform of the divisor $H$. The curve $\tilde{D}$ on
$T$ cut out by the surface $S$ is the proper transform of the
curve cut by the equations $x=y=0$.

\begin{Note}

\item Suppose that $b_1\ne 0$  and $a_1=0$. Then $a_2\ne 0$. The curve
$\tilde{D}$ then consists of two irreducible curves
$\tilde{L}_{tw}$ and $\tilde{C}_1$.  The curve $\tilde{C}_1$ is
the proper transform of the curve $C_1$ defined by
$$x=y=w^2+b_1z^3t=0.$$ Note that the curve $L_{tw}$ and $C_1$
intersect at the point $O_t$. The surface $H$ is not quasi-smooth
at the point $O_t$. We also consider the divisor $D_z$ on $H$ cut
by the equation $z=0$. We easily see that  $D_z=2L_{tw}+R$, where
$R$ is a curve whose support does not contain $L_{tw}$. The curves
$R$ and $L_{tw}$ intersect at the point $O_w$. The surface $H$ is
quasi-smooth at the point $O_w$. Then we have $\tilde{L}_{tw}\cdot
\tilde{R}=\frac{3}{7}$. From the intersection
\[(2\tilde{L}_{tw}+\tilde{R})\cdot\tilde{L}_{tw}=3A\cdot \tilde{L}_{tw}=\frac{3}{35}\]
we obtain $\tilde{L}_{tw}^2=-\frac{6}{35}$. From the intersections
\[(\tilde{L}_{tw}+\tilde{C}_1)\cdot\tilde{L}_{tw}=-K_Y\cdot\tilde{L}_{tw}=\frac{1}{35}, \ \ \ (\tilde{L}_{tw}+\tilde{C}_1)^2=2B^3=-\frac{6}{35}\]
on the surface $T$, we obtain
\[\tilde{L}_{tw}^2=-\frac{6}{35},\ \ \ \tilde{L}_{tw}\cdot\tilde{C}_1=\frac{1}{5},  \ \ \ \tilde{C}_1^2=-\frac{2}{5}.\]
The intersection  matrix
\[\left(\begin{array}{cc}
       \tilde{L}_{tw}^2 &\tilde{L}_{tw}\cdot \tilde{C}_1\\
        \tilde{L}_{tw}\cdot \tilde{C}_1& \tilde{C}_1^2\\
\end{array}\right)= \left(\begin{array}{cc}
-\frac{6}{35} &\frac{1}{5} \\
        \frac{1}{5}& -\frac{2}{5} \\
        \end{array}\right)
\]
is negative-definite.

\item Suppose that $b_1=0$ and $a_1\ne 0$. Then $b_2\ne 0$. The curve
$\tilde{D}$ consists of two irreducible curves $\tilde{L}_{zt}$
and $\tilde{C}_2$. The curve $\tilde{C}_2$ is the proper transform
of the curve $C_2$ defined by $x=y=zw+a_1t^2=0$. From the
intersections
\[(\tilde{L}_{zt}+\tilde{C}_2)\cdot\tilde{L}_{zt}=-K_Y\cdot\tilde{L}_{zt}=-\frac{1}{10}, \ \ \ (\tilde{L}_{zt}+\tilde{C}_2)^2=2B^3=-\frac{6}{35}\]
on the surface $T$, we obtain
\[\tilde{L}_{zt}^2=-\frac{1}{10}-\tilde{L}_{zt}\cdot \tilde{C}_2, \ \ \
\tilde{C}_2^2=-\frac{1}{14}-\tilde{L}_{zt}\cdot \tilde{C}_2.\]
With these intersection numbers we see that the matrix
\[\left(\begin{array}{cc}
       \tilde{L}_{zt}^2 &\tilde{L}_{zt}\cdot \tilde{C}_2 \\
        \tilde{L}_{zt}\cdot \tilde{C}_2& \tilde{C}_2^2\\
\end{array}\right)= \left(\begin{array}{cc}
-\frac{1}{10}-\tilde{L}_{zt}\cdot \tilde{C}_2& \tilde{L}_{zt}\cdot\tilde{C}_2 \\
        \tilde{L}_{zt}\cdot \tilde{C}_2 & -\frac{1}{14}-\tilde{L}_{zt}\cdot \tilde{C}_2 \\
        \end{array}\right)
\]
is negative-definite since $\tilde{L}_{zt}\cdot\tilde{C}_2$ is
non-negative.

Suppose that $b_1=0$ and $a_1= 0$.  We then have $b_2\ne 0$ and
$a_2\ne 0$. Furthermore, the defining equation of $X_{17}$ must
contain $xz^2t$; otherwise $X_{17}$ would not be quasi-smooth at
the point $x=y=w=a_2t^3+b_2z^5=0$. Note that the presence of
$xz^2t$ implies the normality of the surfaces $H$ and $T$. The
curve $\tilde{D}$ consists of two irreducible curves
$\tilde{L}_{tw}$ and $\tilde{L}_{zt}$. Indeed,
$\tilde{D}=\tilde{L}_{tw}+2\tilde{L}_{zt}$. The curves $L_{tw}$
and $L_{zt}$ intersect at the point $O_t$. The surface $H$ is not
quasi-smooth at the point $O_t$. We consider the divisor $D_z$ on
$H$ cut by the equation $z=0$. We easily see that $D_z=2L_{tw}+R$,
where $R$ is a curve whose support does not contain $L_{tw}$. The
curves $R$ and $L_{tw}$ intersect at the point $O_w$. The surface
$H$ is quasi-smooth at the point $O_w$.
 Then we have
$\tilde{L}_{tw}\cdot \tilde{R}=\frac{3}{7}$. From the intersection
\[(2\tilde{L}_{tw}+\tilde{R})\cdot\tilde{L}_{tw}=3A\cdot \tilde{L}_{tw}=\frac{3}{35}\]
we obtain $\tilde{L}_{tw}^2=-\frac{6}{35}$. From the intersections
\[(\tilde{L}_{tw}+2\tilde{L}_{zt})\cdot\tilde{L}_{tw}=-K_Y\cdot\tilde{L}_{tw}=\frac{1}{35}, \ \ \ (\tilde{L}_{tw}+2\tilde{L}_{zt})^2=2B^3=-\frac{6}{35}\]
on the surface $T$, we obtain
\[\tilde{L}_{tw}^2=-\frac{6}{35}, \ \ \ \tilde{L}_{tw}\cdot\tilde{L}_{zt}=\frac{1}{10}, \ \ \ \tilde{L}_{zt}^2=-\frac{1}{10}.\]
Therefore, the curves $\tilde{L}_{tw}$ and $\tilde{L}_{zt}$ form a
negative-definite divisor on $T$.

 \item For the singular point $O_y$ with $c_1\ne 0$ we consider  the
linear system $|-5K_{X_{17}}|$. Every member in the linear system
passes through the point $O_y$. The base locus of $|-5K_{X_{17}}|$
is the union of the loci defined by $x=t=y=0$ and $x=t=z=0$.  It
is a $0$-dimensional locus. Since the proper transform of a
general member in $|-5K_{X_{15}}|$ belongs to the linear system
$|5B+2E|$, the divisor $T$ is nef.

 \item For the singular point $O_y$ with  $c_1=0$ and $c_2\ne 0$ we may
assume that $c_2=1$ and $c_3=c_4=0$ by a coordinate change. Choose
a general member $H$ in the linear system generated by $x^5$ and
$t$. Then it is a normal surface of degree $17$ in
$\mathbb{P}(1,2,3,7)$. Let $T$  be the proper transform of the
surface $H$. The intersection of $T$ with the surface $S$ gives us
a divisor consisting of
 two curves $\tilde{L}_{yw}$ and $\tilde{C}$. The curve $\tilde{C}$ is the proper
transform of the curve $C$ defined by
$$x=t=w^2+b_2yz^4+awy^2z+by^4z^2=0,$$ where $a$ and $b$ are
constants.

Suppose that $b_2\ne 0$. Then the curve $\tilde{C}$ is
irreducible. From the intersection numbers
\[(\tilde{L}_{yw}+\tilde{C})\cdot\tilde{L}_{yw}=-K_Y\cdot \tilde{L}_{yw}=-\frac{3}{7}\]
\[(\tilde{L}_{yw}+\tilde{C})^2=B^2\cdot (5B+E)=-\frac{23}{21}\]
on the surface $T$, we obtain
\[\tilde{L}_{yw}^2=-\frac{3}{7}-\tilde{L}_{yw}\cdot\tilde{C}, \ \ \ \tilde{C}^2=-\frac{2}{3}-\tilde{L}_{yw}\cdot\tilde{C}\]
With these intersection numbers we see that the matrix
\[\left(\begin{array}{cc}
       \tilde{L}_{yw}^2&\tilde{L}_{yw}\cdot\tilde{C}\\
       \tilde{L}_{yw}\cdot\tilde{C}& \tilde{C}^2\\
\end{array}\right)= \left(\begin{array}{cc}
-\frac{3}{7}-\tilde{L}_{yw}\cdot\tilde{C} & \tilde{L}_{yw}\cdot\tilde{C}\\
     \tilde{L}_{yw}\cdot\tilde{C} & -\frac{2}{3}-\tilde{L}_{yw}\cdot\tilde{C} \\
        \end{array}\right)
\]
is negative-definite since $\tilde{L}_{yw}\cdot\tilde{C}$ is
non-negative.

Suppose that $b_2= 0$. The curve $C$ then consists of two
irreducible curves $C_1$ and $C_2$ defined by $$x=t=w-\alpha_1
y^2z=0$$ and $$x=t=w-\alpha_2 y^2z=0,$$ respectively. Therefore, the
curve $\tilde{C}$ consists of their proper transforms
$\tilde{C}_1$ and $\tilde{C}_2$. From the intersections
\[(\tilde{L}_{yw}+\tilde{C}_1+\tilde{C}_2)\cdot\tilde{L}_{yw}=-K_Y\cdot\tilde{L}_{yw}=-\frac{3}{7}, \]
\[(\tilde{L}_{yw}+\tilde{C}_1+\tilde{C}_2)\cdot\tilde{C}_{1}=-K_Y\cdot\tilde{C}_{1}=-\frac{1}{3}, \ \ \ (\tilde{L}_{yw}+\tilde{C}_1+\tilde{C}_2)\cdot\tilde{C}_{2}=-K_Y\cdot\tilde{C}_{2}=-\frac{1}{3} \]
on the surface $T$, we obtain the intersection matrix of the
curves $\tilde{L}_{yw}$,  $\tilde{C}_{1 }$ and $\tilde{C}_2$
\[ \left(\begin{array}{ccc}
- \frac{3}{7}-\tilde{L}_{yw}\cdot \tilde{C}_{1}- \tilde{L}_{yw}\cdot \tilde{C}_{2}&\tilde{L}_{yw}\cdot \tilde{C}_{1}& \tilde{L}_{yw}\cdot \tilde{C}_{2}\\
        \tilde{L}_{yw}\cdot \tilde{C}_{1}& -\frac{1}{3}- \tilde{L}_{yw}\cdot \tilde{C}_{1}-\tilde{C}_{1}\cdot \tilde{C}_2&\tilde{C}_{1}\cdot \tilde{C}_2\\
         \tilde{L}_{yw}\cdot \tilde{C}_{2}& \tilde{C}_{1}\cdot\tilde{C}_2&-\frac{1}{3}- \tilde{L}_{zw}\cdot \tilde{C}_{2}- \tilde{C}_{1}\cdot\tilde{C}_2\\
        \end{array}\right).
\]
It is easy to check that it is negative-definite since
$\tilde{L}_{yw}\cdot \tilde{C}_{1}$,
$\tilde{L}_{yw}\cdot\tilde{C}_2$  and $\tilde{C}_{1}\cdot
\tilde{C}_2$ are non-negative.

\item For the singular point $O_y$ with  $c_1=c_2=0$ and $c_3\ne 0$ we
may assume that  $c_3=1$ and $c_4=0$ by a coordinate change. Note
that in such a case, we must have the monomial $xyw^2$, i.e.,
$e\ne 0$: otherwise the hypersurface $X_{17}$ is not quasi-smooth
at the point defined by $x=z=t=w^2+y^7=0$.

Choose a general member $H$ in the linear system generated by
$x^3$ and $z$. Then it is a normal surface of degree $17$ in
$\mathbb{P}(1,2,5,7)$. Let $D$ be the curve on $H$ cut out by the
equation $x=0$. Let $T$ be the proper transform of the surface
$H$. Then $T$ is normal and the curve $\tilde{D}$ is cut out by
the surface $S$.

Suppose that  $a_1\ne 0$. We may then assume that $a_1=1$ and
$a_2=0$ by a coordinate change. The curve $\tilde{D}$ then
consists of two irreducible curves $\tilde{L}_{yw}$ and
$\tilde{L}_{yt}$.  From the intersection numbers
\[(2\tilde{L}_{yw}+\tilde{L}_{yt})\cdot\tilde{L}_{yw}=-K_Y\cdot \tilde{L}_{yw}=-\frac{3}{7}, \ \ \ (2\tilde{L}_{yw}+\tilde{L}_{yt})^2=3B^3=-\frac{44}{35}\]
on the surface $T$, we obtain
\[\tilde{L}_{yw}^2=-\frac{3}{14}-\frac{1}{2}\tilde{L}_{yw}\cdot\tilde{L}_{yt}, \ \ \ \tilde{L}_{yt}^2=-\frac{2}{5}-2\tilde{L}_{yw}\cdot\tilde{L}_{yt}\]
With these intersection numbers we see that the matrix
\[\left(\begin{array}{cc}
       \tilde{L}_{yw}^2&\tilde{L}_{yw}\cdot\tilde{L}_{yt}\\
       \tilde{L}_{yw}\cdot\tilde{L}_{yt}& \tilde{L}_{yt}^2\\
\end{array}\right)= \left(\begin{array}{cc}
-\frac{3}{14}-\frac{1}{2}\tilde{L}_{yw}\cdot\tilde{L}_{yt} & \tilde{L}_{yw}\cdot\tilde{L}_{yt}\\
     \tilde{L}_{yw}\cdot\tilde{L}_{yt} & -\frac{2}{5}-2\tilde{L}_{yw}\cdot\tilde{L}_{yt} \\
        \end{array}\right)
\]
is negative-definite since $\tilde{L}_{yw}\cdot\tilde{L}_{yt}$ is
non-negative.

Suppose that  $a_1=0$. By changing the coordinate $y$, we may
assume that the defining equation of $X_{17}$ does not contain the
monomial $x^2t^3$. The curve $D$ consists of two irreducible
curves $L_{yw}$ and $L_{tw}$. In fact, we have
$\tilde{D}=3\tilde{L}_{yw}+\tilde{L}_{tw}$. Since the curve
$L_{yw}$ passes through the point $O_y$ but  the curve $L_{tw}$
does not, we have
$$
L_{yw}\cdot L_{tw}=\tilde{L}_{yw}\cdot\tilde{L}_{tw}, \ \ \
L_{tw}^2=\tilde{L}_{tw}^2.
$$
 We also
have
$$
(3\tilde{L}_{yw}+\tilde{L}_{tw})\cdot \tilde{L}_{yw}=-K_Y\cdot
\tilde{L}_{yw}=-\frac{3}{7}, \ \ \ (3L_{yw}+L_{tw})\cdot
L_{tw}=-K_{X_{17}}\cdot L_{tw}=\frac{1}{35}.
$$
To compute $L_{yw}\cdot L_{tw}$, we consider the divisor $D_y$ on
$H$ given by the equation $y=0$. Since the defining equation of
$X_{17}$ does not contain the monomial $x^2t^3$, we have
$D_y=3L_{tw}+R$, where $R$ is a curve whose support does not
contain the curve $L_{tw}$. Note that $R$ meets $L_{tw}$ only at
the point $O_{t}$. Moreover, we can easily see that $L_{tw}\cdot
R=\frac{2}{5}$ since $H$ is quasi-smooth at the point $O_t$. Then
the intersection
$$
(3L_{tw}+R)\cdot L_{tw}=-2K_{X_{17}}\cdot L_{tw}=\frac{2}{35}%
$$
 implies that $L_{tw}^2=-\frac{4}{35}$. This gives a negative-definite matrix
\[\left(\begin{array}{cc}
       \tilde{L}_{yw}^2&\tilde{L}_{yw}\cdot\tilde{L}_{tw}\\
       \tilde{L}_{yw}\cdot\tilde{L}_{tw}& \tilde{L}_{tw}^2\\
\end{array}\right)= \left(\begin{array}{cc}
-\frac{10}{63}& \frac{1}{21}\\
   \frac{1}{21}& -\frac{4}{35}\\
        \end{array}\right).
\]

\item For the singular point $O_y$ with $c_1=c_2=c_3=0$, we consider
linear system $|-7K_{X_{17}}|$. Every member in the linear system
passes through the point $O_y$. The proper transform of a general
member in $|-7K_{X_{17}}|$ belongs to the linear system $|7B+3E|$.
The base locus of the linear system $|-7K_{X_{17}}|$ possibly
contains only the curve $L_{yz}$ and the curve $L_{zt}$. If they
are contained in $X_{17}$, we see  $$(7B+3E)\cdot
\tilde{L}_{yz}=-7K_{X_{17}}\cdot L_{yz}-\frac{1}{2}E\cdot
\tilde{L}_{yz}=\frac{2}{3}, \ \ \ (7B+3E)\cdot
\tilde{L}_{zt}=-7K_{X_{17}}\cdot L_{zt}=\frac{7}{15}.$$ Therefore,
$T$ is nef.

\end{Note}

%%%%%%%%%%%%%%%%%%%%%%%%%%%%%%%%%%%%%%%%%%%%%%%%%%%%%%%%%%%%%%%%%%%%%%%%%%%%%%%%%%%%%%%%

%%%%%%%%%%%%%%%%%%%%%%%%%%%%%%%%%%%%%%%%%%%%%%%%%%%%%%%%%%%%%%%%%%
%%%%%%%%%%%%%%%%%%%%%%%%%%%%%%%%%%%%%%%%%%%%%%%%%%%%%%%%%%%%%%%%%%%%%%%%%%%%%%%%%%%%%%%%

%%%%%%%%%%%%%%%%%%%%%%%%%%%%%%%%%%%%%%%%%%%%%%%%%%%%%%%%%%%%%%

%%%%%%%%%%%%%%%%%%%%%%%%%%%%%%%%%%%%%%%%%%%%%%%%%%%%%%%%%%%%%%
\begin{center}
\begin{longtable}{|l|c|c|c|c|c|}
\hline
\multicolumn{6}{|l|}{\underline{\textbf{No. 34}}: $X_{18}\subset\mathbb{P}(1,1,2,6,9)$\hfill $A^3=1/6$}\\
\multicolumn{6}{|l|}{
\begin{minipage}[m]{.86\linewidth}
%\begin{center}
\vspace*{1.2mm} $w^2+t^3+wf_{9}(x,y,z,t)+f_{18}(x,y,z,t)$
\vspace*{1.2mm}
%\end{center}
\end{minipage}
}\\
\hline \hline
\begin{minipage}[m]{.28\linewidth}
\begin{center}
Singularity
\end{center}
\end{minipage}&
\begin{minipage}[m]{.04\linewidth}
\begin{center}
$B^3$
\end{center}
\end{minipage}&
\begin{minipage}[m]{.11\linewidth}
\begin{center}
Linear

system
\end{center}
\end{minipage}&
\begin{minipage}[m]{.11\linewidth}
\begin{center}
Surface $T$
\end{center}
\end{minipage}&
\begin{minipage}[m]{.11\linewidth}
\begin{center}
\vspace*{1mm}
 \vorder
\vspace*{1mm}
\end{center}
\end{minipage}&
\begin{minipage}[m]{.18\linewidth}
\begin{center}
Condition
\end{center}
\end{minipage}\\
\hline
\begin{minipage}[m]{.28\linewidth}

$O_tO_w=1\times\frac{1}{3}(1_x,1_y,2_z)$ \boundary

\end{minipage}&
\begin{minipage}[m]{.04\linewidth}
\begin{center}
$0$
\end{center}
\end{minipage}&
\begin{minipage}[m]{.11\linewidth}
\begin{center}
$B$
\end{center}
\end{minipage}&
\begin{minipage}[m]{.11\linewidth}
\begin{center}
$y$
\end{center}
\end{minipage}
&
\begin{minipage}[m]{.11\linewidth}
\begin{center}
$y$
\end{center}
\end{minipage}&
\begin{minipage}[m]{.18\linewidth}
\begin{center}

\end{center}
\end{minipage}\\
\hline
\begin{minipage}[m]{.28\linewidth}

$O_zO_t=3\times\frac{1}{2}(1_x,1_y,1_w)$ \boundary

\end{minipage}&
\begin{minipage}[m]{.04\linewidth}
\begin{center}
$-$
\end{center}
\end{minipage}&
\begin{minipage}[m]{.11\linewidth}
\begin{center}
$B$
\end{center}
\end{minipage}&
\begin{minipage}[m]{.11\linewidth}
\begin{center}
$y$
\end{center}
\end{minipage}
&
\begin{minipage}[m]{.11\linewidth}
\begin{center}
$y$
\end{center}
\end{minipage}&
\begin{minipage}[m]{.18\linewidth}
\begin{center}

\end{center}
\end{minipage}\\
\hline

\end{longtable}
\end{center}

 \begin{Note}

\item 
The curve defined by $x=y=0$ is always  irreducible
since we have
 the monomials $w^2$ and $t^3$.  Therefore, the $1$-cycle $\Gamma$  for each singular point is irreducible.
\end{Note}

%%%%%%%%%%%%%%%%%%%%%%%%%%%%%%%%%%%%%%%%%%%%%%%%%%%%%%%%%%%%%%%%%%

\begin{center}
\begin{longtable}{|l|c|c|c|c|c|}
\hline
\multicolumn{6}{|l|}{\underline{\textbf{No. 35}}: $X_{18}\subset\mathbb{P}(1,1,3,5,9)$\hfill $A^3=2/15$}\\
\multicolumn{6}{|l|}{
\begin{minipage}[m]{.86\linewidth}
%\begin{center}
\vspace*{1.2mm} $w^2+zt^3+wf_{9}(x,y,z,t)+f_{18}(x,y,z,t)$
\vspace*{1.2mm}
%\end{center}
\end{minipage}
}\\
\hline \hline
\begin{minipage}[m]{.28\linewidth}
\begin{center}
Singularity
\end{center}
\end{minipage}&
\begin{minipage}[m]{.04\linewidth}
\begin{center}
$B^3$
\end{center}
\end{minipage}&
\begin{minipage}[m]{.11\linewidth}
\begin{center}
Linear

system
\end{center}
\end{minipage}&
\begin{minipage}[m]{.11\linewidth}
\begin{center}
Surface $T$
\end{center}
\end{minipage}&
\begin{minipage}[m]{.11\linewidth}
\begin{center}
\vspace*{1mm}
 \vorder
\vspace*{1mm}
\end{center}
\end{minipage}&
\begin{minipage}[m]{.18\linewidth}
\begin{center}
Condition
\end{center}
\end{minipage}\\
\hline
\begin{minipage}[m]{.28\linewidth}

$O_t=\frac{1}{5}(1_x,1_y,4_w)$ $\positive$

\end{minipage}&
\begin{minipage}[m]{.04\linewidth}
\begin{center}
$+$
\end{center}
\end{minipage}&
\begin{minipage}[m]{.11\linewidth}
\begin{center}
$3B-E$
\end{center}
\end{minipage}&
\begin{minipage}[m]{.11\linewidth}
\begin{center}
$z$
\end{center}
\end{minipage}
&
\begin{minipage}[m]{.11\linewidth}
\begin{center}
$w^2$
\end{center}
\end{minipage}&
\begin{minipage}[m]{.18\linewidth}
\begin{center}

\end{center}
\end{minipage}\\
\hline
\begin{minipage}[m]{.28\linewidth}

$O_zO_w=2\times\frac{1}{2}(1_x,1_y,1_t)$ \boundary

\end{minipage}&
\begin{minipage}[m]{.04\linewidth}
\begin{center}
$-$
\end{center}
\end{minipage}&
\begin{minipage}[m]{.11\linewidth}
\begin{center}
$B$
\end{center}
\end{minipage}&
\begin{minipage}[m]{.11\linewidth}
\begin{center}
$y$
\end{center}
\end{minipage}
&
\begin{minipage}[m]{.11\linewidth}
\begin{center}
$y$
\end{center}
\end{minipage}&
\begin{minipage}[m]{.18\linewidth}
\begin{center}

\end{center}
\end{minipage}\\
\hline

\end{longtable}
\end{center}

\begin{Note}

\item 
The $1$-cycle $\Gamma$ for the singular points of type
$\frac{1}{2}(1,1,1)$ is irreducible since we have  $w^2$ and
$zt^3$.
\end{Note}

%%%%%%%%%%%%%%%%%%%%%%%%%%%%%%%%%%%%%%%%%%%%%%%%%%%%%%%%%%%%%%
%%%%%%%%%%%%%%%%%%%%%%%%%%%%%%%%%%%%%%%%%%%%%%%%%%%%%%%%%%%%%%%%%%

%%%%%%%%%%%%%%%%%%%%%%%%%%%%%%%%%%%%%%%%%%%%%%%%%%%%%%%%%%%%%%%%%%%%%%%%%%%%%%%%%%%%%%%%

%%%%%%%%%%%%%%%%%%%%%%%%%%%%%%%%%%%%%%%%%%%%%%%%%%%%%%%%%%%%%%%%%%%%%%%%%%%%%%%%%%%%%%%%

\begin{center}
\begin{longtable}{|l|c|c|c|c|c|}
\hline
\multicolumn{6}{|l|}{\textbf{No. 36}: $X_{18}\subset\mathbb{P}(1,1,4,6,7)$\hfill $A^3=3/28$}\\
\multicolumn{6}{|l|}{
\begin{minipage}[m]{.86\linewidth}
%\begin{center}
\vspace*{1.2mm}

$zw^2+t^3-z^3t+wf_{11}(x,y,z,t)+f_{18}(x,y,z,t)$ \vspace*{1.2mm}
%\end{center}
\end{minipage}
}\\
\hline \hline
\begin{minipage}[m]{.28\linewidth}
\begin{center}
Singularity
\end{center}
\end{minipage}&
\begin{minipage}[m]{.04\linewidth}
\begin{center}
$B^3$
\end{center}
\end{minipage}&
\begin{minipage}[m]{.11\linewidth}
\begin{center}
Linear

system
\end{center}
\end{minipage}&
\begin{minipage}[m]{.11\linewidth}
\begin{center}
Surface $T$
\end{center}
\end{minipage}&
\begin{minipage}[m]{.11\linewidth}
\begin{center}
\vspace*{1mm}
 \vorder
\vspace*{1mm}
\end{center}
\end{minipage}&
\begin{minipage}[m]{.18\linewidth}
\begin{center}
Condition
\end{center}
\end{minipage}\\
\hline
\begin{minipage}[m]{.28\linewidth}

$O_w=\frac{1}{7}(1,1,6)$ \quadratic

\end{minipage}&
\multicolumn{4}{|l|}{\begin{minipage}[m]{.37\linewidth}
\begin{center}
$zw^2$
\end{center}
\end{minipage}}&
\begin{minipage}[m]{.18\linewidth}
\begin{center}

\end{center}
\end{minipage}\\
\hline
\begin{minipage}[m]{.28\linewidth}

$O_z=\frac{1}{4}(1,1,3)$ \ellipticone

\end{minipage}&\multicolumn{4}{|l|}{\begin{minipage}[m]{.37\linewidth}
\begin{center}
$zw^2-z^3t$
\end{center}
\end{minipage}}&
\begin{minipage}[m]{.18\linewidth}
\begin{center}
\end{center}
\end{minipage}\\
\hline
\begin{minipage}[m]{.28\linewidth}

$O_zO_t=1\times\frac{1}{2}(1_x,1_y,1_w)$ \boundary

\end{minipage}&
\begin{minipage}[m]{.04\linewidth}
\begin{center}
$-$
\end{center}
\end{minipage}&
\begin{minipage}[m]{.11\linewidth}
\begin{center}
$B$
\end{center}
\end{minipage}&
\begin{minipage}[m]{.11\linewidth}
\begin{center}
$y$
\end{center}
\end{minipage}
&
\begin{minipage}[m]{.11\linewidth}
\begin{center}
$y$
\end{center}
\end{minipage}&
\begin{minipage}[m]{.18\linewidth}
\begin{center}

\end{center}
\end{minipage}\\

\hline
\end{longtable}
\end{center}

\begin{Note}

\item 
The $1$-cycle $\Gamma$ for the singular point of type
$\frac{1}{2}(1,1,1)$ is irreducible because of the monomials
$zw^2$ and $t^3$.
\end{Note}
%%%%%%%%%%%%%%%%%%%%%%%%%%%%%%%%%%%%%%%%%%%%%%%%%%%%%%%%%%%%%%%%%%
%%%%%%%%%%%%%%%%%%%%%%%%%%%%%%%%%%%%%%%%%%%%%%%%%%%%%%%%%%%%%%%%%%%%%%%%%%%%%%%%%%%%%%%%

\begin{center}
\begin{longtable}{|l|c|c|c|c|c|}
\hline
\multicolumn{6}{|l|}{\underline{\textbf{No. 37}}: $X_{18}\subset\mathbb{P}(1,2,3,4,9)$\hfill $A^3=1/12$}\\
\multicolumn{6}{|l|}{
\begin{minipage}[m]{.86\linewidth}
%\begin{center}
\vspace*{1.2mm}
$(w-\beta_1z^3)(w-\beta_2z^3)+y\prod_{i=1}^{4}(t-\alpha_i
y^2)+at^3z^2+wf_{9}(x,y,z,t)+
t^3g_{6}(x,y,z)+t^2g_{10}(x,y,z)+tg_{14}(x,y,z)+g_{18}(x,y,z)$
\vspace*{1.2mm}
%\end{center}
\end{minipage}
}\\

\hline \hline
\begin{minipage}[m]{.28\linewidth}
\begin{center}
Singularity
\end{center}
\end{minipage}&
\begin{minipage}[m]{.04\linewidth}
\begin{center}
$B^3$
\end{center}
\end{minipage}&
\begin{minipage}[m]{.11\linewidth}
\begin{center}
Linear

system
\end{center}
\end{minipage}&
\begin{minipage}[m]{.11\linewidth}
\begin{center}
Surface $T$
\end{center}
\end{minipage}&
\begin{minipage}[m]{.11\linewidth}
\begin{center}
\vspace*{1mm}
 \vorder
\vspace*{1mm}
\end{center}
\end{minipage}&
\begin{minipage}[m]{.18\linewidth}
\begin{center}
Condition
\end{center}
\end{minipage}\\
\hline
\begin{minipage}[m]{.28\linewidth}

$O_t=\frac{1}{4}(1_x,3_z,1_w)$ \boundary

\end{minipage}&
\begin{minipage}[m]{.04\linewidth}
\begin{center}
$0$
\end{center}
\end{minipage}&
\begin{minipage}[m]{.11\linewidth}
\begin{center}
$2B$
\end{center}
\end{minipage}&
\begin{minipage}[m]{.11\linewidth}
\begin{center}
$y$
\end{center}
\end{minipage}
&
\begin{minipage}[m]{.11\linewidth}
\begin{center}
$w^2$
\end{center}
\end{minipage}&
\begin{minipage}[m]{.18\linewidth}
\begin{center}

\end{center}
\end{minipage}\\
\hline
\begin{minipage}[m]{.28\linewidth}
 $O_zO_w=2\times\frac{1}{3}(1_x,2_y,1_t)$ \boundary

\end{minipage}&
\begin{minipage}[m]{.04\linewidth}
\begin{center}
$-$
\end{center}
\end{minipage}&
\begin{minipage}[m]{.11\linewidth}
\begin{center}
$2B$
\end{center}
\end{minipage}&
\begin{minipage}[m]{.11\linewidth}
\begin{center}
$y$
\end{center}
\end{minipage}
&
\begin{minipage}[m]{.11\linewidth}
\begin{center}
$y$
\end{center}
\end{minipage}&
\begin{minipage}[m]{.18\linewidth}
\begin{center}
$a\ne0$
\end{center}
\end{minipage}\\
\hline
\begin{minipage}[m]{.28\linewidth}
$O_zO_w=2\times\frac{1}{3}(1_x,2_y,1_t)$ $\surface$

\end{minipage}&
\begin{minipage}[m]{.04\linewidth}
\begin{center}
$-$
\end{center}
\end{minipage}&
\begin{minipage}[m]{.11\linewidth}
\begin{center}
$2B$
\end{center}
\end{minipage}&
\begin{minipage}[m]{.11\linewidth}
\begin{center}
$x^2$, $y$
\end{center}
\end{minipage}
&
\begin{minipage}[m]{.11\linewidth}
\begin{center}
$x^2$, $y$
\end{center}
\end{minipage}&
\begin{minipage}[m]{.18\linewidth}
\begin{center}

$a=0$

\end{center}
\end{minipage}\\
\hline
\begin{minipage}[m]{.28\linewidth}

$O_yO_t=4\times\frac{1}{2}(1_x,1_z,1_w)$ \boundary

\end{minipage}&
\begin{minipage}[m]{.04\linewidth}
\begin{center}
$-$
\end{center}
\end{minipage}&
\begin{minipage}[m]{.11\linewidth}
\begin{center}
$4B+E$
\end{center}
\end{minipage}&
\begin{minipage}[m]{.11\linewidth}
\begin{center}
$t-\alpha_i y^2$
\end{center}
\end{minipage}
&
\begin{minipage}[m]{.11\linewidth}
\begin{center}
$w^2$
\end{center}
\end{minipage}&
\begin{minipage}[m]{.18\linewidth}
\begin{center}

\end{center}
\end{minipage}\\
\hline

\end{longtable}
\end{center}

\begin{Note}

\item 
For the singular point $O_t$, the $1$-cycle $\Gamma$ can be
reducible. In case, we see that $\Gamma$ consists of  the proper
transforms of the curves defined by $x=y=w-\beta_1z^{3}=0$ and
$x=y=w-\beta_2z^{3}=0$. These two irreducible components are
symmetric with respect to the biregular involution of $X_{18}$. In
addition, the point $O_t$ is the intersection point of these two
curves. Consequently, the components of $\Gamma$ are numerically
equivalent to each other.

\item For each singular point of type $\frac{1}{3}(1,2,1)$, the
$1$-cycle $\Gamma$ is irreducible if the constant $a$ is not zero.

\item Suppose that  the constant $a$ is zero. We have only to consider
one of the singular points of type $\frac{1}{3}(1,2,1)$. The other
singular point can be excluded in the same way. We put $\beta_1=0$
and consider the singular point $O_z$. We may also assume that the
defining equation of $X_{18}$ contains neither $xz^3t^2$ nor
$x^2z^4t$ by changing the coordinate $w$.

We take a general surface $H$ from the pencil $|-2K_{X_{18}}|$ and
then let $T$ be the proper transform of the surface. Note that the
surface $H$ is normal. However, it is not quasi-smooth at the
point $O_t$. The intersection of $T$ with the surface $S$ gives us
a divisor consisting of two irreducible curves on the normal
surface $T$. They are the proper transforms $\tilde{L}_{zt}$ and
$\tilde{C}$   of the curve  $L_{zt}$ and the curve $C$ defined by
$$x=y=w-\beta_2z^3=0,$$ respectively. From the intersection numbers
\[(\tilde{L}_{zt}+\tilde{C})\cdot\tilde{L}_{zt}=-K_Y\cdot\tilde{L}_{zt}=-\frac{1}{4}, \ \ \ (\tilde{L}_{zt}+\tilde{C})^2=2B^3=-\frac{1}{6}\]
on the surface $T$, we obtain
\[\tilde{L}_{zt}^2=-\frac{1}{4}-\tilde{L}_{zt}\cdot\tilde{C}, \ \ \ \tilde{C}^2=\frac{1}{12}-\tilde{L}_{zt}\cdot\tilde{C}.\]

To compute the intersection number $\tilde{L}_{zt}\cdot\tilde{C}$,
we consider the divisor $D_w$ on $H$ cut by the equation $w=0$. We
easily see that $D_w=2L_{zt}+R$, where $R$ is a curve whose
support does not contain $L_{zt}$. The curve $R$ and $L_{zt}$
intersects at the point $O_z$. Let $\tilde{R}$ be the proper
transform of $R$. Then we have $\tilde{L}_{zt}\cdot \tilde{R}=0$
since they are disconnected on $T$. From the intersection
\[(2\tilde{L}_{zt}+\tilde{R})\cdot\tilde{L}_{zt}=(9B+E)\cdot \tilde{L}_{zt}=-\frac{5}{4}\]
we obtain $\tilde{L}_{zt}^2=-\frac{5}{8}$.  Therefore,
$\tilde{L}_{zt}\cdot\tilde{C}=\frac{3}{8}$ and
$\tilde{C}^2=-\frac{7}{24}$. With these intersection numbers we
see that the matrix
\[\left(\begin{array}{cc}
       \tilde{L}_{zt}^2&\tilde{L}_{zt}\cdot\tilde{C}\\
       \tilde{L}_{zt}\cdot\tilde{C}& \tilde{C}^2\\
\end{array}\right)= \left(\begin{array}{cc}
-\frac{5}{8}& \frac{3}{8}\\
  \frac{3}{8} & -\frac{7}{24} \\
        \end{array}\right)
\]
is negative-definite.

\item For each singular point of type $\frac{1}{2}(1,1,1)$, the
$1$-cycle $\Gamma$ may be reducible. In case, it consists of  the
proper transforms of the curves defined by $$x=t-\alpha_i
y^2=w+by^3z+cz^{3}=0$$ and $$x=t-\alpha_i y^2=w+dy^3z+ez^{3}=0,$$
where $b$, $c$, $d$, $e$ are constants. These two irreducible
components are also symmetric with respect to the biregular
involution of $X_{18}$. In addition, the singular point  is the
intersection point of these two curves. Therefore, the components
of $\Gamma$ are numerically equivalent.

\end{Note}

%%%%%%%%%%%%%%%%%%%%%%%%%%%%%%%%%%%%%%%%%%%%%%%%%%%%%%%%%%%%%%

%%%%%%%%%%%%%%%%%%%%%%%%%%%%%%%%%%%%%%%%%%%%%%%%%%%%%%%%%%%%%%%%%%

\begin{center}
\begin{longtable}{|l|c|c|c|c|c|}
\hline
\multicolumn{6}{|l|}{\textbf{No. 38}: $X_{18}\subset\mathbb{P}(1,2,3,5,8)$\hfill $A^3=3/40$}\\
\multicolumn{6}{|l|}{
\begin{minipage}[m]{.86\linewidth}
%\begin{center}
\vspace*{1.2mm}

$yw^2+t^2(a_1w+a_2zt)+z^6+y^9+wf_{10}(x,y,z,t)+f_{18}(x,y,z,t)$
\vspace*{1.2mm}
%\end{center}
\end{minipage}
}\\
\hline \hline
\begin{minipage}[m]{.28\linewidth}
\begin{center}
Singularity
\end{center}
\end{minipage}&
\begin{minipage}[m]{.04\linewidth}
\begin{center}
$B^3$
\end{center}
\end{minipage}&
\begin{minipage}[m]{.11\linewidth}
\begin{center}
Linear

system
\end{center}
\end{minipage}&
\begin{minipage}[m]{.11\linewidth}
\begin{center}
Surface $T$
\end{center}
\end{minipage}&
\begin{minipage}[m]{.11\linewidth}
\begin{center}
\vspace*{1mm}
 \vorder
\vspace*{1mm}
\end{center}
\end{minipage}&
\begin{minipage}[m]{.18\linewidth}
\begin{center}
Condition
\end{center}
\end{minipage}\\
\hline
\begin{minipage}[m]{.28\linewidth}

$O_w=\frac{1}{8}(1,3,5)$ \quadratic

\end{minipage}&
\multicolumn{4}{|l|}{\begin{minipage}[m]{.37\linewidth}
\begin{center}
$yw^2$
\end{center}
\end{minipage}}&
\begin{minipage}[m]{.18\linewidth}
\begin{center}

\end{center}
\end{minipage}\\
\hline
\begin{minipage}[m]{.28\linewidth}

$O_t=\frac{1}{5}(1,2,3)$ \quadraticone

\end{minipage}&
\multicolumn{4}{|l|}{\begin{minipage}[m]{.37\linewidth}
\begin{center}
$wt^2$
\end{center}
\end{minipage}}&
\begin{minipage}[m]{.18\linewidth}
\begin{center}

\end{center}
\end{minipage}\\
\hline
\begin{minipage}[m]{.28\linewidth}
 $O_yO_w=2\times\frac{1}{2}(1_x,1_z,1_t)$ $\nef$

\end{minipage}&
\begin{minipage}[m]{.04\linewidth}
\begin{center}
$-$
\end{center}
\end{minipage}&
\begin{minipage}[m]{.11\linewidth}
\begin{center}
$5B+2E$
\end{center}
\end{minipage}&
\begin{minipage}[m]{.11\linewidth}
\begin{center}
$t$
\end{center}
\end{minipage}
&
\begin{minipage}[m]{.11\linewidth}
\begin{center}
$t$
\end{center}
\end{minipage}&
\begin{minipage}[m]{.18\linewidth}
\begin{center}

\end{center}
\end{minipage}\\

\hline
\end{longtable}
\end{center}

\begin{Note}

\item 
For the singular points  of type $\frac{1}{2}(1,1,1)$ we consider
the linear system $|-5K_{X_{18}}|$.  Every member of the linear
system passes through  the singular points  of type
$\frac{1}{2}(1,1,1)$ and the base locus of the linear system
contains no curves. Since the proper transform of a general member
in $|-5K_{X_{18}}|$ belongs to the linear system $|5B+2E|$, the
divisor $T$ is nef.
\end{Note}

%%%%%%%%%%%%%%%%%%%%%%%%%%%%%%%%%%%%%%%%%%%%%%%%%%%%%%%%%%%%%%

%%%%%%%%%%%%%%%%%%%%%%%%%%%%%%%%%%%%%%%%%%%%%%%%%%%%%%%%%%%%%%%%%%
\begin{center}
\begin{longtable}{|l|c|c|c|c|c|}
\hline
\multicolumn{6}{|l|}{\underline{\textbf{No. 39}}: $X_{18}\subset\mathbb{P}(1,3,4,5,6)$\hfill $A^3=1/20$}\\
\multicolumn{6}{|l|}{
\begin{minipage}[m]{.86\linewidth}
%\begin{center}
\vspace*{1.2mm}
$(w-\alpha_1y^2)(w-\alpha_2y^2)(w-\alpha_3y^2)+yt^3+z^3w+at^2z^2+by^2z^3+w^2f_{6}(x,y,z,t)+wf_{12}(x,y,z,t)+t^2g_{8}(x,y,z)+tg_{13}(x,y,z)+g_{18}(x,y,z)$
\vspace*{1.2mm}
%\end{center}
\end{minipage}
}\\
\hline \hline
\begin{minipage}[m]{.28\linewidth}
\begin{center}
Singularity
\end{center}
\end{minipage}&
\begin{minipage}[m]{.04\linewidth}
\begin{center}
$B^3$
\end{center}
\end{minipage}&
\begin{minipage}[m]{.11\linewidth}
\begin{center}
Linear

system
\end{center}
\end{minipage}&
\begin{minipage}[m]{.11\linewidth}
\begin{center}
Surface $T$
\end{center}
\end{minipage}&
\begin{minipage}[m]{.11\linewidth}
\begin{center}
\vspace*{1mm}
 \vorder
\vspace*{1mm}
\end{center}
\end{minipage}&
\begin{minipage}[m]{.18\linewidth}
\begin{center}
Condition
\end{center}
\end{minipage}\\
\hline
\begin{minipage}[m]{.28\linewidth}

$O_t=\frac{1}{5}(1_x,4_z,1_w)$ \boundary

\end{minipage}&
\begin{minipage}[m]{.04\linewidth}
\begin{center}
$0$
\end{center}
\end{minipage}&
\begin{minipage}[m]{.11\linewidth}
\begin{center}
$3B$
\end{center}
\end{minipage}&
\begin{minipage}[m]{.11\linewidth}
\begin{center}
$y$
\end{center}
\end{minipage}
&
\begin{minipage}[m]{.11\linewidth}
\begin{center}
$w^3$
\end{center}
\end{minipage}&
\begin{minipage}[m]{.18\linewidth}
\begin{center}

\end{center}
\end{minipage}\\
\hline
\begin{minipage}[m]{.28\linewidth}

$O_z=\frac{1}{4}(1_x,3_y,1_t)$ \boundary

\end{minipage}&
\begin{minipage}[m]{.04\linewidth}
\begin{center}
$-$
\end{center}
\end{minipage}&
\begin{minipage}[m]{.11\linewidth}
\begin{center}
$3B$
\end{center}
\end{minipage}&
\begin{minipage}[m]{.11\linewidth}
\begin{center}
$y$
\end{center}
\end{minipage}
&
\begin{minipage}[m]{.11\linewidth}
\begin{center}
$y$
\end{center}
\end{minipage}&
\begin{minipage}[m]{.18\linewidth}
\begin{center}
$a\ne0$
\end{center}
\end{minipage}\\
\hline
\begin{minipage}[m]{.28\linewidth}

$O_z=\frac{1}{4}(1_x,3_y,1_t)$ $\surface$

\end{minipage}&
\begin{minipage}[m]{.04\linewidth}
\begin{center}
$-$
\end{center}
\end{minipage}&
\begin{minipage}[m]{.11\linewidth}
\begin{center}
$3B$
\end{center}
\end{minipage}&
\begin{minipage}[m]{.11\linewidth}
\begin{center}
$x^3,y$
\end{center}
\end{minipage}
&
\begin{minipage}[m]{.11\linewidth}
\begin{center}
$x^3$, $y$
\end{center}
\end{minipage}&
\begin{minipage}[m]{.18\linewidth}
\begin{center}
$a=0$
\end{center}
\end{minipage}\\
\hline
\begin{minipage}[m]{.28\linewidth}
$O_zO_w=1\times\frac{1}{2}(1_x,1_y,1_t)$ $\nef$

\end{minipage}&
\begin{minipage}[m]{.04\linewidth}
\begin{center}
$-$
\end{center}
\end{minipage}&
\begin{minipage}[m]{.11\linewidth}
\begin{center}
$5B+2E$
\end{center}
\end{minipage}&
\begin{minipage}[m]{.11\linewidth}
\begin{center}
$t$
\end{center}
\end{minipage}
&
\begin{minipage}[m]{.11\linewidth}
\begin{center}
$t$
\end{center}
\end{minipage}&
\begin{minipage}[m]{.18\linewidth}
\begin{center}

\end{center}
\end{minipage}\\
\hline
\begin{minipage}[m]{.28\linewidth}
 $O_yO_w=3\times\frac{1}{3}(1_x,1_z,2_t)$ $\nef$

\end{minipage}&
\begin{minipage}[m]{.04\linewidth}
\begin{center}
$-$
\end{center}
\end{minipage}&
\begin{minipage}[m]{.11\linewidth}
\begin{center}
$5B+E$
\end{center}
\end{minipage}&
\begin{minipage}[m]{.11\linewidth}
\begin{center}
$t$
\end{center}
\end{minipage}
&
\begin{minipage}[m]{.11\linewidth}
\begin{center}
$t$
\end{center}
\end{minipage}&
\begin{minipage}[m]{.18\linewidth}
\begin{center}
$b\ne 0$
\end{center}
\end{minipage}\\
\hline
\begin{minipage}[m]{.28\linewidth}
 $O_yO_w=3\times\frac{1}{3}(1_x,1_z,2_t)$  \surface

\end{minipage}&
\begin{minipage}[m]{.04\linewidth}
\begin{center}
$-$
\end{center}
\end{minipage}&
\begin{minipage}[m]{.11\linewidth}
\begin{center}
$5B+E$
\end{center}
\end{minipage}&
\begin{minipage}[m]{.11\linewidth}
\begin{center}
$x^5$, $xz$, $t$
\end{center}
\end{minipage}
&
\begin{minipage}[m]{.11\linewidth}
\begin{center}
$t$
\end{center}
\end{minipage}&
\begin{minipage}[m]{.18\linewidth}
\begin{center}
$b=0$
\end{center}
\end{minipage}\\
\hline

\end{longtable}
\end{center}

\begin{Note}

\item 
For the singular point $O_t$, the $1$-cycle $\Gamma$ may be
reducible. However, in case, it consists of two irreducible
components. One is the proper transform $\tilde{L}_{zt}$ of the
curve $L_{zt}$  and the other is the proper transform $\tilde{C}$
of the curve defined by $$x=y=w^2+z^3=0.$$ We can easily check that
\[E\cdot \tilde{C}=2E\cdot \tilde{L}_{zt}=\frac{1}{2},\ \ \ B\cdot \tilde{C}=2B\cdot \tilde{L}_{zt}=0.\]
Therefore, the irreducible curves $\tilde{L}_{zt}$ and $\tilde{C}$
are numerically proportional on $Y$.

\item The $1$-cycle $\Gamma$ for the singular point $O_z$ with $a\ne 0$
is irreducible due to $w^3$ and $t^2z^2$.

\item For the singular point $O_z$ with $a=0$ we may assume that the
defining equation of $X_{18}$ contains neither $xz^3t$ nor
$x^2z^4$ by changing the coordinate $w$.

We take a general surface $H$ from the pencil $|-3K_{X_{18}}|$ and
then let $T$ be the proper transform of the surface. Note that the
surface $H$ is normal. However, it is not quasi-smooth at the
point $O_t$. The intersection of $T$ with the surface $S$ gives us
a divisor consisting of two irreducible curves
 $\tilde{L}_{zt}$ and $\tilde{C}$ on the normal surface $T$.
 The curve $\tilde{C}$ is the proper transform of the
curve $C$ defined by $$x=y=w^2+z^3=0.$$ From the intersection
numbers
\[(\tilde{L}_{zt}+\tilde{C})\cdot\tilde{L}_{zt}=-K_Y\cdot \tilde{L}_{zt}=-\frac{1}{5}, \ \ \ (\tilde{L}_{zt}+\tilde{C})^2=3B^3=-\frac{1}{10}\]
on the surface $T$, we obtain
\[\tilde{L}_{zt}^2=-\frac{1}{5}-\tilde{L}_{zt}\cdot\tilde{C}, \ \ \ \tilde{C}^2=\frac{1}{10}-\tilde{L}_{zt}\cdot\tilde{C}.\]
Therefore,
\[(3\tilde{L}_{zt}+\tilde{R})\cdot\tilde{L}_{zt}=-6K_Y\cdot \tilde{L}_{zt}=-\frac{6}{5}\]
we obtain $\tilde{L}_{zt}^2=-\frac{2}{5}$.  With these
intersection numbers we see
\[\left(\begin{array}{cc}
       \tilde{L}_{zt}^2&\tilde{L}_{zt}\cdot\tilde{C}\\
       \tilde{L}_{zt}\cdot\tilde{C}& \tilde{C}^2\\
\end{array}\right)= \left(\begin{array}{cc}
-\frac{1}{5}-\tilde{L}_{zt}\cdot\tilde{C} & \tilde{L}_{zt}\cdot\tilde{C}\\
     \tilde{L}_{zt}\cdot\tilde{C} & \frac{1}{10}-\tilde{L}_{zt}\cdot\tilde{C} \\
        \end{array}\right).
\]
To compute the intersection number $\tilde{L}_{zt}\cdot\tilde{C}$,
we take the divisor $D_w$ on $H$ cut by the equation $w=0$. This
divisor can be written as $D_w=3L_{zt}+R$, where $R$ is a curve
whose support does not contain $L_{zt}$. The curve $R$ and
$L_{zt}$ intersects at the point $O_z$. Let $\tilde{R}$ be the
proper transform of $R$.  We have $\tilde{L}_{zt}\cdot
\tilde{R}=0$ since they are disconnected on $T$. From the
intersection
\[(3\tilde{L}_{zt}+\tilde{R})\cdot\tilde{L}_{zt}=-6K_Y\cdot \tilde{L}_{zt}=-\frac{6}{5}\]
we obtain $\tilde{L}_{zt}^2=-\frac{2}{5}$.  Therefore,
$\tilde{L}_{zt}\cdot\tilde{C}=\frac{1}{5}$. This shows that the
matrix is negative-definite.

\item For the singular point of type $\frac{1}{2}(1,1,1)$  we consider
the linear system generated by $x^{15}$, $y^5$ and $t^3$ on the
hypersurface $X_{18}$. Its base locus is cut out by $x=y=t=0$.
Since we have the monomial $z^3w$, the base locus does not contain
curves. Therefore the proper transform of a general member in the
linear system is nef by Lemma~\ref{lemma:nefness} and it belongs
to $|15B+6E|$. Consequently, the surface $T$ is nef since
$3T\sim_{\mathbb{Q}} 15B+6E$.

\end{Note}

 For the singular points of type $\frac{1}{3}(1,1,2)$ we may assume
that $\alpha_1=0$ and consider the singular point $O_y$. The other
points can be dealt with in the same way.
Since $\alpha_1=0$,  the defining equation of $X_{18}$ does not
contain the monomial $y^6$. We may also assume that it does not
contain the monomials $x^6y^4$, $x^3y^5$, $x^2y^4z$ and $xy^4t$ by
changing the coordinate $w$.

\begin{Note}

\item 
 For the singular point $O_y$ with $b\ne 0$ we consider the linear
system generated by $x^{30}$, $t^5$ and $w^6$ on the hypersurface
$X_{18}$. Its base locus is cut out by $x=t=w=0$. Since $b\ne 0$,
its base locus does not contain curves, and hence the proper
transform of a general member in the linear system is nef by
Lemma~\ref{lemma:nefness}. It belongs to $|30B+6E|$. Consequently,
the surface $T$ is nef since $6T\sim_{\mathbb{Q}} 30B+6E$.

\item For the singular point $O_y$ with $b=0$  we take a general surface
$H$ from the linear system generated by $x^5$, $xz$ and $t$. Then
$H$ is normal. Moreover, the surface $H$ is smooth at the point
$x=t=w=z^3+\alpha_1\alpha_2y^4=0$. Indeed, the defining equation
of $X_{18}$ must contain at least one of the monomials $xz^2y^3$,
 $ty^3z$; otherwise $X_{18}$ would be singular at
the point $x=t=w=z^3+\alpha_1\alpha_2y^4=0$. Plugging in
$t=\lambda xz+\mu x^5$ with general complex numbers  $\lambda$ and
$\mu$
 into the defining equation of $X_{18}$, we obtain
the defining equation of $H$ in $\mathbb{P}(1,3,4,6)$. It must
contain the monomial $xz^2y^3$. Therefore, the surface $H$ is
smooth at the point $x=t=w=z^3+\alpha_1\alpha_2y^4=0$

 Let
$T$ be the proper transform of the surface $H$. The intersection
of $T$ with the surface $S$ defines a divisor consisting of two
irreducible curves
 $\tilde{L}_{yz}$ and $\tilde{C}$ on the normal surface $T$.
 The curve $\tilde{C}$ is the proper transform of the
curve $C$ defined by
$$
x=t=z^3+(w-\alpha_2 y^2)(w-\alpha_3 y^2)=0.
$$
The curves $L_{yz}$ and $C$ intersect at the point defined by
$x=t=w=z^3+\alpha_1\alpha_2y^4=0$. From the intersection numbers
\[(\tilde{L}_{yz}+\tilde{C})\cdot\tilde{L}_{yz}=-K_Y\cdot \tilde{L}_{yz}=-\frac{1}{4}, \ \ \ (\tilde{L}_{yz}+\tilde{C})^2=B^2\cdot(5B+E)=-\frac{1}{12}\]
on the surface $T$, we obtain
\[\tilde{L}_{yz}^2=-\frac{1}{4}-\tilde{L}_{yz}\cdot\tilde{C}, \ \ \ \tilde{C}^2=\frac{1}{6}-\tilde{L}_{yz}\cdot\tilde{C}\]
With these intersection numbers we see that the matrix
\[\left(\begin{array}{cc}
       \tilde{L}_{yz}^2&\tilde{L}_{yz}\cdot\tilde{C}\\
       \tilde{L}_{yz}\cdot\tilde{C}& \tilde{C}^2\\
\end{array}\right)= \left(\begin{array}{cc}
-\frac{1}{4}-\tilde{L}_{yz}\cdot\tilde{C} & \tilde{L}_{yz}\cdot\tilde{C}\\
     \tilde{L}_{yz}\cdot\tilde{C} & \frac{1}{6}-\tilde{L}_{yz}\cdot\tilde{C} \\
        \end{array}\right)
\]
is negative-definite since $\tilde{L}_{yz}$ and $\tilde{C}$
intersect at a smooth point of the surface $T$.
\end{Note}

%%%%%%%%%%%%%%%%%%%%%%%%%%%%%%%%%%%%%%%%%%%%%%%%%%%%%%%%%%%%%%

%%%%%%%%%%%%%%%%%%%%%%%%%%%%%%%%%%%%%%%%%%%%%%%%%%%%%%%%%%%%%%%%%%

%%%%%%%%%%%%%%%%%%%%%%%%%%%%%%%%%%%%%%%%%%%%%%%%%%%%%%%%%%%%%%%%%%

%%%%%%%%%%%%%%%%%%%%%%%%%%%%%%%%%%%%%%%%%%%%%%%%%%%%%%%%%%%%%%%%%%%%%%%%%%%%%%%%%%%%%%%%

\begin{center}
\begin{longtable}{|l|c|c|c|c|c|}
\hline
\multicolumn{6}{|l|}{\textbf{No. 40}: $X_{19}\subset\mathbb{P}(1,3,4,5,7)$\hfill $A^3=19/420$}\\
\multicolumn{6}{|l|}{
\begin{minipage}[m]{.86\linewidth}
%\begin{center}
\vspace*{1.2mm}
$tw^2-zt^3+z^3(a_1w+a_2yz)+y^4(b_1w+b_2yz+b_3y^2x)+ay^2z^2t+by^3t^2+wf_{12}(x,y,z,t)+f_{19}(x,y,z,t)$
\vspace*{1.2mm}
%\end{center}
\end{minipage}
}\\
\hline \hline
\begin{minipage}[m]{.28\linewidth}
\begin{center}
Singularity
\end{center}
\end{minipage}&
\begin{minipage}[m]{.04\linewidth}
\begin{center}
$B^3$
\end{center}
\end{minipage}&
\begin{minipage}[m]{.11\linewidth}
\begin{center}
Linear

system
\end{center}
\end{minipage}&
\begin{minipage}[m]{.11\linewidth}
\begin{center}
Surface $T$
\end{center}
\end{minipage}&
\begin{minipage}[m]{.11\linewidth}
\begin{center}
\vspace*{1mm}
 \vorder
\vspace*{1mm}
\end{center}
\end{minipage}&
\begin{minipage}[m]{.18\linewidth}
\begin{center}
Condition
\end{center}
\end{minipage}\\
\hline
\begin{minipage}[m]{.28\linewidth}

$O_w=\frac{1}{7}(1,3,4)$ \quadratic

\end{minipage}&
\multicolumn{4}{|l|}{\begin{minipage}[m]{.37\linewidth}
\begin{center}
$tw^2$
\end{center}
\end{minipage}}&
\begin{minipage}[m]{.18\linewidth}
\begin{center}

\end{center}
\end{minipage}\\
\hline
\begin{minipage}[m]{.28\linewidth}

$O_t=\frac{1}{5}(1,3,2)$ \elliptic

\end{minipage}&\multicolumn{4}{|l|}{\begin{minipage}[m]{.37\linewidth}
\begin{center}
$tw^2-zt^3$
\end{center}
\end{minipage}}&
\begin{minipage}[m]{.18\linewidth}
\begin{center}

\end{center}
\end{minipage}\\
\hline

\begin{minipage}[m]{.28\linewidth}

$O_z=\frac{1}{4}(1_x,3_y,1_t)$ \boundary

\end{minipage}&
\begin{minipage}[m]{.04\linewidth}
\begin{center}
$-$
\end{center}
\end{minipage}&
\begin{minipage}[m]{.11\linewidth}
\begin{center}
$3B$
\end{center}
\end{minipage}&
\begin{minipage}[m]{.11\linewidth}
\begin{center}
$y$
\end{center}
\end{minipage}
&
\begin{minipage}[m]{.11\linewidth}
\begin{center}
$zt^3$
\end{center}
\end{minipage}&
\begin{minipage}[m]{.18\linewidth}
\begin{center}
$a_1\ne 0$
\end{center}
\end{minipage}\\
\hline
\begin{minipage}[m]{.28\linewidth}

$O_z=\frac{1}{4}(1_x,1_t,3_w)$ $\surface$

\end{minipage}&
\begin{minipage}[m]{.04\linewidth}
\begin{center}
$-$
\end{center}
\end{minipage}&
\begin{minipage}[m]{.11\linewidth}
\begin{center}
$3B$
\end{center}
\end{minipage}&
\begin{minipage}[m]{.11\linewidth}
\begin{center}
$x^3$, $z$
\end{center}
\end{minipage}
&
\begin{minipage}[m]{.11\linewidth}
\begin{center}
$x^3$, $zt^3$
\end{center}
\end{minipage}&
\begin{minipage}[m]{.18\linewidth}
\begin{center}
$a_1=0$
\end{center}
\end{minipage}\\
\hline
\begin{minipage}[m]{.28\linewidth}

$O_y=\frac{1}{3}(1_x,1_z,2_t)$ \boundary

\end{minipage}&
\begin{minipage}[m]{.04\linewidth}
\begin{center}
$-$
\end{center}
\end{minipage}&
\begin{minipage}[m]{.11\linewidth}
\begin{center}
$7B+E$
\end{center}
\end{minipage}&
\begin{minipage}[m]{.11\linewidth}
\begin{center}
$w$
\end{center}
\end{minipage}
&
\begin{minipage}[m]{.11\linewidth}
\begin{center}
$y^3t^2$
\end{center}
\end{minipage}&
\begin{minipage}[m]{.18\linewidth}
\begin{center}
$b_1\ne 0$, $b\ne 0$ \\$a_2\ne 0$
\end{center}
\end{minipage}\\\hline
\begin{minipage}[m]{.28\linewidth}

$O_y=\frac{1}{3}(1_x,1_z,2_t)$ \surface

\end{minipage}&
\begin{minipage}[m]{.04\linewidth}
\begin{center}
$-$
\end{center}
\end{minipage}&
\begin{minipage}[m]{.11\linewidth}
\begin{center}
$7B+E$
\end{center}
\end{minipage}&
\begin{minipage}[m]{.11\linewidth}
\begin{center}
$x^7$, $w$
\end{center}
\end{minipage}
&
\begin{minipage}[m]{.11\linewidth}
\begin{center}
$y^3t^2$
\end{center}
\end{minipage}&
\begin{minipage}[m]{.18\linewidth}
\begin{center}
$b_1\ne 0$, $b\ne 0$ \\$a_2= 0$
\end{center}
\end{minipage}\\

\hline
\begin{minipage}[m]{.28\linewidth}

$O_y=\frac{1}{3}(1_x,1_z,2_t)$ \surface

\end{minipage}&
\begin{minipage}[m]{.04\linewidth}
\begin{center}
$-$
\end{center}
\end{minipage}&
\begin{minipage}[m]{.11\linewidth}
\begin{center}
$7B+ E$\\

\end{center}
\end{minipage}&
\begin{minipage}[m]{.11\linewidth}
\begin{center}
$x^7$, $w$
\end{center}
\end{minipage}
&
\begin{minipage}[m]{.11\linewidth}
\begin{center}
$z^4y$
\end{center}
\end{minipage}&
\begin{minipage}[m]{.18\linewidth}
\begin{center}
$b_1\ne 0$, $b= 0$\\
$a_2\ne 0$
\end{center}
\end{minipage}\\
\hline
\begin{minipage}[m]{.28\linewidth}

$O_y=\frac{1}{3}(1_x,1_z,2_t)$  \surface

\end{minipage}&
\begin{minipage}[m]{.04\linewidth}
\begin{center}
$-$
\end{center}
\end{minipage}&
\begin{minipage}[m]{.11\linewidth}
\begin{center}
$7B$\\

\end{center}
\end{minipage}&
\begin{minipage}[m]{.11\linewidth}
\begin{center}
$x^7$, $w$
\end{center}
\end{minipage}
&
\begin{minipage}[m]{.11\linewidth}
\begin{center}
$x^7$, $zt^3$
\end{center}
\end{minipage}&
\begin{minipage}[m]{.18\linewidth}
\begin{center}
$b_1\ne 0$, $b= 0$\\
$a_2= 0$
\end{center}
\end{minipage}\\

\hline
\begin{minipage}[m]{.28\linewidth}

$O_y=\frac{1}{3}(1_x,2_t,1_w)$ \surface

\end{minipage}&
\begin{minipage}[m]{.04\linewidth}
\begin{center}
$-$
\end{center}
\end{minipage}&
\begin{minipage}[m]{.11\linewidth}
\begin{center}
$4B$
\end{center}
\end{minipage}&
\begin{minipage}[m]{.11\linewidth}
\begin{center}
$x^4$, $z$
\end{center}
\end{minipage}
&
\begin{minipage}[m]{.11\linewidth}
\begin{center}
$x^4, tw^2$
\end{center}
\end{minipage}&
\begin{minipage}[m]{.18\linewidth}
\begin{center}
$b_1=0$, $b_2\not =0$
\end{center}
\end{minipage}\\
\hline
\begin{minipage}[m]{.28\linewidth}

$O_y=\frac{1}{3}(1_z,2_t,1_w)$ \nef

\end{minipage}&
\begin{minipage}[m]{.04\linewidth}
\begin{center}
$-$
\end{center}
\end{minipage}&
\begin{minipage}[m]{.11\linewidth}
\begin{center}
$7B+2E$
\end{center}
\end{minipage}&
\begin{minipage}[m]{.11\linewidth}
\begin{center}
$w$
\end{center}
\end{minipage}
&
\begin{minipage}[m]{.11\linewidth}
\begin{center}
$w$
\end{center}
\end{minipage}&
\begin{minipage}[m]{.18\linewidth}
\begin{center}
$b_1=0$, $b_2 =0$
\end{center}
\end{minipage}\\

\hline

\end{longtable}
\end{center}

\begin{Note}

\item 
The $1$-cycle $\Gamma$ for the singular point $O_z$ with $a_1\ne
0$ is irreducible due to the monomials $tw^2$, $zt^3$ and $z^3w$.

\item For the singular point $O_z$ with $a_1= 0$ we choose a general
member $H$ in the linear system $|-3K_{X_{19}}|$. Then it is a
normal surface of degree $19$ in $\mathbb{P}(1,4,5,7)$. Let $T$ be
the proper transform of the divisor $H$. The intersection of $T$
with the surface $S$ defines a divisor consisting of
 two irreducible curves $\tilde{L}_{zw}$ and $\tilde{C}$. The curve $\tilde{C}$ is the proper transform of the
curve $C$ defined by $$x=y=w^2-zt^2=0.$$ From the intersection
\[(\tilde{L}_{zw}+\tilde{C})\cdot\tilde{L}_{zw}=-K_Y\cdot\tilde{L}_{zw}=-\frac{1}{21}, \ \ \ (\tilde{L}_{zw}+\tilde{C})^2=3B^3=-\frac{4}{35}\]
on the surface $T$, we obtain
\[\tilde{L}_{zw}^2=-\frac{1}{21}-\tilde{L}_{zw}\cdot\tilde{C}, \ \ \ \tilde{C}^2=-\frac{1}{15}-\tilde{L}_{zw}\cdot\tilde{C}.\]
With these intersection numbers we see that the matrix
\[\left(\begin{array}{cc}
       \tilde{L}_{zw}^2 &\tilde{L}_{zw}\cdot \tilde{C} \\
        \tilde{L}_{zw}\cdot \tilde{C}& \tilde{C}^2\\
\end{array}\right)= \left(\begin{array}{cc}
-\frac{1}{21}-\tilde{L}_{zw}\cdot\tilde{C} &\tilde{L}_{zw}\cdot\tilde{C} \\
        \tilde{L}_{zw}\cdot\tilde{C}& -\frac{1}{15}-\tilde{L}_{zw}\cdot\tilde{C} \\
        \end{array}\right)
\]
is negative-definite since $\tilde{L}_{zw}\cdot\tilde{C}$ is
non-negative number.

Consider  the singular point $O_y$ with  $b_1\ne 0$. In this case,
we may assume that $b_1=1$ and  $b_2=b_3=0$ by a suitable
coordinate change.

\item For the singular point $O_y$ with  $b_1\ne 0$, $b\ne 0$ and
$a_2\ne 0$ the $1$-cycle $\Gamma$ is irreducible because of the
monomials $zt^3$, $yz^4$ and $y^3t^2$.

\item For the singular point $O_y$ with  $b_1\ne 0$, $b\ne 0$ and
$a_2=0$ we may assume that the monomial $xy^2z^3$ does not appear
in $f_{19}$ by changing the coordinate $w$ in a suitable way. We
must then have $a\ne 0$; otherwise the hypersurface would  not be
quasi-smooth at the point defined by $$x=t=w=a_1z^3+y^4=0.$$

Take a general member $H$ in the linear system generated by $x^7$
and $w$.  Then it is a normal surface of degree $19$ in
$\mathbb{P}(1,3, 4,5)$. Let $T$  be the proper transform of the
divisor $H$. The intersection of $T$ with the surface $S$ gives us
a divisor consisting of
 two irreducible curves $\tilde{L}_{yz}$ and $\tilde{C}_1$. The curve $\tilde{C}_1$ is the proper transform of
the curve $C_1$ defined by $x=w=-zt^2+ay^2z^2+by^3t=0$. From the
intersection
\[(\tilde{L}_{yz}+\tilde{C}_1)\cdot\tilde{L}_{yz}=-K_Y\cdot\tilde{L}_{yz}=-\frac{1}{4}, \ \ \ (\tilde{L}_{yz}+\tilde{C}_1)^2=B^2\cdot(7B+E)=-\frac{7}{20}\]
on the surface $T$, we obtain
\[\tilde{L}_{yz}^2=-\frac{1}{4}-\tilde{L}_{yz}\cdot\tilde{C}_1, \ \ \ \tilde{C}_1^2=-\frac{1}{10}-\tilde{L}_{yz}\cdot\tilde{C}_1.\]
With these intersection numbers we see that the matrix
\[\left(\begin{array}{cc}
       \tilde{L}_{yz}^2 &\tilde{L}_{yz}\cdot \tilde{C}_1 \\
        \tilde{L}_{yz}\cdot \tilde{C}_1& \tilde{C}_1^2\\
\end{array}\right)= \left(\begin{array}{cc}
-\frac{1}{4}-\tilde{L}_{yz}\cdot\tilde{C}_1 &\tilde{L}_{yz}\cdot\tilde{C}_1 \\
        \tilde{L}_{yz}\cdot\tilde{C}_1& -\frac{1}{10}-\tilde{L}_{yz}\cdot\tilde{C}_1 \\
        \end{array}\right)
\]
is negative-definite since $\tilde{L}_{yz}\cdot\tilde{C}_1$ is
non-negative number.

 \item For the singular point $O_y$ with  $b_1\ne 0$,  $b= 0$ and $a_2\ne 0$,  we do the same as in the case where $b_1\ne0$, $b\ne0$ and $a_2=0$. The difference is that the intersection of $T$ with the surface $S$ gives us a divisor
consisting of
 two irreducible curves $\tilde{L}_{yt}$ and $\tilde{C}_2$. The curve $\tilde{C}_2$ is the proper transform of
the curve $C_2$ defined by $x=w=-t^3+a_2yz^3+ay^2zt=0$. From the
intersections
\[(\tilde{L}_{yt}+\tilde{C}_2)\cdot\tilde{L}_{yt}=-K_Y\cdot\tilde{L}_{yt}=-\frac{1}{10}, \ \ \ (\tilde{L}_{yt}+\tilde{C}_2)^2=B^2\cdot(7B+E)=-\frac{7}{20}\]
on the surface $T$, we obtain
\[\tilde{L}_{yt}^2=-\frac{1}{10}-\tilde{L}_{yt}\cdot\tilde{C}_2, \ \ \ \tilde{C}_2^2=-\frac{1}{4}-\tilde{L}_{yt}\cdot\tilde{C}_2.\]
This shows  $\tilde{L}_{yt}$ and $\tilde{C}_2$ forms a
negative-definite divisor on $T$.

\item  For the singular point $O_y$ with  $b_1\ne 0$,  $b= 0$ and $a_2= 0$  we may assume that the monomial $xy^2z^3$ does not appear in $f_{19}$ by changing the coordinate $w$ in a suitable way. We must then have $a\ne 0$; otherwise the hypersurface would  not be quasi-smooth at the point defined by $x=t=w=a_1z^3+y^4=0$.

We do the same as the previous case. In this case, we obtain a
divisor consisting of
 three irreducible curves $\tilde{L}_{yz}$,  $\tilde{L}_{yt }$ and $\tilde{C}_3$. The curve $\tilde{C}_3$ is the proper
transform of the curve $C_3$ defined by $x=w=-t^2+ay^2z=0$. From
the intersections
\[\begin{split} & (\tilde{L}_{yz}+\tilde{L}_{yt}+\tilde{C}_3)\cdot\tilde{L}_{yz}=-K_Y\cdot\tilde{L}_{yz}=-\frac{1}{4},\\
& (\tilde{L}_{yz}+\tilde{L}_{yt}+\tilde{C}_3)\cdot\tilde{L}_{yt}=-K_Y\cdot\tilde{L}_{yt}=-\frac{1}{10},\\
&
(\tilde{L}_{yz}+\tilde{L}_{yt}+\tilde{C}_3)\cdot\tilde{C}_{3}=-K_Y\cdot\tilde{C}_{3}=-\frac{1}{6}.\\
\end{split}\] on the surface $T$, we obtain the intersection
matrix of the curves $\tilde{L}_{yz}$,  $\tilde{L}_{yt }$ and
$\tilde{C}_3$
\[ \left(\begin{array}{ccc}
- \frac{1}{4}-\tilde{L}_{yz}\cdot \tilde{L}_{yt}- \tilde{L}_{yz}\cdot \tilde{C}_{3}&\tilde{L}_{yz}\cdot \tilde{L}_{yt}& \tilde{L}_{yz}\cdot \tilde{C}_{3}\\
        \tilde{L}_{yz}\cdot \tilde{L}_{yt}& -\frac{1}{10}- \tilde{L}_{yz}\cdot \tilde{L}_{yt}-\tilde{L}_{yt}\cdot \tilde{C}_3&\tilde{L}_{yt}\cdot \tilde{C}_3\\
         \tilde{L}_{yz}\cdot \tilde{C}_{3}& \tilde{L}_{yt}\cdot\tilde{C}_3&-\frac{1}{6}- \tilde{L}_{yz}\cdot \tilde{C}_{3}- \tilde{L}_{yt}\cdot\tilde{C}_3\\
        \end{array}\right).
\]
It is easy to check that it is negative-definite since
$\tilde{L}_{yz}\cdot \tilde{L}_{yt}$,
$\tilde{L}_{yz}\cdot\tilde{C}_3$ and
$\tilde{L}_{yt}\cdot\tilde{C}_3$ are non-negative numbers.

\item For the singular point $O_y$ with  $b_1=0$ and $b_2\ne 0$  we may
put $b_2=1$ by a coordinate change. Choose a general member $H$ in
the pencil on $X_{19}$ generated by $x^4$ and $z$. Then it is a
normal surface of degree $19$ in $\mathbb{P}(1,3,5,7)$. Let $T$ be
the proper transform of the divisor $H$. The surface $S$ cuts out
the surface $T$ into a divisor $\tilde{D}$.

We suppose that $b\ne 0$. The divisor $\tilde{D}$ then consists of
two irreducible curves $\tilde{L}_{yw}$ and $\tilde{C}$. The curve
$\tilde{C}$ is the proper transform of the curve $C$ defined by
$$x=z=w^2+by^3t=0.$$ From the intersections
\[(\tilde{L}_{yw}+\tilde{C})\cdot\tilde{L}_{yw}=-K_Y\cdot\tilde{L}_{yw}=-\frac{2}{7}, \ \ \ (\tilde{L}_{yw}+\tilde{C})^2=4B^3=-\frac{17}{35}\]
on the surface $T$, we obtain
\[\tilde{L}_{yw}^2=-\frac{2}{7}-\tilde{L}_{yw}\cdot\tilde{C},\ \ \ \tilde{C}^2=-\frac{1}{5}-\tilde{L}_{yw}\cdot\tilde{C}.\]
With these intersection numbers we see that the matrix
\[\left(\begin{array}{cc}
       \tilde{L}_{yw}^2 &\tilde{L}_{yw}\cdot \tilde{C} \\
        \tilde{L}_{yw}\cdot \tilde{C}& \tilde{C}^2\\
\end{array}\right)= \left(\begin{array}{cc}
-\frac{2}{7}-\tilde{L}_{yw}\cdot\tilde{C} &\tilde{L}_{yw}\cdot\tilde{C} \\
       \tilde{L}_{yw}\cdot\tilde{C}& -\frac{1}{5}-\tilde{L}_{yw}\cdot\tilde{C} \\
        \end{array}\right)
\]
is negative-definite since $\tilde{L}_{yw}\cdot\tilde{C}$ is
non-negative number.

We now suppose that  $b= 0$. The divisor $\tilde{D}$ then consists
of two irreducible curves $\tilde{L}_{yw}$ and $\tilde{L}_{yt}$.
  From the intersection
\[(\tilde{L}_{yw}+2\tilde{L}_{yt})\cdot\tilde{L}_{yw}=-K_Y\cdot\tilde{L}_{yw}=-\frac{2}{7}, \ \ \  (\tilde{L}_{yw}+2\tilde{L}_{yt})^2=4B^3=-\frac{17}{35}\]
on the surface $T$, we obtain
\[\tilde{L}_{yw}^2=-\frac{2}{7}-2\tilde{L}_{yw}\cdot\tilde{L}_{yt}, \ \ \ \tilde{L}_{yt}^2=-\frac{1}{20}-\frac{1}{2}\tilde{L}_{yw}\cdot\tilde{L}_{yt}.\]
This again shows that  $\tilde{L}_{yw}$ and $\tilde{L}_{yt}$ form
a negative-definite divisor on $T$.

\item  For the singular point $O_y$ with  $b_1=b_2= 0$ we consider the linear
system generated by $z^{35}$, $t^{28}$ and $w^{20}$ on the
hypersurface $X_{19}$. Its base locus is cut out by $z=t=w=0$.
Since we have the monomial $xy^6$, the base locus does not contain
curves. Therefore the proper transform of a general member in the
linear system is nef by Lemma~\ref{lemma:nefness}. It belongs to
$|140B+40E|$. Consequently, the surface $T$ is nef since
$20T\sim_{\mathbb{Q}} 140B+40E$.
\end{Note}

%%%%%%%%%%%%%%%%%%%%%%%%%%%%%%%%%%%%%%%%%%%%%%%%%%%%%%%%%%%%%%%%%%

%%%%%%%%%%%%%%%%%%%%%%%%%%%%%%%%%%%%%%%%%%%%%%%%%%%%%%%%%%%%%%%%%%%%%%%%%%%%%%%%%%%%%%%%

\begin{center}
\begin{longtable}{|l|c|c|c|c|c|}
\hline
\multicolumn{6}{|l|}{\textbf{No. 41}: $X_{20}\subset\mathbb{P}(1,1,4,5,10)$\hfill $A^3=1/10$}\\
\multicolumn{6}{|l|}{
\begin{minipage}[m]{.86\linewidth}
%\begin{center}
\vspace*{1.2mm}

$(w-\alpha_1t^2)(w-\alpha_2
t^2)+z^5+wf_{10}(x,y,z,t)+f_{20}(x,y,z,t)$ \vspace*{1.2mm}
%\end{center}
\end{minipage}
}\\
\hline \hline
\begin{minipage}[m]{.28\linewidth}
\begin{center}
Singularity
\end{center}
\end{minipage}&
\begin{minipage}[m]{.04\linewidth}
\begin{center}
$B^3$
\end{center}
\end{minipage}&
\begin{minipage}[m]{.11\linewidth}
\begin{center}
Linear

system
\end{center}
\end{minipage}&
\begin{minipage}[m]{.11\linewidth}
\begin{center}
Surface $T$
\end{center}
\end{minipage}&
\begin{minipage}[m]{.11\linewidth}
\begin{center}
\vspace*{1mm}
 \vorder
\vspace*{1mm}
\end{center}
\end{minipage}&
\begin{minipage}[m]{.18\linewidth}
\begin{center}
Condition
\end{center}
\end{minipage}\\
\hline
\begin{minipage}[m]{.28\linewidth}

$O_tO_w=2\times\frac{1}{5}(1,1,4)$ \quadratic

\end{minipage}&
\multicolumn{4}{|l|}{\begin{minipage}[m]{.37\linewidth}
\begin{center}
$wt^2$
\end{center}
\end{minipage}}&
\begin{minipage}[m]{.18\linewidth}
\begin{center}

\end{center}
\end{minipage}\\

\hline
\begin{minipage}[m]{.28\linewidth}

$O_zO_w=1\times\frac{1}{2}(1_x,1_y,1_t)$ \boundary

\end{minipage}&
\begin{minipage}[m]{.04\linewidth}
\begin{center}
$-$
\end{center}
\end{minipage}&
\begin{minipage}[m]{.11\linewidth}
\begin{center}
$B$
\end{center}
\end{minipage}&
\begin{minipage}[m]{.11\linewidth}
\begin{center}
$y$
\end{center}
\end{minipage}
&
\begin{minipage}[m]{.11\linewidth}
\begin{center}
$y$
\end{center}
\end{minipage}&
\begin{minipage}[m]{.18\linewidth}
\begin{center}

\end{center}
\end{minipage}\\

\hline
\end{longtable}
\end{center}

\begin{Note}

\item 
We may assume that $\alpha_1=0$. To see how to treat the singular
points of type $\frac{1}{5}(1,1,4)$, we have only to consider the
singular point $O_t$. The other point can be untwisted or excluded
in the same way.

\item The $1$-cycle $\Gamma$ for the singular point of type
$\frac{1}{2}(1,1,1)$  is irreducible because of the monomial $w^2$
and $z^5$.
\end{Note}
%%%%%%%%%%%%%%%%%%%%%%%%%%%%%%%%%%%%%%%%%%%%%%%%%%%%%%%%%%%%%%%%%%

%%%%%%%%%%%%%%%%%%%%%%%%%%%%%%%%%%%%%%%%%%%%%%%%%%%%%%%%%%%%%%%%%%%%%%%%%%%%%%%%%%%%%%%%

\begin{center}
\begin{longtable}{|l|c|c|c|c|c|}
\hline
\multicolumn{6}{|l|}{\textbf{No. 42}: $X_{20}\subset\mathbb{P}(1,2,3,5,10)$\hfill $A^3=1/15$}\\
\multicolumn{6}{|l|}{
\begin{minipage}[m]{.86\linewidth}
%\begin{center}
\vspace*{1.2mm} $(w-\alpha_1 y^5)(w-\alpha_2
y^5)+wt^2+z^5(a_1t+a_2yz)+wf_{10}(x,y,z,t)+f_{20}(x,y,z,t)$

\vspace*{1.2mm}
%\end{center}
\end{minipage}
}\\
\hline \hline
\begin{minipage}[m]{.28\linewidth}
\begin{center}
Singularity
\end{center}
\end{minipage}&
\begin{minipage}[m]{.04\linewidth}
\begin{center}
$B^3$
\end{center}
\end{minipage}&
\begin{minipage}[m]{.11\linewidth}
\begin{center}
Linear

system
\end{center}
\end{minipage}&
\begin{minipage}[m]{.11\linewidth}
\begin{center}
Surface $T$
\end{center}
\end{minipage}&
\begin{minipage}[m]{.11\linewidth}
\begin{center}
\vspace*{1mm}
 \vorder
\vspace*{1mm}
\end{center}
\end{minipage}&
\begin{minipage}[m]{.18\linewidth}
\begin{center}
Condition
\end{center}
\end{minipage}\\
\hline
\begin{minipage}[m]{.28\linewidth}

$O_z=\frac{1}{3}(1_x,2_y,1_w)$ \boundary

\end{minipage}&
\begin{minipage}[m]{.04\linewidth}
\begin{center}
$-$
\end{center}
\end{minipage}&
\begin{minipage}[m]{.11\linewidth}
\begin{center}
$2B$
\end{center}
\end{minipage}&
\begin{minipage}[m]{.11\linewidth}
\begin{center}
$y$
\end{center}
\end{minipage}
&
\begin{minipage}[m]{.11\linewidth}
\begin{center}
$y$
\end{center}
\end{minipage}&
\begin{minipage}[m]{.18\linewidth}
\begin{center}
$a_1\ne 0$
\end{center}
\end{minipage}\\
\hline
\begin{minipage}[m]{.28\linewidth}

$O_z=\frac{1}{3}(1_x,2_t,1_w)$ \boundary

\end{minipage}&
\begin{minipage}[m]{.04\linewidth}
\begin{center}
$-$
\end{center}
\end{minipage}&
\begin{minipage}[m]{.11\linewidth}
\begin{center}
$2B$
\end{center}
\end{minipage}&
\begin{minipage}[m]{.11\linewidth}
\begin{center}
$y$
\end{center}
\end{minipage}
&
\begin{minipage}[m]{.11\linewidth}
\begin{center}
$w^2$
\end{center}
\end{minipage}&
\begin{minipage}[m]{.18\linewidth}
\begin{center}
$a_1=0$
\end{center}
\end{minipage}\\
\hline
\begin{minipage}[m]{.28\linewidth}

$O_tO_w=2\times\frac{1}{5}(1,2,3)$ \quadratic

\end{minipage}&
\multicolumn{4}{|l|}{\begin{minipage}[m]{.37\linewidth}
\begin{center}
$wt^2$
\end{center}
\end{minipage}}&
\begin{minipage}[m]{.18\linewidth}
\begin{center}

\end{center}
\end{minipage}\\
\hline
\begin{minipage}[m]{.28\linewidth}
 $O_yO_w=2\times\frac{1}{2}(1,1,1)$ $\nef$

\end{minipage}&
\begin{minipage}[m]{.04\linewidth}
\begin{center}
$-$
\end{center}
\end{minipage}&
\begin{minipage}[m]{.11\linewidth}
\begin{center}
$5B+2E$
\end{center}
\end{minipage}&
\begin{minipage}[m]{.11\linewidth}
\begin{center}
$t$
\end{center}
\end{minipage}
&
\begin{minipage}[m]{.11\linewidth}
\begin{center}
$t$
\end{center}
\end{minipage}&
\begin{minipage}[m]{.18\linewidth}
\begin{center}

\end{center}
\end{minipage}\\

\hline
\end{longtable}
\end{center}

\begin{Note}

\item 
The $1$-cycle $\Gamma$ for the singular point $O_z$ with $a_1\ne
0$ is irreducible due to $w^2$ and $z^5t$.

\item The $1$-cycle $\Gamma$ for the singular point $O_z$ with $a_1= 0$
  consists of  the proper
transforms of the curve $L_{zt}$ and the curve defined by
$$x=y=w+t^2=0.$$ These two irreducible components are symmetric with
respect to the biregular involution of $X_{20}$.  Consequently the
components of $\Gamma$ are numerically equivalent to each other.

\item By changing the coordinate $w$ we may assume that $t^4$ is not in
the polynomial $f_{20}$. To see how to untwist or exclude  the
singular points of type $\frac{1}{5}(1,2,3)$ we have only to
consider the singular point $O_t$. The other point can be treated
in the same way.

\item For the singular points of type $\frac{1}{2}(1,1,1)$, we consider
the linear system $|-5K_{X_{20}}|$.  Every member of the linear
system passes through the singular points of type
$\frac{1}{2}(1,1,1)$ and the base locus of the linear system
contains no curves. Since the proper transform of a general member
in $|-5K_{X_{20}}|$ belongs to the linear system $|5B+2E|$, the
divisor $T$ is nef.

\end{Note}
%%%%%%%%%%%%%%%%%%%%%%%%%%%%%%%%%%%%%%%%%%%%%%%%%%%%%%%%%%%%%%%%%%

%%%%%%%%%%%%%%%%%%%%%%%%%%%%%%%%%%%%%%%%%%%%%%%%%%%%%%%%%%%%%%%%%%%%%%%%%%%%%%%%%%%%%%%%

\begin{center}
\begin{longtable}{|l|c|c|c|c|c|}
\hline
\multicolumn{6}{|l|}{\textbf{No. 43}: $X_{20}\subset\mathbb{P}(1,2,4,5,9)$\hfill $A^3=1/18$}\\
\multicolumn{6}{|l|}{
\begin{minipage}[m]{.86\linewidth}
%\begin{center}
\vspace*{1.2mm}

$yw^2+t^4+\prod^{5}_{i=1}(z-\alpha_i
y^2)+wf_{11}(x,y,z,t)+f_{20}(x,y,z,t)$ \vspace*{1.2mm}
%\end{center}
\end{minipage}
}\\
\hline \hline
\begin{minipage}[m]{.28\linewidth}
\begin{center}
Singularity
\end{center}
\end{minipage}&
\begin{minipage}[m]{.04\linewidth}
\begin{center}
$B^3$
\end{center}
\end{minipage}&
\begin{minipage}[m]{.11\linewidth}
\begin{center}
Linear

system
\end{center}
\end{minipage}&
\begin{minipage}[m]{.11\linewidth}
\begin{center}
Surface $T$
\end{center}
\end{minipage}&
\begin{minipage}[m]{.11\linewidth}
\begin{center}
\vspace*{1mm}
 \vorder
\vspace*{1mm}
\end{center}
\end{minipage}&
\begin{minipage}[m]{.18\linewidth}
\begin{center}
Condition
\end{center}
\end{minipage}\\
\hline
\begin{minipage}[m]{.28\linewidth}

$O_w=\frac{1}{9}(1,4,5)$ \quadratic

\end{minipage}&
\multicolumn{4}{|l|}{\begin{minipage}[m]{.37\linewidth}
\begin{center}
$yw^2$
\end{center}
\end{minipage}}&
\begin{minipage}[m]{.18\linewidth}
\begin{center}

\end{center}
\end{minipage}\\

\hline
\begin{minipage}[m]{.28\linewidth}

$O_yO_z=5\times\frac{1}{2}(1_x,1_t,1_w)$ \boundary

\end{minipage}&
\begin{minipage}[m]{.04\linewidth}
\begin{center}
$-$
\end{center}
\end{minipage}&
\begin{minipage}[m]{.11\linewidth}
\begin{center}
$4B+E$
\end{center}
\end{minipage}&
\begin{minipage}[m]{.11\linewidth}
\begin{center}
$z-\alpha_i y^2$
\end{center}
\end{minipage}
&
\begin{minipage}[m]{.11\linewidth}
\begin{center}
$yw^2$
\end{center}
\end{minipage}&
\begin{minipage}[m]{.18\linewidth}
\begin{center}

\end{center}
\end{minipage}\\

\hline
\end{longtable}
\end{center}

\begin{Note}

\item 
The $1$-cycles $\Gamma$ for the singular points of type
$\frac{1}{2}(1,1,1)$ are irreducible due to the monomials $yw^2$
and $t^4$.
\end{Note}
%%%%%%%%%%%%%%%%%%%%%%%%%%%%%%%%%%%%%%%%%%%%%%%%%%%%%%%%%%%%%%%%%%

%%%%%%%%%%%%%%%%%%%%%%%%%%%%%%%%%%%%%%%%%%%%%%%%%%%%%%%%%%%%%%%%%%%%%%%%%%%%%%%%%%%%%%%%

\begin{center}
\begin{longtable}{|l|c|c|c|c|c|}
\hline
\multicolumn{6}{|l|}{\textbf{No. 44}: $X_{20}\subset\mathbb{P}(1,2,5,6,7)$\hfill $A^3=1/21$}\\
\multicolumn{6}{|l|}{
\begin{minipage}[m]{.86\linewidth}
%\begin{center}
\vspace*{1.2mm}

$tw^2+y(t-\alpha_1 y^3)(t-\alpha_2 y^3)(t-\alpha_3
y^3)+z^4+wf_{14}(x,y,z,t)+f_{20}(x,y,z,t)$ \vspace*{1.2mm}
%\end{center}
\end{minipage}
}\\
\hline \hline
\begin{minipage}[m]{.28\linewidth}
\begin{center}
Singularity
\end{center}
\end{minipage}&
\begin{minipage}[m]{.04\linewidth}
\begin{center}
$B^3$
\end{center}
\end{minipage}&
\begin{minipage}[m]{.11\linewidth}
\begin{center}
Linear

system
\end{center}
\end{minipage}&
\begin{minipage}[m]{.11\linewidth}
\begin{center}
Surface $T$
\end{center}
\end{minipage}&
\begin{minipage}[m]{.11\linewidth}
\begin{center}
\vspace*{1mm}
 \vorder
\vspace*{1mm}
\end{center}
\end{minipage}&
\begin{minipage}[m]{.18\linewidth}
\begin{center}
Condition
\end{center}
\end{minipage}\\
\hline
\begin{minipage}[m]{.28\linewidth}

$O_w=\frac{1}{7}(1,2,5)$ \quadratic

\end{minipage}&
\multicolumn{4}{|l|}{\begin{minipage}[m]{.37\linewidth}
\begin{center}
$tw^2$
\end{center}
\end{minipage}}&
\begin{minipage}[m]{.18\linewidth}
\begin{center}

\end{center}
\end{minipage}\\
\hline
\begin{minipage}[m]{.28\linewidth}

$O_t=\frac{1}{6}(1,5,1)$ \elliptic

\end{minipage}&\multicolumn{4}{|l|}{\begin{minipage}[m]{.37\linewidth}
\begin{center}
$tw^2-yt^3$
\end{center}
\end{minipage}}&
\begin{minipage}[m]{.18\linewidth}
\begin{center}
\end{center}
\end{minipage}\\
\hline\begin{minipage}[m]{.28\linewidth}
$O_yO_t=3\times\frac{1}{2}(1_x,1_z,1_w)$ \nef

\end{minipage}&
\begin{minipage}[m]{.04\linewidth}
\begin{center}
$-$
\end{center}
\end{minipage}&
\begin{minipage}[m]{.11\linewidth}
\begin{center}
$7B+3E$
\end{center}
\end{minipage}&
\begin{minipage}[m]{.11\linewidth}
\begin{center}
$w$
\end{center}
\end{minipage}
&
\begin{minipage}[m]{.11\linewidth}
\begin{center}
$w$
\end{center}
\end{minipage}&
\begin{minipage}[m]{.18\linewidth}
\begin{center}

\end{center}
\end{minipage}\\
\hline
\end{longtable}
\end{center}

\begin{Note}

\item 
Consider the linear system generated by $x^{35}$, $z^7$ and
$w^5$. Since the defining equation of $X_{20}$ contains $yt^3$,
the base locus of the linear system contains no curve. Therefore,
the proper transform of a general member in this linear system is
nef. Since it belongs to $|35B+15E|$,  the surface $T$ is nef.
\end{Note}

%%%%%%%%%%%%%%%%%%%%%%%%%%%%%%%%%%%%%%%%%%%%%%%%%%%%%%%%%%%%%%%%%%

%%%%%%%%%%%%%%%%%%%%%%%%%%%%%%%%%%%%%%%%%%%%%%%%%%%%%%%%%%%%%%%%%%%%%%%%%%%%%%%%%%%%%%%%

\begin{center}
\begin{longtable}{|l|c|c|c|c|c|}
\hline
\multicolumn{6}{|l|}{\textbf{No. 45}: $X_{20}\subset\mathbb{P}(1,3,4,5,8)$\hfill $A^3=1/24$}\\
\multicolumn{6}{|l|}{
\begin{minipage}[m]{.86\linewidth}
%\begin{center}
\vspace*{1.2mm}

$z(w-\alpha_1 z^2)(w-\alpha_2
z^2)+t^4+y^4(a_1w+a_2yt)+wf_{12}(x,y,z,t)+f_{20}(x,y,z,t)$
\vspace*{1.2mm}
%\end{center}
\end{minipage}
}\\

\hline \hline
\begin{minipage}[m]{.28\linewidth}
\begin{center}
Singularity
\end{center}
\end{minipage}&
\begin{minipage}[m]{.04\linewidth}
\begin{center}
$B^3$
\end{center}
\end{minipage}&
\begin{minipage}[m]{.11\linewidth}
\begin{center}
Linear

system
\end{center}
\end{minipage}&
\begin{minipage}[m]{.11\linewidth}
\begin{center}
Surface $T$
\end{center}
\end{minipage}&
\begin{minipage}[m]{.11\linewidth}
\begin{center}
\vspace*{1mm}
 \vorder
\vspace*{1mm}
\end{center}
\end{minipage}&
\begin{minipage}[m]{.18\linewidth}
\begin{center}
Condition
\end{center}
\end{minipage}\\
\hline
\begin{minipage}[m]{.28\linewidth}

$O_w=\frac{1}{8}(1,3,5)$ \quadratic

\end{minipage}&
\multicolumn{4}{|l|}{\begin{minipage}[m]{.37\linewidth}
\begin{center}
$zw^2$
\end{center}
\end{minipage}}&
\begin{minipage}[m]{.18\linewidth}
\begin{center}

\end{center}
\end{minipage}\\

\hline
\begin{minipage}[m]{.28\linewidth}

$O_y=\frac{1}{3}(1_x,1_z,2_t)$ $\nef$

\end{minipage}&
\begin{minipage}[m]{.04\linewidth}
\begin{center}
$-$
\end{center}
\end{minipage}&
\begin{minipage}[m]{.11\linewidth}
\begin{center}
$4B+E$
\end{center}
\end{minipage}&
\begin{minipage}[m]{.11\linewidth}
\begin{center}
$z$
\end{center}
\end{minipage}
&
\begin{minipage}[m]{.11\linewidth}
\begin{center}
$z$
\end{center}
\end{minipage}&
\begin{minipage}[m]{.18\linewidth}
\begin{center}

\end{center}
\end{minipage}\\
\hline
\begin{minipage}[m]{.28\linewidth}

$O_zO_w=2\times\frac{1}{4}(1_x,3_y,1_t)$ \boundary

\end{minipage}&
\begin{minipage}[m]{.04\linewidth}
\begin{center}
$-$
\end{center}
\end{minipage}&
\begin{minipage}[m]{.11\linewidth}
\begin{center}
$3B$
\end{center}
\end{minipage}&
\begin{minipage}[m]{.11\linewidth}
\begin{center}
$y$
\end{center}
\end{minipage}
&
\begin{minipage}[m]{.11\linewidth}
\begin{center}
$y$
\end{center}
\end{minipage}&
\begin{minipage}[m]{.18\linewidth}
\begin{center}

\end{center}
\end{minipage}\\

\hline
\end{longtable}
\end{center}

\begin{Note}

\item 
For the singular point $O_y$
 we consider the linear
system generated by $x^{40}$, $z^{10}$, $t^8$ and $w^5$ on the
hypersurface $X_{20}$. Its base locus does not contain curves.
Therefore the proper transform of a general member in the linear
system is nef by Lemma~\ref{lemma:nefness}. The proper transform
belongs to $|40B+10E|$. Consequently, the surface $T$ is nef since
$10T\sim_{\mathbb{Q}} 40B+10E$.

\item The $1$-cycles $\Gamma$ for the singular points of type
$\frac{1}{4}(1,3,1)$ are irreducible due to the monomials $zw^2$
and $t^4$.
\end{Note}

%%%%%%%%%%%%%%%%%%%%%%%%%%%%%%%%%%%%%%%%%%%%%%%%%%%%%%%%%%%%%%%%%%

%%%%%%%%%%%%%%%%%%%%%%%%%%%%%%%%%%%%%%%%%%%%%%%%%%%%%%%%%%%%%%%%%%%%%%%%%%%%%%%%%%%%%%%%

\begin{center}
\begin{longtable}{|l|c|c|c|c|c|}
\hline
\multicolumn{6}{|l|}{\textbf{No. 46}: $X_{21}\subset\mathbb{P}(1,1,3,7,10)$\hfill $A^3=1/10$}\\
\multicolumn{6}{|l|}{
\begin{minipage}[m]{.86\linewidth}
%\begin{center}
\vspace*{1.2mm}

$yw^2+t^3+z^7+wf_{11}(x,y,z,t)+f_{21}(x,y,z,t)$ \vspace*{1.2mm}
%\end{center}
\end{minipage}
}\\

\hline \hline
\begin{minipage}[m]{.28\linewidth}
\begin{center}
Singularity
\end{center}
\end{minipage}&
\begin{minipage}[m]{.04\linewidth}
\begin{center}
$B^3$
\end{center}
\end{minipage}&
\begin{minipage}[m]{.11\linewidth}
\begin{center}
Linear

system
\end{center}
\end{minipage}&
\begin{minipage}[m]{.11\linewidth}
\begin{center}
Surface $T$
\end{center}
\end{minipage}&
\begin{minipage}[m]{.11\linewidth}
\begin{center}
\vspace*{1mm}
 \vorder
\vspace*{1mm}
\end{center}
\end{minipage}&
\begin{minipage}[m]{.18\linewidth}
\begin{center}
Condition
\end{center}
\end{minipage}\\
\hline
\begin{minipage}[m]{.28\linewidth}

$O_w=\frac{1}{10}(1,3,7)$ \quadratic

\end{minipage}&
\multicolumn{4}{|l|}{\begin{minipage}[m]{.37\linewidth}
\begin{center}
$yw^2$
\end{center}
\end{minipage}}&
\begin{minipage}[m]{.18\linewidth}
\begin{center}

\end{center}
\end{minipage}\\

\hline

\end{longtable}
\end{center}

%%%%%%%%%%%%%%%%%%%%%%%%%%%%%%%%%%%%%%%%%%%%%%%%%%%%%%%%%%%%%%%%%%

%%%%%%%%%%%%%%%%%%%%%%%%%%%%%%%%%%%%%%%%%%%%%%%%%%%%%%%%%%%%%%%%%%%%%%%%%%%%%%%%%%%%%%%%

\begin{center}
\begin{longtable}{|l|c|c|c|c|c|}
\hline
\multicolumn{6}{|l|}{\textbf{No. 47}: $X_{21}\subset\mathbb{P}(1,1,5,7,8)$\hfill $A^3=3/40$}\\
\multicolumn{6}{|l|}{
\begin{minipage}[m]{.86\linewidth}
%\begin{center}
\vspace*{1.2mm} $zw^2+t^3+yz^4+wf_{13}(x,y,z,t)+f_{21}(x,y,z,t)$
\vspace*{1.2mm}
%\end{center}
\end{minipage}
}\\
\hline \hline
\begin{minipage}[m]{.28\linewidth}
\begin{center}
Singularity
\end{center}
\end{minipage}&
\begin{minipage}[m]{.04\linewidth}
\begin{center}
$B^3$
\end{center}
\end{minipage}&
\begin{minipage}[m]{.11\linewidth}
\begin{center}
Linear

system
\end{center}
\end{minipage}&
\begin{minipage}[m]{.11\linewidth}
\begin{center}
Surface $T$
\end{center}
\end{minipage}&
\begin{minipage}[m]{.11\linewidth}
\begin{center}
\vspace*{1mm}
 \vorder
\vspace*{1mm}
\end{center}
\end{minipage}&
\begin{minipage}[m]{.18\linewidth}
\begin{center}
Condition
\end{center}
\end{minipage}\\
\hline
\begin{minipage}[m]{.28\linewidth}

$O_w=\frac{1}{8}(1,1,7)$ \quadratic

\end{minipage}&
\multicolumn{4}{|l|}{\begin{minipage}[m]{.37\linewidth}
\begin{center}
$zw^2$
\end{center}
\end{minipage}}&
\begin{minipage}[m]{.18\linewidth}
\begin{center}

\end{center}
\end{minipage}\\

\hline
\begin{minipage}[m]{.28\linewidth}

$O_z=\frac{1}{5}(1_x,2_t,3_w)$ $\family$

\end{minipage}&
\begin{minipage}[m]{.04\linewidth}
\begin{center}
$+$
\end{center}
\end{minipage}&
\begin{minipage}[m]{.11\linewidth}
\begin{center}
$B-E$
\end{center}
\end{minipage}&
\begin{minipage}[m]{.11\linewidth}
\begin{center}
$y$
\end{center}
\end{minipage}
&
\begin{minipage}[m]{.11\linewidth}
\begin{center}
$t^3$
\end{center}
\end{minipage}&
\begin{minipage}[m]{.18\linewidth}
\begin{center}

\end{center}
\end{minipage}\\

\hline
\end{longtable}
\end{center}

\begin{Note}

\item 
For the singular point $O_z$, let $C_{\lambda}$ be the curve on
the surface $S_y$ defined by
$$
\left\{%
\aligned
&y=0,\\%
&w=\lambda x^{8}\\%
\endaligned\right.%
$$
for a sufficiently general complex number  $\lambda$. We then have
$$
-K_{Y}\cdot \tilde{C}_{\lambda}=(B-E)\cdot(8B+E)\cdot B=0.\\%
$$
Consider the linear system generated by $x^{72}$, $y^9t^9$ and
$y^8w^8$. Its base curve is defined by $x=y=0$. It is an
irreducible curve because we have the monomials $zw^2$ and $t^3$.
The proper transform of a general member of the linear system is
equivalent to $72B$. The only curve that intersects the divisor
$B$ negatively is the proper transform of the  irreducible curve
defined by $x=y=0$. It  is  not on the surface $T$. Therefore, if
the curve $\tilde{C}_\lambda$ is reducible, each component of the
curve $\tilde{C}_\lambda$ intersects $B$ trivially.
\end{Note}

%%%%%%%%%%%%%%%%%%%%%%%%%%%%%%%%%%%%%%%%%%%%%%%%%%%%%%%%%%%%%%%%%%

%%%%%%%%%%%%%%%%%%%%%%%%%%%%%%%%%%%%%%%%%%%%%%%%%%%%%%%%%%%%%%%%%%%%%%%%%%%%%%%%%%%%%%%%

\begin{center}
\begin{longtable}{|l|c|c|c|c|c|}
\hline
\multicolumn{6}{|l|}{\textbf{No. 48}: $X_{21}\subset\mathbb{P}(1,2,3,7,9)$\hfill $A^3=1/18$}\\
\multicolumn{6}{|l|}{
\begin{minipage}[m]{.86\linewidth}
%\begin{center}
\vspace*{1.2mm}
$zw^2+t^3+z^7+y^6(a_1w+a_2yt+a_3y^3z+a_4xy^4)+wf_{12}(x,y,z,t)+f_{21}(x,y,z,t)$

\vspace*{1.2mm}
%\end{center}
\end{minipage}
}\\

\hline \hline
\begin{minipage}[m]{.28\linewidth}
\begin{center}
Singularity
\end{center}
\end{minipage}&
\begin{minipage}[m]{.04\linewidth}
\begin{center}
$B^3$
\end{center}
\end{minipage}&
\begin{minipage}[m]{.11\linewidth}
\begin{center}
Linear

system
\end{center}
\end{minipage}&
\begin{minipage}[m]{.11\linewidth}
\begin{center}
Surface $T$
\end{center}
\end{minipage}&
\begin{minipage}[m]{.11\linewidth}
\begin{center}
\vspace*{1mm}
 \vorder
\vspace*{1mm}
\end{center}
\end{minipage}&
\begin{minipage}[m]{.18\linewidth}
\begin{center}
Condition
\end{center}
\end{minipage}\\
\hline
\begin{minipage}[m]{.28\linewidth}

$O_w=\frac{1}{9}(1,2,7)$ \quadratic

\end{minipage}&
\multicolumn{4}{|l|}{\begin{minipage}[m]{.37\linewidth}
\begin{center}
$zw^2$
\end{center}
\end{minipage}}&
\begin{minipage}[m]{.18\linewidth}
\begin{center}

\end{center}
\end{minipage}\\
\hline
\begin{minipage}[m]{.28\linewidth}

$O_y=\frac{1}{2}(1,1,1)$ \nef

\end{minipage}&
\begin{minipage}[m]{.04\linewidth}
\begin{center}
$-$
\end{center}
\end{minipage}&
\begin{minipage}[m]{.11\linewidth}
\begin{center}
$9B+4E$
\end{center}
\end{minipage}&
\begin{minipage}[m]{.11\linewidth}
\begin{center}
$w+yt$
\end{center}
\end{minipage}
&
\begin{minipage}[m]{.11\linewidth}
\begin{center}
$w$ or $yt$
\end{center}
\end{minipage}&
\begin{minipage}[m]{.18\linewidth}
\begin{center}

\end{center}
\end{minipage}\\
\hline

\begin{minipage}[m]{.28\linewidth}
$O_zO_w=2\times\frac{1}{3}(1_x,2_y,1_t)$ \boundary

\end{minipage}&
\begin{minipage}[m]{.04\linewidth}
\begin{center}
$-$
\end{center}
\end{minipage}&
\begin{minipage}[m]{.11\linewidth}
\begin{center}
$2B$
\end{center}
\end{minipage}&
\begin{minipage}[m]{.11\linewidth}
\begin{center}
$y$
\end{center}
\end{minipage}
&
\begin{minipage}[m]{.11\linewidth}
\begin{center}
$y$
\end{center}
\end{minipage}&
\begin{minipage}[m]{.18\linewidth}
\begin{center}

\end{center}
\end{minipage}\\

\hline
\end{longtable}
\end{center}

\begin{Note}

\item 
For the singular point $O_y$
 we consider the
linear system $|-9K_{X_{24}}|$. Every member of the linear system
passes through  the singular point $O_y$ and its base locus
contains  no curves. Since the proper transform of a general
member in $|-9K_{X_{24}}|$ belongs to the linear system $|9B+4E|$,
the divisor $T$ is nef.

\item The $1$-cycles $\Gamma$ for the singular points of type
$\frac{1}{3}(1,2,1)$ are irreducible because of the monomials
$zw^2$ and $t^3$.
\end{Note}
%%%%%%%%%%%%%%%%%%%%%%%%%%%%%%%%%%%%%%%%%%%%%%%%%%%%%%%%%%%%%%%%%%

%%%%%%%%%%%%%%%%%%%%%%%%%%%%%%%%%%%%%%%%%%%%%%%%%%%%%%%%%%%%%%%%%%%%%%%%%%%%%%%%%%%%%%%%

\begin{center}
\begin{longtable}{|l|c|c|c|c|c|}
\hline
\multicolumn{6}{|l|}{\underline{\textbf{No. 49}}: $X_{21}\subset\mathbb{P}(1,3,5,6,7)$\hfill $A^3=1/30$}\\
\multicolumn{6}{|l|}{
\begin{minipage}[m]{.86\linewidth}
%\begin{center}
\vspace*{1.2mm}
$w^3+yt^3+z^3(a_1t+a_2xz)+w^2f_{7}(x,y,z,t)+wf_{14}(x,y,z,t)
+f_{21}(x,y,z,t)$ \vspace*{1.2mm}
%\end{center}
\end{minipage}
}\\
\hline \hline
\begin{minipage}[m]{.28\linewidth}
\begin{center}
Singularity
\end{center}
\end{minipage}&
\begin{minipage}[m]{.04\linewidth}
\begin{center}
$B^3$
\end{center}
\end{minipage}&
\begin{minipage}[m]{.11\linewidth}
\begin{center}
Linear

system
\end{center}
\end{minipage}&
\begin{minipage}[m]{.11\linewidth}
\begin{center}
Surface $T$
\end{center}
\end{minipage}&
\begin{minipage}[m]{.11\linewidth}
\begin{center}
\vspace*{1mm}
 \vorder
\vspace*{1mm}
\end{center}
\end{minipage}&
\begin{minipage}[m]{.18\linewidth}
\begin{center}
Condition
\end{center}
\end{minipage}\\
\hline
\begin{minipage}[m]{.28\linewidth}

$O_t=\frac{1}{6}(1_x,5_z,1_w)$ \boundary

\end{minipage}&
\begin{minipage}[m]{.04\linewidth}
\begin{center}
$0$
\end{center}
\end{minipage}&
\begin{minipage}[m]{.11\linewidth}
\begin{center}
$3B$
\end{center}
\end{minipage}&
\begin{minipage}[m]{.11\linewidth}
\begin{center}
$y$
\end{center}
\end{minipage}
&
\begin{minipage}[m]{.11\linewidth}
\begin{center}
$w^3$
\end{center}
\end{minipage}&
\begin{minipage}[m]{.18\linewidth}
\begin{center}

\end{center}
\end{minipage}\\
\hline
\begin{minipage}[m]{.28\linewidth}

$O_z=\frac{1}{5}(1_x,3_y,2_w)$ \boundary

\end{minipage}&
\begin{minipage}[m]{.04\linewidth}
\begin{center}
$0$
\end{center}
\end{minipage}&
\begin{minipage}[m]{.11\linewidth}
\begin{center}
$3B$
\end{center}
\end{minipage}&
\begin{minipage}[m]{.11\linewidth}
\begin{center}
$y$
\end{center}
\end{minipage}
&
\begin{minipage}[m]{.11\linewidth}
\begin{center}
$y$
\end{center}
\end{minipage}&
\begin{minipage}[m]{.18\linewidth}
\begin{center}
$a_1\ne 0$
\end{center}
\end{minipage}\\
\hline
\begin{minipage}[m]{.28\linewidth}
$O_z=\frac{1}{5}(3_y,1_t,2_w)$ \boundary

\end{minipage}&
\begin{minipage}[m]{.04\linewidth}
\begin{center}
$0$
\end{center}
\end{minipage}&
\begin{minipage}[m]{.11\linewidth}
\begin{center}
$3B$
\end{center}
\end{minipage}&
\begin{minipage}[m]{.11\linewidth}
\begin{center}
$y$
\end{center}
\end{minipage}
&
\begin{minipage}[m]{.11\linewidth}
\begin{center}
$y$
\end{center}
\end{minipage}&
\begin{minipage}[m]{.18\linewidth}
\begin{center}
$a_1=0$
\end{center}
\end{minipage}\\
\hline
\begin{minipage}[m]{.28\linewidth}
$O_yO_t=3\times\frac{1}{3}(1_x,2_z,1_w)$ $\nef$

\end{minipage}&
\begin{minipage}[m]{.04\linewidth}
\begin{center}
$-$
\end{center}
\end{minipage}&
\begin{minipage}[m]{.11\linewidth}
\begin{center}
$5B+E$
\end{center}
\end{minipage}&
\begin{minipage}[m]{.11\linewidth}
\begin{center}
$x^2y$, $z$
\end{center}
\end{minipage}
&
\begin{minipage}[m]{.11\linewidth}
\begin{center}
$x^2y$, $z$
\end{center}
\end{minipage}&
\begin{minipage}[m]{.18\linewidth}
\begin{center}

\end{center}
\end{minipage}\\
\hline

\end{longtable}
\end{center}

\begin{Note}

\item 
For the singular points of types $\frac{1}{6}(1,5,1)$ and
$\frac{1}{5}(1,3,2)$, the $1$-cycle $\Gamma$ is the proper
transform of the curve defined by $$x=y=w^3+a_1z^3t=0.$$ It is
irreducible even though it can be non-reduced.

\item For the singular points of type $\frac{1}{3}(1,2,1)$, consider the
linear system on $X_{21}$ generated by $x^2y$ and $z$. Its base
curves are defined by $x=z=0$ and $y=z=0$. The curve defined by
$x=z=0$ is irreducible  because of the monomials $w^3$ and $yt^3$.
For its proper transform $\tilde{C}$, we have
$T\cdot\tilde{C}=\frac{1}{6}$. Therefore, the divisor $T$ is nef
since the curve defined by $y=z=0$ does not pass through any
singular point of type $\frac{1}{3}(1,2,1)$.
\end{Note}

%%%%%%%%%%%%%%%%%%%%%%%%%%%%%%%%%%%%%%%%%%%%%%%%%%%%%%%%%%%%%%

%%%%%%%%%%%%%%%%%%%%%%%%%%%%%%%%%%%%%%%%%%%%%%%%%%%%%%%%%%%%%%

\begin{center}
\begin{longtable}{|l|c|c|c|c|c|}
\hline
\multicolumn{6}{|l|}{\underline{\textbf{No. 50}}: $X_{22}\subset\mathbb{P}(1,1,3,7,11)$\hfill $A^3=2/21$}\\
\multicolumn{6}{|l|}{
\begin{minipage}[m]{.86\linewidth}
%\begin{center}
\vspace*{1.2mm} $w^2+yt^3+
z^5(a_1t+a_2xz^2+a_3yz^2)+wf_{11}(x,y,z,t)+ f_{22}(x,y,z,t)$
\vspace*{1.2mm}
%\end{center}
\end{minipage}
}\\
\hline \hline
\begin{minipage}[m]{.28\linewidth}
\begin{center}
Singularity
\end{center}
\end{minipage}&
\begin{minipage}[m]{.04\linewidth}
\begin{center}
$B^3$
\end{center}
\end{minipage}&
\begin{minipage}[m]{.11\linewidth}
\begin{center}
Linear

system
\end{center}
\end{minipage}&
\begin{minipage}[m]{.11\linewidth}
\begin{center}
Surface $T$
\end{center}
\end{minipage}&
\begin{minipage}[m]{.11\linewidth}
\begin{center}
\vspace*{1mm}
 \vorder
\vspace*{1mm}
\end{center}
\end{minipage}&
\begin{minipage}[m]{.18\linewidth}
\begin{center}
Condition
\end{center}
\end{minipage}\\
\hline
\begin{minipage}[m]{.28\linewidth}

$O_t=\frac{1}{7}(1_x,3_z,4_w)$ $\positive$

\end{minipage}&
\begin{minipage}[m]{.04\linewidth}
\begin{center}
$+$
\end{center}
\end{minipage}&
\begin{minipage}[m]{.11\linewidth}
\begin{center}
$B-E$
\end{center}
\end{minipage}&
\begin{minipage}[m]{.11\linewidth}
\begin{center}
$y$
\end{center}
\end{minipage}
&
\begin{minipage}[m]{.11\linewidth}
\begin{center}
$w^2$
\end{center}
\end{minipage}&
\begin{minipage}[m]{.18\linewidth}
\begin{center}

\end{center}
\end{minipage}\\
\hline
\begin{minipage}[m]{.28\linewidth}

$O_z=\frac{1}{3}(1_x,1_y,2_w)$ \boundary

\end{minipage}&
\begin{minipage}[m]{.04\linewidth}
\begin{center}
$-$
\end{center}
\end{minipage}&
\begin{minipage}[m]{.11\linewidth}
\begin{center}
$B$
\end{center}
\end{minipage}&
\begin{minipage}[m]{.11\linewidth}
\begin{center}
$y$
\end{center}
\end{minipage}
&
\begin{minipage}[m]{.11\linewidth}
\begin{center}
$y$
\end{center}
\end{minipage}&
\begin{minipage}[m]{.18\linewidth}
\begin{center}
$a_1\ne 0$
\end{center}
\end{minipage}\\
\hline
\begin{minipage}[m]{.28\linewidth}

$O_z=\frac{1}{3}(1_y,1_t,2_w)$ \boundary

\end{minipage}&
\begin{minipage}[m]{.04\linewidth}
\begin{center}
$-$
\end{center}
\end{minipage}&
\begin{minipage}[m]{.11\linewidth}
\begin{center}
$B$
\end{center}
\end{minipage}&
\begin{minipage}[m]{.11\linewidth}
\begin{center}
$y$
\end{center}
\end{minipage}
&
\begin{minipage}[m]{.11\linewidth}
\begin{center}
$y$
\end{center}
\end{minipage}&
\begin{minipage}[m]{.18\linewidth}
\begin{center}
$a_1=0$
\end{center}
\end{minipage}\\

\hline

\end{longtable}
\end{center}

\begin{Note}

\item 
If $a_1=0$, then $a_2\ne 0$: otherwise the hypersurface $X_{22}$
would be singular at the point defined by $x=y=w=0$ and
$t^3+a_3z^7=0$.

\item If $a_1\ne 0$, the  $1$-cycle $\Gamma$ for the singular point
$O_z$ is irreducible because of the monomials $w^2$ and $z^5t$. If
$a_1= 0$, the $1$-cycle $\Gamma$ for the singular point $O_z$ is
still irreducible even though it is not reduced.
\end{Note}

%%%%%%%%%%%%%%%%%%%%%%%%%%%%%%%%%%%%%%%%%%%%%%%%%%%%%%%%%%%%%%
\begin{center}
\begin{longtable}{|l|c|c|c|c|c|}
\hline
\multicolumn{6}{|l|}{\underline{\textbf{No. 51}}: $X_{22}\subset\mathbb{P}(1,1,4,6,11)$\hfill $A^3=1/12$}\\
\multicolumn{6}{|l|}{
\begin{minipage}[m]{.86\linewidth}
%\begin{center}
\vspace*{1.2mm} $w^2+zt^3+z^4t+wf_{11}(x,y,z,t)+ f_{22}(x,y,z,t)$
\vspace*{1.2mm}
%\end{center}
\end{minipage}
}\\
\hline \hline
\begin{minipage}[m]{.28\linewidth}
\begin{center}
Singularity
\end{center}
\end{minipage}&
\begin{minipage}[m]{.04\linewidth}
\begin{center}
$B^3$
\end{center}
\end{minipage}&
\begin{minipage}[m]{.11\linewidth}
\begin{center}
Linear

system
\end{center}
\end{minipage}&
\begin{minipage}[m]{.11\linewidth}
\begin{center}
Surface $T$
\end{center}
\end{minipage}&
\begin{minipage}[m]{.11\linewidth}
\begin{center}
\vspace*{1mm}
 \vorder
\vspace*{1mm}
\end{center}
\end{minipage}&
\begin{minipage}[m]{.18\linewidth}
\begin{center}
Condition
\end{center}
\end{minipage}\\
\hline
\begin{minipage}[m]{.28\linewidth}

$O_t=\frac{1}{6}(1_x,1_y,5_w)$ $\positive$

\end{minipage}&
\begin{minipage}[m]{.04\linewidth}
\begin{center}
$+$
\end{center}
\end{minipage}&
\begin{minipage}[m]{.11\linewidth}
\begin{center}
$4B-E$
\end{center}
\end{minipage}&
\begin{minipage}[m]{.11\linewidth}
\begin{center}
$z$
\end{center}
\end{minipage}
&
\begin{minipage}[m]{.11\linewidth}
\begin{center}
$w^2$
\end{center}
\end{minipage}&
\begin{minipage}[m]{.18\linewidth}
\begin{center}

\end{center}
\end{minipage}\\
\hline
\begin{minipage}[m]{.28\linewidth}

$O_z=\frac{1}{4}(1_x,1_y,3_w)$ $\nef$

\end{minipage}&
\begin{minipage}[m]{.04\linewidth}
\begin{center}
$0$
\end{center}
\end{minipage}&
\begin{minipage}[m]{.11\linewidth}
\begin{center}
$B$
\end{center}
\end{minipage}&
\begin{minipage}[m]{.11\linewidth}
\begin{center}
$x$, $y$
\end{center}
\end{minipage}
&
\begin{minipage}[m]{.11\linewidth}
\begin{center}
$x$, $y$
\end{center}
\end{minipage}&
\begin{minipage}[m]{.18\linewidth}
\begin{center}

\end{center}
\end{minipage}\\
\hline
\begin{minipage}[m]{.28\linewidth}

$O_zO_t=1\times\frac{1}{2}(1_x,1_y,1_w)$ \boundary

\end{minipage}&
\begin{minipage}[m]{.04\linewidth}
\begin{center}
$-$
\end{center}
\end{minipage}&
\begin{minipage}[m]{.11\linewidth}
\begin{center}
$B$
\end{center}
\end{minipage}&
\begin{minipage}[m]{.11\linewidth}
\begin{center}
$y$
\end{center}
\end{minipage}
&
\begin{minipage}[m]{.11\linewidth}
\begin{center}
$y$
\end{center}
\end{minipage}&
\begin{minipage}[m]{.18\linewidth}
\begin{center}

\end{center}
\end{minipage}\\
\hline

\end{longtable}
\end{center}

\begin{Note}

\item 
For the singular point $O_z$, we can easily see that the surface
$T$ is nef since the base locus of the linear system
$|-K_{X_{22}}|$ is the irreducible curve cut by $x=y=0$ and
$B^3=0$.

\item For the singular point of type $\frac{1}{2}(1,1,1)$, the
intersection $\Gamma$ is irreducible since we have the monomials
$w^2$, $z^4t$, and $zt^3$.
\end{Note}

%%%%%%%%%%%%%%%%%%%%%%%%%%%%%%%%%%%%%%%%%%%%%%%%%%%%%%%%%%%%%%

%%%%%%%%%%%%%%%%%%%%%%%%%%%%%%%%%%%%%%%%%%%%%%%%%%%%%%%%%%%%%%

\begin{center}
\begin{longtable}{|l|c|c|c|c|c|}
\hline
\multicolumn{6}{|l|}{\underline{\textbf{No. 52}}: $X_{22}\subset\mathbb{P}(1,2,4,5,11)$\hfill $A^3=1/20$}\\
\multicolumn{6}{|l|}{
\begin{minipage}[m]{.86\linewidth}
%\begin{center}
\vspace*{1.2mm} $w^2+yt^4+y\prod^{5}_{i=1}(z-\alpha_i
y^2)+wf_{11}(x,y,z,t)+ f_{22}(x,y,z,t)$ \vspace*{1.2mm}
%\end{center}
\end{minipage}
}\\
\hline \hline
\begin{minipage}[m]{.28\linewidth}
\begin{center}
Singularity
\end{center}
\end{minipage}&
\begin{minipage}[m]{.04\linewidth}
\begin{center}
$B^3$
\end{center}
\end{minipage}&
\begin{minipage}[m]{.11\linewidth}
\begin{center}
Linear

system
\end{center}
\end{minipage}&
\begin{minipage}[m]{.11\linewidth}
\begin{center}
Surface $T$
\end{center}
\end{minipage}&
\begin{minipage}[m]{.11\linewidth}
\begin{center}
\vspace*{1mm}
 \vorder
\vspace*{1mm}
\end{center}
\end{minipage}&
\begin{minipage}[m]{.18\linewidth}
\begin{center}
Condition
\end{center}
\end{minipage}\\
\hline
\begin{minipage}[m]{.28\linewidth}

$O_t=\frac{1}{5}(1_x,4_z,1_w)$ \boundary

\end{minipage}&
\begin{minipage}[m]{.04\linewidth}
\begin{center}
$0$
\end{center}
\end{minipage}&
\begin{minipage}[m]{.11\linewidth}
\begin{center}
$4B$
\end{center}
\end{minipage}&
\begin{minipage}[m]{.11\linewidth}
\begin{center}
$z$
\end{center}
\end{minipage}
&
\begin{minipage}[m]{.11\linewidth}
\begin{center}
$z$
\end{center}
\end{minipage}&
\begin{minipage}[m]{.18\linewidth}
\begin{center}

\end{center}
\end{minipage}\\
\hline
\begin{minipage}[m]{.28\linewidth}

$O_z=\frac{1}{4}(1_x,1_t,3_w)$ $\nef$

\end{minipage}&
\begin{minipage}[m]{.04\linewidth}
\begin{center}
$-$
\end{center}
\end{minipage}&
\begin{minipage}[m]{.11\linewidth}
\begin{center}
$5B+E$
\end{center}
\end{minipage}&
\begin{minipage}[m]{.11\linewidth}
\begin{center}
$xz$, $t$
\end{center}
\end{minipage}
&
\begin{minipage}[m]{.11\linewidth}
\begin{center}
$xz$, $t$
\end{center}
\end{minipage}&
\begin{minipage}[m]{.18\linewidth}
\begin{center}

\end{center}
\end{minipage}\\
\hline
\begin{minipage}[m]{.28\linewidth}

$O_yO_z=5\times\frac{1}{2}(1_x,1_t,1_w)$ \boundary

\end{minipage}&
\begin{minipage}[m]{.04\linewidth}
\begin{center}
$-$
\end{center}
\end{minipage}&
\begin{minipage}[m]{.11\linewidth}
\begin{center}
$4B+E$
\end{center}
\end{minipage}&
\begin{minipage}[m]{.11\linewidth}
\begin{center}
$z-\alpha_i y^2$
\end{center}
\end{minipage}
&
\begin{minipage}[m]{.11\linewidth}
\begin{center}
$w^2$
\end{center}
\end{minipage}&
\begin{minipage}[m]{.18\linewidth}
\begin{center}

\end{center}
\end{minipage}\\
\hline

\end{longtable}
\end{center}

\begin{Note}

\item 
For the singular points of types $\frac{1}{5}(1,4,1)$ and
$\frac{1}{2}(1,1,1)$, the $1$-cycle $\Gamma$ is always irreducible
because the defining polynomial of $X_{22}$ contains the monomials
$w^2$ and $yt^4$.

\item For the singular point $O_z$, consider the linear system on
$X_{22}$ generated by $xz$ and $t$. Its base curves are defined by
$x=t=0$ and $z=t=0$. The curve defined by $x=t=0$ is irreducible
because of the monomials $w^2$ and $yz^5$.   Its proper transform
intersects the divisor $T$ positively. Since the curve defined by
$z=t=0$ does not pass through the singular point $O_z$, its proper
transform also intersects $T$ positively. Therefore, the divisor
$T$ is nef.
\end{Note}
%%%%%%%%%%%%%%%%%%%%%%%%%%%%%%%%%%%%%%%%%%%%%%%%%%%%%%%%%%%%%%

\begin{center}
\begin{longtable}{|l|c|c|c|c|c|}
\hline
\multicolumn{6}{|l|}{\underline{\textbf{No. 53}}: $X_{24}\subset\mathbb{P}(1,1,3,8,12)$\hfill $A^3=1/12$}\\
\multicolumn{6}{|l|}{
\begin{minipage}[m]{.86\linewidth}
%\begin{center}
\vspace*{1.2mm} $w^2+t^3+z^8+wf_{12}(x,y,z,t)+f_{24}(x,y,z,t)$
\vspace*{1.2mm}
%\end{center}
\end{minipage}
}\\
\hline \hline
\begin{minipage}[m]{.28\linewidth}
\begin{center}
Singularity
\end{center}
\end{minipage}&
\begin{minipage}[m]{.04\linewidth}
\begin{center}
$B^3$
\end{center}
\end{minipage}&
\begin{minipage}[m]{.11\linewidth}
\begin{center}
Linear

system
\end{center}
\end{minipage}&
\begin{minipage}[m]{.11\linewidth}
\begin{center}
Surface $T$
\end{center}
\end{minipage}&
\begin{minipage}[m]{.11\linewidth}
\begin{center}
\vspace*{1mm}
 \vorder
\vspace*{1mm}
\end{center}
\end{minipage}&
\begin{minipage}[m]{.18\linewidth}
\begin{center}
Condition
\end{center}
\end{minipage}\\
\hline
\begin{minipage}[m]{.28\linewidth}

$O_tO_w=1\times\frac{1}{4}(1_x,1_y,3_z)$ \boundary

\end{minipage}&
\begin{minipage}[m]{.04\linewidth}
\begin{center}
$0$
\end{center}
\end{minipage}&
\begin{minipage}[m]{.11\linewidth}
\begin{center}
$B$
\end{center}
\end{minipage}&
\begin{minipage}[m]{.11\linewidth}
\begin{center}
$y$
\end{center}
\end{minipage}
&
\begin{minipage}[m]{.11\linewidth}
\begin{center}
$y$
\end{center}
\end{minipage}&
\begin{minipage}[m]{.18\linewidth}
\begin{center}

\end{center}
\end{minipage}\\
\hline
\begin{minipage}[m]{.28\linewidth}

$O_zO_w=2\times\frac{1}{3}(1_x,1_y,2_t)$ \boundary

\end{minipage}&
\begin{minipage}[m]{.04\linewidth}
\begin{center}
$-$
\end{center}
\end{minipage}&
\begin{minipage}[m]{.11\linewidth}
\begin{center}
$B$
\end{center}
\end{minipage}&
\begin{minipage}[m]{.11\linewidth}
\begin{center}
$y$
\end{center}
\end{minipage}
&
\begin{minipage}[m]{.11\linewidth}
\begin{center}
$y$
\end{center}
\end{minipage}&
\begin{minipage}[m]{.18\linewidth}
\begin{center}

\end{center}
\end{minipage}\\
\hline

\end{longtable}
\end{center}

\begin{Note}

\item 
The $1$-cycle  $\Gamma$ for each singular point is irreducible
because of  the monomials $w^2$ and $t^3$.
\end{Note}

%%%%%%%%%%%%%%%%%%%%%%%%%%%%%%%%%%%%%%%%%%%%%%%%%%%%%%%%%%%%%%
%%%%%%%%%%%%%%%%%%%%%%%%%%%%%%%%%%%%%%%%%%%%%%%%%%%%%%%%%%%%%%

\begin{center}
\begin{longtable}{|l|c|c|c|c|c|}
\hline
\multicolumn{6}{|l|}{\textbf{No. 54}: $X_{24}\subset\mathbb{P}(1,1,6,8,9)$\hfill $A^3=1/18$}\\
\multicolumn{6}{|l|}{
\begin{minipage}[m]{.86\linewidth}
%\begin{center}
\vspace*{1.2mm} $zw^2+t^3+z^4+wf_{15}(x,y,z,t)+f_{24}(x,y,z,t)$

\vspace*{1.2mm}
%\end{center}
\end{minipage}
}\\

\hline \hline
\begin{minipage}[m]{.28\linewidth}
\begin{center}
Singularity
\end{center}
\end{minipage}&
\begin{minipage}[m]{.04\linewidth}
\begin{center}
$B^3$
\end{center}
\end{minipage}&
\begin{minipage}[m]{.11\linewidth}
\begin{center}
Linear

system
\end{center}
\end{minipage}&
\begin{minipage}[m]{.11\linewidth}
\begin{center}
Surface $T$
\end{center}
\end{minipage}&
\begin{minipage}[m]{.11\linewidth}
\begin{center}
\vspace*{1mm}
 \vorder
\vspace*{1mm}
\end{center}
\end{minipage}&
\begin{minipage}[m]{.18\linewidth}
\begin{center}
Condition
\end{center}
\end{minipage}\\
\hline
\begin{minipage}[m]{.28\linewidth}

$O_w=\frac{1}{9}(1,1,8)$ \quadratic

\end{minipage}&
\multicolumn{4}{|l|}{\begin{minipage}[m]{.37\linewidth}
\begin{center}
$zw^2$
\end{center}
\end{minipage}}&
\begin{minipage}[m]{.18\linewidth}
\begin{center}

\end{center}
\end{minipage}\\
\hline
\begin{minipage}[m]{.28\linewidth}

$O_zO_w=1\times\frac{1}{3}(1_x,1_y,2_t)$ \boundary

\end{minipage}&
\begin{minipage}[m]{.04\linewidth}
\begin{center}
$-$
\end{center}
\end{minipage}&
\begin{minipage}[m]{.11\linewidth}
\begin{center}
$B$
\end{center}
\end{minipage}&
\begin{minipage}[m]{.11\linewidth}
\begin{center}
$y$
\end{center}
\end{minipage}
&
\begin{minipage}[m]{.11\linewidth}
\begin{center}
$y$
\end{center}
\end{minipage}&
\begin{minipage}[m]{.18\linewidth}
\begin{center}

\end{center}
\end{minipage}\\
\hline
\begin{minipage}[m]{.28\linewidth}
$O_zO_t=1\times\frac{1}{2}(1_x,1_y,1_w)$ \boundary

\end{minipage}&
\begin{minipage}[m]{.04\linewidth}
\begin{center}
$-$
\end{center}
\end{minipage}&
\begin{minipage}[m]{.11\linewidth}
\begin{center}
$B$
\end{center}
\end{minipage}&
\begin{minipage}[m]{.11\linewidth}
\begin{center}
$y$
\end{center}
\end{minipage}
&
\begin{minipage}[m]{.11\linewidth}
\begin{center}
$y$
\end{center}
\end{minipage}&
\begin{minipage}[m]{.18\linewidth}
\begin{center}

\end{center}
\end{minipage}\\

\hline
\end{longtable}
\end{center}

\begin{Note}

\item 
The $1$-cycles $\Gamma$ for the singular points of types
$\frac{1}{3}(1,1,2)$ and $\frac{1}{2}(1,1,1)$ are irreducible
since we have the monomials $z^4$ and $t^3$.
\end{Note}

%%%%%%%%%%%%%%%%%%%%%%%%%%%%%%%%%%%%%%%%%%%%%%%%%%%%%%%%%%%%%%%%%%

%%%%%%%%%%%%%%%%%%%%%%%%%%%%%%%%%%%%%%%%%%%%%%%%%%%%%%%%%%%%%%%%%%%%%%%%%%%%%%%%%%%%%%%%

%%%%%%%%%%%%%%%%%%%%%%%%%%%%%%%%%%%%%%%%%%%%%%%%%%%%%%%%%%%%%%
%%%%%%%%%%%%%%%%%%%%%%%%%%%%%%%%%%%%%%%%%%%%%%%%%%%%%%%%%%%%%%

\begin{center}
\begin{longtable}{|l|c|c|c|c|c|}
\hline
\multicolumn{6}{|l|}{\underline{\textbf{No. 55}}: $X_{24}\subset\mathbb{P}(1,2,3,7,12)$\hfill $A^3=1/21$}\\
\multicolumn{6}{|l|}{
\begin{minipage}[m]{.86\linewidth}
%\begin{center}
\vspace*{1.2mm} $(w-\alpha_1 y^6)(w-\alpha_2 y^6)
+zt^3+wf_{12}(x,y,z,t)+f_{24}(x,y,z,t)$ \vspace*{1.2mm}
%\end{center}
\end{minipage}
}\\
\hline \hline
\begin{minipage}[m]{.28\linewidth}
\begin{center}
Singularity
\end{center}
\end{minipage}&
\begin{minipage}[m]{.04\linewidth}
\begin{center}
$B^3$
\end{center}
\end{minipage}&
\begin{minipage}[m]{.11\linewidth}
\begin{center}
Linear

system
\end{center}
\end{minipage}&
\begin{minipage}[m]{.11\linewidth}
\begin{center}
Surface $T$
\end{center}
\end{minipage}&
\begin{minipage}[m]{.11\linewidth}
\begin{center}
\vspace*{1mm}
 \vorder
\vspace*{1mm}
\end{center}
\end{minipage}&
\begin{minipage}[m]{.18\linewidth}
\begin{center}
Condition
\end{center}
\end{minipage}\\
\hline
\begin{minipage}[m]{.28\linewidth}

$O_t=\frac{1}{7}(1_x,2_y,5_w)$ $\positive$

\end{minipage}&
\begin{minipage}[m]{.04\linewidth}
\begin{center}
$+$
\end{center}
\end{minipage}&
\begin{minipage}[m]{.11\linewidth}
\begin{center}
$3B-E$
\end{center}
\end{minipage}&
\begin{minipage}[m]{.11\linewidth}
\begin{center}
$z$
\end{center}
\end{minipage}
&
\begin{minipage}[m]{.11\linewidth}
\begin{center}
$w^2$
\end{center}
\end{minipage}&
\begin{minipage}[m]{.18\linewidth}
\begin{center}

\end{center}
\end{minipage}\\

\hline
\begin{minipage}[m]{.28\linewidth}

$O_zO_w=2\times\frac{1}{3}(1_x,2_y,1_t)$ \boundary

\end{minipage}&
\begin{minipage}[m]{.04\linewidth}
\begin{center}
$-$
\end{center}
\end{minipage}&
\begin{minipage}[m]{.11\linewidth}
\begin{center}
$2B$
\end{center}
\end{minipage}&
\begin{minipage}[m]{.11\linewidth}
\begin{center}
$y$
\end{center}
\end{minipage}
&
\begin{minipage}[m]{.11\linewidth}
\begin{center}
$y$
\end{center}
\end{minipage}&
\begin{minipage}[m]{.18\linewidth}
\begin{center}

\end{center}
\end{minipage}\\
\hline
\begin{minipage}[m]{.28\linewidth}
$O_yO_w=2\times\frac{1}{2}(1_x,1_z,1_t)$ $\nef$

\end{minipage}&
\begin{minipage}[m]{.04\linewidth}
\begin{center}
$-$
\end{center}
\end{minipage}&
\begin{minipage}[m]{.11\linewidth}
\begin{center}
$7B+3E$
\end{center}
\end{minipage}&
\begin{minipage}[m]{.11\linewidth}
\begin{center}
$xy^3$, $y^2z$, $t$
\end{center}
\end{minipage}
&
\begin{minipage}[m]{.11\linewidth}
\begin{center}
$xy^3$, $y^2z$, $t$
\end{center}
\end{minipage}&
\begin{minipage}[m]{.18\linewidth}
\begin{center}

\end{center}
\end{minipage}\\
\hline

\end{longtable}
\end{center}

\begin{Note}

\item 
The $1$-cycle $\Gamma$ for each singular point of type
$\frac{1}{3}(1,2,1)$ is irreducible because of the monomials $w^2$
and $zt^3$.

\item For each singular point of type $\frac{1}{2}(1,1,1)$, the  divisor
$T$ is nef. Indeed, the base curve of the linear system on
$X_{24}$ generated by $xy^3$, $y^2z$ and $t$ is cut out by
$y=t=0$. It does not passes through any singular point of type
$\frac{1}{2}(1,1,1)$. Therefore, the surface $T$ must be nef.
\end{Note}

%%%%%%%%%%%%%%%%%%%%%%%%%%%%%%%%%%%%%%%%%%%%%%%%%%%%%%%%%%%%%%
%%%%%%%%%%%%%%%%%%%%%%%%%%%%%%%%%%%%%%%%%%%%%%%%%%%%%%%%%%%%%%

\begin{center}
\begin{longtable}{|l|c|c|c|c|c|}
\hline
    \multicolumn{6}{|l|}{\textbf{No. 56}: $X_{24}\subset\mathbb{P}(1,2,3,8,11)$\hfill $A^3=1/22$}\\
\multicolumn{6}{|l|}{
\begin{minipage}[m]{.86\linewidth}
%\begin{center}
\vspace*{1.2mm}

$yw^2+t^3+z^8+y^{12}+wf_{13}(x,y,z,t)+f_{24}(x,y,z,t)$
\vspace*{1.2mm}
%\end{center}
\end{minipage}
}\\

\hline \hline
\begin{minipage}[m]{.28\linewidth}
\begin{center}
Singularity
\end{center}
\end{minipage}&
\begin{minipage}[m]{.04\linewidth}
\begin{center}
$B^3$
\end{center}
\end{minipage}&
\begin{minipage}[m]{.11\linewidth}
\begin{center}
Linear

system
\end{center}
\end{minipage}&
\begin{minipage}[m]{.11\linewidth}
\begin{center}
Surface $T$
\end{center}
\end{minipage}&
\begin{minipage}[m]{.11\linewidth}
\begin{center}
\vspace*{1mm}
 \vorder
\vspace*{1mm}
\end{center}
\end{minipage}&
\begin{minipage}[m]{.18\linewidth}
\begin{center}
Condition
\end{center}
\end{minipage}\\
\hline
\begin{minipage}[m]{.28\linewidth}

$O_w=\frac{1}{11}(1,3,8)$ \quadratic

\end{minipage}&
\multicolumn{4}{|l|}{\begin{minipage}[m]{.37\linewidth}
\begin{center}
$yw^2$
\end{center}
\end{minipage}}&
\begin{minipage}[m]{.18\linewidth}
\begin{center}
\end{center}
\end{minipage}\\

\hline
\begin{minipage}[m]{.28\linewidth}
$O_yO_t=3\times\frac{1}{2}(1_x,1_z,1_w)$ \boundary

\end{minipage}&
\begin{minipage}[m]{.04\linewidth}
\begin{center}
$-$
\end{center}
\end{minipage}&
\begin{minipage}[m]{.11\linewidth}
\begin{center}
$3B+E$
\end{center}
\end{minipage}&
\begin{minipage}[m]{.11\linewidth}
\begin{center}
$z$
\end{center}
\end{minipage}
&
\begin{minipage}[m]{.11\linewidth}
\begin{center}
$z$
\end{center}
\end{minipage}&
\begin{minipage}[m]{.18\linewidth}
\begin{center}

\end{center}
\end{minipage}\\

\hline
\end{longtable}
\end{center}

\begin{Note}

\item 
The $1$-cycle $\Gamma$ for each singular point of type
$\frac{1}{2}(1,1,1)$ is irreducible due to the monomials $yw^2$
and $t^3$.
\end{Note}

%%%%%%%%%%%%%%%%%%%%%%%%%%%%%%%%%%%%%%%%%%%%%%%%%%%%%%%%%%%%%%%%%%
%%%%%%%%%%%%%%%%%%%%%%%%%%%%%%%%%%%%%%%%%%%%%%%%%%%%%%%%%%%%%%%%%%%%%%%%%%%%%%%%%%%%%%%%

\begin{center}
\begin{longtable}{|l|c|c|c|c|c|}
\hline
\multicolumn{6}{|l|}{\underline{\textbf{No. 57}}: $X_{24}\subset\mathbb{P}(1,3,4,5,12)$\hfill $A^3=1/30$}\\
\multicolumn{6}{|l|}{
\begin{minipage}[m]{.86\linewidth}
%\begin{center}
\vspace*{1.2mm} $(w-\alpha_1 y^4)(w-\alpha_2
y^4)+zt^4+z^6+wf_{12}(x,y,z,t)+f_{24}(x,y,z,t)$ \vspace*{1.2mm}
%\end{center}
\end{minipage}
}\\
\hline \hline
\begin{minipage}[m]{.28\linewidth}
\begin{center}
Singularity
\end{center}
\end{minipage}&
\begin{minipage}[m]{.04\linewidth}
\begin{center}
$B^3$
\end{center}
\end{minipage}&
\begin{minipage}[m]{.11\linewidth}
\begin{center}
Linear

system
\end{center}
\end{minipage}&
\begin{minipage}[m]{.11\linewidth}
\begin{center}
Surface $T$
\end{center}
\end{minipage}&
\begin{minipage}[m]{.11\linewidth}
\begin{center}
\vspace*{1mm}
 \vorder
\vspace*{1mm}
\end{center}
\end{minipage}&
\begin{minipage}[m]{.18\linewidth}
\begin{center}
Condition
\end{center}
\end{minipage}\\
\hline
\begin{minipage}[m]{.28\linewidth}

$O_t=\frac{1}{5}(1_x,3_y,2_w)$ \boundary

\end{minipage}&
\begin{minipage}[m]{.04\linewidth}
\begin{center}
$0$
\end{center}
\end{minipage}&
\begin{minipage}[m]{.11\linewidth}
\begin{center}
$3B$
\end{center}
\end{minipage}&
\begin{minipage}[m]{.11\linewidth}
\begin{center}
$y$
\end{center}
\end{minipage}
&
\begin{minipage}[m]{.11\linewidth}
\begin{center}
$y$
\end{center}
\end{minipage}&
\begin{minipage}[m]{.18\linewidth}
\begin{center}

\end{center}
\end{minipage}\\
\hline
\begin{minipage}[m]{.28\linewidth}

$O_zO_w=2\times\frac{1}{4}(1_x,3_y,1_t)$ \boundary

\end{minipage}&
\begin{minipage}[m]{.04\linewidth}
\begin{center}
$-$
\end{center}
\end{minipage}&
\begin{minipage}[m]{.11\linewidth}
\begin{center}
$3B$
\end{center}
\end{minipage}&
\begin{minipage}[m]{.11\linewidth}
\begin{center}
$y$
\end{center}
\end{minipage}
&
\begin{minipage}[m]{.11\linewidth}
\begin{center}
$y$
\end{center}
\end{minipage}&
\begin{minipage}[m]{.18\linewidth}
\begin{center}

\end{center}
\end{minipage}\\
\hline
\begin{minipage}[m]{.28\linewidth}
 $O_yO_w=2\times\frac{1}{3}(1_x,1_z,2_t)$ $\nef$

\end{minipage}&
\begin{minipage}[m]{.04\linewidth}
\begin{center}
$-$
\end{center}
\end{minipage}&
\begin{minipage}[m]{.11\linewidth}
\begin{center}
$4B+E$
\end{center}
\end{minipage}&
\begin{minipage}[m]{.11\linewidth}
\begin{center}
$z$
\end{center}
\end{minipage}
&
\begin{minipage}[m]{.11\linewidth}
\begin{center}
$z$
\end{center}
\end{minipage}&
\begin{minipage}[m]{.18\linewidth}
\begin{center}

\end{center}
\end{minipage}\\

\hline

\end{longtable}
\end{center}

\begin{Note}

\item 
The cycles $\Gamma$ for the singular points of types
$\frac{1}{5}(1,3,2)$ and $\frac{1}{4}(1,3,1)$ are irreducible
because of the monomials $w^2$ and $zt^4$.

\item For each singular point of type $\frac{1}{3}(1,1,2)$ we consider
the linear system generated by $x^{20}$, $z^5$ and $t^4$ on the
hypersurface $X_{24}$. Its base locus is cut out by $x=z=t=0$.
Since the defining equation of $X_{24}$ contains the monomial
$wy^4$, its base locus does not contain any curves. Therefore the
proper transform of a general member in the linear system is nef
by Lemma~\ref{lemma:nefness}. The proper transform  belongs to
$|20B+5E|$. Consequently, the surface $T$ is nef since
$5T\sim_{\mathbb{Q}} 20B+5E$.
\end{Note}

%%%%%%%%%%%%%%%%%%%%%%%%%%%%%%%%%%%%%%%%%%%%%%%%%%%%%%%%%%%%%%
%%%%%%%%%%%%%%%%%%%%%%%%%%%%%%%%%%%%%%%%%%%%%%%%%%%%%%%%%%%%%%

\begin{center}
\begin{longtable}{|l|c|c|c|c|c|}
\hline
\multicolumn{6}{|l|}{\textbf{No. 58}: $X_{24}\subset\mathbb{P}(1,3,4,7,10)$\hfill $A^3=1/35$}\\
\multicolumn{6}{|l|}{
\begin{minipage}[m]{.86\linewidth}
%\begin{center}
\vspace*{1.2mm}

$zw^2+t^2(a_1w+a_2yt)+z^6+y^8+wf_{14}(x,y,z,t)+f_{24}(x,y,z,t)$
\vspace*{1.2mm}
%\end{center}
\end{minipage}
}\\

\hline \hline
\begin{minipage}[m]{.28\linewidth}
\begin{center}
Singularity
\end{center}
\end{minipage}&
\begin{minipage}[m]{.04\linewidth}
\begin{center}
$B^3$
\end{center}
\end{minipage}&
\begin{minipage}[m]{.11\linewidth}
\begin{center}
Linear

system
\end{center}
\end{minipage}&
\begin{minipage}[m]{.11\linewidth}
\begin{center}
Surface $T$
\end{center}
\end{minipage}&
\begin{minipage}[m]{.11\linewidth}
\begin{center}
\vspace*{1mm}
 \vorder
\vspace*{1mm}
\end{center}
\end{minipage}&
\begin{minipage}[m]{.18\linewidth}
\begin{center}
Condition
\end{center}
\end{minipage}\\
\hline
\begin{minipage}[m]{.28\linewidth}

$O_w=\frac{1}{10}(1,3,7)$ \quadratic

\end{minipage}&
\multicolumn{4}{|l|}{\begin{minipage}[m]{.37\linewidth}
\begin{center}
$zw^2$
\end{center}
\end{minipage}}&
\begin{minipage}[m]{.18\linewidth}
\begin{center}
\end{center}
\end{minipage}\\
\hline
\begin{minipage}[m]{.28\linewidth}

$O_t=\frac{1}{7}(1,3,4)$ \quadraticone

\end{minipage}&
\multicolumn{4}{|l|}{\begin{minipage}[m]{.37\linewidth}
\begin{center}
$t^2w$
\end{center}
\end{minipage}}&
\begin{minipage}[m]{.18\linewidth}
\begin{center}

\end{center}
\end{minipage}\\
\hline
\begin{minipage}[m]{.28\linewidth}
 $O_zO_w=1\times\frac{1}{2}(1_x,1_y,1_t)$ $\nef$

\end{minipage}&
\begin{minipage}[m]{.04\linewidth}
\begin{center}
$-$
\end{center}
\end{minipage}&
\begin{minipage}[m]{.11\linewidth}
\begin{center}
$7B+3E$
\end{center}
\end{minipage}&
\begin{minipage}[m]{.11\linewidth}
\begin{center}
$t$
\end{center}
\end{minipage}
&
\begin{minipage}[m]{.11\linewidth}
\begin{center}
$t$
\end{center}
\end{minipage}&
\begin{minipage}[m]{.18\linewidth}
\begin{center}

\end{center}
\end{minipage}\\

\hline
\end{longtable}
\end{center}

\begin{Note}

\item 
For the singular point of type $\frac{1}{2}(1,1,1)$,
 we consider the
linear system $|-7K_{X_{24}}|$. Every member of the linear system
passes through  the singular point of type $\frac{1}{2}(1,1,1)$
and its base locus contains  no curves. Since the proper transform
of a general member in $|-7K_{X_{24}}|$ belongs to the linear
system $|7B+3E|$, the divisor $T$ is nef.
\end{Note}

%%%%%%%%%%%%%%%%%%%%%%%%%%%%%%%%%%%%%%%%%%%%%%%%%%%%%%%%%%%%%%%%%%

%%%%%%%%%%%%%%%%%%%%%%%%%%%%%%%%%%%%%%%%%%%%%%%%%%%%%%%%%%%%%%%%%%%%%%%%%%%%%%%%%%%%%%%%

%%%%%%%%%%%%%%%%%%%%%%%%%%%%%%%%%%%%%%%%%%%%%%%%%%%%%%%%%%%%%%
%%%%%%%%%%%%%%%%%%%%%%%%%%%%%%%%%%%%%%%%%%%%%%%%%%%%%%%%%%%%%%

\begin{center}
\begin{longtable}{|l|c|c|c|c|c|}
\hline
\multicolumn{6}{|l|}{\underline{\textbf{No. 59}}: $X_{24}\subset\mathbb{P}(1,3,6,7,8)$\hfill $A^3=1/42$}\\
\multicolumn{6}{|l|}{
\begin{minipage}[m]{.86\linewidth}
%\begin{center}
\vspace*{1.2mm} $w^3+yt^3+\prod^4_{i=1}(z-\alpha_i
y^2)+w^2f_8(x,y,z,t)+wf_{16}(x,y,z,t)+f_{24}(x,y,z,t)$
\vspace*{1.2mm}
%\end{center}
\end{minipage}
}\\
\hline \hline
\begin{minipage}[m]{.28\linewidth}
\begin{center}
Singularity
\end{center}
\end{minipage}&
\begin{minipage}[m]{.04\linewidth}
\begin{center}
$B^3$
\end{center}
\end{minipage}&
\begin{minipage}[m]{.11\linewidth}
\begin{center}
Linear

system
\end{center}
\end{minipage}&
\begin{minipage}[m]{.11\linewidth}
\begin{center}
Surface $T$
\end{center}
\end{minipage}&
\begin{minipage}[m]{.11\linewidth}
\begin{center}
\vspace*{1mm}
 \vorder
\vspace*{1mm}
\end{center}
\end{minipage}&
\begin{minipage}[m]{.18\linewidth}
\begin{center}
Condition
\end{center}
\end{minipage}\\
\hline
\begin{minipage}[m]{.28\linewidth}

$O_t=\frac{1}{7}(1_x,6_z,1_w)$ \boundary

\end{minipage}&
\begin{minipage}[m]{.04\linewidth}
\begin{center}
$0$
\end{center}
\end{minipage}&
\begin{minipage}[m]{.11\linewidth}
\begin{center}
$3B$
\end{center}
\end{minipage}&
\begin{minipage}[m]{.11\linewidth}
\begin{center}
$y$
\end{center}
\end{minipage}
&
\begin{minipage}[m]{.11\linewidth}
\begin{center}
$w^3$
\end{center}
\end{minipage}&
\begin{minipage}[m]{.18\linewidth}
\begin{center}

\end{center}
\end{minipage}\\
\hline
\begin{minipage}[m]{.28\linewidth}

$O_zO_w=1\times\frac{1}{2}(1_x,1_y,1_t)$ \boundary

\end{minipage}&
\begin{minipage}[m]{.04\linewidth}
\begin{center}
$-$
\end{center}
\end{minipage}&
\begin{minipage}[m]{.11\linewidth}
\begin{center}
$3B+E$
\end{center}
\end{minipage}&
\begin{minipage}[m]{.11\linewidth}
\begin{center}
$y$
\end{center}
\end{minipage}
&
\begin{minipage}[m]{.11\linewidth}
\begin{center}
$y$
\end{center}
\end{minipage}&
\begin{minipage}[m]{.18\linewidth}
\begin{center}

\end{center}
\end{minipage}\\
\hline
\begin{minipage}[m]{.28\linewidth}

$O_yO_z=4\times\frac{1}{3}(1_x,1_t,2_w)$ \boundary

\end{minipage}&
\begin{minipage}[m]{.04\linewidth}
\begin{center}
$-$
\end{center}
\end{minipage}&
\begin{minipage}[m]{.11\linewidth}
\begin{center}
$6B+E$
\end{center}
\end{minipage}&
\begin{minipage}[m]{.11\linewidth}
\begin{center}
$z-\alpha_i y^2$
\end{center}
\end{minipage}
&
\begin{minipage}[m]{.11\linewidth}
\begin{center}
$yt^3$
\end{center}
\end{minipage}&
\begin{minipage}[m]{.18\linewidth}
\begin{center}

\end{center}
\end{minipage}\\
\hline

\end{longtable}
\end{center}

\begin{Note}

\item 
The $1$-cycle $\Gamma$ for each singular point is irreducible due
to the monomials $w^3$, $z^4$ and $yt^3$.

\end{Note}

%%%%%%%%%%%%%%%%%%%%%%%%%%%%%%%%%%%%%%%%%%%%%%%%%%%%%%%%%%%%%%

%%%%%%%%%%%%%%%%%%%%%%%%%%%%%%%%%%%%%%%%%%%%%%%%%%%%%%%%%%%%%%

\begin{center}
\begin{longtable}{|l|c|c|c|c|c|}
\hline
\multicolumn{6}{|l|}{\textbf{No. 60}: $X_{24}\subset\mathbb{P}(1,4,5,6,9)$\hfill $A^3=1/45$}\\
\multicolumn{6}{|l|}{
\begin{minipage}[m]{.86\linewidth}
%\begin{center}
\vspace*{1.2mm}

$tw^2+(t^2-\alpha_1 y^3)(t^2-\alpha_2
y^3)+z^3(a_1w+a_2yz)+wf_{15}(x,y,z,t)+f_{24}(x,y,z,t)$
\vspace*{1.2mm}
%\end{center}
\end{minipage}
}\\

\hline \hline
\begin{minipage}[m]{.28\linewidth}
\begin{center}
Singularity
\end{center}
\end{minipage}&
\begin{minipage}[m]{.04\linewidth}
\begin{center}
$B^3$
\end{center}
\end{minipage}&
\begin{minipage}[m]{.11\linewidth}
\begin{center}
Linear

system
\end{center}
\end{minipage}&
\begin{minipage}[m]{.11\linewidth}
\begin{center}
Surface $T$
\end{center}
\end{minipage}&
\begin{minipage}[m]{.11\linewidth}
\begin{center}
\vspace*{1mm}
 \vorder
\vspace*{1mm}
\end{center}
\end{minipage}&
\begin{minipage}[m]{.18\linewidth}
\begin{center}
Condition
\end{center}
\end{minipage}\\
\hline
\begin{minipage}[m]{.28\linewidth}

$O_w=\frac{1}{9}(1,4,5)$ \quadratic

\end{minipage}&
\multicolumn{4}{|l|}{\begin{minipage}[m]{.37\linewidth}
\begin{center}
$tw^2$
\end{center}
\end{minipage}}&
\begin{minipage}[m]{.18\linewidth}
\begin{center}

\end{center}
\end{minipage}\\
\hline
\begin{minipage}[m]{.28\linewidth}

$O_z=\frac{1}{5}(1_x,4_y,1_t)$ \boundary

\end{minipage}&
\begin{minipage}[m]{.04\linewidth}
\begin{center}
$-$
\end{center}
\end{minipage}&
\begin{minipage}[m]{.11\linewidth}
\begin{center}
$4B$
\end{center}
\end{minipage}&
\begin{minipage}[m]{.11\linewidth}
\begin{center}
$y$
\end{center}
\end{minipage}
&
\begin{minipage}[m]{.11\linewidth}
\begin{center}
$y$
\end{center}
\end{minipage}&
\begin{minipage}[m]{.18\linewidth}
\begin{center}
$a_1\ne 0$
\end{center}
\end{minipage}\\
\hline
\begin{minipage}[m]{.28\linewidth}

$O_z=\frac{1}{5}(1_x,1_t,4_w)$ \boundary

\end{minipage}&
\begin{minipage}[m]{.04\linewidth}
\begin{center}
$-$
\end{center}
\end{minipage}&
\begin{minipage}[m]{.11\linewidth}
\begin{center}
$4B$
\end{center}
\end{minipage}&
\begin{minipage}[m]{.11\linewidth}
\begin{center}
$y$
\end{center}
\end{minipage}
&
\begin{minipage}[m]{.11\linewidth}
\begin{center}
$t^4$
\end{center}
\end{minipage}&
\begin{minipage}[m]{.18\linewidth}
\begin{center}
$a_1=0$
\end{center}
\end{minipage}\\
\hline
\begin{minipage}[m]{.28\linewidth}
$O_tO_w=1\times\frac{1}{3}(1_x,1_y,2_z)$ \boundary

\end{minipage}&
\begin{minipage}[m]{.04\linewidth}
\begin{center}
$-$
\end{center}
\end{minipage}&
\begin{minipage}[m]{.11\linewidth}
\begin{center}
$5B+E$
\end{center}
\end{minipage}&
\begin{minipage}[m]{.11\linewidth}
\begin{center}
$z$
\end{center}
\end{minipage}
&
\begin{minipage}[m]{.11\linewidth}
\begin{center}
$z$
\end{center}
\end{minipage}&
\begin{minipage}[m]{.18\linewidth}
\begin{center}

\end{center}
\end{minipage}\\
\hline
\begin{minipage}[m]{.28\linewidth}
 $O_yO_t=2\times\frac{1}{2}(1_x,1_z,1_w)$ $\nef$

\end{minipage}&
\begin{minipage}[m]{.04\linewidth}
\begin{center}
$-$
\end{center}
\end{minipage}&
\begin{minipage}[m]{.11\linewidth}
\begin{center}
$5B+2E$
\end{center}
\end{minipage}&
\begin{minipage}[m]{.11\linewidth}
\begin{center}
$xy$, $z$
\end{center}
\end{minipage}
&
\begin{minipage}[m]{.11\linewidth}
\begin{center}
$xy$, $z$
\end{center}
\end{minipage}&
\begin{minipage}[m]{.18\linewidth}
\begin{center}

\end{center}
\end{minipage}\\
\hline
\end{longtable}
\end{center}

\begin{Note}

\item 
The $1$-cycle $\Gamma$ for the singular point $O_z$ with $a_1\ne
0$ is irreducible due to the monomials $t^4$ and $z^3w$.

\item The $1$-cycle $\Gamma$ for the singular point $O_z$ with $a_1= 0$
has two irreducible components. One is $\tilde{L}_{zw}$ and the
other is the proper transform $\tilde{C}$ of the curve defined by
$$x=y=w^2+t^3=0.$$
 Then
we see that
\[E\cdot \tilde{C}=3E\cdot \tilde{L}_{zw}, \ \ \ B\cdot \tilde{C}=3B\cdot \tilde{L}_{zw}.\]
Therefore these two components are numerically proportional on
$Y$.

\item The $1$-cycle $\Gamma$ for the singular point of type
$\frac{1}{3}(1,1,2)$ is irreducible since we have terms $tw^2$ and
$(t^2-\alpha_1 y^3)(t^2-\alpha_2 y^3)$. Note that the constants
$\alpha_i$'s cannot be zero.

\item For the singular points of type $\frac{1}{2}(1,1,1)$,
 we consider the
linear system generated by $xy$ and $z$ on $X_{24}$. Its base
curves are defined by $x=z=0$ and $y=z=0$.  The curve defined by
$y=z=0$ passes though no singular point of type $\frac{1}{2}(1,1,
1)$.  The curve   defined by $x=z=0$   is irreducible.  Moreover,
its proper transform is the $1$-cycle defined by  $(5B+2E)\cdot
B$. Consequently, the divisor $T$ is nef since  $(5B+2E)^2\cdot
B>0$.
\end{Note}

%%%%%%%%%%%%%%%%%%%%%%%%%%%%%%%%%%%%%%%%%%%%%%%%%%%%%%%%%%%%%%%%%%
%%%%%%%%%%%%%%%%%%%%%%%%%%%%%%%%%%%%%%%%%%%%%%%%%%%%%%%%%%%%%%%%%%%%%%%%%%%%%%%%%%%%%%%%

\begin{center}
\begin{longtable}{|l|c|c|c|c|c|}
\hline
\multicolumn{6}{|l|}{\textbf{No. 61}: $X_{25}\subset\mathbb{P}(1,4,5,7,9)$\hfill $A^3=5/252$}\\
\multicolumn{6}{|l|}{
\begin{minipage}[m]{.86\linewidth}
%\begin{center}
\vspace*{1.2mm}
$tw^2-yt^3+z^5+y^4(a_1w+a_2yz+a_3xy^2)+wf_{16}(x,y,z,t)+f_{25}(x,y,z,t)$
\vspace*{1.2mm}
%\end{center}
\end{minipage}
}\\
\hline \hline
\begin{minipage}[m]{.28\linewidth}
\begin{center}
Singularity
\end{center}
\end{minipage}&
\begin{minipage}[m]{.04\linewidth}
\begin{center}
$B^3$
\end{center}
\end{minipage}&
\begin{minipage}[m]{.11\linewidth}
\begin{center}
Linear

system
\end{center}
\end{minipage}&
\begin{minipage}[m]{.11\linewidth}
\begin{center}
Surface $T$
\end{center}
\end{minipage}&
\begin{minipage}[m]{.11\linewidth}
\begin{center}
\vspace*{1mm}
 \vorder
\vspace*{1mm}
\end{center}
\end{minipage}&
\begin{minipage}[m]{.18\linewidth}
\begin{center}
Condition
\end{center}
\end{minipage}\\
\hline
\begin{minipage}[m]{.28\linewidth}

$O_w=\frac{1}{9}(1,4,5)$ \quadratic

\end{minipage}&
\multicolumn{4}{|l|}{\begin{minipage}[m]{.37\linewidth}
\begin{center}
$tw^2$
\end{center}
\end{minipage}}&
\begin{minipage}[m]{.18\linewidth}
\begin{center}

\end{center}
\end{minipage}\\

\hline
\begin{minipage}[m]{.28\linewidth}

$O_t=\frac{1}{7}(1,5,2)$ \elliptic

\end{minipage}&\multicolumn{4}{|l|}{\begin{minipage}[m]{.37\linewidth}
\begin{center}
$tw^2-yt^3$
\end{center}
\end{minipage}}&
\begin{minipage}[m]{.18\linewidth}
\begin{center}

\end{center}
\end{minipage}\\

\hline
\begin{minipage}[m]{.28\linewidth}

$O_y=\frac{1}{4}(1_x,1_z,3_t)$ \boundary

\end{minipage}&
\begin{minipage}[m]{.04\linewidth}
\begin{center}
$-$
\end{center}
\end{minipage}&
\begin{minipage}[m]{.11\linewidth}
\begin{center}
$9B+E$
\end{center}
\end{minipage}&
\begin{minipage}[m]{.11\linewidth}
\begin{center}
$w$
\end{center}
\end{minipage}
&
\begin{minipage}[m]{.11\linewidth}
\begin{center}
$z^5$
\end{center}
\end{minipage}&
\begin{minipage}[m]{.18\linewidth}
\begin{center}
$a_1\ne 0$
\end{center}
\end{minipage}\\
\hline
\begin{minipage}[m]{.28\linewidth}

$O_y=\frac{1}{4}(1_x,3_t,1_w)$ \surface

\end{minipage}&
\begin{minipage}[m]{.04\linewidth}
\begin{center}
$-$
\end{center}
\end{minipage}&
\begin{minipage}[m]{.11\linewidth}
\begin{center}
$5B$
\end{center}
\end{minipage}&
\begin{minipage}[m]{.11\linewidth}
\begin{center}
$x^5$, $z$
\end{center}
\end{minipage}
&
\begin{minipage}[m]{.11\linewidth}
\begin{center}
$x^5, tw^2$
\end{center}
\end{minipage}&
\begin{minipage}[m]{.18\linewidth}
\begin{center}
$a_1=0$, $a_2\ne 0$
\end{center}
\end{minipage}\\
\hline
\begin{minipage}[m]{.28\linewidth}

$O_y=\frac{1}{4}(1_z,3_t,1_w)$ \boundary

\end{minipage}&
\begin{minipage}[m]{.04\linewidth}
\begin{center}
$-$
\end{center}
\end{minipage}&
\begin{minipage}[m]{.11\linewidth}
\begin{center}
$7B+E$
\end{center}
\end{minipage}&
\begin{minipage}[m]{.11\linewidth}
\begin{center}
$t$
\end{center}
\end{minipage}
&
\begin{minipage}[m]{.11\linewidth}
\begin{center}
$t$
\end{center}
\end{minipage}&
\begin{minipage}[m]{.18\linewidth}
\begin{center}
$a_1=a_2=0$
\end{center}
\end{minipage}\\

\hline
\end{longtable}
\end{center}

\begin{Note}

\item 
If $a_1\ne 0$, the $1$-cycle $\Gamma$ for the singular point $O_y$
is irreducible due to the monomials $yt^3$ and $z^5$.

\item If $a_1=a_2=0$, the $1$-cycle $\Gamma$ for the singular point
$O_y$ is irreducible even though it is not reduced.

\item Now we suppose that $a_1=0$ and $a_2\ne 0$. Then we may assume
that $a_2=1$ and $a_3=0$. We take a surface $H$ cut by an equation
$z=\lambda x^5$ with  a general complex number $\lambda$  and then
let $T$ be the proper transform of the surface. The surface $H$ is
normal but it is not quasi-smooth at the point $O_y$.

 The intersection of $T$ with the surface $S$ gives us a divisor consisting of two irreducible curves on the normal surface $T$. One is the proper transform of the curve $L_{yw}$.
 The other is the proper transform  of the curve $C$ defined by $$x=z=w^2-yt^2=0.$$
From the intersection numbers
\[(\tilde{L}_{yw}+\tilde{C})\cdot\tilde{L}_{yw}=-K_Y\cdot \tilde{L}_{yw}=-\frac{2}{9}, \ \ \ (\tilde{L}_{yw}+\tilde{C})^2=5B^3=-\frac{20}{63}\]
on the surface $T$, we obtain
\[\tilde{L}_{yw}^2=-\frac{2}{9}-\tilde{L}_{yw}\cdot\tilde{C}, \ \ \ \tilde{C}^2=-\frac{2}{21}-\tilde{L}_{yw}\cdot\tilde{C}.\]
With these intersection numbers we see that the matrix
\[\left(\begin{array}{cc}
       \tilde{L}_{yw}^2&\tilde{L}_{yw}\cdot\tilde{C}\\
       \tilde{L}_{yw}\cdot\tilde{C}& \tilde{C}^2\\
\end{array}\right)= \left(\begin{array}{cc}
-\frac{2}{9}-\tilde{L}_{yw}\cdot\tilde{C} & \tilde{L}_{yw}\cdot\tilde{C}\\
     \tilde{L}_{yw}\cdot\tilde{C} & -\frac{2}{21}-\tilde{L}_{yw}\cdot\tilde{C} \\
        \end{array}\right)
\]
is negative-definite since $\tilde{L}_{zw}\cdot\tilde{C}$ is
positive.
\end{Note}
%%%%%%%%%%%%%%%%%%%%%%%%%%%%%%%%%%%%%%%%%%%%%%%%%%%%%%%%%%%%%%%%%%

%%%%%%%%%%%%%%%%%%%%%%%%%%%%%%%%%%%%%%%%%%%%%%%%%%%%%%%%%%%%%%%%%%%%%%%%%%%%%%%%%%%%%%%%

%%%%%%%%%%%%%%%%%%%%%%%%%%%%%%%%%%%%%%%%%%%%%%%%%%%%%%%%%%%%%%

%%%%%%%%%%%%%%%%%%%%%%%%%%%%%%%%%%%%%%%%%%%%%%%%%%%%%%%%%%%%%%

\begin{center}
\begin{longtable}{|l|c|c|c|c|c|}
\hline
\multicolumn{6}{|l|}{\underline{\textbf{No. 62}}: $X_{26}\subset\mathbb{P}(1,1,5,7,13)$\hfill $A^3=2/35$}\\
\multicolumn{6}{|l|}{
\begin{minipage}[m]{.86\linewidth}
%\begin{center}
\vspace*{1.2mm} $w^2+zt^3+yz^5+wf_{13}(x,y,z,t)+f_{26}(x,y,z,t)$
\vspace*{1.2mm}
%\end{center}
\end{minipage}
}\\
\hline \hline
\begin{minipage}[m]{.28\linewidth}
\begin{center}
Singularity
\end{center}
\end{minipage}&
\begin{minipage}[m]{.04\linewidth}
\begin{center}
$B^3$
\end{center}
\end{minipage}&
\begin{minipage}[m]{.11\linewidth}
\begin{center}
Linear

system
\end{center}
\end{minipage}&
\begin{minipage}[m]{.11\linewidth}
\begin{center}
Surface $T$
\end{center}
\end{minipage}&
\begin{minipage}[m]{.11\linewidth}
\begin{center}
\vspace*{1mm}
 \vorder
\vspace*{1mm}
\end{center}
\end{minipage}&
\begin{minipage}[m]{.18\linewidth}
\begin{center}
Condition
\end{center}
\end{minipage}\\
\hline
\begin{minipage}[m]{.28\linewidth}

$O_t=\frac{1}{7}(1_x,1_y,6_w)$ $\positive$

\end{minipage}&
\begin{minipage}[m]{.04\linewidth}
\begin{center}
$+$
\end{center}
\end{minipage}&
\begin{minipage}[m]{.11\linewidth}
\begin{center}
$5B-E$
\end{center}
\end{minipage}&
\begin{minipage}[m]{.11\linewidth}
\begin{center}
$z$
\end{center}
\end{minipage}
&
\begin{minipage}[m]{.11\linewidth}
\begin{center}
$w^2$
\end{center}
\end{minipage}&
\begin{minipage}[m]{.18\linewidth}
\begin{center}

\end{center}
\end{minipage}\\
\hline
\begin{minipage}[m]{.28\linewidth}

$O_z=\frac{1}{5}(1_x,2_t,3_w)$ $\family$

\end{minipage}&
\begin{minipage}[m]{.04\linewidth}
\begin{center}
$+$
\end{center}
\end{minipage}&
\begin{minipage}[m]{.11\linewidth}
\begin{center}
$B-E$
\end{center}
\end{minipage}&
\begin{minipage}[m]{.11\linewidth}
\begin{center}
$y$
\end{center}
\end{minipage}
&
\begin{minipage}[m]{.11\linewidth}
\begin{center}
$w^2$
\end{center}
\end{minipage}&
\begin{minipage}[m]{.18\linewidth}
\begin{center}

\end{center}
\end{minipage}\\
\hline

\end{longtable}
\end{center}

\begin{Note}

\item 
For the singular point $O_z$, let $C_{\lambda}$ be the curve on
the surface $S_y$ defined by
$$
\left\{%
\aligned
&y=0,\\%
&t=\lambda x^{7}\\%
\endaligned\right.%
$$
for a sufficiently general complex number  $\lambda$.
 Then
$$
-K_{Y}\cdot \tilde{C}_{\lambda}=(B-E)(7B+E)B=0.\\%
$$
If the curve $\tilde{C}_{\lambda}$ is reducible, it consists of
two irreducible components that are numerically equivalent to each
other since the two components of the curve $C_\lambda$ are
symmetric with respect to the biregular quadratic involution of
$X_{26}$. Then each component of $\tilde{C}_{\lambda}$ intersects
$-K_Y$ trivially.
\end{Note}

%%%%%%%%%%%%%%%%%%%%%%%%%%%%%%%%%%%%%%%%%%%%%%%%%%%%%%%%%%%%%%

\begin{center}
\begin{longtable}{|l|c|c|c|c|c|}
\hline
\multicolumn{6}{|l|}{\underline{\textbf{No. 63}}: $X_{26}\subset\mathbb{P}(1,2,3,8,13)$\hfill $A^3=1/24$}\\
\multicolumn{6}{|l|}{
\begin{minipage}[m]{.86\linewidth}
%\begin{center}
\vspace*{1.2mm} $w^2
+y(t-\alpha_1y^4)(t-\alpha_2y^4)(t-\alpha_3y^4)+z^6(a_1t+a_2yz^2)+
wf_{13}(x,y,z,t)+f_{26}(x,y,z,t)$ \vspace*{1.2mm}
%\end{center}
\end{minipage}
}\\
\hline \hline
\begin{minipage}[m]{.28\linewidth}
\begin{center}
Singularity
\end{center}
\end{minipage}&
\begin{minipage}[m]{.04\linewidth}
\begin{center}
$B^3$
\end{center}
\end{minipage}&
\begin{minipage}[m]{.11\linewidth}
\begin{center}
Linear

system
\end{center}
\end{minipage}&
\begin{minipage}[m]{.11\linewidth}
\begin{center}
Surface $T$
\end{center}
\end{minipage}&
\begin{minipage}[m]{.11\linewidth}
\begin{center}
\vspace*{1mm}
 \vorder
\vspace*{1mm}
\end{center}
\end{minipage}&
\begin{minipage}[m]{.18\linewidth}
\begin{center}
Condition
\end{center}
\end{minipage}\\
\hline
\begin{minipage}[m]{.28\linewidth}

$O_t=\frac{1}{8}(1_x,3_z,5_w)$ $\positive$

\end{minipage}&
\begin{minipage}[m]{.04\linewidth}
\begin{center}
$+$
\end{center}
\end{minipage}&
\begin{minipage}[m]{.11\linewidth}
\begin{center}
$2B-E$
\end{center}
\end{minipage}&
\begin{minipage}[m]{.11\linewidth}
\begin{center}
$y$
\end{center}
\end{minipage}
&
\begin{minipage}[m]{.11\linewidth}
\begin{center}
$w^2$
\end{center}
\end{minipage}&
\begin{minipage}[m]{.18\linewidth}
\begin{center}

\end{center}
\end{minipage}\\
\hline

\begin{minipage}[m]{.28\linewidth}

$O_z=\frac{1}{3}(1_x,2_y,1_w)$ \boundary

\end{minipage}&
\begin{minipage}[m]{.04\linewidth}
\begin{center}
$-$
\end{center}
\end{minipage}&
\begin{minipage}[m]{.11\linewidth}
\begin{center}
$2B$
\end{center}
\end{minipage}&
\begin{minipage}[m]{.11\linewidth}
\begin{center}
$y$
\end{center}
\end{minipage}
&
\begin{minipage}[m]{.11\linewidth}
\begin{center}
$y$
\end{center}
\end{minipage}&
\begin{minipage}[m]{.18\linewidth}
\begin{center}
$a_1\ne 0$
\end{center}
\end{minipage}\\
\hline

\begin{minipage}[m]{.28\linewidth}

$O_z=\frac{1}{3}(1_x,2_t,1_w)$ \boundary

\end{minipage}&
\begin{minipage}[m]{.04\linewidth}
\begin{center}
$-$
\end{center}
\end{minipage}&
\begin{minipage}[m]{.11\linewidth}
\begin{center}
$2B$
\end{center}
\end{minipage}&
\begin{minipage}[m]{.11\linewidth}
\begin{center}
$y$
\end{center}
\end{minipage}
&
\begin{minipage}[m]{.11\linewidth}
\begin{center}
$w^2$
\end{center}
\end{minipage}&
\begin{minipage}[m]{.18\linewidth}
\begin{center}
$a_1=0$
\end{center}
\end{minipage}\\
\hline
\begin{minipage}[m]{.28\linewidth}

$O_yO_t=3\times\frac{1}{2}(1_x,1_z,1_w)$ \boundary

\end{minipage}&
\begin{minipage}[m]{.04\linewidth}
\begin{center}
$-$
\end{center}
\end{minipage}&
\begin{minipage}[m]{.11\linewidth}
\begin{center}
$3B+E$
\end{center}
\end{minipage}&
\begin{minipage}[m]{.11\linewidth}
\begin{center}
$z$
\end{center}
\end{minipage}
&
\begin{minipage}[m]{.11\linewidth}
\begin{center}
$z$
\end{center}
\end{minipage}&
\begin{minipage}[m]{.18\linewidth}
\begin{center}

\end{center}
\end{minipage}\\
\hline

\end{longtable}
\end{center}

\begin{Note}

\item 
For each of the singular points of types $\frac{1}{3}(1,2,1)$ and
$\frac{1}{2}(1,1,1)$, the $1$-cycle $\Gamma$ is always irreducible
because of the monomials $w^2$, $yt^3$ and $z^6t$ even though it
is possibly non-reduced.
\end{Note}

%%%%%%%%%%%%%%%%%%%%%%%%%%%%%%%%%%%%%%%%%%%%%%%%%%%%%%%%%%%%%%

%%%%%%%%%%%%%%%%%%%%%%%%%%%%%%%%%%%%%%%%%%%%%%%%%%%%%%%%%%%%%%

\begin{center}
\begin{longtable}{|l|c|c|c|c|c|}
\hline
\multicolumn{6}{|l|}{\underline{\textbf{No. 64}}: $X_{26}\subset\mathbb{P}(1,2,5,6,13)$\hfill $A^3=1/30$}\\
\multicolumn{6}{|l|}{
\begin{minipage}[m]{.86\linewidth}
%\begin{center}
\vspace*{1.2mm} $w^{2}+y\prod^4_{i=1}(t-\alpha_i
y^3)+z^{4}(a_1t+a_2x)+ wf_{13}(x,y,z,t)+f_{26}(x,y,z,t)$
\vspace*{1.2mm}
%\end{center}
\end{minipage}
}\\
\hline \hline
\begin{minipage}[m]{.28\linewidth}
\begin{center}
Singularity
\end{center}
\end{minipage}&
\begin{minipage}[m]{.04\linewidth}
\begin{center}
$B^3$
\end{center}
\end{minipage}&
\begin{minipage}[m]{.11\linewidth}
\begin{center}
Linear

system
\end{center}
\end{minipage}&
\begin{minipage}[m]{.11\linewidth}
\begin{center}
Surface $T$
\end{center}
\end{minipage}&
\begin{minipage}[m]{.11\linewidth}
\begin{center}
\vspace*{1mm}
 \vorder
\vspace*{1mm}
\end{center}
\end{minipage}&
\begin{minipage}[m]{.18\linewidth}
\begin{center}
Condition
\end{center}
\end{minipage}\\
\hline
\begin{minipage}[m]{.28\linewidth}

$O_t=\frac{1}{6}(1_x,5_z,1_w)$ \boundary

\end{minipage}&
\begin{minipage}[m]{.04\linewidth}
\begin{center}
$0$
\end{center}
\end{minipage}&
\begin{minipage}[m]{.11\linewidth}
\begin{center}
$2B$
\end{center}
\end{minipage}&
\begin{minipage}[m]{.11\linewidth}
\begin{center}
$y$
\end{center}
\end{minipage}
&
\begin{minipage}[m]{.11\linewidth}
\begin{center}
$w^2$
\end{center}
\end{minipage}&
\begin{minipage}[m]{.18\linewidth}
\begin{center}

\end{center}
\end{minipage}\\
\hline
\begin{minipage}[m]{.28\linewidth}

$O_z=\frac{1}{5}(1_x,2_y,3_w)$ \boundary

\end{minipage}&
\begin{minipage}[m]{.04\linewidth}
\begin{center}
$0$
\end{center}
\end{minipage}&
\begin{minipage}[m]{.11\linewidth}
\begin{center}
$2B$
\end{center}
\end{minipage}&
\begin{minipage}[m]{.11\linewidth}
\begin{center}
$y$
\end{center}
\end{minipage}
&
\begin{minipage}[m]{.11\linewidth}
\begin{center}
$y$
\end{center}
\end{minipage}&
\begin{minipage}[m]{.18\linewidth}
\begin{center}
$a_1\ne 0$
\end{center}
\end{minipage}\\
\hline
\begin{minipage}[m]{.28\linewidth}

$O_z=\frac{1}{5}(2_y,1_t, 3_w)$ \boundary

\end{minipage}&
\begin{minipage}[m]{.04\linewidth}
\begin{center}
$0$
\end{center}
\end{minipage}&
\begin{minipage}[m]{.11\linewidth}
\begin{center}
$2B$
\end{center}
\end{minipage}&
\begin{minipage}[m]{.11\linewidth}
\begin{center}
$y$
\end{center}
\end{minipage}
&
\begin{minipage}[m]{.11\linewidth}
\begin{center}
$y$
\end{center}
\end{minipage}&
\begin{minipage}[m]{.18\linewidth}
\begin{center}
$a_1=0$
\end{center}
\end{minipage}\\
\hline
\begin{minipage}[m]{.28\linewidth}

$O_yO_t=4\times\frac{1}{2}(1_x,1_z,1_w)$ \boundary

\end{minipage}&
\begin{minipage}[m]{.04\linewidth}
\begin{center}
$-$
\end{center}
\end{minipage}&
\begin{minipage}[m]{.11\linewidth}
\begin{center}
$6B+2E$
\end{center}
\end{minipage}&
\begin{minipage}[m]{.11\linewidth}
\begin{center}
$t-\alpha_i y^3$
\end{center}
\end{minipage}
&
\begin{minipage}[m]{.11\linewidth}
\begin{center}
$w^2$
\end{center}
\end{minipage}&
\begin{minipage}[m]{.18\linewidth}
\begin{center}

\end{center}
\end{minipage}\\
\hline

\end{longtable}
\end{center}

\begin{Note}

\item 
For each of the singular points the $1$-cycle $\Gamma$ is
irreducible due to the monomials $w^2$ and $z^{4}t$. In
particular, if $a_1=0$, then the $1$-cycle $\Gamma$ is $2L_{zt}$.
\end{Note}

%%%%%%%%%%%%%%%%%%%%%%%%%%%%%%%%%%%%%%%%%%%%%%%%%%%%%%%%%%%%%%

%%%%%%%%%%%%%%%%%%%%%%%%%%%%%%%%%%%%%%%%%%%%%%%%%%%%%%%%%%%%%%

\begin{center}
\begin{longtable}{|l|c|c|c|c|c|}
\hline
\multicolumn{6}{|l|}{\textbf{No. 65}: $X_{27}\subset\mathbb{P}(1,2,5,9,11)$\hfill $A^3=3/110$}\\
\multicolumn{6}{|l|}{
\begin{minipage}[m]{.86\linewidth}
%\begin{center}
\vspace*{1.2mm}

$zw^2+t^3+yz^5+y^8(a_1w+a_2yt+a_3y^3z+a_4xy^5)+wf_{16}(x,y,z,t)+f_{27}(x,y,z,t)$
\vspace*{1.2mm}
%\end{center}
\end{minipage}
}\\

\hline \hline
\begin{minipage}[m]{.28\linewidth}
\begin{center}
Singularity
\end{center}
\end{minipage}&
\begin{minipage}[m]{.04\linewidth}
\begin{center}
$B^3$
\end{center}
\end{minipage}&
\begin{minipage}[m]{.11\linewidth}
\begin{center}
Linear

system
\end{center}
\end{minipage}&
\begin{minipage}[m]{.11\linewidth}
\begin{center}
Surface $T$
\end{center}
\end{minipage}&
\begin{minipage}[m]{.11\linewidth}
\begin{center}
\vspace*{1mm}
 \vorder
\vspace*{1mm}
\end{center}
\end{minipage}&
\begin{minipage}[m]{.18\linewidth}
\begin{center}
Condition
\end{center}
\end{minipage}\\
\hline
\begin{minipage}[m]{.28\linewidth}

$O_w=\frac{1}{11}(1,2,9)$ \quadratic

\end{minipage}&
\multicolumn{4}{|l|}{\begin{minipage}[m]{.37\linewidth}
\begin{center}
$zw^2$
\end{center}
\end{minipage}}&
\begin{minipage}[m]{.18\linewidth}
\begin{center}

\end{center}
\end{minipage}\\

\hline
\begin{minipage}[m]{.28\linewidth}

$O_z=\frac{1}{5}(1_x,4_t,1_w)$ \boundary

\end{minipage}&
\begin{minipage}[m]{.04\linewidth}
\begin{center}
$-$
\end{center}
\end{minipage}&
\begin{minipage}[m]{.11\linewidth}
\begin{center}
$2B$
\end{center}
\end{minipage}&
\begin{minipage}[m]{.11\linewidth}
\begin{center}
$y$
\end{center}
\end{minipage}
&
\begin{minipage}[m]{.11\linewidth}
\begin{center}
$zw^2$
\end{center}
\end{minipage}&
\begin{minipage}[m]{.18\linewidth}
\begin{center}

\end{center}
\end{minipage}\\
\hline
\begin{minipage}[m]{.28\linewidth}

$O_y=\frac{1}{2}(1,1,1)$ \nef

\end{minipage}&
\begin{minipage}[m]{.04\linewidth}
\begin{center}
$-$
\end{center}
\end{minipage}&
\begin{minipage}[m]{.11\linewidth}
\begin{center}
$11B+5E$
\end{center}
\end{minipage}&
\begin{minipage}[m]{.11\linewidth}
\begin{center}
$w+xy^5$
\end{center}
\end{minipage}
&
\begin{minipage}[m]{.11\linewidth}
\begin{center}
$xy^5$ or $w$
\end{center}
\end{minipage}&
\begin{minipage}[m]{.18\linewidth}
\begin{center}

\end{center}
\end{minipage}\\

\hline
\end{longtable}
\end{center}

\begin{Note}

\item 
The $1$-cycle  $\Gamma$ for the singular point $O_z$ is
irreducible due to the monomials $zw^2$ and $t^3$

\item For the singular point $O_y$, we consider the linear system
$|-11K_{X_{27}}|$.  Note that  every member of the linear system
passes through the point $O_y$ and the base locus of the linear
system contains no curves. Since the proper transform of a general
member in $|-11K_{X_{27}}|$ belongs to the linear system
$|11B+5E|$, the divisor $T$ is nef.
\end{Note}

%%%%%%%%%%%%%%%%%%%%%%%%%%%%%%%%%%%%%%%%%%%%%%%%%%%%%%%%%%%%%%%%%%
%%%%%%%%%%%%%%%%%%%%%%%%%%%%%%%%%%%%%%%%%%%%%%%%%%%%%%%%%%%%%%%%%%%%%%%%%%%%%%%%%%%%%%%%

%%%%%%%%%%%%%%%%%%%%%%%%%%%%%%%%%%%%%%%%%%%%%%%%%%%%%%%%%%%%%%

%%%%%%%%%%%%%%%%%%%%%%%%%%%%%%%%%%%%%%%%%%%%%%%%%%%%%%%%%%%%%%

\begin{center}
\begin{longtable}{|l|c|c|c|c|c|}
\hline
\multicolumn{6}{|l|}{\underline{\textbf{No. 66}}: $X_{27}\subset\mathbb{P}(1,5,6,7,9)$\hfill $A^3=1/70$}\\
\multicolumn{6}{|l|}{
\begin{minipage}[m]{.86\linewidth}
%\begin{center}
\vspace*{1.2mm}
$w^3+zt^3+z^3w+y^4t+ay^3z^2+w^2f_{9}(x,y,z,t)+wf_{18}(x,y,z,t)+f_{27}(x,y,z,t)$
\vspace*{1.2mm}

\end{minipage}
}\\
\hline \hline
\begin{minipage}[m]{.28\linewidth}
\begin{center}
Singularity
\end{center}
\end{minipage}&
\begin{minipage}[m]{.04\linewidth}
\begin{center}
$B^3$
\end{center}
\end{minipage}&
\begin{minipage}[m]{.11\linewidth}
\begin{center}
Linear

system
\end{center}
\end{minipage}&
\begin{minipage}[m]{.11\linewidth}
\begin{center}
Surface $T$
\end{center}
\end{minipage}&
\begin{minipage}[m]{.11\linewidth}
\begin{center}
\vspace*{1mm}
 \vorder
\vspace*{1mm}
\end{center}
\end{minipage}&
\begin{minipage}[m]{.18\linewidth}
\begin{center}
Condition
\end{center}
\end{minipage}\\
\hline
\begin{minipage}[m]{.28\linewidth}

$O_t=\frac{1}{7}(1_x,5_y,2_w)$ \boundary

\end{minipage}&
\begin{minipage}[m]{.04\linewidth}
\begin{center}
$0$
\end{center}
\end{minipage}&
\begin{minipage}[m]{.11\linewidth}
\begin{center}
$5B$
\end{center}
\end{minipage}&
\begin{minipage}[m]{.11\linewidth}
\begin{center}
$y$
\end{center}
\end{minipage}
&
\begin{minipage}[m]{.11\linewidth}
\begin{center}
$y$
\end{center}
\end{minipage}&
\begin{minipage}[m]{.18\linewidth}
\begin{center}

\end{center}
\end{minipage}\\
\hline
\begin{minipage}[m]{.28\linewidth}

$O_z=\frac{1}{6}(1_x,5_y,1_t)$ \boundary

\end{minipage}&
\begin{minipage}[m]{.04\linewidth}
\begin{center}
$-$
\end{center}
\end{minipage}&
\begin{minipage}[m]{.11\linewidth}
\begin{center}
$5B$
\end{center}
\end{minipage}&
\begin{minipage}[m]{.11\linewidth}
\begin{center}
$y$
\end{center}
\end{minipage}
&
\begin{minipage}[m]{.11\linewidth}
\begin{center}
$y$
\end{center}
\end{minipage}&
\begin{minipage}[m]{.18\linewidth}
\begin{center}

\end{center}
\end{minipage}\\

\hline
\begin{minipage}[m]{.28\linewidth}

$O_y=\frac{1}{5}(1_x,1_z,4_w)$ $\nef$

\end{minipage}&
\begin{minipage}[m]{.04\linewidth}
\begin{center}
$-$
\end{center}
\end{minipage}&
\begin{minipage}[m]{.11\linewidth}
\begin{center}
$7B+E$
\end{center}
\end{minipage}&
\begin{minipage}[m]{.11\linewidth}
\begin{center}
$t$
\end{center}
\end{minipage}
&
\begin{minipage}[m]{.11\linewidth}
\begin{center}
$y^3z^2$
\end{center}
\end{minipage}&
\begin{minipage}[m]{.18\linewidth}
\begin{center}
$a\ne0$
\end{center}
\end{minipage}\\
\hline
\begin{minipage}[m]{.28\linewidth}

$O_y=\frac{1}{5}(1_x,1_z,4_w)$ $\surface$

\end{minipage}&
\begin{minipage}[m]{.04\linewidth}
\begin{center}
$-$
\end{center}
\end{minipage}&
\begin{minipage}[m]{.11\linewidth}
\begin{center}
$7B$
\end{center}
\end{minipage}&
\begin{minipage}[m]{.11\linewidth}
\begin{center}
$x^7$, $t$
\end{center}
\end{minipage}
&
\begin{minipage}[m]{.11\linewidth}
\begin{center}
$x^7, wz^3$
\end{center}
\end{minipage}&
\begin{minipage}[m]{.18\linewidth}
\begin{center}
$a=0$
\end{center}
\end{minipage}\\
\hline
\begin{minipage}[m]{.28\linewidth}

$O_zO_w=1\times\frac{1}{3}(1_x,2_y,1_t)$ \boundary

\end{minipage}&
\begin{minipage}[m]{.04\linewidth}
\begin{center}
$-$
\end{center}
\end{minipage}&
\begin{minipage}[m]{.11\linewidth}
\begin{center}
$5B+E$
\end{center}
\end{minipage}&
\begin{minipage}[m]{.11\linewidth}
\begin{center}
$y$
\end{center}
\end{minipage}
&
\begin{minipage}[m]{.11\linewidth}
\begin{center}
$y$
\end{center}
\end{minipage}&
\begin{minipage}[m]{.18\linewidth}
\begin{center}

\end{center}
\end{minipage}\\
\hline

\end{longtable}
\end{center}

We may assume that the polynomial $f_{27}$ contains neither
$xy^4z$ nor $x^2y^5$ by changing the coordinate $t$ in an
appropriate  way.

\begin{Note}

\item 
 For the singular points except the point $O_y$, the
$1$-cycles $\Gamma$ are always irreducible because of the
monomials $w^3$, $zt^3$ and $z^3w$.

\item For the singular point $O_y$ with $a\ne 0$, we consider the linear
system generated by $x^2y$, $xz$ and $t$. Its base curve $C$ is
cut out by $x=t=0$. It is irreducible because of the monomials
$w^3$ and $y^3z^2$. Since we have $T\cdot\tilde{C}=(7B+E)^2\cdot
B=\frac{1}{2}$, the divisor $T$ is nef.

\item For the singular point $O_y$ with $a= 0$, we take a general member
$H$ in the linear system generated by $x^7$ and $t$. Then it is a
normal surface of degree $27$ in $\mathbb{P}(1,5,6,9)$. Let $T$
be the proper transform of the surface $H$. The intersection of
$T$ with the surface $S$ gives us a divisor consisting of two
irreducible curves
 $\tilde{L}_{yz}$ and $\tilde{C}$ on the normal surface $T$.
  The curve $\tilde{C}$ is the proper transform of the
curve $C$ defined by $$x=t=w^2+z^3=0.$$ From the intersection
numbers
\[(\tilde{L}_{yz}+\tilde{C})\cdot\tilde{L}_{yz}=-K_Y\cdot \tilde{L}_{yz}=-\frac{1}{6},
\ \ \ (\tilde{L}_{yz}+\tilde{C})^2=7B^3=-\frac{1}{4}\] on the
surface $T$, we obtain
\[\tilde{L}_{yz}^2=-\frac{1}{6}-\tilde{L}_{yz}\cdot\tilde{C}, \ \ \ \tilde{C}^2=-\frac{1}{12}-\tilde{L}_{yz}\cdot\tilde{C}.\]
With these intersection numbers we see that the matrix
\[\left(\begin{array}{cc}
       \tilde{L}_{yz}^2&\tilde{L}_{yz}\cdot\tilde{C}\\
       \tilde{L}_{yz}\cdot\tilde{C}& \tilde{C}^2\\
\end{array}\right)= \left(\begin{array}{cc}
-\frac{1}{6}-\tilde{L}_{yz}\cdot\tilde{C} & \tilde{L}_{yz}\cdot\tilde{C}\\
     \tilde{L}_{yz}\cdot\tilde{C} & -\frac{1}{12}-\tilde{L}_{yz}\cdot\tilde{C} \\
        \end{array}\right)
\]
is negative-definite since $\tilde{L}_{yz}\cdot\tilde{C}$ is
non-negative.

\end{Note}
%%%%%%%%%%%%%%%%%%%%%%%%%%%%%%%%%%%%%%%%%%%%%%%%%%%%%%%%%%%%%%

\begin{center}
\begin{longtable}{|l|c|c|c|c|c|}
\hline
\multicolumn{6}{|l|}{\underline{\textbf{No. 67}}: $X_{28}\subset\mathbb{P}(1,1,4,9,14)$\hfill $A^3=1/18$}\\
\multicolumn{6}{|l|}{
\begin{minipage}[m]{0.86\linewidth}
%\begin{center}
\vspace*{1.2mm} $w^2 +yt^3+z^7+ wf_{14}(x,y,z,t)+f_{28}(x,y,z,t)$
\vspace*{1.2mm}
%\end{center}
\end{minipage}
}\\
\hline \hline
\begin{minipage}[m]{.28\linewidth}
\begin{center}
Singularity
\end{center}
\end{minipage}&
\begin{minipage}[m]{.04\linewidth}
\begin{center}
$B^3$
\end{center}
\end{minipage}&
\begin{minipage}[m]{.11\linewidth}
\begin{center}
Linear

system
\end{center}
\end{minipage}&
\begin{minipage}[m]{.11\linewidth}
\begin{center}
Surface $T$
\end{center}
\end{minipage}&
\begin{minipage}[m]{.11\linewidth}
\begin{center}
\vspace*{1mm}
 \vorder
\vspace*{1mm}
\end{center}
\end{minipage}&
\begin{minipage}[m]{.18\linewidth}
\begin{center}
Condition
\end{center}
\end{minipage}\\
\hline
\begin{minipage}[m]{.28\linewidth}

$O_t=\frac{1}{9}(1_x,4_z,5_w)$ $\positive$

\end{minipage}&
\begin{minipage}[m]{.04\linewidth}
\begin{center}
$+$
\end{center}
\end{minipage}&
\begin{minipage}[m]{.11\linewidth}
\begin{center}
$B-E$
\end{center}
\end{minipage}&
\begin{minipage}[m]{.11\linewidth}
\begin{center}
$y$
\end{center}
\end{minipage}
&
\begin{minipage}[m]{.11\linewidth}
\begin{center}
$w^2$
\end{center}
\end{minipage}&
\begin{minipage}[m]{.18\linewidth}
\begin{center}

\end{center}
\end{minipage}\\
\hline
\begin{minipage}[m]{.28\linewidth}

$O_zO_w=1\times\frac{1}{2}(1_x,1_y,1_t)$ \boundary

\end{minipage}&
\begin{minipage}[m]{.04\linewidth}
\begin{center}
$-$
\end{center}
\end{minipage}&
\begin{minipage}[m]{.11\linewidth}
\begin{center}
$B$
\end{center}
\end{minipage}&
\begin{minipage}[m]{.11\linewidth}
\begin{center}
$y$
\end{center}
\end{minipage}
&
\begin{minipage}[m]{.11\linewidth}
\begin{center}
$y$
\end{center}
\end{minipage}&
\begin{minipage}[m]{.18\linewidth}
\begin{center}

\end{center}
\end{minipage}\\
\hline

\end{longtable}
\end{center}

\begin{Note}

\item 
The $1$-cycle  $\Gamma$ for the singular point of type
$\frac{1}{2}(1,1,1)$ is irreducible since we have the monomials
$w^2$ and $z^7$.
\end{Note}

%%%%%%%%%%%%%%%%%%%%%%%%%%%%%%%%%%%%%%%%%%%%%%%%%%%%%%%%%%%%%%%%%%
%%%%%%%%%%%%%%%%%%%%%%%%%%%%%%%%%%%%%%%%%%%%%%%%%%%%%%%%%%%%%%%%%%%%%%%%%%%%%%%%%%%%%%%%

\begin{center}
\begin{longtable}{|l|c|c|c|c|c|}
\hline
\multicolumn{6}{|l|}{\textbf{No. 68}: $X_{28}\subset\mathbb{P}(1,3,4,7,14)$\hfill $A^3=1/42$}\\
\multicolumn{6}{|l|}{
\begin{minipage}[m]{.86\linewidth}
%\begin{center}
\vspace*{1.2mm}

$(w-\alpha_1 t^2)(w-\alpha_2
t^2)+z^7+y^7(a_1t+a_2xy^2)+wf_{14}(x,y,z,t)+f_{28}(x,y,z,t)$
\vspace*{1.2mm}
%\end{center}
\end{minipage}
}\\
\hline \hline
\begin{minipage}[m]{.28\linewidth}
\begin{center}
Singularity
\end{center}
\end{minipage}&
\begin{minipage}[m]{.04\linewidth}
\begin{center}
$B^3$
\end{center}
\end{minipage}&
\begin{minipage}[m]{.11\linewidth}
\begin{center}
Linear

system
\end{center}
\end{minipage}&
\begin{minipage}[m]{.11\linewidth}
\begin{center}
Surface $T$
\end{center}
\end{minipage}&
\begin{minipage}[m]{.11\linewidth}
\begin{center}
\vspace*{1mm}
 \vorder
\vspace*{1mm}
\end{center}
\end{minipage}&
\begin{minipage}[m]{.18\linewidth}
\begin{center}
Condition
\end{center}
\end{minipage}\\
\hline
\begin{minipage}[m]{.28\linewidth}

$O_y=\frac{1}{3}(1_x,1_z,2_w)$ \boundary

\end{minipage}&
\begin{minipage}[m]{.04\linewidth}
\begin{center}
$-$
\end{center}
\end{minipage}&
\begin{minipage}[m]{.11\linewidth}
\begin{center}
$7B+E$
\end{center}
\end{minipage}&
\begin{minipage}[m]{.11\linewidth}
\begin{center}
$t$
\end{center}
\end{minipage}
&
\begin{minipage}[m]{.11\linewidth}
\begin{center}
$w^2$
\end{center}
\end{minipage}&
\begin{minipage}[m]{.18\linewidth}
\begin{center}
$a_1\ne 0$
\end{center}
\end{minipage}\\
\hline
\begin{minipage}[m]{.28\linewidth}

$O_y=\frac{1}{3}(1_z,1_t,2_w)$ \boundary

\end{minipage}&
\begin{minipage}[m]{.04\linewidth}
\begin{center}
$-$
\end{center}
\end{minipage}&
\begin{minipage}[m]{.11\linewidth}
\begin{center}
$4B+E$
\end{center}
\end{minipage}&
\begin{minipage}[m]{.11\linewidth}
\begin{center}
$z$
\end{center}
\end{minipage}
&
\begin{minipage}[m]{.11\linewidth}
\begin{center}
$z$
\end{center}
\end{minipage}&
\begin{minipage}[m]{.18\linewidth}
\begin{center}
$a_1=0$
\end{center}
\end{minipage}\\
\hline
\begin{minipage}[m]{.28\linewidth}

$O_tO_w=2\times\frac{1}{7}(1,3,4)$ \quadratic

\end{minipage}&
\multicolumn{4}{|l|}{\begin{minipage}[m]{.37\linewidth}
\begin{center}
$wt^2$
\end{center}
\end{minipage}}&
\begin{minipage}[m]{.18\linewidth}
\begin{center}

\end{center}
\end{minipage}\\
\hline
\begin{minipage}[m]{.28\linewidth}
$O_zO_w=1\times\frac{1}{2}(1_x,1_y,1_t)$ \boundary

\end{minipage}&
\begin{minipage}[m]{.04\linewidth}
\begin{center}
$-$
\end{center}
\end{minipage}&
\begin{minipage}[m]{.11\linewidth}
\begin{center}
$3B+E$
\end{center}
\end{minipage}&
\begin{minipage}[m]{.11\linewidth}
\begin{center}
$y$
\end{center}
\end{minipage}
&
\begin{minipage}[m]{.11\linewidth}
\begin{center}
$y$
\end{center}
\end{minipage}&
\begin{minipage}[m]{.18\linewidth}
\begin{center}

\end{center}
\end{minipage}\\

\hline
\end{longtable}
\end{center}

\begin{Note}

\item 
For the singular point $O_y$ with $a_1\ne 0$ the $1$-cycle
$\Gamma$ is irreducible  because of the monomials $w^2$ and $z^7$.

\item The $1$-cycle $\Gamma$ for the singular point $O_y$ with $a_1=0$
consists of two irreducible curves. These are the proper
transforms of the curves  defined by $x=z=w-\alpha_i t^2=0$. Since
these two curves on $X_{28}$ are interchanged by the automorphism
defined by
$$[x,y,z,t,w]\mapsto[x,y,z,t,(\alpha_1+\alpha_2)t^2-f_{14}-w],$$
their proper transforms are numerically equivalent on $Y$.

\item To see how to deal with the singular points of type
$\frac{1}{7}(1,3,4)$ we may assume that $\alpha_1=0$ and we have
only to consider the singular point $O_t$. The other point can be
treated in the same way.

\item The $1$-cycle  $\Gamma$ for the singular point of type
$\frac{1}{2}(1,1,1)$ is irreducible due to the monomials $w^2$ and
$z^7$.
\end{Note}
%%%%%%%%%%%%%%%%%%%%%%%%%%%%%%%%%%%%%%%%%%%%%%%%%%%%%%%%%%%%%%%%%%
%%%%%%%%%%%%%%%%%%%%%%%%%%%%%%%%%%%%%%%%%%%%%%%%%%%%%%%%%%%%%%%%%%%%%%%%%%%%%%%%%%%%%%%%

\begin{center}
\begin{longtable}{|l|c|c|c|c|c|}
\hline
\multicolumn{6}{|l|}{\textbf{No. 69}: $X_{28}\subset\mathbb{P}(1,4,6,7,11)$\hfill $A^3=1/66$}\\
\multicolumn{6}{|l|}{
\begin{minipage}[m]{.86\linewidth}
%\begin{center}
\vspace*{1.2mm} $zw^2+t^4+y(z^2-\alpha_1 y^3)(z^2-\alpha_2
y^3)+wf_{17}(x,y,z,t)+f_{28}(x,y,z,t)$

\vspace*{1.2mm}
%\end{center}
\end{minipage}
}\\
\hline \hline
\begin{minipage}[m]{.28\linewidth}
\begin{center}
Singularity
\end{center}
\end{minipage}&
\begin{minipage}[m]{.04\linewidth}
\begin{center}
$B^3$
\end{center}
\end{minipage}&
\begin{minipage}[m]{.11\linewidth}
\begin{center}
Linear

system
\end{center}
\end{minipage}&
\begin{minipage}[m]{.11\linewidth}
\begin{center}
Surface $T$
\end{center}
\end{minipage}&
\begin{minipage}[m]{.11\linewidth}
\begin{center}
\vspace*{1mm}
 \vorder
\vspace*{1mm}
\end{center}
\end{minipage}&
\begin{minipage}[m]{.18\linewidth}
\begin{center}
Condition
\end{center}
\end{minipage}\\
\hline
\begin{minipage}[m]{.28\linewidth}

$O_w=\frac{1}{11}(1,4,7)$ \quadratic

\end{minipage}&
\multicolumn{4}{|l|}{\begin{minipage}[m]{.37\linewidth}
\begin{center}
$zw^2$
\end{center}
\end{minipage}}&
\begin{minipage}[m]{.18\linewidth}
\begin{center}

\end{center}
\end{minipage}\\

\hline
\begin{minipage}[m]{.28\linewidth}

$O_z=\frac{1}{6}(1_x,1_t,5_w)$ \boundary

\end{minipage}&
\begin{minipage}[m]{.04\linewidth}
\begin{center}
$-$
\end{center}
\end{minipage}&
\begin{minipage}[m]{.11\linewidth}
\begin{center}
$4B$
\end{center}
\end{minipage}&
\begin{minipage}[m]{.11\linewidth}
\begin{center}
$y$
\end{center}
\end{minipage}
&
\begin{minipage}[m]{.11\linewidth}
\begin{center}
$t^4$
\end{center}
\end{minipage}&
\begin{minipage}[m]{.18\linewidth}
\begin{center}

\end{center}
\end{minipage}\\
\hline
\begin{minipage}[m]{.28\linewidth}
$O_yO_z=2\times\frac{1}{2}(1_x,1_t,1_w)$ $\nef$

\end{minipage}&
\begin{minipage}[m]{.04\linewidth}
\begin{center}
$-$
\end{center}
\end{minipage}&
\begin{minipage}[m]{.11\linewidth}
\begin{center}
$11B+5E$
\end{center}
\end{minipage}&
\begin{minipage}[m]{.11\linewidth}
\begin{center}
$xyz$, $yt$, $w$
\end{center}
\end{minipage}
&
\begin{minipage}[m]{.11\linewidth}
\begin{center}
$xyz$, $yt$, $w$
\end{center}
\end{minipage}&
\begin{minipage}[m]{.18\linewidth}
\begin{center}

\end{center}
\end{minipage}\\

\hline
\end{longtable}
\end{center}

\begin{Note}

\item 
The $1$-cycle  $\Gamma$ for the singular point $O_z$  is
irreducible due to the monomials  $zw^2$ and $t^4$.

\item For the singular points of type $\frac{1}{2}(1,1,1)$ consider the
linear system generated by $xyz$, $yt$ and $w$. Since the base
curves of the linear system  pass through no singular points of
type $\frac{1}{2}(1,1,1)$ the divisor $T$ is nef.
\end{Note}
%%%%%%%%%%%%%%%%%%%%%%%%%%%%%%%%%%%%%%%%%%%%%%%%%%%%%%%%%%%%%%%%%%

%%%%%%%%%%%%%%%%%%%%%%%%%%%%%%%%%%%%%%%%%%%%%%%%%%%%%%%%%%%%%%%%%%%%%%%%%%%%%%%%%%%%%%%%

%%%%%%%%%%%%%%%%%%%%%%%%%%%%%%%%%%%%%%%%%%%%%%%%%%%%%%%%%%%%%%
%%%%%%%%%%%%%%%%%%%%%%%%%%%%%%%%%%%%%%%%%%%%%%%%%%%%%%%%%%%%%%

\begin{center}
\begin{longtable}{|l|c|c|c|c|c|}
\hline
\multicolumn{6}{|l|}{\underline{\textbf{No. 70}}: $X_{30}\subset\mathbb{P}(1,1,4,10,15)$\hfill $A^3=1/20$}\\
\multicolumn{6}{|l|}{
\begin{minipage}[m]{.86\linewidth}
%\begin{center}
\vspace*{1.2mm} $w^2+t^3+z^5t+wf_{15}(x,y,z,t)+f_{30}(x,y,z,t)$
\vspace*{1.2mm}
%\end{center}
\end{minipage}
}\\
\hline \hline
\begin{minipage}[m]{.28\linewidth}
\begin{center}
Singularity
\end{center}
\end{minipage}&
\begin{minipage}[m]{.04\linewidth}
\begin{center}
$B^3$
\end{center}
\end{minipage}&
\begin{minipage}[m]{.11\linewidth}
\begin{center}
Linear

system
\end{center}
\end{minipage}&
\begin{minipage}[m]{.11\linewidth}
\begin{center}
Surface $T$
\end{center}
\end{minipage}&
\begin{minipage}[m]{.11\linewidth}
\begin{center}
\vspace*{1mm}
 \vorder
\vspace*{1mm}
\end{center}
\end{minipage}&
\begin{minipage}[m]{.18\linewidth}
\begin{center}
Condition
\end{center}
\end{minipage}\\
\hline
\begin{minipage}[m]{.28\linewidth}

$O_z=\frac{1}{4}(1_x,1_y,3_w)$ \boundary

\end{minipage}&
\begin{minipage}[m]{.04\linewidth}
\begin{center}
$-$
\end{center}
\end{minipage}&
\begin{minipage}[m]{.11\linewidth}
\begin{center}
$B$
\end{center}
\end{minipage}&
\begin{minipage}[m]{.11\linewidth}
\begin{center}
$y$
\end{center}
\end{minipage}
&
\begin{minipage}[m]{.11\linewidth}
\begin{center}
$y$
\end{center}
\end{minipage}&
\begin{minipage}[m]{.18\linewidth}
\begin{center}

\end{center}
\end{minipage}\\
\hline
\begin{minipage}[m]{.28\linewidth}

$O_tO_w=1\times\frac{1}{5}(1_x,1_y,4_z)$ \boundary

\end{minipage}&
\begin{minipage}[m]{.04\linewidth}
\begin{center}
$0$
\end{center}
\end{minipage}&
\begin{minipage}[m]{.11\linewidth}
\begin{center}
$B$
\end{center}
\end{minipage}&
\begin{minipage}[m]{.11\linewidth}
\begin{center}
$y$
\end{center}
\end{minipage}
&
\begin{minipage}[m]{.11\linewidth}
\begin{center}
$y$
\end{center}
\end{minipage}&
\begin{minipage}[m]{.18\linewidth}
\begin{center}

\end{center}
\end{minipage}\\
\hline
\begin{minipage}[m]{.28\linewidth}

$O_zO_t=1\times\frac{1}{2}(1_x,1_y,1_w)$ \boundary

\end{minipage}&
\begin{minipage}[m]{.04\linewidth}
\begin{center}
$-$
\end{center}
\end{minipage}&
\begin{minipage}[m]{.11\linewidth}
\begin{center}
$B$
\end{center}
\end{minipage}&
\begin{minipage}[m]{.11\linewidth}
\begin{center}
$y$
\end{center}
\end{minipage}
&
\begin{minipage}[m]{.11\linewidth}
\begin{center}
$y$
\end{center}
\end{minipage}&
\begin{minipage}[m]{.18\linewidth}
\begin{center}

\end{center}
\end{minipage}\\

\hline

\end{longtable}
\end{center}

\begin{Note}

\item 
For each singular point the $1$-cycle $\Gamma$ is irreducible due
to the monomials $w^2$ and $t^3$.
\end{Note}

%%%%%%%%%%%%%%%%%%%%%%%%%%%%%%%%%%%%%%%%%%%%%%%%%%%%%%%%%%%%%%

\begin{center}
\begin{longtable}{|l|c|c|c|c|c|}
\hline
\multicolumn{6}{|l|}{\underline{\textbf{No. 71}}: $X_{30}\subset\mathbb{P}(1,1,6,8,15)$\hfill $A^3=1/24$}\\
\multicolumn{6}{|l|}{
\begin{minipage}[m]{.86\linewidth}
%\begin{center}
\vspace*{1.2mm} $w^2+zt^3+wf_{15}(x,y,z,t)+ f_{30}(x,y,z,t)$
\vspace*{1.2mm}
%\end{center}
\end{minipage}
}\\
\hline \hline
\begin{minipage}[m]{.28\linewidth}
\begin{center}
Singularity
\end{center}
\end{minipage}&
\begin{minipage}[m]{.04\linewidth}
\begin{center}
$B^3$
\end{center}
\end{minipage}&
\begin{minipage}[m]{.11\linewidth}
\begin{center}
Linear

system
\end{center}
\end{minipage}&
\begin{minipage}[m]{.11\linewidth}
\begin{center}
Surface $T$
\end{center}
\end{minipage}&
\begin{minipage}[m]{.11\linewidth}
\begin{center}
\vspace*{1mm}
 \vorder
\vspace*{1mm}
\end{center}
\end{minipage}&
\begin{minipage}[m]{.18\linewidth}
\begin{center}
Condition
\end{center}
\end{minipage}\\
\hline
\begin{minipage}[m]{.28\linewidth}

$O_t=\frac{1}{8}(1_x,1_y,7_w)$ $\positive$

\end{minipage}&
\begin{minipage}[m]{.04\linewidth}
\begin{center}
$+$
\end{center}
\end{minipage}&
\begin{minipage}[m]{.11\linewidth}
\begin{center}
$6B-E$
\end{center}
\end{minipage}&
\begin{minipage}[m]{.11\linewidth}
\begin{center}
$z$
\end{center}
\end{minipage}
&
\begin{minipage}[m]{.11\linewidth}
\begin{center}
$w^2$
\end{center}
\end{minipage}&
\begin{minipage}[m]{.18\linewidth}
\begin{center}

\end{center}
\end{minipage}\\
\hline
\begin{minipage}[m]{.28\linewidth}

$O_zO_w=1\times\frac{1}{3}(1_x,1_y,2_t)$ \boundary

\end{minipage}&
\begin{minipage}[m]{.04\linewidth}
\begin{center}
$-$
\end{center}
\end{minipage}&
\begin{minipage}[m]{.11\linewidth}
\begin{center}
$B$
\end{center}
\end{minipage}&
\begin{minipage}[m]{.11\linewidth}
\begin{center}
$y$
\end{center}
\end{minipage}
&
\begin{minipage}[m]{.11\linewidth}
\begin{center}
$y$
\end{center}
\end{minipage}&
\begin{minipage}[m]{.18\linewidth}
\begin{center}

\end{center}
\end{minipage}\\
\hline
\begin{minipage}[m]{.28\linewidth}

$O_zO_t=1\times\frac{1}{2}(1_x,1_y,1_w)$ \boundary

\end{minipage}&
\begin{minipage}[m]{.04\linewidth}
\begin{center}
$-$
\end{center}
\end{minipage}&
\begin{minipage}[m]{.11\linewidth}
\begin{center}
$B$
\end{center}
\end{minipage}&
\begin{minipage}[m]{.11\linewidth}
\begin{center}
$y$
\end{center}
\end{minipage}
&
\begin{minipage}[m]{.11\linewidth}
\begin{center}
$y$
\end{center}
\end{minipage}&
\begin{minipage}[m]{.18\linewidth}
\begin{center}

\end{center}
\end{minipage}\\
\hline

\end{longtable}
\end{center}

\begin{Note}

\item 
For the singular points of types $\frac{1}{2}(1,1,1)$ and
$\frac{1}{3}(1,1,2)$, the $1$-cycles  $\Gamma$ are irreducible
because of  $w^2$ and $t^3z$.
\end{Note}

%%%%%%%%%%%%%%%%%%%%%%%%%%%%%%%%%%%%%%%%%%%%%%%%%%%%%%%%%%%%%%

%%%%%%%%%%%%%%%%%%%%%%%%%%%%%%%%%%%%%%%%%%%%%%%%%%%%%%%%%%%%%%%%%%%%%%%%%%%%%%%%%%%%%%%%

\begin{center}
\begin{longtable}{|l|c|c|c|c|c|}
\hline
\multicolumn{6}{|l|}{\underline{\textbf{No. 72}}: $X_{30}\subset\mathbb{P}(1,2,3,10,15)$\hfill $A^3=1/30$}\\
\multicolumn{6}{|l|}{
\begin{minipage}[m]{.86\linewidth}
%\begin{center}
\vspace*{1.2mm}
$w^2+t^3+z^{10}+y^{15}+wf_{15}(x,y,z,t)+f_{30}(x,y,z,t)$
\vspace*{1.2mm}
%\end{center}
\end{minipage}
}\\
\hline \hline
\begin{minipage}[m]{.28\linewidth}
\begin{center}
Singularity
\end{center}
\end{minipage}&
\begin{minipage}[m]{.04\linewidth}
\begin{center}
$B^3$
\end{center}
\end{minipage}&
\begin{minipage}[m]{.11\linewidth}
\begin{center}
Linear

system
\end{center}
\end{minipage}&
\begin{minipage}[m]{.11\linewidth}
\begin{center}
Surface $T$
\end{center}
\end{minipage}&
\begin{minipage}[m]{.11\linewidth}
\begin{center}
\vspace*{1mm}
 \vorder
\vspace*{1mm}
\end{center}
\end{minipage}&
\begin{minipage}[m]{.18\linewidth}
\begin{center}
Condition
\end{center}
\end{minipage}\\
\hline
\begin{minipage}[m]{.28\linewidth}

$O_tO_w=1\times\frac{1}{5}(1_x,2_y,3_z)$ \boundary

\end{minipage}&
\begin{minipage}[m]{.04\linewidth}
\begin{center}
$0$
\end{center}
\end{minipage}&
\begin{minipage}[m]{.11\linewidth}
\begin{center}
$2B$
\end{center}
\end{minipage}&
\begin{minipage}[m]{.11\linewidth}
\begin{center}
$y$
\end{center}
\end{minipage}
&
\begin{minipage}[m]{.11\linewidth}
\begin{center}
$y$
\end{center}
\end{minipage}&
\begin{minipage}[m]{.18\linewidth}
\begin{center}

\end{center}
\end{minipage}\\
\hline
\begin{minipage}[m]{.28\linewidth}

$O_zO_w=2\times\frac{1}{3}(1_x,2_y,1_t)$ \boundary

\end{minipage}&
\begin{minipage}[m]{.04\linewidth}
\begin{center}
$-$
\end{center}
\end{minipage}&
\begin{minipage}[m]{.11\linewidth}
\begin{center}
$2B$
\end{center}
\end{minipage}&
\begin{minipage}[m]{.11\linewidth}
\begin{center}
$y$
\end{center}
\end{minipage}
&
\begin{minipage}[m]{.11\linewidth}
\begin{center}
$y$
\end{center}
\end{minipage}&
\begin{minipage}[m]{.18\linewidth}
\begin{center}

\end{center}
\end{minipage}\\
\hline
\begin{minipage}[m]{.28\linewidth}

$O_yO_t=3\times\frac{1}{2}(1_x,1_z,1_w)$ \boundary

\end{minipage}&
\begin{minipage}[m]{.04\linewidth}
\begin{center}
$-$
\end{center}
\end{minipage}&
\begin{minipage}[m]{.11\linewidth}
\begin{center}
$3B+E$
\end{center}
\end{minipage}&
\begin{minipage}[m]{.11\linewidth}
\begin{center}
$z$
\end{center}
\end{minipage}
&
\begin{minipage}[m]{.11\linewidth}
\begin{center}
$z$
\end{center}
\end{minipage}&
\begin{minipage}[m]{.18\linewidth}
\begin{center}

\end{center}
\end{minipage}\\

\hline

\end{longtable}
\end{center}

\begin{Note}

\item 
For each singular point the $1$-cycle $\Gamma$ is irreducible due
to the monomials $w^2$ and $t^3$.
\end{Note}

%%%%%%%%%%%%%%%%%%%%%%%%%%%%%%%%%%%%%%%%%%%%%%%%%%%%%%%%%%%%%%%%%%
%%%%%%%%%%%%%%%%%%%%%%%%%%%%%%%%%%%%%%%%%%%%%%%%%%%%%%%%%%%%%%
\begin{center}
\begin{longtable}{|l|c|c|c|c|c|}
\hline
\multicolumn{6}{|l|}{\underline{\textbf{No. 73}}: $X_{30}\subset\mathbb{P}(1,2,6,7,15)$\hfill $A^3=1/42$}\\
\multicolumn{6}{|l|}{
\begin{minipage}[m]{.86\linewidth}
%\begin{center}
\vspace*{1.2mm} $w^2+yt^4+\prod_{i=1}^{5}(z-\alpha_iy^3)
+wf_{15}(x,y,z,t)+f_{30}(x,y,z,t)$ \vspace*{1.2mm}
%\end{center}
\end{minipage}
}\\
\hline \hline
\begin{minipage}[m]{.28\linewidth}
\begin{center}
Singularity
\end{center}
\end{minipage}&
\begin{minipage}[m]{.04\linewidth}
\begin{center}
$B^3$
\end{center}
\end{minipage}&
\begin{minipage}[m]{.11\linewidth}
\begin{center}
Linear

system
\end{center}
\end{minipage}&
\begin{minipage}[m]{.11\linewidth}
\begin{center}
Surface $T$
\end{center}
\end{minipage}&
\begin{minipage}[m]{.11\linewidth}
\begin{center}
\vspace*{1mm}
 \vorder
\vspace*{1mm}
\end{center}
\end{minipage}&
\begin{minipage}[m]{.18\linewidth}
\begin{center}
Condition
\end{center}
\end{minipage}\\
\hline
\begin{minipage}[m]{.28\linewidth}

$O_t=\frac{1}{7}(1_x,6_z,1_w)$ \boundary

\end{minipage}&
\begin{minipage}[m]{.04\linewidth}
\begin{center}
$0$
\end{center}
\end{minipage}&
\begin{minipage}[m]{.11\linewidth}
\begin{center}
$2B$
\end{center}
\end{minipage}&
\begin{minipage}[m]{.11\linewidth}
\begin{center}
$y$
\end{center}
\end{minipage}
&
\begin{minipage}[m]{.11\linewidth}
\begin{center}
$w^2$
\end{center}
\end{minipage}&
\begin{minipage}[m]{.18\linewidth}
\begin{center}

\end{center}
\end{minipage}\\
\hline
\begin{minipage}[m]{.28\linewidth}

$O_zO_w=1\times\frac{1}{3}(1_x,2_y,1_t)$ \boundary

\end{minipage}&
\begin{minipage}[m]{.04\linewidth}
\begin{center}
$-$
\end{center}
\end{minipage}&
\begin{minipage}[m]{.11\linewidth}
\begin{center}
$2B$
\end{center}
\end{minipage}&
\begin{minipage}[m]{.11\linewidth}
\begin{center}
$y$
\end{center}
\end{minipage}
&
\begin{minipage}[m]{.11\linewidth}
\begin{center}
$y$
\end{center}
\end{minipage}&
\begin{minipage}[m]{.18\linewidth}
\begin{center}

\end{center}
\end{minipage}\\
\hline
\begin{minipage}[m]{.28\linewidth}

$O_yO_z=5\times\frac{1}{2}(1_x,1_t,1_w)$ \boundary

\end{minipage}&
\begin{minipage}[m]{.04\linewidth}
\begin{center}
$-$
\end{center}
\end{minipage}&
\begin{minipage}[m]{.11\linewidth}
\begin{center}
$6B+2E$
\end{center}
\end{minipage}&
\begin{minipage}[m]{.11\linewidth}
\begin{center}
$z-\alpha_i y^3$
\end{center}
\end{minipage}
&
\begin{minipage}[m]{.11\linewidth}
\begin{center}
$w^2$
\end{center}
\end{minipage}&
\begin{minipage}[m]{.18\linewidth}
\begin{center}

\end{center}
\end{minipage}\\
\hline

\end{longtable}
\end{center}

\begin{Note}

\item 
For each singular point the $1$-cycle $\Gamma$ is irreducible due
to the monomials  $w^2$, $z^5$ and $yt^4$.
\end{Note}

%%%%%%%%%%%%%%%%%%%%%%%%%%%%%%%%%%%%%%%%%%%%%%%%%%%%%%%%%%%%%%

%%%%%%%%%%%%%%%%%%%%%%%%%%%%%%%%%%%%%%%%%%%%%%%%%%%%%%%%%%%%%%

\begin{center}
\begin{longtable}{|l|c|c|c|c|c|}
\hline
\multicolumn{6}{|l|}{\textbf{No. 74}: $X_{30}\subset\mathbb{P}(1,3,4,10,13)$\hfill $A^3=1/52$}\\
\multicolumn{6}{|l|}{
\begin{minipage}[m]{.86\linewidth}
%\begin{center}
\vspace*{1.2mm}
$zw^2+t^3+z^5t+y^{10}+wf_{17}(x,y,z,t)+f_{30}(x,y,z,t)$

\vspace*{1.2mm}
%\end{center}
\end{minipage}
}\\
\hline \hline
\begin{minipage}[m]{.28\linewidth}
\begin{center}
Singularity
\end{center}
\end{minipage}&
\begin{minipage}[m]{.04\linewidth}
\begin{center}
$B^3$
\end{center}
\end{minipage}&
\begin{minipage}[m]{.11\linewidth}
\begin{center}
Linear

system
\end{center}
\end{minipage}&
\begin{minipage}[m]{.11\linewidth}
\begin{center}
Surface $T$
\end{center}
\end{minipage}&
\begin{minipage}[m]{.11\linewidth}
\begin{center}
\vspace*{1mm}
 \vorder
\vspace*{1mm}
\end{center}
\end{minipage}&
\begin{minipage}[m]{.18\linewidth}
\begin{center}
Condition
\end{center}
\end{minipage}\\
\hline
\begin{minipage}[m]{.28\linewidth}

$O_w=\frac{1}{13}(1,3,7)$ \quadratic

\end{minipage}&
\multicolumn{4}{|l|}{\begin{minipage}[m]{.37\linewidth}
\begin{center}
$zw^2$
\end{center}
\end{minipage}}&
\begin{minipage}[m]{.18\linewidth}
\begin{center}

\end{center}
\end{minipage}\\
\hline
\begin{minipage}[m]{.28\linewidth}

$O_z=\frac{1}{4}(1_x,3_y,1_w)$ \boundary

\end{minipage}&
\begin{minipage}[m]{.04\linewidth}
\begin{center}
$-$
\end{center}
\end{minipage}&
\begin{minipage}[m]{.11\linewidth}
\begin{center}
$3B$
\end{center}
\end{minipage}&
\begin{minipage}[m]{.11\linewidth}
\begin{center}
$y$
\end{center}
\end{minipage}
&
\begin{minipage}[m]{.11\linewidth}
\begin{center}
$y$
\end{center}
\end{minipage}&
\begin{minipage}[m]{.18\linewidth}
\begin{center}

\end{center}
\end{minipage}\\
\hline
\begin{minipage}[m]{.28\linewidth}
$O_zO_t=1\times\frac{1}{2}(1_x,1_y,1_w)$ \boundary

\end{minipage}&
\begin{minipage}[m]{.04\linewidth}
\begin{center}
$-$
\end{center}
\end{minipage}&
\begin{minipage}[m]{.11\linewidth}
\begin{center}
$3B+E$
\end{center}
\end{minipage}&
\begin{minipage}[m]{.11\linewidth}
\begin{center}
$y$
\end{center}
\end{minipage}
&
\begin{minipage}[m]{.11\linewidth}
\begin{center}
$y$
\end{center}
\end{minipage}&
\begin{minipage}[m]{.18\linewidth}
\begin{center}

\end{center}
\end{minipage}\\

\hline
\end{longtable}
\end{center}

\begin{Note}

\item 
The $1$-cycles $\Gamma$ for the singular points of types
$\frac{1}{2}(1,1,1)$ and $\frac{1}{4}(1,3,1)$ are irreducible
because of the monomials $zw^2$, $t^3$ and $z^5t$.
\end{Note}
%%%%%%%%%%%%%%%%%%%%%%%%%%%%%%%%%%%%%%%%%%%%%%%%%%%%%%%%%%%%%%%%%%
%%%%%%%%%%%%%%%%%%%%%%%%%%%%%%%%%%%%%%%%%%%%%%%%%%%%%%%%%%%%%%%%%%%%%%%%%%%%%%%%%%%%%%%%

%%%%%%%%%%%%%%%%%%%%%%%%%%%%%%%%%%%%%%%%%%%%%%%%%%%%%%%%%%%%%%

%%%%%%%%%%%%%%%%%%%%%%%%%%%%%%%%%%%%%%%%%%%%%%%%%%%%%%%%%%%%%%

\begin{center}
\begin{longtable}{|l|c|c|c|c|c|}
\hline
\multicolumn{6}{|l|}{\underline{\textbf{No. 75}}: $X_{30}\subset\mathbb{P}(1,4,5,6,15)$\hfill $A^3=1/60$}\\
\multicolumn{6}{|l|}{
\begin{minipage}[m]{.86\linewidth}
%\begin{center}
\vspace*{1.2mm}
$w^2+t^5+z^6+y^6t+wf_{15}(x,y,z,t)+f_{30}(x,y,z,t)$
\vspace*{1.2mm}
%\end{center}
\end{minipage}
}\\
\hline \hline
\begin{minipage}[m]{.28\linewidth}
\begin{center}
Singularity
\end{center}
\end{minipage}&
\begin{minipage}[m]{.04\linewidth}
\begin{center}
$B^3$
\end{center}
\end{minipage}&
\begin{minipage}[m]{.11\linewidth}
\begin{center}
Linear

system
\end{center}
\end{minipage}&
\begin{minipage}[m]{.11\linewidth}
\begin{center}
Surface $T$
\end{center}
\end{minipage}&
\begin{minipage}[m]{.11\linewidth}
\begin{center}
\vspace*{1mm}
 \vorder
\vspace*{1mm}
\end{center}
\end{minipage}&
\begin{minipage}[m]{.18\linewidth}
\begin{center}
Condition
\end{center}
\end{minipage}\\
\hline
\begin{minipage}[m]{.28\linewidth}

$O_y=\frac{1}{4}(1_x,1_z,3_w)$ $\nef$

\end{minipage}&
\begin{minipage}[m]{.04\linewidth}
\begin{center}
$-$
\end{center}
\end{minipage}&
\begin{minipage}[m]{.11\linewidth}
\begin{center}
$5B+E$
\end{center}
\end{minipage}&
\begin{minipage}[m]{.11\linewidth}
\begin{center}
$xy$, $z$
\end{center}
\end{minipage}
&
\begin{minipage}[m]{.11\linewidth}
\begin{center}
$xy$, $z$
\end{center}
\end{minipage}&
\begin{minipage}[m]{.18\linewidth}
\begin{center}

\end{center}
\end{minipage}\\
 \hline
\begin{minipage}[m]{.28\linewidth}

$O_tO_w=1\times\frac{1}{3}(1_x,1_y,2_z)$ \boundary

\end{minipage}&
\begin{minipage}[m]{.04\linewidth}
\begin{center}
$-$
\end{center}
\end{minipage}&
\begin{minipage}[m]{.11\linewidth}
\begin{center}
$5B+E$
\end{center}
\end{minipage}&
\begin{minipage}[m]{.11\linewidth}
\begin{center}
$z$
\end{center}
\end{minipage}
&
\begin{minipage}[m]{.11\linewidth}
\begin{center}
$z$
\end{center}
\end{minipage}&
\begin{minipage}[m]{.18\linewidth}
\begin{center}

\end{center}
\end{minipage}\\
\hline
\begin{minipage}[m]{.28\linewidth}

$O_zO_w=2\times\frac{1}{5}(1_x,4_y,1_t)$ \boundary

\end{minipage}&
\begin{minipage}[m]{.04\linewidth}
\begin{center}
$-$
\end{center}
\end{minipage}&
\begin{minipage}[m]{.11\linewidth}
\begin{center}
$4B$
\end{center}
\end{minipage}&
\begin{minipage}[m]{.11\linewidth}
\begin{center}
$y$
\end{center}
\end{minipage}
&
\begin{minipage}[m]{.11\linewidth}
\begin{center}
$y$
\end{center}
\end{minipage}&
\begin{minipage}[m]{.18\linewidth}
\begin{center}

\end{center}
\end{minipage}\\
\hline
\begin{minipage}[m]{.28\linewidth}

$O_yO_t=2\times\frac{1}{2}(1_x,1_z,1_w)$ \boundary

\end{minipage}&
\begin{minipage}[m]{.04\linewidth}
\begin{center}
$-$
\end{center}
\end{minipage}&
\begin{minipage}[m]{.11\linewidth}
\begin{center}
$5B+2E$
\end{center}
\end{minipage}&
\begin{minipage}[m]{.11\linewidth}
\begin{center}
$z$
\end{center}
\end{minipage}
&
\begin{minipage}[m]{.11\linewidth}
\begin{center}
$z$
\end{center}
\end{minipage}&
\begin{minipage}[m]{.18\linewidth}
\begin{center}

\end{center}
\end{minipage}\\
\hline

\end{longtable}
\end{center}

\begin{Note}

\item 
For the singular point $O_y$,  consider the linear system
generated by $xy$ and $z$. Its base curves are defined by $x=z=0$
and $y=z=0$.  The curve defined by $y=z=0$ does not pass through
the point $O_y$.  The curve  defined by $x=z=0$  is irreducible.
Moreover,  its proper transform is the $1$-cycle defined by
$(5B+E)\cdot B$. Consequently, the divisor $T$ is nef since
$(5B+E)^2\cdot B>0$.

\item For the other singular points we immediately see that the
$1$-cycles $\Gamma$ are  irreducible due to the monomials $w^2$
and $t^5$.

\end{Note}

%%%%%%%%%%%%%%%%%%%%%%%%%%%%%%%%%%%%%%%%%%%%%%%%%%%%%%%%%%%%%%

%%%%%%%%%%%%%%%%%%%%%%%%%%%%%%%%%%%%%%%%%%%%%%%%%%%%%%%%%%%%%%

%%%%%%%%%%%%%%%%%%%%%%%%%%%%%%%%%%%%%%%%%%%%%%%%%%%%%%%%%%%%%%

\begin{center}
\begin{longtable}{|l|c|c|c|c|c|}
\hline
\multicolumn{6}{|l|}{\textbf{No. 76}: $X_{30}\subset\mathbb{P}(1,5,6,8,11)$\hfill $A^3=1/88$}\\
\multicolumn{6}{|l|}{
\begin{minipage}[m]{.86\linewidth}
%\begin{center}
\vspace*{1.2mm}
$tw^2+zt^3+z^5+y^{5}+wf_{19}(x,y,z,t)+f_{30}(x,y,z,t)$

\vspace*{1.2mm}
%\end{center}
\end{minipage}
}\\

\hline \hline
\begin{minipage}[m]{.28\linewidth}
\begin{center}
Singularity
\end{center}
\end{minipage}&
\begin{minipage}[m]{.04\linewidth}
\begin{center}
$B^3$
\end{center}
\end{minipage}&
\begin{minipage}[m]{.11\linewidth}
\begin{center}
Linear

system
\end{center}
\end{minipage}&
\begin{minipage}[m]{.11\linewidth}
\begin{center}
Surface $T$
\end{center}
\end{minipage}&
\begin{minipage}[m]{.11\linewidth}
\begin{center}
\vspace*{1mm}
 \vorder
\vspace*{1mm}
\end{center}
\end{minipage}&
\begin{minipage}[m]{.18\linewidth}
\begin{center}
Condition
\end{center}
\end{minipage}\\
\hline
\begin{minipage}[m]{.28\linewidth}

$O_w=\frac{1}{11}(1,5,6)$ \quadratic

\end{minipage}&
\multicolumn{4}{|l|}{\begin{minipage}[m]{.37\linewidth}
\begin{center}
$tw^2$
\end{center}
\end{minipage}}&
\begin{minipage}[m]{.18\linewidth}
\begin{center}
\end{center}
\end{minipage}\\
\hline
\begin{minipage}[m]{.28\linewidth}

$O_t=\frac{1}{8}(1,5,3)$ \elliptic

\end{minipage}&\multicolumn{4}{|l|}{\begin{minipage}[m]{.37\linewidth}
\begin{center}
$tw^2-zt^3$
\end{center}
\end{minipage}}&
\begin{minipage}[m]{.18\linewidth}
\begin{center}

\end{center}
\end{minipage}\\

\hline
\begin{minipage}[m]{.28\linewidth}
$O_zO_t=1\times\frac{1}{2}(1_x,1_y,1_w)$ \boundary

\end{minipage}&
\begin{minipage}[m]{.04\linewidth}
\begin{center}
$-$
\end{center}
\end{minipage}&
\begin{minipage}[m]{.11\linewidth}
\begin{center}
$5B+2E$
\end{center}
\end{minipage}&
\begin{minipage}[m]{.11\linewidth}
\begin{center}
$y$
\end{center}
\end{minipage}
&
\begin{minipage}[m]{.11\linewidth}
\begin{center}
$y$
\end{center}
\end{minipage}&
\begin{minipage}[m]{.18\linewidth}
\begin{center}

\end{center}
\end{minipage}\\

\hline
\end{longtable}
\end{center}

\begin{Note}

\item 
The $1$-cycle $\Gamma$ for the singular point of type
$\frac{1}{2}(1,1,1)$ is irreducible because of the monomials
$tw^2$ and $z^5$.
\end{Note}

%%%%%%%%%%%%%%%%%%%%%%%%%%%%%%%%%%%%%%%%%%%%%%%%%%%%%%%%%%%%%%%%%%

%%%%%%%%%%%%%%%%%%%%%%%%%%%%%%%%%%%%%%%%%%%%%%%%%%%%%%%%%%%%%%%%%%%%%%%%%%%%%%%%%%%%%%%%

\begin{center}
\begin{longtable}{|l|c|c|c|c|c|}
\hline
\multicolumn{6}{|l|}{\underline{\textbf{No. 77}}: $X_{32}\subset\mathbb{P}(1,2,5,9,16)$\hfill $A^3=1/45$}\\
\multicolumn{6}{|l|}{
\begin{minipage}[m]{.86\linewidth}
%\begin{center}
\vspace*{1.2mm} $w^2+zt^3+yz^6+wf_{16}(x,y,z,t)+ f_{32}(x,y,z,t)$
\vspace*{1.2mm}
%\end{center}
\end{minipage}
}\\
\hline \hline
\begin{minipage}[m]{.28\linewidth}
\begin{center}
Singularity
\end{center}
\end{minipage}&
\begin{minipage}[m]{.04\linewidth}
\begin{center}
$B^3$
\end{center}
\end{minipage}&
\begin{minipage}[m]{.11\linewidth}
\begin{center}
Linear

system
\end{center}
\end{minipage}&
\begin{minipage}[m]{.11\linewidth}
\begin{center}
Surface $T$
\end{center}
\end{minipage}&
\begin{minipage}[m]{.11\linewidth}
\begin{center}
\vspace*{1mm}
 \vorder
\vspace*{1mm}
\end{center}
\end{minipage}&
\begin{minipage}[m]{.18\linewidth}
\begin{center}
Condition
\end{center}
\end{minipage}\\
\hline
\begin{minipage}[m]{.28\linewidth}

$O_t=\frac{1}{9}(1_x,2_y,7_w)$ $\positive$

\end{minipage}&
\begin{minipage}[m]{.04\linewidth}
\begin{center}
$+$
\end{center}
\end{minipage}&
\begin{minipage}[m]{.11\linewidth}
\begin{center}
$5B-E$
\end{center}
\end{minipage}&
\begin{minipage}[m]{.11\linewidth}
\begin{center}
$z$
\end{center}
\end{minipage}
&
\begin{minipage}[m]{.11\linewidth}
\begin{center}
$w^2$
\end{center}
\end{minipage}&
\begin{minipage}[m]{.18\linewidth}
\begin{center}

\end{center}
\end{minipage}\\

\hline
\begin{minipage}[m]{.28\linewidth}

$O_z=\frac{1}{5}(1_x,4_t,1_w)$ \boundary

\end{minipage}&
\begin{minipage}[m]{.04\linewidth}
\begin{center}
$-$
\end{center}
\end{minipage}&
\begin{minipage}[m]{.11\linewidth}
\begin{center}
$2B$
\end{center}
\end{minipage}&
\begin{minipage}[m]{.11\linewidth}
\begin{center}
$y$
\end{center}
\end{minipage}
&
\begin{minipage}[m]{.11\linewidth}
\begin{center}
$w^2$
\end{center}
\end{minipage}&
\begin{minipage}[m]{.18\linewidth}
\begin{center}

\end{center}
\end{minipage}\\
\hline
\begin{minipage}[m]{.28\linewidth}
 $O_yO_w=2\times\frac{1}{2}(1_x,1_z,1_t)$ $\nef$

\end{minipage}&
\begin{minipage}[m]{.04\linewidth}
\begin{center}
$-$
\end{center}
\end{minipage}&
\begin{minipage}[m]{.11\linewidth}
\begin{center}
$9B+4E$
\end{center}
\end{minipage}&
\begin{minipage}[m]{.11\linewidth}
\begin{center}
$xy^4$, $y^2z$, $t$
\end{center}
\end{minipage}
&
\begin{minipage}[m]{.11\linewidth}
\begin{center}
$xy^4$, $y^2z$, $t$
\end{center}
\end{minipage}&
\begin{minipage}[m]{.18\linewidth}
\begin{center}

\end{center}
\end{minipage}\\

\hline

\end{longtable}
\end{center}

\begin{Note}

\item 
For the singular point $O_z$, the $1$-cycle $\Gamma$ is
irreducible due to the monomials $w^2$ and $zt^3$.

\item For the singular points of type $\frac{1}{2}(1,1,1)$,
 we consider the
linear system generated by $xy^4$, $y^2z$ and $t$ on $X_{32}$. Its
base curve is defined by $y=t=0$.  The curve defined by $y=t=0$
passes though no singular point of type $\frac{1}{2}(1,1,1)$.
Consequently, the divisor $T$ is nef.
\end{Note}

%%%%%%%%%%%%%%%%%%%%%%%%%%%%%%%%%%%%%%%%%%%%%%%%%%%%%%%%%%%%%%

%%%%%%%%%%%%%%%%%%%%%%%%%%%%%%%%%%%%%%%%%%%%%%%%%%%%%%%%%%%%%%%%%%%%%%%%%%%%%%%%%%%%%%%%

\begin{center}
\begin{longtable}{|l|c|c|c|c|c|}
\hline
\multicolumn{6}{|l|}{\underline{\textbf{No. 78}}: $X_{32}\subset\mathbb{P}(1,4,5,7,16)$\hfill $A^3=1/70$}\\
\multicolumn{6}{|l|}{
\begin{minipage}[m]{.86\linewidth}
%\begin{center}
\vspace*{1.2mm} $w^2+yt^4+z^5t+wf_{16}(x,y,z,t)+f_{32}(x,y,z,t)$
\vspace*{1.2mm}
%\end{center}
\end{minipage}
}\\
\hline \hline
\begin{minipage}[m]{.28\linewidth}
\begin{center}
Singularity
\end{center}
\end{minipage}&
\begin{minipage}[m]{.04\linewidth}
\begin{center}
$B^3$
\end{center}
\end{minipage}&
\begin{minipage}[m]{.11\linewidth}
\begin{center}
Linear

system
\end{center}
\end{minipage}&
\begin{minipage}[m]{.11\linewidth}
\begin{center}
Surface $T$
\end{center}
\end{minipage}&
\begin{minipage}[m]{.11\linewidth}
\begin{center}
\vspace*{1mm}
 \vorder
\vspace*{1mm}
\end{center}
\end{minipage}&
\begin{minipage}[m]{.18\linewidth}
\begin{center}
Condition
\end{center}
\end{minipage}\\
\hline
\begin{minipage}[m]{.28\linewidth}

$O_t=\frac{1}{7}(1_x,5_z,2_w)$ \boundary

\end{minipage}&
\begin{minipage}[m]{.04\linewidth}
\begin{center}
$0$
\end{center}
\end{minipage}&
\begin{minipage}[m]{.11\linewidth}
\begin{center}
$4B$
\end{center}
\end{minipage}&
\begin{minipage}[m]{.11\linewidth}
\begin{center}
$y$
\end{center}
\end{minipage}
&
\begin{minipage}[m]{.11\linewidth}
\begin{center}
$w^2$
\end{center}
\end{minipage}&
\begin{minipage}[m]{.18\linewidth}
\begin{center}

\end{center}
\end{minipage}\\
\hline
\begin{minipage}[m]{.28\linewidth}

$O_z=\frac{1}{5}(1_x,4_y,1_w)$ \boundary

\end{minipage}&
\begin{minipage}[m]{.04\linewidth}
\begin{center}
$-$
\end{center}
\end{minipage}&
\begin{minipage}[m]{.11\linewidth}
\begin{center}
$4B$
\end{center}
\end{minipage}&
\begin{minipage}[m]{.11\linewidth}
\begin{center}
$y$
\end{center}
\end{minipage}
&
\begin{minipage}[m]{.11\linewidth}
\begin{center}
$y$
\end{center}
\end{minipage}&
\begin{minipage}[m]{.18\linewidth}
\begin{center}

\end{center}
\end{minipage}\\

\hline
\begin{minipage}[m]{.28\linewidth}

$O_yO_w=2\times\frac{1}{4}(1_x,1_z,3_t)$ $\nef$

\end{minipage}&
\begin{minipage}[m]{.04\linewidth}
\begin{center}
$-$
\end{center}
\end{minipage}&
\begin{minipage}[m]{.11\linewidth}
\begin{center}
$5B+E$
\end{center}
\end{minipage}&
\begin{minipage}[m]{.11\linewidth}
\begin{center}
$xy$, $z$
\end{center}
\end{minipage}
&
\begin{minipage}[m]{.11\linewidth}
\begin{center}
$xy$, $z$
\end{center}
\end{minipage}&
\begin{minipage}[m]{.18\linewidth}
\begin{center}

\end{center}
\end{minipage}\\

\hline

\end{longtable}
\end{center}

\begin{Note}

\item 
For the singular points other than those of type
$\frac{1}{4}(1,1,3)$, the $1$-cycles $\Gamma$ are always
irreducible due to the  monomials  $w^2$ and $z^5t$.

\item For the singular points of type $\frac{1}{4}(1,1,3)$,
 we consider the
linear system generated by $xy$ and $z$ on $X_{32}$. Its base
curves are defined by $x=z=0$ and $y=z=0$.  The curve defined by
$y=z=0$ passes though no singular point of type $\frac{1}{4}(1,1,
3)$.  The curve   defined by $x=z=0$   is irreducible because of
the monomials $w^2$ and $yt^4$.  Its proper transform is the
$1$-cycle defined by  $(5B+E)\cdot B$. Therefore, the divisor $T$
is nef since  $(5B+E)^2\cdot B>0$.
\end{Note}

%%%%%%%%%%%%%%%%%%%%%%%%%%%%%%%%%%%%%%%%%%%%%%%%%%%%%%%%%%%%%%%%%%
%%%%%%%%%%%%%%%%%%%%%%%%%%%%%%%%%%%%%%%%%%%%%%%%%%%%%%%%%%%%%%%%%%%%%%%%%%%%%%%%%%%%%%%%

\begin{center}
\begin{longtable}{|l|c|c|c|c|c|}
\hline
\multicolumn{6}{|l|}{\textbf{No. 79}: $X_{33}\subset\mathbb{P}(1,3,5,11,14)$\hfill $A^3=1/70$}\\
\multicolumn{6}{|l|}{
\begin{minipage}[m]{.86\linewidth}
%\begin{center}
\vspace*{1.2mm}

$zw^2+t^3+yz^6+y^{11}+wf_{19}(x,y,z,t)+f_{33}(x,y,z,t)$
\vspace*{1.2mm}
%\end{center}
\end{minipage}
}\\

\hline \hline
\begin{minipage}[m]{.28\linewidth}
\begin{center}
Singularity
\end{center}
\end{minipage}&
\begin{minipage}[m]{.04\linewidth}
\begin{center}
$B^3$
\end{center}
\end{minipage}&
\begin{minipage}[m]{.11\linewidth}
\begin{center}
Linear

system
\end{center}
\end{minipage}&
\begin{minipage}[m]{.11\linewidth}
\begin{center}
Surface $T$
\end{center}
\end{minipage}&
\begin{minipage}[m]{.11\linewidth}
\begin{center}
\vspace*{1mm}
 \vorder
\vspace*{1mm}
\end{center}
\end{minipage}&
\begin{minipage}[m]{.18\linewidth}
\begin{center}
Condition
\end{center}
\end{minipage}\\
\hline
\begin{minipage}[m]{.28\linewidth}

$O_w=\frac{1}{14}(1,3,11)$ \quadratic

\end{minipage}&
\multicolumn{4}{|l|}{\begin{minipage}[m]{.37\linewidth}
\begin{center}
$zw^2$
\end{center}
\end{minipage}}&
\begin{minipage}[m]{.18\linewidth}
\begin{center}

\end{center}
\end{minipage}\\

\hline
\begin{minipage}[m]{.28\linewidth}

$O_z=\frac{1}{5}(1_x,1_t,4_w)$ \boundary

\end{minipage}&
\begin{minipage}[m]{.04\linewidth}
\begin{center}
$-$
\end{center}
\end{minipage}&
\begin{minipage}[m]{.11\linewidth}
\begin{center}
$3B$
\end{center}
\end{minipage}&
\begin{minipage}[m]{.11\linewidth}
\begin{center}
$y$
\end{center}
\end{minipage}
&
\begin{minipage}[m]{.11\linewidth}
\begin{center}
$t^3$
\end{center}
\end{minipage}&
\begin{minipage}[m]{.18\linewidth}
\begin{center}

\end{center}
\end{minipage}\\
\hline
\end{longtable}
\end{center}

\begin{Note}

\item 
The 1-cycle $\Gamma$ for the singular point $O_z$ is irreducible
because of the monomials $zw^2$ and $t^3$.
\end{Note}
%%%%%%%%%%%%%%%%%%%%%%%%%%%%%%%%%%%%%%%%%%%%%%%%%%%%%%%%%%%%%%%%%%

%%%%%%%%%%%%%%%%%%%%%%%%%%%%%%%%%%%%%%%%%%%%%%%%%%%%%%%%%%%%%%%%%%

%%%%%%%%%%%%%%%%%%%%%%%%%%%%%%%%%%%%%%%%%%%%%%%%%%%%%%%%%%%%%%%%%%

%%%%%%%%%%%%%%%%%%%%%%%%%%%%%%%%%%%%%%%%%%%%%%%%%%%%%%%%%%%%%%%%%%
%%%%%%%%%%%%%%%%%%%%%%%%%%%%%%%%%%%%%%%%%%%%%%%%%%%%%%%%%%%%%%%%%%%%%%%%%%%%%%%%%%%%%%%%

\begin{center}
\begin{longtable}{|l|c|c|c|c|c|}
\hline
\multicolumn{6}{|l|}{\underline{\textbf{No. 80}}: $X_{34}\subset\mathbb{P}(1,3,4,10,17)$\hfill $A^3=1/60$}\\
\multicolumn{6}{|l|}{
\begin{minipage}[m]{.86\linewidth}
%\begin{center}
\vspace*{1.2mm} $w^2+zt^3+z^6t+y^8(a_1t+a_2y^2z+a_3xy^4)
+wf_{17}(x,y,z,t)+t^2g_{14}(x,y,z)+tg_{24}(x,y,z)+g_{34}(x,y,z)$

\vspace*{1.2mm}
\end{minipage}
}\\
\hline \hline
\begin{minipage}[m]{.28\linewidth}
\begin{center}
Singularity
\end{center}
\end{minipage}&
\begin{minipage}[m]{.04\linewidth}
\begin{center}
$B^3$
\end{center}
\end{minipage}&
\begin{minipage}[m]{.11\linewidth}
\begin{center}
Linear

system
\end{center}
\end{minipage}&
\begin{minipage}[m]{.11\linewidth}
\begin{center}
Surface $T$
\end{center}
\end{minipage}&
\begin{minipage}[m]{.11\linewidth}
\begin{center}
\vspace*{1mm}
 \vorder
\vspace*{1mm}
\end{center}
\end{minipage}&
\begin{minipage}[m]{.18\linewidth}
\begin{center}
Condition
\end{center}
\end{minipage}\\
\hline
\begin{minipage}[m]{.28\linewidth}

$O_t=\frac{1}{10}(1_x,3_y,7_w)$ $\positive$

\end{minipage}&
\begin{minipage}[m]{.04\linewidth}
\begin{center}
$+$
\end{center}
\end{minipage}&
\begin{minipage}[m]{.11\linewidth}
\begin{center}
$4B-E$
\end{center}
\end{minipage}&
\begin{minipage}[m]{.11\linewidth}
\begin{center}
$z$
\end{center}
\end{minipage}
&
\begin{minipage}[m]{.11\linewidth}
\begin{center}
$w^2$
\end{center}
\end{minipage}&
\begin{minipage}[m]{.18\linewidth}
\begin{center}

\end{center}
\end{minipage}\\

\hline
\begin{minipage}[m]{.28\linewidth}

$O_z=\frac{1}{4}(1_x,3_y,1_w)$ \boundary

\end{minipage}&
\begin{minipage}[m]{.04\linewidth}
\begin{center}
$-$
\end{center}
\end{minipage}&
\begin{minipage}[m]{.11\linewidth}
\begin{center}
$3B$
\end{center}
\end{minipage}&
\begin{minipage}[m]{.11\linewidth}
\begin{center}
$y$
\end{center}
\end{minipage}
&
\begin{minipage}[m]{.11\linewidth}
\begin{center}
$y$
\end{center}
\end{minipage}&
\begin{minipage}[m]{.18\linewidth}
\begin{center}

\end{center}
\end{minipage}\\
\hline
\begin{minipage}[m]{.28\linewidth}

$O_y=\frac{1}{3}(1_x,1_z,2_w)$ \nef

\end{minipage}&
\begin{minipage}[m]{.04\linewidth}
\begin{center}
$-$
\end{center}
\end{minipage}&
\begin{minipage}[m]{.11\linewidth}
\begin{center}
$4B+E$
\end{center}
\end{minipage}&
\begin{minipage}[m]{.11\linewidth}
\begin{center}
$z$
\end{center}
\end{minipage}
&
\begin{minipage}[m]{.11\linewidth}
\begin{center}
$z$
\end{center}
\end{minipage}&
\begin{minipage}[m]{.18\linewidth}
\begin{center}
$a_1\ne 0$
\end{center}
\end{minipage}\\
\hline
\begin{minipage}[m]{.28\linewidth}

$O_y=\frac{1}{3}(1_x,1_t,2_w)$ \boundary

\end{minipage}&
\begin{minipage}[m]{.04\linewidth}
\begin{center}
$-$
\end{center}
\end{minipage}&
\begin{minipage}[m]{.11\linewidth}
\begin{center}
$4B$
\end{center}
\end{minipage}&
\begin{minipage}[m]{.11\linewidth}
\begin{center}
$z$
\end{center}
\end{minipage}
&
\begin{minipage}[m]{.11\linewidth}
\begin{center}
$w^2$
\end{center}
\end{minipage}&
\begin{minipage}[m]{.18\linewidth}
\begin{center}
$a_1=0$, $a_2\ne 0$
\end{center}
\end{minipage}\\
\hline
\begin{minipage}[m]{.28\linewidth}

$O_y=\frac{1}{3}(1_z,1_t,2_w)$ \boundary

\end{minipage}&
\begin{minipage}[m]{.04\linewidth}
\begin{center}
$-$
\end{center}
\end{minipage}&
\begin{minipage}[m]{.11\linewidth}
\begin{center}
$4B+E$
\end{center}
\end{minipage}&
\begin{minipage}[m]{.11\linewidth}
\begin{center}
$z$
\end{center}
\end{minipage}
&
\begin{minipage}[m]{.11\linewidth}
\begin{center}
$z$
\end{center}
\end{minipage}&
\begin{minipage}[m]{.18\linewidth}
\begin{center}
$a_1=a_2=0$
\end{center}
\end{minipage}\\

\hline
\begin{minipage}[m]{.28\linewidth}

$O_zO_t=1\times\frac{1}{2}(1_x,1_y,1_w)$ \boundary

\end{minipage}&
\begin{minipage}[m]{.04\linewidth}
\begin{center}
$-$
\end{center}
\end{minipage}&
\begin{minipage}[m]{.11\linewidth}
\begin{center}
$3B+E$
\end{center}
\end{minipage}&
\begin{minipage}[m]{.11\linewidth}
\begin{center}
$y$
\end{center}
\end{minipage}
&
\begin{minipage}[m]{.11\linewidth}
\begin{center}
$y$
\end{center}
\end{minipage}&
\begin{minipage}[m]{.18\linewidth}
\begin{center}

\end{center}
\end{minipage}\\

\hline
\end{longtable}
\end{center}

\begin{Note}

\item 
For each of the  singular points to which the method $\boundary$
is applied, the $1$-cycle $\Gamma$ is always irreducible even
though it is possibly non-reduced.

\item For the singular point $O_y$ with $a_1\ne 0$,
 we consider the
linear system generated by $xy$ and $z$ on $X_{34}$. Its base
curves are defined by $x=z=0$ and $y=z=0$.  The curve defined by
$y=z=0$ does not pass through the point $O_y$.  The curve defined
by $x=z=0$   is irreducible because of the monomials $w^2$ and
$y^8t$.  Its proper transform is the $1$-cycle defined by
$(4B+E)\cdot B$, and hence it intersects $T$ positively.
Consequently, the divisor $T$ is nef.
\end{Note}

%%%%%%%%%%%%%%%%%%%%%%%%%%%%%%%%%%%%%%%%%%%%%%%%%%%%%%%%%%%%%%%%%%

\begin{center}
\begin{longtable}{|l|c|c|c|c|c|}
\hline
\multicolumn{6}{|l|}{\underline{\textbf{No. 81}}: $X_{34}\subset\mathbb{P}(1,4,6,7,17)$ \hfill $A^3=1/84$}\\
\multicolumn{6}{|l|}{
\begin{minipage}[m]{.86\linewidth}
%\begin{center}
\vspace*{1.2mm}
$w^2+zt^4+yz^5+y^7z+wf_{17}(x,y,z,t)+f_{34}(x,y,z,t)$
\vspace*{1.2mm}
%\end{center}
\end{minipage}
}\\
\hline \hline
\begin{minipage}[m]{.28\linewidth}
\begin{center}
Singularity
\end{center}
\end{minipage}&
\begin{minipage}[m]{.04\linewidth}
\begin{center}
$B^3$
\end{center}
\end{minipage}&
\begin{minipage}[m]{.11\linewidth}
\begin{center}
Linear

system
\end{center}
\end{minipage}&
\begin{minipage}[m]{.11\linewidth}
\begin{center}
Surface $T$
\end{center}
\end{minipage}&
\begin{minipage}[m]{.11\linewidth}
\begin{center}
\vspace*{1mm}
 \vorder
\vspace*{1mm}
\end{center}
\end{minipage}&
\begin{minipage}[m]{.18\linewidth}
\begin{center}
Condition
\end{center}
\end{minipage}\\
\hline
\begin{minipage}[m]{.28\linewidth}

$O_t=\frac{1}{7}(1_x,4_y,3_w)$ \boundary

\end{minipage}&
\begin{minipage}[m]{.04\linewidth}
\begin{center}
$0$
\end{center}
\end{minipage}&
\begin{minipage}[m]{.11\linewidth}
\begin{center}
$6B$
\end{center}
\end{minipage}&
\begin{minipage}[m]{.11\linewidth}
\begin{center}
$z$
\end{center}
\end{minipage}&
\begin{minipage}[m]{.11\linewidth}
\begin{center}
$w^2$
\end{center}
\end{minipage}
&
\begin{minipage}[m]{.18\linewidth}
\begin{center}

\end{center}
\end{minipage}\\
\hline
\begin{minipage}[m]{.28\linewidth}

$O_z=\frac{1}{6}(1_x,1_t,5_w)$ \boundary

\end{minipage}&
\begin{minipage}[m]{.04\linewidth}
\begin{center}
$-$
\end{center}
\end{minipage}&
\begin{minipage}[m]{.11\linewidth}
\begin{center}
$4B$
\end{center}
\end{minipage}&
\begin{minipage}[m]{.11\linewidth}
\begin{center}
$y$
\end{center}
\end{minipage}&
\begin{minipage}[m]{.11\linewidth}
\begin{center}
$zt^4$
\end{center}
\end{minipage}&
\begin{minipage}[m]{.18\linewidth}
\begin{center}

\end{center}
\end{minipage}\\
\hline
\begin{minipage}[m]{.28\linewidth}

$O_y=\frac{1}{4}(1_x,3_t,1_w)$ \boundary

\end{minipage}&
\begin{minipage}[m]{.04\linewidth}
\begin{center}
$-$
\end{center}
\end{minipage}&
\begin{minipage}[m]{.11\linewidth}
\begin{center}
$7B+E$
\end{center}
\end{minipage}&
\begin{minipage}[m]{.11\linewidth}
\begin{center}
$t$
\end{center}
\end{minipage}&
\begin{minipage}[m]{.11\linewidth}
\begin{center}
$t$
\end{center}
\end{minipage}&
\begin{minipage}[m]{.18\linewidth}
\begin{center}

\end{center}
\end{minipage}\\
\hline
\begin{minipage}[m]{.28\linewidth}
 $O_yO_z=2\times\frac{1}{2}(1_x,1_t,1_w)$ $\nef$

\end{minipage}&
\begin{minipage}[m]{.04\linewidth}
\begin{center}
$-$
\end{center}
\end{minipage}&
\begin{minipage}[m]{.11\linewidth}
\begin{center}
$7B+3E$
\end{center}
\end{minipage}&
\begin{minipage}[m]{.11\linewidth}
\begin{center}
$xz$, $t$
\end{center}
\end{minipage}&
\begin{minipage}[m]{.11\linewidth}
\begin{center}
$xz$, $t$
\end{center}
\end{minipage}&
\begin{minipage}[m]{.18\linewidth}
\begin{center}

\end{center}
\end{minipage}\\
\hline
\end{longtable}
\end{center}

\begin{Note}

\item 
The $1$-cycle $\Gamma$ for the singular point $O_t$ is irreducible
due to the monomials $w^2$ and $y^5t^2$ even though it can be
non-reduced.

\item For the singular point $O_z$, the $1$-cycle $\Gamma$ is
irreducible due to the monomials $w^2$ and $zt^4$.

\item For the singular point $O_y$, the $1$-cycle $\Gamma$ is
irreducible due to the monomials $w^2$, $yz^5$ and $y^7z$

\item For the singular points of type $\frac{1}{2}(1,1,1)$, we consider
the linear system generated by $xz$ and $t$ on $X_{34}$. Its base
curves are defined by $x=t=0$ and $z=t=0$.  The curve defined by
$z=t=0$ passes though no singular point of type $\frac{1}{2}(1,1,
1)$.  The curve   defined by $x=t=0$   is irreducible due to the
monomials $w^2$, $yz^5$ and $y^7z$.  Its proper transform is
equivalent to  the $1$-cycle defined by $(7B+3E)\cdot B$ that
intersects $T$ positively.  Therefore, the divisor $T$ is nef.
\end{Note}

%%%%%%%%%%%%%%%%%%%%%%%%%%%%%%%%%%%%%%%%%%%%%%%%%%%%%%%%%%%%%%%%%%%%%%%%%%%%%%%%%%%%%%%%
%%%%%%%%%%%%%%%%%%%%%%%%%%%%%%%%%%%%%%%%%%%%%%%%%%%%%%%%%%%%%%%%%%%%%%%%%%%%%%%%%%%%%%%%

\begin{center}
\begin{longtable}{|l|c|c|c|c|c|}
\hline
\multicolumn{6}{|l|}{\underline{\textbf{No. 82}}: $X_{36}\subset\mathbb{P}(1,1,5,12,18)$\hfill $A^3=1/30$}\\
\multicolumn{6}{|l|}{
\begin{minipage}[m]{.86\linewidth}
%\begin{center}
\vspace*{1.2mm} $w^2+t^3+yz^7+wf_{18}(x,y,z,t)+f_{36}(x,y,z,t)$
\vspace*{1.2mm}
%\end{center}
\end{minipage}
}\\

\hline \hline
\begin{minipage}[m]{.28\linewidth}
\begin{center}
Singularity
\end{center}
\end{minipage}&
\begin{minipage}[m]{.04\linewidth}
\begin{center}
$B^3$
\end{center}
\end{minipage}&
\begin{minipage}[m]{.11\linewidth}
\begin{center}
Linear

system
\end{center}
\end{minipage}&
\begin{minipage}[m]{.11\linewidth}
\begin{center}
Surface $T$
\end{center}
\end{minipage}&
\begin{minipage}[m]{.11\linewidth}
\begin{center}
\vspace*{1mm}
 \vorder
\vspace*{1mm}
\end{center}
\end{minipage}&
\begin{minipage}[m]{.18\linewidth}
\begin{center}
Condition
\end{center}
\end{minipage}\\
\hline
\begin{minipage}[m]{.28\linewidth}

$O_z=\frac{1}{5}(1_x,2_t,3_w)$ $\family$

\end{minipage}&
\begin{minipage}[m]{.04\linewidth}
\begin{center}
$0$
\end{center}
\end{minipage}&
\begin{minipage}[m]{.11\linewidth}
\begin{center}
$B-E$
\end{center}
\end{minipage}&
\begin{minipage}[m]{.11\linewidth}
\begin{center}
$y$
\end{center}
\end{minipage}
&
\begin{minipage}[m]{.11\linewidth}
\begin{center}
$w^2$
\end{center}
\end{minipage}&
\begin{minipage}[m]{.18\linewidth}
\begin{center}

\end{center}
\end{minipage}\\
\hline
\begin{minipage}[m]{.28\linewidth}

$O_tO_w=1\times\frac{1}{6}(1_x,1_y,5_z)$ \boundary

\end{minipage}&
\begin{minipage}[m]{.04\linewidth}
\begin{center}
$0$
\end{center}
\end{minipage}&
\begin{minipage}[m]{.11\linewidth}
\begin{center}
$B$
\end{center}
\end{minipage}&
\begin{minipage}[m]{.11\linewidth}
\begin{center}
$y$
\end{center}
\end{minipage}
&
\begin{minipage}[m]{.11\linewidth}
\begin{center}
$y$
\end{center}
\end{minipage}&
\begin{minipage}[m]{.18\linewidth}
\begin{center}

\end{center}
\end{minipage}\\

\hline

\end{longtable}
\end{center}

\begin{Note}

\item 
For the singular point $O_z$, let $C_{\lambda}$ be the curve on
the surface $S_y$  cut by $t=\lambda x^{12}$ for a  general
complex number  $\lambda$. Then
$$
-K_{Y}\cdot \tilde{C}_{\lambda}=(B-E)(12B+2E)B=0.\\%
$$
If the curve $\tilde{C}_{\lambda}$ is reducible, it consists of
two irreducible components. Because these two components are
symmetric with respect to the biregular quadratic involution of
$X_{36}$, they must be numerically equivalent to each other.
Therefore, each component of $\tilde{C}_{\lambda}$ intersects
$-K_Y$ trivially.

\item For the singular point of type $\frac{1}{6}(1,1,5)$, the $1$-cycle
$\Gamma$ is irreducible due to $w^2$ and $t^3$.
\end{Note}

%%%%%%%%%%%%%%%%%%%%%%%%%%%%%%%%%%%%%%%%%%%%%%%%%%%%%%%%%%%%%%%%%%

%%%%%%%%%%%%%%%%%%%%%%%%%%%%%%%%%%%%%%%%%%%%%%%%%%%%%%%%%%%%%%

\begin{center}
\begin{longtable}{|l|c|c|c|c|c|}
\hline
\multicolumn{6}{|l|}{\underline{\textbf{No. 83}}: $X_{36}\subset\mathbb{P}(1,3,4,11,18)$\hfill $A^3=1/66$}\\
\multicolumn{6}{|l|}{
\begin{minipage}[m]{.86\linewidth}
%\begin{center}
\vspace*{1.2mm} $(w-\alpha_1 y^6)(w-\alpha_2
y^6)+yt^3+z^9+wf_{18}(x,y,z,t)+f_{36}(x,y,z,t)$ \vspace*{1.2mm}
%\end{center}
\end{minipage}
}\\
\hline \hline
\begin{minipage}[m]{.28\linewidth}
\begin{center}
Singularity
\end{center}
\end{minipage}&
\begin{minipage}[m]{.04\linewidth}
\begin{center}
$B^3$
\end{center}
\end{minipage}&
\begin{minipage}[m]{.11\linewidth}
\begin{center}
Linear

system
\end{center}
\end{minipage}&
\begin{minipage}[m]{.11\linewidth}
\begin{center}
Surface $T$
\end{center}
\end{minipage}&
\begin{minipage}[m]{.11\linewidth}
\begin{center}
\vspace*{1mm}
 \vorder
\vspace*{1mm}
\end{center}
\end{minipage}&
\begin{minipage}[m]{.18\linewidth}
\begin{center}
Condition
\end{center}
\end{minipage}\\
\hline
\begin{minipage}[m]{.28\linewidth}

$O_t=\frac{1}{11}(1_x,4_z,7_w)$ $\positive$

\end{minipage}&
\begin{minipage}[m]{.04\linewidth}
\begin{center}
$+$
\end{center}
\end{minipage}&
\begin{minipage}[m]{.11\linewidth}
\begin{center}
$3B-E$
\end{center}
\end{minipage}&
\begin{minipage}[m]{.11\linewidth}
\begin{center}
$y$
\end{center}
\end{minipage}
&
\begin{minipage}[m]{.11\linewidth}
\begin{center}
$w^2$
\end{center}
\end{minipage}&
\begin{minipage}[m]{.18\linewidth}
\begin{center}

\end{center}
\end{minipage}\\

\hline
\begin{minipage}[m]{.28\linewidth}

$O_zO_w=1\times\frac{1}{2}(1_x,1_y,1_t)$ \boundary

\end{minipage}&
\begin{minipage}[m]{.04\linewidth}
\begin{center}
$-$
\end{center}
\end{minipage}&
\begin{minipage}[m]{.11\linewidth}
\begin{center}
$3B+E$
\end{center}
\end{minipage}&
\begin{minipage}[m]{.11\linewidth}
\begin{center}
$y$
\end{center}
\end{minipage}
&
\begin{minipage}[m]{.11\linewidth}
\begin{center}
$y$
\end{center}
\end{minipage}&
\begin{minipage}[m]{.18\linewidth}
\begin{center}

\end{center}
\end{minipage}\\
\hline
\begin{minipage}[m]{.28\linewidth}

$O_yO_w=2\times\frac{1}{3}(1_x,1_z,2_t)$ \boundary

\end{minipage}&
\begin{minipage}[m]{.04\linewidth}
\begin{center}
$-$
\end{center}
\end{minipage}&
\begin{minipage}[m]{.11\linewidth}
\begin{center}
$18B+4E$
\end{center}
\end{minipage}&
\begin{minipage}[m]{.11\linewidth}
\begin{center}
$w-\alpha_i y^6$
\end{center}
\end{minipage}
&
\begin{minipage}[m]{.11\linewidth}
\begin{center}
$yt^3$
\end{center}
\end{minipage}&
\begin{minipage}[m]{.18\linewidth}
\begin{center}

\end{center}
\end{minipage}\\

\hline

\end{longtable}
\end{center}

\begin{Note}

\item 
For each of the singular points corresponding to the method
$\boundary$, the $1$-cycle $\Gamma$ is irreducible since we have
the monomials $w^2$, $yt^3$, and $z^9$.
\end{Note}

%%%%%%%%%%%%%%%%%%%%%%%%%%%%%%%%%%%%%%%%%%%%%%%%%%%%%%%%%%%%%%
%%%%%%%%%%%%%%%%%%%%%%%%%%%%%%%%%%%%%%%%%%%%%%%%%%%%%%%%%%%%%%%%%%

\begin{center}
\begin{longtable}{|l|c|c|c|c|c|}
\hline
\multicolumn{6}{|l|}{\underline{\textbf{No. 84}}: $X_{36}\subset\mathbb{P}(1,7,8,9,12)$ \hfill $A^3=1/168$}\\
\multicolumn{6}{|l|}{
\begin{minipage}[m]{.86\linewidth}
%\begin{center}
\vspace*{1.2mm}
$w^3+t^4+z^3w+y^4(a_1z+a_2xy)+w^2f_{12}(x,y,z,t)+wf_{24}(x,y,z,t)+f_{36}(x,y,z,t)$
\vspace*{1.2mm}
%\end{center}
\end{minipage}
}\\
\hline \hline
\begin{minipage}[m]{.28\linewidth}
\begin{center}
Singularity
\end{center}
\end{minipage}&
\begin{minipage}[m]{.04\linewidth}
\begin{center}
$B^3$
\end{center}
\end{minipage}&
\begin{minipage}[m]{.11\linewidth}
\begin{center}
Linear

system
\end{center}
\end{minipage}&
\begin{minipage}[m]{.11\linewidth}
\begin{center}
Surface $T$
\end{center}
\end{minipage}&
\begin{minipage}[m]{.11\linewidth}
\begin{center}
\vspace*{1mm}
 \vorder
\vspace*{1mm}
\end{center}
\end{minipage}&
\begin{minipage}[m]{.18\linewidth}
\begin{center}
Condition
\end{center}
\end{minipage}\\
\hline
\begin{minipage}[m]{.28\linewidth}

$O_z=\frac{1}{8}(1_x,7_y,1_t)$ \boundary

\end{minipage}&
\begin{minipage}[m]{.04\linewidth}
\begin{center}
$-$
\end{center}
\end{minipage}&
\begin{minipage}[m]{.11\linewidth}
\begin{center}
$7B$
\end{center}
\end{minipage}&
\begin{minipage}[m]{.11\linewidth}
\begin{center}
$y$
\end{center}
\end{minipage}&
\begin{minipage}[m]{.11\linewidth}
\begin{center}
$y$
\end{center}
\end{minipage}
&
\begin{minipage}[m]{.18\linewidth}
\begin{center}

\end{center}
\end{minipage}\\

\hline
\begin{minipage}[m]{.28\linewidth}

$O_y=\frac{1}{7}(1_x,2_t,5_w)$ \boundary

\end{minipage}&
\begin{minipage}[m]{.04\linewidth}
\begin{center}
$-$
\end{center}
\end{minipage}&
\begin{minipage}[m]{.11\linewidth}
\begin{center}
$8B$
\end{center}
\end{minipage}&
\begin{minipage}[m]{.11\linewidth}
\begin{center}
$z$
\end{center}
\end{minipage}&
\begin{minipage}[m]{.11\linewidth}
\begin{center}
$t^4$
\end{center}
\end{minipage}&
\begin{minipage}[m]{.18\linewidth}
\begin{center}
$a_1\ne 0$
\end{center}
\end{minipage}\\
\hline
\begin{minipage}[m]{.28\linewidth}

$O_y=\frac{1}{7}(1_z,2_t,5_w)$ \boundary

\end{minipage}&
\begin{minipage}[m]{.04\linewidth}
\begin{center}
$-$
\end{center}
\end{minipage}&
\begin{minipage}[m]{.11\linewidth}
\begin{center}
$12B+E$
\end{center}
\end{minipage}&
\begin{minipage}[m]{.11\linewidth}
\begin{center}
$w$
\end{center}
\end{minipage}&
\begin{minipage}[m]{.11\linewidth}
\begin{center}
$w$
\end{center}
\end{minipage}&
\begin{minipage}[m]{.18\linewidth}
\begin{center}
$a_1 =0 $
\end{center}
\end{minipage}\\
\hline
\begin{minipage}[m]{.28\linewidth}

$O_tO_w=1\times\frac{1}{3}(1_x,1_y,2_z)$ \boundary

\end{minipage}&
\begin{minipage}[m]{.04\linewidth}
\begin{center}
$-$
\end{center}
\end{minipage}&
\begin{minipage}[m]{.11\linewidth}
\begin{center}
$8B+2E$
\end{center}
\end{minipage}&
\begin{minipage}[m]{.11\linewidth}
\begin{center}
$z$
\end{center}
\end{minipage}&
\begin{minipage}[m]{.11\linewidth}
\begin{center}
$z$
\end{center}
\end{minipage}&
\begin{minipage}[m]{.18\linewidth}
\begin{center}

\end{center}
\end{minipage}\\
\hline
\begin{minipage}[m]{.28\linewidth}

$O_zO_w=1\times\frac{1}{4}(1_x,3_y,1_t)$ \boundary

\end{minipage}&
\begin{minipage}[m]{.04\linewidth}
\begin{center}
$-$
\end{center}
\end{minipage}&
\begin{minipage}[m]{.11\linewidth}
\begin{center}
$7B+E$
\end{center}
\end{minipage}&
\begin{minipage}[m]{.11\linewidth}
\begin{center}
$y$
\end{center}
\end{minipage}&
\begin{minipage}[m]{.11\linewidth}
\begin{center}
$y$
\end{center}
\end{minipage}&
\begin{minipage}[m]{.18\linewidth}
\begin{center}

\end{center}
\end{minipage}\\
\hline
\end{longtable}
\end{center}

\begin{Note}

\item 
For each singular point the $1$-cycle $\Gamma$ is always
irreducible because of the monomials $w^3$ and $t^4$. In
particular, the intersection $\Gamma$ for the singular point $O_y$
with $a_1=0$ is irreducible even though it is non-reduced.
\end{Note}
%%%%%%%%%%%%%%%%%%%%%%%%%%%%%%%%%%%%%%%%%%%%%%%%%%%%%%%%%%%%%%%%%%

\begin{center}
\begin{longtable}{|l|c|c|c|c|c|}
\hline
\multicolumn{6}{|l|}{\underline{\textbf{No. 85}}: $X_{38}\subset\mathbb{P}(1,3,5,11,19)$ \hfill $A^3=2/165$}\\
\multicolumn{6}{|l|}{
\begin{minipage}[m]{.86\linewidth}
%\begin{center}
\vspace*{1.2mm}
$w^2+zt^3+yz^7+y^9(a_1t+a_2y^2z)+wf_{19}(x,y,z,t)+f_{38}(x,y,z,t)$
\vspace*{1.2mm}
%\end{center}
\end{minipage}
}\\
\hline \hline
\begin{minipage}[m]{.28\linewidth}
\begin{center}
Singularity
\end{center}
\end{minipage}&
\begin{minipage}[m]{.04\linewidth}
\begin{center}
$B^3$
\end{center}
\end{minipage}&
\begin{minipage}[m]{.11\linewidth}
\begin{center}
Linear

system
\end{center}
\end{minipage}&
\begin{minipage}[m]{.11\linewidth}
\begin{center}
Surface $T$
\end{center}
\end{minipage}&
\begin{minipage}[m]{.11\linewidth}
\begin{center}
\vspace*{1mm}
 \vorder
\vspace*{1mm}
\end{center}
\end{minipage}&
\begin{minipage}[m]{.18\linewidth}
\begin{center}
Condition
\end{center}
\end{minipage}\\
\hline
\begin{minipage}[m]{.28\linewidth}

$O_t=\frac{1}{11}(1_x,3_y,8_w)$ $\positive$

\end{minipage}&
\begin{minipage}[m]{.04\linewidth}
\begin{center}
$+$
\end{center}
\end{minipage}&
\begin{minipage}[m]{.11\linewidth}
\begin{center}
$5B-E$
\end{center}
\end{minipage}&
\begin{minipage}[m]{.11\linewidth}
\begin{center}
$z$
\end{center}
\end{minipage}&
\begin{minipage}[m]{.11\linewidth}
\begin{center}
$w^2$
\end{center}
\end{minipage}
&
\begin{minipage}[m]{.18\linewidth}
\begin{center}

\end{center}
\end{minipage}\\
\hline
\begin{minipage}[m]{.28\linewidth}

$O_z=\frac{1}{5}(1_x,1_t,4_w)$ \boundary

\end{minipage}&
\begin{minipage}[m]{.04\linewidth}
\begin{center}
$-$
\end{center}
\end{minipage}&
\begin{minipage}[m]{.11\linewidth}
\begin{center}
$3B$
\end{center}
\end{minipage}&
\begin{minipage}[m]{.11\linewidth}
\begin{center}
$y$
\end{center}
\end{minipage}&
\begin{minipage}[m]{.11\linewidth}
\begin{center}
$zt^3$
\end{center}
\end{minipage}&
\begin{minipage}[m]{.18\linewidth}
\begin{center}

\end{center}
\end{minipage}\\
\hline
\begin{minipage}[m]{.28\linewidth}

$O_y=\frac{1}{3}(1_x,2_z,1_w)$ \boundary

\end{minipage}&
\begin{minipage}[m]{.04\linewidth}
\begin{center}
$-$
\end{center}
\end{minipage}&
\begin{minipage}[m]{.11\linewidth}
\begin{center}
$5B+E$
\end{center}
\end{minipage}&
\begin{minipage}[m]{.11\linewidth}
\begin{center}
$z$
\end{center}
\end{minipage}&
\begin{minipage}[m]{.11\linewidth}
\begin{center}
$z$
\end{center}
\end{minipage}&
\begin{minipage}[m]{.18\linewidth}
\begin{center}
$a_1\ne 0$
\end{center}
\end{minipage}\\
\hline
\begin{minipage}[m]{.28\linewidth}

$O_y=\frac{1}{3}(1_x,2_t,1_w)$ \boundary

\end{minipage}&
\begin{minipage}[m]{.04\linewidth}
\begin{center}
$-$
\end{center}
\end{minipage}&
\begin{minipage}[m]{.11\linewidth}
\begin{center}
$5B+E$
\end{center}
\end{minipage}&
\begin{minipage}[m]{.11\linewidth}
\begin{center}
$z$
\end{center}
\end{minipage}&
\begin{minipage}[m]{.11\linewidth}
\begin{center}
$w^2$
\end{center}
\end{minipage}&
\begin{minipage}[m]{.18\linewidth}
\begin{center}
$a_1=0$
\end{center}
\end{minipage}\\

\hline
\end{longtable}
\end{center}

\begin{Note}

\item 
For the singular point $O_z$, the $1$-cycle $\Gamma$ is
irreducible because of  the monomials $w^2$ and $zt^3$.

\item For the singular point $O_y$, the $1$-cycle $\Gamma$ is
irreducible because of the monomials $w^2$ and $y^9t$. Note that
in case when $a_1=0$ the $1$-cycle $\Gamma$ is still irreducible
but non-reduced.
\end{Note}

%%%%%%%%%%%%%%%%%%%%%%%%%%%%%%%%%%%%%%%%%%%%%%%%%%%%%%%%%%%%%%%%%%%%%%%%%%%%%%%%%%%%%%%%
%%%%%%%%%%%%%%%%%%%%%%%%%%%%%%%%%%%%%%%%%%%%%%%%%%%%%%%%%%%%%%%%%%%%%%%%%%%%%%%%%%%%%%%%

\begin{center}
\begin{longtable}{|l|c|c|c|c|c|}
\hline
\multicolumn{6}{|l|}{\underline{\textbf{No. 86}}: $X_{38}\subset\mathbb{P}(1,5,6,8,19)$\hfill $A^3=1/120$}\\
\multicolumn{6}{|l|}{
\begin{minipage}[m]{.86\linewidth}
%\begin{center}
\vspace*{1.2mm}
$w^2+zt^4+z^5t+y^6t+wf_{19}(x,y,z,t)+f_{38}(x,y,z,t)$
\vspace*{1.2mm}
%\end{center}
\end{minipage}
}\\
\hline \hline
\begin{minipage}[m]{.28\linewidth}
\begin{center}
Singularity
\end{center}
\end{minipage}&
\begin{minipage}[m]{.04\linewidth}
\begin{center}
$B^3$
\end{center}
\end{minipage}&
\begin{minipage}[m]{.11\linewidth}
\begin{center}
Linear

system
\end{center}
\end{minipage}&
\begin{minipage}[m]{.11\linewidth}
\begin{center}
Surface $T$
\end{center}
\end{minipage}&
\begin{minipage}[m]{.11\linewidth}
\begin{center}
\vspace*{1mm}
 \vorder
\vspace*{1mm}
\end{center}
\end{minipage}&
\begin{minipage}[m]{.18\linewidth}
\begin{center}
Condition
\end{center}
\end{minipage}\\
\hline
\begin{minipage}[m]{.28\linewidth}

$O_t=\frac{1}{8}(1_x,5_y,3_w)$ \boundary

\end{minipage}&
\begin{minipage}[m]{.04\linewidth}
\begin{center}
$0$
\end{center}
\end{minipage}&
\begin{minipage}[m]{.11\linewidth}
\begin{center}
$5B$
\end{center}
\end{minipage}&
\begin{minipage}[m]{.11\linewidth}
\begin{center}
$y$
\end{center}
\end{minipage}
&
\begin{minipage}[m]{.11\linewidth}
\begin{center}
$y$
\end{center}
\end{minipage}&
\begin{minipage}[m]{.18\linewidth}
\begin{center}

\end{center}
\end{minipage}\\
\hline
\begin{minipage}[m]{.28\linewidth}

$O_z=\frac{1}{6}(1_x,5_y,1_w)$ \boundary

\end{minipage}&
\begin{minipage}[m]{.04\linewidth}
\begin{center}
$-$
\end{center}
\end{minipage}&
\begin{minipage}[m]{.11\linewidth}
\begin{center}
$5B$
\end{center}
\end{minipage}&
\begin{minipage}[m]{.11\linewidth}
\begin{center}
$y$
\end{center}
\end{minipage}
&
\begin{minipage}[m]{.11\linewidth}
\begin{center}
$y$
\end{center}
\end{minipage}&
\begin{minipage}[m]{.18\linewidth}
\begin{center}

\end{center}
\end{minipage}\\
\hline
\begin{minipage}[m]{.28\linewidth}

$O_y=\frac{1}{5}(1_x,1_z,4_w)$ $\nef$

\end{minipage}&
\begin{minipage}[m]{.04\linewidth}
\begin{center}
$-$
\end{center}
\end{minipage}&
\begin{minipage}[m]{.11\linewidth}
\begin{center}
$6B+E$
\end{center}
\end{minipage}&
\begin{minipage}[m]{.11\linewidth}
\begin{center}
$xy$, $z$
\end{center}
\end{minipage}
&
\begin{minipage}[m]{.11\linewidth}
\begin{center}
$xy$, $z$
\end{center}
\end{minipage}&
\begin{minipage}[m]{.18\linewidth}
\begin{center}

\end{center}
\end{minipage}\\

\hline
\begin{minipage}[m]{.28\linewidth}

$O_zO_t=1\times\frac{1}{2}(1_x,1_y,1_w)$ \boundary

\end{minipage}&
\begin{minipage}[m]{.04\linewidth}
\begin{center}
$-$
\end{center}
\end{minipage}&
\begin{minipage}[m]{.11\linewidth}
\begin{center}
$5B+2E$
\end{center}
\end{minipage}&
\begin{minipage}[m]{.11\linewidth}
\begin{center}
$y$
\end{center}
\end{minipage}
&
\begin{minipage}[m]{.11\linewidth}
\begin{center}
$y$
\end{center}
\end{minipage}&
\begin{minipage}[m]{.18\linewidth}
\begin{center}

\end{center}
\end{minipage}\\

\hline
\end{longtable}
\end{center}

\begin{Note}

\item 
 For the
singular points except the point $O_y$, the $1$-cycles $\Gamma$
are always irreducible because of the monomials $w^2$, $zt^4$ and
$z^5t$.

\item For the singularity $O_y$, we consider the linear system generated
by $xy$ and $z$ on $X_{38}$. Its base curves are defined by
$x=z=0$ and $y=z=0$. The curve defined by $y=z=0$ does not pass
though the singular point $O_y$.  The curve   defined by $x=z=0$
is irreducible because of the monomials $w^2$ and $y^6t$. The
proper transform is equivalent to the $1$-cycle defined by
$(6B+E)\cdot B$ and   $(6B+E)^2\cdot B>0$.  Therefore, the divisor
$T$ is nef.
\end{Note}

%%%%%%%%%%%%%%%%%%%%%%%%%%%%%%%%%%%%%%%%%%%%%%%%%%%%%%%%%%%%%%%%%%
%%%%%%%%%%%%%%%%%%%%%%%%%%%%%%%%%%%%%%%%%%%%%%%%%%%%%%%%%%%%%%%%%%

\begin{center}
\begin{longtable}{|l|c|c|c|c|c|}
\hline
\multicolumn{6}{|l|}{\underline{\textbf{No. 87}}: $X_{40}\subset\mathbb{P}(1,5,7,8,20)$ \hfill $A^3=1/140$}\\
\multicolumn{6}{|l|}{
\begin{minipage}[m]{.86\linewidth}
%\begin{center}
\vspace*{1.2mm} $(w-\alpha_1 y^4)(w-\alpha_2
y^4)+t^5+yz^5+wf_{20}(x,y,z,t)+f_{40}(x,y,z,t)$ \vspace*{1.2mm}
%\end{center}
\end{minipage}
}\\

\hline \hline
\begin{minipage}[m]{.28\linewidth}
\begin{center}
Singularity
\end{center}
\end{minipage}&
\begin{minipage}[m]{.04\linewidth}
\begin{center}
$B^3$
\end{center}
\end{minipage}&
\begin{minipage}[m]{.11\linewidth}
\begin{center}
Linear

system
\end{center}
\end{minipage}&
\begin{minipage}[m]{.11\linewidth}
\begin{center}
Surface $T$
\end{center}
\end{minipage}&
\begin{minipage}[m]{.11\linewidth}
\begin{center}
\vspace*{1mm}
 \vorder
\vspace*{1mm}
\end{center}
\end{minipage}&
\begin{minipage}[m]{.18\linewidth}
\begin{center}
Condition
\end{center}
\end{minipage}\\
\hline
\begin{minipage}[m]{.28\linewidth}

$O_z=\frac{1}{7}(1_x,1_t,6_w)$ \boundary

\end{minipage}&
\begin{minipage}[m]{.04\linewidth}
\begin{center}
$-$
\end{center}
\end{minipage}&
\begin{minipage}[m]{.11\linewidth}
\begin{center}
$5B$
\end{center}
\end{minipage}&
\begin{minipage}[m]{.11\linewidth}
\begin{center}
$y$
\end{center}
\end{minipage}&
\begin{minipage}[m]{.11\linewidth}
\begin{center}
$t^5$
\end{center}
\end{minipage}
&
\begin{minipage}[m]{.18\linewidth}
\begin{center}

\end{center}
\end{minipage}\\
\hline
\begin{minipage}[m]{.28\linewidth}

$O_tO_w=1\times\frac{1}{4}(1_x,1_y,3_z)$ \boundary

\end{minipage}&
\begin{minipage}[m]{.04\linewidth}
\begin{center}
$-$
\end{center}
\end{minipage}&
\begin{minipage}[m]{.11\linewidth}
\begin{center}
$7B+E$
\end{center}
\end{minipage}&
\begin{minipage}[m]{.11\linewidth}
\begin{center}
$z$
\end{center}
\end{minipage}&
\begin{minipage}[m]{.11\linewidth}
\begin{center}
$z$
\end{center}
\end{minipage}&
\begin{minipage}[m]{.18\linewidth}
\begin{center}

\end{center}
\end{minipage}\\
\hline
\begin{minipage}[m]{.28\linewidth}

$O_yO_w=2\times\frac{1}{5}(1_x,2_z,3_t)$ \nef

\end{minipage}&
\begin{minipage}[m]{.04\linewidth}
\begin{center}
$-$
\end{center}
\end{minipage}&
\begin{minipage}[m]{.11\linewidth}
\begin{center}
\vspace*{1mm} $7B+ E$
\end{center}
\end{minipage}&
\begin{minipage}[m]{.11\linewidth}
\begin{center}
$x^2y$, $z$
\end{center}
\end{minipage}&
\begin{minipage}[m]{.11\linewidth}
\begin{center}
$x^2y$, $z$
\end{center}
\end{minipage}&
\begin{minipage}[m]{.18\linewidth}
\begin{center}

\end{center}
\end{minipage}\\

\hline
\end{longtable}
\end{center}

 \begin{Note}

\item 
 The irreducibility of the $1$-cycle $\Gamma$ can be
immediately checked for each singular point corresponding to the
method $\boundary$ since we have the monomials $w^2$ and $t^5$.

\item For the singular points of type $\frac{1}{5}(1,2,3)$, we consider
the linear system generated by $x^2y$ and $z$ on $X_{40}$. Its
base curves are defined by $x=z=0$ and $y=z=0$. The curve defined
by $y=z=0$ passes though no singular points of type
$\frac{1}{5}(1,2,3)$.  The curve   defined by $x=z=0$   is
irreducible because of the monomials $w^2$ and $t^5$.  Its proper
transform is equivalent to the $1$-cycle defined by $(7B+E)\cdot
B$. Consequently, the divisor $T$ is nef since $(7B+E)^2\cdot
B>0$.
\end{Note}

%%%%%%%%%%%%%%%%%%%%%%%%%%%%%%%%%%%%%%%%%%%%%%%%%%%%%%%%%%%%%%%%%%%%%%%%%%%%%%%%%%%%%%%%
%%%%%%%%%%%%%%%%%%%%%%%%%%%%%%%%%%%%%%%%%%%%%%%%%%%%%%%%%%%%%%%%%%%%%%%%%%%%%%%%%%%%%%%%

\begin{center}
\begin{longtable}{|l|c|c|c|c|c|}
\hline
\multicolumn{6}{|l|}{\underline{\textbf{No. 88}}: $X_{42}\subset\mathbb{P}(1,1,6,14,21)$\hfill $A^3=1/42$}\\
\multicolumn{6}{|l|}{
\begin{minipage}[m]{.86\linewidth}
%\begin{center}
\vspace*{1.2mm} $w^2+t^3+z^7+wf_{21}(x,y,z,t)+f_{42}(x,y,z,t)$
\vspace*{1.2mm}
%\end{center}
\end{minipage}
}\\

\hline \hline
\begin{minipage}[m]{.28\linewidth}
\begin{center}
Singularity
\end{center}
\end{minipage}&
\begin{minipage}[m]{.04\linewidth}
\begin{center}
$B^3$
\end{center}
\end{minipage}&
\begin{minipage}[m]{.11\linewidth}
\begin{center}
Linear

system
\end{center}
\end{minipage}&
\begin{minipage}[m]{.11\linewidth}
\begin{center}
Surface $T$
\end{center}
\end{minipage}&
\begin{minipage}[m]{.11\linewidth}
\begin{center}
\vspace*{1mm}
 \vorder
\vspace*{1mm}
\end{center}
\end{minipage}&
\begin{minipage}[m]{.18\linewidth}
\begin{center}
Condition
\end{center}
\end{minipage}\\
\hline
\begin{minipage}[m]{.28\linewidth}

$O_tO_w=1\times\frac{1}{7}(1_x,1_y,6_z)$ \boundary

\end{minipage}&
\begin{minipage}[m]{.04\linewidth}
\begin{center}
$0$
\end{center}
\end{minipage}&
\begin{minipage}[m]{.11\linewidth}
\begin{center}
$B$
\end{center}
\end{minipage}&
\begin{minipage}[m]{.11\linewidth}
\begin{center}
$y$
\end{center}
\end{minipage}
&
\begin{minipage}[m]{.11\linewidth}
\begin{center}
$y$
\end{center}
\end{minipage}&
\begin{minipage}[m]{.18\linewidth}
\begin{center}

\end{center}
\end{minipage}\\
\hline
\begin{minipage}[m]{.28\linewidth}

$O_zO_w=1\times\frac{1}{3}(1_x,1_y,2_t)$ \boundary

\end{minipage}&
\begin{minipage}[m]{.04\linewidth}
\begin{center}
$-$
\end{center}
\end{minipage}&
\begin{minipage}[m]{.11\linewidth}
\begin{center}
$B$
\end{center}
\end{minipage}&
\begin{minipage}[m]{.11\linewidth}
\begin{center}
$y$
\end{center}
\end{minipage}
&
\begin{minipage}[m]{.11\linewidth}
\begin{center}
$y$
\end{center}
\end{minipage}&
\begin{minipage}[m]{.18\linewidth}
\begin{center}

\end{center}
\end{minipage}\\
\hline
\begin{minipage}[m]{.28\linewidth}

$O_zO_t=1\times\frac{1}{2}(1_x,1_y,1_w)$ \boundary

\end{minipage}&
\begin{minipage}[m]{.04\linewidth}
\begin{center}
$-$
\end{center}
\end{minipage}&
\begin{minipage}[m]{.11\linewidth}
\begin{center}
$B$
\end{center}
\end{minipage}&
\begin{minipage}[m]{.11\linewidth}
\begin{center}
$y$
\end{center}
\end{minipage}
&
\begin{minipage}[m]{.11\linewidth}
\begin{center}
$y$
\end{center}
\end{minipage}&
\begin{minipage}[m]{.18\linewidth}
\begin{center}

\end{center}
\end{minipage}\\

\hline
\end{longtable}
\end{center}

\begin{Note}

\item 
For each singular point the $1$-cycle $\Gamma$ is irreducible due
to the monomials $w^2$, $t^3$, and $z^{7}$.
\end{Note}

%%%%%%%%%%%%%%%%%%%%%%%%%%%%%%%%%%%%%%%%%%%%%%%%%%%%%%%%%%%%%%%%%%

%%%%%%%%%%%%%%%%%%%%%%%%%%%%%%%%%%%%%%%%%%%%%%%%%%%%%%%%%%%%%%%%%%

\begin{center}
\begin{longtable}{|l|c|c|c|c|c|}
\hline
\multicolumn{6}{|l|}{\underline{\textbf{No. 89}}: $X_{42}\subset\mathbb{P}(1,2,5,14,21)$ \hfill $A^3=1/70$}\\
\multicolumn{6}{|l|}{
\begin{minipage}[m]{.86\linewidth}
%\begin{center}
\vspace*{1.2mm}
$w^2+t^3+yz^8+y^{21}+wf_{21}(x,y,z,t)+f_{42}(x,y,z,t)$
\vspace*{1.2mm}
%\end{center}
\end{minipage}
}\\

\hline \hline
\begin{minipage}[m]{.28\linewidth}
\begin{center}
Singularity
\end{center}
\end{minipage}&
\begin{minipage}[m]{.04\linewidth}
\begin{center}
$B^3$
\end{center}
\end{minipage}&
\begin{minipage}[m]{.11\linewidth}
\begin{center}
Linear

system
\end{center}
\end{minipage}&
\begin{minipage}[m]{.11\linewidth}
\begin{center}
Surface $T$
\end{center}
\end{minipage}&
\begin{minipage}[m]{.11\linewidth}
\begin{center}
\vspace*{1mm}
 \vorder
\vspace*{1mm}
\end{center}
\end{minipage}&
\begin{minipage}[m]{.18\linewidth}
\begin{center}
Condition
\end{center}
\end{minipage}\\
\hline
\begin{minipage}[m]{.28\linewidth}

$O_z=\frac{1}{5}(1_x,4_t,1_w)$ \boundary

\end{minipage}&
\begin{minipage}[m]{.04\linewidth}
\begin{center}
$-$
\end{center}
\end{minipage}&
\begin{minipage}[m]{.11\linewidth}
\begin{center}
$2B$
\end{center}
\end{minipage}&
\begin{minipage}[m]{.11\linewidth}
\begin{center}
$y$
\end{center}
\end{minipage}&
\begin{minipage}[m]{.11\linewidth}
\begin{center}
$w^2$
\end{center}
\end{minipage}
&
\begin{minipage}[m]{.18\linewidth}
\begin{center}

\end{center}
\end{minipage}\\
\hline
\begin{minipage}[m]{.28\linewidth}

$O_tO_w=1\times\frac{1}{7}(1_x,2_y,5_z)$ \boundary

\end{minipage}&
\begin{minipage}[m]{.04\linewidth}
\begin{center}
$0$
\end{center}
\end{minipage}&
\begin{minipage}[m]{.11\linewidth}
\begin{center}
$2B$
\end{center}
\end{minipage}&
\begin{minipage}[m]{.11\linewidth}
\begin{center}
$y$
\end{center}
\end{minipage}&
\begin{minipage}[m]{.11\linewidth}
\begin{center}
$y$
\end{center}
\end{minipage}&
\begin{minipage}[m]{.18\linewidth}
\begin{center}

\end{center}
\end{minipage}\\
\hline
\begin{minipage}[m]{.28\linewidth}

$O_yO_t=3\times\frac{1}{2}(1_x,1_z,1_w)$ \boundary

\end{minipage}&
\begin{minipage}[m]{.04\linewidth}
\begin{center}
$-$
\end{center}
\end{minipage}&
\begin{minipage}[m]{.11\linewidth}
\begin{center}
$5B+2E$
\end{center}
\end{minipage}&
\begin{minipage}[m]{.11\linewidth}
\begin{center}
$z$
\end{center}
\end{minipage}&
\begin{minipage}[m]{.11\linewidth}
\begin{center}
$z$
\end{center}
\end{minipage}&
\begin{minipage}[m]{.18\linewidth}
\begin{center}

\end{center}
\end{minipage}\\

\hline
\end{longtable}
\end{center}

\begin{Note}

\item 
For each singular point the $1$-cycle $\Gamma$ is irreducible
because of the monomials $w^2$ and $t^3$.
\end{Note}

%%%%%%%%%%%%%%%%%%%%%%%%%%%%%%%%%%%%%%%%%%%%%%%%%%%%%%%%%%%%%%%%%%%%%%%%%%%%%%%%%%%%%%%%

%%%%%%%%%%%%%%%%%%%%%%%%%%%%%%%%%%%%%%%%%%%%%%%%%%%%%%%%%%%%%%%%%%%%%%%%%%%%%%%%%%%%%%%%

\begin{center}
\begin{longtable}{|l|c|c|c|c|c|}
\hline
\multicolumn{6}{|l|}{\underline{\textbf{No. 90}}: $X_{42}\subset\mathbb{P}(1,3,4,14,21)$\hfill $A^3=1/84$}\\
\multicolumn{6}{|l|}{
\begin{minipage}[m]{.86\linewidth}
%\begin{center}
\vspace*{1.2mm} $(w-\alpha_1 y^7)(w-\alpha_2
y^7)+t^{3}+z^{7}t+wf_{21}(x,y,z,t)+f_{42}(x,y,z,t)$
\vspace*{1.2mm}
%\end{center}
\end{minipage}
}\\

\hline \hline
\begin{minipage}[m]{.28\linewidth}
\begin{center}
Singularity
\end{center}
\end{minipage}&
\begin{minipage}[m]{.04\linewidth}
\begin{center}
$B^3$
\end{center}
\end{minipage}&
\begin{minipage}[m]{.11\linewidth}
\begin{center}
Linear

system
\end{center}
\end{minipage}&
\begin{minipage}[m]{.11\linewidth}
\begin{center}
Surface $T$
\end{center}
\end{minipage}&
\begin{minipage}[m]{.11\linewidth}
\begin{center}
\vspace*{1mm}
 \vorder
\vspace*{1mm}
\end{center}
\end{minipage}&
\begin{minipage}[m]{.18\linewidth}
\begin{center}
Condition
\end{center}
\end{minipage}\\
\hline
\begin{minipage}[m]{.28\linewidth}

$O_z=\frac{1}{4}(1_x,3_y,1_w)$ \boundary

\end{minipage}&
\begin{minipage}[m]{.04\linewidth}
\begin{center}
$-$
\end{center}
\end{minipage}&
\begin{minipage}[m]{.11\linewidth}
\begin{center}
$3B$
\end{center}
\end{minipage}&
\begin{minipage}[m]{.11\linewidth}
\begin{center}
$y$
\end{center}
\end{minipage}
&
\begin{minipage}[m]{.11\linewidth}
\begin{center}
$y$
\end{center}
\end{minipage}&
\begin{minipage}[m]{.18\linewidth}
\begin{center}

\end{center}
\end{minipage}\\
\hline
\begin{minipage}[m]{.28\linewidth}

$O_tO_w=1\times\frac{1}{7}(1_x,3_y,4_z)$ \boundary

\end{minipage}&
\begin{minipage}[m]{.04\linewidth}
\begin{center}
$0$
\end{center}
\end{minipage}&
\begin{minipage}[m]{.11\linewidth}
\begin{center}
$3B$
\end{center}
\end{minipage}&
\begin{minipage}[m]{.11\linewidth}
\begin{center}
$y$
\end{center}
\end{minipage}
&
\begin{minipage}[m]{.11\linewidth}
\begin{center}
$y$
\end{center}
\end{minipage}&
\begin{minipage}[m]{.18\linewidth}
\begin{center}

\end{center}
\end{minipage}\\
\hline
\begin{minipage}[m]{.28\linewidth}

$O_yO_w=2\times\frac{1}{3}(1_x,1_z,2_t)$ \nef

\end{minipage}&
\begin{minipage}[m]{.04\linewidth}
\begin{center}
$-$
\end{center}
\end{minipage}&
\begin{minipage}[m]{.11\linewidth}
\begin{center}
$4B+E$
\end{center}
\end{minipage}&
\begin{minipage}[m]{.11\linewidth}
\begin{center}
$xy$, $z$
\end{center}
\end{minipage}
&
\begin{minipage}[m]{.11\linewidth}
\begin{center}
$xy$, $z$
\end{center}
\end{minipage}&
\begin{minipage}[m]{.18\linewidth}
\begin{center}

\end{center}
\end{minipage}\\

\hline
\begin{minipage}[m]{.28\linewidth}

$O_zO_t=1\times\frac{1}{2}(1_x,1_y,1_w)$ \boundary

\end{minipage}&
\begin{minipage}[m]{.04\linewidth}
\begin{center}
$-$
\end{center}
\end{minipage}&
\begin{minipage}[m]{.11\linewidth}
\begin{center}
$3B+E$
\end{center}
\end{minipage}&
\begin{minipage}[m]{.11\linewidth}
\begin{center}
$y$
\end{center}
\end{minipage}
&
\begin{minipage}[m]{.11\linewidth}
\begin{center}
$y$
\end{center}
\end{minipage}&
\begin{minipage}[m]{.18\linewidth}
\begin{center}

\end{center}
\end{minipage}\\

\hline
\end{longtable}
\end{center}

\begin{Note}

\item 
For the singular points other than those of type
$\frac{1}{3}(1,1,2)$, the $1$-cycle $\Gamma$ is irreducible since
we have monomials $w^2$, $t^3$, and $z^7t$.

\item For the singular points of type $\frac{1}{3}(1,1,2)$, consider the
linear system generated by $xy$ and $z$ on $X_{42}$. Its base
curves are defined by $x=z=0$ and $y=z=0$. The curve defined by
$y=z=0$ passes through no singular points of type
$\frac{1}{3}(1,1,2)$.  The curve   defined by $x=z=0$   is
irreducible because of the monomials $w^2$ and $t^3$.  Its proper
transform is equivalent to the $1$-cycle defined by $(4B+E)\cdot
B$ and $(4B+E)^2\cdot B>0$. Therefore, the divisor $T$ is nef.
\end{Note}

%%%%%%%%%%%%%%%%%%%%%%%%%%%%%%%%%%%%%%%%%%%%%%%%%%%%%%%%%%%%%%%%%%

%%%%%%%%%%%%%%%%%%%%%%%%%%%%%%%%%%%%%%%%%%%%%%%%%%%%%%%%%%%%%%%%%%

\begin{center}
\begin{longtable}{|l|c|c|c|c|c|}
\hline
\multicolumn{6}{|l|}{\underline{\textbf{No. 91}}: $X_{44}\subset\mathbb{P}(1,4,5,13,22)$ \hfill $A^3=1/130$}\\
\multicolumn{6}{|l|}{
\begin{minipage}[m]{.86\linewidth}
%\begin{center}
\vspace*{1.2mm}
$w^2+zt^3+yz^8+y^{11}+wf_{22}(x,y,z,t)+f_{44}(x,y,z,t)$
\vspace*{1.2mm}
%\end{center}
\end{minipage}
}\\
\hline \hline
\begin{minipage}[m]{.28\linewidth}
\begin{center}
Singularity
\end{center}
\end{minipage}&
\begin{minipage}[m]{.04\linewidth}
\begin{center}
$B^3$
\end{center}
\end{minipage}&
\begin{minipage}[m]{.11\linewidth}
\begin{center}
Linear

system
\end{center}
\end{minipage}&
\begin{minipage}[m]{.11\linewidth}
\begin{center}
Surface $T$
\end{center}
\end{minipage}&
\begin{minipage}[m]{.11\linewidth}
\begin{center}
\vspace*{1mm}
 \vorder
\vspace*{1mm}
\end{center}
\end{minipage}&
\begin{minipage}[m]{.18\linewidth}
\begin{center}
Condition
\end{center}
\end{minipage}\\
\hline
\begin{minipage}[m]{.28\linewidth}

$O_t=\frac{1}{13}(1_x,4_y,9_w)$ $\positive$

\end{minipage}&
\begin{minipage}[m]{.04\linewidth}
\begin{center}
$+$
\end{center}
\end{minipage}&
\begin{minipage}[m]{.11\linewidth}
\begin{center}
$5B-E$
\end{center}
\end{minipage}&
\begin{minipage}[m]{.11\linewidth}
\begin{center}
$z$
\end{center}
\end{minipage}&
\begin{minipage}[m]{.11\linewidth}
\begin{center}
$w^2$
\end{center}
\end{minipage}
&
\begin{minipage}[m]{.18\linewidth}
\begin{center}

\end{center}
\end{minipage}\\
\hline
\begin{minipage}[m]{.28\linewidth}

$O_z=\frac{1}{5}(1_x,3_t,2_w)$ \boundary

\end{minipage}&
\begin{minipage}[m]{.04\linewidth}
\begin{center}
$-$
\end{center}
\end{minipage}&
\begin{minipage}[m]{.11\linewidth}
\begin{center}
$4B$
\end{center}
\end{minipage}&
\begin{minipage}[m]{.11\linewidth}
\begin{center}
$y$
\end{center}
\end{minipage}&
\begin{minipage}[m]{.11\linewidth}
\begin{center}
$w^2$
\end{center}
\end{minipage}&
\begin{minipage}[m]{.18\linewidth}
\begin{center}

\end{center}
\end{minipage}\\
\hline
\begin{minipage}[m]{.28\linewidth}

$O_yO_w=1\times\frac{1}{2}(1_x,1_z,1_t)$ \boundary

\end{minipage}&
\begin{minipage}[m]{.04\linewidth}
\begin{center}
$-$
\end{center}
\end{minipage}&
\begin{minipage}[m]{.11\linewidth}
\begin{center}
$5B+2E$
\end{center}
\end{minipage}&
\begin{minipage}[m]{.11\linewidth}
\begin{center}
$z$
\end{center}
\end{minipage}&
\begin{minipage}[m]{.11\linewidth}
\begin{center}
$z$
\end{center}
\end{minipage}&
\begin{minipage}[m]{.18\linewidth}
\begin{center}

\end{center}
\end{minipage}\\

\hline
\end{longtable}
\end{center}

\begin{Note}

\item 
For each singular point the $1$-cycle $\Gamma$ is irreducible due
to the monomials $w^2$, $y^{11}$, and $zt^3$.
\end{Note}

%%%%%%%%%%%%%%%%%%%%%%%%%%%%%%%%%%%%%%%%%%%%%%%%%%%%%%%%%%%%%%%%%%%%%%%%%%%%%%%%%%%%%%%%

%%%%%%%%%%%%%%%%%%%%%%%%%%%%%%%%%%%%%%%%%%%%%%%%%%%%%%%%%%%%%%%%%%%%%%%%%%%%%%%%%%%%%%%%

\begin{center}
\begin{longtable}{|l|c|c|c|c|c|}
\hline
\multicolumn{6}{|l|}{\underline{\textbf{No. 92}}: $X_{48}\subset\mathbb{P}(1,3,5,16,24)$\hfill $A^3=1/120$}\\
\multicolumn{6}{|l|}{
\begin{minipage}[m]{.86\linewidth}
%\begin{center}
\vspace*{1.2mm}
$w^2+t^3+yz^9+y^{16}+wf_{24}(x,y,z,t)+f_{48}(x,y,z,t)$
\vspace*{1.2mm}
%\end{center}
\end{minipage}
}\\

\hline \hline
\begin{minipage}[m]{.28\linewidth}
\begin{center}
Singularity
\end{center}
\end{minipage}&
\begin{minipage}[m]{.04\linewidth}
\begin{center}
$B^3$
\end{center}
\end{minipage}&
\begin{minipage}[m]{.11\linewidth}
\begin{center}
Linear

system
\end{center}
\end{minipage}&
\begin{minipage}[m]{.11\linewidth}
\begin{center}
Surface $T$
\end{center}
\end{minipage}&
\begin{minipage}[m]{.11\linewidth}
\begin{center}
\vspace*{1mm}
 \vorder
\vspace*{1mm}
\end{center}
\end{minipage}&
\begin{minipage}[m]{.18\linewidth}
\begin{center}
Condition
\end{center}
\end{minipage}\\
\hline
\begin{minipage}[m]{.28\linewidth}

$O_z=\frac{1}{5}(1_x,1_t,4_w)$ \boundary

\end{minipage}&
\begin{minipage}[m]{.04\linewidth}
\begin{center}
$-$
\end{center}
\end{minipage}&
\begin{minipage}[m]{.11\linewidth}
\begin{center}
$3B$
\end{center}
\end{minipage}&
\begin{minipage}[m]{.11\linewidth}
\begin{center}
$y$
\end{center}
\end{minipage}
&
\begin{minipage}[m]{.11\linewidth}
\begin{center}
$t^3$
\end{center}
\end{minipage}&
\begin{minipage}[m]{.18\linewidth}
\begin{center}

\end{center}
\end{minipage}\\
\hline
\begin{minipage}[m]{.28\linewidth}

$O_tO_w=1\times\frac{1}{8}(1_x,3_y,5_z)$ \boundary

\end{minipage}&
\begin{minipage}[m]{.04\linewidth}
\begin{center}
$0$
\end{center}
\end{minipage}&
\begin{minipage}[m]{.11\linewidth}
\begin{center}
$3B$
\end{center}
\end{minipage}&
\begin{minipage}[m]{.11\linewidth}
\begin{center}
$y$
\end{center}
\end{minipage}
&
\begin{minipage}[m]{.11\linewidth}
\begin{center}
$y$
\end{center}
\end{minipage}&
\begin{minipage}[m]{.18\linewidth}
\begin{center}

\end{center}
\end{minipage}\\
\hline
\begin{minipage}[m]{.28\linewidth}

$O_yO_w=2\times\frac{1}{3}(1_x,2_z,1_t)$ \boundary

\end{minipage}&
\begin{minipage}[m]{.04\linewidth}
\begin{center}
$-$
\end{center}
\end{minipage}&
\begin{minipage}[m]{.11\linewidth}
\begin{center}
$5B+E$
\end{center}
\end{minipage}&
\begin{minipage}[m]{.11\linewidth}
\begin{center}
$z$
\end{center}
\end{minipage}
&
\begin{minipage}[m]{.11\linewidth}
\begin{center}
$z$
\end{center}
\end{minipage}&
\begin{minipage}[m]{.18\linewidth}
\begin{center}

\end{center}
\end{minipage}\\

\hline
\end{longtable}
\end{center}

\begin{Note}

\item 
For each singular point the $1$-cycle $\Gamma$ is irreducible due
to the monomials $w^2$ and $t^3$.
\end{Note}
%%%%%%%%%%%%%%%%%%%%%%%%%%%%%%%%%%%%%%%%%%%%%%%%%%%%%%%%%%%%%%%%%%
%%%%%%%%%%%%%%%%%%%%%%%%%%%%%%%%%%%%%%%%%%%%%%%%%%%%%%%%%%%%%%%%%%

\begin{center}
\begin{longtable}{|l|c|c|c|c|c|}
\hline
\multicolumn{6}{|l|}{\underline{\textbf{No. 93}}: $X_{50}\subset\mathbb{P}(1,7,8,10,25)$ \hfill $A^3=1/280$}\\
\multicolumn{6}{|l|}{
\begin{minipage}[m]{.86\linewidth}
%\begin{center}
\vspace*{1.2mm}
$w^2+t^5+z^5t+y^6(a_1z+a_2xy)+wf_{25}(x,y,z,t)+f_{50}(x,y,z,t)$
\vspace*{1.2mm}
%\end{center}
\end{minipage}
}\\

\hline \hline
\begin{minipage}[m]{.28\linewidth}
\begin{center}
Singularity
\end{center}
\end{minipage}&
\begin{minipage}[m]{.04\linewidth}
\begin{center}
$B^3$
\end{center}
\end{minipage}&
\begin{minipage}[m]{.11\linewidth}
\begin{center}
Linear

system
\end{center}
\end{minipage}&
\begin{minipage}[m]{.11\linewidth}
\begin{center}
Surface $T$
\end{center}
\end{minipage}&
\begin{minipage}[m]{.11\linewidth}
\begin{center}
\vspace*{1mm}
 \vorder
\vspace*{1mm}
\end{center}
\end{minipage}&
\begin{minipage}[m]{.18\linewidth}
\begin{center}
Condition
\end{center}
\end{minipage}\\
\hline
\begin{minipage}[m]{.28\linewidth}

$O_z=\frac{1}{8}(1_x,7_y,1_w)$ \boundary

\end{minipage}&
\begin{minipage}[m]{.04\linewidth}
\begin{center}
$-$
\end{center}
\end{minipage}&
\begin{minipage}[m]{.11\linewidth}
\begin{center}
$7B$
\end{center}
\end{minipage}&
\begin{minipage}[m]{.11\linewidth}
\begin{center}
$y$
\end{center}
\end{minipage}&
\begin{minipage}[m]{.11\linewidth}
\begin{center}
$y$
\end{center}
\end{minipage}
&
\begin{minipage}[m]{.18\linewidth}
\begin{center}

\end{center}
\end{minipage}\\
\hline
\begin{minipage}[m]{.28\linewidth}

$O_y=\frac{1}{7}(1_x,3_t,4_w)$ \boundary

\end{minipage}&
\begin{minipage}[m]{.04\linewidth}
\begin{center}
$-$
\end{center}
\end{minipage}&
\begin{minipage}[m]{.11\linewidth}
\begin{center}
$8B$
\end{center}
\end{minipage}&
\begin{minipage}[m]{.11\linewidth}
\begin{center}
$z$
\end{center}
\end{minipage}&
\begin{minipage}[m]{.11\linewidth}
\begin{center}
$w^2$
\end{center}
\end{minipage}&
\begin{minipage}[m]{.18\linewidth}
\begin{center}
$a_1\ne 0$
\end{center}
\end{minipage}\\
\hline
\begin{minipage}[m]{.28\linewidth}

$O_y=\frac{1}{7}(1_z,3_t,4_w)$ \boundary

\end{minipage}&
\begin{minipage}[m]{.04\linewidth}
\begin{center}
$-$
\end{center}
\end{minipage}&
\begin{minipage}[m]{.11\linewidth}
\begin{center}
$10B+E$
\end{center}
\end{minipage}&
\begin{minipage}[m]{.11\linewidth}
\begin{center}
$t$
\end{center}
\end{minipage}&
\begin{minipage}[m]{.11\linewidth}
\begin{center}
$t$
\end{center}
\end{minipage}&
\begin{minipage}[m]{.18\linewidth}
\begin{center}
$a_1=0$
\end{center}
\end{minipage}\\
\hline
\begin{minipage}[m]{.28\linewidth}

$O_tO_w=1\times\frac{1}{5}(1_x,2_y,3_z)$ \boundary

\end{minipage}&
\begin{minipage}[m]{.04\linewidth}
\begin{center}
$-$
\end{center}
\end{minipage}&
\begin{minipage}[m]{.11\linewidth}
\begin{center}
$8B+E$
\end{center}
\end{minipage}&
\begin{minipage}[m]{.11\linewidth}
\begin{center}
$z$
\end{center}
\end{minipage}&
\begin{minipage}[m]{.11\linewidth}
\begin{center}
$z$
\end{center}
\end{minipage}&
\begin{minipage}[m]{.18\linewidth}
\begin{center}

\end{center}
\end{minipage}\\
\hline
\begin{minipage}[m]{.28\linewidth}

$O_zO_t=1\times\frac{1}{2}(1_x,1_y,1_w)$ \boundary

\end{minipage}&
\begin{minipage}[m]{.04\linewidth}
\begin{center}
$-$
\end{center}
\end{minipage}&
\begin{minipage}[m]{.11\linewidth}
\begin{center}
$7B+3E$
\end{center}
\end{minipage}&
\begin{minipage}[m]{.11\linewidth}
\begin{center}
$y$
\end{center}
\end{minipage}&
\begin{minipage}[m]{.11\linewidth}
\begin{center}
$y$
\end{center}
\end{minipage}&
\begin{minipage}[m]{.18\linewidth}
\begin{center}

\end{center}
\end{minipage}\\

\hline
\end{longtable}
\end{center}

\begin{Note}

\item 
For each singular point the $1$-cycle $\Gamma$ is always
irreducible because of the monomials $w^2$ and $t^5$. In
particular, the $1$-cycle $\Gamma$ for the singular point $O_y$
with $a_1=0$ is irreducible even though it is non-reduced.
\end{Note}

%%%%%%%%%%%%%%%%%%%%%%%%%%%%%%%%%%%%%%%%%%%%%%%%%%%%%%%%%%%%%%%%%%%%%%%%%%%%%%%%%%%%%%%%
%%%%%%%%%%%%%%%%%%%%%%%%%%%%%%%%%%%%%%%%%%%%%%%%%%%%%%%%%%%%%%%%%%%%%%%%%%%%%%%%%%%%%%%%
\begin{center}
\begin{longtable}{|l|c|c|c|c|c|}
\hline
\multicolumn{6}{|l|}{\underline{\textbf{No. 94}}: $X_{54}\subset\mathbb{P}(1,4,5,18,27)$\hfill $A^3=1/180$}\\
\multicolumn{6}{|l|}{
\begin{minipage}[m]{.86\linewidth}
%\begin{center}
\vspace*{1.2mm}
$w^2+t^3+yz^{10}+y^{9}t+wf_{27}(x,y,z,t)+f_{54}(x,y,z,t)$
\vspace*{1.2mm}
%\end{center}
\end{minipage}
}\\

\hline \hline
\begin{minipage}[m]{.28\linewidth}
\begin{center}
Singularity
\end{center}
\end{minipage}&
\begin{minipage}[m]{.04\linewidth}
\begin{center}
$B^3$
\end{center}
\end{minipage}&
\begin{minipage}[m]{.11\linewidth}
\begin{center}
Linear

system
\end{center}
\end{minipage}&
\begin{minipage}[m]{.11\linewidth}
\begin{center}
Surface $T$
\end{center}
\end{minipage}&
\begin{minipage}[m]{.11\linewidth}
\begin{center}
\vspace*{1mm}
 \vorder
\vspace*{1mm}
\end{center}
\end{minipage}&
\begin{minipage}[m]{.18\linewidth}
\begin{center}
Condition
\end{center}
\end{minipage}\\
\hline
\begin{minipage}[m]{.28\linewidth}

$O_z=\frac{1}{5}(1_x,3_t,2_w)$ \boundary

\end{minipage}&
\begin{minipage}[m]{.04\linewidth}
\begin{center}
$-$
\end{center}
\end{minipage}&
\begin{minipage}[m]{.11\linewidth}
\begin{center}
$4B$
\end{center}
\end{minipage}&
\begin{minipage}[m]{.11\linewidth}
\begin{center}
$y$
\end{center}
\end{minipage}
&
\begin{minipage}[m]{.11\linewidth}
\begin{center}
$w^2$
\end{center}
\end{minipage}&
\begin{minipage}[m]{.18\linewidth}
\begin{center}

\end{center}
\end{minipage}\\
\hline
\begin{minipage}[m]{.28\linewidth}

$O_y=\frac{1}{4}(1_x,1_z,3_w)$ \boundary

\end{minipage}&
\begin{minipage}[m]{.04\linewidth}
\begin{center}
$-$
\end{center}
\end{minipage}&
\begin{minipage}[m]{.11\linewidth}
\begin{center}
$18B+3E$
\end{center}
\end{minipage}&
\begin{minipage}[m]{.11\linewidth}
\begin{center}
$t$
\end{center}
\end{minipage}
&
\begin{minipage}[m]{.11\linewidth}
\begin{center}
$w^2$
\end{center}
\end{minipage}&
\begin{minipage}[m]{.18\linewidth}
\begin{center}

\end{center}
\end{minipage}\\
\hline
\begin{minipage}[m]{.28\linewidth}

$O_tO_w=1\times\frac{1}{9}(1_x,4_y,5_z)$ \boundary

\end{minipage}&
\begin{minipage}[m]{.04\linewidth}
\begin{center}
$0$
\end{center}
\end{minipage}&
\begin{minipage}[m]{.11\linewidth}
\begin{center}
$4B$
\end{center}
\end{minipage}&
\begin{minipage}[m]{.11\linewidth}
\begin{center}
$y$
\end{center}
\end{minipage}
&
\begin{minipage}[m]{.11\linewidth}
\begin{center}
$y$
\end{center}
\end{minipage}&
\begin{minipage}[m]{.18\linewidth}
\begin{center}

\end{center}
\end{minipage}\\

\hline
\begin{minipage}[m]{.28\linewidth}

$O_yO_t=1\times\frac{1}{2}(1_x,1_z,1_w)$ \boundary

\end{minipage}&
\begin{minipage}[m]{.04\linewidth}
\begin{center}
$-$
\end{center}
\end{minipage}&
\begin{minipage}[m]{.11\linewidth}
\begin{center}
$5B+2E$
\end{center}
\end{minipage}&
\begin{minipage}[m]{.11\linewidth}
\begin{center}
$z$
\end{center}
\end{minipage}
&
\begin{minipage}[m]{.11\linewidth}
\begin{center}
$z$
\end{center}
\end{minipage}&
\begin{minipage}[m]{.18\linewidth}
\begin{center}

\end{center}
\end{minipage}\\

\hline
\end{longtable}
\end{center}

\begin{Note}

\item 
For each singular point the $1$-cycle $\Gamma$ is irreducible due
to the monomials $w^2$, $t^3$ and $yz^{10}$.
\end{Note}

%%%%%%%%%%%%%%%%%%%%%%%%%%%%%%%%%%%%%%%%%%%%%%%%%%%%%%%%%%%%%%%%%%%%%%%%%%%%%%%%%%%%%%%%
\begin{center}
\begin{longtable}{|l|c|c|c|c|c|}
\hline
\multicolumn{6}{|l|}{\underline{\textbf{No. 95}}: $X_{66}\subset\mathbb{P}(1,5,6,22,33)$\hfill $A^3=1/330$}\\
\multicolumn{6}{|l|}{
\begin{minipage}[m]{.86\linewidth}
%\begin{center}
\vspace*{1.2mm}
$w^2+t^3+z^{11}+y^{12}(a_1z+a_2xy)+wf_{33}(x,y,z,t)+f_{66}(x,y,z,t)$
\vspace*{1.2mm}
%\end{center}
\end{minipage}
}\\

\hline \hline
\begin{minipage}[m]{.29\linewidth}
\begin{center}
Singularity
\end{center}
\end{minipage}&
\begin{minipage}[m]{.04\linewidth}
\begin{center}
$B^3$
\end{center}
\end{minipage}&
\begin{minipage}[m]{.11\linewidth}
\begin{center}
Linear

system
\end{center}
\end{minipage}&
\begin{minipage}[m]{.11\linewidth}
\begin{center}
Surface $T$
\end{center}
\end{minipage}&
\begin{minipage}[m]{.11\linewidth}
\begin{center}
\vspace*{1mm}
 \vorder
\vspace*{1mm}
\end{center}
\end{minipage}&
\begin{minipage}[m]{.17\linewidth}
\begin{center}
Condition
\end{center}
\end{minipage}\\
\hline
\begin{minipage}[m]{.29\linewidth}

$O_y=\frac{1}{5}(1_x,2_t,3_w)$ \boundary

\end{minipage}&
\begin{minipage}[m]{.04\linewidth}
\begin{center}
$-$
\end{center}
\end{minipage}&
\begin{minipage}[m]{.11\linewidth}
\begin{center}
$6B$
\end{center}
\end{minipage}&
\begin{minipage}[m]{.11\linewidth}
\begin{center}
$z$
\end{center}
\end{minipage}
&
\begin{minipage}[m]{.11\linewidth}
\begin{center}
$w^2$
\end{center}
\end{minipage}&
\begin{minipage}[m]{.17\linewidth}
\begin{center}
$a_1\ne 0$
\end{center}
\end{minipage}\\
\hline
\begin{minipage}[m]{.29\linewidth}

$O_y=\frac{1}{5}(1_z,2_t,3_w)$ \boundary

\end{minipage}&
\begin{minipage}[m]{.04\linewidth}
\begin{center}
$-$
\end{center}
\end{minipage}&
\begin{minipage}[m]{.11\linewidth}
\begin{center}
$6B+E$
\end{center}
\end{minipage}&
\begin{minipage}[m]{.11\linewidth}
\begin{center}
$z$
\end{center}
\end{minipage}
&
\begin{minipage}[m]{.11\linewidth}
\begin{center}
$z$
\end{center}
\end{minipage}&
\begin{minipage}[m]{.17\linewidth}
\begin{center}
$a_1=0$
\end{center}
\end{minipage}\\
\hline
\begin{minipage}[m]{.29\linewidth}

$O_tO_w=1\times\frac{1}{11}(1_x,5_y,6_z)$ \boundary

\end{minipage}&
\begin{minipage}[m]{.04\linewidth}
\begin{center}
$0$
\end{center}
\end{minipage}&
\begin{minipage}[m]{.11\linewidth}
\begin{center}
$5B$
\end{center}
\end{minipage}&
\begin{minipage}[m]{.11\linewidth}
\begin{center}
$y$
\end{center}
\end{minipage}
&
\begin{minipage}[m]{.11\linewidth}
\begin{center}
$y$
\end{center}
\end{minipage}&
\begin{minipage}[m]{.17\linewidth}
\begin{center}

\end{center}
\end{minipage}\\
\hline
\begin{minipage}[m]{.29\linewidth}

$O_zO_w=1\times\frac{1}{3}(1_x,2_y,1_t)$ \boundary

\end{minipage}&
\begin{minipage}[m]{.04\linewidth}
\begin{center}
$-$
\end{center}
\end{minipage}&
\begin{minipage}[m]{.11\linewidth}
\begin{center}
$5B+E$
\end{center}
\end{minipage}&
\begin{minipage}[m]{.11\linewidth}
\begin{center}
$y$
\end{center}
\end{minipage}
&
\begin{minipage}[m]{.11\linewidth}
\begin{center}
$y$
\end{center}
\end{minipage}&
\begin{minipage}[m]{.17\linewidth}
\begin{center}

\end{center}
\end{minipage}\\

\hline
\begin{minipage}[m]{.29\linewidth}

$O_zO_t=1\times\frac{1}{2}(1_x,1_y,1_w)$ \boundary

\end{minipage}&
\begin{minipage}[m]{.04\linewidth}
\begin{center}
$-$
\end{center}
\end{minipage}&
\begin{minipage}[m]{.11\linewidth}
\begin{center}
$5B+2E$
\end{center}
\end{minipage}&
\begin{minipage}[m]{.11\linewidth}
\begin{center}
$y$
\end{center}
\end{minipage}
&
\begin{minipage}[m]{.11\linewidth}
\begin{center}
$y$
\end{center}
\end{minipage}&
\begin{minipage}[m]{.17\linewidth}
\begin{center}

\end{center}
\end{minipage}\\

\hline
\end{longtable}
\end{center}

\begin{Note}

\item 
The $1$-cycle $\Gamma$ for each singular point is irreducible
because of  the monomials $w^2$ and $t^3$.
\end{Note}
\end{proof}

\newpage

\section{Epilogue}

\subsection*{Open problems}

Let $X$ be a quasi-smooth hypersurface of degrees $d$ with only
terminal singularities in weighted projective space $\mathbb{P}(1,
a_1, a_2, a_3, a_4)$, where $d=\sum  a_i$. In Main Theorem, we
prove that $X$ is birationally rigid. In particular, $X$ is
non-rational. Moreover, the proof also explicitly describes the
generators of the group of birational automorphisms
$\mathrm{Bir}(X)$ modulo subgroup of biregular automorphisms
$\mathrm{Aut}(X)$ (see Theorem~\ref{theorem:Bir}). Furthermore,
Theorem~\ref{theorem:auxiliary} says that
$\mathrm{Bir}(X)=\mathrm{Aut}(X)$ for those families in the list
of Fletcher and Reid with entry numbers No.~1, 3, 10, 11, 14, 19,
21, 22, 28, 29, 34, 35, 37,  39, 49, 50, 51, 52, 53, 55, 57, 59,
62, 63, 64, 66, 67, 70, 71, 72, 73, 75, 77, 78, 80, 81, 82, 83,
84, 85, 86, 87, 88,  89, 90, 91, 92, 93, 94 and  95. Of course,
some quasi-smooth threefolds in other families may also be
birationally super-rigid.

Explicit birational involutions play a key role in the proof of
Main Theorem. In many cases, they arise from generically
$2$-to-$1$ rational maps  of $X$ to suitable  $3$-dimensional
weighted projective spaces (\emph{quadratic involutions}). However, in some cases they arise from rational
maps of $X$ to suitable  $2$-dimensional weighted projective
spaces whose general fibers are birational to smooth elliptic
curves (\emph{elliptic involutions}). Moreover, we often use such  elliptic rational
fibrations in order  to exclude some singular points of $X$ as
centers of non-canonical singularities of any log pair
$\left(X,\frac{1}{n}\mathcal{M}\right)$, where $\mathcal{M}$ is a mobile
linear subsystem in $|-nK_{X}|$. The latter is done using
Corollary~\ref{corollary:Mori-cone} or Lemma~\ref{lemma:bad-link}.
A similar role in the proof of Main Theorem is played by so-called
\emph{Halphen pencils} on $X$, i.e., pencils whose general
members are  irreducible surfaces of Kodaira dimension zero.
Implicitly Halphen pencils appear almost every time when we apply
Lemmas~\ref{lemma:boundary} and \ref{lemma:negative-definite}.
This leads us to three problems that are closely related to Main
Theorem. They are
\begin{enumerate}
\item[(1)] to find relations between generators of the birational automorphism group
$\mathrm{Bir}(X)$;

\item[(2)] to describe birational transformations of $X$ into
elliptic fibrations;

\item[(3)] to classify Halphen pencils on $X$.
\end{enumerate}

While proving Main Theorem, we noticed many interesting Halphen
pencils on $X$  even though we did not mention them explicitly in
the proofs.  We also observed that their general members are K3
surfaces. This gives an evidence for

\begin{conjec}
\label{conjecture:Halphen} Every Halphen pencil on $X$ is a pencil
of K3 surfaces.
\end{conjec}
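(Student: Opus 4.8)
The plan is to prove the sharper statement that a Halphen pencil $\mathcal{P}$ on $X$ is automatically contained in $|-K_X|$; the conjecture then follows, because a general anticanonical member of each of the 95 families is a $K3$ surface (this is Reid's correspondence recalled in the introduction), and a general member of a sub-pencil $\mathcal{P}\subset|-K_X|$ still acquires at worst du Val singularities along the base locus, so its minimal resolution is again a $K3$ surface. Thus I write $\mathcal{P}\subset|-nK_X|$ and aim to show $n=1$. The basic object is the log pair $\left(X,\frac{1}{n}\mathcal{P}\right)$, which is a log Calabi--Yau pair since $K_X+\frac{1}{n}\mathcal{P}\sim_{\mathbb{Q}}0$.

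First I would reduce to the case in which $\left(X,\frac{1}{n}\mathcal{P}\right)$ is canonical. Suppose it is not. Since $\mathcal{P}$ is in particular a mobile linear system in $|-nK_X|$, Theorems~\ref{theorem:smooth point excluding}, \ref{theorem:excluding-curve} and~\ref{theorem:excluding or untwisting} apply verbatim: a non-canonical centre can only be a singular point, and such a point is untwisted by one of the explicit birational involutions $\tau$ of $X$. By Lemma~\ref{lemma:untwisting-invoultion}, $\tau(\mathcal{P})\subset|-n_\tau K_X|$ with $n_\tau<n$. The key point is that $\tau$ restricts to a birational map $S\dashrightarrow\tau(S)$ on a general member $S$, so the Kodaira dimension of the general member is unchanged and $\tau(\mathcal{P})$ is again a Halphen pencil. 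Inducting on $n$ exactly as in the proof of Main Theorem, I may therefore assume that $\left(X,\frac{1}{n}\mathcal{P}\right)$ is canonical.

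Next I would extract the Kodaira dimension from a resolution. Let $f\colon W\to X$ be a log resolution of the base locus of $\mathcal{P}$, so that the strict transform $\widetilde{\mathcal{P}}$ is base point free and defines a fibration $\pi\colon W\to\mathbb{P}^1$ whose general fibre $\widetilde{S}$ is a smooth model of a general $S\in\mathcal{P}$. Writing $K_W=f^*K_X+\sum_i a_iE_i$ and $\widetilde{\mathcal{P}}=f^*\mathcal{P}-\sum_i m_iE_i$, and using that $\widetilde{S}$ is a fibre (so $\widetilde{S}\big|_{\widetilde{S}}=0$) together with $K_X+\frac{1}{n}\mathcal{P}\sim_{\mathbb{Q}}0$, adjunction gives
\[
K_{\widetilde{S}}=\Big(\sum_i\big(a_i-\tfrac{m_i}{n}\big)E_i\Big)\Big|_{\widetilde{S}}.
\]
Canonicity of the pair means $a_i-\frac{m_i}{n}\geq 0$ for every $i$, so $K_{\widetilde{S}}$ is effective and supported on the $f$-exceptional curves meeting $\widetilde{S}$. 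On the other hand $\kappa(\widetilde{S})=0$, so $\widetilde{S}$ has a minimal model $S_0$ with $K_{S_0}\equiv 0$ and, for the contraction $\sigma\colon\widetilde{S}\to S_0$, one has $K_{\widetilde{S}}=\sigma^*K_{S_0}+E_\sigma=E_\sigma$, an effective $\sigma$-exceptional divisor. Matching these two descriptions of $K_{\widetilde{S}}$ should force $a_i=\frac{m_i}{n}$ for the relevant $E_i$, hence $K_{\widetilde{S}}\sim_{\mathbb{Q}}0$ and $\widetilde{S}$ minimal, and---tracing the multiplicities back through the explicit equations of the 95 families---should pin $n=1$.

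The hard part will be the final step: ruling out that the minimal model $S_0$ is an abelian, bielliptic or Enriques surface rather than a $K3$ surface. This is where a genuinely new input is needed, beyond the Mori-theoretic machinery of the paper. Concretely, one must show that the general member of a Halphen pencil has vanishing irregularity and no torsion in its Picard group; for a quasi-smooth hypersurface this should follow from Lefschetz-type and Kawamata--Viehweg vanishing on $W$, but the terminal quotient singularities and the exceptional contribution to $K_{\widetilde{S}}$ make the vanishing delicate to control uniformly across all 95 families. A second, more structural difficulty is justifying the numerical matching $a_i=\frac{m_i}{n}$ case by case: unlike in the excluding and untwisting tables, one cannot here simply derive a contradiction, but must instead read off the precise multiplicities of the pencil at each singular point. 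This is the reason the statement is left as \textbf{Conjecture~\ref{conjecture:Halphen}} rather than proved in the present paper.
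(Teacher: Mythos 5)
First, a point of order: the paper does not prove this statement at all --- it is stated as Conjecture~\ref{conjecture:Halphen} and explicitly left open (``We do not know any deep reason why this conjecture should be true''), the only known case being that of a \emph{general} hypersurface $X$, established in \cite{ChPa09}. So there is no proof in the paper to compare yours against; the question is whether your argument stands on its own, and it does not.

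The fatal gap is your opening reduction: the ``sharper statement'' that every Halphen pencil lies in $|-K_X|$ (i.e.\ that one can force $n=1$) is false, and the paper itself documents the counterexamples. For every family with $a_1\geq 2$ (the vast majority of the 95), the linear system $|-K_X|$ is spanned by the single monomial $x$, hence is zero-dimensional and contains no pencil whatsoever, yet Halphen pencils certainly exist: for family No.~33 the pencil $|-2K_{X_{17}}|$, spanned by $x^2$ and $y$, is a Halphen pencil, and Example~\ref{example:no-33} exhibits on special members of that family a second Halphen pencil inside $|-3K_{X_{17}}|$. Even on smooth quartics (family No.~1), Example~\ref{example:quartic-threefold} recalls Iskovskikh's observation that special quartics carry Halphen pencils contained in $|-2K_{X_4}|$. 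Your untwisting induction cannot repair this: by Lemma~\ref{lemma:untwisting-invoultion} an involution lowers $n$ only while the pair $\left(X,\frac{1}{n}\mathcal{P}\right)$ is \emph{non-canonical}, and the induction terminates at a canonical pair, which genuinely occurs with $n>1$ (indeed $\left(X_{17},\frac{1}{2}|-2K_{X_{17}}|\right)$ must be canonical, since otherwise untwisting would produce a pencil inside the zero-dimensional system $|-K_{X_{17}}|$). At that point your adjunction computation only gives that $K_{\widetilde{S}}$ is an effective $f$-exceptional divisor; there is no mechanism that forces $a_i=\frac{m_i}{n}$ and ``pins $n=1$'', because $n=1$ is simply not true. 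What then remains --- showing the minimal model of a general member is a K3 surface rather than an abelian, bielliptic or Enriques surface, and controlling the pencil at the singular points of $X$ --- is precisely the content of the conjecture, and you concede it as open. So the proposal proves nothing beyond the (correct, but standard) reduction to canonical pairs.
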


We do not know any deep reason why this conjecture should be true.
When $X$ is a general threefold in its family,
Conjecture~\ref{conjecture:Halphen} was proved in \cite{ChPa09}.

The original proof of Theorem~\ref{theorem:IM} given by Iskovskikh
and Manin in \cite{IsM71} holds in arbitrary characteristic. This
also follows from \cite{Pu98}. The short proof of
Theorem~\ref{theorem:IM} given by Corti in \cite{Co00} holds only
in characteristic zero. For some families in the list of Fletcher
and Reid, the proof of Main Theorem requires vanishing type
results and, thus, is valid only in characteristic zero. This
suggests  the birational rigidity problem of $X$ and problems (1),
(2) and (3)  over an algebraically closed field of positive
characteristic. For double covers of $\mathbb{P}^3$ ramified along
smooth sextic surfaces, this was done in \cite{ChPa07} and
\cite{ChPa11}, which revealed special phenomenon of small
characteristics (see \cite[Example~1.5]{ChPa07}).

\subsection*{General vs. special}

The first three problems  listed in the previous section are
solved in the case when $X$ is a general hypersurface in its
family. This is done in \cite{Ch00}, \cite{Ch07}, \cite{ChPa05}
and \cite{ChPa09}. In many cases, the same methods can be applied
regardless of the assumption that $X$ is general. For example, we
proved in \cite{ChPa05} that a general hypersurface in the
families No.~3, 60, 75, 83, 87, 93 cannot have a birational
transformation to an
 elliptic fibration. We are able to  prove that it is also
true for every quasi-smooth hypersurface in  the families No.~3,
75, 83, 87, 93, using the methods given in this paper. However, in
the family No.~60, it is no longer true for an arbitrary
quasi-smooth hypersurface.

\begin{exam}
\label{example:60} Let $X_{24}$ be a quasi-smooth hypersurface in the
family No.~$60$. Suppose, in addition, that $X_{24}$ contains the curve
$L_{zw}$. We may then assume that it is defined by the equation
\begin{equation*}
\begin{split}
 & w^2t+w(at^2x^3+tg_9(x, y, z)+g_{15}(x,y,z))+\\ & t^4+t^3h_{6}(x,y,z)+t^2h_{12}(x, y, z)+th_{18}(x, y, z)+h_{24}(x, y,z)=0\\
 \end{split}
 \end{equation*}
 in $\mathbb{P}(1,4,5,6,9)$. For the hypersurface $X_{24}$
to be quasi-smooth at the point $O_z$, the polynomial $h_{24}$
must  contain the monomial $yz^4$. For the hypersurface $X_{24}$ to
contain the curve $L_{zw}$, the polynomial $g_{15}$ does not
contain the monomial $z^3$.

Consider the projection $\pi\colon
X_{24}\dashrightarrow \mathbb{P}(1,4,6)$. Its general fiber is an
irreducible curve birational to an elliptic curve. To see this, on
the hypersurface $X_{24}$, consider the surface cut by $y=\lambda x^4$
and the surface cut by $t=\mu x^6$, where $\lambda$ and $\mu$ are
sufficiently general complex numbers. Then the intersection of
these two surfaces is the $1$-cycle $4L_{zw}+C_{\lambda, \mu}$,
where $C_{\lambda, \mu}$ is a curve defined by the
equation
 \[\lambda w^2x^2+w\left(\mu^2x^{11}g_0+ \mu x^2g_9(x,\lambda x^4,z)+\frac{g_{15}(x,\lambda
x^4,z)}{x^4}\right)+\]\[\mu^4x^{20}+\mu^3x^{14}h_{6}(x,\lambda
x^4,z)+\mu^2x^{8}h_{12}(x,\lambda x^4,z)+ \mu
x^{2}h_{18}(x,\lambda x^4,z)+\frac{h_{24}(x,\lambda
x^4,z)}{x^4}=0\] in $\mathbb{P}(1,5,9)$. Plugging $x=1$ into the
equation, we see that the curve $C_{\lambda, \mu}$ is birational
to  a double cover of $\mathbb{C}$ ramified at four distinct
points.
\end{exam}

In some of the 95 families of Reid and Fletcher, special quasi-smooth
hypersurfaces may have simpler geometry than their general
representatives.

\begin{exam}
\label{example:no-2} Let $X_5$ be a quasi-smooth hypersurface in
$\mathbb{P}(1,1,1,1,2)$ (the family No. 2). The hypersurface $X_5$ can be
given by
$$
tw^2+wf_3(x,y,z,t)+f_5(x,y,z,t)=0.
$$
 The natural projection
$X_5\dasharrow\mathbb{P}^3$ is a generically double cover.
Therefore, it induces a birational involution of $X_5$, denoted by
$\tau$. By Theorem~\ref{theorem:Bir}, the birational automorphism
group $\mathrm{Bir}(X)$ is generated by the biregular automorphism
group $\mathrm{Aut}(X)$ and the involution $\tau$. By Main
Theorem, the hypersurface $X_5$ is birationally rigid. Moreover,
if the hypersurface $X_5$ is general, then it is not birationally
super-rigid, i.e., $\mathrm{Bir}(X)\ne \mathrm{Aut}(X)$. However,
in a special case, the involution $\tau$ can be biregular, and
hence the hypersurface $X_5$ is birationally super-rigid. To be
precise, the involution $\tau$ is biregular if and only if the
coefficient polynomial $f_3$ of $w$ is a zero polynomial. Thus,
the hypersurface $X_5$ is birationally super-rigid if and only if
$f_3$ is a zero polynomial.
\end{exam}

However, this is not always the case, i.e., special quasi-smooth
hypersurfaces usually have more complicated geometry than their
general representatives. Here we provide three illustrating examples.

\begin{exam}
\label{example:quartic-threefold} Let $X_4$ be a smooth quartic
threefold in $\mathbb{P}^4$ (the family No.~1). From
Theorem~\ref{theorem:IM} we know that every smooth quartic
hypersurface in $\mathbb{P}^4$ admits no non-biregular
birational automorphisms. Moreover, it was proved in \cite{Ch00} that
every rational map $\rho\colon X_4\dasharrow\mathbb{P}^2$ whose
general fiber is birational to a smooth elliptic curve fits a
commutative diagram
$$
\xymatrix{
&X_4\ar@{-->}[dl]_{\rho}\ar@{-->}[dr]^{\pi}&\\%
\mathbb{P}^2\ar@{-->}[rr]_{\sigma}&&\mathbb{P}^2,}
$$ %
where $\pi$ is a linear projection from a line and $\sigma$ is a
birational map. Furthermore, it was proved in \cite{ChPa09} that
every Halphen pencil on $X_4$ is contained in $|-K_{X_{4}}|$
provided that $X_4$ satisfies some generality assumptions.
Earlier, Iskovskikh pointed out in \cite{Is01} that this is no
longer true for an \emph{arbitrary} smooth quartic hypersurface in
$\mathbb{P}^4$. Indeed, a special smooth quartic hypersurface in $\mathbb{P}^4$ may have a Halphen pencil contained in $|-2K_{X_4}|$.
The complete classification of Halphen pencils on
$X_4$ was obtained in \cite{ChKa10}.
\end{exam}

\begin{exam}[{For details see the proof of
Theorem~\ref{theorem:amazing-23}}] \label{example:no-23} Let
$X_{14}$ be a quasi-smooth hypersurface in $\mathbb{P}(1,2,3,4,5)$
(the family No. 23). If $X_{14}$ is a general such hypersurface, then
there exists an exact sequence of groups
$$
1\longrightarrow\Gamma_{X_{14}}\longrightarrow\mathrm{Bir}(X_{14})\longrightarrow\mathrm{Aut}(X_{14})\longrightarrow 1,%
$$
where $\Gamma_{X_{14}}$ is a free product of two birational
involutions constructed in
Section~\ref{subsection:hard-involutions}. This follows from
\cite[Lemma~4.2]{ChPa05} (cf. Theorem~\ref{theorem:Bir}).
Moreover, let $\rho\colon X_{14}\dasharrow\mathbb{P}^{2}$ be a
rational map whose general fiber is birational to a smooth
elliptic curve. If $X_{14}$ is general, then there exists a
commutative diagram
$$
\xymatrix{
&&X_{14}\ar@{-->}[dl]_{\phi}\ar@{-->}[dr]^{\rho}&&&\\%
&\mathbb{P}(1,2,3)\ar@{-->}[rr]_{\sigma}&&\mathbb{P}^{2}&}
$$
where $\phi$ is the natural projection and $\sigma$ is some
birational map. Suppose now that $X_{14}$ is defined by the equation
$$
(t+by^2)w^2+yt(t-\alpha_1y^2)(t-\alpha_2 y^2) +
z^4y+xtz^3+xf_{13}(x,y,z,t,w)+yg_{12}(y,z,t,w)=0.
$$
Then none of these assertions are true. Indeed, let
$\mathcal{H}$ be the linear subsystem of $|-5K_{X_{14}}|$
generated by $x^5$, $xy^2$, $x^3y$ and $yz+xt$. Let $\pi\colon
X_{14}\dasharrow \mathbb{P}(1,2,5)$ be the rational map induced by
$$[x:y:z:t:w]\mapsto[x:y:yz+xt].$$ Then $\pi$ is dominant and its
general fiber is birational to an elliptic curve . Let $f\colon
Y\to X_{14}$ be the weighted blow up at the point $O_z$ with weight $(1,1,2)$. Denote by $E$
its exceptional surface. Let $g\colon W\to Y$ be the weighted blow
up at the point over $O_w$ with weight $(1,2,3)$. Denote by $G$ be
its exceptional divisor. Denote by $\hat{L}_{zw}$, $\hat{L}_{zt}$
and $\hat{L}_{yw}$ the proper transforms of the curves $L_{zw}$,
$L_{zt}$ and $L_{yw}$ by the morphism $f\circ g$. Then the curves
$\hat{L}_{zw}$ and $\hat{L}_{zt}$ are the only curves that
intersect $-K_W$ negatively. Moreover, there is an anti-flip
$\chi\colon W\dasharrow U$ along the curves $\hat{L}_{zw}$ and
$\hat{L}_{zt}$ (see the proof of
Theorem~\ref{theorem:amazing-23}). Let $\check{E}$ and $\check{G}$
be the proper transforms on $U$ of the divisors $E$ and $G$,
respectively. For $m\gg 0$, the linear system $|-mK_U|$ is free
and gives an elliptic fibration $\eta\colon U\to
 \Sigma$, where $\Sigma$ is a normal surface. Furthermore, there  exist a commutative diagram
$$
\xymatrix{
&W\ar@{->}[ld]_{g}\ar@{-->}[r]^{\chi}&U\ar@{->}[dddr]^{\eta}\\%
Y\ar@{->}[d]_{f}&&\\%
X_{14}\ar@{-->}[rd]_{\pi}&&\\
&\mathbb{P}(1,2,5)&&\Sigma\ar@{-->}[ll]^{\ \ \ \ \ \ \theta}}
$$ %
where $\theta$ is a birational map. The divisor $\check{G}$ is a
section of the elliptic fibration $\eta$ and $\check{E}$ is a
$2$-section of  $\eta$. Let $\tau_{U}$ be a birational involution
of the threefold $U$ that is induced by the reflection of the
general fiber of $\eta$ with respect to the section $\check{G}$.
The involution $\tau_U$ induces a birational involution of $X_{14}$.
This new involution is not biregular and not contained in the subgroup of the birational automorphism group $\mathrm{Bir}(X_{14})$ generated by two birational involutions constructed in
Section~\ref{subsection:hard-involutions}.
\end{exam}

\begin{exam}
\label{example:no-33} Let $X_{17}$ be a quasi-smooth hypersurface
in $\mathbb{P}(1,2,3,5,7)$ (the family No. 33). Then it can be given
by the quasi-homogenous polynomial equation
$$
(dx^3+exy+z)w^2+
t^2(a_1w+a_2yt)+z^4(b_1t+b_2yz)+$$
$$y^5(c_1w+c_2yt+c_3y^2z+c_4y^3x)+wf_{10}(x,y,z,t)+f_{17}(x,y,z,t)=0.
$$
 The
pencil $|-2K_{X_{17}}|$ is a Halphen pencil. Moreover, if the
defining equation of $X_{17}$ is sufficiently general, then this
is the only Halphen pencil on $X_{17}$ (see
\cite[Corollary~1.1]{ChPa09}). Suppose that $c_1=c_2=0$ and $c_3\ne
0$. Then we may assume that  $c_3=1$ and $c_4=0$ by a coordinate
change. Here we encounter an extra Halphen pencil. Indeed, the pencil on $X_{17}$ cut out
by $\lambda x^3+\mu z=0$, where $[\lambda:\mu]\in\mathbb{P}^1$, is a Halphen pencil contained in $|-3K_{X_{17}}|$ and  different from the Halphen pencil
$|-2K_{X_{17}}|$.
\end{exam}

\subsection*{Calabi problem}
 In many applications it
is useful to \emph{measure} how singular effective
$\mathbb{Q}$-divisors $D$ equivalent to $-K_X$  can be. A possible \emph{measurement} is
given by the so-called $\alpha$-invariant of the Fano hypersurface
$X$. It is defined by the number
$$
\alpha(X)=\mathrm{sup}\left\{\lambda\in\mathbb{Q}\ \left|%
\aligned
&\text{the log pair}\ \left(X, \lambda D\right)\ \text{is Kawamata log terminal}\\
&\text{for every effective $\mathbb{Q}$-divisor}\ D\sim_{\mathbb{Q}} -K_{X}.\\
\endaligned\right.\right\}.%
$$

If $X$ is a general hypersurface in its family, then $\alpha(X)=1$ by
\cite[Theorem~1.3]{Ch08} and \cite[Theorem~1.15]{Ch08b} except the
case when $X$ belongs to the families No. 1, 2, 3, 4 or 5. If $X$ is
a general quartic threefold in $\mathbb{P}^3$ (the family No. 1), we
have $\alpha(X)\ge\frac{7}{9}$ by \cite[Theorem~1.1.6]{ChPaWon}.
If $X$ is a double cover of $\mathbb{P}^3$ ramified along smooth
sextic surface (the family No. 3), then all possible values of
$\alpha(X)$ are found in \cite[Theorem~1.1.5]{ChPaWon}. For
general threefolds in the families No. 2, 4 and 5, the bound
$\alpha(X)>\frac{3}{4}$ proved in \cite{Ch08} and \cite{Ch09}. In
particular, we have

\begin{corol}
\label{corollary:alpha} Let $X$ be a quasi-smooth hypersurface of
degrees $d$ with only terminal singularities in the weighted
projective space $\mathbb{P}(1, a_1, a_2, a_3, a_4)$, where
$d=\sum  a_i$. Suppose that $X$ is a general hypersurface in this
family. Then $\alpha(X)>\frac{3}{4}$.
\end{corol}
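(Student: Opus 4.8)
The plan is to prove Corollary~\ref{corollary:alpha} not by any new computation but by collecting the results on the $\alpha$-invariant quoted immediately before the statement and verifying, family by family, that each available bound is strictly larger than $\frac{3}{4}$. The essential simplification is that the Corollary concerns only a \emph{general} member $X$ of each family, so in every case it is enough to record the generic value of $\alpha(X)$; the four groups into which the $95$ families split for this purpose are exactly those appearing in the preceding discussion.

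First I would dispose of all families other than No. 1, 2, 3, 4, 5. By \cite[Theorem~1.3]{Ch08} and \cite[Theorem~1.15]{Ch08b}, a general hypersurface $X$ in any of these families satisfies $\alpha(X)=1$, and since $1>\frac{3}{4}$ these are settled at once. For the quartic threefold (family No. 1), \cite[Theorem~1.1.6]{ChPaWon} gives $\alpha(X)\ge\frac{7}{9}$ for general $X$, and the elementary comparison $\frac{7}{9}=\frac{28}{36}>\frac{27}{36}=\frac{3}{4}$ yields the desired inequality. For families No. 2, 4 and 5 the bound $\alpha(X)>\frac{3}{4}$ for general $X$ is precisely the content of \cite{Ch08} and \cite{Ch09}, so these require nothing beyond citation.

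The one case demanding genuine care is family No. 3, the double cover of $\mathbb{P}^3$ ramified along a sextic surface. Here the plan is to appeal to the complete determination of the possible values of $\alpha(X)$ carried out in \cite[Theorem~1.1.5]{ChPaWon}, and to read off from that classification the value attained by a general such double solid, confirming that it exceeds $\frac{3}{4}$. The hard part will be exactly this extraction: within the stratification of \cite{ChPaWon} one must identify the stratum corresponding to a \emph{general} branch sextic, as opposed to the special sextics that can force $\alpha(X)$ to drop, and check that the generic stratum clears the threshold $\frac{3}{4}$. This is the only step where the ``general versus special'' distinction emphasized throughout the Epilogue does real work, since for non-general members of family No. 3 the invariant can take smaller values.

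Assembling the four groups then gives $\alpha(X)>\frac{3}{4}$ for a general $X$ in each of the $95$ families, which is the assertion of the Corollary; note that Main Theorem itself is not needed for the argument, the Corollary being a pure compilation of the $\alpha$-invariant estimates. The only arithmetic to be displayed is the single comparison $\frac{7}{9}>\frac{3}{4}$ together with the verification from \cite{ChPaWon} for the double solid.
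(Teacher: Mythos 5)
Your proposal is correct and is essentially identical to the paper's own argument: the Corollary is stated as an immediate consequence (``In particular, we have'') of the preceding paragraph, which collects exactly the four groups of citations you use --- $\alpha(X)=1$ for general members outside families No.~1--5 via \cite[Theorem~1.3]{Ch08} and \cite[Theorem~1.15]{Ch08b}, the bound $\alpha(X)\ge\frac{7}{9}$ for No.~1 via \cite[Theorem~1.1.6]{ChPaWon}, the classification of values for No.~3 via \cite[Theorem~1.1.5]{ChPaWon}, and $\alpha(X)>\frac{3}{4}$ for No.~2, 4, 5 via \cite{Ch08} and \cite{Ch09}. Your added care about reading off the generic stratum for the sextic double solid is a fair observation about the only implicit step, but it does not change the route.
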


Similarly, we can define the $\alpha$-invariant of any Fano variety
with at most Kawamata log terminal singularities. This invariant
has been studied intensively by many people who used different
notations for it. The notation $\alpha(X)$ is due to Tian who
defined the $\alpha$-invariant in a different way (see
\cite{Tian}). However, his definition coincides with the one we
just gave (see \cite[Theorem~A.3]{ChSh08c}).

Tian proved in \cite{Tian} that a smooth Fano variety of dimension
$n$ whose $\alpha$-invariant is greater than $\frac{n}{n+1}$
admits a K\"ahler--Einstein metric. This result was generalized
for Fano varieties with quotient singularities by Demailly and
Koll\'ar (see \cite[Criterion~6.4]{DeKo01}). Thus,
Corollary~\ref{corollary:alpha} implies

\begin{theor}
\label{theorem:KE-95} Let $X$ be a quasi-smooth hypersurface of
degrees $d$ with only terminal singularities in the weighted
projective space $\mathbb{P}(1, a_1, a_2, a_3, a_4)$, where
$d=\sum  a_i$. Suppose that $X$ is a general hypersurface in this
family. Then $X$ is admits an orbifold K\"ahler--Einstein metric.
\end{theor}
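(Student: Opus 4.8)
The plan is to obtain the orbifold K\"ahler--Einstein metric from a lower bound on the $\alpha$-invariant together with the orbifold version of Tian's existence criterion, so that no new analytic work is required. The single geometric input is Corollary~\ref{corollary:alpha}, which asserts $\alpha(X)>\frac{3}{4}$ for a general hypersurface $X$ in any of the $95$ families. The analytic engine is the generalization of Tian's theorem due to Demailly and Koll\'ar (\cite[Criterion~6.4]{DeKo01}): a Fano variety of dimension $n$ with at worst quotient singularities whose $\alpha$-invariant exceeds $\frac{n}{n+1}$ carries an orbifold K\"ahler--Einstein metric. First I would record that $X$ fits this framework.

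Second, I would verify the hypotheses of the criterion. The threefold $X$ is Fano with $\mathrm{Pic}(X)\cong\mathbb{Z}$ and $-K_X$ ample, and by construction its only singularities are the terminal cyclic quotient points of type $\frac{1}{r}(1,a,r-a)$ described in Section~\ref{section:quotient singular point}; hence $X$ is a well-formed three-dimensional Fano orbifold with at worst quotient singularities, precisely the class to which \cite[Criterion~6.4]{DeKo01} applies. I would also invoke \cite[Theorem~A.3]{ChSh08c} to identify the algebraic $\alpha(X)$ appearing in Corollary~\ref{corollary:alpha} with Tian's analytically defined invariant, so that the bound feeds directly into the criterion.

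Finally, I would combine the two ingredients. Since $n=\dim X=3$, the threshold in the criterion is $\frac{n}{n+1}=\frac{3}{4}$, and Corollary~\ref{corollary:alpha} gives exactly $\alpha(X)>\frac{3}{4}$. Therefore $\alpha(X)>\frac{n}{n+1}$, and the Demailly--Koll\'ar criterion produces an orbifold K\"ahler--Einstein metric on $X$, which is the assertion of the theorem.

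The main obstacle is not in this deduction, which is essentially a citation chain, but in respecting the boundary: the threshold $\frac{n}{n+1}$ equals $\frac{3}{4}$ in dimension three, so the \emph{strict} inequality in Corollary~\ref{corollary:alpha} is indispensable, and a weaker estimate $\alpha(X)\ge\frac{3}{4}$ would not suffice. Thus the real weight of the argument rests entirely on the strict bound $\alpha(X)>\frac{3}{4}$, whose justification is imported from \cite{Ch08} and \cite{Ch09} (and from \cite{ChPaWon} for the exceptional small families No.~1 and 3), together with the check that the equivalence of the two definitions of $\alpha$ and the quotient-singularity hypothesis of \cite[Criterion~6.4]{DeKo01} both hold without the general-position caveats that would weaken the conclusion.
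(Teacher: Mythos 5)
Your proposal is correct and follows exactly the route the paper takes: Corollary~\ref{corollary:alpha} supplies the strict bound $\alpha(X)>\frac{3}{4}$, and the Demailly--Koll\'ar generalization of Tian's criterion (\cite[Criterion~6.4]{DeKo01}) with threshold $\frac{n}{n+1}=\frac{3}{4}$ for $n=3$ yields the orbifold K\"ahler--Einstein metric. Your added remarks on the equivalence of the two definitions of $\alpha$ via \cite[Theorem~A.3]{ChSh08c} and on the indispensability of the strict inequality are consistent with the paper's discussion and require no change.
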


Recently, Chen, Donaldson and Sun and independently Tian proved
that a smooth Fano variety admits a K\"ahler--Einstein metric if and
only if it is $K$-stable (see \cite{Donaldson2013-0},
\cite{Donaldson2013-1}, \cite{Donaldson2013-2},
\cite{Donaldson2013-3} and \cite{Tian2013}). Earlier Odaka and
Okada proved that birationally super-rigid smooth Fano varieties with base-point-free anticanonical linear systems
must be slope-stable (see \cite{OdakaSano}). Furthermore, Odaka and Sano
proved that Fano varieties of dimension $n$ with at most log terminal singularities
whose $\alpha$-invariants are greater than
$\frac{n}{n+1}$ must be $K$-stable (see \cite{OdakaSano}). These results suggest that
\emph{every} quasi-smooth hypersurface in  the 95 families of Fletcher
and Reid should admit an orbifold K\"ahler--Einstein metric.

Using methods we developed in the proof of Main Theorem, it is
possible to explicitly describe all quasi-smooth hypersurfaces in
the 95 families of Fletcher and Reid whose $\alpha$-invariants are
greater than $\frac{3}{4}$. All of them must admit orbifold
K\"ahler--Einstein metrics by \cite[Criterion~6.4]{DeKo01}.

The $\alpha$-invariants can be applied to the non-rationality problem on products of Fano varieties.
In particular, we can apply
\cite[Theorem~6.5]{Ch08} to quasi-smooth hypersurfaces in the 95 families of Fletcher and Reid
 whose $\alpha$-invariants are $1$.

\subsection*{Arithmetics}

As it was pointed out by Pukhlikov and Tschinkel, the
problem (1) is closely related to the problem of potential density
of rational points on $X$ in the case when $X$ is defined over a
number field. For example, if $\mathrm{Bir}(X)$ is infinite, then
we are able to show that $X$ contains infinitely many rational surfaces.
It implies the potential density of rational points on $X$.

The papers \cite{BoTsch}, \cite{BoTsch2}, \cite{HarrisTschinkel}
use birational transformations into elliptic fibrations  in order to prove the
potential density on all smooth Fano threefolds possibly except
double covers of $\mathbb{P}^3$ ramified along smooth sextic
surfaces (the family No. 3 in the list of Fletcher and Reid).

%Vice
%versa, our proof of Main Theorem uses a method
%originating in \cite{BoTsch}.

If $X$ is defined over a number field, it seems likely that the
set of rational points on $X$ is potentially dense. For every
smooth quartic threefold in $\mathbb{P}^4$ (the family No.~1), this
was proved by Harris and Tschinkel in \cite{HarrisTschinkel}. For
general  Fano hypersurfaces  in the families No. 2, 4, 5, 6, 7, 9, 11,
12, 13, 15, 17, 19, 20, 23, 25, 27, 30, 31, 33, 36, 38, 40, 41,
42, 44, 58, 61, 68 and 76, this was proved in \cite{ChPa05} and
\cite{ChPa11}. Despite many attempts, this problem is still open
for double covers of $\mathbb{P}^3$ ramified along smooth sextic
surfaces.

The methods we use in the proof of Main Theorem can be applied to
prove the potential density of rational points on the quasi-smooth
hypersurfaces  in some families in the list of Fletcher and Reid.
In fact, for some families we can use our methods to prove the
density of rational points on $X$ (see \cite[Page~84 and
Section~5]{ChPa05}).

\subsection*{Fano threefold complete intersections}

In 2013 and 2014, after the present paper was announced on ArXiv,  new results on the birational rigidity of Fano threefold complete intersections were introduced (\cite{AZu14}, \cite{Od13}). Like the 95 families of Fano threefold hypersurfaces, it is well known that there are 85 families of Fano threefold complete intersections of codimension $2$ (\cite[Table~6]{IF00}). In addition, it is also known that there is only one family of Fano threefold complete intersections of codimension $3$, \emph{i.e.}, complete intersections of three quadrics in $\mathbb{P}^6$.  There is no Fano threefold complete intersections of codimensions $4$ and higher (\cite{3Ch11}). The lists of Fano threefold complete intersections in \cite[Tables ~5, 6, and 7]{IF00} are proved to be complete (\cite{3Ch11}).  In 1996,  a general member in the family of complete intersections of quadrics and cubics in $\mathbb{P}^5$ is proved to be birationally rigid (\cite{IskovskikhPukhlikov}).
In 2013, Odaka announced that general members in 19 families out of the 85 families of Fano threefold complete intersections of codimension $2$ are birationally rigid and that general members in the other 64 families are not birationally rigid (\cite{Od13}). After Odaka, a proof of the birational rigidity of quasi-smooth complete intersections in the 19 families (except the family of smooth complete intersections of quadrics and cubics in $\mathbb{P}^5$)  is announced by Ahmadinezhad and Zucconi (\cite{AZu14}).

\newpage

\newpage

\emph{Ivan Cheltsov}
\medskip

School of Mathematics, The
University of Edinburgh

  Edinburgh, EH9 3JZ, UK

 \texttt{I.Cheltsov@ed.ac.uk}

 \bigskip
 \bigskip

 \emph{Jihun Park}
 \medskip

 Center for Geometry and Physics, Institute for Basic Science (IBS)

77 Cheongam-ro, Nam-gu, Pohang, Gyeongbuk, 790-784, Korea.
\medskip

 Department of
Mathematics, POSTECH

77 Cheongam-ro, Nam-gu, Pohang, Gyeongbuk, 790-784, Korea.

\texttt{wlog@postech.ac.kr}

\end{document}